\pdfoutput=1\relax
\documentclass{amsart}


\usepackage{verbatim}
\usepackage[textsize=scriptsize]{todonotes}
\usepackage{tikz-cd}
\usepackage{etoolbox}
\usepackage{etex}
\usepackage{wasysym}
\usepackage[T1]{fontenc}
\usepackage{chemarr}
\usepackage{amssymb}
\usepackage[leqno]{amsmath}
\usepackage{comment}
\usepackage{mathtools}
\usepackage{rotating}
\usepackage{wrapfig}
\usepackage{outlines}
\usepackage{graphicx}
\usepackage{scalerel}
\usepackage{bbm}
\usepackage{multicol}
\usepackage{float} 
\usepackage{amsthm}
\usepackage[all,arc]{xy}
\usepackage{stackrel}


\usepackage{spectralsequences}

\let\oldwidetilde\widetilde
\protected\def\widetilde{\oldwidetilde}



\newtheorem{theorem}{Theorem}


\usepackage{hyperref}
\usepackage{cleveref}

\hypersetup{
   colorlinks,
   linkcolor={red},
   citecolor={green!30!black},
   urlcolor={blue}
}

\usepackage{tikz}
\usetikzlibrary{matrix,arrows,decorations}
\usepackage{tikz-cd}

\usepackage{adjustbox}

\let\oldtocsection=\tocsection
 
\let\oldtocsubsection=\tocsubsection
 
\let\oldtocsubsubsection=\tocsubsubsection
 
\renewcommand{\tocsection}[2]{\hspace{0em}\oldtocsection{#1}{#2}}
\renewcommand{\tocsubsection}[2]{\hspace{1em}\oldtocsubsection{#1}{#2}}
\renewcommand{\tocsubsubsection}[2]{\hspace{2em}\oldtocsubsubsection{#1}{#2}}


\makeatletter
\newcommand{\leqnomode}{\tagsleft@true}
\newcommand{\reqnomode}{\tagsleft@false}
\makeatother


\theoremstyle{definition}

\newtheorem{nul}{}[section]
\newtheorem{dfn}[nul]{Definition}

\newtheorem{rmk}[nul]{Remark}

\newtheorem{cnstr}[nul]{Construction}
\newtheorem{cnv}[nul]{Convention}
\newtheorem{ntn}[nul]{Notation}
\newtheorem{exm}[nul]{Example}

\newtheorem{rec}[nul]{Recollection}

\newtheorem{qst}[nul]{Question}

\newtheorem{warn}[nul]{Warning}

\newtheorem*{warn*}{Warning}
\newtheorem*{dfn*}{Definition}
\newtheorem*{axm*}{Axiom}
\newtheorem*{ntn*}{Notation}
\newtheorem*{exm*}{Example}
\newtheorem*{exr*}{Exercise}
\newtheorem*{int*}{Intuition}
\newtheorem*{qst*}{Question}
\newtheorem*{rmk*}{Remark}

\theoremstyle{plain}

\newtheorem{thm}[nul]{Theorem}
\newtheorem{prop}[nul]{Proposition}

\newtheorem{lem}[nul]{Lemma}

\newtheorem{cnj}[nul]{Conjecture}
\newtheorem{cor}[nul]{Corollary}

\newtheorem*{thm*}{Theorem}
\newtheorem*{prop*}{Proposition}
\newtheorem*{cor*}{Corollary}
\newtheorem*{lem*}{Lemma}
\newtheorem*{cnj*}{Conjecture}


\DeclareMathOperator*{\colim}{colim}

\DeclareMathOperator{\cof}{cof}


\DeclareMathOperator{\Hom}{Hom}
\DeclareMathOperator{\Map}{Map}
\DeclareMathOperator{\End}{End}

\DeclareMathOperator{\Fun}{Fun}


\DeclareMathOperator{\Spec}{\mathrm{Spec}}
\DeclareMathOperator{\Spf}{\mathrm{Spf}}
\DeclareMathOperator{\gl1}{gl_1}
\DeclareMathOperator{\sl1}{sl^E_1}

\DeclareMathOperator{\pic}{pic}

\DeclareMathOperator{\Mod}{Mod}

\DeclareMathOperator{\Def}{Def}

\def\ct{\mathrm{ct}}

\def\Pr{\mathrm{Pr}}
\def\PrL{\mathrm{Pr}^{\mathrm{L}}}
\def\st{\mathrm{st}}
\def\rig{\mathrm{rig}}

\def\tr{\mathrm{tr}}
\def\height{\mathrm{height}}

\def\len{\mathrm{len}}
\def\supp{\mathrm{supp}}
\newcommand{\ev}[0]{{\mathrm{ev}}}

%


\def\E{\mathbb{E}}
\def\F{\mathbb{F}}
\def\G{\mathbb{G}}
\def\H{\mathbb{H}}
\def\N{\mathbb{N}}

\def\Q{\mathbb{Q}}

\def\Ss{\mathbb{S}}
\def\T{\mathbb{T}}
\def\W{\mathbb{W}}
\def\Z{\mathbb{Z}}
\def\cC{\mathcal{C}}
\def\CC{\mathcal{C}}
\def\DD{\mathcal{D}}
\def\AA{\mathcal{A}}

\def\EE{\mathcal{E}}
\def\D{\mathcal{D}}
\def\mE{\mathcal{E}}

\def\cO{\mathcal{O}}

\def\hR{\widehat{\mathcal{R}}}

\DeclareMathOperator{\CAlg}{CAlg}
\def\CAlgw{\mathrm{CAlg}^{\wedge}}
\def\Modw{\mathrm{Mod}^{\wedge}}

\def\one{\mathbbm{1}}


\def\Fp{\mathbb{F}_p}
\def\Fpbar{\overline{\mathbb{F}}_p}

\def\ku{\mathrm{ku}}

\def\KU{\mathrm{KU}}


\def\MU{\mathrm{MU}} 
\def\MUP{\mathrm{MUP}}


\def\Mfg{\mathcal{M}_{\mathrm{fg}}}

\def\m{\mathfrak{m}}
\def\modm{/\!\!/\m}
\def\mm{/\!\!/}
\def\ll{[\![}
\def\rr{]\!]}






\def\Alg{\mathrm{Alg}}
\def\CAlg{\mathrm{CAlg}}
\def\CRing{\mathrm{CRing}}

\def\CAlgh{\mathrm{CRing}}
\def\Modh{\mathrm{Mod}^{\heartsuit}}

\def\Sp{\mathrm{Sp}}
\def\LnfSp{L_n^f\mathrm{Sp}}

\def\Set{\mathrm{Set}}

\def\BSL{\mathrm{BSL}}
\def\Top{\mathrm{Top}}
\def\CHaus{\mathrm{CHaus}}




\def\op{\mathrm{op}}
\def\Gr{\mathrm{Gr}}
\def\gr{\mathrm{gr}}
\def\Fil{\mathrm{Fil}}

\def\cons{\mathrm{cons}}

\newcommand{\pushout}{\arrow[ul, phantom, "\ulcorner", very near start]}



\newcommand{\pointL}[2]{\one_{#1}[#2]}
\newcommand{\pointR}[1]{\Omega^{\infty}_{#1}}
\newcommand{\ldbl}{(\!(}
\newcommand{\QQ}{\mathbb{Q}}
\newcommand{\ZZ}{\mathbb{Z}}
\newcommand{\NN}{\mathbb{N}}
\newcommand{\val}{\nu}
\newcommand{\arc}{\mathrm{arc}}
\newcommand{\HRing}[2]{#1\ll t^{#2} \rr}

\newcommand{\rdbl}{)\!)}
\newcommand{\mate}[1]{{#1}^{\tiny\gemini}}
\newcommand{\wt}{\widetilde}

\newcommand{\Prig}{\mathrm{Pr^{rig}}}
\newcommand{\dualz}{\diamondsuit}
\newcommand{\dual}{\vee}
\newcommand{\qind}{\quad\in\quad}%
\global\long\def\oto#1{\xrightarrow{#1}}%
\definecolor{DefColor}{rgb}{0.6,0.15,0.25}
\newcommand{\Setc}{\mathrm{Set}}
\newcommand{\mdef}[1]{\textcolor{DefColor}{#1}} 
\newcommand{\tdef}[1]{\textit{\mdef{#1}}}
\newcommand{\deff}[1]{\tdef{#1}}
\newcommand{\idem}{\mathrm{idem}}
\newcommand{\Spd}{\mathrm{Spd}}
\def\id{\mathrm{id}}
\def\Id{\mathrm{Id}}
\newcommand{\NS}{\mathrm{NS}}
\newcommand{\NSP}{\mathrm{NS}^{\Pi}}
\newcommand{\CSpec}{\mathrm{Spec}^{\mathrm{cons}}_{T(n)}}
\newcommand\xqed[1]{%
  \leavevmode\unskip\penalty9999 \hbox{}\nobreak\hfill
  \quad\hbox{#1}}
\newcommand\tqed{\xqed{$\triangleleft$}}

\newcommand{\Npm}{t^{\pm 1 /p^{\infty}}}

\newcommand{\Np}{t^{1/p^{\infty}}}

\def\Perf{\mathrm{Perf}}
\newcommand{\noloc}{\;\mathord{:}\,}

\usepackage{amssymb,graphicx}


\usepackage[margin=1.5in]{geometry}

\newtoggle{draft}
\togglefalse{draft}

\iftoggle{draft} {

\newcommand{\NB}[1]{\todo[color=gray!40]{#1}}
\newcommand{\TODO}[1]{\todo[color=red]{#1}}
}{ 
\newcommand{\NB}[1]{}
\newcommand{\TODO}[1]{}
\renewcommand{\todo}[1]{}
\renewcommand{\todo}[1]{}

}

\title{The Chromatic Nullstellensatz}
\date{\today}

\author{Robert Burklund}
\address{Department of Mathematical Sciences, University of Copenhagen, Denmark}
\email{rb@math.ku.dk}

\author{Tomer M. Schlank}
\address{Einstein Institute of Mathematics, The Hebrew University of Jerusalem}
\email{tomer.schlank@gmail.com}

\author{Allen Yuan}
\address{Department of Mathematics, Columbia University, New York, NY, USA}
\email{allenyua@gmail.com}


\begin{document}
\maketitle

\begin{abstract}
We show that Lubin--Tate theories  attached to algebraically closed fields  are characterized among  $T(n)$-local $\mathbb{E}_{\infty}$-rings as those that satisfy an analogue of Hilbert's Nullstellensatz. Furthermore, we show that for  every $T(n)$-local $\mathbb{E}_{\infty}$-ring $R$, the collection of $\E_\infty$-ring maps from $R$ to such Lubin--Tate theories jointly detect nilpotence. In particular, we deduce that every non-zero $T(n)$-local $\mathbb{E}_{\infty}$-ring $R$ admits an $\E_\infty$-ring map to such a Lubin--Tate theory. 
As consequences, we construct $\E_{\infty}$ complex orientations of algebraically closed Lubin--Tate theories, compute the strict Picard spectra of such Lubin--Tate theories, and prove redshift for the algebraic $\mathrm{K}$-theory of arbitrary $\E_{\infty}$-rings.\vspace{0.1cm}  
\end{abstract}

\setcounter{tocdepth}{1}

\begin{figure}[H]
  \centering{}
  \setlength{\fboxsep}{-5pt}
  \setlength{\fboxsep}{5pt}
  \frame{\includegraphics[scale=0.05]{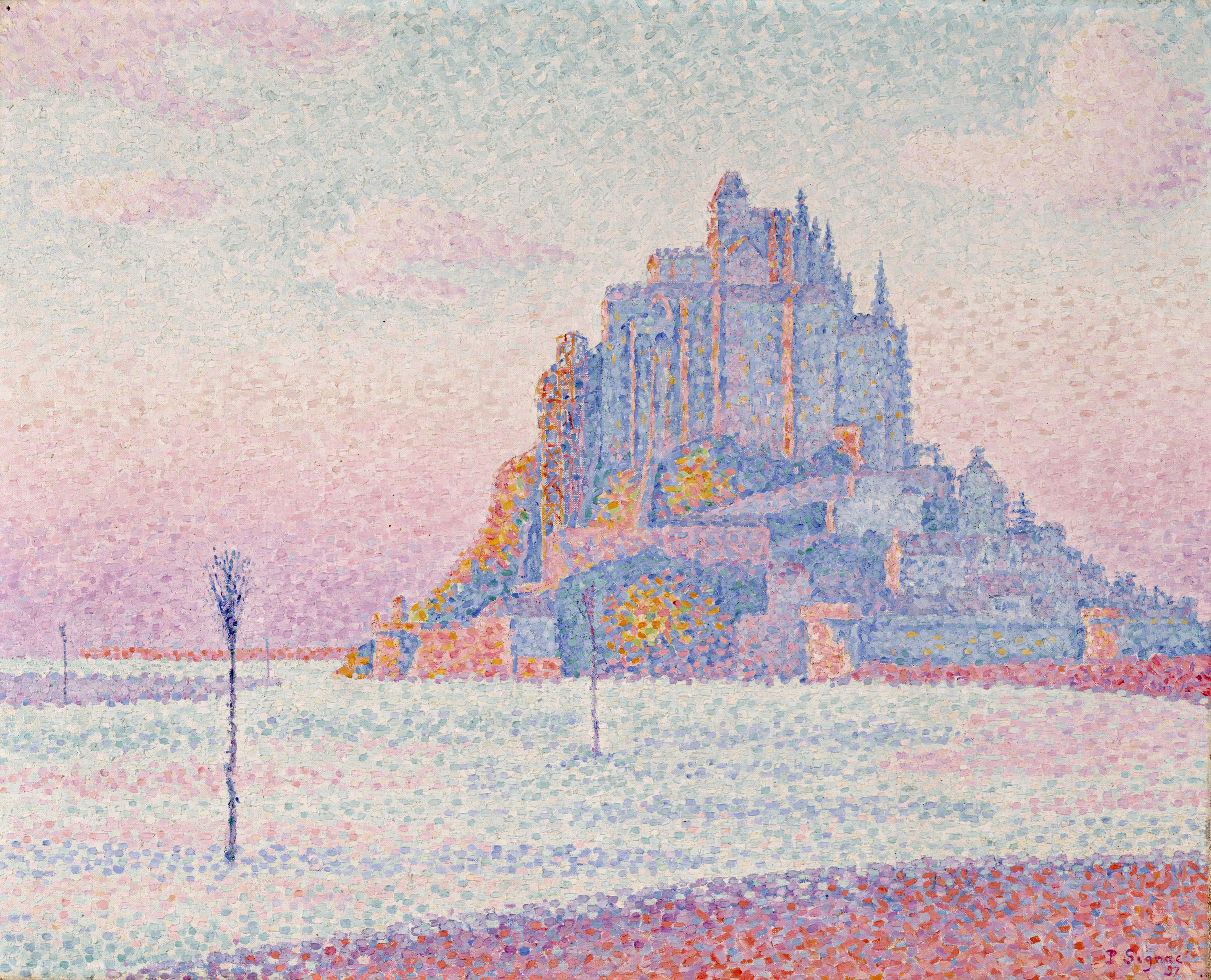}}
  \caption{\footnotesize Mont Saint-Michel, Setting Sun, Paul Signac,\\
    {[}Dallas Museum of Art, The Eugene and Margaret McDermott Art Fund, Inc., bequest of Mrs. Eugene McDermott in honor of Bill Booziotis{]}}
\end{figure}

\tableofcontents
\vbadness 5000



\section{Introduction}
\label{sec:introduction}

Stable homotopy theory has greatly benefited from insights offered by three fundamental perspectives. First, spectra should be considered as $\infty$-categorical analogues of abelian groups. Second, it is fruitful to generalize notions from algebra and algebraic geometry to the world of spectra. Third, these generalizations should avoid element-based formulae and be given in terms of categorical properties. In this way, for example, the Zariski spectrum of a ring is replaced by the notion of the Balmer spectrum, which presents chromatic homotopy theory as the analog for spectra of the primary decomposition for abelian groups. Some aspects of homotopy theory thus became akin to a game of Taboo, where classical notions from algebra are redefined without using the words, \emph{element}, \emph{equation} or \emph{subset}. 
\\

This paper aims to study such a redefinition for algebraically closed fields. The idea is that algebraically closed fields are precisely those commutative rings that satisfy a form of Hilbert's Nullstellensatz.

\begin{dfn}
  Let $\CC$ be a presentable $\infty$-category.
  We say that $\CC$ is \deff{Nullstellensatzian} if every compact and non-terminal object in $\CC$ admits a map to the initial object of $\CC$. Similarly, we say that an object $A \in \CC$ is  \deff{Nullstellensatzian} if $A$ is non-terminal and $\CC_{A/-}$ is Nullstellensatzian.
  \tqed
\end{dfn}

Hilbert's Nullstellensatz is essentially the statement that an object in the category of commutative rings satisfies the Nullstellensatz if and only if it is an algebraically closed field. 

Our first result is the classification of Nullstellensatzian $\E_\infty$-algebras in the monochromatic world. Through the connection between being Nullstellensatzian and being algebraically closed (in the sense of Galois theory) work of Baker and Richter \cite{BakerRichter} on Galois extensions suggests that the natural candidates for Nullstellensatzian $T(n)$-local $\E_\infty$-algebras are the Lubin-Tate theories attached to algebraically closed fields. Indeed, we show that these are exactly the Nullstellensatzian $T(n)$-local $\E_\infty$-algebras.


\begin{theorem}[Chromatic Nullstellensatz, \Cref{thm:alpha-null}]\label{thm:intro_Nullstellensatz}
A $T(n)$-local $\mathbb{E}_{\infty}$-algebra $R$ is Nullstellensatzian in  $\mathrm{CAlg}(\mathrm{Sp}_{T(n)})$ if and only if $R \cong E(L)$, where 
$E(L)$ is the Lubin-Tate spectrum attached to an algebraically closed field $L$.\footnote{ Here and throughout the paper, by ``$E(L)$ for an algebraically closed field $L$'' we mean for height $n>0$ that $\mathrm{char}\,L=p$ and $E(L)$ is as in \cite{GoerssHopkins, ECII},  and for height $n=0$ that $\mathrm{char}\,L=0$  and $E(L) \coloneqq L[u^{\pm 1}]$ where the generator $u$ is placed in degree $2$, that is, $E(L) \cong L\otimes KU$.}
\end{theorem}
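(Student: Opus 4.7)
The theorem has two directions. For the ``only if'' direction, suppose $R$ is Nullstellensatzian; the plan is to extract enough structure from $R$ to recognize it, via Goerss--Hopkins--Miller obstruction theory, as a Lubin--Tate theory $E(L)$. I would apply the Nullstellensatzian property to carefully designed compact $R$-algebras. For a non-unit $a \in \pi_0 R$, the compact $R$-algebra obtained by cellularly coning off $a$ is non-terminal provided $a$ is not a unit, and a map back to $R$ then forces $a$ itself to be zero; iterating this kind of test pushes $\pi_0 R$ to be a field. Applied to an irreducible monic $f \in \pi_0 R[x]$ of degree $>1$, the compact algebra obtained by freely adjoining $x$ and coning off $f(x)$ admits a map to $R$ only if $f$ has a root in $\pi_0 R$, so $\pi_0 R$ is algebraically closed. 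Analogous tests with even and odd degree cells, together with $T(n)$-locality of $R$, force $R$ to be even periodic of height $n$. Goerss--Hopkins obstruction theory (with the vanishing of relevant André--Quillen cohomology over the algebraically closed residue field) then upgrades the abstract data to an equivalence $R \simeq E(L)$ for $L = \pi_0 R / \mathfrak{m}$.

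For the ``if'' direction, let $A$ be a compact non-terminal $E(L)$-algebra in $\CAlg(\Sp_{T(n)})$; I want to construct an $E(L)$-algebra map $A \to E(L)$. Any compact object is a retract of a finite iterated pushout of free $E(L)$-algebras along cell attachments, so it suffices to solve an inductive lifting problem: given a partial map, extend across one more cell. The extension obstructions live in topological André--Quillen or Goerss--Hopkins cohomology, and the strategy is to show they are always solvable because $L$ is algebraically closed. The key classical inputs are (i) the Baker--Richter theorem \cite{BakerRichter} that $E(L)$ has no nontrivial finite Galois extensions when $L$ is algebraically closed, which handles the \'etale-like obstructions; and (ii) the Henselian local structure of $\pi_0 E(L)$ with algebraically closed residue field $L$, which lifts polynomial solutions from $L$ to $\pi_0 E(L)$. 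To start the induction one needs some initial map, which is supplied by non-terminality of $A$ combined with the nilpotence-detection machinery developed in the paper for $T(n)$-local $\E_\infty$-rings.

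The main obstacle I expect is that compact objects in $\CAlg(\Sp_{T(n)})$ are awkward: $T(n)$-localization does not preserve compactness from $\CAlg(\Sp)$, so one cannot naively import ``finitely presented'' descriptions. A substantial portion of the work will go into identifying a workable, cell-by-cell description of (a generating family of) the compact objects, so that the inductive lifting argument above is actually applicable. A secondary technical hurdle is making the Goerss--Hopkins obstruction calculus effective in the $T(n)$-local setting, where the relevant Andr\'e--Quillen cohomology groups must be controlled in terms of the algebra of $L$. Once these pieces are in place, both directions of the equivalence should fall out of the lifting/obstruction framework together with the classical arithmetic of algebraically closed fields.
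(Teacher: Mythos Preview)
Your plan has a genuine gap at the level of compactness. For $n\geq 1$ the unit $\one_{T(n)}$ is \emph{not} compact in $\Sp_{T(n)}$, so $R$ is not a compact $R$-module and the free algebras $R\{z^0\}$, $R\{x\}$ on a degree-zero class are \emph{not} compact in $\CAlg(\Sp_{T(n)})_{R/}$. Hence none of your test objects---$R\mm^\infty a$, $R\{x\}\mm^\infty f(x)$, or the cell attachments in your ``if'' direction---are compact, and the Nullstellensatzian property simply does not apply to them. (You flag that compact objects in $\CAlg(\Sp_{T(n)})$ are awkward, but the problem is fatal for your particular test algebras, not merely technical.) A second issue, even modulo compactness, is non-terminality: in the $\E_\infty$ world there is no direct argument that $R\{x\}\mm^\infty f(x)\neq 0$ for irreducible $f$; one needs a target where the equation is known to be solvable.

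The paper's route is quite different and resolves both points simultaneously. The hard core is \Cref{cor:mod_algclosed}: every nonzero $R\in\CAlg(\Sp_{T(n)})$ admits a map to some $E(L)$ with $L$ algebraically closed. This is proved by the nilpotence-detection machinery of \S\S\ref{sec:tilting}--\ref{sec:mapout} (tilting, the cofreeness theorem for power operations, and a small-object argument with the three maps $f,g,h$). Both directions of the Nullstellensatz then follow easily. For ``only if'': using the map $R\to E(L)$ and the genuinely compact module $K=R\modm$ (compact because it is a retract of $K\otimes V_n$ with $V_n$ a type-$n$ complex), one builds compact test algebras $R\{\Sigma^{2d}K^\vee\}\to R\{\oplus\Sigma^{2d_i}K^\vee\}$ corepresenting graded polynomial equations, and $E(L)$ serves as the witness $T$ of non-terminality in \Cref{prop:polyroots}; this forces $\pi_*(K)$ to be an even periodic algebraically closed field, whence $R\cong E(\pi_0 K)$ by \Cref{cor:alg-closed-E}. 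No Goerss--Hopkins obstruction theory is invoked. For ``if'': given compact nonzero $B$ over $E(L)$, apply \Cref{cor:mod_algclosed} to get $B\to E(F)$, write $F$ as a filtered colimit of perfections of finitely generated $L$-subalgebras, use compactness of $B$ to factor through one, and then apply Lang's classical theorem (\Cref{thm:Lang}) to retract back to $L$. There is no cell-by-cell lifting.
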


\subsection{The constructible spectrum}\hfill

Given an arbitrary $T(n)$-local $\mathbb{E}_{\infty}$-algebra $R$, \Cref{thm:intro_Nullstellensatz} supports the idea of considering 
$\mathbb{E}_{\infty}$ maps $R\to E(L)$ out to Lubin--Tate theories of algebraically closed fields as ``geometric points
of $\mathrm{Spec}(R)$''. 
In classical algebra, the geometric points of $A$ are usually organized into a topological space---the Zariski spectrum of $A$. The utility of the spectrum comes from the fact that often algebraic questions over a base ring $A$ can be studied locally or even point-wise over $\mathrm{Spec}(A)$. 
Our understanding of the algebraically closed fields in $\CAlg(\Sp_{T(n)})$ allows us to make an analogous construction in the chromatic setting.



\begin{dfn}
Let $R \in \CAlg(\Sp_{T(n)})$. A \deff{geometric point} of $R$ is an equivalence class of maps 
$f\colon R \to E(L)$ for $L$ algebraically closed, under the equivalence relation identifying two maps $f_{1}\colon R \to E(L_{1})$ and $f_2 \colon R \to E(L_2)$ whenever 
$E(L_1)\otimes_{R} E(L_2) \neq 0$.\footnote{Note that for a discrete ring $A$, taking equivalence classes of maps $A \to L$ to algebraically closed fields under the analogous equivalence relation gives rise to the set of points of $\Spec(A).$ }
\tqed
\end{dfn}

The set\footnote{Although it is not immediate, the collection of geometric points of $R$ turns out to be a set.} of geometric points can be endowed with the so-called \textbf{constructible topology}, which gives it the structure of a compact Hausdorff topological space.  

\begin{theorem}[\Cref{thm:con-tn}]
There is a functor 
\deff{\[
\CSpec\colon \CAlg(\Sp_{T(n)})^{\op} \to \CHaus 
\]}
which sends $R \in \CAlg(\Sp_{T(n)}) $ to the set of geometric points endowed with the topology in which a subset $U \subset \CSpec(R)$ is closed if and only if it is the image a map 
$\CSpec(S)\to \CSpec(R)$ induced by some map of algebras  $R \to S$.
\end{theorem}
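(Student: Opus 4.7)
My plan is to establish, in sequence, that $\CSpec(R)$ is a set, that the prescribed closed sets form a topology, that the resulting space is compact Hausdorff, and that the assignment is functorial with continuous maps. The set-theoretic bound comes from observing that any geometric point $R \to E(L)$ factors through $E(\ell) \to E(L)$ for an algebraically closed subfield $\ell \subseteq L$ whose cardinality is controlled by $|\pi_\ast R|$, so the equivalence classes of geometric points form a set. The topology axioms reduce to the definition: $\emptyset$ is the image of $\CSpec(0) \to \CSpec(R)$, the full space is the image of the identity, finite unions come from products $S_1 \times S_2$ in $\CAlg(\Sp_{T(n)})$, and arbitrary intersections come from relative tensor products $\bigotimes_R S_\alpha$ (formed as filtered colimits of finite tensor products). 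Functoriality and continuity are formal: a map $R \to R'$ induces $\CSpec(R') \to \CSpec(R)$ by postcomposition of points, and the preimage of the image of $\CSpec(S) \to \CSpec(R)$ equals the image of $\CSpec(S \otimes_R R') \to \CSpec(R')$.

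The compactness step is where \Cref{thm:intro_Nullstellensatz} is used decisively. Given a family of closed sets $\{U_\alpha\}$ with the finite intersection property, and writing $U_\alpha$ as the image of $\CSpec(S_\alpha) \to \CSpec(R)$, non-emptiness of each finite intersection $U_{\alpha_1} \cap \cdots \cap U_{\alpha_k}$ is equivalent to non-vanishing of the finite relative tensor product $S_{\alpha_1} \otimes_R \cdots \otimes_R S_{\alpha_k}$, using the consequence of the Chromatic Nullstellensatz that every nonzero $T(n)$-local $\E_\infty$-algebra admits a geometric point. Passing to the filtered colimit, $\bigotimes_R S_\alpha$ remains nonzero, hence admits a geometric point, which projects to a geometric point of $R$ lying in every $U_\alpha$.

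The Hausdorff property is the main obstacle I anticipate. Given distinct geometric points $x_1$ and $x_2$ corresponding to $R \to E(L_i)$ with $E(L_1) \otimes_R E(L_2) = 0$, the task is to separate them by complementary closed subsets. The natural strategy is to interpret the vanishing of the tensor product as a finite-type witness: from $E(L_1) \otimes_R E(L_2) = 0$, one wants to extract a decomposition of some intermediate $R$-algebra as a product $S_1 \times S_2$ such that $x_i$ factors through $S_i$ and the images of $\CSpec(S_i) \to \CSpec(R)$ are disjoint and jointly cover $\CSpec(R)$. Producing such a decomposition from the single vanishing condition is the central technical difficulty, and I would expect it to rely on a sharper understanding of how products of $\E_\infty$-algebras in $\Sp_{T(n)}$ interact with the set of Nullstellensatzian points, possibly via a descent or Stone-duality-flavored argument reducing the problem to the behavior of idempotents in $\pi_0$ of appropriate relative tensor algebras.
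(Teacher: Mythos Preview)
Your treatment of functoriality, the topology axioms, and compactness is broadly on the right track and parallels what the paper does. Two small points: for compactness you need to justify that a filtered colimit of nonzero $T(n)$-local commutative algebras is nonzero, i.e.\ that the zero algebra is \emph{compact} in $\CAlg(\Sp_{T(n)})$; this is not automatic (the unit is not compact) and the paper proves it separately using a compact generator of $\Sp_{T(n)}$. Also, your set-theoretic bound is fine but the paper avoids it entirely by identifying $\CSpec(R)$ with the set of maximal ideals in a small semilattice built from the \emph{compact} objects of $\CAlg(\Sp_{T(n)})_{R/}$.

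The genuine gap is the Hausdorff step. Your proposed strategy---extracting from $E(L_1)\otimes_R E(L_2)=0$ a product decomposition $S_1\times S_2$ with $x_i$ factoring through $S_i$ and $[S_1]\cup[S_2]=\CSpec(R)$, $[S_1]\cap[S_2]=\emptyset$---would exhibit a \emph{clopen} separation of $x_1$ and $x_2$. That is strictly stronger than Hausdorff: it would make $\CSpec(R)$ totally disconnected, which the paper explicitly leaves as an open question. There is no reason to expect such idempotent decompositions to exist in general, and no finite-type reduction of the vanishing $E(L_1)\otimes_R E(L_2)=0$ is likely to produce one.

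The paper's route to Hausdorffness is quite different and worth knowing. It first shows that $E(-)\colon \Perf_k\to \CAlg^\wedge_{E(k)}$ commutes with ultraproducts (as it preserves the relevant products and filtered colimits), so an ultraproduct of algebraically closed Lubin--Tate theories is again $E$ of an algebraically closed field, hence has one-point spectrum. From this one computes that $\CSpec\big(\prod_{\alpha\in U} E(L_\alpha)\big)\cong \beta U$, the Stone--\v{C}ech compactification. Now cover an arbitrary $R$ by the product over all its geometric points; the induced map $\beta U\to \CSpec(R)$ is a closed continuous surjection from a normal space, and closed surjections with normal source have normal (in particular Hausdorff) target. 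No idempotents or product decompositions of $R$-algebras are needed.
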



In the classical case, a not-often-mentioned property is that is that one can check whether an element $a \in A$ is nilpotent by checking whether it is nilpotent at every geometric point of $A$.
In fact, it is this property which guarantees that the Zariski spectrum of $A$ has enough points.
The analogous result in the $T(n)$-local setting is the following theorem:

\begin{dfn}
Let $R\in \CAlg(\Sp_{T(n)})$ and let 
$g\colon M \to N$ be a map of compact $T(n)$-local $R$-modules. 
We say that $g$ is \deff{nilpotent} if there exists some $k \gg 0$ such that 
$g^{\otimes k} \colon M^{\otimes k} \to N^{\otimes k}$
is null. 
A map $f \colon R \to S$ in $\CAlg(\Sp_{T(n)})$   \deff{detects nilpotence} if a map $M \to N$ of compact $T(n)$-local $R$-modules is nilpotent if and only if the induced map $M \otimes_R S \to N\otimes_R S$ is nilpotent in $\Mod_{S}(\Sp_{T(n)})^{\omega}$.
\tqed
\end{dfn}

\begin{theorem}[\Cref{prop:cons-surj-dn}]\label{thm:intro_points}
Let $f\colon R \to S$ be a map in $\CAlg(\Sp_{T(n)})$. Then $f$ detects nilpotence  if and only if the induced map $\CSpec(S) \to \CSpec(R) $ is surjective. 
\end{theorem}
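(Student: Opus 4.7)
The plan is to prove the two directions separately, using a direct compactness argument for ``detects $\Rightarrow$ surjective'' and a pointwise nilpotence criterion for the reverse.

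For the direction ``$f$ detects nilpotence $\Rightarrow \pi\colon\CSpec(S)\to\CSpec(R)$ is surjective'', let $x : R \to E(L)$ be any geometric point. By \Cref{thm:intro_Nullstellensatz}, lifting $x$ to a geometric point of $S$ is equivalent to showing $E(L) \otimes_R S \neq 0$, since any such tensor product, if non-zero, admits a map to some $E(L')$ producing the desired lift. Suppose toward contradiction that $E(L) \otimes_R S = 0$. Write $E(L) = \colim_\alpha M_\alpha$ as a filtered colimit of compact $R$-modules, and use compactness of $R \in \Mod_R^{\omega}$ to factor $x$ as $R \to M_i \to E(L)$. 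Cocontinuity of $-\otimes_R S$ gives $\colim_\alpha (M_\alpha \otimes_R S) = 0$, so since each $M_\alpha \otimes_R S$ is compact in $\Mod_S$, we can pick $j \geq i$ for which the transition map $\phi\colon M_i \to M_j$ satisfies $\phi \otimes_R S = 0$. By detection of nilpotence, $\phi$ itself is $\otimes$-nilpotent, so $\phi^{\otimes k} = 0$ for some $k$. Post-composing by $(M_j \to E(L))^{\otimes k}$ shows $M_i^{\otimes k} \to E(L)^{\otimes k}$ is null, and since $x^{\otimes k}\colon R \to E(L)^{\otimes k}$ factors through $R \to M_i^{\otimes k}$, it too is null. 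But $x^{\otimes k}$ is the unit of the non-zero $R$-algebra $E(L)^{\otimes_R k}$, a contradiction.

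For the direction ``$\pi$ surjective $\Rightarrow f$ detects nilpotence'', the key input is a pointwise nilpotence criterion: \emph{a map $g\colon M \to N$ of compact $T(n)$-local $R$-modules is nilpotent if and only if $g \otimes_R E(L)$ is nilpotent for every geometric point $R \to E(L)$.} The forward implication is immediate. For the reverse, the plan is to encode non-nilpotence of $g$ as non-vanishing of an auxiliary $R$-algebra $A_g \in \CAlg(\Sp_{T(n)})$---built, for example, as a ``$g$-telescoped'' colimit in which iterated tensor powers of $g$ become invertible---and then apply the corollary of \Cref{thm:intro_Nullstellensatz} that every non-zero $T(n)$-local $\E_\infty$-ring admits a geometric point, producing $R \to E(L)$ at which $g$ remains non-nilpotent.

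Granted the criterion, surjectivity of $\pi$ implies detection: if $g \otimes_R S$ is nilpotent, then for every $y\colon S \to E(L')$ we have $g \otimes_R E(L') = (g \otimes_R S) \otimes_S E(L')$ nilpotent. By surjectivity, every geometric point $x\colon R \to E(L)$ is equivalent to some $f \circ y$, and using that tensor products of Lubin--Tate theories of algebraically closed fields admit common geometric refinements (along which base-change is conservative on nilpotence, e.g. via faithful flatness on the underlying Morava residue fields), we conclude that $g \otimes_R E(L)$ is nilpotent as well. The pointwise criterion applied to $R$ then forces $g$ to be nilpotent. The main obstacle is the construction of the algebra $A_g$ in the pointwise criterion: one needs to turn the categorical statement of $\otimes$-nilpotence of a \emph{morphism} into non-vanishing of a naturally associated $\E_\infty$-$R$-\emph{algebra}, which is the step that most heavily uses the specific structure of $T(n)$-local $\E_\infty$-rings.
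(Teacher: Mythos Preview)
Your forward direction has a genuine error: in $\Mod_R(\Sp_{T(n)})$ for $n\geq 1$, the object $R$ is \emph{not} compact (equivalently, the unit $\one_{T(n)}$ is not compact in $\Sp_{T(n)}$), so you cannot factor the unit map $x\colon R \to E(L)$ through a stage $M_i$ of a presentation by compacts. The paper's argument for this direction avoids the issue by first showing that detecting nilpotence implies \emph{nil-conservativity} (\Cref{lem:DN_nilconservative}): one passes to a compact generator $c$, observes that the unit $R \to E(L)$ is \emph{locally} nilpotent if $E(L)\otimes_R S = 0$, and then uses the ring structure on $E(L)$ to deduce $c\otimes E(L)=0$, a contradiction. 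Your argument can be repaired along these lines, but as written the compactness claim is false.

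Your reverse direction is incomplete in a more serious way. You reduce everything to a ``pointwise nilpotence criterion'' and propose encoding non-nilpotence of $g\colon M\to N$ as non-vanishing of an $\E_\infty$-$R$-algebra $A_g$, but you do not construct $A_g$, and the telescope you suggest does not naturally carry an $\E_\infty$-structure. The paper takes a completely different route: it invokes \Cref{thm:modmain} to produce nilpotence-detecting maps $R\to E(A)$ and $E(A)\otimes_R S \to E(B)$ with $A,B$ products of algebraically closed fields, then uses surjectivity on $\CSpec$ together with \Cref{thm:con-tn}(5) to deduce that $\Spec(B)\to\Spec(A)$ is surjective, and finally applies \Cref{thm:DN_Krull0} (which characterizes nilpotence detection for maps between such Lubin--Tate theories purely in terms of $\Spec$ of the underlying perfect algebras). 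The factorization lemma (\Cref{lem:DN_composition}(2)) then gives that $R\to S$ detects nilpotence. In other words, rather than proving a pointwise criterion directly, the paper reduces to an algebraic setting where such a criterion is already available.
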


\Cref{thm:intro_points} tells us that, in this theory, we have ``enough points''.  
In particular, since the map $R \to 0$ detects nilpotence only if $R=0$, we deduce that any nonzero $R$ has at least one geometric point. In other words:

\begin{theorem}[\Cref{cor:mod_algclosed}]\label{cor:intro_map_to_E}
Let $R$ be a non-zero $T(n)$-local $\E_\infty$-algebra.
Then there exists some algebraically closed field $L$ and a map of $\E_\infty$-algebras
$R \to E(L)$.
\end{theorem}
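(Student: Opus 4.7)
The plan is to derive this corollary as a direct formal consequence of \Cref{thm:intro_points}, applied to the canonical map $f \colon R \to 0$ to the zero $T(n)$-local $\E_\infty$-algebra. The underlying slogan is that a nonzero $R$ cannot admit a nilpotence-detecting map to $0$, so the induced map on constructible spectra must fail to be surjective, forcing $\CSpec(R)$ to be nonempty.

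First, I would observe that $\CSpec(0) = \emptyset$: a geometric point of $0$ is an equivalence class of maps $0 \to E(L)$ in $\CAlg(\Sp_{T(n)})$, and since $E(L)$ is nonzero no such map exists (the zero algebra is terminal). Second, I would show that $R \to 0$ detects nilpotence only if $R \simeq 0$. For this, consider the identity map $\id_R \colon R \to R$, which is a compact $T(n)$-local $R$-module map. Its image in $\Mod_0(\Sp_{T(n)})^\omega$ is the zero map, which is trivially nilpotent. Hence if $R \to 0$ detects nilpotence then $\id_R$ itself is nilpotent; but $\id_R^{\otimes k} = \id_R$ for all $k \geq 1$, so this forces $\id_R$ to be null and therefore $R \simeq 0$.

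Combining these two observations: assuming $R \ne 0$, the map $R \to 0$ does not detect nilpotence, so by \Cref{thm:intro_points} the induced map $\CSpec(0) \to \CSpec(R)$ is not surjective. Since $\CSpec(0) = \emptyset$, this means $\CSpec(R) \ne \emptyset$, and choosing any representative of any geometric point of $R$ produces an $\E_\infty$-algebra map $R \to E(L)$ to an algebraically closed Lubin--Tate theory, as desired.

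In this argument, essentially nothing nontrivial happens at the level of the corollary itself: the main obstacle has been absorbed into \Cref{thm:intro_points}, whose proof in turn rests on both the classification of Nullstellensatzian objects in $\CAlg(\Sp_{T(n)})$ (\Cref{thm:intro_Nullstellensatz}) and enough control over the equivalence relation defining $\CSpec(R)$ to guarantee that ``enough points'' exist.
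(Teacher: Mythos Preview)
Your derivation mirrors the informal sketch the paper gives in its introduction immediately before stating this corollary, but it is not the paper's actual proof, and in the paper's logical order it is circular.

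The paper's proof of \Cref{cor:mod_algclosed} is a direct two-line deduction from \Cref{thm:modmain}: that theorem produces a nilpotence-detecting map $R \to E(A)$ for some perfect algebra $A$ of Krull dimension $0$; since $R \neq 0$ and the map detects nilpotence, $E(A) \neq 0$ and hence $A \neq 0$; pick any map $A \to L$ to an algebraically closed field and compose to get $R \to E(A) \to E(L)$.

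Your route through \Cref{thm:intro_points} has a dependency loop you half-anticipate but do not close. You note that \Cref{thm:intro_points} rests on \Cref{thm:intro_Nullstellensatz}; but the paper's proof of \Cref{thm:intro_Nullstellensatz} (which is \Cref{thm:alpha-null}) \emph{uses} \Cref{cor:mod_algclosed} directly---see \Cref{lem:ns-cxo} and \Cref{prop:modm_null}, both of which invoke it to produce a comparison map $R \to E(L)$. More concretely, the direction of \Cref{thm:intro_points} you invoke (the contrapositive of ``surjective on $\CSpec$ $\Rightarrow$ nilpotence-detecting'') is proved in \Cref{prop:cons-surj-dn} using \Cref{thm:con-tn}, whose proof identifies the abstract Nullstellensatzian objects with the $E(L)$'s via \Cref{thm:alpha-null}. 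The chain closes: your argument $\to$ \Cref{thm:intro_points} $\to$ \Cref{thm:alpha-null} $\to$ \Cref{cor:mod_algclosed}. The introduction's ordering is expository; the body establishes \Cref{cor:mod_algclosed} first (Section~5), then \Cref{thm:alpha-null} (Section~6), then \Cref{prop:cons-surj-dn} (Section~7).

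A separate minor gap: $\id_R \colon R \to R$ is not a map of \emph{compact} $R$-modules in $\Modw_R$, since the unit of $\Sp_{T(n)}$ is not compact for $n \geq 1$ (the paper flags this explicitly). Replace $R$ by $R \otimes V$ for a finite type~$n$ complex $V$; then $R \otimes V$ is compact in $\Modw_R$ and nonzero whenever $R$ is, and your step goes through.
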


We hope that the invariant $\CSpec(-)$ will find additional applications in the future and will allow more ideas from algebraic geometry to be transported to the chromatic world.  In this paper, we initiate the study of $\CSpec(-)$ by computing $\CSpec(R)$ for some choice examples of $R \in \CAlg(\Sp_{T(n)})$.
\begin{prop}[\Cref{prop:spec-LT}, \Cref{prop:spec_affine_E}, \Cref{prop:spec_affine_T}]
Assume that $n\geq 1$, let $A$ be a perfect $\F_p$-algebra, and let $E(A)$ denote any Lubin--Tate theory associated to $A$ \cite{ECII}.  Then we have the following homeomorphisms in $\CHaus$, where $\Spec^{\mathrm{cons}}_{\mathrm{Zar}}(-)$ denotes the classical constructible spectrum:
\begin{enumerate}
    \item 
    \[\CSpec(E(A))\cong \Spec^{\mathrm{cons}}_{\mathrm{Zar}}(A)\]
    \item 
    \[\CSpec(E(A)[t])\cong \Spec^{\mathrm{cons}}_{\mathrm{Zar}}(A[t])\]
    \item 
    \[\CSpec(\one_{T(n)}[t]) \cong \Spec^{\mathrm{cons}}_{\mathrm{Zar}}(\F_p[t]))\]
\end{enumerate}
\end{prop}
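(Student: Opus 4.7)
The plan is to prove the three parts in sequence by a common strategy: for each source algebra $B$, compute the space $\Map_{\CAlg(\Sp_{T(n)})}(B, E(L))$, identify $\pi_0$ of this space with a classical algebraic object, and then verify that both the geometric-point equivalence relation and the constructible topology match their Zariski counterparts. For (1), I would invoke the full faithfulness of $E(-)$ restricted to perfect $\F_p$-algebras from Lurie's work on Lubin--Tate theory, giving $\pi_0 \Map_{\CAlg(\Sp_{T(n)})}(E(A), E(L)) \cong \Hom_{\CRing}(A, L)$. Combined with an appropriate Lubin--Tate base-change formula showing that $E(L_1) \otimes_{E(A)} E(L_2) \neq 0$ if and only if $L_1 \otimes_A L_2 \neq 0$, this matches the geometric equivalence with the classical equivalence cutting out points of $\Spec(A)$. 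The constructible topologies agree by functoriality, both being generated by images of algebra maps.

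For (2), a map $E(A)[t] \to E(L)$ is the data of a map $E(A) \to E(L)$ together with an element $t \in \pi_0 E(L) = W(L)\ll u_1, \ldots, u_{n-1} \rr$, a complete local ring with residue field $L$. By (1) the first datum gives a ring map $A \to L$. The main obstacle is the key lemma that such a pair $(A \to L, t)$ is geometrically equivalent to its residue $(A \to L, \bar{t})$ for $\bar{t} \in L$ the image of $t$. To establish this, I would compute $E(L) \otimes_{E(A)[t]} E(L) \simeq (E(L) \otimes_{E(A)} E(L))/(t - \bar{t})$ and observe that since both $t$ and $\bar{t}$ reduce to $\bar{t}$ in the residue ring of the local ring $\pi_0(E(L) \otimes_{E(A)} E(L))$, the difference $t - \bar{t}$ lies in the maximal ideal and is hence a non-unit; the quotient therefore remains nonzero in $\Sp_{T(n)}$. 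After this collapse, the classifying data becomes a ring map $A[t] \to L$, and I would identify equivalence classes with primes of $A[t]$ by the same argument as in (1), which only requires $L$ to be algebraically closed (not the source $A[t]$ to be perfect).

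For (3), the same strategy applies with $\one_{T(n)}$ replacing $E(A)$: since $\one_{T(n)}$ is initial in $\CAlg(\Sp_{T(n)})$, a map $\one_{T(n)}[t] \to E(L)$ is simply an element $t \in \pi_0 E(L)$, which the key lemma of (2) reduces to $\bar{t} \in L$. Two pairs $(L_1, \bar{t}_1), (L_2, \bar{t}_2)$ are then equivalent if and only if some common algebraically closed extension receives both $L_i$ with $\bar{t}_1, \bar{t}_2$ mapping to the same element, which happens precisely when they have the same minimal polynomial over $\F_p$ (or are both transcendental). These equivalence classes biject homeomorphically with primes of $\F_p[t]$, yielding the identification $\CSpec(\one_{T(n)}[t]) \cong \Spec^{\mathrm{cons}}_{\mathrm{Zar}}(\F_p[t])$.
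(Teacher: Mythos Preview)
Your approach to (1) is essentially the paper's: full faithfulness of $E(-)$ identifies $\pi_0\Map(E(A),E(L))$ with $\Hom(A,L)$, and the geometric-point equivalence relation translates directly. The paper packages this via an ``algebraic approximation map'' $\Psi_R$ and a general left-Kan-extension description of $\CSpec$, but the content is the same.

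For (2) and (3), however, there is a genuine gap. In this paper $E(A)[t]$ denotes the \emph{flat} polynomial algebra $E(A)[\N]$ (the monoid ring on $\N$), not the free $\E_\infty$-algebra $E(A)\{z^0\}$. A map $E(A)[t]\to E(L)$ is therefore the data of an $E(A)$-algebra map together with a \emph{strict} element of $E(L)$ (an $\E_\infty$-monoid map $\N\to\Omega^\infty E(L)$), not an arbitrary class in $\pi_0E(L)$. Your ``key lemma'' collapsing $(A\to L,t)$ to $(A\to L,\bar t)$ presumes you already have a map for every $t\in\pi_0E(L)$, which is false; and your pushout computation $(E(L)\otimes_{E(A)}E(L))/(t-\bar t)$ does not describe an $\E_\infty$-pushout along the flat polynomial algebra. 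Worse, if one runs your argument on the object for which it \emph{would} parameterize $\pi_0E(L)$, namely $E(k)\{z^0\}$, the conclusion is wrong: the paper computes $\CSpec(E(k)\{z^0\})\cong\Spec^{\cons}_{\CRing}(k[z_0,z_1,\dots])$, an infinite-dimensional space, precisely because power operations distinguish elements with the same residue.

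The paper's route is quite different. For (2) it first proves that $R[t]\to R[t^{\pm1}]\times R$ induces a bijection on $\CSpec$ (since $R[t^{\pm1}]$ is an idempotent $R[t]$-algebra, and the map detects nilpotence), reducing to $E(A)[t^{\pm1}]$. Maps out of this are strict units of $E(L)$, and the paper's computation of the strict Picard spectrum (\Cref{thm:strict-pic}), itself a consequence of the Nullstellensatz and orientation theory, gives $\pi_0\Hom_{\Sp_{\ge0}}(\Z,\gl_1E(L))\cong L^\times$. For (3) the paper does not argue pointwise at all: it uses the identification $\CSpec(R)\cong\CSpec(E(\overline{\F}_p)\otimes R)/\mathfrak{G}$ by the Morava stabilizer group, applies (2) to $E(\overline{\F}_p)[t]$, and then identifies the $\mathfrak{G}$-action as the Galois action on $\overline{\F}_p[t]$.
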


\subsection{Applications}\hfill

\Cref{cor:intro_map_to_E} has wide-ranging applications in the study of $T(n)$-local $\E_\infty$-algebras.  As an immediate consequence we obtain an alternative proof for Hahn's celebrated result on the chromatic support of $\mathbb{E}_\infty$-algebras.

\begin{thm}[Hahn \cite{Hahnsupport}, \Cref{thm:Hahn-later}]\label{thm:hahn}
Let $R \in \mathrm{CAlg}(\mathrm{Sp})$. Then for every $n \geq 0$, we have that  $R\otimes T(n)=0$ implies $R\otimes T(n+1)=0$.
\end{thm}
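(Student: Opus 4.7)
The plan is to prove the contrapositive: assume $R \otimes T(n+1) \neq 0$ and deduce $R \otimes T(n) \neq 0$. First, the localization $L_{T(n+1)} R$ is a non-zero $T(n+1)$-local $\mathbb{E}_\infty$-ring, so by \Cref{cor:intro_map_to_E} applied at height $n+1$ there exists an $\mathbb{E}_\infty$-algebra map $L_{T(n+1)} R \to E(L)$ for some algebraically closed field $L$. Composing with the localization map yields an $\mathbb{E}_\infty$-ring map $R \to E(L)$, which exhibits $E(L)$ as an $R$-algebra. Using this structure, one has the identification
\[
E(L) \otimes T(n) \;\simeq\; E(L) \otimes_R (R \otimes T(n)),
\]
so it suffices to prove the chromatic non-vanishing $E(L) \otimes T(n) \neq 0$: this forces $R \otimes T(n) \neq 0$, as desired.

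For the non-vanishing, the plan is to exploit the explicit structure of the coefficient ring: $\pi_0 E(L)$ is a regular local ring of dimension $n+1$ in which $p, v_1, \ldots, v_n$ form part of a regular system of parameters. Choosing a type $n$ finite complex of the form $F(n) = \mathbb{S}/(p^{i_0}, v_1^{i_1}, \ldots, v_{n-1}^{i_{n-1}})$, so that $T(n) \simeq v_n^{-1} F(n)$, one identifies $E(L) \otimes T(n) \simeq v_n^{-1}\bigl(E(L)/I\bigr)$. Since powers of a regular sequence remain a regular sequence, $v_n$ acts as a non-zero-divisor on $E(L)_*/I$, so inverting it produces a nonzero spectrum. (At $n = 0$ one takes $F(0) = \mathbb{S}$ and uses that $v_0 = p$ is a non-zero-divisor on the $p$-torsion-free ring $\pi_0 E(L) = W(L)$.)

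The main obstacle is conceptual rather than technical: the argument hinges on the fact that the height $n+1$ ``geometric point'' $R \to E(L)$ supplied by the Nullstellensatz also detects the height $n$ behavior of $R$. That this is possible is built into the Lubin--Tate coefficient ring, whose regular local structure simultaneously records all chromatic heights $\leq n+1$ through the regular parameters $p, v_1, \ldots, v_n$. Once this is in hand, the deduction is essentially formal, and markedly shorter than Hahn's original approach via $\mathbb{E}_\infty$-orientations by $\mathrm{MU}$.
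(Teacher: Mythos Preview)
Your proof is correct and matches the paper's approach exactly: prove the contrapositive by using \Cref{cor:intro_map_to_E} (at height $n+1$) to produce a ring map $R \to E_{n+1}(L)$, then observe that $E_{n+1}(L)\otimes T(n)\neq 0$ forces $R\otimes T(n)\neq 0$ since $E_{n+1}(L)\otimes T(n)$ is a module over $R\otimes T(n)$. The paper leaves the non-vanishing $E_{n+1}(L)\otimes T(n)\neq 0$ as a ``straightforward computation,'' which is precisely the regular-sequence argument you spell out.
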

Indeed, since algebras over the zero algebra are zero, \Cref{cor:intro_map_to_E} allows us to reduce the statement to the case of $E(L)$, where it is a straightforward computation.

In view of \Cref{thm:hahn}, it is natural to define the \deff{height} of a nonzero $\mathbb{E}_{\infty}$-algebra $R\in \mathrm{CAlg}(\mathrm{Sp})$ by
\[
\mdef{\mathrm{height}(R)} := \max\{n \geq -1  | T(n)\otimes R \neq 0 \}.
\footnote{Here we set $T(-1) = \mathbb{S}$.}\]

Based on computations at small heights, Ausoni and Rognes suggested a far-reaching conjectural organizing principle for the interaction between algebraic K-theory and chromatic height. This phenomena, known as \emph{redshift}, can be summarized by the slogan  ``algebraic K-theory raises the chromatic height by one.''
\Cref{cor:intro_map_to_E} allows us to prove this conjecture for arbitrary $\mathbb{E}_{\infty}$-algebras. Note that if $R$ is an $\mathbb{E}_{\infty}$-algebra, then $K(R)$ also admits the natural structure of an $\mathbb{E}_{\infty}$-algebra.  We get:

\begin{theorem}[Redshift for $\mathbb{E}_{\infty}$-algebras, \Cref{thm:redshift'}]\label{thm:redshift}
Let $0 \neq R \in \mathrm{CAlg}(\mathrm{Sp})$ be such that $\height(R) \geq 0$.  Then 
\[\mathrm{height}(K(R)) =  \mathrm{height}(R)+1. \]
\end{theorem}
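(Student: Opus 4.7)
The plan is to establish the two inequalities $\mathrm{height}(K(R)) \leq \mathrm{height}(R)+1$ and $\mathrm{height}(K(R)) \geq \mathrm{height}(R)+1$ separately; set $n := \mathrm{height}(R) \geq 0$ throughout.

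For the upper bound, $R$ is $T(n+1)$-acyclic by definition of height. I would invoke the purity theorem of Clausen--Mathew--Naumann--Noel, which shows that if the fiber of an $\mathbb{E}_1$-ring map has vanishing $T(m)$-localization, then the induced map on algebraic $K$-theory is a $T(m+1)$-equivalence; applied to $R \to 0$ with $m = n+1$, this yields $T(n+2) \otimes K(R) = 0$. Then \Cref{thm:hahn} applied to the $\mathbb{E}_\infty$-ring $K(R)$ propagates this vanishing to all $T(m)$ with $m \geq n+2$, giving $\mathrm{height}(K(R)) \leq n+1$.

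For the lower bound, I would use the chromatic Nullstellensatz. Since $n \geq 0$, the $\mathbb{E}_\infty$-algebra $L_{T(n)} R$ is a nonzero $T(n)$-local $\mathbb{E}_\infty$-ring, so \Cref{cor:intro_map_to_E} produces an algebraically closed field $L$ and an $\mathbb{E}_\infty$-ring map $L_{T(n)} R \to E(L)$; composing with the localization yields an $\mathbb{E}_\infty$-ring map $R \to E(L)$, and applying algebraic $K$-theory gives an $\mathbb{E}_\infty$-ring map $K(R) \to K(E(L))$. This exhibits $K(E(L))$ as a $K(R)$-algebra, so if $T(n+1) \otimes K(R) = 0$, then by base change $T(n+1) \otimes K(E(L)) = 0$ as well, contradicting redshift for the Lubin--Tate theory $E(L)$ itself. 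Hence $T(n+1) \otimes K(R) \neq 0$, as required.

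The main obstacle is thus reduced to the redshift input for $E(L)$: namely, that $T(n+1) \otimes K(E(L)) \neq 0$ when $L$ is an algebraically closed field of height $n$. This is the genuinely nontrivial ingredient that must be supplied separately, plausibly by leveraging Hahn--Wilson-type arguments for redshift of $\mathbb{E}_\infty$-orientable height-$n$ ring spectra, perhaps in combination with the $\mathbb{E}_\infty$ complex orientations on $E(L)$ that appear as another application of \Cref{cor:intro_map_to_E} in this paper. Granted this input, everything else is structural: the Nullstellensatz supplies the map $R \to E(L)$ and transports the lower bound from $E(L)$ to $R$ via base change, while CMNN purity together with \Cref{thm:hahn} handles the upper bound formally.
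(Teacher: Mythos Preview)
Your proposal is correct and matches the paper's approach essentially exactly: the upper bound is cited from \cite{CMNN,LMMT}, and the lower bound is obtained by applying the Nullstellensatz to get a map $R \to E(L)$ and then invoking redshift for $E(L)$. Two small points: the redshift input for $E(L)$ (height $n \geq 1$) is supplied by \cite{Yuanred} rather than Hahn--Wilson or orientation arguments, and the paper treats the height $0$ case separately via a direct computation showing $K(L[t^{\pm 1}])^{\wedge}_p \simeq \ku^{\wedge}_p \oplus \Sigma \ku^{\wedge}_p$.
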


The inequality  $\mathrm{height}(K(R)) \leq  \mathrm{height}(R)+1$ has been recently proved in the groundbreaking papers \cite{LMMT, CMNN}, so we are reduced to proving that the height always increases. Once again, \Cref{cor:intro_map_to_E} allows us to reduce the claim to the case of $E(L)$, where it was proven by the third author in \cite{Yuanred}.

Many of the best studied $\E_{\infty}$-algebras, including cobordism rings, occur as Thom spectra. 
As a consequence of \Cref{cor:intro_map_to_E} we find that $\E_\infty$-algebra maps from Thom spectra to algebraically closed Lubin--Tate theories, known as orientations, are particularly well-behaved.



\begin{theorem}[Universal orientability, \Cref{cor:main-E-picard}]\label{thm:Universal_orientability}
 Let $L$ be an algebraically closed field and let 
 \[ f \colon X \to \pic({\Mod_{E(L)}(\Sp_{T(n)})})\] 
 be a map in $\Sp_{\geq 0}$ with $T(n)$-local Thom spectrum $Mf$.  
 Then the following are equivalent:
\begin{enumerate}
    \item $Mf \neq 0$.
    \item There is map $Mf \to E(L)$ in ${\CAlg_{E(L)}(\Sp_{T(n)})}$.
    \item $f$ is null-homotopic.
    \item $Mf \cong E(L)[X] \in {\CAlg_{E(L)}(\Sp_{T(n)})}$.
\end{enumerate}
\end{theorem}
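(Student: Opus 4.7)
The plan is to establish the circle $(3) \Rightarrow (4) \Rightarrow (2) \Rightarrow (1) \Rightarrow (3)$, together with the equivalence $(2) \Leftrightarrow (3)$. All implications except $(1) \Rightarrow (3)$ should follow formally from the universal property of the $\E_\infty$-Thom spectrum: for any $E(L)$-algebra $S$,
\[
\Map_{\CAlg_{E(L)}(\Sp_{T(n)})}(Mf, S) \simeq \mathrm{Null}\bigl(X \xrightarrow{f} \pic(E(L)) \to \pic(S)\bigr),
\]
the space of null-homotopies of the displayed composite in $\Sp_{\ge 0}$. Taking $S = E(L)$ yields $(2) \Leftrightarrow (3)$ immediately. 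When $f$ is null-homotopic, functoriality identifies $Mf$ with the free $E(L)$-algebra $E(L)[X] = E(L) \otimes \Sigma^\infty_+ \Omega^\infty X$, proving $(3) \Rightarrow (4)$; the canonical augmentation of this free algebra gives $(4) \Rightarrow (2)$; and $(2) \Rightarrow (1)$ is immediate from $E(L) \neq 0$.

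For $(1) \Rightarrow (3)$ I would argue as follows. Assuming $Mf \neq 0$, \Cref{cor:intro_map_to_E} supplies an $\E_\infty$-ring map $\varphi : Mf \to E(L')$ for some algebraically closed field $L'$. The composite $E(L) \to Mf \xrightarrow{\varphi} E(L')$ equips $E(L')$ with the structure of an $E(L)$-algebra---equivalently, an embedding $L \hookrightarrow L'$ of algebraically closed fields---under which $\varphi$ is canonically a morphism in $\CAlg_{E(L)}(\Sp_{T(n)})$. Applying the universal property above with $S = E(L')$ then yields a null-homotopy of the post-composition
\[
X \xrightarrow{f} \pic(E(L)) \xrightarrow{\iota_*} \pic(E(L')).
\]

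The main obstacle is promoting this to a null-homotopy of $f$ itself, i.e.\ descending the nullity along the map $\iota_*$. My plan is to attack this using the detailed computation of $\pic(E(L))$ for algebraically closed $L$ developed elsewhere in the paper, combined with faithful/Galois-type descent along the extension $E(L) \to E(L')$. The target of the descent argument is the statement that $\iota_*$ is sufficiently injective on homotopy classes of maps from connective spectra $X$ that a null-homotopy of $\iota_* \circ f$ must already descend to a null-homotopy of $f$. Granted this step, $(1) \Rightarrow (3)$ follows and the circle closes.
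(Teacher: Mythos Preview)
Your overall architecture matches the paper's: the easy implications are exactly as you say, and the crux is $(1)\Rightarrow(3)$. You also correctly reduce this to showing that the map
\[
\iota_*\colon \pic(\Modw_{E(L)}) \to \pic(\Modw_{E(L')})
\]
admits a retraction in $\Sp_{\geq 0}$ (this is precisely what you need for ``null after $\iota_*$'' to imply ``null''). This is exactly the content of \Cref{thm:pic-split-main} in the paper, and indeed the paper's proof of \Cref{cor:main-E-picard} simply invokes that theorem (applied to $R=Mf$) together with \Cref{prop:Thom}.

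The gap is in your proposed attack on this retraction. First, ``Galois-type descent along $E(L)\to E(L')$'' is not available: $L\to L'$ is an extension of algebraically closed fields, so the automorphism group over $L$ is trivial and there is no Galois structure to descend along. Faithfully flat descent gives at best injectivity statements on $\pi_0$, not a retraction of spectra. Second, if by ``the detailed computation of $\pic(E(L))$ developed elsewhere in the paper'' you mean the strict Picard spectrum (\Cref{thm:strict-pic}), beware that its proof goes through \Cref{prop:Ek-Tor2}, which already uses \Cref{cor:main-E-picard}; invoking it here would be circular.

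The paper's actual mechanism for the retraction is quite different and is the real content: one embeds $L'$ (via upward L\"owenheim--Skolem) into a Hahn series field $L\ll t^{\Gamma}\rr$ for a large totally ordered $\Q$-vector space $\Gamma$, and then exploits the Milnor-type pullback square
\[
\begin{tikzcd}
L \ar[r]\ar[d] & L\ll t^{\Gamma_{\leq 0}}\rr \ar[d]\\
L\ll t^{\Gamma_{\geq 0}}\rr \ar[r] & L\ll t^{\Gamma}\rr
\end{tikzcd}
\]
together with the explicit structure of units in Hahn series rings (\Cref{prop:Hahn-props}) to split $\gl_1 E(L)\to \gl_1 E(L\ll t^{\Gamma}\rr)$ directly (\Cref{lem:hahn-split}, \Cref{cor:gl1-ret}). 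The passage from $\gl_1$ to $\pic$ then uses that the cokernel is $\Z/2$ on both sides. No descent or prior knowledge of $\pic(E(L))$ is used.
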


\begin{rmk}
\Cref{thm:Universal_orientability} can be considered as a higher (and categorified) version of orthogonality of characters. 
\tqed
\end{rmk}

For a  map  $g\colon X \to \pic(\Sp)$ in $\Sp_{\geq 0}$, it follows from \Cref{thm:Universal_orientability} that there exists an $\mathbb{E}_{\infty}$-algebra map $Mg \to E(\overline{\F}_p) $ if and only if $K(n)\otimes Mg\neq 0$ (\Cref{cor:Thom_sp}).

\begin{theorem}
\label{{cor:Universal_orientability}}
Taking $g$ to be the complex $J$-homomorphism $\ku \to \pic(\Sp)$, we obtain an equivalence of spaces 
\[\Map_{\CAlg(\Sp)}(\MUP,E(\overline{\F}_p)) \cong \Map_{\Sp_{\geq 0}}(\ku ,\gl1(E(\overline{\F}_p))).\]
In particular, there exists an $\E_{\infty}$-algebra map 
\[\MU \to E(\overline{\F}_p). \]
\end{theorem}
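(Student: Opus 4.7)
The plan is to combine the classical identification of $\MUP$ as a Thom $\E_\infty$-ring with \Cref{thm:Universal_orientability}. Recall that the complex $J$-homomorphism $g \colon \ku \to \pic(\Sp)$ exhibits $\MUP$ as a Thom $\E_\infty$-ring. By the universal property of Thom $\E_\infty$-rings, for any $\E_\infty$-ring $R$ there is a natural equivalence
\[\Map_{\CAlg(\Sp)}(\MUP, R) \simeq \Omega_{g_R} \Map_{\Sp_{\geq 0}}(\ku, \pic(\Mod_R(\Sp))),\]
where $g_R$ denotes the composite $\ku \to \pic(\Sp) \to \pic(\Mod_R(\Sp))$. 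Whenever $g_R$ is nullhomotopic, this loop space identifies with $\Map_{\Sp_{\geq 0}}(\ku, \gl1(R))$ via the equivalence $\Omega \pic(\Mod_R(\Sp)) \simeq \gl1(R)$. Thus the claimed equivalence of mapping spaces will follow once I produce a canonical nullhomotopy of $g_R$ for $R = E(\overline{\F}_p)$.

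For this, I would invoke \Cref{thm:Universal_orientability} with $L = \overline{\F}_p$ and $X = \ku$. Since $E(\overline{\F}_p)$ is $T(n)$-local, the universal property above agrees with its $T(n)$-local analogue, and it is equivalent to check that the base-changed map $f \colon \ku \to \pic(\Mod_{E(\overline{\F}_p)}(\Sp_{T(n)}))$ is null. The hypothesis of \Cref{thm:Universal_orientability} is the nonvanishing of the Thom spectrum $Mf \simeq L_{T(n)}(E(\overline{\F}_p) \otimes \MUP)$, which follows from the well-known fact that $K(n) \otimes \MUP \neq 0$ (for instance, by Milnor's computation of $K(n)_*\MU$). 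The theorem then yields the required nullhomotopy of $f$, and hence of $g_R$.

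Finally, for the existence of an $\E_\infty$-ring map $\MU \to E(\overline{\F}_p)$, the equivalence just established shows that $\Map_{\CAlg(\Sp)}(\MUP, E(\overline{\F}_p))$ is nonempty, since the right-hand side contains the zero map. Precomposing any chosen point of this space with the canonical $\E_\infty$-ring map $\MU \to \MUP$ (arising from the inclusion of $\MU$ as the $0$-summand of $\MUP \simeq \bigvee_n \Sigma^{2n}\MU$) produces the desired map. The main obstacle, in my view, is the compatibility step in the second paragraph, namely matching the Thom universal property of $\MUP$ in $\CAlg(\Sp)$ with the $T(n)$-local version that feeds into \Cref{thm:Universal_orientability}; this should be mild given that $E(\overline{\F}_p)$ is itself $T(n)$-local, but deserves a careful formal justification.
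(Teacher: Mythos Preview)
Your proposal is correct and matches the paper's approach: the paper packages exactly this argument into \Cref{cor:Thom_sp}, which applies \Cref{thm:Universal_orientability} to the base-changed map $\tilde g$ and then reads off the mapping space identification from the Thom universal property (the ``final statement'' in \Cref{cor:Thom_sp} is deduced from condition (4), which is equivalent to the nullhomotopy you use). Your compatibility concern is handled there by the observation $M\tilde g = L_{K(n)}(\MUP \otimes E(\overline{\F}_p))$, so the $T(n)$-local and absolute mapping spaces out to $E(\overline{\F}_p)$ agree.
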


\begin{rmk}
The question of whether such $\E_{\infty}$ complex orientations of Lubin--Tate theories exist has a long history.  In \cite{Ando}, Ando gave a ``norm-coherence'' condition based on power operations for when a Lubin--Tate theory admits an $\mathbb{H}_{\infty}$ map from $\MU$. Building on work by Ando in the case of the Honda formal group, Zhu \cite{Zhu} checked this condition for all Lubin--Tate theories.  A general obstruction theory for constructing $\E_{\infty}$ complex orientations was described by Hopkins--Lawson \cite{HopkinsLawson}, which recovered previous  results of Walker and M\"ollers \cite{Walker, Mol} at height $1$.  The more general case of $\MUP$-orientations was demonstrated in a height $1$ example by \cite{HahnYuan}, and then proven for all Lubin--Tate theories of height $n\leq 2$ by Balderrama \cite{Balderrama}. 
\tqed
\end{rmk}

\Cref{thm:Universal_orientability} can, in turn, be be used to study the $\E_{\infty}$-algebra $E(L)$.  
In particular, we can compute its strict Picard spectrum.

\begin{theorem}[\Cref{thm:strict-pic}]\label{thm:intro_strict}
Let $E(L)$ be a Lubin--Tate spectrum attached to an algebraically closed field $L$.  Then there is an equivalence of connective spectra
\[
\mathrm{Hom}_{\mathrm{Sp_{\geq 0}}}(\mathbb{Z}, \pic({\Mod_{E(L)}(\Sp_{T(n)})})) \cong \Sigma^{n+2}\Z_p \oplus \Sigma L^{\times}.
\]
\end{theorem}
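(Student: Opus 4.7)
The plan is to decompose the strict Picard spectrum into a discrete contribution from Teichm\"uller units and a chromatic contribution from the Ando--Hopkins--Rezk--Strickland--type orientation class at height $n$.

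First, I would use the fibre sequence of connective spectra
\[
\Sigma \gl1(E(L))_{T(n)} \to \pic(\Mod_{E(L)}(\Sp_{T(n)})) \to H\pi_0(\pic)
\]
and apply $\mathrm{Hom}_{\Sp_{\geq 0}}(\Z, -)$. Since the rightmost (discrete Picard group) term contributes only in degree $0$, in positive degrees the problem reduces to computing the strict units spectrum $\gl1^{\mathrm{str}}(E(L))_{T(n)} := \mathrm{Hom}_{\Sp_{\geq 0}}(\Z, \gl1(E(L))_{T(n)})$, whose suspension yields the bulk of the answer.

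Next, the Teichm\"uller embedding $L^\times \hookrightarrow W(L)^\times = \pi_0\gl1(E(L))$ is a homomorphism of honest abelian groups and so lifts uniquely to a strict map $L^\times[0] \to \gl1(E(L))$; this accounts for the $\Sigma L^\times$ summand. For the complementary $(1 + \mathfrak{m})$-piece I would invoke Rezk's logarithm (equivalently, the Bousfield--Kuhn functor), which $T(n)$-locally converts the multiplicative data on $1 + \mathfrak{m}$ into additive data in $L_{T(n)} E(L)$. Combined with the explicit $\E_\infty$ complex orientation $\MU \to E(L)$ furnished by \Cref{cor:Universal_orientability}, this produces the $\Sigma^{n+1}\Z_p$ summand of the strict units, arising from the strict lift of the orientation class at height $n$; after the final shift it becomes the $\Sigma^{n+2}\Z_p$ claimed.

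Finally, I would invoke universal orientability (\Cref{thm:Universal_orientability}) to rule out additional strict Picard classes: any non-null map $f \colon \Z[k] \to \pic$ in $\Sp_{\geq 0}$ must have $T(n)$-local Thom spectrum equal to zero, and an analysis of when this vanishing occurs shows that the only strict classes realising it are the Teichm\"uller and orientation contributions already identified. The main obstacle is the chromatic step: pinning down the $\Z_p$ in degree $n+1$ of $\gl1^{\mathrm{str}}$ requires a careful coordination between Rezk's logarithm and the $\E_\infty$-orientation, as one must verify that no extra $p$-adic rank or additional generator can appear beyond what the orientation class produces.
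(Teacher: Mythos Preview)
Your proposal has a genuine gap in how the $\Sigma^{n+2}\Z_p$ summand is produced. The assertion that Rezk's logarithm together with an $\E_\infty$ complex orientation ``produces the $\Sigma^{n+1}\Z_p$ summand of the strict units'' does not hold up: the logarithm is a $K(n)$-local equivalence $\gl_1(E(L)) \simeq L_{K(n)}E(L)$, but strict units are not a $K(n)$-local invariant, so this does not compute $\Hom_{\Sp_{\geq 0}}(\Z,\gl_1)$. Likewise, an $\E_\infty$ orientation $\MU \to E(L)$ corresponds to a null-homotopy of $\ku \to \pic$, not to a strict class in degree $n+1$. In the paper the $\Z_p$ has a different origin: it is the Pontryagin dual of $\Q_p/\Z_p$, and per \Cref{rmk:good_times} the corresponding map $\Sigma^{n+2}\Z_p \to \pic$ records height-$n$ primitive roots of unity, not an orientation class.

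The paper's argument proceeds along different lines. It first establishes \Cref{prop:Ek-Tor2}, computing $\Hom_{\Sp_{\geq 0}}(H,\pic) \simeq \Sigma^{n+1}H^*$ for finite $p$-groups $H$. The vanishing outside degree $n+1$ combines universal orientability with Hopkins--Lurie ambidexterity: for $m\le n$ or $m\ge n+2$ the functor $\lim_{B^m H}$ is conservative, so the Thom spectrum is nonzero and hence the map is null. The value in degree $n+1$ comes from the Hopkins--Lurie equivalence $E(L)[B^n H]\cong E(L)^{H^*}$. The theorem is then deduced from the cofiber sequence $\Z[1/p]\to \Q_p/\Z_p \to \Sigma\Z$: the $\Q_p/\Z_p$ term gives $\Sigma^{n+1}\Z_p$ by the proposition, while the $\Z[1/p]$ term isolates $L^\times$ since $\sl_1(L)$ is $p$-complete and hence invisible to $\Z[1/p]$. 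Your final ``rule out additional classes'' step is exactly where this ambidexterity input is indispensable; without it there is no mechanism for showing that, say, $\pi_m\Hom(\Z,\pic)$ vanishes for $2\le m\le n$.
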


Desuspending both sides of the equality above,  we get an equivalence\footnote{This result was announced by Hopkins and Lurie and a sketch of their proof (which differs significantly from ours) was presented in the Thursday seminar in Spring 2014 (c.f. \cite[Theorem 1.6]{TNO}). As far as we are aware, there is currently no full written account of their approach.}
\[
\mathbb{G}_m(E(L)) \coloneqq \Hom_{\Sp_{\geq 0}}(\Z, \gl1(E(L))) \simeq \Sigma^{n+1}\Z_p \oplus L^{\times}.
\]
\Cref{thm:intro_strict} can be used to deduce other results about $E(L)$.  For example, in the upcoming paper \cite{fourier} of T. Barthel, S. Carmeli, L. Yanovski and the second author, it is shown that one can deduce the following implication:


\begin{prop}\label{prop:discrep_intro}
 Let $L$ be an algebraically closed field.
 Then, there is a fiber sequence
 \[ \tau_{\geq 0}\Sigma^{n} I_{\QQ_p/\ZZ_p} \to \gl1 E(L) \to \tau_{\geq 0} L_n^f\gl1 E(L) \]
 in $\Sp_{\geq 0}$.\footnote{In the case $n=0$, one can take an arbitrary prime and set $L_0^f = L_{\mathbb{S}[1/p]}$, or alternatively, take $L_0^f = L_{\QQ}$ and replace $I_{\QQ_p/\ZZ_p}$ with   $I_{\QQ/\ZZ}$.}
 In other words, the discrepancy spectrum of $E(L)$ is equivalent to the $n$-fold suspension of the Brown--Comenetz dual of the sphere.
 \end{prop}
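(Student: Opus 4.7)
The plan is to obtain the stated fiber sequence as a direct consequence of Theorem~\ref{thm:intro_strict}, applied through the higher Pontryagin/Fourier duality framework of the forthcoming paper \cite{fourier}. Set $D := \fib\bigl(\gl1 E(L) \to \tau_{\geq 0} L_n^f \gl1 E(L)\bigr)$; by construction $D$ is connective and $L_n^f$-acyclic, hence lies in the monochromatic slice at height $n$, and the task is to identify $D$ with $\tau_{\geq 0}\Sigma^n I_{\QQ_p/\ZZ_p}$.

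First I would extract the strict-unit data by applying $\Hom_{\Sp_{\geq 0}}(\Z, -)$, which preserves fiber sequences, to the defining fiber sequence for $D$. Theorem~\ref{thm:intro_strict} identifies
\[ \Hom_{\Sp_{\geq 0}}(\Z, \gl1 E(L)) \simeq \Sigma^{n+1}\Z_p \oplus L^\times. \]
The summand $L^\times$ is discrete and persists under $L_n^f$-localization, so is already visible in $\tau_{\geq 0} L_n^f \gl1 E(L)$, while $\Sigma^{n+1}\Z_p$ is purely monochromatic; consequently the strict units of $D$ are identified with the $\Sigma^{n+1}\Z_p$ summand.

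Next I would appeal to the duality of \cite{fourier}, which is announced to establish a form of higher Pontryagin/Fourier duality on monochromatic connective spectra under which the Brown-Comenetz dual of the sphere plays the role of dualizing object, shifted by the chromatic height $n$. Under this duality, the connective cover $\tau_{\geq 0}\Sigma^n I_{\QQ_p/\ZZ_p}$ corresponds precisely to $\Sigma^{n+1}\Z_p$ on the strict side; thus the strict computation above rigidifies to an equivalence $D \simeq \tau_{\geq 0}\Sigma^n I_{\QQ_p/\ZZ_p}$, which is exactly the content of the asserted fiber sequence.

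The main obstacle is the Fourier duality of \cite{fourier} itself, which is the deep technical input and not yet publicly available. The contribution of the present paper to the argument is the strict Picard calculation of Theorem~\ref{thm:intro_strict}, which is precisely the input needed to invoke the duality; once that machinery is in place, the shift by $n$ and the appearance of the Brown-Comenetz dual of the sphere are forced by the general framework and by the known behavior of strict invariants on Brown-Comenetz dualizing objects, and no further case-by-case analysis should be required.
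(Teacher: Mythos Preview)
Your proposal is correct and aligns with the paper's treatment: the paper does not prove this proposition directly but attributes it to \cite{fourier}, with the input from the present paper being the strict Picard/unit computation. One minor refinement: in Section~\ref{sec:orient} the paper indicates that the precise input used in \cite{fourier} is the $H = C_p$ case of Proposition~\ref{prop:Ek-Tor2} (i.e., $\Hom_{\Sp_{\geq 0}}(C_p, \pic(\Modw_{E(L)})) \simeq \Sigma^{n+1}C_p$), rather than the full Theorem~\ref{thm:intro_strict}, though the introduction phrases it as you do.
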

 
 In \cite{HL}, it is further shown that from \Cref{thm:intro_strict}, one can deduce the following:
 
 \begin{prop}[{\cite[Corollary 5.4.10]{HL}}]
 The $E(k)$-cochains functor 
 \[(\mathcal{S}_p^{\leq n})^{\op} \to \CAlgw_{E(L)}\]
 \[X \mapsto  E(L)^X\]
 is fully faithful on $n$-truncated $p$-finite spaces. 
 \end{prop}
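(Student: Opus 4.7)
The plan is to deduce this statement from \Cref{thm:intro_strict} using the ambidexterity formalism for the $T(n)$-local category, in the spirit of Hopkins--Lurie's approach in \cite[\S 5]{HL}. I would proceed by induction on $n$, reducing via Postnikov descent to the case of Eilenberg--MacLane sources. The base case $n = 0$ concerns the finite-product functor $I \mapsto \prod_I E(L)$ on finite sets, whose fully faithfulness follows from standard arguments (using that $E(L)$ has no non-trivial idempotents since $\pi_0 E(L)$ is local). For the inductive step, a space $X \in \mathcal{S}_p^{\leq n}$ admits a Postnikov filtration with Eilenberg--MacLane fibers $K(\pi_i, i)$, where $\pi_i$ is a finite abelian $p$-group. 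Since $Z \mapsto E(L)^Z$ sends $\mathcal{S}$-colimits to $\CAlgw_{E(L)}$-limits, descent along this tower reduces fully faithfulness for general $X$ (mapping into $E(L)^Y$ for $Y \in \mathcal{S}_p^{\leq n}$) to the key case $X = K(\pi, n)$.

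For this key case, I would use ambidexterity to identify the cochain algebra $E(L)^{K(\pi, n)}$ with the ``group algebra'' $E(L)[K(\pi, n)]$ as $\E_\infty$-$E(L)$-algebras. This reformulates $\E_\infty$-algebra maps out of $E(L)^{K(\pi, n)}$ as characters:
\[
\Map_{\CAlgw_{E(L)}}\!\bigl(E(L)^{K(\pi, n)},\, R\bigr) \;\simeq\; \Map_{\Sp_{\geq 0}}\!\bigl(\Sigma^n H\pi,\, \gl1 R\bigr).
\]
Applying this with $R = E(L)^Y$ and using $\gl1 E(L)^Y \simeq (\gl1 E(L))^Y$ (as $\gl1$ preserves limits) gives $\Map_{\Sp_{\geq 0}}(\Sigma^n H\pi, (\gl1 E(L))^Y)$. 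Since $\Sigma^n H\pi$ is an $H\Z$-module, this factors through the strict units, which \Cref{thm:intro_strict} computes as $\mathbb{G}_m(E(L)) \simeq \Sigma^{n+1}\Z_p \oplus L^\times$. The $L^\times$-summand contributes nothing because $L^\times$ is uniquely $p$-divisible (as $L$ is algebraically closed of characteristic $p$) while $\pi$ is $p$-torsion. The $\Sigma^{n+1}\Z_p$-summand, via Pontryagin duality between $\pi$ and $\Hom(\pi, \Q_p/\Z_p)$ applied $Y$-parametrically, produces precisely $\Map_{\mathcal{S}}(Y, K(\pi, n))$; the critical degree match is that the ``$n+1$'' in the strict Picard provides exactly the extra shift needed to convert $\Ext^1_\Z(-, \Z_p)$ into Pontryagin duality on finite $p$-torsion groups, combined with the $n$-fold delooping of $\pi$.

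The main obstacle is not \Cref{thm:intro_strict} itself but the ambidexterity identification $E(L)^{K(\pi, n)} \simeq E(L)[K(\pi, n)]$ at the $\E_\infty$-algebra level, along with the character correspondence above. This is the technical heart of the argument, supplied by the higher ambidexterity for $p$-finite spaces developed in \cite[\S 5]{HL}; granting it, \Cref{thm:intro_strict} provides exactly the degree and duality information needed to conclude.
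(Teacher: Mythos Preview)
The paper gives no argument of its own for this statement: both in the introduction and in \S\ref{sec:orient} it is attributed to \cite[Corollary 5.4.10]{HL}, with the only input supplied by the present paper being the $H=C_p$ case of \Cref{prop:Ek-Tor2}. There is therefore no proof in the paper to compare your sketch against beyond that attribution.

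That said, your sketch contains a genuine error at its central step. The claimed $\E_\infty$-equivalence $E(L)^{K(\pi,n)} \simeq E(L)[K(\pi,n)]$ is false in general, and ambidexterity does not supply it: the norm map is an equivalence of $E(L)$-modules, not of $\E_\infty$-algebras. Concretely, take height $n=1$, $p$ odd, $\pi=C_p$. Then
\[
\pi_0\,E(L)^{BC_p}\;\cong\;W(L)[t]\big/\big((1+t)^p-1\big)\;\cong\;W(L)\times W(L)[\zeta_p]
\]
has two maximal ideals, whereas \cite[Theorem 5.3.26]{HL} gives $E(L)[BC_p]\simeq E(L)^{C_p^*}$, whose $\pi_0\cong W(L)^{p}$ has $p$ maximal ideals; so the two $\E_\infty$-$E(L)$-algebras are not equivalent. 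Your displayed character formula therefore also fails for $R=E(L)$: the right-hand side has $|\pi|$ components by \Cref{prop:Ek-Tor2}, but the left-hand side has only one (any $\E_\infty$-map must kill the idempotent cutting out the $W(L)[\zeta_p]$ factor and hence factors through the augmentation). What \cite{HL} actually proves is the Fourier-type identification $E(L)[B^nH]\simeq E(L)^{H^*}$ with $H^*$ the finite Pontryagin dual --- the opposite direction from what your argument requires. Hopkins--Lurie's route to full faithfulness does not go through your identification; it uses their machinery in \cite[\S 5.4]{HL}, whose input is exactly the computation of $[\Sigma^m C_p,\gl1 E(L)]$ furnished by \Cref{prop:Ek-Tor2}.
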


\subsection{The proof}\hfill

We now sketch the proof of our main theorems.  For this, it will be convenient to use the notational conventions of the body of this paper, so we suggest that the reader familiarizes themselves with the conventions listed at the end of this introduction at this point.  

In this paper we make considerable use of a key fact, due to Lurie, that Lubin--Tate spectra can be defined not only for perfect fields, but more generally for any perfect $\F_p$-algebra $A$.  We show that in fact the assignment $A \mapsto E(A)$ enjoys many nice properties.

\begin{thm}[\Cref{thm:E_functor}]\label{thm:intro_lubin_tate}
There is an adjunction  
\[
E(-) \colon \Perf_{k} \rightleftarrows \CAlgw_{E(L)} \noloc (\pi_0(-)/\m)^{\flat}
\]
where $\m$ denotes the Landweber ideal $(p, u_1, \cdots, u_{n-1})$ and $(-)^{\flat}$ denotes the inverse limit along Frobenius.  Furthermore,
\begin{enumerate}
    \item $E(-)$ is fully-faithful and preserves arbitrary products.
    \item $R \in \CAlgw_{E(k)}$ belongs to the essential image of $E(-)$ if and only if $R\modm$ has vanishing odd homotopy groups and $\pi_0(R)/\m = \pi_0(R\modm)$ is perfect.
\end{enumerate}
\end{thm}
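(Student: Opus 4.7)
The plan is to build on Lurie's construction from \cite{ECII} of $E(A)$ for a perfect $\F_p$-algebra $A$: a $T(n)$-locally completed $\E_\infty$-ring with $\pi_0 E(A) = W(A)\ll u_1, \ldots, u_{n-1}\rr$, vanishing odd homotopy, and $\pi_* E(A)\modm \cong A[\beta^{\pm 1}]$ with $\beta \in \pi_2$ coming from the orientation. The first step is to upgrade $E$ to a functor $E(-) \colon \Perf_k \to \CAlgw_{E(k)}$ using naturality of Lurie's construction; a morphism $k \to A$ produces a canonical map $E(k) \to E(A)$, which exhibits $E(A)$ as an object of $\CAlgw_{E(k)}$.

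To establish the adjunction, the key input is Lurie's universal property of $E(A)$: it classifies, among $T(n)$-locally complete $\E_\infty$-rings, oriented deformations of a Honda-type formal group over $A$. For $R \in \CAlgw_{E(k)}$, the structure map $E(k) \to R$ equips $R$ with an oriented height-$n$ formal group; reducing modulo $\m$ produces a formal group of characteristic $p$ over $\pi_0(R)/\m$, and passing to the tilt $(\pi_0(R)/\m)^{\flat}$ canonically presents this formal group as the base change of a Honda-type formal group along a map $A \to (\pi_0(R)/\m)^{\flat}$ in $\Perf_k$. Lurie's theorem then supplies the natural equivalence
\[
\Map_{\CAlgw_{E(k)}}(E(A), R) \simeq \Map_{\Perf_k}\bigl(A, (\pi_0(R)/\m)^{\flat}\bigr),
\]
which is the desired adjunction. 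Full faithfulness of $E(-)$ reduces to showing the unit $A \to (\pi_0 E(A)/\m)^{\flat}$ is an equivalence, which holds because $\pi_0 E(A)/\m = A$ is already perfect. Product preservation is immediate from the fact that each of $\pi_0$, quotient by $\m$, and tilting manifestly preserves products.

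For the essential-image characterization, the forward direction is immediate from the computation of $\pi_* E(A)\modm$. For the converse, suppose $R \in \CAlgw_{E(k)}$ satisfies the two hypotheses, and set $A \coloneqq \pi_0(R)/\m = \pi_0(R\modm)$, which is perfect. The adjunction produces a counit map $\varphi \colon E(A) \to R$, and it suffices to show $\varphi$ is an equivalence. Since both $E(A)$ and $R$ are $\m$-complete, a Nakayama-style argument via the $\m$-adic filtration reduces the problem to verifying that $\varphi\modm \colon E(A)\modm \to R\modm$ is an equivalence. Both sides are even $\E_\infty$-algebras over $E(k)\modm = k[\beta^{\pm 1}]$ with $\pi_0 = A$; since $R\modm$ has vanishing odd homotopy and inherits the Bott element $\beta$ from $E(k)$ (acting periodically by $T(n)$-local completeness of $R$), its homotopy is forced to be the free $A[\beta^{\pm 1}]$-module, matching $E(A)\modm$ as a graded ring.

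The principal obstacle is promoting Lurie's classical universal property into the space-level adjunction above: one must verify that the $\infty$-category of oriented deformations of the formal group of $R$ from the perfect tilt $(\pi_0(R)/\m)^{\flat}$ is equivalent to the \emph{discrete} set of perfect ring maps from $A$, requiring careful control of contractibility of lifts through the deformation tower. A secondary subtlety in the essential-image argument is ensuring the two stated hypotheses genuinely pin down the graded ring structure on $\pi_*(R\modm)$; here one must exploit the $\beta$-periodicity forced by $T(n)$-local completeness together with evenness to rigidify $\varphi\modm$ into an equivalence of $\E_\infty$-algebras.
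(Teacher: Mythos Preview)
Your approach to the adjunction has a genuine gap. Lurie's universal property for $E(A)$ in \cite{ECII} characterizes $E(A)$ among $T(n)$-local $\E_\infty$-rings that are \emph{already} even periodic with regular Landweber sequence and perfect residue ring; it does not compute $\Map(E(A), R)$ for arbitrary $R \in \CAlgw_{E(k)}$. A general such $R$ need not be even, need not carry a well-behaved formal group, and $\pi_0(R)/\m$ need not be perfect---so your deformation-theoretic description of $\Map(E(A),R)$ is not grounded in a theorem you can cite. The paper avoids this entirely: it first constructs the spherical Witt vector adjunction $\W \dashv (-)^\flat$ on $\CAlg(\Sp_p)$ via Lurie's $\Ss_p$-thickening universal property (which \emph{does} apply to arbitrary targets), then base-changes to obtain $\W_{E(k)}(A) \coloneqq E(k)\otimes_{\W(k)}\W(A)$, and finally identifies $\W_{E(k)}(-)$ with Lurie's $E(-)$. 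The adjunction then follows formally from the pushout formula and the thickening universal property, with no appeal to formal-group deformation theory for general $R$.

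Your product-preservation argument is also incorrect: you argue that $\pi_0$, quotient by $\m$, and tilting preserve products, but these are the constituents of the \emph{right} adjoint, for which product preservation is automatic. The claim is that the \emph{left} adjoint $E(-)$ preserves products, which is not formal. The paper proves this by a filtration argument: reduce to checking on underlying spectra, pass to connective covers, filter by the Postnikov tower and then the $\m$-adic filtration, and observe that the associated graded pieces are $(\m^j/\m^{j+1}) \otimes_k (-)$, which preserves the relevant limits because $\m^j/\m^{j+1}$ is dualizable over $k$.

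Finally, two smaller issues in your essential-image argument: $R\modm$ is only an $\E_1$-$R$-algebra (via the Hahn--Wilson/Angeltveit construction), not $\E_\infty$; and the claim that the Bott element acts periodically on $R\modm$ ``by $T(n)$-local completeness'' requires the argument of \Cref{lem:strong-even} (or equivalently \cite[Lemma 2.1.25]{HL}), not merely $T(n)$-locality.
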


The fully faithfulness of $E(-)$ implies that $\Perf_k$ is a colocalization of $\CAlgw_{E(k)}$ and in particular that every $R \in \CAlgw_{E(k)}$ has an $E$-colocalization map $E((\pi_0(R)/\m)^\flat) \to E$ universal among maps to $R$ from Lubin--Tate theories. 
$E(-)$ allows us to translate questions about Lubin--Tate theories in $\CAlg(\Sp_{T(n)})$ into questions in the $1$-category $\Perf_k$. 

\begin{rmk}
We find it noteworthy that  $\Perf_k$ admits so many different naturally defined fully faithful embeddings into so many different $\infty$-categories. We suggest that this should be considered as a manifestation of a general principle that ``derived perfect algebras are just perfect algebras.''
\tqed
\end{rmk}

The main technical result in this paper from which we deduce most of our theorems is the following:

\begin{thm}[\Cref{thm:modmain}]\label{thm:into_mapsout1}
  Given a $T(n)$-local commutative algebra $R$, there exists a perfect algebra $A$ of Krull dimension $0$  and a map of commutative algebras $R \to E(A)$ such that the base-change functor 
  \[
\Modw_R \to \Modw_{E(A)}
\]
detects nilpotence.
\end{thm}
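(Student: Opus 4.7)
The plan is to first address the case where $R$ is already a Lubin--Tate theory, and then to bootstrap this to the general case.

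For the Lubin--Tate case $R = E(B)$ with $B$ perfect, the adjunction $E(-) \dashv (\pi_0(-)/\m)^{\flat}$ from \Cref{thm:intro_lubin_tate} translates the problem into pure commutative algebra over $\F_p$: I need a perfect $\F_p$-algebra $A$ of Krull dimension zero, together with a homomorphism $B \to A$, so that the corresponding base-change of compact $T(n)$-local modules detects tensor-nilpotence. A natural candidate is the perfection of $\prod_{\mathfrak{p} \in \Spec B} \overline{\kappa(\mathfrak{p})}$, which is reduced, of Krull dimension zero, and induces a surjection of prime spectra. The nilpotence-detection claim then follows by identifying compact $T(n)$-local $E(B)$-modules with perfect complexes over $\pi_* E(B)$ and invoking the classical fact that a morphism of perfect complexes over a commutative ring is tensor-nilpotent if and only if it is nilpotent at each residue field.

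For general $R \in \CAlg(\Sp_{T(n)})$, the strategy is to first produce some map $R \to E(B)$ with $B$ perfect; the previous paragraph applied to $E(B)$ then yields a further map $E(B) \to E(A)$ which, composed, gives what we want. To construct this initial map, I would attempt a transfinite construction that at each stage enlarges $R$ by adjoining ``formal'' residue fields of pieces of $\pi_* R$, using obstruction theory for $T(n)$-local $\E_\infty$-algebras together with \Cref{thm:intro_lubin_tate} to realize these adjunctions as $\E_\infty$-morphisms into progressively larger algebras. A fixed point of this process should lie in the essential image of $E(-)$, which is characterized in \Cref{thm:intro_lubin_tate}.

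The main obstacle, I anticipate, is exactly the existence of this initial map $R \to E(B)$, which is morally equivalent to the Nullstellensatz corollary \Cref{cor:intro_map_to_E}. The key technical input should be a non-vanishing lemma, ensuring that the relevant obstructions vanish whenever $R \neq 0$; this likely requires a genuinely new chromatic non-vanishing argument, combined with a careful analysis of how $\E_\infty$-algebra operations interact with the Lubin--Tate deformation stack. Once that step is carried out, the remainder of the argument should follow formally from the adjunction in \Cref{thm:intro_lubin_tate}.
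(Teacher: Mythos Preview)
Your decomposition has a circularity problem. You propose to first find \emph{some} map $R \to E(B)$ with $B$ perfect, and then to improve $B$ to Krull dimension zero. But producing any such map is already the entire content of the theorem: the corollary \Cref{cor:intro_map_to_E} (existence of a map to a Lubin--Tate theory) is deduced \emph{from} \Cref{thm:into_mapsout1}, not used as an input to it. Your sketch acknowledges this when you say the first step is ``morally equivalent to the Nullstellensatz corollary,'' but then the proposal offers no mechanism beyond a vague appeal to obstruction theory. The paper does not separate these two steps; it runs a single small object argument that simultaneously kills odd homotopy, forces the $E$-colocalization $E(R^{\flat}) \to R$ to be surjective on $\pi_0$, and forces $R^{\flat}$ to have Krull dimension zero. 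The key technical inputs you are missing are (i) the cofreeness theorem \Cref{thm:intro_cofree}, which is what allows one to build the map $g$ controlling surjectivity of the colocalization, and (ii) the argument that killing an odd class detects nilpotence, which rests on the May nilpotence conjecture.

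There is also a smaller error in your Lubin--Tate step: compact objects in $\Modw_{E(B)}$ are \emph{not} the same as perfect complexes over $\pi_* E(B)$ (the unit is typically not compact $T(n)$-locally), so the reduction to classical tensor-nilpotence at residue fields does not go through as stated. The paper handles this case separately via a direct argument about conservativity and nil-conservativity for maps of the form $E(A) \to E(A')$ induced by surjections on $\Spec$ (see \Cref{thm:DN_Krull0} and \Cref{DN_perf}), which does use the structure of $\Modw_{E(B)}$ but not the identification you claim.
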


The first five sections of the paper are dedicated to the proof of   \Cref{thm:into_mapsout1}. 
By the Devinatz--Hopkins--Smith nilpotence theorem \cite{DHS}, for every $T(n)$-local ring $R$, the map  $R\to R \otimes E(k)$ detects nilpotence. Thus, when proving  \Cref{thm:into_mapsout1}, we are free to work in $\CAlgw_{E(k)}$.

Conceptually our  basic strategy is now to inductively  map our commutative $E(k)$-algebra  $R$ to another commutative $E(k)$-algebra $R'$ in such a way that the map $ R\to R'$ detects nilpotence and such that  $R'$ is ``closer'' to satisfying the conditions of \Cref{thm:intro_lubin_tate}(2) together with the condition that $\pi_0(R)/\m$ is of Krull dimension $0$. 

\begin{exm} \label{exm:motivation}
Let us consider a $T(n)$-local commutative $E(k)$-algebra $R$ which has a non-zero class $\alpha \in \pi_1(R)$. As Lubin--Tate theories are even, we know that $R$ is not yet a Lubin--Tate theory and that we would like to kill $\alpha$. For this, we consider the pushout
\[ \begin{tikzcd}
E(k)\{z^1\} \ar[r, "z^1 \mapsto 0"] \ar[d, "z^1 \mapsto \alpha"]\ar[d] & E(k) \ar[d] \\
R \ar[r] & R'
\end{tikzcd} \]
in $\CAlgw_{E(k)}$. The map $R \to R'$ has the property that $\alpha$ is sent to zero, so as long as this map detects nilpotence we can proceed. For this we prove in \Cref{sec:nilpotence2} that nilpotence detecting maps are closed under co-base change, thereby reducing to the universal example of killing an odd degree class: the map 
\[ E(k)\{z^1\} \xrightarrow{z^1 \mapsto 0} E(k). \] 
\tqed
\end{exm}

Formalizing our strategy, we construct 
three maps  $f,g,h \in \CAlgw_{E(k)}$ with the following properties:
\begin{enumerate}
    \item All three maps $f,g,h$ detects nilpotence.
    \item $R \in \CAlgw_{E(k)}$ satisfies the right lifting property with respect to $f, g$ and $h$ if and only if  there exists a perfect $k$-algebra $A$ of Krull dimension $0$  and an equivalence  $R \cong E(A)$.
\end{enumerate}
Using a small object argument we show that every $R$ has a nilpotence detecting maps out to an object which has the right lifting property with respect to $f$, $g$ and $h$ and condition (2) then implies that this suffices to prove \Cref{thm:into_mapsout1}.


The three maps $f$, $g$ and $h$ which we use are
\begin{itemize}
    \item $f \colon E(k)[t] \to E(k)[t^{\pm 1}]\times E(k)$ 
    is the map associated to inverting $t$ and sending $t$ to zero on the two components.
    \item $g\colon E(k)\{z^0\} \to E(A )$ 
    is more difficult to construct, but is characterized by the fact that the induced map on $\pi_0(-)/\m$,
    $\pi_0(E(k)\{z^0\})/\m \to A$, is a direct limit perfection.
    \item $h\colon E(k)\{z^1\} \to E(k)$ is the map sending $z^1$ to zero which we have already encountered in \Cref{exm:motivation}.
\end{itemize}

The map $f$ is conservative, and therefore it is easily seen to detect nilpotence.  
The map $h$ is easy to construct but our proof that it detects nilpotence requires a strong form of the fact that the square of an odd degree class is null\footnote{At the prime $2$ this is a special feature of the fact that we are working $T(n)$-locally.} implied by the May nilpotence conjecture \cite{MNNmaynilp}.

The  map $g$ is also conservative and thus easily seen to detect nilpotence.  However, its construction requires strong control on maps \textbf{to} Lubin--Tate theories.  We obtain this control by utilizing the theory of power operations over $E(k)$.  Rezk studied the power operations on a $T(n)$-local commutative $E(k)$-algebra $R$ extensively in \cite{RezkCong}, and showed that these power operations endow $\pi_0(R)$ with the structure of an algebra over a certain monad $\T$ on $\Modh_{\pi_0(E)}$.  Composing with $\pi_0$, we thus get a functor 
 
 \[E^{\T}(-) :=\pi_0\circ E(-) \colon \Perf_{k} \to \Alg_{\T}. \]
 
 On the other hand, we have an obvious forgetful functor
 \[
U_{\T}\colon \Alg_{\T} \to \CAlgh_{\pi_0E(k)},
\]
which we may compose with the functor 
\[
(-\otimes_{\pi_0E(k)}k)^{\sharp} \colon \CRing_{\pi_0E(k)} \to \Perf_k
\]
to obtain a functor
\[
\overline{U}_{\T}\colon \Alg_{\T} \to \Perf_{k}.
\]
 
 The key step in constructing the map $g$ is an adjunction between the functors  $E^{\T}(-)$ and $\overline{U}_{\T}$, which we believe is of independent interest.   
 
 \begin{theorem}[Cofreeness of $\pi_0E(k)$, \Cref{thm:cofree}]\label{thm:intro_cofree}
The functor $E^{\T}( -)\colon \Perf_k \to \Alg_{\T}$ is fully faithful and right adjoint to $\overline{U}_{\T}$.
 \end{theorem}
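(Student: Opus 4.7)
My strategy is to construct the adjunction $\overline{U}_{\T} \dashv E^{\T}$ by directly exhibiting its unit; fully faithfulness of $E^{\T}$ then follows formally from the fact that the proposed counit $\overline{U}_{\T} E^{\T}(A) = (\pi_0 E(A)/\m)^{\sharp} = A^{\sharp} = A$ is the identity, by part (2) of Theorem \ref{thm:intro_lubin_tate} together with the observation that $A$ is already perfect. The main inputs will be Theorem \ref{thm:intro_lubin_tate} (the $\infty$-categorical adjunction $E(-) \dashv (\pi_0(-)/\m)^{\flat}$) and Rezk's explicit control of the $\T$-algebra structure on $\pi_0$ of a $T(n)$-local commutative $E(k)$-algebra.

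The central task is to show that, for every $X \in \Alg_{\T}$ and every $A \in \Perf_k$, the natural mod-$\m$ reduction map
\[
\Hom_{\Alg_{\T}}(X, \pi_0 E(A)) \to \Hom_{\CRing_k}(X/\m, A)
\]
is a bijection. Granting this, its inverse sends each $\phi\colon X/\m \to A$ to a canonical lift $\widetilde{\phi}\colon X \to \pi_0 E(A)$ in $\Alg_{\T}$, and specializing to the universal case $A = (X/\m)^{\sharp}$, with $\phi$ the perfection map, produces the desired unit $\eta_X\colon X \to \pi_0 E((X/\m)^{\sharp})$. Here I use that the $\T$-algebra structure on $X$ includes the Frobenius-lifting power operation $\psi_p$, so that $X/\m$ automatically carries the standard $p$-th-power Frobenius and a canonical map to its perfection; this is what makes $\overline{U}_{\T}$ well defined as a functor to $\Perf_k$.

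The principal obstacle is the bijectivity claim above, which is a rigidity statement: a $\T$-algebra map into $\pi_0 E(A)$ is determined by, and can always be lifted from, its special fiber. My plan is to attack this by $\m$-adic induction, lifting $\phi$ one step at a time through the tower $\pi_0 E(A)/\m^n$. At each stage, the kernel is a square-zero extension of $\pi_0 E(k)$-modules, and one must show that the set of $\T$-algebra extensions of a given map $X \to \pi_0 E(A)/\m^n$ across such a square-zero extension is a singleton. I expect this to reduce to the vanishing of an André--Quillen-type cohomology group for $\T$-algebras, with vanishing forced by the formal étaleness (in the $\T$-enhanced sense of Rezk) of $\pi_0 E(A)$ over $\pi_0 E(k)$ along $\m$.

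An alternative route, which I would pursue in parallel, is to resolve $X$ by free $\T$-algebras and reduce to the case where $X$ is the free $\T$-algebra on a set $S$. In that case $\Hom_{\Alg_{\T}}(X, \pi_0 E(A))$ is a set of functions $S \to \pi_0 E(A)$ and $\Hom_{\CRing_k}(X/\m, A)$ is a set of functions $S \to A$; the required bijection then becomes an explicit identification, internal to Rezk's theory, of the free $\T$-algebra on $S$ with $\pi_0 E$ applied to a free perfect $k$-algebra on $S$. Combining either approach with the counit computation of the first paragraph yields the adjunction $\overline{U}_{\T} \dashv E^{\T}$ and the full faithfulness of $E^{\T}$ simultaneously.
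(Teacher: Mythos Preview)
Both proposed approaches have genuine gaps.

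For the $\m$-adic induction: the ideals $\m^n \subset \pi_0 E(A)$ are not stable under the non-additive power operations, so the quotients $\pi_0 E(A)/\m^n$ are not $\T$-algebras and there is no tower in $\Alg_\T$ through which to lift. Concretely, the weight-$p$ operation $\theta$ of \Cref{prop:pder} satisfies $\theta(p) \equiv 1 \pmod{p}$ (take $j=k=1$, $x=1$ in \Cref{lem:thetacong}), so $\theta(p)\notin\m$ even though $p\in\m$; hence the quotient map $\pi_0 E(A)\to \pi_0 E(A)/\m$ cannot be a map of $\T$-algebras. The $\m$-adic filtration is therefore incompatible with the $\T$-structure, and no deformation-theoretic argument in $\Alg_\T$ can run along it. The filtration the paper uses instead---the \emph{Witt filtration} on $W_\T(A)$---is built precisely to be compatible with the weight grading on operations; that under $\overline{\ev}_k$ it induces the $\m$-adic topology on $E_0$ is one of the \emph{outputs} of the argument (see the end of the proof of \Cref{cor:only-need-k}), not an input.

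For the free reduction: your computation of the right-hand side is incorrect. By Strickland's theorem (\Cref{thm:strickland}), $\T(E_0)/\m$ is a polynomial $k$-algebra on \emph{infinitely many} generators, so when $X=\T(E_0)$ is free on one generator, $\Hom_{\CAlgh_k}(X/\m, A)$ is not a single copy of $A$ but rather $\Hom_{\CAlgh_{E_0}}(\T(E_0), A) = W_\T(A)$ (cf.\ \Cref{prop:Wstruct}). The bijection you need is therefore exactly $\pi_0 E(A)\cong W_\T(A)$, which is \Cref{thm:ev-iso} and is the entire content of the theorem, not ``an explicit identification internal to Rezk's theory.'' Its proof occupies all of \Cref{sec:cofree}: a change-of-rings reduction to $A=k$ via the Witt filtration, the Ando--Hopkins--Strickland interpretation of additive operations as deformations of Frobenius isogenies together with their rigidity (\Cref{prop:power-op-surj}), a transchromatic induction on height for injectivity (\Cref{prop:detecting-classes}; this is why the whole paper is inductive on $n$, cf.\ \Cref{rmk:intro-induct}), and a length-counting argument using $\theta$ to pass from additive to all operations.
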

 
 \begin{rmk}
 Note that while \Cref{thm:intro_lubin_tate} gives an embedding of  $\Perf_{k}$ via a \textbf{left} adjoint in  $\CAlgw_{E(k)}$, \Cref{thm:intro_cofree} gives an embedding of  $\Perf_{k}$ in $\Alg_{\T}$ via a \textbf{right} adjoint. We thus gain some control on maps both to and from $E(A)$ for perfect $A$, which we utilize to prove \Cref{thm:into_mapsout1}.
 \tqed
 \end{rmk}
 
 \begin{rmk}
 It is worth spelling out the content of \Cref{thm:intro_lubin_tate} at low heights explicitly.  The case $n=0$ is trivial, as the theorem reduces to the self adjunction of the identity functor on $\CRing_{\mathbb{Q}}$.  In the case $n=1$, the category $\Alg_{\T}$ is the category of $\delta$-$W(k)$-rings, and we recover the fully faithful embedding of perfect $k$-algebras in $\delta$-$W(k)$-rings, which is a classical theorem of Joyal on $p$-typical Witt vectors \cite{JoyalWitt}. As far as we are aware, the case $n=2$ is already new.  
\end{rmk}
 
 \begin{rmk}
 After writing the proof, it came to our attention that Charles Rezk had previously announced a proof of \Cref{thm:intro_cofree}. As we understand it, Rezk's proof differs significantly from  ours.
 \tqed
 \end{rmk}
 
 \begin{rmk}\label{rmk:intro-induct}
 For most of the paper, the height $n$ is fixed. There is, however, one exception. At one point in \Cref{sec:cofree} our proof uses an induction on $n$.  Specifically, we use the fact that for every perfect field $k$ and $n>0$, there exists a perfect field $k'$ and a map in $\CAlg(\Sp)$
 \[E_n(k) \to E_{n-1}(k').\]
 We deduce this fact from the height $n-1$ case of \Cref{cor:intro_map_to_E}, together with the fact that $L_{T(n-1)}E_n(k) \neq 0$.  For this reason, our proof of \Cref{cor:intro_map_to_E} is in fact inductive on the height $n$. 
 \end{rmk}
 
 \subsection{Organization of the paper} \hfill
 
 In \S 2, we prove \Cref{thm:intro_lubin_tate} and study the functors appearing in its statement.  In particular, we observe that the functor $(\pi_0(\bullet)/\m)^{\flat}$ can be adapted to quite a general setup, and we study it in that generality. 
 In \S 3, we study the power operations on Lubin--Tate theories and prove \Cref{thm:intro_cofree}. 
 In \S 4, we prove general statements about detecting nilpotence, which we employ in the proof of \Cref{thm:into_mapsout1}.
 In \S 5, we assemble the results of the previous sections to prove \Cref{thm:into_mapsout1}.
 In \S 6, we use \Cref{thm:into_mapsout1} to prove \Cref{thm:intro_Nullstellensatz}. 
 In \S 7, we use \Cref{thm:into_mapsout1} to define $\CSpec$ and prove \Cref{thm:intro_points}. The construction of $\CSpec$ can be generalized to the setting of an $\infty$-category $\cC$ satisfying relatively mild conditions, and we develop this general theory in Appendix A.  The results of Appendix A are used in \S 7 for the specific case $\cC = \CAlg(\Sp_{T(n)})$. 
  The final two sections are devoted to applications of the general theory. In \S 8, we prove \Cref{thm:Universal_orientability} and its applications, and in \S 9, we give applications to chromatic support and redshift. 
 
\subsection*{Acknowledgments}\hfill
 
We would like to thank 
Tobias Barthel,
Clark Barwick, 
Omer Ben-Neria, 
Shachar Carmeli,
Jeremy Hahn, 
Mike Hopkins, 
Moshe Kamensky, 
Jacob Lurie, 
Charles Rezk, 
Andy Senger, 
Nathaniel Stapleton, 
Lior Yanovski and 
the entire  Seminarak group for useful discussions. 
We would like to thank 
Shaul Barkan,
Shay Ben Moshe, 
Shai Keidar, 
Shaul Ragimov and 
Asaf Yekutieli 
for comments on previous drafts. 
The second author is supported by ISF1588/18 and BSF 2018389 and the third author is  supported in part by NSF grant DMS-2002029.    
 
 \subsection*{Conventions}
 \begin{enumerate}
     \item For the remainder of the paper we fix a height $n\in \mathbb{N}$ and a prime $p$. 
     \item If $n>0$, we fix a perfect field $k$ of characteristic $p$ and a height $n$ formal group  $\mathbb{G}_0$ over it.
     \item If $n=0$, we fix a characteristic $0$ field $k$.
     \item We say category for $\infty$-category.
      \item We say commutative algebra for $\mathbb{E}_{\infty}$-algebra, and write $\CAlg(-)$ for it. By contrast, $\CRing$ denotes the category of discrete commutative rings. 
    \item For $R\in \CAlg(\Sp_{T(n)})$ we write $\CAlg^{\wedge}_{R}$ for $\CAlg(\Sp_{T(n)})_{R/-}.$
     \item For a commutative algebra $R$, we denote by $R[x_1,\dots,x_d]$ the flat polynomial algebra  $R[\mathbb{N}^d]$.
     \item\label{ffff} For a commutative algebra $R$, we denote by $R\{z^{i}\}$ the free commutative algebra under $R$ with a class in $\pi_i$. When $R$ is $T(n)$-local, we occasionally abuse notation and  use   $R\{z^{i}\}$ for the $T(n)$-localized version of the free algebra.  
     \item  For a commutative algebra $R$ and $\alpha \in \pi_*(R)$, we denote by $R \mm^{\infty} \alpha$ the commutative $R$-algebra that corepresents a null homotopy of $\alpha$. We use this notation to differ from $R/\alpha$, which is just the cofiber of $\alpha$. As in (\ref{ffff}), we occasionally implicitly localize.  
     \item We denote $\Sp_p$ for the category of $p$-complete spectra.
     \item We denote $\mathrm{Perf}_k$ for the category of perfect $k$-algebras and $\mathrm{Perf}$ for $\mathrm{Perf}_{\mathbb{F}_p}$.
     \item $\mathrm{Perf}$ is presentable and the inclusion $\Perf \subset \CRing$ admits a left adjoint which we denote by $(-)^{\sharp}$ (colimit perfection) and a right adjoint which we denote by $(-)^{\flat}$ (limit perfection).  
     \item For a category $\CC$ and $a,b \in \CC$, we denote 
     \[[a,b]_{\CC} := \pi_0(\mathrm{Map}_{\CC}(a,b)).\]
     When the category $\CC$ is clear from context, we omit the subscript.
   \item We write $W(A)$ for the $p$-typical Witt vectors of a perfect $\F_p$-algebra $A$,
     reserving the notation $\W(A)$ for the $p$-complete spherical Witt vectors of \Cref{sec:tilting}.
   \end{enumerate}

\section{Lubin--Tate theories and tilting}
\label{sec:tilting}
In this section, which is preparatory for all those which follow, we introduce the first fundamental idea in the proof of \Cref{thm:into_mapsout1}: the $E$-colocalization map. 

\begin{thm} \label{thm:E-and-tilt}
  Let $k$ be a perfect field of characteristic $p$ and
  let $\Perf_k$ denote the category of perfect $k$-algebras.
  Then, there is an adjunction
  \[ E(-) \colon \Perf_k \rightleftarrows \CAlg_{E(k)}^{\wedge} \noloc (-)^{\flat} \]
  which identifies $\Perf_k$ as a colocalization of $\CAlg_{E(k)}^{\wedge}$.
\end{thm}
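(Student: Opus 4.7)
The plan is to build the left adjoint $E(-)$ using Lurie's construction of Lubin--Tate spectra for perfect $\F_p$-algebras \cite{ECII}, and to deduce the adjunction from the universal property of these deformations. For each $A \in \Perf_k$, Lurie produces a $K(n)$-local commutative algebra $E(A)$, functorial in $A$, with $\pi_0 E(A) = W(A)\ll u_1, \ldots, u_{n-1} \rr$ and with a canonical map $E(k) \to E(A)$ realizing it in $\CAlg^{\wedge}_{E(k)}$. The candidate right adjoint $R \mapsto (\pi_0(R)/\m)^{\flat}$ lands in $\Perf_k$ because $\pi_0(R)/\m$ is naturally a $k$-algebra (since $\pi_0(E(k))/\m = k$) and limit perfection produces a perfect $\F_p$-algebra. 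Since $\pi_0(E(A))/\m = A$ and $A$ is already perfect, the candidate unit $A \to (\pi_0(E(A))/\m)^{\flat}$ is the identity; so once the adjunction is in place, $E(-)$ is automatically fully faithful, which is exactly the colocalization claim.

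The heart of the proof is producing the natural equivalence
\[
\Map_{\CAlg^{\wedge}_{E(k)}}\bigl(E(A), R\bigr) \;\simeq\; \Hom_{\Perf_k}\bigl(A, (\pi_0(R)/\m)^{\flat}\bigr)
\]
for $A \in \Perf_k$ and $R \in \CAlg^{\wedge}_{E(k)}$. I would argue as follows. Because $E(A)$ is $K(n)$-local, the left-hand mapping space coincides with the corresponding space of $E(k)$-algebra maps $E(A) \to L_{K(n)}R$ in the $K(n)$-local category, where the Goerss--Hopkins--Lurie theorem applies: it identifies this space with the discrete set of deformations of the Lubin--Tate formal group $\mathbb{G}_A$ to $\pi_0 R$ along the reduction $\pi_0(R) \to \pi_0(R)/\m$. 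Classical Lubin--Tate rigidity, which governs deformations when the residue ring is perfect, then identifies this with the set of ordinary $k$-algebra maps $A \to \pi_0(R)/\m$. Finally, the standard right adjointness of limit perfection $(-)^{\flat} \colon \CRing_{\F_p} \to \Perf$ rewrites this as $\Hom_{\Perf_k}(A, (\pi_0(R)/\m)^{\flat})$. Each step is functorial in both variables, assembling to the sought adjunction.

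The main obstacle is the middle identification: $\E_\infty$ maps $E(A) \to R$ are classified by the discrete set of ring maps $A \to \pi_0(R)/\m$. This rests on two deep inputs from \cite{ECII}: the representability of the formal-group deformation functor by $E(A)$ when $A$ is perfect, and the rigidity that deformations along a surjection with kernel in the Landweber ideal are uniquely determined by their reduction when the base is perfect. A secondary subtlety is the $T(n)$-local versus $K(n)$-local passage: the adjunction is stated for $T(n)$-local $R$ while the universal property is naturally $K(n)$-local. This is reconciled by the $K(n)$-locality of $E(A)$, which forces maps out of it to factor through $L_{K(n)}R$, combined with the observation that the quotient $\pi_0(R)/\m$ is insensitive to the difference between $R$ and $L_{K(n)}R$ since $L_{K(n)}$ is the identity modulo the Landweber ideal on the relevant complete local rings.
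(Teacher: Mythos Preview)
Your approach is different from the paper's, and there is a genuine gap in the middle step. You invoke the Goerss--Hopkins--Lurie theorem to identify $\Map_{\CAlg^{\wedge}_{E(k)}}(E(A), R)$ with deformation data for an \emph{arbitrary} $R \in \CAlg^{\wedge}_{E(k)}$, but that theorem (as stated in \cite{ECII} and reproduced here as \Cref{thm:GHML}) only asserts full faithfulness of $E(-;-)$: it computes maps between two Lubin--Tate theories, not maps from $E(A)$ into a general $T(n)$-local $E(k)$-algebra. Likewise, ``classical Lubin--Tate rigidity'' is a statement about deformations over complete Noetherian local rings with perfect residue field; for arbitrary $R$, neither $\pi_0(R)$ nor $\pi_0(R)/\m$ need have this form, so the passage to $\Hom_k(A, \pi_0(R)/\m)$ is unjustified. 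What you would need is that $E(A)$ is $\m$-adically formally \'etale over $E(k)$, so that maps out of it into any $\m$-complete target are insensitive to square-zero extensions and hence computed on $\pi_0(-)/\m$. But proving that is essentially the content of the theorem.

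The paper avoids this by going through the spherical Witt vectors $\W(A)$. The key point is that the thickening universal property (\Cref{dfn:thickening}(b)) already gives, for \emph{any} $S \in \CAlg(\Sp_p)$, an equivalence $\Map(\W(A), S) \simeq \Map(A, \pi_0(S)/p)$; this is elementary deformation theory for connective $p$-complete algebras and does not require anything about formal groups. From this one extracts the Witt--tilt adjunction (\Cref{prop:witt-and-tilt}), then transports it to $\Mod^{\wedge}_{E(k)}$ by base change along $\W(k) \to E(k)$ (\Cref{cnstr:C-witt-tilt}), obtaining an adjunction $\W_{E(k)}(-) \dashv (-)^{\flat}$ automatically. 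Only afterwards does one check (\Cref{lem:agrees-with-jacob}) that $\W_{E(k)}(A) \simeq E(A)$, using the description of $\pi_*E(A)$. In effect, the paper proves the universal property you assert by first establishing it for the simpler object $\W(A)$ and then identifying $E(A)$ as a pushout; your sketch treats that universal property as an input when it is actually the output.
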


The main input to this theorem is Lurie's spherical Witt vector construction, which is the sphere analogue of the desired functor $E(-)$.  We review this construction in \Cref{subsec:thickenings} and discuss properties of it and its right adjoint, tilting, in Sections \ref{subsec:W-properties} and \ref{subsec:tilting}.  Then, we give some preliminaries on Lubin--Tate theories in \Cref{subsec:LT-theory} and use the previous constructions to construct the functor $E(-)$ and prove \Cref{thm:E-and-tilt} in \Cref{subsec:E}.


\subsection{The spherical Witt functor}
\label{subsec:thickenings}\ 

Let $A$ be a perfect $\F_p$-algebra.  Using deformation theory, Lurie \cite{ECII} constructed its ring of  \emph{spherical Witt vectors}, which is a $p$-complete flat $\Ss_p$-algebra $\W(A)$ such that 
\[
\pi_*\W(A) \cong \left( (\pi_* \Ss_p) \otimes W(A) \right)_p.
\]
In this subsection, which draws heavily from \cite[Section 5.2]{ECII}, we recall this construction and extend it to a Witt vector--tilting adjunction.

\begin{prop} \label{prop:witt-and-tilt}
  There is an adjunction
  \[ \W(-) \colon \Perf \rightleftarrows \CAlg(\Sp_p) \noloc (-)^{\flat} \]
  which identifies $\Perf$ as a colocalization of $\CAlg(\Sp_p)$.
  The right adjoint $(-)^\flat$ is computed by the inverse limit along Frobenius on $\pi_0(-)/p$.\footnote{This agrees with the use of $(-)^\flat$ for tilting in discrete commutative algebra and so we will often make use of the formula $R^\flat \cong (\pi_0R/p)^\flat$.}
  
  The essential image of the (fully faithful) functor $\W$ consists of those $R\in \CAlg(\Sp_p)$ such that $R$ is connective and $\F_p\otimes R$ is a discrete perfect ring.  In this situation, we have $R\simeq \W(\F_p\otimes R)$.  
\end{prop}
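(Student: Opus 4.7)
The plan is to bootstrap from Lurie's construction of the spherical Witt vector functor in \cite[Section 5.2]{ECII}. For a perfect $\F_p$-algebra $A$, $\W(A)$ is defined there as a flat, connective, $p$-complete $\E_\infty$-ring with $\pi_0 \W(A) \cong W(A)$, and comes equipped with a universal property: $\W(A)$ corepresents the functor $R \mapsto \Hom_{\CRing}(A, \pi_0(R)/p)$ on $\CAlg(\Sp_p)$. Combining with the classical Joyal/Witt adjunction, which identifies this functor with $\Map_{\Perf}(A, (\pi_0(R)/p)^\flat) = \Map_{\Perf}(A, R^\flat)$ for $A$ perfect, produces precisely the asserted adjunction $\W \dashv (-)^\flat$.

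For fully faithfulness, I would compute the unit. At $A \in \Perf$, the unit is a natural map $A \to \W(A)^\flat = (W(A)/p)^\flat$. Since $W(A)/p \cong A$ and $A$ is already perfect, $A^\flat = A$ and the unit is the identity, so $\W$ is fully faithful and $\Perf$ is a colocalization of $\CAlg(\Sp_p)$.

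For the essential image, necessity is built into Lurie's construction: $\W(A)$ is connective, and flatness over $\Ss_p$ implies $\F_p \otimes \W(A)$ is concentrated in degree zero with $\pi_0 = A$. For sufficiency, I would use the counit. Suppose $R \in \CAlg(\Sp_p)$ is connective with $\F_p \otimes R$ discrete and perfect, and set $A := \pi_0(\F_p \otimes R) = \pi_0(R)/p$. Then $R^\flat = A^\flat = A$, and the counit of the adjunction is a map $c \colon \W(A) \to R$. Smashing with $\F_p$ yields a map $\F_p \otimes \W(A) \to \F_p \otimes R$ which, under the natural identifications of both sides with $A$ in degree zero, is the identity on $\pi_0$ and hence an equivalence. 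A Nakayama-style Postnikov induction then upgrades this $\F_p$-equivalence between connective $p$-complete spectra to a genuine equivalence, so $c$ is an equivalence.

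The main obstacle here is establishing the corepresentability of $R \mapsto \Hom_{\CRing}(A, \pi_0(R)/p)$ by $\W(A)$; this is the substantial input from \cite{ECII}, resting on deformation theory for $p$-complete $\E_\infty$-rings (and amounting to the observation that $W(A)$ is ind-\'etale over $\Z_p$ along Frobenius, so admits an essentially unique spherical lift). Granted this input, the identification of the right adjoint and the essential image description both follow quickly.
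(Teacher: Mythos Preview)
Your proposal is correct and follows essentially the same route as the paper: both take Lurie's corepresentability property of $\W(A)$ (the paper's Definition~2.2(b)) as the key input, derive the adjunction from it, and verify fully faithfulness via the unit being the identity on a perfect ring. The only mild difference is in the essential image: the paper simply cites \cite[Proposition~5.2.9]{ECII} for sufficiency, whereas you unwind this into a direct argument via the counit and conservativity of $\F_p$-homology on connective $p$-complete spectra --- a harmless expository variation rather than a different idea.
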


We prove \Cref{prop:witt-and-tilt} at the end of \Cref{subsec:thickenings}.

\begin{dfn}[{\cite[Definition 5.2.1]{ECII}}]\label{dfn:thickening}
  Given a discrete, commutative $\F_p$-algebra $A$ we say that a map $\sigma \colon R \to A$ in $\CAlg(\Sp_p)_{\geq 0}$ \deff{exhibits $R$ as an $\Ss_p$-thickening of $A$} if
  \begin{enumerate}
  \item[(a)] $\sigma$ induces an isomorphism $\pi_0(R)/p \to A$.
  \item[(b)] For any $S \in \CAlg(\Sp_p)$, the canonical map
    \[ \Map_{\CAlg}(R, S) \to \Map_{\CAlg}(A, \pi_0(S)/p) \]
    is an equivalence. \tqed
  \end{enumerate}
\end{dfn}

\begin{prop}[{\cite[Example 5.2.7]{ECII}}] \label{prop:thickenings-exist}
  Given a perfect $\F_p$-algebra $A$, there exists an $\Ss_p$-thickening
  $R \to A$ in the sense of \Cref{dfn:thickening}
  such that the natural map
  $ \F_p \otimes R \to A $
  is an equivalence.
\end{prop}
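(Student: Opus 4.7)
The plan is to construct $R$ explicitly when $A$ is a free perfect $\F_p$-algebra and then bootstrap to arbitrary perfect $A$ by lifting through the Postnikov tower of $\Ss_p$, using the classical vanishing $L_{A/\F_p} = 0$.

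For the free case, take $A_0 = \F_p[\{t_s^{1/p^\infty}\}_{s \in S}]$ and set
\[R_0 := \bigl(\colim_n \Ss[\{x_s\}_{s \in S}]\bigr)^{\wedge}_p \in \CAlg(\Sp_p),\]
with transition maps in the filtered colimit given by $x_s \mapsto x_s^p$. The reduction $R_0 \to A_0$ makes condition (c) immediate, since $\F_p$-base change absorbs the $p$-completion and $\colim_n \F_p[\{x_s\}]$ under the $p$-th power is exactly $A_0$; condition (a) then follows via K\"unneth. For (b), one computes
\[\Map_{\CAlg}(R_0, T) \simeq \prod_{s \in S} \lim\bigl(\Omega^\infty T \xleftarrow{(-)^p} \Omega^\infty T \xleftarrow{(-)^p} \cdots\bigr) \simeq \prod_{s \in S} (\pi_0 T/p)^\flat = \Map_{\CRing}(A_0, \pi_0 T/p).\]
The crucial step is the discreteness of the inverse limit: for $T \in \CAlg(\Sp_p)$, the $n$-fold composite $(-)^{p^n}$ has derivative $p^n t^{p^n - 1}$ at any $T$-valued point $t$, so both $\lim$ and $\lim^1$ of the corresponding sequence on positive-degree homotopy vanish by $p$-completeness of each $\pi_k T$; on $\pi_0$, the fact that tilting along $(-)^p$ of a $p$-adically complete ring coincides with the Frobenius tilt of its reduction mod $p$ is classical.

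For general perfect $A$, I would proceed by deformation theory. Starting from the classical $p$-typical Witt vectors $W(A) \in \CAlg(\Z_p) = \CAlg(\tau_{\leq 0}\Ss_p)$, inductively lift to $R_n \in \CAlg(\tau_{\leq n}\Ss_p)$ by solving square-zero extension problems; the obstructions are controlled by the relative cotangent complex $L_{R_{n-1}/\tau_{\leq n-1}\Ss_p}$, whose base-change to $A$ equals $L_{A/\F_p} = 0$, and so vanish (after inductively maintaining flatness of $R_{n-1}$). Setting $R := \lim_n R_n$ and verifying (a), (b), (c) stage-by-stage yields the thickening. A concrete alternative, which avoids cotangent-complex reasoning, is to resolve $A$ as a simplicial colimit of free perfect algebras via the bar construction of the adjunction $\Set \rightleftarrows \Perf$, apply the free-case construction degreewise to produce $R_\bullet$, and take the realization in $\CAlg(\Sp_p)$; the universal property (b) then passes to the colimit because both sides convert colimits in the $A$-variable into limits of mapping spaces.

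The main obstacle is the free-case universal property, namely the discreteness of $\lim_{(-)^p} \Omega^\infty T$ for $T \in \CAlg(\Sp_p)$: one must carefully track the derivative of the $p$-th power map on higher homotopy and establish the vanishing of the associated $\lim^1$-term, both of which hinge on $p$-completeness of $T$. In the bootstrap via deformation theory, a secondary subtlety is propagating flatness and the cotangent-complex vanishing inductively through the Postnikov tower; in the alternative simplicial approach, the corresponding difficulty is ensuring that the realization of the resolution in $\CAlg(\Sp_p)$ really coincides with the realization in $\Perf$, which is automatic thanks to the degreewise freeness of the resolution.
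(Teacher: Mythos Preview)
Your deformation-theoretic sketch---lifting $W(A)$ through the Postnikov tower of $\Ss_p$ via the vanishing $L_{A/\F_p}=0$---is essentially Lurie's argument in \cite{ECII}, which is exactly what the paper cites for this proposition; that approach is correct and matches the reference.

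The explicit free-case construction, however, has a genuine gap stemming from a conflation of the flat polynomial algebra $\Ss[\N^S]$ with the free $\E_\infty$-algebra $\Ss\{x_s\}$. Your mapping-space identification $\Map_{\CAlg}(\Ss[\{x_s\}],T)\simeq\prod_s\Omega^\infty T$ is false for the flat polynomial algebra: one obtains instead the space of $\E_\infty$-monoid maps $\N\to\Omega^\infty_{\mathrm{mult}}T$, i.e.\ the space of \emph{strict} elements of $T$, which is in general much smaller. The identity you wrote holds only for $\Ss\{x_s\}$---but if you switch to that, your verification that $\F_p\otimes R_0\simeq A_0$ is no longer immediate, since $\F_p\otimes\Ss\{x\}=\F_p\{x\}$ carries Dyer--Lashof classes in positive degrees. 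Showing these die in the colimit along $x\mapsto x^p$ requires precisely the cotangent-complex argument you invoke only later: the transition map sends $dx\mapsto px^{p-1}dx\equiv 0\pmod p$, so the $p$-completed colimit is $p$-adically formally \'etale over $\Ss_p$, forcing its $\F_p$-base-change to be discrete. The paper records this computation (Example~\ref{exm:free-algebra-perfd}), but as a \emph{consequence} of the proposition rather than an input to it. Once this repair is made, your derivative argument for the discreteness of $\lim_{(-)^p}\Omega^\infty T$ is valid (the induced map on $\pi_k$ for $k\geq 1$ at a basepoint $t$ is multiplication by $pt^{p-1}$, and $1-p\cdot(\text{shift})$ is invertible on derived $p$-complete modules since it reduces to the identity mod $p$); but as written, the free case establishes (b) and (c) for two different rings.
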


\begin{rmk} \label{rmk:thickening-flat}
  The equivalence $A \simeq \F_p \otimes R$ implies that the $\F_p$-homology of $R$ is discrete.
  In turn this implies that $R$ is flat over $\Ss_p$.
  \tqed
\end{rmk}

\begin{cnstr} \label{cnstr:Witt}
  Using \Cref{dfn:thickening}(b) to compute the spaces of maps between the $\Ss_p$-thickenings produced by \Cref{prop:thickenings-exist}, we find that the full subcategory of $\Ss_p$-thickenings of perfect $\F_p$-algebras is equivalent to the category of perfect $\F_p$-algebras (with equivalence given by $\pi_0(-)/p$). Using the inverse of this equivalence, we obtain a construction of the spherical Witt vectors of a perfect $\F_p$-algebra as a fully faithful functor, which we denote \deff{
  \[ \W(-) \colon \Perf \to \CAlg(\Sp_p). \]}
  \tqed
\end{cnstr}



\begin{proof}[Proof (of \Cref{prop:witt-and-tilt}).]
  From the construction of $\W(-)$ via $\Ss_p$-thickenings, we can read off that
  there is a natural equivalence between $(\pi_0(\W(A))/p)^{\flat}\cong \pi_0(\W(A))/p $ and the perfect $\F_p$-algebra $A$.
  We use this natural equivalence $\eta \colon \Id \xrightarrow{\cong} (\W(-))^\flat$
  as the unit of the Witt--tilt adjunction.
  In order to conclude that $\eta$ exhibits $(-)^\flat$ as right adjoint to $\W(-)$, it now suffices to argue that the induced map
  \begin{align*}
    \Map_{\CAlg(\Sp_p)}&(\W(A), S) \to \Map_{\CAlg(\Sp_p)}(\pi_0\W(A)/p, \pi_0(S)/p) \\
                &\to \Map_{\Perf}((\pi_0\W(A)/p)^\flat, (\pi_0(S)/p)^\flat) \xrightarrow{\eta} \Map_{\Perf}(A, (\pi_0(S)/p)^\flat)
  \end{align*}
  is an equivalence. The first map is an equivalence by condition (b) of the definition of $\Ss_p$-thickening. The second map is an equivalence because the source is discrete and perfect. The final map is an equivalence because $\eta$ is an equivalence.
  The claim that $\Perf$ is a colocalization is equivalent to the unit of the adjunction, $\eta$, being an equivalence (alternately, $\W$ is fully faithful by construction).
  
  Finally, the statement about the essential image follows from \cite[Proposition 5.2.9]{ECII} and property (b) in \Cref{dfn:thickening}, which guarantees the uniqueness of thickenings.
  \qedhere
  
\end{proof}

\subsection{Properties of $\W$}
\label{subsec:W-properties}\ 

There are two properties which make the spherical Witt vector functor particularly easy to work with:
\begin{itemize}
\item The spherical Witt vectors $\W(A)$ are $p$-adically formally \'etale over $\Ss_p$.
\item It is relatively easy to describe $\W(A)$ in terms of square-zero extensions.
\end{itemize}
We give precise meaning to these statements in the first two lemmas of \Cref{subsub:defW} and
spend the remainder of the subsection extracting easy consequences.  Among these consequences are a spherical analogue of the multiplicative lift\footnote{This construction is sometimes known as the Teichm\"uller lift.}, which we construct in \Cref{subsubsec:multlift}.

\subsubsection{Deformation theory and $\W$}
\label{subsub:defW}

\begin{lem}\label{lem:formally-etale}
  The spherical Witt vectors $\W(A)$ associated to a perfect $\F_p$-algebra $A$ are
  $p$-adically formally \'etale in the sense that
  the $p$-complete cotangent complex $(L_{\W(A)/\Ss_p})_p$ vanishes. 
\end{lem}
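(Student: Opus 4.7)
My plan is to reduce the vanishing of the $p$-complete cotangent complex to the classical fact that the cotangent complex of a perfect $\F_p$-algebra over $\F_p$ vanishes. The key leverage comes from \Cref{prop:thickenings-exist}, which tells us that $H\F_p \otimes \W(A) \simeq A$ is already discrete.

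First I would reformulate the claim: the $p$-complete cotangent complex $(L_{\W(A)/\Ss_p})_p$ vanishes if and only if $L_{\W(A)/\Ss_p}\otimes H\F_p$ vanishes, by the standard fact that a spectrum is $p$-complete zero exactly when its smash product with $H\F_p$ is zero. Next, I would apply the base-change formula for cotangent complexes to the pushout square
\[
\begin{tikzcd}
\Ss_p \ar[r] \ar[d] & H\F_p \ar[d] \\
\W(A) \ar[r] & \W(A)\otimes_{\Ss_p} H\F_p
\end{tikzcd}
\]
of $\E_\infty$-rings. Using \Cref{prop:thickenings-exist} to identify the bottom-right corner with $A$, base change yields the equivalence
\[
L_{\W(A)/\Ss_p}\otimes H\F_p \;\simeq\; L_{A/H\F_p}.
\]

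It therefore suffices to show $L_{A/H\F_p}=0$, which is the classical Frobenius argument: the absolute Frobenius on $A$ is an isomorphism (since $A$ is perfect), whereas it induces the zero map on $L_{A/H\F_p}$ (it factors through the relative Frobenius and acts as zero on K\"ahler differentials modulo $p$, hence on all of $L_{A/H\F_p}$). Since an isomorphism that is also zero forces the target to vanish, we conclude $L_{A/H\F_p}\simeq 0$ and the result follows.

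I do not anticipate a serious obstacle: the only step requiring care is the identification $\W(A)\otimes_{\Ss_p}H\F_p\simeq A$, which is an immediate consequence of the construction of the thickening in \Cref{prop:thickenings-exist} together with \Cref{rmk:thickening-flat}. Everything else is formal manipulation of the cotangent complex.
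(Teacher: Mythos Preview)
Your argument is correct and follows the same route as the paper: reduce modulo $p$ via base change for the cotangent complex, identify the result with $L_{A/\F_p}$ using $\W(A)\otimes\F_p\simeq A$, and invoke the vanishing of the cotangent complex of a perfect $\F_p$-algebra. The only point to tighten is your ``standard fact'': the implication $L\otimes H\F_p=0\Rightarrow L_p=0$ is not known for arbitrary spectra and is normally invoked only for bounded-below ones, so you should note (as the paper does) that $L_{\W(A)/\Ss_p}$ is connective because $\W(A)$ is.
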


\begin{proof}
  Since $A = \W(A)\otimes \F_p$ by \Cref{prop:thickenings-exist} and $A$ is perfect by assumption, we have equivalences
  \[ \F_p \otimes L_{\W(A)/\Ss_p} \cong A \otimes_{\W(A)} L_{\W(A)/\Ss_p} \cong L_{A/\F_p} = 0. \]  
  Since $\W(A)$ is connective, $L_{\W(A)/\Ss_p}$ is connective as well \cite[Proposition 7.4.3.9(1)]{HA}.
  $\F_p$-homology is conservative on $p$-complete bounded below spectra,
  therefore $(L_{\W(A)/\Ss_p})_p = 0$ as desired.
\end{proof}

\begin{rmk} \label{rmk:using-formally-etale}
  If the unit map $\Ss_p \to R$ is $p$-adically formally \'etale,
  then deformation theory tells us that for any square zero extension $B \to A$, the induced map
  \[ \Map_{\CAlg(\Sp_p)}(R, B) \to \Map_{\CAlg(\Sp_p)}(R, A) \]
  is an equivalence \cite[Remark 7.4.1.8]{HA}. 
  \tqed
\end{rmk}

The classical Witt vectors $W(A)$ can be expressed as the inverse limit of its $p$-adic tower, which is an $\omega$-indexed tower of square zero extensions beginning at $A$.  There is an analogous statement for spherical Witt vectors:

\begin{lem} \label{lem:functorial-sqz-tower}
 The commutative algebra $\W(A)$ can be functorially expressed as the inverse limit of a $2\omega$-indexed tower starting at $A$, where each map is a square zero extension by a suspension of $A$.
\end{lem}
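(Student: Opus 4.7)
The plan is to build the tower by concatenating two $\omega$-indexed stages: first a tower from $A$ up to the classical Witt vectors $W(A)$ via Witt vector truncations, then a tower from $W(A)$ up to $\W(A)$ via a refined Postnikov tower of $\W(A)$.

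For the first stage, set $T_n \coloneqq W_{n+1}(A)$. Each truncation map $W_{n+2}(A) \to W_{n+1}(A)$ is a classical square-zero extension whose kernel is identified with $A$ via the Verschiebung (using that $A$ is perfect). The inverse limit is $W(A)$ by $p$-adic completeness, so this produces a functor $\Perf \to \Fun(\omega^{\op}, \CAlg(\Sp_p))$ with values in towers from $A$ to $W(A)$ in which every successor map is a square-zero extension by $A$.

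For the second stage, use the Postnikov tower $\W(A) = \lim_n \tau_{\leq n}\W(A)$, which begins at $\tau_{\leq 0}\W(A) = W(A)$ and converges since $\W(A)$ is connective. Each Postnikov step $\tau_{\leq n+1}\W(A) \to \tau_{\leq n}\W(A)$ is a square-zero extension by $\Sigma^{n+1}\pi_{n+1}\W(A)$, by the standard decomposition of the Postnikov tower of a connective $\E_\infty$-ring. By flatness of $\W(A)$ over $\Ss_p$ we identify $\pi_m\W(A) \cong \pi_m(\Ss_p) \otimes_{\Z_p} W(A)$. Fix once and for all a filtration of each finite $\Z_p$-module $\pi_m(\Ss_p)$ by $\Z_p$-submodules with all subquotients equal to $\F_p$; base-changing along $\Z_p \to W(A)$ produces a filtration of $\pi_m\W(A)$ by $W(A)$-submodules whose subquotients are canonically identified with $A$. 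Refining each Postnikov step along this filtration is a standard operation on square-zero extensions: given an extension classified by a derivation to $\Sigma^{n+2}M$ and a submodule $M' \subseteq M$, composing with the projection $M \to M/M'$ yields an intermediate square-zero extension, and iteration produces the refinement. This assembles into an $\omega$-indexed tower from $W(A)$ to $\W(A)$ in which each successor map is a square-zero extension by some $\Sigma^{n+1}A$.

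Concatenating the two sub-towers yields the desired $2\omega$-indexed tower with $T_0 = A$, $T_\omega = W(A)$, and inverse limit $\W(A)$, each successor map a square-zero extension by a suspension of $A$. The main obstacle will be arranging this construction as a functor $\Perf \to \Fun((2\omega)^{\op}, \CAlg(\Sp_p))$ rather than per-object: this reduces to the functoriality of the Postnikov refinement, which follows because (a) the filtration of $\pi_m(\Ss_p)$ is fixed in advance and so base-changes naturally along $W(-)$, and (b) the derivations classifying the refined square-zero extensions are natural in $A$ by naturality of the Postnikov tower and of the cotangent-complex formalism of \cite[\S 7.4]{HA}.
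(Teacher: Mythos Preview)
Your proof is correct and takes essentially the same approach as the paper. The paper organizes it slightly more efficiently by building the $2\omega$-tower once for $\Ss_p$ (the $p$-adic tower from $\F_p$ to $\Z_p$ followed by the Serre-finiteness-refined Postnikov tower from $\Z_p$ to $\Ss_p$) and then tensoring with $\W(A)$, using $\F_p \otimes \W(A) \simeq A$; functoriality in $A$ is then immediate from functoriality of $\W(-)$, so the separate functoriality argument you give becomes unnecessary.
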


\begin{proof}
  The Postnikov tower of $\Ss_p$ is a tower of square-zero extensions building $\Ss_p$ from $\Z_p$.
  Serre finiteness lets us refine this to an $\omega$-indexed tower of extensions by copies of $\Sigma^j\F_p$.  
  The $p$-adic tower of $\Z_p$ is a $\omega$-indexed tower of square-zero extensions building $\Z_p$ from $\F_p$ using copies of $\F_p$.
  Tensoring this pair of towers with $\W(A)$ and observing that $\F_p \otimes \W(A) \simeq A$ allows us to conclude.
\end{proof}

Using \Cref{lem:functorial-sqz-tower} we can provide a simple criterion for a limit to be preserved by $\W$.

\begin{lem}\label{lem:W-limits}
  The spherical Witt vectors functor preserves those limits of perfect $\F_p$-algebras whose limit, taken in $\Sp_p$, is connective.
  Examples include:
  \begin{enumerate}
  \item $\W(-)$ commutes with arbitrary products.
  \item $\W(-)$ commutes with $\N$-indexed inverse limits whose $\lim^1$ vanishes.
  \item $\W(-)$ satisfies arc descent.
  \end{enumerate}
\end{lem}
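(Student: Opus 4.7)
My plan is to leverage the characterization of the essential image of $\W$ from \Cref{prop:witt-and-tilt}: an object $R \in \CAlg(\Sp_p)$ lies in the image of $\W(-)$ if and only if $R$ is connective and $\F_p \otimes R$ is a discrete perfect $\F_p$-algebra, in which case $R \simeq \W(\F_p \otimes R)$. Given a diagram $\{A_i\}$ in $\Perf$ with limit $B$ (computed in $\Perf$, equivalently in $\CRing$ since $\Perf \subset \CRing$ is closed under limits), I would set $R := \lim_i \W(A_i)$ in $\CAlg(\Sp_p)$ and produce the canonical comparison $\phi \colon \W(B) \to R$ assembled from the projections. The goal reduces to showing that if $R$ is connective then $R$ lies in the essential image of $\W$, and that $\phi$ realizes the resulting equivalence $R \simeq \W(B)$.

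The main computation is that tensoring with $\F_p$ commutes with limits in $\Sp_p$, yielding $\F_p \otimes R \simeq \lim_i (\F_p \otimes \W(A_i)) \simeq \lim_i A_i$, where the last equivalence uses $\F_p \otimes \W(A_i) \simeq A_i$ from \Cref{prop:thickenings-exist}. For the first equivalence I would use that $\F_p$ is the cofiber of $\Ss_p \xrightarrow{p} \Ss_p$, so $M \otimes \F_p$ is equivalent to $\Sigma \fib(p \cdot \colon M \to M)$, which is a limit-preserving construction on the variable $M$.

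Next, assuming $R$ is connective, $\F_p \otimes R$ is connective as well. Since $\F_p \otimes R \simeq \lim_i A_i$ is a limit of \emph{discrete} objects in $\Sp_p$, its homotopy groups are concentrated in non-positive degrees (arising as derived $\lim^s$ functors on $\pi_0$ via the Bousfield--Kan spectral sequence). Combined with connectivity, this forces $\F_p \otimes R$ to be discrete, and it then coincides with the limit of $\{A_i\}$ in $\CRing$, which is $B$; in particular $\F_p \otimes R$ is perfect. \Cref{prop:witt-and-tilt} therefore yields $R \simeq \W(B)$, and the uniqueness clause of \Cref{dfn:thickening}(b) (both sides being $\Ss_p$-thickenings of $B$ inducing the identity on $\F_p$-reductions) identifies $\phi$ with this equivalence.

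The three examples then amount to verifying connectivity of $R$: for (1), $\prod_i \W(A_i)$ is connective as a product of connective spectra; for (2), the Milnor sequence identifies $\pi_{-1} R$ with $\lim^1 W(A_j)$, which vanishes when the tower $\{A_j\}$ has vanishing $\lim^1$, e.g.\ via the Mittag--Leffler property, which is preserved by $W$ since $W$ sends surjections of perfect rings to surjections; for (3), connectivity of the cosimplicial limit of $\W(B^{\otimes \bullet})$ along an arc cover should follow from classical arc descent for the ordinary Witt vectors together with vanishing of higher cohomology in the cosimplicial direction. I expect the main conceptual step to be the passage from connectivity of $R$ to discreteness of $\F_p \otimes R$ in the third paragraph, which crucially relies on the diagram $\{A_i\}$ consisting of discrete rings so that the limit spectral sequence only populates non-positive degrees.
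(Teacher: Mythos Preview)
Your route via the essential-image characterization of $\W$ (\Cref{prop:witt-and-tilt}) is different from the paper's and, once patched, works. The paper instead exploits \Cref{lem:functorial-sqz-tower}: since $\W(A)$ is the limit of a functorial tower whose associated graded pieces are suspensions of $A$, and limits commute with limits, one is reduced to checking that the cone $B \to \{A_i\}$ remains a limit diagram in $\Sp_p$, which is precisely the connectivity hypothesis. Your approach is more intrinsic (it never unpacks $\W$), at the cost of needing the essential-image criterion as a black box.

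There is, however, a genuine gap in your reduction. The lemma's hypothesis is that $\lim_{\Sp_p} A_i$ is connective; you instead take as working hypothesis that $R = \lim_i \W(A_i)$ is connective, and you never derive one from the other. The fix is already latent in your second paragraph: you compute $\F_p \otimes R \simeq \lim_{\Sp_p} A_i$, and $R$ is $p$-complete (as a limit of $p$-complete objects), so connectivity of $R$ is \emph{equivalent} to connectivity of $R/p$ --- if $R/p$ is connective then for $n<0$ the long exact sequence forces $p$ to act bijectively on $\pi_n R$, which is derived $p$-complete and hence zero. This gap bites in your treatment of example~(2): you try to deduce $\lim^1 W(A_j)=0$ from $\lim^1 A_j=0$ via the Mittag--Leffler condition, but vanishing $\lim^1$ does not imply Mittag--Leffler for arbitrary towers of abelian groups, so that step is incomplete. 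With the equivalence of connectivity conditions in hand, no such detour is needed: $\lim^1 A_j = 0$ is exactly the statement that $\lim_{\Sp_p} A_j$ is connective, and you are done.
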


\begin{proof}  
  Suppose we are given a limit diagram $F^{\triangleleft}\colon D^{\triangleleft} \to \Perf$ that we wish to show is preserved by $\W$. 
  It suffices to show that $F^{\triangleleft} \circ \W$ is a limit diagram on underlying spectra.
  Using \Cref{lem:functorial-sqz-tower} it therefore suffices to show that $F^{\triangleleft} \circ \Sigma^j$ is a limit diagram. The latter is a reformulation of the hypothesis on $F^{\triangleleft}$.
   


  For example (1) we note that arbitrary products of connective $\F_p$-modules are connective.
  The vanishing condition on $\lim^{1}$ in (2) is a restatement of connectivity.
  Example (3) follows from arc descent for the category of perfect complexes on perfect, qcqs schemes \cite[Theorem 5.16]{arctopology}.
\end{proof}

We also give another description of the essential image of $\W$.

\begin{lem} \label{lem:W-image}
A commutative algebra $R\in \CAlg(\Sp_p)_{\geq 0}$ is in the essential image of $\W$ if and only if $R$ is  $p$-adically formally \'etale and  $\pi_0(R)/p$ is perfect.

\end{lem}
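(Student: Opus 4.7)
The forward direction is immediate: if $R \simeq \W(A)$ for a perfect $\F_p$-algebra $A$, then \Cref{lem:formally-etale} gives $p$-adic formal \'etaleness, while $\pi_0(R)/p \cong A$ is perfect by \Cref{prop:thickenings-exist}.

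For the backward direction, set $A := \pi_0(R)/p$ and let $\sigma \colon R \to A$ denote the canonical composite $R \to \pi_0(R) \to \pi_0(R)/p$. The plan is to show that $\sigma$ exhibits $R$ as an $\Ss_p$-thickening of $A$ in the sense of \Cref{dfn:thickening}; since \Cref{cnstr:Witt} identifies $\W(A)$ as the essentially unique such thickening, this produces the desired equivalence $R \simeq \W(A)$.

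Condition (a) of \Cref{dfn:thickening} holds by construction, so the work lies in verifying condition (b): for every $S \in \CAlg(\Sp_p)$, producing an equivalence
\[ \Map_{\CAlg(\Sp_p)}(R, S) \xrightarrow{\simeq} \Map_{\CAlg(\Sp_p)}(A, \pi_0(S)/p). \]
Since $R$ is connective, I may replace $S$ by $\tau_{\geq 0}S$ and assume $S$ is connective. I will then exhibit $S$ as an $\omega$-indexed inverse limit of iterated square-zero extensions over the discrete $\F_p$-algebra $\pi_0(S)/p$, built in two stages. First, the Postnikov tower $S \simeq \lim_{n}\tau_{\leq n}S$ has transition maps $\tau_{\leq n+1}S \to \tau_{\leq n}S$ (for $n \geq 0$) with fiber $\Sigma^{n+1}\pi_{n+1}(S)$, so each is a square-zero extension. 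Second, applied to the bottom $\tau_{\leq 0}S = \pi_0(S)$, the derived $p$-adic tower expresses $\pi_0(S) \simeq \lim_{n}(\pi_0(S) \otimes^{\mathrm{L}}_{\Z}\Z/p^n)$, in which each transition map has fiber $\pi_0(S) \otimes^{\mathrm{L}}_{\Z} \fib(\Z/p^{n+1} \to \Z/p^n) \simeq \pi_0(S)/p$ and is therefore a square-zero extension by $\pi_0(S)/p$. Because $R$ is $p$-adically formally \'etale, \Cref{rmk:using-formally-etale} makes $\Map_{\CAlg(\Sp_p)}(R,-)$ insensitive to each square-zero extension, and both towers collapse to equivalences. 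Finally, any map $R \to \pi_0(S)/p$ factors uniquely through $R \to \pi_0(R)/p = A$ since the target is a discrete $\F_p$-algebra, yielding the desired identification with $\Hom_{\CRing}(A, \pi_0(S)/p)$.

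The main technical obstacle I anticipate is the careful verification that the derived $p$-adic tower of $\pi_0(S)$ is genuinely a tower of square-zero extensions in the precise sense covered by \Cref{rmk:using-formally-etale}---especially when $\pi_0(S)$ has nontrivial $p$-torsion, so that the classical and derived $p$-adic towers diverge at the level of $\pi_0$ and $\pi_1$. The reduction to connective $S$ is a more routine check, using that $R$ is connective so that $\CAlg(\Sp_p)$-maps out of $R$ factor uniquely through $\tau_{\geq 0}S$.
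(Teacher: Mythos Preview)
Your proposal is correct and follows essentially the same argument as the paper: the forward direction is immediate from earlier lemmas, and for the converse you verify condition~(b) of \Cref{dfn:thickening} by exhibiting any (connective) $S$ as a limit of a tower of square-zero extensions over $\pi_0(S)/p$---first the Postnikov tower, then the $p$-adic tower on $\pi_0$---so that $p$-adic formal \'etaleness collapses the mapping space. Your anticipated technical concern about the derived $p$-adic tower is handled by noting that square-zero extensions are stable under base change (so each transition map remains one after tensoring with $\pi_0(S)$), and convergence uses that $\pi_0(S)$ is derived $p$-complete since $S \in \CAlg(\Sp_p)$.
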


\begin{proof}
 We have already seen that any $R$ in the essential image satisfies these conditions.  In light of \Cref{prop:witt-and-tilt}, we only need to check that if $R$ is $p$-adically formally \'etale, then it satisfies condition (b) of \Cref{dfn:thickening}.   Similarly to the proof of \Cref{lem:functorial-sqz-tower}, we may exhibit any $S\in \CAlg(\Sp_p)_{\geq 0}$ as a limit of a tower of square zero extensions of $\pi_0(S)/p$ (i.e., first along its $p$-adic tower on $\pi_0$, and then its Postnikov tower); then, since $R$ is $p$-adically formally \'etale, we obtain the desired equivalence
  \[ \Map_{\CAlg_p}(R, S) \xrightarrow{\cong} \Map_{\CAlg_p}(R, \pi_0(S)/p). \]  
\end{proof}

\subsubsection{The multiplicative lift}\hfill
\label{subsubsec:multlift}

Using our understanding of the essential image of $\W$, we can now give some examples.


\begin{exm} \label{exm:line}
Let $M$ be a (discrete) commutative monoid on which $p$ acts invertibly.  Then since $\F_p \otimes \Ss[M]_p  \cong \F_p[M]$ is discrete and perfect, \Cref{prop:witt-and-tilt} implies that there is an equivalence
\[
\W(\F_p[M]) \simeq \Ss[M]_p.
\]
In particular, applying this to the case $M = \N[1/p]$, the additive monoid of non-negative elements in $\mathbb{Z}[1/p]$, we have an equivalence
  \[ \W(\F_p[\Np]) \simeq \Ss[\Np]_p \]
  between the spherical Witt vectors on the perfection of a polynomial algebra and
  the $p$-complete spherical group ring on $\N[1/p]$.
  \tqed
\end{exm}

\begin{exm} \label{exm:free-algebra-perfd}
  Consider the $p$-completion of the commutative algebra
  \[ \colim  \left( \Ss\{x\} \xrightarrow{x \mapsto x^p} \Ss\{x\} \xrightarrow{x \mapsto x^p} \cdots \right) \]
  which might be described as the perfection of a free algebra.

  Since $\Ss\{x\}$ is a free algebra on a single class in degree zero,
  its cotangent complex is a free module on a class in degree 0 (which we might call $dx$).
  After tensoring down to $\F_p$, the induced maps on cotangent complexes are each zero as $dx \mapsto d(x^p) = px^{p-1} dx = 0$.
  This means that the perfection of a free algebra is $p$-adically formally \'etale.
  After examining what happens on $ \pi_0(-)/p$, we can use \Cref{lem:W-image} to conclude that this algebra is equivalent to $\W(\F_p[\Np])$ as well.  
  \tqed
\end{exm}

\begin{cnstr}[Multiplicative lifts for $\W$]\label{cnstr:teichmuller-lift}
For a discrete ring $A$, let $A^*$ denote the commutative monoid of nonzero elements of $A$ under multiplication (in contrast to $A^{\times}$, the group of units).  If $A$ is a perfect $\F_p$-algebra, then $p$ acts invertibly on $A^*$ and so the natural map
\[
\F_p[A^*] \to A
\]
is a map between perfect $\F_p$-algebras.  Applying the functor $\W$ and using the equivalence of  \Cref{exm:line}, we obtain a composite map
\[
\tau\colon \Ss[A^*]_p \xrightarrow{\simeq} \W(\F_p[A^*]) \to \W(A)
\]
which we refer to as the \deff{(spherical) multiplicative lift}.  
\tqed
\end{cnstr}

In the spherical setting, the multiplicative lift has a novel feature,
which is that it provides a simple way to construct many strict elements (cf. \Cref{rmk:strict}) in $\pi_0(R)$.  Namely, for any $z \in (R^\flat)^*$, we have the composite
\[ \Ss[t] \xrightarrow{t\mapsto z} \Ss[ (R^{\flat}) * ]_p \xrightarrow{\tau} \W(R^\flat) \to R, \]
and we will write $[z]$ for the corresponding strict element in $\pi_0(R)$.

\subsection{Deformation theory and tilting}
\label{subsec:tilting}\ 

We now turn to studying the tilt functor $(-)^\flat$ which is right adjoint to $\W(-)$.
Since this functor lands in a $1$-category, where objects are relatively well described by their underlying set, we will mostly focus on understanding $(-)^\flat$ at this level.

\begin{lem} \label{lem:tilt-corepresented}
  The (underlying set of the) tilt functor $(-)^{\flat}$ is corepresented by the commutative algebra
  $\Ss[\Np]_p$.
\end{lem}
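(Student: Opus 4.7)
The plan is to assemble the claim from two ingredients: a standard corepresentability statement for underlying sets in $\Perf$, and the Witt--tilt adjunction combined with \Cref{exm:line}.

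First I would identify the forgetful functor $\Perf \to \Set$. I claim it is corepresented by the free perfect $\F_p$-algebra on one generator, which can be written concretely as $\F_p[\N[1/p]]$ (with generator the monoid element $t^1$). Indeed, to give a map $\F_p[\N[1/p]] \to A$ in $\Perf$ is to give compatible $p$-power roots of an element of $A$, and since $A$ is perfect such a system is the same data as an element of $A$. Equivalently, this is the colimit along Frobenius of $\F_p[t]$, which is by construction left adjoint, at the level of $1$-categories, to the inclusion of perfect $\F_p$-algebras into pointed $\F_p$-algebras.

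Next, I apply the Witt--tilt adjunction of \Cref{prop:witt-and-tilt}. For any $R \in \CAlg(\Sp_p)$ this yields a natural bijection
\[ R^\flat \;\cong\; \Hom_{\Perf}(\F_p[\N[1/p]], R^\flat) \;\cong\; \pi_0 \Map_{\CAlg(\Sp_p)}(\W(\F_p[\N[1/p]]), R). \]
Finally, \Cref{exm:line} provides an equivalence $\W(\F_p[\N[1/p]]) \simeq \Ss[\N[1/p]]_p$, which upon substitution gives the desired corepresentation
\[ R^\flat \;\cong\; \pi_0 \Map_{\CAlg(\Sp_p)}(\Ss[\N[1/p]]_p, R). \]

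There is no real obstacle here, as the argument is purely formal once the pieces are in place; the only thing to be a bit careful about is that the tilt takes values in the $1$-category $\Perf$, so the $\infty$-categorical mapping space on the right is automatically discrete, and ``underlying set'' may be read as $\pi_0$ of that space.
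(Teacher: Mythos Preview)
Your proof is correct and follows essentially the same approach as the paper: both combine the fact that $\F_p[\Np]$ corepresents the underlying set functor on $\Perf$ with the Witt--tilt adjunction and the identification $\W(\F_p[\Np]) \simeq \Ss[\Np]_p$ from \Cref{exm:line}. Your version simply spells out the intermediate adjunction step and the justification for corepresentability in $\Perf$ more explicitly.
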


\begin{proof}
  This follows from the equivalence $\Ss[\Np]_p \simeq \W(\F_p[\Np])$ of \Cref{exm:line}
  together the fact that $\Fp[\Np]$ corepresents the underlying set functor on perfect $\F_p$-algebras.
\end{proof}

Using the corepresentability of $(-)^\flat$, we can establish various properties of this functor in short order by leaning on our understanding of the spherical Witt vectors.

\begin{lem} \label{lem:calg-sqz}
  Given a square zero extension $R \to S$ in $\CAlg(\Sp_p)$ the induced map
  \[ R^\flat \to S^\flat \]
  is an isomorphism.  
\end{lem}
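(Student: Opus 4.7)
The plan is to reduce the question to a statement about mapping spaces out of a single $p$-adically formally \'etale object and then invoke deformation theory. The tilt $R^\flat$ lives in the $1$-category $\Perf$, so to check that $R^\flat \to S^\flat$ is an isomorphism it suffices to check that the induced map on underlying sets is a bijection.

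By \Cref{lem:tilt-corepresented}, the underlying set of $R^\flat$ is naturally identified with $\pi_0 \Map_{\CAlg(\Sp_p)}(\Ss[\Np]_p, R)$. So the claim reduces to showing that the map
\[
\Map_{\CAlg(\Sp_p)}(\Ss[\Np]_p, R) \to \Map_{\CAlg(\Sp_p)}(\Ss[\Np]_p, S)
\]
is an equivalence (it is enough to check this on $\pi_0$, but we get the equivalence on the nose). By the presentation $\Ss[\Np]_p \simeq \W(\F_p[\Np])$ from \Cref{exm:line}, together with \Cref{lem:formally-etale}, the commutative algebra $\Ss[\Np]_p$ is $p$-adically formally \'etale over $\Ss_p$. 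The desired equivalence on mapping spaces is then precisely the content of \Cref{rmk:using-formally-etale} applied to the square zero extension $R \to S$.

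I do not expect any significant obstacle: the lemma is essentially an unwinding of the corepresentability of $(-)^\flat$ combined with the formal \'etale property already established for $\W$ of a perfect algebra. The only small point worth being explicit about is that, although $(-)^\flat$ a priori takes values in $\Perf$, showing that the underlying set map is bijective is sufficient because $\Perf$ is a $1$-category and the tilting functor is computed on underlying sets by corepresentability.
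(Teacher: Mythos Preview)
Your proof is correct and follows essentially the same approach as the paper: reduce to the underlying set via corepresentability by $\Ss[\Np]_p \simeq \W(\F_p[\Np])$, then use that this object is $p$-adically formally \'etale together with \Cref{rmk:using-formally-etale}. The paper's version is just a more terse rendering of exactly this argument.
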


\begin{proof}
  It suffices to prove this at the level of the underlying set.
  Since $(-)^\flat$ is corepresented by $\W(\F_p[\Np])$, which is formally \'etale over $\Ss_p$, this follows from \Cref{rmk:using-formally-etale}.
\end{proof}

\begin{lem} \label{lem:calg-flat-pi0}
  $(-)^{\flat}$ sends the maps in the span 
  \[\pi_0(R) \leftarrow  \tau_{\geq 0}R  \to R \]
  to equivalences for every $R \in \CAlg(\Sp_p)$.
\end{lem}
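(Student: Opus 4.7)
The plan is to exploit the corepresentability of $(-)^\flat$ on underlying sets by the algebra $\Ss[\Np]_p = \W(\F_p[\Np])$ (\Cref{lem:tilt-corepresented}), combined with the fact that this algebra is $p$-adically formally \'etale (\Cref{lem:formally-etale}) and, crucially, connective. Since $\Perf$ sits inside $\CRing$ and all the maps in question are ring maps between tilts, it suffices to verify that the induced maps are bijections on underlying sets.

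First I would handle the map $\tau_{\geq 0} R \to R$. Because $\Ss[\Np]_p$ is connective (being the $p$-completion of a connective algebra) and because $\tau_{\geq 0}$ is right adjoint to the inclusion $\CAlg(\Sp_p)_{\geq 0} \hookrightarrow \CAlg(\Sp_p)$, we get
\[ \Map_{\CAlg(\Sp_p)}(\Ss[\Np]_p, \tau_{\geq 0}R) \xrightarrow{\simeq} \Map_{\CAlg(\Sp_p)}(\Ss[\Np]_p, R), \]
which by \Cref{lem:tilt-corepresented} is exactly the claim that $(\tau_{\geq 0}R)^\flat \to R^\flat$ is a bijection.

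For the map $\tau_{\geq 0}R \to \pi_0 R$, I would use the Postnikov tower of $\tau_{\geq 0}R$, which converges (in $\CAlg(\Sp_p)$) to $\tau_{\geq 0} R$ since bounded-below spectra are Postnikov-convergent, and each transition $\tau_{\leq n}\tau_{\geq 0}R \to \tau_{\leq n-1}\tau_{\geq 0}R$ is a square-zero extension by $\Sigma^n \pi_n R$. Applying the limit-preserving functor $\Map_{\CAlg(\Sp_p)}(\Ss[\Np]_p, -)$ and passing to $\pi_0$ gives
\[ (\tau_{\geq 0}R)^\flat \cong \lim_n (\tau_{\leq n}\tau_{\geq 0}R)^\flat, \]
and by \Cref{lem:calg-sqz} each transition map on the right is an isomorphism. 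The inverse limit of such a tower therefore agrees with its initial term $(\pi_0 R)^\flat$, which completes the proof. I do not anticipate any real obstacle; the only subtlety to keep straight is that the naturality of corepresentability promotes set-level bijections to ring isomorphisms in $\Perf$, which is automatic since the comparison maps arise from ring maps in $\CAlg(\Sp_p)$.
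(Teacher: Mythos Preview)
Your proposal is correct and matches the paper's proof essentially line for line: the paper also invokes connectivity of $\Ss[\Np]_p$ for the map $\tau_{\geq 0}R \to R$, and then uses that the Postnikov tower is a tower of square-zero extensions together with \Cref{lem:calg-sqz} for the map $\tau_{\geq 0}R \to \pi_0 R$. Your write-up simply unpacks a few more details (Postnikov convergence, the limit argument) that the paper leaves implicit.
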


\begin{proof}
  The statement for $\tau_{\geq 0}R  \to R$ follows from the fact that $\Ss[\Np]_p$ is connective.
  The Postnikov tower is a tower of square-zero extensions, therefore \Cref{lem:calg-sqz} implies that
  $(\tau_{\geq 0}R)^\flat \to (\pi_0(R))^\flat$ is an equivalence as well.
\end{proof}

\begin{lem} \label{lem:tilt-of-quotient}
  Let $R \in \CAlg(\Sp_p)$ be a commutative algebra and $x \in \pi_0R$ a class such that $\pi_0R$ is (derived) $x$-complete.
  Then, there are equivalences
  \[ R^\flat \cong (\pi_0R)^\flat \cong ((\pi_0R)/x)^{\flat}. \]
  In particular, this holds if $R$ is $x$-complete.
\end{lem}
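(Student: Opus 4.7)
The statement has two equivalences. The first, $R^\flat \cong (\pi_0 R)^\flat$, is immediate from \Cref{lem:calg-flat-pi0}, so the content is concentrated in the second equivalence $(\pi_0 R)^\flat \cong ((\pi_0 R)/x)^\flat$. Let $A = \pi_0 R$. My plan is to express $A$ as the limit of its $x$-adic tower, observe that each stage differs from the previous by a square-zero extension, and then apply \Cref{lem:calg-sqz} together with the fact that $(-)^\flat$ preserves limits.

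The first step is local: for each $n \geq 1$, the surjection $A/x^{n+1} \twoheadrightarrow A/x^n$ has kernel $(x^n)/(x^{n+1})$, and this kernel satisfies $(x^n)^2 \subseteq (x^{2n}) \subseteq (x^{n+1})$, so its square vanishes in $A/x^{n+1}$. Consequently the map is a square-zero extension in $\CAlg(\Sp_p)$ (by a module over $A/x$). \Cref{lem:calg-sqz} then gives $(A/x^{n+1})^\flat \cong (A/x^n)^\flat$, and iterating down the tower yields $(A/x^n)^\flat \cong (A/x)^\flat$ for all $n \geq 1$.

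For the global step, I would use the hypothesis that $A$ is derived $x$-complete to identify $A$ with the limit of its tower $\{A/x^n\}$ in $\CAlg(\Sp_p)$. Then, since $(-)^\flat$ is corepresented by $\Ss[\Np]_p$ (\Cref{lem:tilt-corepresented}), it is a right adjoint at the level of underlying sets and thus preserves arbitrary limits. Combining,
\[ A^\flat \cong \lim_n (A/x^n)^\flat \cong (A/x)^\flat, \]
which is the desired equivalence. The ``in particular'' clause then follows because $x$-completeness of the spectrum $R$ implies that $\pi_0 R$ is derived $x$-complete as a discrete ring.

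The main obstacle I anticipate is the passage from derived $x$-completeness of $A$ to the identification $A \cong \lim_n A/x^n$ in $\CAlg(\Sp_p)$: derived completeness a priori only produces $A$ as a limit of the derived Koszul tower $[A \xrightarrow{x^n} A]$, not of the classical quotient tower, and these can differ when $A$ has nontrivial $x$-power torsion with a nonzero $\lim^1$. The cleanest resolution is to argue at the level of corepresentable functors and pro-isomorphisms: the maps $[A \xrightarrow{x^n} A] \to A/x^n$ become pro-isomorphic on $\pi_0$ after Postnikov-truncation of the Koszul tower by further square-zero extensions (by $x^n$-torsion modules, which are again $A/x$-modules), and another application of \Cref{lem:calg-sqz} absorbs the correction. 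This reduces any potential discrepancy between the derived and classical towers to further square-zero extensions, which are invisible to $(-)^\flat$.
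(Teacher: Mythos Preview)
Your proposal is correct and follows essentially the same approach as the paper: reduce to $\pi_0$ via \Cref{lem:calg-flat-pi0}, write $\pi_0 R$ as the limit of its $x$-adic tower of square-zero extensions, and conclude using \Cref{lem:calg-sqz} together with the fact that tilting commutes with limits. In fact, you are more careful than the paper about the distinction between derived $x$-completeness and classical $x$-adic completeness of $\pi_0 R$; the paper simply asserts that $\pi_0 R$ is the limit of its $x$-adic tower, while your final paragraph explains how to bridge the gap via further square-zero extensions coming from the $x$-torsion---this is a genuine refinement.
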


\begin{proof}
  The first equivalence is copied from \Cref{lem:calg-flat-pi0}.  For the second one, note that since $\pi_0R$ is discrete and $x$-complete, we can write it as the limit of its $x$-adic tower as a tower of square-zero extensions of commutative algebras.  \Cref{lem:calg-sqz} and the fact that tilting commutes with limits now give the desired equivalence.
  
 For the ``in particular,'' note that expressing $x$-completeness as the vanishing of the inverse limit along multiplication by $x$ and using the Milnor sequence for homotopy groups of an inverse limit, one sees that $R$ is $x$-complete exactly when its homotopy groups are derived $x$-complete.
\end{proof}

\begin{rmk}
  Note that in \Cref{lem:tilt-of-quotient} the distinction between
  $(\pi_0R)/x$ and $\pi_0((\pi_0R)/x)$ doesn't matter because
  \Cref{lem:calg-flat-pi0} lets us replace commutative algebras by their $\pi_0$ when taking tilts.
  \tqed
\end{rmk}

\begin{lem} \label{lem:tilt-compact}
  The tilting functor $(-)^{\flat}\colon \CAlg(\Sp_p) \to \Perf$ commutes with $\omega_1$-filtered colimits.
  In particular, this means that, for any uncountable regular ordinal $\kappa$, the spherical Witt vectors functor sends $\kappa$-compact objects to $\kappa$-compact objects.
\end{lem}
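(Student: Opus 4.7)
The plan is to deduce the first claim from the $\omega_1$-compactness of the corepresenting object $\Ss[\Np]_p$ and then bootstrap the ``in particular'' statement using the Witt--tilt adjunction.

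For the first claim, \Cref{lem:tilt-corepresented} identifies the underlying set of $R^{\flat}$ with $\pi_0\Map_{\CAlg(\Sp_p)}(\Ss[\Np]_p, R)$. Filtered colimits in $\Perf$ are computed on underlying sets, because a filtered colimit of perfect $\F_p$-algebras is perfect (Frobenius commutes with filtered colimits, so the colimit inherits its being an isomorphism). It therefore suffices to show that $\Ss[\Np]_p$ is $\omega_1$-compact in $\CAlg(\Sp_p)$. Before $p$-completion, we can write
\[ \Ss[\Np] \simeq \colim\bigl(\Ss\{x\} \xrightarrow{x\mapsto x^p} \Ss\{x\} \xrightarrow{x\mapsto x^p} \cdots\bigr) \]
in $\CAlg(\Sp)$, and each $\Ss\{x\}$ is compact there (it corepresents $\Omega^{\infty}$, which commutes with filtered colimits). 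A sequential colimit of compact objects is $\omega_1$-compact, so $\Ss[\Np]$ is $\omega_1$-compact in $\CAlg(\Sp)$.

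It remains to transfer $\omega_1$-compactness across $p$-completion. The functor $(-)_p\colon \CAlg(\Sp) \to \CAlg(\Sp_p)$ is left adjoint to the fully faithful inclusion, so it preserves $\omega_1$-compact objects provided the inclusion preserves $\omega_1$-filtered colimits. The latter reduces to the analogous statement for spectra, since filtered colimits of commutative algebras are created on underlying spectra. There, $X \in \Sp$ is $p$-complete iff the countable limit $\lim\bigl(X \xrightarrow{p} X \xrightarrow{p} \cdots\bigr)$ vanishes, and countable limits commute past $\omega_1$-filtered colimits; hence an $\omega_1$-filtered colimit of $p$-complete spectra is $p$-complete. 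This yields that $\Ss[\Np]_p$ is $\omega_1$-compact in $\CAlg(\Sp_p)$, which completes the proof of the first claim.

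For the ``in particular'' part, fix an uncountable regular cardinal $\kappa$ and a $\kappa$-compact $A \in \Perf$. Adjointness (\Cref{prop:witt-and-tilt}) gives a natural equivalence
\[ \Map_{\CAlg(\Sp_p)}(\W(A), R) \cong \Map_{\Perf}(A, R^{\flat}), \]
so $\Map_{\CAlg(\Sp_p)}(\W(A), -)$ factors as $(-)^{\flat}$ followed by $\Map_{\Perf}(A, -)$. The first factor commutes with $\omega_1$-filtered, and therefore with $\kappa$-filtered, colimits by the first part; the second commutes with $\kappa$-filtered colimits by the hypothesis on $A$. Consequently $\W(A)$ is $\kappa$-compact in $\CAlg(\Sp_p)$. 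The only step requiring any care is the interchange of a countable limit with an $\omega_1$-filtered colimit in $\Sp$, which is where the restriction $\kappa \geq \omega_1$ enters; everything else is formal manipulation of adjunctions and corepresenting objects.
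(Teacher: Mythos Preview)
Your argument is correct modulo one small imprecision, and it takes a genuinely different route from the paper's. The imprecision: the equivalence $\Ss[\Np] \simeq \colim\bigl(\Ss\{x\} \xrightarrow{x\mapsto x^p} \Ss\{x\} \to \cdots\bigr)$ does not hold in $\CAlg(\Sp)$ before $p$-completion; what is true (by \Cref{exm:free-algebra-perfd} together with \Cref{exm:line}) is that their $p$-completions agree, or alternatively one has $\Ss[\Np] \simeq \colim\bigl(\Ss[t] \xrightarrow{t \mapsto t^p} \Ss[t] \to \cdots\bigr)$ before completion. Either correction leaves your argument intact, since $\Ss[t]$ is also compact in $\CAlg(\Sp)$.

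The paper argues differently: it decomposes $R^\flat$ as $\lim_{x \mapsto x^p} \pi_0(R)/p$ and shows each ingredient commutes with $\omega_1$-filtered colimits, the key point being that $\pi_0$ on $\CAlg(\Sp_p)$ differs from $\pi_0$ on $\CAlg(\Sp)$ only by a $p$-completion, i.e.\ a countable limit, which commutes with $\omega_1$-filtered colimits in $\Set$. Both proofs thus rest on the same interchange of countable limits with $\omega_1$-filtered colimits; the paper applies it in $\Set$ to the explicit formula for the tilt, while you apply it in $\Sp$ to establish $\omega_1$-compactness of the corepresenting object. Your packaging has the pleasant side effect that the ``in particular'' becomes an immediate formal consequence of the Witt--tilt adjunction.
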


\begin{proof}
First, observe that $\pi_0 \colon \CAlg(\Sp_p) \to \Set$ commutes with $\omega_1$-filtered colimits.  This is because while $\pi_0$ commutes with filtered colimits in $\CAlg(\Sp)$, colimits in $p$-complete algebras are computed by colimit followed by $p$-completion.  Since $p$-completion is an $\omega$-indexed limit of functors which commute with filtered colimits, it commutes with $\omega_1$-filtered colimits. Here, we are using that $\omega$-indexed limits commute with $\omega_1$-filtered colimits in sets.

To finish,  we use the description of the tilt as
  $\lim_{x \mapsto x^p} \pi_0(-)/p$ from \Cref{prop:witt-and-tilt}
 to see that the tilt is formed from an $\omega$-indexed limit of terms which
  individually commute with $\omega_1$-filtered colimits, and therefore again it commutes with $\omega_1$-filtered colimits.  
\end{proof}

\subsection{Lubin--Tate theories}
\label{subsec:LT-theory}\

Quillen's work on complex cobordism allows us to attach to each complex orientable homotopy commutative ring spectrum $R$ a formal group $\G^Q$ over the graded ring $\pi_{*}R$. This association further identifies choices of complex orientation of $R$ and choices of coordinate on $\G^Q$.

\begin{dfn}
  If $R$ is complex orientable, then
  the stratification of the moduli of formal groups by height
  provides a sequence of ideals
  \[ \m_0 \subset \m_1 \subset \cdots \subseteq \pi_*R \]
  in $\pi_*R$ known as the \deff{Landweber ideals}.
  We let \deff{$\overline{v}_k$}$ \in (\pi_*R)/\m_{k-1}$ denote the
  $k^{\mathrm{th}}$ Hasse invariant of $\G^Q$, which cuts out the locus where $\G^Q$ has height at least $k+1$. Note that $\overline{v}_k$ is of degree $2(p^k-1)$.
  \tqed
\end{dfn}

\begin{cnv}
In this paper, we take the convention that once a ring spectrum $R$ is known to be complex orientable, we make a choice of lifts $v_0, v_1 ,\cdots \in \pi_*(R)$ of the Hasse invariants.  The particular choice will not be important to us, except that if a ring is constructed as an algebra over another complex oriented ring, we take the image of the previously chosen $v_i$'s.  
\tqed
\end{cnv}

Given this choice, we obtain a preferred choice of generators for the Landweber ideals.

\begin{dfn}
  We say that a homotopy commutative ring spectrum $R$
  is \deff{complex periodic} if $R$ is complex orientable,
  $\pi_2(R)$ is projective of rank 1 as a $\pi_0R$-module,
  and the multiplication map
  \[ \pi_2(R) \otimes_{\pi_0(R)} \pi_n(R) \to \pi_{n+2}(R) \]
  is an isomorphism for every $n$.
  \tqed
\end{dfn}

For complex periodic ring spectra, we can reformulate the Quillen formal group as a formal group on $\pi_0R$ whose dualizing line $\omega_{\G^Q}$ is given by $\pi_{2}(R)$ \cite[Example 4.2.19]{ECII}. In the situation where $R$ is additionally even, we can extract the homotopy ring of $R$ from $\pi_0R$ together with $\G^Q$ via the formula
\[ \pi_*(R) \cong \begin{cases} \omega_{\G^Q}^{\otimes k} & *=2k \\ 0 & \text{otherwise} \end{cases}. \]

If $\pi_*R$ has a unit in degree 2, then we will typically write $u$ for a choice of such a unit.  This choice provides a trivialization of $\omega_{\G^Q}$, with which we can push the Hasse invariants of $\G^Q$ into degree zero where we write $u_i$ instead of $v_i$.
Concretely, we have $v_i= u_iu^{p^i-1} $.

If $R$ is an even\footnote{Note that evenness guarantees complex orientability.} commutative algebra,
then although we cannot lift the quotients $\pi_*(R) / \m_k$ to the level of commutative algebras,
we can construct a sequence of $\E_1$-$R$-algebras $R \modm_k$ 
which act as quotients by the Landweber ideals.

\begin{cnstr} \label{cnstr:modm}
  Given an even commutative algebra $R$ and an element $x\in \pi_*(R)$, Hahn and Wilson \cite{HahnWilsoneven}, building on work of Angeltveit \cite{Ang}, show that the $R$-module $R/x$ admits the structure of an $\E_1$-$R$-algebra.  Hence, given a choice of elements $v_i$ as above, we may obtain  an $\E_1$-$R$-algebra $R\modm_j$ whose underlying $R$-module is given by the tensor product
  \[ (R / v_0 ) \otimes_R \cdots \otimes_R (R /v_{j}). \]
  
  More generally, given a commutative $R$-algebra $S$
  we write \deff{$S\modm_j$} for the $\E_1$-$A$-algebra $R\modm_j \otimes_{R} S$ obtained by base-change.
  Note that the tensor product description implies that if $S$ is $T(n)$-local,
  then $S\modm_j$ is $T(n)$-local as well.    
  \tqed
\end{cnstr}

\begin{cnv}
  In this paper we will almost exclusively work with objects of height $n$.  Consequently, for the sake of brevity, we will use $\m$ for the Landweber ideal $\m_{n-1}$ which cuts out the locus of height $\geq n$.
  \tqed
\end{cnv}


The importance of the universal deformation of a formal group of height $n$ to homotopy theory was first recognized by Morava, who used Landweber’s exact functor theorem to attach to each perfect field $k$ and formal group $\G_0$ of height $n$ over $k$ a homotopy commutative ring spectrum $E(k;\,\G_0)$. Building on this, Goerss, Hopkins and Miller developed a collection of obstruction theoretic techniques for analyzing the space of coherent ring structures on a homotopy commutative ring, and proved the following theorem which provides us with our basic objects of interest.

\begin{thm}[Goerss--Hopkins--Miller \cite{GoerssHopkins, RezkGHM}] \label{thm:GHM}
  Given a perfect field $k$ of characteristic $p$ and a formal group $\G_0$ over $k$ of height $n < \infty$ there exists an even periodic $K(n)$-local commutative algebra $E(k, \G_0)$ such that $\pi_0E(k, \G_0)$ can be identified with the Lubin--Tate ring parametrizing deformations of $\G_0$.
\end{thm}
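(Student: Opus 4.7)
The plan is to follow the obstruction-theoretic framework of Goerss and Hopkins. As a first step, one produces the underlying homotopy commutative ring spectrum: by the classical Lubin--Tate theorem, deformations of $\G_0$ are classified by the complete local ring $R = W(k)\ll u_1, \ldots, u_{n-1}\rr$, carrying a universal formal group law $\G$. The sequence $p, u_1, \ldots, u_{n-1}$ is a regular sequence on which $\G$ has height exactly $n$, so $\G$ satisfies Landweber's exactness criterion. The formula $X \mapsto \MU_*(X) \otimes_{\MU_*} R[u^{\pm 1}]$ then defines an even periodic complex-oriented homology theory, which by Brown representability is represented by a homotopy commutative ring spectrum $E$ with $\pi_*E = R[u^{\pm 1}]$. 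Even periodicity is built into this presentation, and $K(n)$-locality follows because the Landweber-exact formula kills all $K(j)$-acyclic spectra for $j \neq n$.

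The heart of the argument is the second step: refining the homotopy commutative structure on $E$ to an $\E_\infty$-ring structure. By Morava's theorem, the cooperation ring $E_*E$ is flat over $E_*$ and admits a clean description as the ring of continuous functions on the profinite groupoid scheme of pairs of deformations of $\G_0$ equipped with an isomorphism of underlying formal groups. Goerss and Hopkins construct the moduli space $\mathcal{M}(E)$ of $\E_\infty$-refinements of $E$ as the limit of a Postnikov-type tower of spaces $\mathcal{M}_k(E)$, in which the fibers of $\mathcal{M}_{k+1}(E) \to \mathcal{M}_k(E)$ are, when nonempty, torsors over certain Andre--Quillen cohomology groups of $E_*$ regarded as a commutative algebra object in $E_*E$-comodules. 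One needs to show both that these obstruction groups vanish in the relevant bi-degrees (existence) and that a stronger vanishing holds (contractibility of $\mathcal{M}(E)$, and hence uniqueness up to a contractible choice).

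The main obstacle is the vanishing of these Andre--Quillen cohomology groups. The critical input is the formal smoothness — indeed, a strong pro-\'etaleness — of the Lubin--Tate deformation functor over the moduli of formal groups, which translates cohomologically into vanishing of the cotangent complex of $(E_*, E_*E)$ in the pertinent bi-degrees. Rezk's article \cite{RezkGHM} realizes this via an explicit simplicial resolution adapted to the Lubin--Tate ring and the action of the Morava stabilizer group on the deformation tower. An alternative modern approach, due to Lurie and carried out in \cite{ECII}, circumvents the explicit vanishing: one constructs a derived moduli stack of oriented formal groups intrinsically as a spectrally enriched object and extracts $E(k, \G_0)$ as its structure sheaf over the formal neighborhood of $(k, \G_0)$, so that the $\E_\infty$-structure and the identification of $\pi_0$ with the Lubin--Tate ring are automatic from the construction.
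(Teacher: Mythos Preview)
The paper does not supply its own proof of this theorem: it is stated as a citation to \cite{GoerssHopkins, RezkGHM}, and the paper immediately passes to Lurie's generalization \cite[Theorems 5.0.2, 5.1.5 and 5.4.1]{ECII} (stated as \Cref{thm:GHML}), which is likewise quoted without proof. So there is nothing in the paper to compare your argument against; your sketch is a reasonable summary of the Goerss--Hopkins obstruction-theoretic approach that the citation points to.

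A couple of minor imprecisions are worth flagging. First, the justification of $K(n)$-locality is not quite right: Landweber exactness gives you $E(n)$-locality essentially for free, but $K(n)$-locality is the stronger statement that $E$ is local with respect to $K(n)$-acyclics, and this relies on the $\m$-adic completeness of $\pi_*E$ (cf.\ \cite[Proposition 7.10]{HovStrick}). Second, ``Brown representability'' is a slight oversimplification of how one extracts a spectrum from a Landweber-exact functor; the standard route goes through the category of $\MU$-modules or the stack $\Mfg$. Neither point affects the overall shape of the argument, which is the standard one.
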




Generalizing the Goerss--Hopkins--Miller theorem,
Lurie has constructed a functor
\[ E(-;-) \colon \int_{\Perf} \Mfg^{=n} \to \CAlg(\Sp_{T(n)}) \]
which takes in a perfect $\F_p$-algebra $A$ together with a formal group $\mathbb{H}_0$ of height exactly $n$ over $A$ and produces a $T(n)$-local commutative algebra $E(A;\H_0)$
(see \cite[Section 5]{ECII}).

\begin{thm}[{\cite[Theorems 5.0.2, 5.1.5 and 5.4.1]{ECII}}] \label{thm:GHML}\ \\
  The functor $E(-;-)$ has the following properties:
  \begin{enumerate}
  \item $E(-;-)$ is fully faithful.
  \item $E(A;\, \H_0)$ is $K(n)$-local and even periodic.
  \item 
    Any sequence
    $v_0,\dots,v_{n-1} \in \pi_*E(A;\,  \H_0)$
    lifting the Hasse invariants is regular.
  \item The essential image of $E(-;-)$ consists of those
    commutative algebras $R$ satisfying $(2)$ and $(3)$ for which    
    $\pi_0(R)/\m$ is perfect.
  \item There is a natural identification of
    $(\pi_0E(A;\,  \H_0) /\m, (\G^Q)_{\pi_0E(A;\, \H_0) /\m})$ with $(A, \H_0)$
    which, when $A$ is a perfect field, exhibits $\G^Q$ as a universal deformation of $\H_0$.
  \end{enumerate}
\end{thm}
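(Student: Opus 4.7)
The plan is to combine the Goerss--Hopkins--Miller theorem (\Cref{thm:GHM}) for perfect fields with Lurie's spectral deformation theory to construct $E(-;-)$ functorially over $\Perf$. The guiding principle is that $E(A;\H_0)$ should be the universal even periodic, $K(n)$-local commutative algebra carrying a deformation of $\H_0$.

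First, I would construct $E(-;-)$ by representability. Given $(A,\H_0)$, consider the formal moduli problem on $K(n)$-local, even periodic commutative $\W(A)$-algebras sending $R$ to the space of deformations of $\H_0$ along the reduction $\pi_0(R)/\m \to A$ (equipped with an isomorphism of formal groups). Using the smoothness of the deformation theory of height-exactly-$n$ formal groups together with a spectral representability argument, one obtains a representing object $E(A;\H_0)$. Properties (2) and (5) are then built into the construction. For (3), the classical Lubin--Tate deformation ring over $W(A)$ is a formal power series ring on $n$ generators, and lifting these generators to $\pi_0 E(A;\H_0)$ via the even periodic structure produces a regular sequence of Hasse invariant lifts.

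For fully faithfulness (1), I would compute the mapping space $\Map_{\CAlg}(E(A;\H_0), E(B;\H_1))$ directly from the universal property: such a map is, by representability, the same data as a deformation of $\H_0$ to $E(B;\H_1)$, which by (5) unpacks to a morphism $f \colon A \to B$ in $\Perf$ together with an isomorphism $f^{*}\H_0 \cong \H_1$. This identifies the mapping space with the corresponding space of morphisms in the category of pairs. For the essential image (4), given $R$ satisfying (2) and (3) with $A := \pi_0(R)/\m$ perfect, the Quillen formal group of $R$ reduces modulo $\m$ to a formal group $\H_0$ of height exactly $n$ over $A$, and the universal property supplies a canonical map $E(A;\H_0) \to R$; regularity of the Hasse sequence on both sides forces this map to induce an isomorphism on $\pi_0/\m$ and, by the usual spreading-out argument along the regular sequence, on all of $\pi_*$, hence to be an equivalence.

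The main obstacle is the representability step at the heart of the construction. Establishing that the spectral deformation moduli problem for a height-exactly-$n$ formal group over a perfect $\F_p$-algebra is corepresentable in $K(n)$-local commutative algebras requires setting up a robust spectral formal moduli theory, carefully controlling the relevant tangent complex, and bootstrapping from the case of perfect fields (where \Cref{thm:GHM} applies) to general perfect algebras by a descent/gluing argument. This is the central technical content of \cite[Section 5]{ECII}.
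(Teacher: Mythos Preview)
This theorem is not proved in the paper at all: it is quoted verbatim as a result of Lurie, with the attribution \cite[Theorems 5.0.2, 5.1.5 and 5.4.1]{ECII}, and no argument is given beyond that citation. So there is no ``paper's own proof'' to compare your proposal against.

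Your sketch is a reasonable high-level summary of how \cite{ECII} proceeds (representability of a spectral deformation problem, reading off (2), (3), (5) from the construction, and deducing (1) and (4) from the universal property), and you correctly identify the representability step as the substantial technical input. But since the present paper treats this as a black box, the appropriate ``proof'' here is simply the citation; any further argument belongs to an exposition of \cite{ECII}, not to this paper.
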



\begin{rmk}
  When applied to Lubin--Tate theories, \Cref{cnstr:modm} attaches to each Lubin--Tate theory
  an associated Morava $K$-theory.
  Since the Landweber ideal in $\pi_*E(A;\,  \H_0)$
  is generated by a regular sequence, we can read off the homotopy groups of this $K$-theory
  \[ \pi_{*}(E(A; \, \H_0)\modm) \cong \begin{cases} \omega_{\H_0}^{\otimes k} & *=2k \\ 0 & \text{otherwise} \end{cases}. \]
  \tqed
\end{rmk}

As we are in the business of producing and manipulating Lubin--Tate theories,
it is useful for us to be able to recognize these objects from as great a distance as possible.   To prove our first Lubin--Tate recognition theorem, we first need the following lemma, which compresses the conditions of \Cref{thm:GHML}(4) into a more manageable form.

\begin{lem} \label{lem:strong-even}
  Given a $T(n)$-local commutative algebra $R$,
  if $V_n \otimes R$ is even for a single type $n$ generalized Moore spectrum $V_n$,
  then
  \begin{enumerate}
  \item $R \otimes V$ is even for every generalized Moore spectrum $V$.
  \item $R$ is $K(n)$-local, even and complex orientable.
  \item The sequence of classes $v_0,\dots,v_{n-1}$ is regular in $\pi_*R$ and
    the $\E_1$-$R$-algebra $R\modm$ of \Cref{cnstr:modm} will have
    $ \pi_*(R\modm) \cong \pi_*(R)/\m $.
  \end{enumerate}
\end{lem}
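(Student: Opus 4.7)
The plan is to exploit the crucial fact that every self-map $v_k^{i_k}$ used in the construction of a generalized Moore spectrum has even degree $2i_k(p^k-1)$. This allows even parity to propagate across cofiber sequences once the relevant regularity is in place. I would establish the three parts in the order (2), (1), (3), handling their interdependencies carefully.

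First, for the $K(n)$-locality claim in (2): the spectrum $R\otimes V_n$ is $T(n)$-local and even, and inherits a $v_n$-self-map of even degree from $V_n$. One verifies that evenness together with this $v_n$-periodic structure forces $R\otimes V_n$ to already equal its $K(n)$-localization. Consequently, the fiber $F$ of $R\to L_{K(n)}R$ is $T(n)$-local and satisfies $F\otimes V_n=0$. Since $V_n$ has type exactly $n$ and $V_n\otimes T(n)\neq 0$, the thick subcategory theorem identifies $V_n$ as a generator of $\Sp_{T(n)}$ as a localizing subcategory, so $F=0$ and $R$ is $K(n)$-local. Evenness of $R$ itself follows once (1) is in hand by writing $R$ as the inverse limit along a cofinal tower of type-$n$ generalized Moore spectra, each term of which is even, and applying a Mittag--Leffler argument to the odd homotopy (which vanishes). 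Complex orientability then follows from evenness via the standard Atiyah--Hirzebruch obstruction theory for computing $[MU,R]$.

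Second, for (1), I would propagate evenness through the inductive construction of generalized Moore spectra. For any generalized Moore $V$, the $K(n)$-locality of $R$ ensures $V\otimes R$ is $K(n)$-local and hence obtained as the inverse limit of $V\otimes V_n^{(m)}\otimes R$ along a cofinal tower $\{V_n^{(m)}\}$ of type-$n$ generalized Moore spectra. It suffices to show each $V\otimes V_n^{(m)}\otimes R$ is even. For this I would first argue that $V_n^{(m)}\otimes R$ is even for every $m$: this uses the cofiber sequence relating $V_n$ and $V_n^{(m)}$ together with the regularity established in (3), which in turn comes from the hypothesis applied to $V_n$ directly. Then I would build $V\otimes V_n^{(m)}\otimes R$ from $V_n^{(m)}\otimes R$ as an iterated cofiber along the even-degree self-maps coming from the cells of $V$; regularity of the corresponding classes (from (3)) guarantees that each cofiber preserves evenness. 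The inverse limit preserves evenness by vanishing of $\lim^1$ on odd homotopy.

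Third, for (3), I would apply the cofiber sequence
\[ R\otimes V^{(k)} \xrightarrow{v_k^{i_k}} R\otimes V^{(k)} \to R\otimes V^{(k+1)} \]
and induct downward from $k=n-1$ to $k=0$. At each stage, the long exact sequence in homotopy, combined with evenness of the cofiber $R\otimes V^{(k+1)}$ (known inductively) and the requirement that $v_k^{i_k}$ has even degree, forces $v_k^{i_k}$ to act injectively on $\pi_*(R\otimes V^{(k)})$ and forces $R\otimes V^{(k)}$ itself to be even. This yields the short exact sequence
\[ 0\to \pi_*(R\otimes V^{(k)}) \xrightarrow{v_k^{i_k}} \pi_*(R\otimes V^{(k)}) \to \pi_*(R\otimes V^{(k+1)}) \to 0, \]
which establishes regularity of $v_0^{i_0},\ldots,v_{n-1}^{i_{n-1}}$ on $\pi_*R$. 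Since injectivity of a power of $v_k$ implies injectivity of $v_k$, the sequence $v_0,\ldots,v_{n-1}$ is also regular. Finally, applying the Hahn--Wilson construction \Cref{cnstr:modm} one $v_k$ at a time and using evenness of each intermediate quotient (which holds by the same LES analysis) identifies $\pi_*(R\modm)$ with $\pi_*R/\m$.

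The main obstacle is the circular dependence between the cascade downward in (3) (which requires showing evenness of $R\otimes V^{(k)}$ from evenness of $R\otimes V^{(k+1)}$) and the nilpotence of odd classes needed to break this circle. The LES gives surjectivity of $v_k^{i_k}$ on the odd homotopy of $R\otimes V^{(k)}$, but concluding that this odd homotopy actually vanishes requires additional input controlling the $T(m)$-acyclic nature of $R$ for $m<n$. Here the $\E_\infty$-ring structure on $R$ is essential, and one must invoke nilpotence-type results (for instance a version of the May nilpotence conjecture for odd classes in $T(n)$-local commutative algebras) to force the odd homotopy to be zero rather than merely $v_k$-divisible.
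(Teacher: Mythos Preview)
You have correctly isolated the crux: from the single cofiber sequence
\[
R\otimes V^{(k)} \xrightarrow{v_k^{i_k}} R\otimes V^{(k)} \to R\otimes V^{(k+1)}
\]
and evenness of the cofiber, the long exact sequence only tells you that $v_k^{i_k}$ acts \emph{surjectively} on the odd homotopy of $R\otimes V^{(k)}$, not that this odd homotopy vanishes. But your proposed remedy does not close the gap. The May nilpotence conjecture and its relatives concern nilpotence of elements in $\pi_*$ of an $\E_\infty$-ring, whereas the odd classes in question live in $\pi_*(R\otimes V^{(k)})$, which is only a $\pi_*R$-module; and even if one could import nilpotence there, nilpotence is strictly weaker than vanishing. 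So as written the downward induction does not go through, and the circularity you flag is real for your outline.

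The paper's proof avoids this obstacle directly, and does so \emph{without} using the $\E_\infty$-structure on $R$. Rather than the single cofiber sequence, it uses the full $v_m^{i_m}$-Bockstein tower
\[
\cdots \longrightarrow V_m/v_m^{3i_m}\otimes R \longrightarrow V_m/v_m^{2i_m}\otimes R \longrightarrow V_m/v_m^{i_m}\otimes R,
\]
whose layers are even suspensions of $V_{m+1}\otimes R$. Inducting up the tower, each finite stage is even (an extension of even objects is even) and the transition maps are surjective on homotopy. The key point is that the $T(n)$-local inverse limit of this tower recovers $V_m\otimes R$: for $m<n$ the self-map $v_m$ is nilpotent on $V_m\otimes W$ for any type-$n$ finite $W$, so the comparison fiber vanishes $T(n)$-locally. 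Hence $V_m\otimes R$ is even, and the downward induction runs cleanly from $m=n$ to $m=0$, giving evenness of $R$ outright. Complex orientability then follows from evenness, $K(n)$-locality from complex orientability plus $T(n)$-locality (Hovey), and regularity of $v_0,\dots,v_{n-1}$ drops out of the now-short exact sequences. The Bockstein tower is the missing idea in your proposal.
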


\begin{proof}
  We begin by showing that $R$ is even.
  Write the generalized Moore spectrum $V_n$ as
  \[ V_n = \Ss/(p^{i_0},v_1^{i_1}, \dots v_{n-1}^{i_{n-1}}) \]
  so that we have a tower of generalized Moore spectra $V_m$ such that
  $ V_m = V_{m-1}/v_{m-1}^{i_{m-1}} $.
  We will show that $V_m \otimes R$ is even by downward induction on $m$ starting with the case $m=n$, which is our assumption on $R$. For the inductive step, we consider the $v_m^{i_m}$-Bockstein tower
  \begin{center}
    \begin{tikzcd}
      &
      \Sigma^{2|v_m^{i_m}|} V_m/v_m^{i_m} \otimes R \ar[d] &
      \Sigma^{|v_m^{i_m}|} V_m/v_m^{i_m} \otimes R \ar[d] & \\
      \cdots \ar[r] &
      V_m/v_m^{3i_m} \otimes R \ar[r] &
      V_m/v_m^{2i_m} \otimes R \ar[r] &
      V_m/v_m^{i_m} \otimes R
    \end{tikzcd}
  \end{center}
  which has the feature that the $T(n)$-local inverse limit of this tower is $V_m \otimes R$.
  Since $V_{m+1} = V_m/v_m^{i_m}$ is even, we can induct up this tower showing that every term is even (an extension of even objects is even).  From evenness we can also read off that the maps in the tower are surjective on homotopy groups and therefore the inverse limit is even as well.
  The case $V_0=\Ss$ is the conclusion that $R$ is even.  
  
  Since evenness implies complex orientability and complex orientability implies that $T(n)$-local implies $K(n)$-local \cite[Corollary 1.10]{Hovey}, we have shown (2).
  Using that $R$ is complex orientable, we can now pick classes $v_i$ in $\pi_*(R)$.
  Examining the long exact sequence on homotopy groups associated to the cofiber sequence
  \[ \Sigma^{|v_m^{i_m}|} V_m \otimes R \xrightarrow{v_m^{i_m}} V_m \otimes R \to V_{m+1} \otimes R \]
  we can read off from the fact that all three terms have even homotopy
  that $v_m^{i_m}$ acts injectively on the homotopy of $V_m \otimes R$.
  In particular, inducting upwards on $m$, we find that
  \[ \pi_*(V_m \otimes R) \cong \pi_*(R)/(p^{i_0}, \dots, v_{m-1}^{i_{m-1}}) \]
  and that $ p^{i_0}, v_1^{i_1}, \dots v_{n-1}^{i_{n-1}} $ is a regular sequence in $\pi_*(R)$.  
  This implies that $ p, v_1, \dots v_{n-1} $ is a regular sequence as well (see \cite[\href{https://stacks.math.columbia.edu/tag/07DV}{Lemma 07DV}]{stacks-project}) which is (3).  This in turn implies conclusion (1) by the long exact sequence on homotopy.  
\end{proof}

Note that over an algebraically closed field, all formal groups of height $n$ are isomorphic.  Therefore, there is no danger in dropping the formal group from our notation and we do so.  An immediate corollary of \Cref{lem:strong-even} and \Cref{thm:GHML}(4) is:

\begin{cor}\label{cor:alg-closed-E}
  Suppose that $R\in \CAlg(\Sp_{T(n)})$ such that $R\otimes V$ is even for a single type $n$ generalized Moore spectrum $V$.  Then \Cref{lem:strong-even} implies that $R$ is even and we have the $\E_1$-$R$-algebra $R\modm$ of \Cref{cnstr:modm}.  
  
  In this situation, if $\pi_*(R\modm)$ is an even periodic algebraically closed field, then there is an equivalence $R \cong E(\pi_0(R\modm))$.
\end{cor}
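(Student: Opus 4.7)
The plan is to verify the hypotheses of Theorem \ref{thm:GHML}(4), which characterize the essential image of Lurie's Lubin--Tate construction. By assumption, Lemma \ref{lem:strong-even} applies, so we immediately know that $R$ is $K(n)$-local, even, and that a choice of lifts $v_0, \ldots, v_{n-1}$ of the Hasse invariants forms a regular sequence in $\pi_* R$ with $\pi_*(R\modm) \cong \pi_*(R)/\m$. Since an algebraically closed field is perfect, the perfectness condition on $\pi_0(R)/\m = \pi_0(R\modm)$ in Theorem \ref{thm:GHML}(4) is automatic. The only remaining content is to upgrade evenness of $R$ to even periodicity.

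For this, pick a unit $\bar u \in \pi_2(R\modm)$, which exists because $\pi_*(R\modm)$ is an even periodic field, and choose any lift $u \in \pi_2 R$. The key claim is that $u$ is already a unit in $\pi_* R$. To see this, I would use that $R$, being $K(n)$-local and complex orientable, can be recovered as the $T(n)$-local inverse limit of $R\otimes V_n^{(i)}$ for a cofinal tower of type $n$ generalized Moore spectra $V_n^{(i)}$; by the argument in the proof of Lemma \ref{lem:strong-even} these smashes have homotopy $\pi_*(R)/(p^{i_0}, v_1^{i_1}, \ldots, v_{n-1}^{i_{n-1}})$ and the tower is surjective on homotopy, so the associated $\lim^1$ vanishes and passing to homotopy groups exhibits $\pi_* R$ as the $\m$-adic completion of itself. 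Since $u$ reduces to a unit modulo $\m$, the standard argument for complete local (graded) rings---$uv = 1 - z$ with $z \in \m$ forces $uv$, hence $u$, to be a unit---concludes that $u$ is invertible in $\pi_{-2} R$, establishing even periodicity.

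With all hypotheses of Theorem \ref{thm:GHML}(4) in place, $R$ lies in the essential image of $E(-;-)$, and part (5) of the same theorem identifies $R \cong E(\pi_0(R)/\m,\, \G^{Q}_{\pi_0(R)/\m})$. Since $\pi_0(R\modm)$ is algebraically closed, any two height $n$ formal groups over it are isomorphic, so the formal group may be dropped from the notation, giving $R \cong E(\pi_0(R\modm))$ as desired. The main subtlety in this plan is justifying the $\m$-adic completeness step cleanly from $K(n)$-locality; everything else is a direct consequence of the machinery already assembled in Lemma \ref{lem:strong-even} and Theorem \ref{thm:GHML}.
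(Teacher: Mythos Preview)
Your proposal is correct and follows exactly the approach the paper intends: the paper states this as ``an immediate corollary of \Cref{lem:strong-even} and \Cref{thm:GHML}(4)'' with no further proof. You have in fact supplied the one detail the paper glosses over---namely, the verification of even periodicity via lifting a unit from $\pi_2(R\modm)$ using $\m$-adic completeness of $\pi_* R$---and your justification of that completeness from the Bockstein towers in the proof of \Cref{lem:strong-even} is sound.
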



  
The situation simplifies significantly when working under a fixed $E(k;\,\G_0)$, because the compatibility of the Quillen formal group with ring maps fixes the choice of formal group on Lubin--Tate theories.  

\begin{cnv}
For the remainder of the paper, fix a perfect field $k$ and a formal group $\G_0$ of height $n$ over $k$.  This determines a Lubin--Tate theory $E(k; \,\G_0)$, which we denote simply by $E(k)$.  We additionally fix a choice of unit $u\in \pi_2 E(k)$, which (given the previous choices of $v_i$'s) determines elements $u_0=p, u_1, \cdots ,u_{n-1}\in \pi_0E(k)$ and an isomorphism
\[
\pi_0E(k) \cong W(k)[\![ u_1,\dots,u_{n-1} ]\!][u^{\pm1}]. 
\]
\tqed
\end{cnv}

\begin{dfn}\label{dfn:LTfunctor}
Define the functor 
\begin{align*}
\mdef{E(-)}\colon \Perf_k &\to \CAlgw_{E(k)}\\   
(k\to A) &\mapsto E(A;\, (\G_0)_A).
\end{align*}
Note that given our conventions, we have for $A\in \Perf_k$ a canonical isomorphism
\[
\pi_0E(A) \cong W(A)[\![ u_1,\dots,u_{n-1} ]\!][u^{\pm1}] 
\]
by \cite[Corollary 5.4.2]{ECII}.  
\tqed
\end{dfn}

In this setting, the conditions of \Cref{thm:GHML}(4) are even easier to check.

\begin{cor}\label{cor:E-alg-image}
An algebra $R\in \CAlgw_{E(k)}$ is in the essential image of 
\[
E(-)\colon \Perf_k \to \CAlgw_{E(k)}
\]
if and only if $R\modm$ is even and $\pi_0 (R)/\m $ is perfect\footnote{In fact, as we will see in the proof, the first assumption implies that $\pi_0(R)/\m \cong \pi_0(R\modm)$.}. 
\end{cor}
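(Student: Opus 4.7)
The forward direction is immediate from \Cref{thm:GHML}: if $R \cong E(A)$ then parts (2) and (3) of that theorem say $R$ is even periodic with a regular Landweber sequence, which implies $\pi_*(R\modm) \cong \pi_*(R)/\m$ (hence $R\modm$ is even), while (5) identifies $\pi_0(R)/\m$ with the perfect $\F_p$-algebra $A$. The real content is the reverse direction.

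For the reverse direction, the plan is to verify the hypotheses of \Cref{thm:GHML}(4), namely that $R$ is $K(n)$-local and even periodic with a regular Landweber sequence. Since $R$ is $T(n)$-local and an $E(k)$-algebra, periodicity is free (the unit $u \in \pi_2 E(k)$ maps to a unit in $\pi_2 R$), and the remaining three properties are supplied by \Cref{lem:strong-even} as soon as we exhibit a single type $n$ generalized Moore spectrum $V_n$ with $V_n \otimes R$ even. The main technical step is therefore to produce such a $V_n$ from the hypothesis that $R\modm = R/(v_0, \dots, v_{n-1})$ is even.

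The key tool is the octahedral cofiber sequence
\[ \Sigma^{|v|} N/v^{i-1} \to N/v^i \to N/v \]
valid for any $R$-module $N$ and any element $v \in \pi_*R$, which upgrades a quotient by $v$ to a quotient by $v^i$ while preserving evenness whenever $|v|$ is even. Starting from the even $R\modm$, I would apply this inductively one variable at a time to conclude that $R/(v_0^{i_0}, \dots, v_{n-1}^{i_{n-1}})$ is even for any positive integers $i_0, \dots, i_{n-1}$; choosing the $i_j$ large enough that a type $n$ generalized Moore spectrum $V_n = \Ss/(p^{i_0}, v_1^{i_1}, \dots, v_{n-1}^{i_{n-1}})$ exists then supplies the input needed by \Cref{lem:strong-even}.

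With \Cref{lem:strong-even} applicable we obtain (2) and (3) of \Cref{thm:GHML}(4), and the perfection assumption on $\pi_0(R)/\m$ completes the hypotheses, placing $R$ in the essential image of $E(-)$. For the footnote, once regularity of $v_0, \dots, v_{n-1}$ in $\pi_*R$ has been established, a standard inductive argument using the cofiber sequences $\Sigma^{|v_j|} R/(v_0, \dots, v_{j-1}) \xrightarrow{v_j} R/(v_0, \dots, v_{j-1}) \to R/(v_0, \dots, v_j)$ identifies $\pi_*(R\modm)$ with $\pi_*(R)/\m$, and in particular $\pi_0(R\modm) \cong \pi_0(R)/\m$. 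The main obstacle is the multi-variable induction of the previous paragraph: one cannot assume evenness of any intermediate quotient $R/(v_0^{i_0}, \dots, v_j^{i_j})$ with $j < n-1$ before the quotients at the subsequent indices have already been upgraded to powers, so the order of operations in the induction matters and must be organized carefully.
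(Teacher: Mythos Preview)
Your argument is correct, but it takes a longer route than the paper. The paper's proof is a one-liner: it cites \cite[Lemma~2.1.25]{HL} (and its proof) to conclude directly that $R$ is even with $v_0,\dots,v_{n-1}$ regular, and then applies \Cref{thm:GHML}(4). A footnote also remarks that one may alternatively ``run an argument analogous to the proof of \Cref{lem:strong-even}.''

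Your approach is closest to this footnoted alternative, but with an extra detour: you first upgrade the exponents so that $R/(v_0^{i_0},\dots,v_{n-1}^{i_{n-1}})$ matches $V_n\otimes R$ for an honest generalized Moore spectrum, and only then invoke \Cref{lem:strong-even} as a black box. This works, but is unnecessary: the Bockstein-tower argument in the proof of \Cref{lem:strong-even} does not actually require the starting quotient to come from a finite spectrum---it runs verbatim with $R\modm$ (all exponents equal to~$1$) in place of $V_n\otimes R$, since the tower construction only uses that the quotient is by powers of the $v_m$ and that $R$ is $T(n)$-local. Adapting the proof directly avoids your octahedral step entirely.

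Finally, the ``main obstacle'' you flag is smaller than you suggest. The multi-variable induction is straightforward provided you never drop below $n$ variables: to upgrade $v_j$ to $v_j^{i_j}$, set $M$ equal to $R$ quotiented by the other $n-1$ variables (at their current exponents) and use your cofiber sequence $\Sigma^{|v_j|}M/v_j^{i-1}\to M/v_j^{i}\to M/v_j$ inductively on $i$. At every stage the input $M/v_j$ is the full $n$-variable quotient from the previous step, so no partial quotient $R/(v_0^{i_0},\dots,v_j^{i_j})$ with fewer than $n$ variables is ever needed.
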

\begin{proof}
The only if direction is clear, so assume that $R\modm$ is even and $\pi_0(R)/\m$ is perfect.  But \cite[Lemma 2.1.25]{HL} (and its proof) show that $R$ is even and any choice of $v_0, \cdots v_{n-1}$ is a regular sequence\footnote{Alternately, we may run an argument analogous to the proof of \Cref{lem:strong-even}.}.  Hence, the result follows from \Cref{thm:GHML}(4).  
\end{proof}

In the next section, we will see that, in contrast to the functor $E(-;-)$ of \Cref{thm:GHML}, this relative version of the Lubin--Tate functor has an additional feature of central importance to us: it admits a right adjoint.  

\subsection{The Lubin--Tate functor} \label{subsec:E}\

While our discussion of spherical Witt vectors takes values in $\CAlg(\Sp_p)$, the next construction shows that we can make a similar construction much more generally.

\begin{cnstr}\label{cnstr:C-witt-tilt}
  Let $\CC$ be a $p$-complete stable presentably symmetric monoidal category.
  The symmetric monoidal unit map $\Sp_p \to \CC$ in $\CAlg(\PrL)$ provides us with an adjunction
  \[ \iota^* \colon \CAlg(\Sp_p) \rightleftarrows \CAlg(\CC) \noloc \iota_* \]
  which we can compose with the Witt-tilt adjunction to obtain an adjunction
  \[ \Perf \rightleftarrows \CAlg(\CC). \]
  Setting $B = (\iota_*{\one}_{\CC})^{\flat}$, we can refine this to an adjunction
  \[ \W_{\CC}(-) \colon \Perf_{B} \rightleftarrows \CAlg(\CC) \noloc (-)_{\CC}^{\flat} \]
  where $\W_{\CC}(-)$ sends a perfect $B$-algebra $A$ to the pushout
  ${\one}_\CC \otimes_{\iota^*\W(B)} \iota^*\W(A)$ and $(-)_{\CC}^\flat$ sends $R$ to
  $B = (\iota_*{\one}_\CC)^\flat \to (\iota_*R)^\flat$.
  \tqed
\end{cnstr}

We can extend many of the properties of $(-)^{\flat}$ to $(-)_{\CC}^{\flat}$ (and for this reason we will often drop $\CC$ from the notation when $\CC$ is clear from context).

\begin{lem}\label{lem:tilt-pi0}\hfill
\begin{enumerate}
    \item   The functor $(-)^{\flat}$ is invariant under square-zero extensions in $\CAlg(\CC)$ (in the sense of \cite[Definition 7.4.1.6]{HA}).
  \item We have natural isomorphisms
  $(R)_{\CC}^{\flat} \cong \pi_0(R)^\flat$.  

\end{enumerate}
\end{lem}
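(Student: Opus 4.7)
The plan is to reduce both parts to the analogous statements for $\CAlg(\Sp_p)$ established earlier, namely \Cref{lem:calg-sqz} and \Cref{lem:calg-flat-pi0}, by unraveling the definition of $(-)_\CC^\flat$ from \Cref{cnstr:C-witt-tilt} as the composite $(-)^\flat \circ \iota_*$. The key observation is that any property of $(-)^\flat$ that is preserved by the lax symmetric monoidal right adjoint $\iota_*$ immediately transfers to $(-)_\CC^\flat$.

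For part (1), the first step is to verify that $\iota_*$ carries square-zero extensions in $\CAlg(\CC)$ to square-zero extensions in $\CAlg(\Sp_p)$. Recall that in the sense of \cite[Definition 7.4.1.6]{HA}, a square-zero extension $\widetilde{R} \to R$ by an $R$-module $M$ is presented as a pullback of the form $\widetilde{R} \simeq R \times_{R \oplus M[1]} R$, where the two maps into the trivial square-zero extension are the zero derivation and a derivation classifying the extension. Since $\iota_*$ is a right adjoint (and lax symmetric monoidal, hence induces a functor $\CAlg(\CC) \to \CAlg(\Sp_p)$ preserving limits) it carries this pullback to the analogous pullback in $\CAlg(\Sp_p)$, exhibiting $\iota_* \widetilde{R} \to \iota_* R$ as a square-zero extension by $\iota_* M$. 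Applying \Cref{lem:calg-sqz} to this image then produces the desired equivalence $(\widetilde{R})^\flat_\CC \xrightarrow{\cong} R^\flat_\CC$.

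For part (2), we simply unravel the definitions. Taking $\pi_0(R)$ to mean $\pi_0(\iota_* R)$ (the natural interpretation absent an additional choice of t-structure on $\CC$, and compatible with any such t-structure for which $\iota^*$ is right t-exact), we have
\[ R^\flat_\CC \;=\; (\iota_* R)^\flat \;\cong\; \pi_0(\iota_* R)^\flat \;=\; \pi_0(R)^\flat, \]
where the middle isomorphism is \Cref{lem:calg-flat-pi0}. All three steps are clearly natural in $R$.

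The only step requiring genuine verification is the preservation of square-zero extensions by $\iota_*$ in part (1), and I expect this to be the main (though minor) obstacle. It is essentially formal, reducing to the facts that $\iota_*$ preserves limits and intertwines the formation of trivial square-zero extensions $R \oplus M$; the latter follows from $\iota^*$ being symmetric monoidal, which ensures compatibility of $\iota_*$ with the cotangent-complex formalism used to define and classify square-zero extensions.
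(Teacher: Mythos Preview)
Your proposal is correct and follows essentially the same approach as the paper's proof, which simply asserts that $\iota_*$ preserves square-zero extensions for part (1) and writes out the identical chain of isomorphisms $R^{\flat}_{\CC} \simeq (\iota_* R)^{\flat} \simeq \pi_0(\iota_* R)^{\flat} \simeq \pi_0(R)^{\flat}$ for part (2). You have supplied more justification than the paper for why $\iota_*$ preserves square-zero extensions, but the underlying strategy is identical.
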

\begin{proof}
Statement (1) follows because $\iota_*$ preserves square-zero extensions, and (2) follows because
\[
R^{\flat}_{\CC} \simeq (\iota_* R)^{\flat} \simeq (\pi_0 \iota_* R)^{\flat} \simeq (\pi_0 R)^{\flat}.
\]
\end{proof}


At this point we are finally ready to move back to the $T(n)$-local world, prove \Cref{thm:E-and-tilt}, and end the section.  Specializing the above discussion to the case $\CC = \Modw_{E(k)}$, we have:

\begin{lem}\label{lem:Etilt} \hfill
\begin{enumerate}
    \item For any $R\in \CAlgw_{E(k)}$, we have 
    \[
    R^{\flat} \cong (\pi_0R)^{\flat} \cong (\pi_0(R)/m)^{\flat}.
    \]
    In particular, we have $E(k)^{\flat} \cong k^{\flat} \cong k$ and we may regard $(-)^{\flat}$ as a functor $(-)^{\flat}\colon \CAlgw_{E(k)} \to \Perf_k$.  
    \item The composite of the Lubin--Tate functor of \Cref{dfn:LTfunctor} with tilting
  \[
  \Perf_k \xrightarrow{E(-)} \CAlgw_{E(k)} \xrightarrow{(-)^{\flat}} \Perf_k
  \]
  is naturally isomorphic to the identity functor.
\end{enumerate}
\end{lem}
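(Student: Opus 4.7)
The plan is to deduce (1) by reducing the tilt computation to the discrete level and applying \Cref{lem:tilt-of-quotient} iteratively, then deduce (2) by combining (1) with the identification $\pi_0 E(A)/\m \cong A$ from \Cref{thm:GHML}(5).

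For (1), I first invoke \Cref{lem:tilt-pi0}(2) (applied to $\CC = \Modw_{E(k)}$) to obtain a natural isomorphism $R^{\flat} \cong (\pi_0 R)^{\flat}$, reducing the question to one about discrete commutative rings. Next, since $R$ is a $T(n)$-local $E(k)$-algebra and hence complex-orientable, $R$ is in fact $K(n)$-local by \cite[Corollary 1.10]{Hovey}, so $\pi_0 R$ is classically $\m$-adically complete for $\m = (p, u_1, \ldots, u_{n-1})$. I will then apply \Cref{lem:tilt-of-quotient} iteratively along the regular sequence generating $\m$: first with $x = p$, giving $(\pi_0 R)^{\flat} \cong (\pi_0 R/p)^{\flat}$; then with $x = u_1$, noting that $\pi_0 R/p$ remains $u_1$-complete; continuing through $u_{n-1}$ to conclude
\[
(\pi_0 R)^{\flat} \;\cong\; (\pi_0 R/\m)^{\flat}.
\]
Specializing $R = E(k)$ yields $E(k)^{\flat} \cong k^{\flat} \cong k$ since $k$ is already perfect. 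Functoriality of $(-)^{\flat}$ combined with the unit map $E(k) \to R$ then endows each $R^{\flat}$ with a natural $k$-algebra structure, so $(-)^{\flat}$ factors as a functor $\CAlgw_{E(k)} \to \Perf_k$.

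For (2), fix $A \in \Perf_k$. Applying (1) to $R = E(A)$ gives $E(A)^{\flat} \cong (\pi_0 E(A)/\m)^{\flat}$; combined with the natural isomorphism $\pi_0 E(A)/\m \cong A$ from \Cref{thm:GHML}(5) (compatible with the construction in \Cref{dfn:LTfunctor}) and the fact that $A^{\flat} \cong A$ for perfect $A$, we obtain the desired natural isomorphism $(-)^{\flat} \circ E(-) \cong \Id_{\Perf_k}$. The main obstacle I anticipate is justifying the iterated application of \Cref{lem:tilt-of-quotient} (originally stated for $\CAlg(\Sp_p)$) in our $E(k)$-relative setting, but this is harmless because $(-)^{\flat}_{\CC}$ is computed underlyingly and thus the lemma may be applied to $\iota_* R \in \CAlg(\Sp_p)$. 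A minor bookkeeping point is to verify that at each stage the quotient $\pi_0 R/(p, u_1, \ldots, u_j)$ remains derived $u_{j+1}$-complete, which is standard since $\m$-adic completeness of a $\pi_0 E(k)$-module descends to quotients by sub-ideals generated by an initial portion of the regular sequence.
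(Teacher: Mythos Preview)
Your proof is correct and follows essentially the same route as the paper: reduce to $\pi_0$ via \Cref{lem:tilt-pi0}, then iterate \Cref{lem:tilt-of-quotient} along the generators of $\m$ using that $T(n)$-local $E(k)$-algebras are $\m$-complete (the paper cites \cite[Proposition~7.10]{HovStrick} for this), and deduce (2) from the identification $\pi_0 E(A)/\m \cong A$. One minor correction: for a general $R \in \CAlgw_{E(k)}$ the ring $\pi_0 R$ is only \emph{derived} $\m$-complete, not classically $\m$-adically complete, but this is precisely the hypothesis of \Cref{lem:tilt-of-quotient} (and is inherited by the successive quotients since derived complete modules are closed under cofibers), so the iteration goes through unchanged.
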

\begin{proof}
The first isomorphism of (1) follows from combining \Cref{lem:tilt-pi0}(2) and \Cref{lem:calg-flat-pi0}.  For the second, note that for $E(k)$-algebras, we can rewrite the $T(n)$-local condition as being $\m$-complete \cite[Proposition 7.10]{HovStrick}.    Using \Cref{lem:tilt-of-quotient} repeatedly for each of the generators $p, \dots, u_{n-1}$ of this ideal, we obtain the description of the tilt as $(\pi_0(-)/\m)^{\flat}$.
  Then (2) is an immediate consequence of (1) because 
  \[
  E(A)^{\flat} \cong A^{\flat} \cong A
  \]
  and these are functors valued in 1-categories.  
\end{proof}



As a consequence of (1) and the previous discussion, we obtain an adjunction 
\[ \W_{E(k)}(-) \colon \Perf_{k} \rightleftarrows \CAlg_{E(k)}^{\wedge} \noloc (-)^{\flat}. \]
In fact, $\W_{E(k)}(-)$ agrees with Lurie's Lubin--Tate theory functor of \Cref{dfn:LTfunctor}.



  

\begin{lem} \label{lem:agrees-with-jacob}
  The functor $\W_{E(k)}(-):\Perf_k \to \CAlgw_{E(k)}$ is equivalent to the functor $E(-)$ of \Cref{dfn:LTfunctor}.
\end{lem}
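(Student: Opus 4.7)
The plan is to show that $\W_{E(k)}(A)$ lies in the essential image of $E(-)$ characterized by \Cref{cor:E-alg-image}, and then pin down the identification by computing tilts. By construction,
\[
\W_{E(k)}(A) \simeq L_{T(n)}\bigl(E(k) \otimes_{\W(k)} \W(A)\bigr).
\]
Tensoring with the finite $E(k)$-module $E(k)\modm$ is dualizable and hence commutes with $T(n)$-localization, and moreover produces a $T(n)$-local object by \Cref{cnstr:modm}, so the localization can be dropped:
\[
\W_{E(k)}(A)\modm \simeq E(k)\modm \otimes_{\W(k)} \W(A).
\]

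The crucial input is flatness of $\W(A)$ over $\W(k)$. By \Cref{rmk:thickening-flat}, both spherical Witt vectors are flat over $\Ss_p$, so $\pi_*\W(A) \cong \pi_*\W(k) \otimes_{W(k)} W(A)$. Since $A$ is free as a $k$-module (as $k$ is a field) and classical Witt vectors of perfect algebras inherit this flatness, $W(A)$ is flat over $W(k)$, hence $\pi_*\W(A)$ is flat over $\pi_*\W(k)$. The Tor spectral sequence for the tensor product accordingly collapses to
\[
\pi_*\W_{E(k)}(A)\modm \;\cong\; \pi_*(E(k)\modm) \otimes_{\pi_*\W(k)} \pi_*\W(A) \;\cong\; A[u^{\pm 1}],
\]
which is concentrated in even degrees with $\pi_0 = A$ perfect. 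By \Cref{cor:E-alg-image}, $\W_{E(k)}(A) \simeq E(A')$ for some $A' \in \Perf_k$. The unit of the Witt--tilt adjunction (\Cref{prop:witt-and-tilt}) together with \Cref{lem:Etilt}(2) then furnishes natural isomorphisms
\[
A \;\xrightarrow{\sim}\; \W_{E(k)}(A)^\flat \;\simeq\; E(A')^\flat \;\simeq\; A',
\]
so $A' \cong A$ and $\W_{E(k)}(A) \simeq E(A)$ objectwise.

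To upgrade to a natural equivalence of functors, observe that by \Cref{thm:GHML}(1) and \Cref{lem:Etilt}(2), the tilt functor $(-)^\flat$ restricts to an equivalence from the essential image of $E(-)$ back to $\Perf_k$. Both $E(-)$ and $\W_{E(k)}(-)$ land in this essential image, and both compose with $(-)^\flat$ to give the identity on $\Perf_k$, so they are naturally equivalent. The main obstacle in this approach is the flatness computation in the middle paragraph; once that is in hand, the rest is a formal application of the essential-image characterization \Cref{cor:E-alg-image} and the tilt adjunction.
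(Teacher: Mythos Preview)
Your proof is correct. The approach differs from the paper's mainly in the order of operations: the paper first produces a natural transformation $\W_{E(k)}(-) \to E(-)$ directly as the adjoint transpose of the isomorphism $\mathrm{Id} \cong (-)^\flat \circ E(-)$ from \Cref{lem:Etilt}(2), and then checks it is an equivalence modulo~$\m$ using the known formula for $\pi_*E(A)$. You instead first verify objectwise that $\W_{E(k)}(A)$ lies in the essential image of $E(-)$ by computing $\W_{E(k)}(A)\modm$ via the Tor spectral sequence and invoking \Cref{cor:E-alg-image}, and only afterwards assemble naturality by observing that $(-)^\flat$ restricts to an equivalence on that essential image.

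Both routes rest on the same ingredients: the adjunction, \Cref{lem:Etilt}(2), and a mod-$\m$ identification. Your flatness argument (that $\pi_*\W(A)$ is flat over $\pi_*\W(k)$ because $W(A)$ is torsion-free over the DVR $W(k)$) spells out what the paper's one-line ``clear by the description of $\pi_*E(A)$'' leaves implicit. The paper's ordering is slicker because the natural transformation comes for free from the adjunction, so one only needs to verify a single map is an equivalence rather than separately establishing objectwise equivalence and naturality; your ordering, on the other hand, makes the role of the essential-image criterion \Cref{cor:E-alg-image} more transparent.
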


\begin{proof}
  

  The natural isomorphism of \Cref{lem:Etilt}(2) is adjoint to a natural transformation
  \[
  \W_{E(k)}(-) \to E(-).
  \]
  It suffices to show that this is an equivalence modulo $\m$, but $\W_{E(k)}(A) \simeq \W(A)\otimes_{\W(k)}E(k)$ so this is clear by the description of $\pi_*E(A)$ of \Cref{dfn:LTfunctor}.  
  \qedhere
  
  
 
    
\end{proof}

In view of this lemma we will adopt the notation $E(-)$ for $\W_{E(k)}(-)$.

\begin{thm}\label{thm:E_functor}
The adjunction 
  \[ E(-) \colon \Perf_k \rightleftarrows \CAlg_{E(k)}^{\wedge} \noloc (-)^{\flat} \]
  of \Cref{cnstr:C-witt-tilt} has the following properties:
  \begin{enumerate}
  \item $E(-)$ is a colocalization, and in particular fully faithful.
  \item The tilt can be computed via any of
    \[(-)^{\flat},\quad \pi_0(-)^{\flat}\quad\text{or}\quad (\pi_0(-)/\m)^{\flat}.\]
  \item There are natural isomorphisms
    \[ \pi_*E(A) \cong W(A)[\![ u_1,\dots,u_{n-1} ]\!][u^{\pm1}]. \]
    In particular, $\pi_*E(A)$ is even, $u_0,\dots,u_{n-1}$ is a regular sequence and
    \[ \pi_*(E(A)\modm) \cong A[u^{\pm1}]. \]
  \item The essential image of $E(-)$ consists of those $R \in \CAlg_{E(k)}^{\wedge} $ for which\\
    (i) $R\modm$ has vanishing odd homotopy groups\
    (ii) $\pi_0(R)/\m$ (which is isomorphic to $\pi_0(R\modm)$ given (i)) is perfect.
  \item $E(-)$ preserves those limits of perfect $k$-algebras whose limit, taken in $\Sp_p$, is connective.
  \item Tilting commutes with $\omega_1$-filtered colimits.
    In particular, $E(-)$ preserves $\kappa$-compactness
    for every uncountable regular $\kappa$.
  \end{enumerate}
\end{thm}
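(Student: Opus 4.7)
The plan is to assemble most of this theorem directly from results already established in Sections 2.1--2.4, treating it as a packaging statement for the $T(n)$-local version of what was done for spherical Witt vectors. Parts (1), (2), and (4) are essentially restatements. Part (3) is Lurie's computation. The genuine work lies in (5) and (6), which require transporting the proofs of \Cref{lem:W-limits} and \Cref{lem:tilt-compact} from $\W(-)$ to $E(-)$.

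For (1), I would use \Cref{lem:Etilt}(2) (together with \Cref{lem:agrees-with-jacob} identifying $E(-)$ with $\W_{E(k)}(-)$) to see that the unit of the adjunction is a natural isomorphism; this is equivalent to the left adjoint being fully faithful, i.e.\ a colocalization. For (2), I just restate \Cref{lem:Etilt}(1), whose proof already combined \Cref{lem:tilt-pi0}(2) and \Cref{lem:calg-flat-pi0} with the $\m$-completeness of $T(n)$-local $E(k)$-algebras and iterated application of \Cref{lem:tilt-of-quotient}. For (3), the formula $\pi_*E(A)\cong W(A)\llbracket u_1,\dots,u_{n-1}\rrbracket[u^{\pm1}]$ is imported from \Cref{dfn:LTfunctor} via \cite[Corollary 5.4.2]{ECII}; evenness and regularity of $u_0,\dots,u_{n-1}$ are read off, and then \Cref{cnstr:modm} together with regularity gives $\pi_*(E(A)\modm) \cong A[u^{\pm1}]$. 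Part (4) is precisely \Cref{cor:E-alg-image}, whose proof I would simply reference.

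For (5), the approach is to mimic the proof of \Cref{lem:W-limits} after constructing a suitable Postnikov-like tower for $E(A)$. Concretely, I would combine the Postnikov tower of $E(A)$ with the $\m$-adic tower on $\pi_0$, using regularity of $u_0,\dots,u_{n-1}$ to identify each associated graded piece, producing a functorial $2\omega$-indexed tower of square-zero extensions ending in $E(A)\modm$ whose layers are shifts of $A[u^{\pm1}]$ viewed as an $E(A)\modm$-module. Given a diagram $F^\triangleleft \colon D^\triangleleft \to \Perf_k$ whose $\Sp_p$-limit is connective, each shift $\Sigma^j F^\triangleleft$ remains a limit diagram in $\Sp_p$, and tensoring with $[u^{\pm 1}]$ is harmless, so applying $E(-)$ level-wise and recombining via the tower yields a limit diagram in $\CAlgw_{E(k)}$. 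The main obstacle here is establishing the tower functorially; one clean route is to note that $E(A) \simeq E(k)\otimes_{\W(k)} \W(A)$ in the appropriate $\m$-completed sense, base-change the $\Ss_p$-thickening tower of \Cref{lem:functorial-sqz-tower} along $\W(k) \to E(k)$, and then filter further by the $\m$-adic tower.

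For (6), the strategy parallels \Cref{lem:tilt-compact}. Using the description $(-)^\flat \cong \lim_{x\mapsto x^p}\pi_0(-)/\m$ from (2), it suffices to verify that $\pi_0\colon \CAlgw_{E(k)} \to \Set$ commutes with $\omega_1$-filtered colimits. As in the proof of \Cref{lem:tilt-compact}, this reduces to the fact that colimits in $\CAlgw_{E(k)}$ are computed by ordinary colimits followed by $\m$-completion, which is an $\omega$-indexed limit of functors commuting with filtered colimits, and $\omega$-indexed limits of sets commute with $\omega_1$-filtered colimits. The preservation of $\kappa$-compactness by $E(-)$ for uncountable regular $\kappa$ is then immediate from being left adjoint to a functor commuting with $\kappa$-filtered colimits. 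The main conceptual point throughout is that the general machinery of \Cref{cnstr:C-witt-tilt} and the results of Section 2.2--2.3 port over almost verbatim once one accepts the completion-theoretic description of the tilt given in \Cref{lem:Etilt}.
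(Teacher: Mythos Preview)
Your treatment of (1)--(4) and (6) matches the paper essentially line for line. For (5) your overall strategy (filter $E(A)$ so that the layers are governed by copies of $A$, then invoke the connectivity hypothesis) is the same as the paper's, but the packaging you describe has two imprecisions worth flagging. First, the phrase ``tower of square-zero extensions ending in $E(A)\modm$'' does not parse: $E(A)\modm$ is only an $\E_1$-$E(k)$-algebra (\Cref{cnstr:modm}), so it cannot terminate a tower of square-zero extensions of commutative algebras, and invoking ``the Postnikov tower of $E(A)$'' is delicate since $E(A)$ is not bounded below. Second, the assertion that ``tensoring with $[u^{\pm1}]$ is harmless'' is true here but not automatic---limits do not commute with infinite sums---and only holds because in each fixed homotopy degree a single copy of $A$ appears; this is exactly the point that needs to be isolated.

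The paper's argument for (5) sidesteps both issues by working entirely with homotopy groups rather than a spectrum-level tower. One first uses $E(-)\simeq L_{K(n)}(\tau_{\geq 0}E(-))$ to pass to the connective cover, then filters by the Postnikov tower to reduce to showing each $\pi_sE(-)\circ F^{\triangleleft}$ is a limit diagram. Periodicity reduces to $s=0$, and the $\m$-adic filtration on $\pi_0E(A)\cong W(A)\llbracket u_1,\dots,u_{n-1}\rrbracket$ has associated graded $(\m^j/\m^{j+1})\otimes_k A$, a \emph{finite} sum of copies of $A$ since $\m^j/\m^{j+1}$ is a dualizable $k$-module. The connectivity hypothesis then applies directly to each graded piece and the inverse limit closes the argument. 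This is the same content as your sketch, but the reduction to $\pi_0$ and finite-rank graded pieces is what makes the convergence honest.
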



\begin{proof}
 Statement (1) follows because \Cref{lem:Etilt} shows that the unit map is an equivalence, and (2) is just \Cref{lem:Etilt}(1).  Moreover, (3) follows from \cite[Corollary 5.4.2]{ECII} and (4) is just \Cref{cor:E-alg-image}. 

  The argument for (5) is similar to the argument in \Cref{lem:W-limits}.
  Suppose we are given a limit diagram $F^{\triangleleft}\colon D^{\triangleleft} \to \Perf_k$ that we wish to show is preserved by $E(-)$.
  It suffices to show that $F^{\triangleleft} \circ E(-)$ is a limit diagram on underlying spectra.
  Since $E(-) \simeq L_{K(n)}(\tau_{\geq 0}E(-))$ it suffices to prove the claim on connective covers.
  Filtering by the Postnikov tower it suffices to argue that $F^{\triangleleft} \circ \pi_sE(-)$ is a limit diagram. Since $E(-)$ is even periodic it suffices to handle the $s=0$ case.
  Since $\pi_0E(-)$ is $\m$-adically complete we can write this functor as the limit of the tower of quotients by powers of $\m$. The associated graded of this tower can be identified with
  $ \m^j/\m^{j+1} \otimes_k \pi_0E(F^{\triangleleft}(-))/\m$.
  Finally we observe that $\m^j/\m^{j+1}$ is a dualizable $k$-module and $\pi_0E(F^{\triangleleft}(-))/\m$ is equivalent to $F^{\triangleleft}(-)$ which means it sufficed for $F^{\triangleleft}(-)$ to be a limit diagram on underlying spectra, which was our assumption.


  
  (6) follows from \Cref{lem:tilt-compact} together with the fact that $T(n)$-localization
  (in its guise as $\m$-adic completion)
  commutes with $\omega_1$-filtered colimits.
  \qedhere
  
\end{proof}

\section{Power operations on Lubin--Tate theories}
\label{sec:cofree}
Our goal in this section is to describe how power operations act on the homotopy groups of the
$T(n)$-local commutative $E(k)$-algebra $E(A)$ for $A \in \Perf_k$.

\begin{cnv}
For ease of notation, with this section we will sometimes write $E$ for $E(k)$ and $E_0$ for $\pi_0E(k)$.
\tqed
\end{cnv}

\subsubsection{Algebraic power operations and the monad $\T$}\hfill

Let $R\in \CAlg(\Sp)$ be a commutative ring spectrum.  Then, the commutative ring $\pi_0 R$ comes equipped with additional algebraic structure from power operations.  This algebraic structure was studied systematically by Rezk in the case that $R$ is a $T(n)$-local commutative $E$-algebra.  More specifically, he constructed a monad $\T$ on the category of (discrete) $E_0$-modules whose category of algebras $\Alg_{\T}$ serves as the natural target for the functor $\pi_0(-)$ on commutative $E$-algebras; that is, there is a natural lift
\begin{center}
  \begin{tikzcd}
    & \Alg_{\T}\arrow[d,"U_{\T}"]\\
    \CAlgw_{E} \arrow[ur,dashed, "\pi_0"]\arrow[r,"\pi_0"]& \CAlgh_{E_0}.
  \end{tikzcd}
\end{center}

The free $\T$-algebra $\T(E_0)$ is closely related to the homotopy groups of the free $T(n)$-local commutative $E$-algebra on one generator, and as such, the monad $\T :\Modh_{E_0} \to \Modh_{E_0}$ can be thought of as an algebraic approximation to the free $T(n)$-local commutative $E$-algebra monad.  

\begin{exm}\label{exm:ht1}
Consider the case $n=1$ and $E = E(\F_p, \G_m) = \KU_p$.  Then, $\Alg_{\T}$ can be identified with the category $\CAlgh_{\delta}$ of $\delta$-rings\footnote{Also known as a $\theta$-rings or $p$-typical $\lambda$-rings.}.  This means that if $R$ is a $T(1)$-local commutative $\KU_p$-algebra, then $\pi_0R$ has a (non-additive) operation $\delta\colon \pi_0(R) \to \pi_0(R)$ which acts as a $p$-derivation, so that the operation
  \[ \psi(x) = x^p + p\delta(x), \]
  is a ring endomorphism lifting Frobenius.  Here, $\psi$ is the $p$-th Adams operation. 
  \tqed 
\end{exm}

For formal reasons, the forgetful functor $U_{\T}\colon \Alg_{\T} \to \CAlgh_{E_0}$ admits both a left adjoint $F_{\T}$ and a right adjoint $W_{\T}$
\begin{center}
  \begin{tikzcd}[sep=huge]
    \Alg_{\T}
    \arrow[rr,"U_{\T}"] & &
    \CAlgh_{E_0}.
    \arrow[ll, bend right=25, swap, "F_{\T}"]
    \arrow[ll, bend left=25, swap, "W_{\T}"] 
  \end{tikzcd}
\end{center}
In fact, $\Alg_{\T}$ is both monadic and comonadic over $\CAlgh_{E_0}$ \cite[4.23]{RezkCong}. 
The notation $W_{\T}$ is justified by the following theorem.

\begin{thm}[Joyal \cite{JoyalWitt}]
For any ring $B$, the $p$-typical Witt vectors $W(B)$ naturally acquires the structure of a $\delta$-ring.  The resulting functor $W\colon \CAlgh \to \CAlgh_{\delta}$ is right adjoint to the forgetful functor $U\colon \CAlgh_{\delta} \to \CAlgh$.    
\end{thm}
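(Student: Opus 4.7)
The plan is to construct the $\delta$-ring structure on $W(B)$ and the unit $\eta_A \colon A \to W(A)$ of the putative adjunction for any $\delta$-ring $A$, then assemble these into the adjunction formally. Recall that $W(B)$ carries a natural Frobenius endomorphism $F$ characterized on ghost coordinates by $w_n \circ F = w_{n+1}$, and $F$ reduces to the $p$-th power map modulo $p$. When $B$ is $p$-torsion free, $W(B)$ is also $p$-torsion free, so the formula $\delta(x) := (F(x) - x^p)/p$ is unambiguous and makes $W(B)$ a $\delta$-ring with Frobenius lift $F$. The general case follows because the Witt polynomials computing $\delta$ in terms of Witt coordinates are defined universally over $\mathbb{Z}$; alternatively, one presents any ring as a quotient of a $p$-torsion free ring and descends the structure by naturality.

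Next, for any $\delta$-ring $A$ with Frobenius lift $\psi_A$, I would construct the unit $\eta_A \colon A \to W(A)$ as the unique ring map whose ghost coordinates are $w_n \circ \eta_A = \psi_A^n$. When $A$ is $p$-torsion free, the existence of such an element of $W(A)$ is furnished by Dwork's lemma: a sequence $(b_n) \in A^{\mathbb{N}}$ arises from a Witt vector precisely when $\psi_A(b_n) \equiv b_{n+1} \pmod{p^{n+1}}$ for all $n$, and in our situation $b_n = \psi_A^n a$ so this congruence holds as an equality. The general case follows by resolving $A$ by free $\delta$-rings, which are polynomial rings over $\mathbb{Z}$ and hence $p$-torsion free. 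One then verifies that $\eta_A$ is a $\delta$-ring map: both $F \circ \eta_A$ and $\eta_A \circ \psi_A$ have ghost coordinates $(\psi_A^{n+1}(-))_{n \geq 0}$, so they agree after the ghost map, which suffices by naturality and the $p$-torsion free case.

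Given these ingredients, the adjunction is essentially formal. For $f \colon U(A) \to B$ with $A$ a $\delta$-ring, define $\tilde{f} := W(f) \circ \eta_A$; this is a $\delta$-ring map and $w_0 \circ \tilde{f} = f \circ w_0 \circ \eta_A = f$. For uniqueness, any $\delta$-ring map $g \colon A \to W(B)$ lifting $f$ must satisfy $w_n \circ g = w_0 \circ F^n \circ g = w_0 \circ g \circ \psi_A^n = f \circ \psi_A^n$, which pins down $g$ on ghost coordinates and hence on $W(B)$ by the $p$-torsion free case and naturality. The main obstacle in this plan is the construction of $\eta_A$: the content is really Dwork's lemma together with a careful naturality reduction, and the trivial form in which the Dwork congruence manifests in our setting (namely, as the equality $\psi_A(\psi_A^n a) = \psi_A^{n+1}a$) is precisely Joyal's observation that launches the theory.
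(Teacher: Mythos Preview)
The paper does not actually prove this theorem: it is stated as a classical result of Joyal and cited without proof, serving as height-$1$ motivation for the paper's own \Cref{thm:cofree}. So there is no argument in the paper to compare your proposal against.

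That said, your outline is the standard proof and is essentially correct. The construction of the $\delta$-structure on $W(B)$ via the Witt-vector Frobenius, the unit $\eta_A$ via ghost coordinates $(\psi_A^n(-))_n$ and Dwork's lemma, and the verification of the adjunction by reading off ghost components are exactly the expected ingredients. Your reductions to the $p$-torsion free case are handled appropriately: free $\delta$-rings are polynomial over $\mathbb{Z}$, hence $p$-torsion free, and naturality lets you descend both the $\delta$-structure and the uniqueness argument. One small point worth making explicit in the uniqueness step is that when $B$ has $p$-torsion the ghost map is not injective, so ``pins down $g$ on ghost coordinates'' is not literally enough; you correctly gesture at the fix (naturality plus the torsion-free case), but in a full write-up you would want to spell out that the equality $g = W(f)\circ \eta_A$ is a natural transformation between functors on $\delta$-rings, hence determined by its value on free (and thus $p$-torsion free) $\delta$-rings.
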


That is, in the case where $\Alg_{\T} = \CAlgh_{\delta}$ at height $1$, we have $W_{\T}(A)= W(A) = \pi_0E(A)$.  In particular, by composing with the adjunction 
\[
(-/p)^{\sharp} \colon \CAlgh  \rightleftarrows \Perf_{\F_p}  \noloc \mathrm{incl},
\]
we obtain an adjunction
\[
(U(-)/p)^{\sharp} \colon \CAlgh_{\delta}  \rightleftarrows \Perf_{\F_p}  \noloc \pi_0E(-).
\]

The main theorem of this section is a generalization of this adjunction to the case of arbitrary height, which allows us to describe the $\T$-algebra structure on $\pi_0(E)$ in terms of the cofree $\T$-algebra functor $W_{\T}(-)$.

\begin{thm}[Cofreeness of Lubin--Tate theory]\label{thm:cofree}
  There is an adjunction
  \[
  (U_{\T}(-)/\m)^{\sharp} \colon \Alg_{\T} \rightleftarrows \Perf_k  \noloc \pi_0E(-)  
  \]
  where the right adjoint  $\pi_0E(-)$ is fully faithful.
\end{thm}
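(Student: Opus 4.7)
The statement asserts two things: the existence of the adjunction, and the fully faithfulness of $\pi_0 E(-)$. I would treat them together. Fully faithfulness amounts to the counit being an isomorphism; at $A \in \Perf_k$ the counit $(U_{\T}(\pi_0 E(A))/\m)^{\sharp} \to A$ is the identity by \Cref{thm:E_functor}(3) (which gives $\pi_0 E(A)/\m \cong A$) combined with the perfectness of $A$. Hence, once the adjunction is established, fully faithfulness follows automatically.

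To establish the adjunction, it suffices to exhibit a natural bijection
\[
\Hom_{\Alg_{\T}}(R, \pi_0 E(A)) \cong \Hom_{\CAlgh_{E_0}}(U_{\T}(R), A)
\]
for $R \in \Alg_{\T}$ and $A \in \Perf_k$, where the right side regards $A$ as an $E_0$-algebra via the quotient $E_0 \twoheadrightarrow E_0/\m = k$. (The right side then equals $\Hom_{\Perf_k}((U_{\T}(R)/\m)^{\sharp}, A)$ by composing the standard adjunctions $(-)/\m \dashv (\mathrm{incl})$ and $(-)^{\sharp} \dashv (\mathrm{incl})$.) The forward map is postcomposition with the projection $\pi_0 E(A) \twoheadrightarrow \pi_0 E(A)/\m = A$, which is immediate. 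The content is constructing the inverse: given a ring map $\bar g\colon U_{\T}(R) \to A$, produce a unique $\T$-algebra lift $\tilde g\colon R \to \pi_0 E(A)$.

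To build $\tilde g$, my plan is to exploit the $\m$-adic completeness of $\pi_0 E(A)$ and lift $\bar g$ along the tower $\{\pi_0 E(A)/\m^{j}\}_{j \geq 1}$ one stage at a time, using the $\T$-algebra structure on $R$ to pin down the lift at each stage. The main obstacle, and the bulk of the real work, is showing the existence and uniqueness of these successive lifts; this requires understanding how the power operations on $\pi_0 E(A)$ interact with the $\m$-adic filtration, since the associated graded pieces $\m^j/\m^{j+1}$ are free $A$-modules but their behavior under $\T$ reflects the formal group at every lower height.

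To overcome this obstacle, I would perform induction on the height $n$, as indicated in \Cref{rmk:intro-induct}. The base case $n=0$ is trivial, as $\T$ is the identity monad and the adjunction reduces to the self-adjunction of the identity on $\CAlgh_k = \Perf_k$ (in characteristic zero). For the inductive step at height $n \geq 1$, I would employ the existence of a map $E_n(k) \to E_{n-1}(k')$ — supplied by the height-$(n-1)$ case of \Cref{cor:intro_map_to_E}, itself proven by induction in parallel — to compare the power operations at height $n$ with those at height $n-1$, where the inductive hypothesis of the present theorem is available. This would allow the successive obstructions encountered in the lifting procedure to be organized and killed inductively, yielding both existence and uniqueness of $\tilde g$ and completing the proof.
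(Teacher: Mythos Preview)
Your reduction is correct: once the adjunction is established, fully faithfulness follows from the counit $(\pi_0 E(A)/\m)^{\sharp} \cong A$, which is immediate from \Cref{thm:E_functor}(3). Likewise, factoring the left adjoint as $(-)^{\sharp} \circ (-/\m) \circ U_{\T}$ is exactly right. The real content is the inverse map, and here your proposal has a genuine gap.

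Your plan is to lift a given ring map $\bar g \colon U_{\T}(R) \to A$ along the $\m$-adic tower $\{\pi_0 E(A)/\m^{j}\}$, using the $\T$-structure on $R$ to ``pin down'' the lift at each stage. But the intermediate quotients $\pi_0 E(A)/\m^{j}$ are \emph{not} $\T$-algebras: the power operations do not preserve the $\m$-adic filtration (a weight-$p^r$ operation on a class in $\m^{j}$ need not land in $\m^{j}$). So there is no obvious sense in which the $\T$-structure on $R$ selects a preferred lift at each finite stage, and you give no concrete mechanism for how it would. The vague appeal to height induction does not fill this gap: in the paper, the height induction is used only to prove a specific injectivity statement (\Cref{prop:detecting-classes}), not to organize a lifting procedure.

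The paper's approach is structurally different. Rather than construct the adjunction directly, it proves that the canonical comparison map $\overline{\ev}_A \colon \pi_0 E(A) \to W_{\T}(A)$ to the cofree $\T$-algebra on $A$ is an isomorphism (\Cref{thm:ev-iso}); the adjunction then follows by composing the known adjunction $U_{\T} \dashv W_{\T}$ with $(-/\m)^{\sharp} \dashv \mathrm{incl}$. To prove $\overline{\ev}_A$ is an isomorphism, the paper (i) reduces to $A=k$ via a change-of-rings formula for the \emph{Witt filtration} on $W_{\T}$ (the filtration by weight of power operations, not the $\m$-adic one); (ii) proves injectivity via the transchromatic/height-induction argument you gesture at (\Cref{prop:detecting-classes}); and (iii) proves surjectivity by a length-counting argument combining the surjectivity of the mod-$\m$ total additive power operation (rigidity of isogenies, \Cref{prop:power-op-surj}) with the non-additive $p$-derivation $\theta$ (\Cref{prop:pder}). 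The Witt filtration, the rigidity input, and $\theta$ are all essential and absent from your sketch.
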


\begin{rmk}
We have seen (cf. \Cref{thm:E_functor}) that the construction $E(-)$ is naturally a left adjoint.  The task of constructing maps \emph{into} Lubin--Tate theories, to which much of this paper is devoted, would be considerably easier one could realize $E(-)$ as a \emph{right adjoint}.  This theorem asserts that at least this is the case in algebra, at the level of $\pi_0$ equipped with its power operations.
\tqed
\end{rmk}

We remark that this result has also been independently obtained by Rezk.  We now outline the ideas which go into our proof of this result.  

\subsubsection{The map $\overline{\ev}$}\hfill

  Given a stable presentably symmetric monoidal category $\CC$, the functor
  \begin{align*}
       \pi_0 \colon \CAlg(\CC) &\to \mathrm{Set} \\
        R &\mapsto \pi_0 \Map_{\CC}({\one}, R)
  \end{align*} 
  is represented by the free commutative algebra on a copy of the unit, ${\one}\{t\}$.  Accordingly, the operations on $\pi_0R$ are given by elements of $\pi_0 {\one}\{t\} = \pi_0 \bigoplus {\one}^{\otimes s}_{h\Sigma_s}\cong \pi_0 \bigoplus {\one}_{h\Sigma_s}$.  

\begin{dfn}
  To each class $P \in \pi_0({\one}_{h\Sigma_r})$, we can associate a
  \deff{power operation of weight} $r$ by sending a class $x \in \pi_0R$ to the composite 
  \begin{equation*}\label{intro:powop} \one \xrightarrow{P} {\one}_{h\Sigma_r} \hookrightarrow \oplus_s {\one}_{h\Sigma_s} \cong {\one}\{t\} \xrightarrow{t \mapsto x} R. \end{equation*}
  Dually, by omitting the composition with $P$ and $R$-linearizing, we obtain a map of $R$-modules $R[B\Sigma_r] \to R$, which induces a map
  \[ \mathcal{P}_r \colon \pi_0R \longrightarrow [B\Sigma_r, R] \]
  which we call the \deff{total power operation of weight} $r$.\footnote{This mirrors the duality between cohomology and homology. As in that case, although it is easier to think in terms of actions, coactions have the moral high ground.}
  \tqed
\end{dfn}

\begin{rmk}\label{rmk:strict}
The power operations on a class $x\in \pi_0 R$ measure the failure of $x$ to strictly commute with itself.  In the case that $x$ is a \deff{strict element}, i.e., $x$ arises as the image of $t$ under a commutative algebra map ${\one}[t] \to R$, the induced map ${\one}\{t \} \xrightarrow{t\mapsto x} R$ factors through the projection \[ {\one}\{ t \} \xrightarrow{t\mapsto t} {\one}[t], \]
and so there are no interesting power operations on $x$ -- that is, the total power operation on $x$ is given simply by the formula $\mathcal{P}_r(x) = x^r$.  
\tqed
\end{rmk}

Although the functor $W_{\T}$ initially seems mysterious, it turns out to be closely related to the total power operation.

\begin{rmk}\label{rmk:WTintuition}
Concretely, an element of $W_{\T}(B)$ can be thought of as a $B$-valued functional on the the $E_0$-module of power operations.  Then, the unit of the $(U_{\T}, W_{\T})$-adjunction 
\[ \ev_B \colon B \longrightarrow W_{\T} U_{\T}(B) \]
has a relatively simple interpretation:  it sends $b\in B$ to the functional which evaluates each power operation on $b$.  As such, $\ev_B$ is a universal refinement of the total power operation on $B$, and we think of $W_\T(A)$ as the universal target for $A$-valued power operations; we make these ideas precise in \Cref{sub:background}.
\tqed
\end{rmk}

It is through this unit map $\ev$ that we approach the proof of \Cref{thm:cofree}.

\begin{cnstr}
Let $A$ be a perfect $k$-algebra.  Then by construction, $\pi_0E(A)$ acquires the structure of a $\T$-algebra, and we saw in \Cref{thm:E_functor}(3) that there is a natural equivalence $\pi_0E(A) / \m \cong A$ for $A \in \Perf_k$.  Therefore, we can construct a natural transformation $\overline{\ev}_A$ as the following composite:
  \begin{center}
    \begin{tikzcd}
      & W_{\T}U_{\T}\pi_0E(A) \ar[dr,"W_{\T}(-/\m)"] & \\    
      \pi_0E(A) \ar[ur, "\ev_A"] \ar[rr, "\overline{\ev}_A"] & & W_{\T}(A).
    \end{tikzcd}
  \end{center}
  \tqed
\end{cnstr}

  In the spirit of \Cref{rmk:WTintuition}, the natural transformation $\overline{\ev}$ can be interpreted as recording the value of the total mod $\m$ power operation on $\pi_0E(A)$.  Our strategy for proving \Cref{thm:cofree} is to show that this map $\overline{\ev}_A$ is an isomorphism for all $A \in \Perf_k$.  The theorem follows easily from this fact (and will be deduced at the end of \Cref{sub:pder}), so we spend the rest of this section showing that $\overline{\ev}$ is an isomorphism.
  
\subsubsection{The Witt filtration}\hfill

An important organizing principle in our proof that $\overline{\ev}$ is an isomorphism is the \emph{Witt filtration}.  This is a filtration of $W_{\T}(A)$ by ideals induced by the weight grading on $\T$, so that $W^{\leq r}_\T(A)$ is the universal target for $A$-valued power operations of weight at most $p^r$.  One consequence of having this filtration is that we are able to control the dependence of $W_\T(A)$ on $A$:

\begin{prop} \label{prop:completed-tensor-formula}
  There is an equivalence
  \[ \left( W_{\T}(k) \otimes_{W(k)} W(A) \right)_W^{\wedge} \xrightarrow{\cong} W_{\T}(A) \]
  natural in the choice of a perfect $k$-algebra $A$,
  where the completion on the left\footnote{That is, the inverse limit of the corresponding quotients.} is with respect to the Witt filtration on $W_{\T}(k)$.
\end{prop}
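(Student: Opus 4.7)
The plan is to construct a natural filtered map, observe that both sides are complete with respect to the Witt filtration, and check that the induced map on associated graded pieces is an isomorphism. The map
\[ W_{\T}(k) \otimes_{W(k)} W(A) \longrightarrow W_{\T}(A) \]
is assembled from the functoriality of $W_{\T}$ in the morphism $k \to A$, together with the natural $W(A)$-algebra structure on $W_{\T}(A)$: a $\T$-algebra carries a lift of Frobenius compatible with its augmentation to $A$, which by the universal property of Witt vectors promotes the underlying $W(k)$-algebra structure on $W_{\T}(A)$ to a $W(A)$-algebra structure.

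Next, I would filter both sides compatibly: the left-hand side inherits a filtration from the Witt filtration on $W_{\T}(k)$, and the right-hand side carries its own Witt filtration, the two being matched by functoriality of $W_{\T}$. Since $W_{\T}(A)$ is by construction the inverse limit of its weight-$\leq r$ truncations, it is already complete with respect to its Witt filtration, which is why the completion on the left is essential. It then suffices to compare associated graded pieces. By design, the $r$-th graded piece $\gr^r W_{\T}(k)$ is a $k$-module, since it encodes power operations of weight exactly $p^r$ acting on $k$ modulo $\m$; consequently, tensoring with $W(A)$ over $W(k)$ factors through $W(A)/p \cong A$, giving
\[ \gr^r\!\left( W_{\T}(k) \otimes_{W(k)} W(A) \right) \cong \gr^r W_{\T}(k) \otimes_k A. \]

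The crux, and the main obstacle, is to produce the matching description $\gr^r W_{\T}(A) \cong \gr^r W_{\T}(k) \otimes_k A$, i.e.\ to show that the functor $A \mapsto \gr^r W_{\T}(A)$ is $k$-linear in $A$. This rests on the structural fact, worked out by Rezk in \cite{RezkCong}, that the weight-$p^r$ component of the monad $\T$ is a free $E_0$-module of finite rank; given this flatness, the right adjoint $W_{\T}$ has graded pieces given by $E_0$-linear homomorphisms out of a free module, and the required linearity in $A$ follows formally. Assembling these isomorphisms on graded pieces and passing to the inverse limit then yields the statement, with naturality in $A$ immediate from the construction of the map.
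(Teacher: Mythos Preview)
Your overall architecture matches the paper's: build a filtered comparison map, note that $W_{\T}(A)$ is complete for the Witt filtration, and reduce to an isomorphism on associated graded using Strickland's freeness theorem. Two steps, however, are handled too loosely.

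First, the $W(A)$-algebra structure on $W_{\T}(A)$. You invoke a Frobenius lift on $\T$-algebras together with a universal property of Witt vectors, but at height $n>1$ a $\T$-algebra does not literally carry a Frobenius lift in the sense needed to map out of $W(A)$; the additive $p$-derivation $\theta$ is not the whole story. The paper instead equips $W_{\T}(A)$ with a $W(A)$-structure via the evaluation map $\overline{\ev}_A \colon \pi_0 E(A) \to W_{\T}(A)$ and the inclusion $W(A) \hookrightarrow \pi_0 E(A) \cong W(A)[\![u_1,\dots,u_{n-1}]\!]$. This is the map whose naturality in $A$ produces the comparison with $W_{\T}(k)\otimes_{W(k)} W(A)$, and it is not obviously the same as what you sketch.

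Second, and more seriously, the claim that ``linearity in $A$ follows formally'' from freeness of $\T(E_0)$ skips the main content. The identification $W_{\T}^{=r}(A) \cong A^{\overline{d}(r)}$ is only an isomorphism of \emph{abelian groups}; the $A$-module structure induced by $\overline{\ev}_A$ is not the na\"ive coordinatewise one. The paper computes (via the total power operation on multiplicative lifts, which are strict elements of $\pi_0 E(A)$) that $[a]\in W(A)$ acts on the $r$-th graded piece by $a^{p^r}$, i.e.\ through an $r$-fold Frobenius twist. It is then the \emph{perfectness} of $A$ that makes this twisted action free of rank $\overline{d}(r)$ and compatible with base change from $k$. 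Without this computation the comparison on associated graded is not established, and your argument does not go through as stated.
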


As a corollary of \Cref{prop:completed-tensor-formula}, we show that $\overline{\ev}_A$ is an isomorphism in general as soon as it is an isomorphism in the case $A=k$. For this reason, we are reduced to the case of understanding power operations on $\pi_0E = \pi_0E(k)$ for the remainder of the proof.


\subsubsection{Additive operations on Lubin-Tate theory}\ 

The theory of power operations for algebras over Lubin-Tate theory was pioneered by Ando, Hopkins and Strickland in the papers \cite{Ando, Strickland2, Strickland1, AHS}.  The basis of their understanding was a careful understanding of the \deff{additive} power operations -- that is, those power operations $P$ such that $P(x_1+x_2) = P(x_1) + P(x_2)$.  
The \deff{total additive power operation} (of weight $p^r$) is the ring map
\[ \tau_r \colon E_0 \longrightarrow E^0(B\Sigma_{p^r})/I_{\mathrm{tr}} \]
obtained by composing $\mathcal{P}_{p^r}$ with the quotient by the ideal generated by transfers from subgroups of the form $\Sigma_{p^r -i}\times \Sigma_i \subset \Sigma_{p^r}$ (which enforces additivity).\footnote{Note that we have restricted to $p$-power weights, as the transfer ideal contains the unit otherwise.}  The key insight of the Ando-Hopkins-Strickland theory, which we review in \Cref{sub:evsurj}, is that the map $\tau_r$ and its concomitant structures can be interpreted in terms of deformations of isogenies of formal groups.

\begin{exm}\label{exm:ht1-add}
In the height 1 situation of \Cref{exm:ht1}, the additive power operations on $\pi_0R$ for $R \in \CAlgw_{\KU_p}$ are generated under addition and composition by the $p$-th Adams operation
\[
\psi(x) = x^p + p\delta(x).
\]
Hence, the action of additive power operations gives $\pi_0R$ the structure of a module over $\Z_p[\psi]$.
\tqed
\end{exm}

Two of the three core inputs in the proof of \Cref{thm:cofree} come purely from the understanding additive operations on $E_0$.  We have:

\begin{prop} \label{prop:detecting-classes}
  Given $x \in E_0$ which is non-zero mod $p$,
  there exists an additive power operation $Q$ such that the reduction mod $\m$ of $Q(x)$ is non-zero.
\end{prop}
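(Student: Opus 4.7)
The plan is to reformulate the desired existence statement in terms of the total additive power operation $\tau_r \colon E_0 \to B_r$, where $B_r := E^0(B\Sigma_{p^r})/I_{\mathrm{tr}}$. Every weight-$p^r$ additive power operation $Q \colon E_0 \to E_0$ arises as $Q = \pi \circ \tau_r$ for some $E_0$-linear functional $\pi \colon B_r \to E_0$, where $B_r$ is regarded as an $E_0$-module via the natural ``source'' $E_0$-algebra structure coming from the unit $E_0 \to E^0(B\Sigma_{p^r})$. Since $B_r$ is finite and flat over $E_0$ (by Strickland), the existence of a suitable $\pi$ with $\pi(\tau_r(x)) \notin \m$ is equivalent to the assertion that $\tau_r(x) \notin \m B_r$, i.e., that $\tau_r(x)$ has nonzero image in the quotient ring $R_r := B_r / \m B_r$ for some $r$.

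Via the Ando--Hopkins--Strickland interpretation of the following subsection, $\mathrm{Spf}(R_r)$ is identified with the formal scheme of finite flat subgroups of order $p^r$ in the fixed formal group $\G_0$ over $k$, and the composite $\bar\tau_r \colon E_0 \to R_r$ is the map classifying the quotient $\G_0/H$ as a deformation of $\G_0^{(p^r)} \cong \G_0$ (using perfectness of $k$). If $x \notin \m$, then the image of $\bar\tau_r(x)$ in the residue field of $R_r$ is $\mathrm{Frob}^r(x \bmod \m) \in k$, which is nonzero because $k$ is reduced; in this case $Q = \mathrm{id}$ already suffices.

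The main case is $x \in \m \setminus (p)$, where $\bar\tau_r(x)$ lies in the nilpotent radical of $R_r$ and must be shown to be nonzero. The idea is that nontrivial first-order deformations of the Frobenius subgroup $\ker F^r \subseteq \G_0$ inside $\mathrm{Spf}(R_r)$ yield, via the quotient construction, nontrivial first-order deformations of $\G_0^{(p^r)}$ as a deformation of $\G_0$, and thus correspond to nonzero tangent vectors in $\m / \m^2 \cong k \cdot \{u_1, \ldots, u_{n-1}\}$. Combining the resulting nondegeneracy on $\m / \m^2$ with iteration along the $\m$-adic filtration, and using the Frobenius-lift to handle the $W(k)$-layer of the filtration, one detects any $x$ nonzero mod $p$.

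The main obstacle is rigorously verifying the tangent-space nondegeneracy claim of the previous paragraph, which ultimately rests on Strickland's explicit presentation of $B_r$ in terms of formal group data and the deformation theory of Frobenius subgroups of height-$n$ formal groups. This computational step is the heart of the proposition.
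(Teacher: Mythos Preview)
Your reformulation---that it suffices to find $r$ with $\bar\tau_r(x) \neq 0$ in $R_r := B_r/\m B_r$---is correct, and the case $x \notin \m$ is indeed immediate. But the main case $x \in \m \setminus (p)$ is not actually proved. You assert a ``tangent-space nondegeneracy'' (roughly, that $\bar\tau_r$ is injective on $\m/(\m^2,p)$) without argument, and then invoke an unspecified ``iteration along the $\m$-adic filtration.'' The second step is genuinely problematic: since $\bar\tau_r$ is a ring map sending $\m$ into the maximal ideal of the Artinian local ring $R_r$, it annihilates $\m^N$ for $N \gg 0$, so no fixed $r$ can detect all of $\m \setminus (p)$ and you must let $r$ grow. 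You give no mechanism for propagating detection from one filtration layer to the next---the ring maps $\bar\tau_r$ do not in any obvious way interact with the $\m$-adic filtration so as to support such an induction. You yourself flag this as ``the main obstacle''; as written, the proposal is a plan with the hard step missing.

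The paper's argument is entirely different and makes no attempt at a direct deformation-theoretic analysis of $R_r$. Instead it proceeds by induction on the height $n$: using the height $n{-}1$ case of the chromatic Nullstellensatz, one constructs a map of commutative ring spectra $E_n(k) \to E_{n-1}(K)$ that is injective on $\pi_0$ mod $p$. This map intertwines total additive power operations, so the inductive hypothesis yields an additive operation $P$ with $P(x) \not\equiv 0 \pmod{\m_{n-2}}$. The remaining passage from nonzero mod $\m_{n-2}$ to nonzero mod $\m$ is a separate lemma of Hahn. If your direct approach could be completed it would avoid the inductive dependence on the main theorem at lower heights, which would be a genuine simplification of the global logic of the paper; but that completion is not in evidence here.
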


\begin{prop} \label{prop:power-op-surj}
  For any $r\geq 0$, the mod $\m$ total additive power operation
  \[ \overline{\tau}_r \colon E_0 \to E^0(B\Sigma_{p^r})/(I_{\mathrm{tr}}, \m) \]
  is surjective.
\end{prop}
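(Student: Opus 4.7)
The plan is to leverage the Ando--Hopkins--Strickland (AHS) theory of additive power operations. Strickland's theorem identifies $\Spf(E^0(B\Sigma_{p^r})/I_{\mathrm{tr}})$ with the formal moduli $\mathrm{Sub}_{p^r}(\G)$ of subgroups of order $p^r$ in the universal deformation $\G$ of $\G_0$, and the total additive power operation $\tau_r$ corresponds to the ``target'' map of formal schemes sending $(\G, H) \mapsto \G/H$, where $\G/H$ is made into a deformation of $\G_0$ via Ando's norm-coherent structure on the deformation. Reducing modulo $\m$ in the source identifies the target of $\overline{\tau}_r$ with the ring of global sections $\Gamma(\mathrm{Sub}_{p^r}(\G_0))$ of the scheme of order-$p^r$ subgroups of the fixed height-$n$ formal group $\G_0$ over $k$.

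Since $\G_0$ is connected of height $n$, every finite subgroup is infinitesimal, so $\mathrm{Sub}_{p^r}(\G_0)$ is an infinitesimal thickening of a single $k$-point; hence $\Gamma(\mathrm{Sub}_{p^r}(\G_0))$ is a local Artinian $k$-algebra with residue field $k$. Surjectivity of $\overline{\tau}_r$ then reduces, by Nakayama's lemma, to two checks:
\begin{enumerate}
\item[(i)] surjectivity on residue fields, i.e.\ the composite $E_0 \xrightarrow{\overline{\tau}_r} \Gamma(\mathrm{Sub}_{p^r}(\G_0)) \twoheadrightarrow k$ is surjective, and
\item[(ii)] the images of $\overline{\tau}_r$ span the maximal ideal of $\Gamma(\mathrm{Sub}_{p^r}(\G_0))$ modulo its square.
\end{enumerate}

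For (i), the composite classifies the quotient formal group at the closed subgroup, which is a Frobenius twist of $\G_0$; identifying it with $\G_0$ via an inverse to Frobenius, the composite becomes the $r$-th iterate of Frobenius on $k$, and this is surjective because $k$ is perfect. For (ii), one applies $\overline{\tau}_r$ to the deformation coordinates $u_1, \dots, u_{n-1}$ and, using Ando's norm coherence together with Lubin's formulas for the effect of an isogeny on formal group parameters, verifies that the images span $\m$ in $\Gamma(\mathrm{Sub}_{p^r}(\G_0))$ modulo its square.

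The main obstacle is the explicit calculation in (ii). The natural approach is induction on $r$: factoring a degree-$p^r$ isogeny into degree-$p$ steps reduces the problem to the base case $r=1$, which can be treated directly from Strickland's explicit presentation of $E^0(B\Sigma_p)/I_{\mathrm{tr}}$ as a quotient of the divisor ring on the $p$-series of $\G$. Alternatively, surjectivity of $\overline{\tau}_r$ can be reformulated as the statement that the target map $\mathrm{Sub}_{p^r}(\G_0) \to \Spf(E_0)$ is a closed immersion, which could potentially be checked by dualizing the cotangent-space statement and combining with the faithfulness of power operations encoded in \Cref{prop:detecting-classes}.
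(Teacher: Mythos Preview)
Your geometric setup via AHS is correct and matches the paper's, but there is a genuine gap in step (ii): you never actually carry out the computation of $\overline{\tau}_r(u_i)$ modulo the square of the maximal ideal, and you concede this is ``the main obstacle.'' Such a computation would require explicit control over how the Lubin--Tate parameters transform under isogeny, which is delicate; the proposed induction on $r$ does not obviously help, since composing target maps does not decompose the Artinian local ring $\Gamma(\mathrm{Sub}_{p^r}(\G_0))$ in a way that reduces to the $r=1$ case.

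The paper bypasses this entirely with a conceptual input you are missing: \emph{rigidity of isogenies}. The key lemma (\Cref{lem:defzero}) says that over a characteristic-$p$ base, a map between positive-height formal groups is uniquely determined by its restriction to the special fiber. From this it follows immediately that $(\sigma,\tau)\colon \Def(\varphi^r) \to \Def(\G_0)\times_{\Spf W(k)}\Def(\G_0^{(r)})$ is a closed immersion --- indeed, its fiber over the closed point is a single reduced point, since there is exactly one deformation of $\varphi^r$ with prescribed (trivial) source and target deformations. Surjectivity of $\overline{\tau}_r$ then falls out by base change, with no coordinate computation needed. Your closing alternative (recasting as a closed immersion and invoking \Cref{prop:detecting-classes}) correctly reformulates the goal, but \Cref{prop:detecting-classes} concerns injectivity of power operations on elements of $E_0$, not the geometry of the target map, so it does not supply the missing ingredient.
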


The key step in the proof of \Cref{prop:detecting-classes}, which is given in \Cref{sub:detection},
is an induction on height using transchromatic maps between Lubin--Tate theories of different heights.
\begin{rmk}\label{rmk:induct}
The transchromatic maps that we use are maps in $\CAlg(\Sp)$ from a Lubin--Tate theory of height $n$ to a Lubin--Tate theory of height $n-1$.  These are constructed using the height $n-1$ case of \Cref{cor:mod_algclosed}, and it is for this reason that our paper is inductive on the height (cf. \Cref{rmk:intro-induct}).
\tqed
\end{rmk}

The proof of \Cref{prop:power-op-surj}, given in \Cref{sub:evsurj} requires us to delve deeper into the algebro-geometric perspective on additive power operations and is essentially a corollary of the fact that isogenies of height $n$ formal groups are rigid.

In the context of proving \Cref{thm:cofree}, \Cref{prop:detecting-classes} is almost sufficient to conclude that $\overline{\ev}_k$ is injective and
\Cref{prop:power-op-surj} falls just short of proving $\overline{\ev}_k$ is surjective.
In both cases, the missing piece is that we have yet to take non-additive operations into account.  

\begin{rmk}
Surprisingly, although the algebra of additive operations grows more complicated as height increases, it is the case at any height that passing from additive operations to general operations only requires adding compositions with a \emph{single} non-additive operation: 
the (additive) $p$-derivation $\theta$ first defined in unpublished work of Rezk (cf. \Cref{prop:pder}).
\tqed
\end{rmk}

Our final input to the proof of \Cref{thm:cofree} is an understanding of how this non-additive operation $\theta$ interacts with the $p$-adic filtration on $E_0$.  Using the Witt filtration as a book-keeping device, we combine this with \Cref{prop:detecting-classes} and \Cref{prop:power-op-surj} to make a rank counting argument which shows that $\overline{\ev}$ is an isomorphism.  This is done in \Cref{sub:pder}.



\subsection{$W_{\T}$ and the total power operation}
\label{sub:background}\

In \cite{RezkCong}, Rezk constructed a monad $\T$ on the category of $E_0$-modules which is an algebraic approximation to the free  $K(n)$-local commutative $E$-algebra monad.  It is naturally graded by a weight $j$, and defined so that for a finite free $E$-module $M$, we have
\[
\T(\pi_0 M) = \bigoplus_j \T_j(\pi_0 M) = \bigoplus_j  \pi_0 (M^{\otimes_E j}_{h\Sigma_j}).
\]
In particular, $\pi_0$ of the free $K(n)$-local commutative $E$-algebra on one generator is a completion of $\T(E_0)$.  Accordingly, the elements of $\T(E_0)$ can be thought of as (degree zero) power operations which act on the $\pi_0$ of any $K(n)$-local commutative $E$-algebra.

One can also take a dual perspective on power operations: for $R\in \CAlgw_E$,  rather than thinking of power operations as elements of $\T(E_0)$ acting on $\pi_0(R)$, one can consider them as \emph{co-acting} on $\pi_0(R)$ via the various total power operation maps: more precisely, since the $E^0(B\Sigma_i)$ are finite free $E_0$-modules by \cite{Strickland1}, there are composites
\[
\pi_0R \to R^0(B\Sigma_i) \cong \pi_0R \otimes_{E_0} E^0(B\Sigma_i)
\]
for $i\geq 0$.  As remarked in the introduction, this dual perspective corresponds to the fact that the forgetful functor $U_{\T}\colon \Alg_{\T} \to \CAlgh_{E_0}$ is also \emph{comonadic}, with cofree functor $W_{\T}$.  In this section, we give a more explicit description of $W_{\T}$ and relate it to the total power operation.

\begin{rmk}\label{rmk:Tstruct}
Since the free algebra $\T(E_0)$ corepresents the functor $U_{\T} \colon \Alg_{\T} \to \CAlgh_{E_0}$ to $E_0$-algebras (and not just sets), it admits the structure of a \emph{co-$E_0$-algebra} in $\Alg_{\T}$, and in particular it admits the following structures:
\reqnomode
\begin{align}
\Delta^+ &\colon \T_l (E_0) \to \bigoplus_{i+j = l} \T_i(E_0) \otimes \T_j(E_0)  & &  \tag{coaddition}\\
\Delta^{\times} &\colon \T_l(E_0) \to \T_l(E_0) \otimes \T_l(E_0)  & & \tag{comultiplication}\\
\varepsilon &\colon E_0  \to \Hom_{\Alg_{\T}}(\T(E_0), \T(E_0)) & & \tag{co-$E_0$-unit}
\end{align}
where we have indicated the interaction with the grading.  Moreover, the monad structure on $\T$ induces maps
\[
\circ\colon \T_l(E_0) \otimes \T_{m}(E_0) \to \T_{lm}(E_0)
\]
corresponding to composition of power operations.
\tqed
\end{rmk}

\begin{prop}\label{prop:Wstruct}\hfill
\begin{enumerate}
    \item For any $A\in \CAlgh_{E_0}$, there is an isomorphism of $E_0$-algebras 
    \[ W_{\T}(A) = \Hom_{\CAlgh_{E_0}}(\T(E_0), A),\] where the $E_0$-algebra structure on the right is induced by the maps in \Cref{rmk:Tstruct}.  
    \item Via the identification of (1), the unit map on $B \in \Alg_{\T}$
    \[
    \ev_B \colon B \to W_{\T} U_{\T} (B) \cong \Hom_{\CAlgh_{E_0}}(\T(E_0),U_{\T}(B))
    \] sends $b\in B$ to the function $\T(E_0) \to B$ which sends a power operation  $\gamma \in \T(E_0)$ to $\gamma(b)$.    
\end{enumerate}
\end{prop}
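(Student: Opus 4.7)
The proof is a formal consequence of the corepresentability statement in \Cref{rmk:Tstruct} combined with the $(U_{\T}, W_{\T})$-adjunction and the triangle identities. For part (1), the plan is to identify $U_{\T}W_{\T}(A)$ via the following chain of natural isomorphisms of $E_0$-algebras:
\[
U_{\T}W_{\T}(A) \cong \Hom_{\Alg_{\T}}(\T(E_0),\, W_{\T}(A)) \cong \Hom_{\CAlgh_{E_0}}(U_{\T}\T(E_0),\, A).
\]
The first isomorphism is precisely \Cref{rmk:Tstruct}: since $\T(E_0)$ corepresents $U_{\T}\colon \Alg_{\T} \to \CAlgh_{E_0}$ as a functor with values in $E_0$-algebras (not merely sets or $E_0$-modules), the hom set on the right inherits an $E_0$-algebra structure matching the one on $U_{\T}W_{\T}(A)$ inherited from $W_{\T}(A)$ as a $\T$-algebra. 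The second isomorphism is the $U_{\T} \dashv W_{\T}$ adjunction, and its compatibility with the $E_0$-algebra structures is automatic: naturality of the adjunction in both variables forces the coaddition, comultiplication, and co-$E_0$-unit maps of \Cref{rmk:Tstruct} to induce the corresponding structures on the right.

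For part (2), my approach is to trace the element $b \in B$ through the identification of part (1). Let $\phi_b\colon \T(E_0) \to B$ denote the $\T$-algebra map which sends the generator $1 \in E_0 \subset \T(E_0)$ to $b$; this is the adjoint of the $E_0$-module map $E_0 \to U_{\T}(B)$ determined by $b$, and by the very definition of the action of power operations, $\phi_b(\gamma) = \gamma(b)$ for each $\gamma \in \T(E_0)$. The element $\ev_B(b) \in U_{\T}W_{\T}U_{\T}(B)$ corresponds, under the first identification above, to the $\T$-algebra map $\widetilde{\ev_B(b)}\colon \T(E_0) \to W_{\T}U_{\T}(B)$ sending $1 \mapsto \ev_B(b)$. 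By the universal property of $\T(E_0)$, this must agree with the composite $\ev_B \circ \phi_b$, since both send the generator $1$ to $\ev_B(b)$. Transferring through the $U_{\T} \dashv W_{\T}$ adjunction yields the $E_0$-algebra map
\[
\epsilon_{U_{\T}B} \circ U_{\T}(\ev_B \circ \phi_b) = \epsilon_{U_{\T}B} \circ U_{\T}\ev_B \circ U_{\T}\phi_b,
\]
where $\epsilon$ is the counit of $U_{\T} \dashv W_{\T}$. By the triangle identity $\epsilon_{U_{\T}B} \circ U_{\T}\ev_B = \id_{U_{\T}B}$, this simplifies to $U_{\T}\phi_b$, which on $\gamma \in \T(E_0)$ outputs $\phi_b(\gamma) = \gamma(b)$, as claimed.

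I do not anticipate any substantial obstacle: both parts unwind from the definitions and the triangle identities. The only subtle bookkeeping is in confirming that the $E_0$-algebra structure transported across the various adjunction isomorphisms in (1) matches the one defined by the co-$E_0$-algebra operations on $\T(E_0)$; but this is forced by the fact that the co-$E_0$-algebra structure was \emph{constructed} in \Cref{rmk:Tstruct} precisely so that the first isomorphism above is $E_0$-algebra-linear.
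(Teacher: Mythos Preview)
Your proof is correct and amounts to an explicit unwinding of what the paper obtains by citation: the paper simply observes that $\T(E_0)$ is an $E_0$-plethory in the sense of Borger--Wieland (via Rezk), and then statement (1) is \cite[\S 1.10]{BorgerWieland}, while (2) is declared ``straightforward.'' Your argument---corepresentability of $U_\T$ by $\T(E_0)$ composed with the $U_\T \dashv W_\T$ adjunction, plus the triangle identity for (2)---is exactly the content of that reference, written out directly rather than invoked.

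One small point worth making explicit: the reason the $E_0$-algebra structure transports across the adjunction bijection in (1) is that $U_\T$, being left adjoint to $W_\T$, \emph{preserves coproducts}; hence the co-$E_0$-algebra structure maps on $\T(E_0)$ in $\Alg_\T$ (whose targets are coproducts in $\Alg_\T$) become, under $U_\T$, the co-$E_0$-algebra structure maps on $U_\T\T(E_0)$ in $\CAlgh_{E_0}$. This is what makes your ``naturality in both variables'' remark go through, and is precisely the mechanism behind the plethory formalism.
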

\begin{proof}
Recall that for a (discrete) commutative ring $B$, a $B$-plethory is a (discrete) commutative $B$-algebra with a comonad structure on the covariant functor that it represents \cite{BorgerWieland}. As discussed in \cite[\S 4.22]{RezkCong}, $\T(E_0)$ is an example of an \emph{$E_0$-plethory}.  Statement (1) is \cite[\S 1.10]{BorgerWieland}, and (2) is straightforward. \end{proof}

The identification in part (1) above allows us to think of elements $v\in W_{\T}(A)$ in terms of their evaluations at power operations:

\begin{ntn}\label{ntn:t-and-wt}
Any power operation $\lambda \in \T(E_0)$, determines, via evaluation, a functional
\[
\lambda^* \colon W_{\T}(A) \cong \Hom_{\CAlgh_{E_0}}(\T(E_0), A) \to A.
\]
Note that $\lambda^*$ is neither additive nor multiplicative in general. 

On the other hand, $W_{\T}(A)$ is also a $\T$-algebra by construction, and therefore $\T(E_0)$ acts on it.  For $v\in W_{\T}(A)$, we denote the result of the action by an operation $\gamma \in \T(E_0)$ simply by $\gamma v \in W_{\T}(A)$.  These notations interact via the formula
\[
\lambda^* (\gamma v) = (\lambda \circ \gamma)^* v.
\]
\tqed
\end{ntn}

For $B\in \Alg_\T$, the evaluation map $B \to W_{\T}U_{\T}(B)$ can be thought of as a universal lift of the total power operation on $B$ to a $\T$-algebra map; we make this precise as follows:

\begin{rmk}[$W_{\T}$ and the total power operation] \label{rmk:witt-total-pow}
If $R$ is a $K(n)$-local commutative $E$-algebra, we note that the composite
\[
\iota\colon W_{\T}(\pi_0R) \cong \Hom_{\CAlgh_{E_0}}(\T(E_0), \pi_0R) \to \Hom_{\Modh_{E_0}}(\T(E_0), \pi_0R) \cong \prod_i R^0(B\Sigma_i) 
\]
defines an embedding of $W_{\T}(\pi_0R)$ into a product of $R$-cohomologies of symmetric groups which is multiplicative but not additive.  By \Cref{prop:Wstruct}, the composite
\[
\pi_0R\xrightarrow{\ev_{\pi_0R}} W_{\T}(\pi_0R)\xrightarrow{\iota} \prod_j R^0(B\Sigma_j) 
\]
can be identified with the product of the total power operation maps (which is also multiplicative but not additive). \tqed
\end{rmk}

One can also further quotient by transfers to relate $W_{\T}$ to the total \emph{additive} power operation:

\begin{prop}\label{prop:add-tot-pow}
  Consider the composite  
  \[
\iota^+ \colon W_{\T}(\pi_0R) \to  \prod_j R^0(B\Sigma_j)  \to  \prod_j R^0(B\Sigma_j)/I_{\tr}
\]
  of the embedding $\iota$ of \Cref{rmk:witt-total-pow} with quotient by the transfer ideals.  Then $\iota^+$ is a ring homomorphism, and the composite 
  \[
  \iota^+ \circ \ev_{\pi_0R}\colon \pi_0R \to \prod_j R^0(B\Sigma_j)/I_{\tr} 
  \]
  is the product over $j$ of the total order $j$ additive power operations.
\end{prop}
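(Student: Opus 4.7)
The plan is to use the co-$E_0$-algebra structure on $\T(E_0)$ from \Cref{rmk:Tstruct} to control both ring operations on $W_\T(\pi_0 R) \cong \Hom_{\CAlgh_{E_0}}(\T(E_0), \pi_0 R)$; concretely, for $f, g \in W_\T(\pi_0 R)$ and $\lambda \in \T(E_0)$,
\[
(f+g)(\lambda) = m_{\pi_0 R} \circ (f \otimes g) \circ \Delta^+(\lambda), \qquad (fg)(\lambda) = m_{\pi_0 R} \circ (f \otimes g) \circ \Delta^\times(\lambda).
\]
Multiplicativity of $\iota$ follows immediately from the fact that $\Delta^\times$ preserves the weight grading ($\T_j(E_0) \to \T_j(E_0) \otimes \T_j(E_0)$), matching the componentwise cup product on $\prod_j R^0(B\Sigma_j)$; this property was already recorded in \Cref{rmk:witt-total-pow} and passes to $\iota^+$.

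The heart of the argument is additivity of $\iota^+$. The weight $l$ piece of $\Delta^+$ lands in $\bigoplus_{i + j = l} \T_i(E_0) \otimes \T_j(E_0)$, so the problem reduces to identifying these summands. The key geometric input is that $\Delta^+$ originates from the monoid structure on the free $\E_\infty$-space, i.e.\ from the concatenation map $\coprod_n B\Sigma_n \times \coprod_n B\Sigma_n \to \coprod_n B\Sigma_n$, whose weight $l$ component is the disjoint union, over $i + j = l$, of the induction maps $B\Sigma_i \times B\Sigma_j \to B\Sigma_l$. Dualizing over $E$ identifies the $(i, j)$-component of $\Delta^+$ with the transfer $\tr^{\Sigma_l}_{\Sigma_i \times \Sigma_j}$, except in the boundary cases $(0, l)$ and $(l, 0)$ where it gives the two identity terms via the counit $\T_0(E_0) \cong E_0$. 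Reducing modulo $I_{\tr}$ then kills every cross contribution and leaves $\iota^+(f)_l + \iota^+(g)_l$ as desired. This geometric identification is the step I expect to be the main obstacle: it requires carefully unwinding the construction of $\T$ in \cite{RezkCong} from the extended power functor $X \mapsto \bigoplus_n X^{\otimes n}_{h\Sigma_n}$ and recognizing the resulting map on $E$-cohomology as the Becker--Gottlieb transfer associated to the finite cover $B\Sigma_i \times B\Sigma_j \to B\Sigma_l$.

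Given that $\iota^+$ is a ring homomorphism, the second claim is then essentially bookkeeping: by \Cref{rmk:witt-total-pow}, the composite $\iota \circ \ev_{\pi_0 R}$ is already the product of the total power operations $\mathcal{P}_j$, and each $\tau_r$ is defined precisely as $\mathcal{P}_{p^r}$ composed with the quotient by $I_{\tr}$. Since $I_{\tr}$ contains the unit of $R^0(B\Sigma_j)$ for $j$ not a prime power (as noted in the preamble to \Cref{prop:power-op-surj}), the product over all $j$ collapses to the product over $r$ of $\tau_r$, completing the identification.
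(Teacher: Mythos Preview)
Your approach is essentially the same as the paper's: reduce to showing $\iota^+$ is additive (multiplicativity being inherited from $\iota$), then show that the off-diagonal pieces of $\Delta^+$ lie in the image of transfer and therefore die modulo $I_{\tr}$. The second assertion is indeed just bookkeeping from \Cref{rmk:witt-total-pow}.

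There is, however, a confusion in your identification of where $\Delta^+$ comes from. The concatenation map $\coprod_n B\Sigma_n \times \coprod_n B\Sigma_n \to \coprod_n B\Sigma_n$ induces, on $E$-homology, the \emph{multiplication} $\T_i(E_0)\otimes\T_j(E_0)\to\T_{i+j}(E_0)$, not $\Delta^+$; dualizing that map lands you in $\T(E_0)^\vee$, not back in $\T(E_0)$. The paper instead observes that $\Delta^+$ is induced on $E$-homology by the free-algebra map $\Ss\{t\}\to\Ss\{x,y\}$ sending $t\mapsto x+y$. Unwinding this on the $l$-th extended power, the map $\Ss_{h\Sigma_l}\to(\Ss\oplus\Ss)^{\otimes l}_{h\Sigma_l}\cong\bigoplus_{i+j=l}\Ss_{h(\Sigma_i\times\Sigma_j)}$ is, on each summand with $i,j>0$, precisely the Becker--Gottlieb transfer $\Ss_{h\Sigma_l}\to\Ss_{h(\Sigma_i\times\Sigma_j)}$ (while the $(0,l)$ and $(l,0)$ summands give the identity). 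Hence $\Delta^+\lambda = \lambda\otimes 1 + 1\otimes\lambda$ modulo the image of transfer, which is exactly your conclusion. So your endpoint is right, but the route through ``concatenation then dualize'' does not literally produce $\Delta^+$; the cleanest justification is the $t\mapsto x+y$ description.
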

\begin{proof}
The second assertion is clear from \Cref{rmk:witt-total-pow}, so the content of the statement is that $\iota^+$ is a ring homomorphism.  In fact, since we have observed above that $\iota$ is already multiplicative, we just need to show that $\iota^+$ is additive.  

To see this, note that the coaddition on $\T(E_0)$ is the map induced on $E$-homology by unique commutative algebra map
\[
\Ss\{ t \} \to \Ss \{ x,y \}
\]
which sends $t\mapsto x+y$ (recall that $\{ -\}$ denotes the free commutative algebra).  It follows that for $\lambda \in \T_{p^r}(E_0)$, $\Delta^+ \lambda$ is given by $\lambda \otimes 1 + 1\otimes \lambda$ plus terms in the image of 
\[
\tr\colon E_0(B\Sigma_{p^r}) \to E_0(B\Sigma_i \times B\Sigma_j) \simeq \T_i(E_0) \otimes \T_j(E_0)
\]
where $i,j>0$ and $i+j= p^r$.  In other words, the equation $\Delta^+ \lambda= \lambda \otimes 1 + 1\otimes \lambda$ holds modulo transfers.  Dualizing, this means that the map
\[
\iota\colon \Hom_{\CAlgh_{E_0}}(\T(E_0), \pi_0R)  \to  \Hom_{\Modh_{E_0}}(\T(E_0), \pi_0R) 
\]
is additive modulo transfers, as required.  
\end{proof}

\subsection{The Witt filtration}
\label{sub:wittfilt}\

Recall that the monad $\T$ admits a grading $\T = \bigoplus_j \T_j$ by weight.  Due to the presence of non-additive operations, this grading on $\T$ does not quite induce a grading on the dual construction $W_{\T}$, but it does induce a filtration
\[
W_{\T}(A) \to \cdots \to W_{\T}^{\leq 2}(A) \to W_{\T}^{\leq 1}(A) \to W_{\T}^{\leq 0}(A) \cong A
\]
which we call the \deff{Witt filtration} (cf. \Cref{dfn:wittfilt}).  This filtration will serve two primary functions:
\begin{enumerate}
    \item The Witt filtration provides an organizational principle for the calculation of $W_{\T}(k)$ in \Cref{sub:pder}, where we use the filtration to control how the additive $p$-derivation $\theta$ and the various additive total power operations interact.  
    \item We will show that for a perfect $k$-algebra $A$, the construction $W_{\T}(A)$ is, on associated graded for the Witt filtration, obtained from $W_{\T}(k)$ by base change along $W(k) \to W(A)$ (\Cref{prop:change-of-ring}).  This allows us to reduce the proof of \Cref{thm:cofree} to the case of a perfect field $k$ (\Cref{cor:only-need-k}). 
\end{enumerate}

The key statements needed for (1) will be proved after defining the Witt filtration (\Cref{prop:witt-op-shift} and \Cref{prop:add-witt-factor}).  Then, we will accomplish (2) in \Cref{subsub:witt-gr} by understanding the associated graded of the Witt filtration on $W_{\T}(A)$ as a module over $W(A)$ via the map $\overline{\ev}$ (and the inclusion $W(A)\to \pi_0E(A)$, cf. \Cref{thm:E_functor}(3)).

\begin{prop}\label{prop:wittideal}
Let $A\in \CAlgh_{E_0}$.  
Then, the subset of \deff{$W_{\T}^{\geq r}(A)$} of $W_{\T}(A)$ defined by
\[
W_{\T}^{\geq r}(A) := \{ v\in W_{\T}(A)| \lambda^* v = 0 \text{ for any } \lambda \in \T(E_0) \text{ of positive weight less than }p^r \}
\]
is an ideal.    
\end{prop}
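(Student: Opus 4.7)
The plan is to translate the problem into a statement about the cooperations on the $E_0$-plethory $\T(E_0)$. By \Cref{prop:Wstruct}(1), elements of $W_{\T}(A)$ are $E_0$-algebra maps $v\colon \T(E_0) \to A$ and $\lambda^* v = v(\lambda)$; hence $v \in W_{\T}^{\geq r}(A)$ precisely when $v$ vanishes on the $E_0$-submodule $J_r := \bigoplus_{0 < l < p^r} \T_l(E_0) \subset \T(E_0)$. The ring structure on $W_{\T}(A)$ is determined by the co-$E_0$-algebra structure of $\T(E_0)$ in $\Alg_{\T}$: concretely, for $\lambda \in \T(E_0)$,
\[
(v_1 + v_2)(\lambda) = \mu_A \circ (v_1 \otimes v_2) \circ \Delta^+(\lambda), \quad (v_1 \cdot v_2)(\lambda) = \mu_A \circ (v_1 \otimes v_2) \circ \Delta^\times(\lambda),
\]
where $\mu_A$ is the multiplication on $A$. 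Since $W_\T(A)$ is a commutative ring, it suffices to show that $W_{\T}^{\geq r}(A)$ contains the additive identity, is closed under addition, and is closed under multiplication by arbitrary elements; closure under negation then follows from $-v = (-1) \cdot v$.

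The zero of $W_{\T}(A)$ is the composite of the counit $\T(E_0) \twoheadrightarrow E_0$ for coaddition with the structure map $E_0 \to A$; this counit visibly annihilates $J_r$, so $0 \in W_{\T}^{\geq r}(A)$. For closure under addition, I would take $v_1, v_2 \in W_{\T}^{\geq r}(A)$ and $\lambda \in \T_l(E_0)$ with $0 < l < p^r$, and invoke the weight-preservation of $\Delta^+$ from \Cref{rmk:Tstruct} to write $\Delta^+ \lambda \in \bigoplus_{i+j = l} \T_i(E_0) \otimes_{E_0} \T_j(E_0)$. In each summand, either both $i$ and $j$ are positive (hence strictly less than $l < p^r$), so $v_1$ and $v_2$ annihilate their respective factors, or one of them (say $i$) vanishes, in which case the other factor lies in $\T_l(E_0)$ and is killed by $v_2$ since $0 < l < p^r$. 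Every term in $(v_1 + v_2)(\lambda)$ is therefore zero.

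For closure under multiplication by an arbitrary element, the argument is entirely parallel and slightly simpler: for $v \in W_{\T}^{\geq r}(A)$, $w \in W_{\T}(A)$, and $\lambda$ of weight $0 < l < p^r$, the weight-preservation of $\Delta^\times$ from \Cref{rmk:Tstruct} gives $\Delta^\times \lambda \in \T_l(E_0) \otimes_{E_0} \T_l(E_0)$, so every elementary tensor $\alpha \otimes \beta$ appearing has $\beta \in \T_l(E_0)$ with $0 < l < p^r$; thus $v(\beta) = 0$ and $(w \cdot v)(\lambda) = 0$. The main (and only) step is unpacking the duality between the ring structure on $W_{\T}(A)$ and the co-$E_0$-algebra structure on $\T(E_0)$; the result is then formal once the weight-compatibility of $\Delta^+$ and $\Delta^\times$ is in hand.
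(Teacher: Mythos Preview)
Your proof is correct and follows essentially the same approach as the paper: both arguments use the weight-compatibility of $\Delta^+$ and $\Delta^\times$ from \Cref{rmk:Tstruct} to show that $(v_1+v_2)(\lambda)$ and $(w\cdot v)(\lambda)$ vanish for $\lambda$ of positive weight less than $p^r$. Your version is slightly more explicit in tracking the case split $i=0$ versus $i>0$ for $\Delta^+$ and in verifying the zero element and negation, but the substance is the same.
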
  
\begin{proof}
It follows from the interaction of $\Delta^+$ and $\Delta^{\times}$ with the grading on $\T(E_0)$ (cf. \Cref{rmk:Tstruct}) that if $\lambda \in \T(E_0)$ has positive weight less than $p^r$, then
\begin{align*}
    \Delta^+ \lambda &\in \bigoplus_{i+ j < p^r} \T_i(E_0) \otimes \T_j(E_0)\\
    \Delta^{\times} \lambda &\in \bigoplus_{i,j < p^r} \T_i(E_0) \otimes \T_j(E_0).
\end{align*}
Therefore, for $v,v' \in W_{\T}^{\geq r}(A)$ and $w\in W_{\T}(A)$, the element $v\otimes v'$ (resp. $w\otimes v$) evaluates to zero on $\Delta^+ \lambda$ (resp. $\Delta^{\times} \lambda$).  Hence, we have $\lambda^*(v+v') =0$ and $\lambda^* (wv)=0$.  Thus, $v+v'$ and $wv$ are also in $W_{\T}^{\geq r}(A)$ and so $W_{\T}^{\geq r}(A)\subset W_{\T}(A)$ is an ideal.  
\end{proof}

We may now define:

\begin{dfn}[Witt filtration]\label{dfn:wittfilt}
Define the ring 
{\[
\mdef{W_{\T}^{\leq r}(A)} := W_{\T}(A) / W_{\T}^{\geq r+1}(A).
\]}
We refer to the resulting filtration
\[
W_{\T}(A) \to \cdots \to W_{\T}^{\leq 2}(A) \to W_{\T}^{\leq 1}(A) \to W_{\T}^{\leq 0}(A) \cong A
\]
as the \deff{Witt filtration} on $W_{\T}(A)$.  
\tqed
\end{dfn}

The Witt filtration interacts predictably with the action of $\T$ on $W_{\T}(A)$:

\begin{prop}\label{prop:witt-op-shift}
Let $\gamma \in \T_p(E_0)$ have weight $p$.  Then, for $r\geq 0$,  if $v\in W_{\T}^{\geq r+1}(A)$ then $\gamma v \in W_{\T}^{\geq r}(A)$.  Consequently, if $v \in W_{\T}(A)$ is such that $\gamma v$ is detected in Witt filtration $r-1$, then $v$ is detected in Witt filtration $r$.

\end{prop}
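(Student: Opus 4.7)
The plan is to verify the claim by directly unwinding the definition of $W_{\T}^{\geq r}(A)$. Concretely, I need to show that if $v \in W_{\T}^{\geq r+1}(A)$, then $\lambda^{*}(\gamma v) = 0$ for every $\lambda \in \T(E_0)$ of positive weight less than $p^{r}$. My first move is to rewrite the left-hand side using the identity $\lambda^{*}(\gamma v) = (\lambda \circ \gamma)^{*} v$ recorded in \Cref{ntn:t-and-wt}, which reduces the problem to checking $(\lambda \circ \gamma)^{*} v = 0$ for every such $\lambda$.

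The key observation is that composition is multiplicative in the weight grading: as noted in \Cref{rmk:Tstruct}, the composition map carries $\T_{m}(E_0) \otimes \T_{p}(E_0)$ into $\T_{mp}(E_0)$. Thus if $\lambda$ has positive weight $m$ with $m < p^{r}$, then $\lambda \circ \gamma$ has positive weight $mp$, and $mp < p \cdot p^{r} = p^{r+1}$. The assumption $v \in W_{\T}^{\geq r+1}(A)$ then immediately forces $(\lambda \circ \gamma)^{*} v = 0$, which is what we needed.

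The second assertion is simply the contrapositive of the first: if $\gamma v \notin W_{\T}^{\geq r}(A)$, so that $\gamma v$ has nonzero image in $W_{\T}^{\leq r-1}(A)$, then $v$ cannot lie in $W_{\T}^{\geq r+1}(A)$, and hence has nonzero image in $W_{\T}^{\leq r}(A)$. I do not anticipate any real obstacle; the whole argument is a matter of tracking weights, and rests entirely on the weight-multiplicativity of composition in $\T(E_0)$ together with the evaluation--action identity of \Cref{ntn:t-and-wt}.
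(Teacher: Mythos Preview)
Your proof is correct and takes essentially the same approach as the paper: both arguments reduce to the identity $\lambda^{*}(\gamma v) = (\lambda \circ \gamma)^{*} v$ from \Cref{ntn:t-and-wt} together with the fact that composition multiplies weights, so that $\lambda$ of weight less than $p^{r}$ composed with $\gamma$ of weight $p$ lands in weight less than $p^{r+1}$. Your write-up is in fact slightly more careful with the indices than the paper's terse one-line proof.
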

\begin{proof}
  This follows immediately from \Cref{ntn:t-and-wt} and the definition of the Witt filtration, because if $\lambda$ has weight less than $p^{r-1}$ and $\gamma$ has weight $p$, then $\lambda \circ \gamma$ has weight less than $p^r$.  
\end{proof}

We will also need to understand the relationship of the Witt filtration with the total \emph{additive} power operation:

\begin{prop}\label{prop:add-witt-factor}
The total order $r$ additive power operation
\[
\tau_r\colon E_0 \to E^0(B\Sigma_{p^r})/I_{\tr}
\]
factors, as a ring map, through the composite
\[
E_0 \xrightarrow{\ev_{E_0}} W_{\T}(E_0) \to W_{\T}^{\leq r}(E_0)
\]
of evaluation with projection to the $r$-th stage of the Witt filtration.  Similarly, the total mod $\m$ order $r$ additive power operation $\overline{\tau}_r$ factors through the composite
\[
E_0 \xrightarrow{\overline{\ev}_k} W_{\T}(k) \to W_{\T}^{\leq r}(k).
\]
\end{prop}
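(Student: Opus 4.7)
The plan is to use \Cref{prop:add-tot-pow} to exhibit $\tau_r$ as a composite involving the ring map $\iota^+$, and then argue that the $p^r$-component of $\iota^+$ descends through the Witt filtration. In effect, the proof amounts to little more than unwinding the definition of $W_{\T}^{\geq r+1}$.

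First, specializing \Cref{prop:add-tot-pow} to $R = E$ (so that $\pi_0R = E_0$), we see that $\tau_r$ agrees with the composite of $\ev_{E_0} \colon E_0 \to W_{\T}(E_0)$ with the $p^r$-component $\iota^+_r \colon W_{\T}(E_0) \to E^0(B\Sigma_{p^r})/I_{\tr}$ of the ring map $\iota^+$. Since $\iota^+_r$ is a ring map and $W_{\T}^{\leq r}(E_0)$ is by definition the quotient of $W_{\T}(E_0)$ by the ideal $W_{\T}^{\geq r+1}(E_0)$, the desired factorization (as a ring map) follows once we check that $\iota^+_r$ annihilates $W_{\T}^{\geq r+1}(E_0)$. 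For this, recall from \Cref{rmk:witt-total-pow} that the pre-quotient component $\iota_r(v)$, viewed as a functional $\T_{p^r}(E_0) \to E_0$ under the identification $E^0(B\Sigma_{p^r}) \cong \Hom_{E_0}(\T_{p^r}(E_0), E_0)$ (which uses that $E_0(B\Sigma_{p^r})$ is finite free by \cite{Strickland1}), is the map $\lambda \mapsto \lambda^* v$. If $v \in W_{\T}^{\geq r+1}(E_0)$, then by \Cref{prop:wittideal} this vanishes for every $\lambda$ of positive weight less than $p^{r+1}$, and in particular on every $\lambda \in \T_{p^r}(E_0)$. Hence $\iota_r(v) = 0$ already before quotienting by transfers, and a fortiori $\iota^+_r(v) = 0$.

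For the mod $\m$ statement, we appeal to the naturality of $\iota^+$ in its target algebra: since $E_0(B\Sigma_i)$ is finite free, the construction $A \mapsto \prod_j (E^0(B\Sigma_j) \otimes_{E_0} A)/I_{\tr}$ is functorial in $A$, and the arguments of \Cref{rmk:witt-total-pow} and \Cref{prop:add-tot-pow} go through with $E_0$ replaced by any $E_0$-algebra $A$, yielding a natural ring map $\iota^+_{A} \colon W_{\T}(A) \to \prod_j (E^0(B\Sigma_j) \otimes_{E_0} A)/I_{\tr}$. Applied to the quotient $E_0 \to k$ and projected to the $p^r$-th factor, this gives a ring map $\iota^+_{r,k} \colon W_{\T}(k) \to E^0(B\Sigma_{p^r})/(I_{\tr}, \m)$ whose composite with $\overline{\ev}_k = W_{\T}(-/\m) \circ \ev_{E_0}$ is $\overline{\tau}_r$. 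Running the preceding paragraph verbatim over $k$ shows that $\iota^+_{r,k}$ kills $W_{\T}^{\geq r+1}(k)$ and hence descends to a ring map out of $W_{\T}^{\leq r}(k)$, yielding the second factorization.

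I anticipate no substantive obstacle here; the only piece of bookkeeping is the naturality needed for the mod $\m$ version, which is immediate from the definition of $\iota^+$ using the finite freeness of $E_0(B\Sigma_i)$.
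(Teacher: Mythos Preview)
Your proof is correct and follows essentially the same approach as the paper: both use \Cref{prop:add-tot-pow} to factor $\tau_r$ through $\iota^+ \circ \ev_{E_0}$, observe that the $p^r$-component of $\iota$ (and hence of $\iota^+$) annihilates $W_{\T}^{\geq r+1}$ directly from the definition of that ideal, and then deduce the mod~$\m$ statement from naturality of $\iota^+$ in the target. The only cosmetic difference is that the paper phrases the vanishing via the factorization $W_{\T}(E_0) \xrightarrow{\iota} \prod_j E^0(B\Sigma_j) \to \prod_{j \le p^r} E^0(B\Sigma_j)$ and draws the naturality square explicitly, whereas you work componentwise and describe naturality in words.
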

\begin{proof}
It follows from \Cref{prop:add-tot-pow} that the additive total power operation of order $r$ factors as a composite
\[
E_0 \xrightarrow{\ev_{E_0}} W_{\T}(E_0) \xrightarrow{\iota^+} \prod_j E^0(B\Sigma_j)/I_{\tr} \xrightarrow{\mathrm{proj}_{p^r}} E^0(B\Sigma_{p^r})/I_{\tr},
\]
where $\iota^+$ is a ring map and $\mathrm{proj}_{p^r}$ projects onto the $p^r$th factor.  It suffices to see that the composite $\mathrm{proj}_{p^r}\circ \iota^+$ factors through $W_{\T}^{\leq r}(E_0)$, which amounts to showing that it vanishes on the ideal $W_{\T}^{\geq r+1}(E_0)$.  

But the map $\mathrm{proj}_{p^r}\circ \iota^+$ factors through 
\[
W_{\T}(E_0) \xrightarrow{\iota} \prod_{j} E^0(B\Sigma_j) \to \prod_{j\leq p^r} E^0(B\Sigma_j),
\]
(cf. \Cref{rmk:witt-total-pow}), which vanishes on $W_{\T}^{\geq r+1}(E_0)$ essentially by definition.  

The mod $\m$ version of the statement follows by considering the diagram
\[
\begin{tikzcd}
W_{\T}(E_0) \arrow[r,"\iota"]\arrow[d] & \prod_j E^0(B\Sigma_j) \arrow[r]\arrow[d]& \prod_j E^0(B\Sigma_j)/I_{\tr}\arrow[d] \\
W_{\T}(k)\arrow[r,"\overline{\iota}"] & \prod_j E^0(B\Sigma_j)\otimes_{E_0} k \arrow[r] & \prod_j E^0(B\Sigma_j)/I_{\tr} \otimes_{E_0} k
\end{tikzcd}
\]
obtained by naturality, and applying a straightforward analogue of the above argument.  
\end{proof}

\subsubsection{Witt components and the associated graded}\label{subsub:witt-gr}\hfill

With the change-of-rings theorem (\Cref{prop:completed-tensor-formula}) as a goal, we turn our attention to understanding the associated graded pieces
\[
W_{\T}^{=r}(A) := W_{\T}^{\geq r}(A) / W_{\T}^{\geq r+1}(A)
\]
of the Witt filtration.  The primary input to our understanding is the following theorem of Strickland:

\begin{thm}[\cite{Strickland1}]\label{thm:strickland}
For $n,l\geq 0$, let 
\[
\overline{d}(l) \coloneqq \genfrac[]{0pt}{0}{n+l-1}{n-1}_p = \prod_{j=1}^{n-1} \frac{p^{l+j}-1}{p^j-1}
\]
denote the Gaussian binomial coefficient, which counts the number of subgroups $H\subset (\Q_p/\Z_p)^n$ of cardinality $|H|=p^l$.  

Then $\T(E_0)$ is a polynomial $E_0$-algebra on generators in $p$-power degrees, and for $r\geq 0$, the $E_0$-module $(Q\T(E_0))_{p^r}$ of indecomposables of weight $p^r$ is free of rank $\overline{d}(r)$.
\end{thm}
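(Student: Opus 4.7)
\medskip

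The plan is to reduce the statement to Strickland's explicit computation of $E^0(B\Sigma_m)$ and the classical theory of finite subgroup schemes of height $n$ formal groups. Recall that by Rezk's construction, $\T_j(E_0)$ is identified (up to $E_0$-linear duality for free modules) with $E^0(B\Sigma_j)$, and the coalgebraic structures on $\T(E_0)$ described in \Cref{rmk:Tstruct} correspond to the usual stable cohomology operations $B\Sigma_i \times B\Sigma_j \to B\Sigma_{i+j}$ (addition) and $B\Sigma_i \times B\Sigma_j \to B\Sigma_{ij}$ (multiplication), together with the transfers.

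The geometric heart of the argument is Strickland's theorem that $\Spec(E^0(B\Sigma_{p^r})/I_{\mathrm{tr}})$ represents the functor
\[ R \mapsto \{\text{finite flat subgroup schemes of order } p^r \text{ in } \G^Q_R\} \]
on complete local $E_0$-algebras, where $\G^Q$ is the universal deformation of $\G_0$. The first step is to invoke this identification, together with the fact that the scheme of subgroups of order $p^r$ of a universal deformation of a height $n$ formal group is finite flat over $\Spec E_0$. Passing to the geometric special fiber (or any algebraically closed extension of $k$), the number of subgroups of order $p^r$ of a height $n$ formal group equals the number of index-$p^r$ subgroups of the Tate module $\Z_p^n$, equivalently the number of order-$p^r$ subgroups of $(\Q_p/\Z_p)^n$, which is by definition $\overline{d}(r)$. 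This gives the rank count for the indecomposables in weight $p^r$.

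The second step is to verify that $(Q\T(E_0))_m = 0$ for $m$ not a power of $p$. For such $m$, one writes $m = m_1 + \cdots + m_\ell$ as a non-trivial partition such that the multinomial coefficient $\binom{m}{m_1,\ldots,m_\ell}$ is a $p$-adic unit (which is possible whenever $m$ is not a $p$-power, by Kummer's theorem applied to the base-$p$ expansion of $m$): the transfer from the associated Young subgroup then contains a unit in $E^0(B\Sigma_m)$, so $I_{\mathrm{tr}}$ equals the whole ring. Combined with the first step, this shows that if $\T(E_0)$ is polynomial on some collection of indecomposable generators, all generators lie in $p$-power weights and the number of generators of weight $p^r$ is forced to be $\overline{d}(r)$.

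The main obstacle — and the technical heart of Strickland's argument — is proving the polynomial structure. The strategy is to build up $\T_{p^r}(E_0)$ from its transfer filtration whose associated graded is controlled by tensor products $\T_{p^{a_1}}(E_0) \otimes_{E_0} \cdots \otimes_{E_0} \T_{p^{a_s}}(E_0)$ with $a_1 + \cdots + a_s = r$, then use the explicit deformation-theoretic description of order-$p^r$ subgroups of $\G^Q$ (and the fact that the formation of such subgroup schemes is flat over the scheme of order-$p^{r-1}$ subgroups, by splitting off a rank-$p$ quotient) to show that the symmetric algebra map
\[ \Sym_{E_0}\bigl( \bigoplus_{r \geq 0} (Q\T(E_0))_{p^r} \bigr) \longrightarrow \T(E_0) \]
is injective in each weight and identifies with it after comparing the computed $E_0$-ranks; both sides are free $E_0$-modules whose rank in each weight can be read off from the subgroup-scheme interpretation, and the surjectivity onto indecomposables is tautological. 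The hard part is the injectivity (equivalently, producing enough algebraically independent elements of the correct weights), which in Strickland's treatment ultimately rests on an explicit presentation of $E^0(B\Sigma_{p^r})$ as a complete intersection cut out by the equations defining subgroup schemes.
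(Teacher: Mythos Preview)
The paper does not prove this theorem; it is stated with the attribution \cite{Strickland1} and no proof follows. Your proposal is therefore not being compared against anything in the paper --- it is an outline of Strickland's original argument, and as such it is broadly accurate: the identification of $(Q\T(E_0))_{p^r}$ with the $E_0$-dual of $E^0(B\Sigma_{p^r})/I_{\mathrm{tr}}$, the subgroup-scheme interpretation giving the rank $\overline{d}(r)$, the Kummer-type argument killing indecomposables in non-$p$-power weights, and the rank-count approach to polynomiality are all the right ingredients. If anything, you correctly flag that the polynomial structure is the nontrivial part and that your sketch of it is only a sketch; but since the paper treats the whole statement as a black-box citation, no more is expected here.
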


Moving forward, we fix a choice of polynomial generators for $\T(E_0)$ as an $E_0$-algebra, which determines a bijection of \emph{sets}
\[
\omega \colon W_{\T}(A) \xrightarrow{\cong} \Hom_{\CAlgh_{E_0}}(\T(E_0),A) \xrightarrow{\cong} \prod_{r\geq 0} A^{\overline{d}(r)}
\]
which is natural in $A\in \CAlgh_{E_0}$.  

\begin{rmk}
The map $\omega$ is analogous to fixing a choice of \emph{Witt components} of a Witt vector.  More precisely, in the case where $E$ is a height $1$ Lubin-Tate theory corresponding to the multiplicative formal group, the construction $W_{\T}$ can be identified with the classical $p$-typical Witt vectors: then, we have that $\overline{d}(r) = 1$ for $r\geq 0$ and there exists a choice of polynomial generators of $\T(E_0)$ such that the components of $\omega$ are the usual Witt components. 
\tqed
\end{rmk}

We can identify the ideals in the Witt filtration $W^{>r}_{\T}(A)$ as those where the ``Witt components of weight at most $p^r$'' vanish:

\[
\begin{tikzcd}
W_{\T}^{\geq r}(A)  \arrow[d,"\cong","\omega^{\geq r}"'] \arrow[r]& W_{\T}(A)\arrow[d,"\cong", "\omega"'] \\
\prod_{i \geq r} A^{\overline{d}(i)}\arrow[r]  & \prod_{i\geq 0} A^{\overline{d}(i)}. 
\end{tikzcd}
\]

While this identification is not quite additive, it \emph{is} additive at the level of associated graded, which allows us to identify $W_{\T}^{=r}(A)$ \emph{as an abelian group}:


\begin{prop}\label{prop:witt-gr-ab}
  For $A\in \CAlgh_{E_0}$, there is a natural isomorphism of abelian groups
    \[
  \omega^{=r}\colon W_{\T}^{=r}(A) \xrightarrow{\cong}  A^{\overline{d}(r)}
  \]
  which fits into a commutative diagram of \emph{sets}
  \[
  \begin{tikzcd}
  W_{\T}^{\geq r}(A) \arrow[d,"\cong","\omega^{\geq r}"'] \arrow[r] &  W_{\T}^{=r}(A) \arrow[d,dashed, "\cong","\omega^{=r}"']\\
  \prod_{i \geq r}A^{\overline{d}(i)} \arrow[r]& A^{\overline{d}(r)}.  
  \end{tikzcd}
  \]
\end{prop}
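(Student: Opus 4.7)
The plan is to leverage the choice of polynomial generators $\{\lambda_{i,j}\}$ of $\T(E_0)$ (with $\lambda_{i,j}$ of weight $p^i$ and $j \in \{1,\ldots,\overline{d}(i)\}$) together with the fact that addition in $W_{\T}(A) \cong \Hom_{\CAlgh_{E_0}}(\T(E_0),A)$ is controlled by the coaddition $\Delta^+$ on $\T(E_0)$: explicitly, $\lambda^*(v+v') = \sum_\alpha \mu_\alpha^*(v)\,\nu_\alpha^*(v')$ whenever $\Delta^+\lambda = \sum_\alpha \mu_\alpha\otimes\nu_\alpha$. First I would verify that $\omega^{\geq r}$ identifies $W_{\T}^{\geq r}(A)$ with $\prod_{i\geq r} A^{\overline{d}(i)}$. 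Indeed, any $\lambda \in \T(E_0)$ of positive weight strictly less than $p^r$ is a polynomial without constant term in the generators $\lambda_{i,j}$ for $i<r$ (because weights add under multiplication), so the condition $\lambda^* v=0$ for every such $\lambda$ reduces to $\lambda_{i,j}^* v=0$ for all $i<r$, which is exactly the vanishing of the first block of coordinates under $\omega$.

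The key step is to show that evaluation at a weight-$p^r$ generator $\lambda_{r,j}$ descends to an additive map $W_{\T}^{=r}(A) \to A$. Vanishing on $W_{\T}^{\geq r+1}(A)$ is automatic since $\mathrm{wt}(\lambda_{r,j})=p^r<p^{r+1}$, so $\lambda_{r,j}^*$ factors through the quotient. For additivity on $W_{\T}^{\geq r}(A)$, I would establish the expansion
\[ \Delta^+\lambda_{r,j} = \lambda_{r,j}\otimes 1 + 1\otimes\lambda_{r,j} + T, \qquad T \in \bigoplus_{0<i<p^r} \T_i(E_0)\otimes_{E_0} \T_{p^r-i}(E_0). \]
The bi-weight $(p^r,0)$ and $(0,p^r)$ components of $\Delta^+\lambda_{r,j}$ are forced by applying the augmentations $\id\otimes\epsilon$ and $\epsilon\otimes\id$, each of which must return $\lambda_{r,j}$, combined with the fact that $\Delta^+$ preserves total weight. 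For $v,v' \in W_{\T}^{\geq r}(A)$, each summand of $T$ contributes zero to $\lambda_{r,j}^*(v+v')$, because one of its tensor factors is a positive-weight operation of weight $<p^r$ and hence annihilates $v$ or $v'$. This leaves $\lambda_{r,j}^*(v+v') = \lambda_{r,j}^*(v) + \lambda_{r,j}^*(v')$.

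Packaging the descended additive maps for $j=1,\ldots,\overline{d}(r)$ produces the desired homomorphism $\omega^{=r}\colon W_{\T}^{=r}(A) \to A^{\overline{d}(r)}$, which tautologically fills in the commutative square of sets and is a bijection because $\omega^{\geq r}$ is. Naturality in $A$ follows from the naturality of $\omega$ and of the Witt filtration. I expect the main technical point to be unwinding the $(U_{\T},W_{\T})$-adjunction carefully enough to justify the convolution formula $\lambda^*(v+v') = \sum\mu^*(v)\nu^*(v')$ for addition in $W_{\T}(A) \cong \Hom_{\CAlgh_{E_0}}(\T(E_0),A)$; once this is in place the remainder is bookkeeping with the weight grading on $\T(E_0)$.
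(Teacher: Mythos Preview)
Your proposal is correct and follows essentially the same approach as the paper: both argue that $\Delta^+\lambda = \lambda\otimes 1 + 1\otimes\lambda$ modulo terms of strictly smaller biweight, so that the projection $W_{\T}^{\geq r}(A)\to A^{\overline d(r)}$ is additive with kernel $W_{\T}^{\geq r+1}(A)$. The paper is slightly more economical in that it cites this coaddition fact from the proof of an earlier proposition rather than rederiving it via the augmentations, but the substance is identical.
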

\begin{proof}
Consider the (non-dotted) diagram of sets
\[
  \begin{tikzcd}
  W_{\T}^{\geq r+1}(A) \arrow[d,"\cong","\omega^{\geq r+1}"'] \arrow[r] & W_{\T}^{\geq r}(A) \arrow[d,"\cong","\omega^{\geq r}"'] \arrow[r] &  W_{\T}^{=r}(A)\arrow[d,dashed]\\
  \prod_{i\geq r+1}A^{\overline{d}(i)} \arrow[r] & \prod_{i \geq r}A^{\overline{d}(i)} \arrow[r]& A^{\overline{d}(r)}
  \end{tikzcd}
\]
where the top and bottom rows are short exact sequences of abelian groups.  Recall from the proof of \Cref{prop:add-tot-pow} that the coaddition map
\[
\Delta^+ \colon \T_l(E_0) \to \bigoplus_{i+j = l} \T_i(E_0) \otimes \T_j(E_0)
\]
sends $\lambda \in \T_l(E_0)$ to $\lambda \otimes 1 + 1\otimes \lambda$ plus elements of the form $\lambda' \otimes \lambda''$ where $\lambda',\lambda''$ both have weight less than $l$.  This means that the composite  $W_{\T}^{\geq r}(A) \to A^{\overline{d}(r)}$ is a map of abelian groups.  Moreover, note that the kernel of this composite is exactly $W_{\T}^{\geq r+1}(A)$, and therefore we obtain an isomorphism of groups filling in the dotted arrow in the diagram, which we define to be $\omega^{=r}$.
\end{proof}

However, in order to prove \Cref{prop:completed-tensor-formula}, we will need finer control over this associated graded as $A$ changes.  Let us now specialize to the case of interest, where $A$ is a perfect $k$-algebra regarded as an $E_0$-algebra via the composite $E_0 \to k \to A$.  Then, the map
\[
\overline{\ev}_A \colon \pi_0E(A) \to W_{\T}(A) 
\]
equips $W_{\T}(A)$ with an $\pi_0E(A)$-algebra structure.  Under the isomorphism
\[
\pi_0E(A) \cong W(A)\ll u_1, \cdots ,u_{n-1} \rr 
\]
of \Cref{thm:E_functor}(3), this in particular equips $W_{\T}(A)$ with a $W(A)$-algebra structure by restriction of scalars.

Since $p=0$ in $A$ and $W_{\T}^{=r}(A)\cong A^{\overline{d}(r)}$ as abelian groups, this $W(A)$-module structure descends to an $A$-module structure on the associated graded.  It turns out that the resulting $A$-module structure is \emph{not} quite the obvious pointwise one, but rather an $r$-fold Frobenius twist thereof.  To see this, we have the following lemma about the action of the multiplicative lifts $[a]\in W(A)$ on $W_{\T}(A)$:

\begin{lem}\label{lem:grAmod}
For a perfect $k$-algebra $A$, regard $W_{\T}(A)$ as a $W(A)$-module via $\overline{\ev}_A$ as above and let $[-]\colon A\to W(A)$ denote the multiplicative lift.  Then for $a\in A$, $v \in W_{\T}(A)$, and $\lambda \in \T(E_0)_{p^r}$, we have
\[
\lambda^* ([a] \cdot v) = a^{p^r}\lambda^* v.
\]
\end{lem}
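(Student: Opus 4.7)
The approach is to unpack the multiplication and $W(A)$-module structures on $W_{\T}(A)$ and reduce the claim to the fact that the multiplicative lift $[a]$ is a strict element of $\pi_0 E(A)$ in the sense of \Cref{rmk:strict}.

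First, since the $W(A)$-module structure on $W_{\T}(A)$ factors through the composite $W(A) \hookrightarrow \pi_0 E(A) \xrightarrow{\overline{\ev}_A} W_{\T}(A)$, we have $[a] \cdot v = \overline{\ev}_A([a]) \cdot v$ in $W_{\T}(A)$. Unwinding the product on $W_{\T}(A) = \Hom_{\CAlgh_{E_0}}(\T(E_0), A)$ via \Cref{prop:Wstruct}(1), and writing $\Delta^\times \lambda = \sum \lambda' \otimes \lambda''$ with both factors of weight $p^r$ (by \Cref{rmk:Tstruct}), the desired identity reduces to showing
\[
\sum \overline{\ev}_A([a])^*(\lambda') \cdot \lambda''^* v \;=\; a^{p^r} \lambda^* v.
\]

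The next step is the computation of $\overline{\ev}_A([a])^*(\lambda')$. By \Cref{cnstr:teichmuller-lift}, the spherical multiplicative lift $\Ss[t] \to \W(A)$ sending $t \mapsto [a]$, followed by the base-change map $\W(A) \to E(A)$, exhibits $[a]$ as a strict element of $\pi_0 E(A)$. By \Cref{rmk:strict}, for any $\mu \in \T(E_0)$ of weight $w$, the power operation satisfies $\mu \cdot [a] = \epsilon(\mu) \cdot [a]^w = \epsilon(\mu)\cdot[a^w]$, where $\epsilon \colon \T(E_0) \to E_0$ is the augmentation $\mu \mapsto \mu \cdot 1$, equivalently the map corresponding to the multiplicative unit $1 \in W_{\T}(E_0)$. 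Reducing modulo $\m$ and specializing to $w = p^r$ (using that $[a^{p^r}] \in W(A)$ maps to $a^{p^r} \in A$) gives $\overline{\ev}_A([a])^*(\lambda') = \epsilon(\lambda') \cdot a^{p^r}$ in $A$.

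Substituting and pulling $a^{p^r}$ outside, the left-hand side becomes $a^{p^r} \cdot \sum \epsilon(\lambda') \lambda''^* v = a^{p^r} \cdot \bigl((\epsilon \otimes \id)\Delta^\times \lambda\bigr)^* v$, and the proof concludes via the counit identity $(\epsilon \otimes \id)\Delta^\times = \id$, which holds because $\epsilon$ represents the multiplicative unit of $W_{\T}$. The main subtlety is ensuring that $[a]$ is strict in $\pi_0 E(A)$, not merely in $\pi_0 \W(A)$; this is automatic since strictness of an element is preserved by any map of commutative ring spectra, and $[a] \in \pi_0 E(A)$ is by construction the image of $[a] \in \pi_0 \W(A)$ along $\W(A) \to E(A)$.
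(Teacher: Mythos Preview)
Your proof is correct and rests on the same key point as the paper's: the multiplicative lift $[a]\in\pi_0E(A)$ is strict (via \Cref{cnstr:teichmuller-lift}), so its total weight-$p^r$ power operation is $[a]^{p^r}$, which reduces to $a^{p^r}$ modulo $\m$.

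The packaging differs slightly. You unpack the product on $W_{\T}(A)$ directly through the comultiplication $\Delta^\times$, writing $\Delta^\times\lambda=\sum\lambda'\otimes\lambda''$ in Sweedler notation and finishing with the counit identity $(\epsilon\otimes\id)\Delta^\times=\id$. The paper instead invokes the multiplicative (non-additive) embedding $\iota\colon W_{\T}(A)\hookrightarrow\prod_i E^0(B\Sigma_i)\otimes_{E_0}A$ of \Cref{rmk:witt-total-pow}: since $\lambda^*$ factors through projection to the $p^r$-th component of $\iota$, and since $\iota$ is multiplicative and sends $\overline{\ev}_A([a])$ to the constant $a^{p^r}$ in that component, the identity $\lambda^*([a]\cdot v)=a^{p^r}\lambda^*v$ is immediate. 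Your approach trades the appeal to $\iota$ for an explicit Sweedler computation; the paper's use of $\iota$ essentially absorbs the counit identity into the statement that $\iota$ is multiplicative. Either route works, and the underlying content is the same.
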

\begin{proof}
Recall from \Cref{rmk:witt-total-pow} that there is a multiplicative embedding
\begin{equation}\label{eqn:multembA}
W_{\T}(A) \hookrightarrow \Hom_{\Modh_{E_0}}(\T(E_0),A)  \cong \prod_i E^0(B\Sigma_i) \otimes_{E_0} A
\end{equation}
such that the composite 
\[
\pi_0E(A) \xrightarrow{\overline{\ev}_A} W_{\T}(A) \to \prod_i E^0(B\Sigma_i) \otimes_{E_0} A
\]
is the product of the total mod $\m$ power operations on $E(A)$.  Since the map $\lambda^*$ factors through the projection of (\ref{eqn:multembA}) to the $p^r$th component, it suffices to show that $[a]$ acts by $a^{p^r}$ along the total mod $\m$ power operation map
\[
\pi_0E(A) \to E^0(B\Sigma_{p^r})\otimes_{E_0} A.
\]
But $[a]\in \pi_0E(A)$ is a strict element by \Cref{cnstr:teichmuller-lift}, so the weight $p^r$ total power operation on it is just given by raising to the $p^r$th power (\Cref{rmk:strict}).

\end{proof}

We may now identify the associated graded of the Witt filtration. 

\begin{prop}\label{prop:grAmod}
Let $A$ be a perfect $k$-algebra and regard $W_{\T}^{=r}(A)$ as an $A$-module as above.  Then:
\begin{enumerate}
    \item Under the identification $\omega^{=r}\colon W_{\T}^{=r}(A) \cong A^{\overline{d}(r)}$, the action of $a\in A$ through $\overline{\ev}_A$ is given by multiplication by $a^{p^r}$ on each component.  
    \item Consequently, $W_{\T}^{=r}(A)$ is a free $A$-module of rank $\overline{d}(r)$ (on the standard generators of $A^{\overline{d}(r)}$).
\end{enumerate}
\end{prop}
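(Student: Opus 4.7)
The plan is to chase through the definition of $\omega^{=r}$ and apply \Cref{lem:grAmod}. First, I note that once we fix polynomial generators $\lambda^{(r)}_1, \dots, \lambda^{(r)}_{\overline{d}(r)} \in \T(E_0)$ of weight $p^r$ (whose existence is provided by Strickland's \Cref{thm:strickland}), the isomorphism $\omega^{=r}$ admits the concrete description
\[
\omega^{=r}([v]) = \bigl( (\lambda^{(r)}_1)^*(v), \dots, (\lambda^{(r)}_{\overline{d}(r)})^*(v) \bigr),
\]
where $v \in W_{\T}^{\geq r}(A)$ is any lift of $[v]$. This is immediate from the commutative diagram defining $\omega^{=r}$ in \Cref{prop:witt-gr-ab}, combined with the definition of $\omega$ as evaluation at the chosen polynomial generators.

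For part (1), I will use that the $A$-module structure on $W_{\T}^{=r}(A)$ is obtained by restricting the $W(A)$-action along the Teichm\"uller lift $A \to W(A)$, $a \mapsto [a]$. Indeed, the $W(A)$-action descends to $W(A)/p = A$ because $p$ annihilates $W_{\T}^{=r}(A) \cong A^{\overline{d}(r)}$, and the Teichm\"uller lift is a set-theoretic section of this quotient. Applying \Cref{lem:grAmod} to the weight $p^r$ operation $\lambda = \lambda^{(r)}_i$ then yields
\[
(\lambda^{(r)}_i)^*\bigl([a] \cdot v\bigr) = a^{p^r} \cdot (\lambda^{(r)}_i)^*(v)
\]
for every lift $v$, and reading this off componentwise via $\omega^{=r}$ gives the identity $\omega^{=r}(a \cdot [v]) = a^{p^r} \cdot \omega^{=r}([v])$, establishing (1).

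Finally, (2) will follow formally from (1) together with perfectness of $A$. By (1), the $A$-module $W_{\T}^{=r}(A)$ is isomorphic, via $\omega^{=r}$, to $A^{\overline{d}(r)}$ with $A$-action twisted by the $p^r$-th power Frobenius. Since $A$ is perfect, this Frobenius is a ring automorphism of $A$, so the twisted module is free of rank $\overline{d}(r)$; concretely, any element $(x_1,\dots,x_{\overline{d}(r)})$ can be written uniquely as $\sum_i x_i^{1/p^r} \cdot e_i$ under the twisted action, exhibiting the standard basis vectors $e_i$ as a free $A$-basis for $W_{\T}^{=r}(A)$. I do not foresee a real obstacle: the whole argument is a direct unwinding of definitions together with a single application of \Cref{lem:grAmod}; the only subtlety is that the Frobenius twist in (1) forces the use of perfectness to conclude freeness on the standard generators in (2).
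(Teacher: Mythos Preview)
Your proposal is correct and follows essentially the same approach as the paper: reduce to the Teichm\"uller lift via the fact that $p=0$ in $W_{\T}^{=r}(A)$, invoke \Cref{lem:grAmod}, and then use perfectness of $A$ to untwist the Frobenius. The paper's proof is simply a terser version of yours; one small phrasing quibble is that ``restricting the $W(A)$-action along the Teichm\"uller lift'' is not literally how the $A$-module structure arises (the Teichm\"uller lift is not a ring map), but you immediately correct this by noting that the action descends to $W(A)/p$ and $[-]$ is a set-theoretic section, which is exactly the right justification.
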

\begin{proof}
  Part (1) follows from \Cref{lem:grAmod} by noting that since $p=0$ in $W_{\T}^{=r}(A)$, the action of $x\in W(A)$ on the associated graded depends only on the reduction $\overline{x}$ of $x$ modulo $p$ (and so we may replace $x$ by $[\overline{x}]$).  The second part follows because $A$ is perfect.  
\end{proof}

We now use our knowledge of the associated graded to prove \Cref{prop:completed-tensor-formula}.
Note that by naturality of $\overline{\ev}$, a perfect $k$-algebra $A$ yields a natural map
\[
W_{\T}(k)\otimes_{W(k)} W(A) \to W_{\T}(A).
\]
We have:



\begin{prop} \label{prop:change-of-ring}
  Given a perfect $k$-algebra $A$, the induced map
  \[ W_{\T}(k) \otimes_{W(k)} W(A) \to W_{\T}(A) \]
  sends the Witt filtration on the source to the Witt filtration on the target and becomes an isomorphism upon completing the source with respect to the Witt filtration.
\end{prop}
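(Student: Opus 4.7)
The proof naturally splits into three steps: showing filtration preservation, computing the induced map on associated graded, and deducing the result from completeness of the target. The main obstacle is the associated graded step, where a Frobenius twist appears from the action of $W(A)$ on $W_\T^{=r}(A)$ by $\bar a^{p^r}$ (\Cref{prop:grAmod}), which is ultimately resolved using perfection of $A$.

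I would first verify that the map $W_\T(k)\otimes_{W(k)}W(A)\to W_\T(A)$ preserves the Witt filtration, where I filter the source by the images $F^r$ of $W_\T^{\geq r}(k)\otimes_{W(k)} W(A)$. The map in question is the $W(A)$-algebra extension of the functoriality map $W_\T(k)\to W_\T(A)$ coming from $k\to A$. Since $W_\T^{\geq r}(A)$ is defined by vanishing of natural functionals $\lambda^*$ for $\lambda\in\T(E_0)$ of positive weight below $p^r$, and is an ideal by \Cref{prop:wittideal}, the image of $W_\T^{\geq r}(k)$ lies in $W_\T^{\geq r}(A)$ by naturality, and multiplication by elements of $W(A)\subset W_\T(A)$ stays in $W_\T^{\geq r}(A)$ by the ideal property.

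For the second step, I would observe by right-exactness of tensor product that there is a natural surjection $W_\T^{=r}(k)\otimes_{W(k)} W(A) \twoheadrightarrow F^r/F^{r+1}$; hence to show $F^r/F^{r+1}\to W_\T^{=r}(A)$ is an isomorphism, it suffices to show the composite
\[ W_\T^{=r}(k)\otimes_{W(k)} W(A) \to W_\T^{=r}(A) \]
is an isomorphism. I would apply $\omega^{=r}$ to identify $W_\T^{=r}(A)\cong A^{\overline{d}(r)}$, and use \Cref{prop:grAmod} to record that the $W(A)$-action on the target (and correspondingly the $W(k)$-action on the source) acquires the Frobenius twist $a\cdot x = \bar a^{p^r}x$ under this identification. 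A direct unwinding then shows that the composite sends $v\otimes a \mapsto \bar a^{p^r}\iota(\omega^{=r}(v))$, where $\iota\colon k\to A$ denotes the structural map. This is an isomorphism of abelian groups: surjectivity uses that $A$ is perfect, so every element of $A^{\overline{d}(r)}$ admits a $p^r$-th root; injectivity uses that $\bar a^{p^r}=0$ forces $a\in pW(A)$, at which point $e_j\otimes pb = p\cdot (e_j\otimes b) = 0$ in the tensor product, since $p$ acts on $e_j$ through $\bar p^{p^r}=0$.

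For the final step, a choice of polynomial generators of $\T(E_0)$ (\Cref{thm:strickland}) gives identifications $W_\T(A)\cong \prod_{i\geq 0}A^{\overline{d}(i)}$ and $W_\T^{\leq r}(A)\cong \prod_{0\leq i\leq r}A^{\overline{d}(i)}$, exhibiting $W_\T(A) = \lim_r W_\T^{\leq r}(A)$ as complete with respect to the Witt filtration. A standard five-lemma induction on $r$, with base case $r=0$ (where both $M/F^1$ and $W_\T^{\leq 0}(A)$ compute to $A$ via the $r=0$ version of step 2) and inductive step using the isomorphism on associated graded, shows that $M/F^{r+1} \to W_\T^{\leq r}(A)$ is an isomorphism for every $r\geq 0$. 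Passing to the inverse limit then shows that the completion of the source maps isomorphically onto $W_\T(A)$, as desired.
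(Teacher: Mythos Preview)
Your proof is correct and follows essentially the same approach as the paper's: both establish filtration preservation via functoriality plus the ideal property, reduce to the associated graded, and conclude from completeness of $W_\T(A)$. The only difference is one of detail: where the paper tersely invokes \Cref{prop:grAmod} together with naturality of $\omega^{=r}$ to see that the map on associated graded sends the free $A$-module basis to the free $A$-module basis, you give an explicit hands-on verification of bijectivity using perfection of $A$ for both surjectivity and injectivity.
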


\begin{proof}
The map respects the Witt filtration because $W_{\T}(k)\to W_{\T}(A)$ respects filtration essentially by definition, and the Witt filtration on $W_{\T}(A)$ is a filtration by $W(A)$-modules.   Thus, there is an induced map
\[
W_{\T}^{=r}(k) \otimes_{W(k)} W(A) \to W_{\T}^{=r}(A)
\]
on associated graded.  But this map is an isomorphism by \Cref{prop:grAmod} and the naturality of $\omega^{=r}$, so the conclusion follows from the fact that the right-hand side is complete with respect to the Witt filtration.
\end{proof}

\begin{cor} \label{cor:only-need-k}
In order to prove that the evaluation map
  \[
  \overline{\ev}_A\colon \pi_0E(A) \longrightarrow W_{\T}(A)
  \]
  is an isomorphism for any perfect $k$-algebra $A$, it suffices to show that the evaluation map
  \[ \overline{\ev}_k \colon \pi_0E(k) \longrightarrow W_{\T}(k) \]
  is an isomorphism.
\end{cor}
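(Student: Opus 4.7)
The plan is to combine \Cref{prop:change-of-ring} with the explicit description $\pi_0E(A) \cong W(A)[\![u_1,\dots,u_{n-1}]\!]$ from \Cref{thm:E_functor}(3) to identify $\overline{\ev}_A$ as a completion comparison map which will turn out to be an isomorphism. Assuming $\overline{\ev}_k$ is an isomorphism, naturality of $\overline{\ev}$ gives a commutative diagram
\[
\begin{tikzcd}
\pi_0E(k) \otimes_{W(k)} W(A) \arrow[r, "\overline{\ev}_k \otimes \mathrm{id}"] \arrow[d] & W_{\T}(k) \otimes_{W(k)} W(A) \arrow[d] \\
\pi_0E(A) \arrow[r, "\overline{\ev}_A"] & W_{\T}(A)
\end{tikzcd}
\]
in which the top horizontal map is an isomorphism by assumption. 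By \Cref{prop:change-of-ring}, the right vertical map becomes an isomorphism upon completing the source with respect to the Witt filtration; by \Cref{thm:E_functor}(3), the left vertical map is exactly the $\m$-adic completion. Consequently, after transporting filtrations via $\overline{\ev}_k\otimes\mathrm{id}$, the map $\overline{\ev}_A$ is identified with the canonical comparison
\[
\bigl(\pi_0E(k) \otimes_{W(k)} W(A)\bigr)^{\wedge}_{\m} \longrightarrow \bigl(\pi_0E(k) \otimes_{W(k)} W(A)\bigr)^{\wedge}_{W}
\]
between two completions of the same ring.

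Thus everything reduces to showing that these two completions agree, i.e.\ that the $\m$-adic and (transported) Witt filtrations induce the same linear topology on $\pi_0E(k) \otimes_{W(k)} W(A)$. Because the Witt filtration is functorial and its associated graded pieces, described via $\omega^{=r}$ in \Cref{prop:grAmod}, base-change compatibly along $W(k) \to W(A)$, it suffices to establish this cofinality of filtrations on $W_{\T}(k) \cong \pi_0E(k)$ itself. One direction is straightforward: since $W^{\geq 1}(k)$ is the kernel of $W_{\T}(k) \to W_{\T}^{\leq 0}(k) \cong k$, and this map is identified with the reduction $\pi_0E(k) \to k$ under $\overline{\ev}_k$, we have $\m = W^{\geq 1}(k)$; combining this with the fact from \Cref{prop:grAmod}(1) that each graded piece $W^{=r}(k)$ is a free $k$-module with $\m$ acting through $k$ (via the Frobenius-twisted multiplicative-lift action), one inductively obtains $\m^r \subset W^{\geq r}(k)$, so Cauchy sequences for the Witt filtration are Cauchy for the $\m$-adic one.

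The main obstacle is the reverse direction: showing that every $\m$-adic open contains a Witt-filtration open, i.e.\ for each $r$ there is an $r'$ with $W^{\geq r'}(k) \subset \m^r$. Since the Witt filtration is not obviously multiplicative, this requires genuine work: one uses \Cref{prop:witt-op-shift}, which controls how multiplication by weight-$p$ power operations shifts the filtration, together with \Cref{prop:wittideal} to see that the Witt filtration is an ideal filtration, to control the image of $W^{\geq r'}(k)$ in the $\m$-adic quotients $\pi_0E(k)/\m^r \cong W(k)[\![u_1,\dots,u_{n-1}]\!]/\m^r$. Once this cofinality is established, the two completions coincide, and the commutative square above identifies $\overline{\ev}_A$ with an isomorphism, completing the proof.
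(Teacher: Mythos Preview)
Your overall framework matches the paper's: reduce to comparing the $\m$-adic and Witt topologies on $\pi_0E(k)\cong W_\T(k)$, then transport along $W(k)\to W(A)$ via \Cref{prop:change-of-ring}. The difficulty is entirely in your cofinality argument, where both directions contain gaps.

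For the direction you call easy, you claim $\m^r\subset W^{\geq r}(k)$ by arguing that $\m$ acts through $k$ on each $W^{=r}(k)$. But \Cref{prop:grAmod} only describes the $W(k)$-action via multiplicative lifts; it says nothing about how $u_1,\dots,u_{n-1}$ act. In fact, since the comultiplication $\Delta^\times$ is weight-preserving on both tensor factors, one only gets $W^{\geq a}\cdot W^{\geq b}\subset W^{\geq\max(a,b)}$, not $W^{\geq a+b}$, so the inductive step $\m\cdot W^{\geq r}\subset W^{\geq r+1}$ is unjustified. (Your Cauchy-sequence implication is also stated backwards: $\m^r\subset W^{\geq r}$ would give that $\m$-Cauchy implies Witt-Cauchy, not the reverse.) For the direction you call hard, \Cref{prop:witt-op-shift} concerns how power operations shift the filtration, not multiplication in $W_\T(k)$, so it does not bear on $W^{\geq r'}\subset\m^r$.

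The paper's argument, once unpacked, is pure commutative algebra and avoids these issues. One direction: each $W^{\leq s}(k)$ has finite $W(k)$-length by \Cref{prop:grAmod}, hence finite $\pi_0E(k)$-length, so Nakayama forces $\m^N W^{\leq s}(k)=0$, i.e.\ $\m^N\subset W^{\geq s+1}$. Other direction: $\pi_0E(k)$ is complete Noetherian local (this is where ``$\m$ finitely generated'' enters) and the Witt filtration is separated (since $W_\T(k)$ is an inverse limit---this is the paper's ``exhaustive, by hypothesis''), so Chevalley's theorem yields $W^{\geq s}\subset\m^r$ for $s\gg 0$.
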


\begin{proof}
  Using the description of the homotopy groups of $E(A)$ from \Cref{thm:E_functor}(3) and
  the description of $W_{\T}(A)$ from \Cref{prop:change-of-ring} we can identify
  the evaluation map at a perfect $k$-algebra $A$ with the map
  \[ \left( E(k)_0 \otimes_{W(k)} W(A) \right)_{\m}^{\wedge} \longrightarrow \left( W_{\T}(k) \otimes_{W(k)} W(A) \right)_{W}^{\wedge}, \]
  where $(-)^{\wedge}_W$ indicates completion along the Witt filtration. 
  By hypothesis, this map is an isomorphism before completion, so it suffices to show that the topologies induced by $\m$ and $W_{\T}$ coincide.  
  
  To see this, note that both topologies are induced from $\pi_0E(k)$, so it suffices to show that the $\m$-adic topology and the $W_{\T}$-adic topology coincide on $\pi_0E(k)$.  But this follows because $\m$ is finitely generated and the Witt filtration is exhaustive (again by hypothesis).
\end{proof}

\subsection{Change of height}
\label{sub:detection}\ 


In this subsection we prove \Cref{prop:detecting-classes} which is a reformulation of the following proposition.





\begin{prop}\label{prop:Gammainjrefined}
  Given an $x \in E_0$ which is nonzero mod $p$,
  there exists an $r \gg 0$ such that 
  $ \tau_r (x) \not\equiv 0 \mod{\m} $
  where $\tau_r$ is the total additive power operation of weight $p^r$.
\end{prop}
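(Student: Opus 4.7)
The plan is to proceed by induction on the height $n$, using the transchromatic maps of \Cref{rmk:induct} to carry out the inductive step.

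\textbf{Base case $n = 1$.} By \Cref{thm:strickland}, $\overline d(r) = 1$ so the target of $\tau_r$ has $E_0$-rank one, and $\tau_r$ is identified with a power of the Adams operation. The identity $\psi(x) = x^p + p\delta(x)$ of \Cref{exm:ht1-add} gives $\tau_r(x) \equiv x^{p^r} \pmod p$, which is nonzero modulo $p = \m$ whenever $x$ is.

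\textbf{Inductive step.} Assume the statement at height $n-1$, and let $x \in \pi_0E_n(k)$ be nonzero mod $p$. Using the height $n-1$ case of \Cref{cor:mod_algclosed} (itself proved inductively, cf. \Cref{rmk:intro-induct}) applied to the nonzero algebra $L_{T(n-1)} E_n(k)$, we construct transchromatic maps
\[ f\colon E_n(k) \to E_{n-1}(L) \]
in $\CAlg(\Sp)$ for varying algebraically closed fields $L$. We argue that for an appropriate choice of $f$, the image $f_*(x) \in \pi_0 E_{n-1}(L)$ is nonzero modulo $p$: morally, $\pi_0 E_n(k)/p$ embeds into its localization at $u_{n-1}$, where every nonzero element can be detected after specializing $u_{n-1}$ to a generic unit in a sufficiently large algebraically closed $L[\![u_1,\ldots,u_{n-2}]\!]$.

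The inductive hypothesis applied to $f_*(x)$ then furnishes an $r$ with $\tau_r^{(n-1)}(f_*(x)) \not\equiv 0 \pmod{\m_{n-1}}$. Naturality of $\tau_r$ in ring maps yields a commutative square
\[
\begin{tikzcd}[ampersand replacement=\&]
\pi_0E_n(k) \arrow[r,"\tau_r^{(n)}"] \arrow[d,"f_*"'] \& E_n^0(B\Sigma_{p^r})/I_{\tr} \arrow[d,"\tilde f_*"] \\
\pi_0E_{n-1}(L) \arrow[r,"\tau_r^{(n-1)}"'] \& E_{n-1}^0(B\Sigma_{p^r})/I_{\tr},
\end{tikzcd}
\]
whose lower-right entry is nonzero mod $\m_{n-1}$. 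From this we transport the nonvanishing back up to conclude $\tau_r^{(n)}(x) \not\equiv 0 \pmod{\m_n}$, completing the induction.

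\textbf{Main obstacle.} The delicate step is the last transport: converting mod-$\m_{n-1}$ nonvanishing of $\tilde f_*(\tau_r^{(n)}(x))$ into mod-$\m_n$ nonvanishing of $\tau_r^{(n)}(x)$ itself. This is not automatic, since $f_*$ sends $u_{n-1}$ to a unit in $\pi_0E_{n-1}(L)$, so that $f_*(\m_n)$ already generates the unit ideal and cannot directly witness the Landweber ideal on the nose. Overcoming this requires isolating a summand of $E_n^0(B\Sigma_{p^r})/I_{\tr}$—most naturally indexed via the Witt filtration of \Cref{sub:wittfilt}, which decomposes total power operations by subgroup-scheme type—on which $\tilde f_*$ genuinely intertwines $\m_n$-information at height $n$ with $\m_{n-1}$-information at height $n-1$. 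A secondary technical wrinkle is arranging $f$ so that $f_*(x)$ survives mod $p$, which one accomplishes by exploiting the abundance of algebraically closed targets $L$ produced by \Cref{cor:mod_algclosed} at height $n-1$.
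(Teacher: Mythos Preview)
Your inductive skeleton matches the paper's proof exactly: induct on height, use a transchromatic map $E_n(k)\to E_{n-1}(K)$ that is injective on $\pi_0$ mod $p$ (this is \Cref{lem:lowerht}), and exploit naturality of $\tau_r$. You have also correctly located the real difficulty. However, your proposal stops short of resolving it, and the tool you gesture toward (the Witt filtration) is not the one the paper uses.

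First, a bookkeeping correction: at height $n-1$ the maximal Landweber ideal is $\m_{n-2}$, not $\m_{n-1}$. So what the inductive hypothesis actually yields is $\tau_r^{(n-1)}(f_*x)\not\equiv 0\pmod{\m_{n-2}}$, and hence, via the commutative square, $\tau_r(x)\not\equiv 0\pmod{\m_{n-2}}$ in $E_n^0(B\Sigma_{p^r})/I_{\tr}$. The remaining gap is to upgrade this to nonvanishing mod $\m=\m_{n-1}$.

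This upgrade is not handled by the Witt filtration. The paper instead invokes a separate ingredient, \Cref{lem:Hahn} (due to Hahn): if $y\in E_0$ is nonzero mod $\m_{n-2}$, then there exists an additive operation $Q$ (of some weight $p^j$) with $Q(y)\not\equiv 0\pmod{\m}$. Applying this to an additive operation $P$ of weight $p^r$ witnessing $\tau_r(x)\not\equiv 0\pmod{\m_{n-2}}$ gives $Q(P(x))\not\equiv 0\pmod{\m}$, hence $\tau_{r+j}(x)\not\equiv 0\pmod{\m}$. Without this lemma (or an equivalent), your argument does not close; your ``isolating a summand via the Witt filtration'' suggestion is too vague to substitute for it, and in any case the Witt filtration organizes operations by weight rather than by how they move elements between the strata $\m_{n-2}\subset\m_{n-1}$.
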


Our approach will be inductive, relying on transchromatic maps from $E(k)$ to a Lubin-Tate theory of height $n-1$ (cf. \Cref{rmk:induct}).   

\begin{lem}\label{lem:lowerht}
  There exists a map of commutative algebras
  \[ \nu \colon E(k) \to E_{n-1}(K) \]
  from $E$ to some Lubin-Tate theory $E_{n-1}(K)$ of height $n-1$
  and perfect residue field $K$
  such that the induced map on $\pi_0$ is injective mod $p$.
\end{lem}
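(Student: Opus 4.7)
The plan is to construct $\nu$ in three stages: first show that $L_{T(n-1)}E(k)$ is nonzero, then invoke the height $n-1$ case of \Cref{cor:intro_map_to_E} on this localization to produce a map into some $E_{n-1}(K)$, and finally verify the injectivity condition on $\pi_0$ modulo $p$.

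Stage 1 (nonzero $T(n-1)$-localization): I would choose a type $n-1$ generalized Moore spectrum $V(n-2) = \mathbb{S}/(p^{i_0}, v_1^{i_1}, \ldots, v_{n-2}^{i_{n-2}})$ admitting a $v_{n-1}$ self-map, so that $T(n-1) \simeq V(n-2)[v_{n-1}^{-1}]$. By \Cref{thm:E_functor}(3), $p, v_1, \ldots, v_{n-1}$ form a regular sequence in $\pi_{\ast}E(k)$, so $v_{n-1}$ acts injectively on the nonzero quotient $\pi_{\ast}(E(k))/(p^{i_0}, v_1^{i_1}, \ldots, v_{n-2}^{i_{n-2}})$. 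Hence $\pi_{\ast}(E(k) \otimes T(n-1))$ is nonzero and $L_{T(n-1)}E(k) \neq 0$.

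Stage 2 (invoke inductive hypothesis): Applying the height $n-1$ case of \Cref{cor:intro_map_to_E}---whose induction on the height is explicitly recorded in \Cref{rmk:intro-induct}---to the nonzero $T(n-1)$-local commutative algebra $L_{T(n-1)}E(k)$ produces an algebraically closed field $K$ together with a map $L_{T(n-1)}E(k) \to E_{n-1}(K)$ in $\CAlg(\Sp_{T(n-1)})$. Composing with the localization map then gives the desired $\nu \colon E(k) \to L_{T(n-1)}E(k) \to E_{n-1}(K)$ in $\CAlg(\Sp)$.

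Stage 3 (injectivity mod $p$): Since $T(n-1)$-localization inverts $v_{n-1}$ on homotopy, $\nu_{\ast}(u_{n-1})$ becomes a unit in $\pi_0 E_{n-1}(K)$. Thus the induced map
\[ \overline{\nu}_{\ast} \colon k[\![u_1, \ldots, u_{n-1}]\!] \longrightarrow K[\![u_1, \ldots, u_{n-2}]\!] \]
factors through the localization $k[\![u_1, \ldots, u_{n-1}]\!][u_{n-1}^{-1}]$. The first map into this localization is injective (the source is a domain, $u_{n-1}$ a nonzerodivisor), and both sides of the second map $\varphi \colon k[\![u_1, \ldots, u_{n-1}]\!][u_{n-1}^{-1}] \to K[\![u_1, \ldots, u_{n-2}]\!]$ are Noetherian domains of Krull dimension $n-2$, so no dimension obstruction intervenes.

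The main obstacle is verifying that $\varphi$ itself is injective. I expect to handle this formal-group-theoretically: $\overline{\nu}_{\ast}$ classifies a deformation of the height-$n$ formal group $\mathbb{G}_0$ to $K[\![u_1, \ldots, u_{n-2}]\!]$, and the $T(n-1)$-localization pins the generic height of this deformation to exactly $n-1$. Together with the matching Krull dimensions, this rigidity forces $\varphi$ to be generically \'etale, hence injective. If any flexibility is needed, the ``enough points'' content of \Cref{thm:intro_points} at height $n-1$ allows us to enlarge $K$ further to kill any potential kernel of $\varphi$.
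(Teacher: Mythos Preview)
Your Stages 1 and 2 are essentially the paper's approach: the paper also produces $\nu$ by first passing to a nonzero $T(n-1)$-local (in fact $K(n-1)$-local) commutative algebra and then applying the height $n-1$ case of \Cref{cor:mod_algclosed}. The only cosmetic difference is that the paper explicitly inverts $u_{n-1}$ before localizing, which makes the intermediate $\pi_0$ transparent.

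Stage 3 has a genuine gap. Matching Krull dimensions does not force a map of Noetherian domains to be injective (e.g.\ $k[x]\to k[y]$, $x\mapsto 0$), and ``generically \'etale by rigidity'' is an assertion, not an argument. The suggestion to enlarge $K$ cannot help either: injectivity of $\varphi$ concerns the kernel in the \emph{source}, which is unaffected by postcomposing with a field extension on the target.

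What you are missing is the invariant prime ideal theorem. Writing $\pi_0E_{n-1}(K)\cong W(K)[\![w_1,\dots,w_{n-2}]\!]$, the compatibility of Hasse invariants under the ring map $\nu$ forces
\[
\nu(u_i)\equiv (\text{unit})\cdot w_i \pmod{(p,w_1,\dots,w_{i-1})}\qquad (1\le i\le n-2).
\]
Because the sequences $(p,u_1,\dots,u_{n-2})$ and $(p,w_1,\dots,w_{n-2})$ are regular and both rings are complete with respect to them, injectivity mod $p$ reduces to injectivity after quotienting by these ideals. On the source side (after inverting $u_{n-1}$ and completing) this quotient is the field $k(\!(u_{n-1})\!)$, so the induced map to $K$ is automatically injective. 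That is the paper's argument, and it is exactly the precision your formal-group heuristic needs.
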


\begin{proof}
  The composite of the sequence of commutative algebra maps 
  \[
    E(k) \to E(k)[u_{n-1}^{-1}] \to L_{K(n-1)}(E(k)[u_{n-1}^{-1}])
  \]
  can be identified on $\pi_0$ with the map of rings
  \begin{equation}\label{eqn:lowerhtcomp}
    W(k)[\![u_1, \dots ,u_{n-1}]\!] \to (W(k)[\![u_1, \dots , u_{n-1}]\!][u_{n-1}^{-1}])^{\wedge}_{\m_{n-2}}
  \end{equation}
  (cf. \cite[Proposition 7.10]{HovStrick}).
  In particular, this latter ring is $T(n-1)$-locally nontrivial, and so by \Cref{cor:mod_algclosed} (cf. \Cref{rmk:induct}), there exists a Lubin-Tate theory $E_{n-1}(K)$ of height $n-1$ with perfect residue field $K$ and a map of commutative algebras
  \[
    \nu\colon L_{K(n-1)}(E(k)[u_{n-1}^{-1}]) \to E_{n-1}(K).
  \]
  Since the map (\ref{eqn:lowerhtcomp}) is visibly injective, it suffices to see that $\nu$ is injective on $\pi_0$ modulo $p$.  
  Choose Lubin-Tate parameters $p, w_1, \dots ,w_{n-2} \in \pi_0 E_{n-1}(K)$ so that 
  \[
    \pi_0 E_{n-1}(K) \cong W(K)[\![w_1, \dots ,w_{n-2}]\!].
  \]
  Then by the invariant prime ideal theorem, 
  \[
    \nu(u_i) \equiv w_i \pmod{(p, w_1, \dots, w_{i-1})}
  \]
  for $1\leq i \leq n-2$.
  Since the sequence $p, u_1, \dots , u_{n-2}$ (resp. $p, w_1, \dots ,w_{n-2}$)
  is regular in $\pi_0 L_{K(n-1)}(E(k)[u_{n-1}^{-1}])$ (resp. $\pi_0 E_{n-1}(K)$)
  and the respective rings are complete with respect to these elements,
  it suffices to check that $\nu$ is injective on $\pi_0$
  after passing to the quotient by the ideal $(p,u_1,\dots u_{n-2})$ in the source
  and the ideal $(p,w_1,\dots ,w_{n-2})$ in the target.
  But 
  \[
    \pi_0 L_{K(n-1)}(E(k)[u_{n-1}^{-1}]) / (p, u_1, \dots ,u_{n-2}) \cong k\ldbl u_{n-1} \rdbl 
  \]
  is a field, so this is automatic.  
\end{proof}

The proof of \Cref{prop:Gammainjrefined} builds on the following lemma of Hahn.

\begin{lem}[Hahn] \label{lem:Hahn}
  Given an $x \in \pi_0E(k)$ which is nonzero mod $\m_{n-2}$,
  there exists a $j \gg 0$ such that 
  $ \tau_j(x) \not\equiv 0 \mod{\m} $.
\end{lem}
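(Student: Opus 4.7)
Since $x$ is nonzero modulo $\m_{n-2}$, we have $\bar x \in E_0/\m_{n-2} \cong k \llp u_{n-1} \rrp$ (actually $k\ll u_{n-1}\rr$) a nonzero element, and we may write $\bar x = u_{n-1}^d \cdot h(u_{n-1})$ for some integer $d \geq 0$ and $h \in k\ll u_{n-1}\rr$ with $h(0) \in k^\times$. Note also that the further reduction $E_0/\m \cong k$ is obtained from $k\ll u_{n-1}\rr$ by killing $u_{n-1}$, so the statement is equivalent to finding some $j$ for which $\tau_j$ ``cancels'' the $u_{n-1}^d$ factor and leaves a nonzero element of $k$.

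The plan is to induct on $d$. The base case $d=0$ is immediate: $\bar x \equiv h(0) \neq 0 \pmod{\m}$, and because $B\Sigma_{p^0} = B\Sigma_1 = *$, the operation $\tau_0$ is the identity, so $\tau_0(x) = x$ already witnesses the conclusion.

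For the inductive step, the idea is to analyze $\tau_r$ modulo $\m_{n-2}$ using the Ando--Hopkins--Strickland description of $E^0(B\Sigma_{p^r})/I_{\tr}$ as the ring of functions on the moduli of order-$p^r$ subgroup schemes of the universal deformation $\G^Q$. Reducing modulo $\m_{n-2}$ brings us to the moduli over $\Spec(k\ll u_{n-1}\rr)$, where the $[p]$-series of $\G^Q$ has the explicit form $[p](t) \equiv u_{n-1}t^{p^{n-1}} + t^{p^n} \pmod{\text{higher-order corrections}}$. Using this, one identifies the universal order-$p$ subgroup scheme explicitly and shows that the induced map $\tau_1 \pmod{\m_{n-2}}$, after restriction to an appropriate ``Frobenius-twisted'' component of the target, acts essentially as division by $u_{n-1}$ up to a Frobenius twist of the coefficients. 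Applying this operation to $\bar x$ therefore strictly decreases the $u_{n-1}$-adic valuation while keeping the result nonzero, allowing the induction to continue; some composition $\tau_j$ (of weight $p^j$) realizes the cumulative effect via the composition structure on $\T(E_0)$.

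The main obstacle is Step (2): making precise the explicit description of $\tau_1 \pmod{\m_{n-2}}$ in terms of Strickland's moduli of subgroup schemes and identifying exactly how the Frobenius twist from the isogeny interacts with the $u_{n-1}$-adic filtration on $k\ll u_{n-1}\rr$. This is essentially a careful computation in the Ando--Hopkins--Strickland framework, and once in hand, the induction on $d$ is routine bookkeeping. Alternatively, one could attempt to bypass the explicit moduli description by leveraging \Cref{prop:add-witt-factor} to reformulate the question in terms of the Witt filtration on $W_{\T}(k)$, but this would essentially require already knowing the structure result we are working toward.
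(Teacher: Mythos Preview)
The paper's own proof is just a citation: it invokes \cite[Chapter 3, Lemma 5.5]{HahnThesis}, which (iterated) furnishes an additive operation $Q$ of weight $p^j$ preserving the ideal $\m_{n-2}$ with $Q(x) \not\equiv 0 \pmod{\m}$, and then observes that this immediately implies the statement about the total operation $\tau_j$.

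Your sketch is along the right lines and is presumably close in spirit to what Hahn actually does: reduce modulo $\m_{n-2}$ to land in $k\ll u_{n-1}\rr$, find a weight-$p$ additive operation that strictly lowers the $u_{n-1}$-adic valuation, and iterate. But as you yourself flag, the step you have not carried out --- the explicit analysis of $\tau_1 \pmod{\m_{n-2}}$ via the Strickland moduli of subgroups and the shape of the $[p]$-series $[p](t) \equiv u_{n-1}t^{p^{n-1}} + (\text{unit})\cdot t^{p^n}$ --- is the entire content of the lemma. The induction on $d$ is trivial bookkeeping once that single step is in hand; without it, what you have written is an accurate reformulation of what needs to be proved rather than a proof. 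Your alternative suggestion of routing through the Witt filtration via \Cref{prop:add-witt-factor} is, as you correctly note, circular here: that factorization is a consequence of the structure one is trying to establish, not an input to it.
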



\begin{proof}
  This is a reformulation of \cite[Chapter 3, Lemma 5.5]{HahnThesis} which
  (when iterated) asserts that
  there exists some additive power operation $Q$ of weight $p^j$ 
  that fixes the ideal $\m_{n-2} \subset E_0$ and for which
  $ Q(x) \not\equiv 0 \mod{\m} $.
  Passing back to the total power operation
  it follows that $\tau_{j}(x) \not\equiv 0 \mod{\m}$ as desired.
\end{proof}


\begin{proof}[Proof of \Cref{prop:Gammainjrefined}]
  We proceed by induction on the height of $\G_0$.
  The case $n=1$ is trivial as $\tau_r$ on $E_0(k)/p$ is just the $r$-fold Frobenius on $k$.
  For the inductive step we use \Cref{lem:lowerht} to provide us with
  a height $n-1$ Lubin-Tate theory $E_{n-1}$ with perfect residue field $K$ and
  a commutative algebra map
  \[
    \nu\colon E(k) \to E_{n-1}
  \]
  which is injective on $\pi_0$ mod $p$.
  We now consider the diagram of rings:
  \[
    \begin{tikzcd}
      \pi_0E(k)/p \arrow[r,"\tau_r"] \arrow[d,"\nu_*"] &
      E(k)^0(B\Sigma_{p^r})/(I_{\tr}, p) \arrow[r] \arrow[d] &
      E(k)^0(B\Sigma_{p^r})/(I_{\tr}, \m_{n-2}) \arrow[d] \\
      \pi_0E_{n-1}/p \arrow[r,"\tau_r^{(n-1)}"] &
      E_{n-1}^0(B\Sigma_{p^r})/(I_{\tr}, p) \arrow[r] &
      E_{n-1}^0(B\Sigma_{p^r})/(I_{\tr}, \m_{n-2}).
    \end{tikzcd}
  \]
  Here, we have written $\tau_r^{(n-1)}$ for
  the version of $\tau_r$ corresponding to the height $n-1$ Lubin-Tate theory $E_{n-1}$.
  Note first that the diagram commutes:
  the left square commutes because $E(k) \to E_{n-1}$ is a map of commutative algebras
  and thus is compatible with the total power operation,
  and the right square commutes because the map of ring spectra $E(k) \to E_{n-1}$
  sends $\m_{n-2}$ to $\m_{n-2}$ by the invariant prime ideal theorem,
  and sends $I_{\tr}$ to $I_{\tr}$ because it is a map of spectra.

  Note further that the bottom composite is the height $n-1$ variant of the map $\overline{\tau}_r$ under consideration. It follows that if $x \in \pi_0E(k)/p$ is a nonzero element, then $\nu_*(x)$ is nonzero by construction of $\nu$, and by induction, we may choose $r$ large enough such that 
  \[
    \tau_r^{(n-1)} \nu_*(x) \not\equiv 0 \pmod{\m_{n-2}}.
  \]
  By the commutativity of the above diagram, it follows that 
  $ \tau_r(x) \not\equiv 0 \pmod{\m_{n-2}}$. 
  Therefore, there exists some additive power operation $P$ of weight $p^r$ such that
  $ P(x) \not\equiv 0 \pmod{\m_{n-2}} $.
  Now we are in a situation where we can apply \Cref{lem:Hahn} to find an additive operation $Q$ of some weight $p^j$ such that
  \[ Q(P(x)) \not\equiv 0 \pmod{\m}. \] 
  It follows that $\tau_{r+j}(x) \not\equiv 0 \pmod{\m}$, as desired.  
\end{proof}

\subsection{Rigidity of isogenies}
\label{sub:evsurj}\ 

Our goal in this subsection is to prove \Cref{prop:power-op-surj} which we reproduce below.

\begin{prop} \label{prop:Gammasurj}
  For every $r\geq 0$, the mod $\m$ total additive power operation
  \[ \overline{\tau}_r \colon \pi_0E(k) \to E(k)^0(B\Sigma_{p^r})/(I_{\tr}, \m) \]
  is surjective.
\end{prop}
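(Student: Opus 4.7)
The plan is to interpret $\overline{\tau}_r$ geometrically via Ando--Hopkins--Strickland theory, and reduce the required surjectivity to the rigidity of isogenies of height-$n$ formal groups.

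By \cite{Strickland1}, the formal scheme $\mathrm{Spf}(R)$ for $R = E^0(B\Sigma_{p^r})/I_{\tr}$, equipped with its natural source $E_0$-algebra structure $s$, represents the functor on complete local $E_0$-algebras of pairs $(\mathbb{G}, H)$ where $\mathbb{G}$ is a deformation of $\mathbb{G}_0$ and $H\subset \mathbb{G}$ is a subgroup of order $p^r$. The total additive power operation $\tau_r\colon E_0\to R$ classifies the quotient formal group $\mathbb{G}/H$ as a (new) deformation of $\mathbb{G}_0$. Modding out by $s(\m)$ yields the $k$-algebra $\overline{R} := R/s(\m)R$, which corepresents the functor on $k$-algebras $A$ of subgroups of $\mathbb{G}_0 \otimes_k A$ of order $p^r$. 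By \Cref{thm:strickland}, $\overline{R}$ is a free $k$-module of rank $\overline{d}(r)$, hence Artinian, and decomposes as a finite product of local Artinian rings $\overline{R} = \prod_{H_0} \overline{R}_{H_0}$ indexed by Galois orbits of geometric subgroups $H_0 \subset \mathbb{G}_0 \otimes \bar{k}$ of order $p^r$.

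Surjectivity of $\overline{\tau}_r$ can be checked on each component $\overline{R}_{H_0}$, and the induced map on residue fields is automatically surjective (since these are finite separable extensions of $k$ through which the natural quotient $E_0 \to k$ factors). By Nakayama's lemma, using that $E_0$ is a complete local Noetherian ring, surjectivity reduces to surjectivity on cotangent spaces; dualizing, we need the tangent map
\[
\{\widetilde{H}\subset \mathbb{G}_0\otimes_k k[\epsilon] \mid \widetilde{H} \text{ lifts } H_0\} \longrightarrow \{\text{deformations of } \mathbb{G}_0 \text{ to } k[\epsilon]\}, \qquad \widetilde{H}\mapsto (\mathbb{G}_0\otimes k[\epsilon])/\widetilde{H}
\]
to be injective for every $H_0$.

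This injectivity is precisely a manifestation of the rigidity of isogenies of height-$n$ formal groups. Indeed, given two lifts $\widetilde{H}_1, \widetilde{H}_2$ of $H_0$ with quotient isogenies $\phi_i \colon \mathbb{G}_0\otimes k[\epsilon] \twoheadrightarrow (\mathbb{G}_0\otimes k[\epsilon])/\widetilde{H}_i$, an equivalence of the quotient deformations is encoded by an isomorphism $\alpha$ reducing to the identity on $\mathbb{G}_0/H_0 \cong \mathbb{G}_0$. Then $\phi_2$ and $\alpha \circ \phi_1$ are two isogenies from $\mathbb{G}_0\otimes k[\epsilon]$ to the same quotient formal group, each lifting the canonical isogeny $\mathbb{G}_0 \to \mathbb{G}_0/H_0$. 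By rigidity of isogenies of finite-height formal groups over Artinian $k$-algebras (a standard consequence of their deformation theory, cf.\ \cite{Strickland1}), these two lifts must coincide, forcing $\widetilde{H}_1 = \ker \phi_2 = \widetilde{H}_2$. The substantive input is the rigidity statement; the remainder of the proof is formal bookkeeping to set it up via the geometric interpretation of $\tau_r$.
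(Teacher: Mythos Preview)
Your approach is correct and rests on the same key input as the paper: rigidity of isogenies of height-$n$ formal groups (the paper's \Cref{lem:defzero}). The paper organizes this differently---it first proves that $(\sigma,\tau)\colon \Def(\varphi^r)\to\Def(\G_0)\times_{\Spf W(k)}\Def(\G_0^{(r)})$ is a closed immersion via a single Nakayama reduction to the fiber over the closed point (which rigidity identifies with $\Spf(k)$), and then base-changes along $\sigma$ to deduce surjectivity of $\overline{\tau}_r$---whereas you apply Nakayama directly to $\overline{\tau}_r$ and split into residue-field and tangent-space checks. Two imprecisions worth tightening: (i) the decomposition by ``Galois orbits of geometric subgroups'' is vacuous, since a one-dimensional formal group over any field has a \emph{unique} subgroup scheme of each order $p^r$ (ideals in $k[\![x]\!]$ are linearly ordered by order of vanishing, so the only Hopf ideal of that order is the Frobenius kernel), hence $\overline{R}$ is already local with residue field $k$; (ii) the composite $E_0\to k$ induced by $\overline{\tau}_r$ is not the canonical quotient but a Frobenius twist of it, coming from the identification $\Def(\G_0^{(r)})\cong\Def(\G_0)$---this is still surjective since $E_0$ is local and $k$ is perfect, but your residue-field justification as written does not account for it. The paper's route is a bit cleaner and yields the more general closed-immersion statement \Cref{prop:Gammasurjgeometric} for an arbitrary isogeny.
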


The proof of \Cref{prop:Gammasurj} will involve interpreting $\tau_r$ in terms of the deformation theory of formal groups.

\begin{ntn}
Let $\hR$ denote the category of complete Noetherian local rings with characteristic $p$ residue field and local ring homomorphisms. For $B\in \hR$, we will let $\m_B$ denote the maximal ideal and $\pi :B \to B/\m_B$ denote the reduction modulo $\m_B$.
\tqed
\end{ntn}

We will consider moduli problems which are defined on $\hR$.  For $B\in \hR$, we let $\Spf(B)$ to denote the functor
\[
\Hom_{\hR}(B, -)\colon \hR \to \Set
\]
and we will use the term \emph{scheme} to mean a functor of the form $\Spf(B)$ for $B\in \hR$.  Accordingly, for $X=\Spf(B)$, we will sometimes write ${\cO}_X$ for $B$.   

\begin{dfn}
For $B\in \hR$, a \deff{deformation of $\G_0$ over $B$} is a triple $(\G, i, \alpha)$ where $\G$ is a formal group over $B$, $i:k\to B/\m_B$ is a map of rings, and $\alpha\colon \pi^* \G \simeq i^* \G_0$ is an isomorphism of formal groups.
\tqed
\end{dfn}

The deformations of $\G_0$ over $B$ can be organized into a groupoid $\Def(\G_0)(B)$ which turns out to be discrete \cite{lubin1966formal}.  The resulting functor
\[
\Def(\G_0)\colon \hR \to \Set
\]
is represented by $E_0$ -- that is, $\Def(\G_0) \cong \Spf(E_0).$

Similarly, given formal groups $\G_0$ and $\G_0'$ over $k$ and an isogeny $q_0:\G_0 \to \G_0'$, a \deff{deformation of $q_0$ over $B$} is the data $(q, i, \G, \G', \alpha, \alpha')$ where $(\G, i, \alpha)\in \Def(\G_0)(B)$ and $(\G',i,\alpha')\in \Def(\G_0')(B)$ are deformations, and $q:\G \to \G'$ is an isogeny fitting into the following diagram:
\[
\begin{tikzcd}
\pi^*\G \arrow[r,"\pi^* q"]\arrow[d,"\alpha"', "\simeq"] & \pi^* \G' \arrow[d,"\alpha'", "\simeq"'] \\
i^* \G_0 \arrow[r, "i^*q_0"] & i^* \G_0'.\\
\end{tikzcd}
\]  There is a scheme $\Def(q_0)$ which classifies deformations of $q_0$ \cite[\S 13]{Strickland2}.  By construction, $\Def(q_0)$ admits natural maps $\sigma \colon \Def(q_0) \to \Def(\G_0)$ and $\tau :\Def(q_0) \to \Def(\G_0')$ which remember the source and target deformations, respectively.  

\begin{rmk}
Since $k$ is a perfect $\F_p$-algebra, the relative cotangent complex $L_{k/\F_p}$ vanishes and a standard deformation theory argument shows that for $B\in \hR$, any map $k\to B/\m_B$ lifts uniquely to a map $W(k)\to B$.  Hence, the above schemes $\Def(\G_0)$ and $\Def(q_0)$ admit natural maps to $\Spf (W(k))$, and there is a commutative square
\[
\begin{tikzcd}
\Def(q_0) \arrow[r,"\sigma"]\arrow[d,"\tau "] & \Def(\G_0)\arrow[d] \\
\Def(\G_0') \arrow[r] & \Spf (W(k))
\end{tikzcd}
\]
which induces a map of schemes
\[
(\sigma,\tau)\colon \Def(q_0) \to \Def(\G_0) \times_{\Spf (W(k))} \Def(\G_0').
\]
\tqed
\end{rmk}

The key ingredient in the proof of \Cref{prop:Gammasurj} will be the following statement about $(\sigma,\tau)$:
\begin{prop}\label{prop:Gammasurjgeometric}
Let $q_0 \colon \G_0 \to \G_0'$ be an isogeny of formal groups over $k$.  Then the map of schemes
\[
(\sigma,\tau)\colon \Def(q_0) \to \Def(\G_0) \times_{\Spf (W(k))} \Def(\G_0')
\]
is a closed immersion; that is, the corresponding map on functions
\[
(\sigma,\tau)\colon {\cO}_{\Def(\G_0)} \hat{\otimes}_{W(k)} {\cO}_{\Def(\G_0')} \to {\cO}_{\Def(q_0)}
\]
is surjective.  
\end{prop}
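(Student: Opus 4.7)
My strategy is to identify $\Def(q_0)$ with a closed subscheme of $\Def(\G_0) \times_{\Spf W(k)} \Def(\G_0')$ by realizing it as the equalizer of two natural morphisms to $\Def(\G_0')$; the closedness will then follow from the fact that $\Def(\G_0')$ is affine, hence separated.

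The key input is the \emph{rigidity of isogenies} of formal groups of finite height: for any $B \in \hR$ and deformations $\G, \G'$ of $\G_0, \G_0'$ over $B$, the reduction map $\Hom(\G, \G') \to \Hom(\G_0, \G_0')$ is injective. In particular there is at most one isogeny $q \colon \G \to \G'$ lifting $q_0$, so $(\sigma, \tau)$ is already a monomorphism of functors on $\hR$.

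To upgrade this monomorphism to a closed immersion, I use Strickland's theory of subgroup schemes. Let $K_0 = \ker(q_0) \subset \G_0$, a finite subgroup scheme of order $d = \deg(q_0)$. Strickland's scheme $\Sub_d(\G^{\mathrm{univ}})$ parametrizing deformations of order-$d$ subgroups of the universal deformation $\G^{\mathrm{univ}}$ is finite flat over $\Def(\G_0)$, and its fiber over the closed point enumerates the subgroup schemes of $\G_0$ of order $d$ (each appearing with multiplicity one). Thus the connected component of $\Sub_d(\G^{\mathrm{univ}})$ containing $K_0$ is finite flat of rank one over $\Def(\G_0)$, i.e., maps isomorphically to it; this exhibits a canonical lift $K \subset \G^{\mathrm{univ}}$ of $K_0$, and hence by functoriality a canonical subgroup $K \subset \G$ deforming $K_0$ for every $\G \in \Def(\G_0)(B)$. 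Quotienting by $K$ and using the identification $\G_0 / K_0 \cong \G_0'$ induced by $q_0$ yields a natural morphism
\[ \Phi \colon \Def(\G_0) \to \Def(\G_0'), \qquad \G \mapsto \G/K. \]
Since an isogeny $q \colon \G \to \G'$ lifting $q_0$ is uniquely determined by its kernel $K \subset \G$ together with an isomorphism $\G/K \cong \G'$ of deformations of $\G_0'$ (and the latter is automatic and unique when it exists, by rigidity), the monomorphism $(\sigma, \tau)$ identifies $\Def(q_0)$ with
\[ \mathrm{eq}\bigl(\Phi \circ \pi_1,\, \pi_2 \colon \Def(\G_0) \times_{\Spf W(k)} \Def(\G_0') \rightrightarrows \Def(\G_0')\bigr), \]
which is a closed subscheme of $\Def(\G_0) \times_{\Spf W(k)} \Def(\G_0')$. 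Dualizing this inclusion gives the desired surjection on functions.

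The main technical hurdle is the subgroup-scheme step: ensuring via Strickland's results on $\Sub_d$ that the chosen connected component of $\Sub_d(\G^{\mathrm{univ}})$ is literally isomorphic to $\Def(\G_0)$ (and not merely a finite flat cover of higher rank), which is precisely what makes the canonical deformation $K \subset \G$ and therefore the morphism $\Phi$ well-defined; everything else flows relatively formally from rigidity and the universal properties of the moduli involved.
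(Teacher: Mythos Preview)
Your argument has a genuine gap at the Strickland step. You assert that the fiber of $\Sub_{p^r}(\G^{\mathrm{univ}})$ over the closed point of $\Def(\G_0)$ ``enumerates the subgroup schemes of $\G_0$ of order $d$, each appearing with multiplicity one,'' and hence that the connected component through $K_0$ is finite flat of rank one. This is false for height $n\ge 2$. Over the perfect field $k$, a one-dimensional formal group has a \emph{unique} subgroup scheme of each order $p^r$ (namely $\ker(F^r)$, since any finite subgroup is cut out by $t^{p^r}$ in a coordinate). So the special fiber of $\Sub_{p^r}$ has a single point, but its length is $\overline{d}(r)=\genfrac[]{0pt}{1}{n+r-1}{n-1}_p$, which is $>1$ as soon as $n\ge 2$ and $r\ge 1$. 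Thus $\Sub_{p^r}(\G^{\mathrm{univ}})$ is itself local and connected of rank $\overline{d}(r)$ over $\Def(\G_0)$; there is no rank-one component, no canonical lift of $K_0$, and your map $\Phi$ does not exist. Indeed, were your identification correct, $\Def(q_0)$ would be the graph of $\Phi$ and hence isomorphic to $\Def(\G_0)$ via $\sigma$; but $\sigma\colon\Def(q_0)\to\Def(\G_0)$ is finite flat of degree $\overline{d}(r)$, not an isomorphism.

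The paper's proof avoids this entirely by working modulo the maximal ideal. Since $(\sigma,\tau)$ is a map of complete local rings, surjectivity on functions can be checked after reducing modulo the maximal ideal of the source, i.e., after base-changing both $\sigma$ and $\tau$ to the closed points. The resulting scheme parametrizes, over a $k$-algebra $B$ with $p=0$, lifts of $q_0$ to an isogeny between \emph{trivial} deformations $f^*\G_0\to f^*\G_0'$. Here the paper invokes a direct rigidity lemma (\Cref{lem:defzero}): in characteristic $p$, a map of height-$n$ formal groups is uniquely determined by its restriction to the special fiber. Since $f^*q_0$ is one such lift, it is the only one, and the reduced map is an isomorphism onto $\Spf(k)$. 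No global section of $\Sub_{p^r}$ is needed.
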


Before we prove this, we need a preliminary lemma about deformations of maps of formal groups:

\begin{lem}\label{lem:defzero}
Suppose that $B\in \hR$ such that $p=0$ in $B$.  Let $\G$ and $\G'$ denote formal groups over $B$ of height $n\geq 1$ and let $\psi_0 \colon \pi^* \G \to \pi^* \G'$ be a map of formal groups over the special fiber.  Then there is at most one map $\psi\colon \G \to \G'$ such that $\pi^* \psi = \psi_0$.  
\end{lem}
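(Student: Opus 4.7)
The plan is to reduce to the rigidity statement that any homomorphism of formal groups $\phi \colon \G \to \G'$ over $B$ with $\phi \equiv 0 \pmod{\m_B}$ must be zero; since $\G'$ is commutative, we may take $\phi = \psi_1 -_{\G'} \psi_2$. Because $B$ is Noetherian local, by the Krull intersection theorem it suffices to show that every coefficient of $\phi$ lies in $\m_B^k$ for every $k \geq 1$, and we accomplish this by induction on $k$, the base case $k=1$ being the assumption.

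Choose coordinates so that $\G, \G'$ are formal group laws on $B\ll x \rr$, and write
\[
[p]_\G(x) = \sum_{i \geq 1} b_i x^i, \qquad [p]_{\G'}(y) = \sum_{i \geq 1} c_i y^i.
\]
Because $p = 0$ in $B$ and because the special fibers of $\G$ and $\G'$ have height $n$, the reductions mod $\m_B$ of these $p$-series start in degree $p^n$. Hence $b_i, c_i \in \m_B$ for $1 \leq i < p^n$, while $b_{p^n}$ and $c_{p^n}$ are units in $B$. The entire argument now rests on the homomorphism identity $\phi \circ [p]_\G = [p]_{\G'} \circ \phi$.

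For the inductive step, suppose $\phi(x) = \sum_{j \geq 1} a_j x^j$ with every $a_j \in \m_B^k$, and compute both sides of the identity modulo $\m_B^{k+1}$. On the right, $\phi(x)^i$ has coefficients in $\m_B^{ik}$, so $c_i \phi(x)^i$ has coefficients in $\m_B^{ik+1}$ when $i < p^n$ (using $c_i \in \m_B$) and in $\m_B^{ik}$ when $i \geq p^n$; since $k \geq 1$ and $p^n \geq 2$, both cases land in $\m_B^{k+1}$, so $[p]_{\G'} \circ \phi \equiv 0 \pmod{\m_B^{k+1}}$. On the left, extract for each $j_0 \geq 1$ the coefficient of $x^{j_0 p^n}$ in $\phi([p]_\G(x)) = \sum_j a_j [p]_\G(x)^j$. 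Since $[p]_\G(x)^j$ has minimum degree $j p^n$, only terms with $j \leq j_0$ contribute, and the $j = j_0$ term contributes precisely $a_{j_0} b_{p^n}^{j_0}$; the remaining terms have the form $a_j \lambda_j$ with $j < j_0$ and $\lambda_j \in B$. A nested induction on $j_0$, together with the fact that $b_{p^n}$ is a unit, then forces each $a_{j_0}$ into $\m_B^{k+1}$, completing the inductive step.

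The main technical obstacle is the bookkeeping around the lower-degree coefficients $b_i, c_i$ for $1 \leq i < p^n$: these need not vanish, and the proof relies crucially on the two properties that they lie in $\m_B$ and that the leading coefficients $b_{p^n}, c_{p^n}$ at degree $p^n$ are units. Once this is pinned down, the Krull intersection theorem delivers $\phi = 0$.
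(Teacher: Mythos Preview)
Your proof is correct and follows essentially the same approach as the paper. Both reduce to showing $\phi \equiv 0 \pmod{\m_B}$ implies $\phi = 0$, and both exploit that $\phi([p]_\G(x)) \equiv 0 \pmod{\m_B^{k+1}}$ once the coefficients of $\phi$ lie in $\m_B^k$, then use that $[p]_\G(x)$ has unit leading coefficient in degree $p^n$ modulo $\m_B$ to push the coefficients into $\m_B^{k+1}$. The paper organizes this as a contradiction (take the maximal $j$ with all coefficients in $\m_B^j$, then examine the leading term of $g$ in $x$), while you run it as a direct induction on $k$ with a nested induction on $j_0$ and invoke Krull intersection; the paper reaches $\phi([p]_\G)\equiv 0$ via the additivity $g(s)+_{\G'}g(t)\equiv g(s)+g(t)$ and $p=0$, whereas you compute $[p]_{\G'}(\phi)\equiv 0$ directly. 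One phrasing to tighten: the sentence ``$[p]_\G(x)^j$ has minimum degree $jp^n$'' is literally false over $B$; what you mean (and use) is that its coefficients below degree $jp^n$ lie in $\m_B$, so after multiplying by $a_j\in\m_B^k$ those terms vanish modulo $\m_B^{k+1}$.
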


\begin{proof}
  Suppose there were two maps $\psi_1$ and $\psi_2$ extending $\psi_0$.
  Their difference $\psi_1 -_{\G'} \psi_2$ is a map extending the zero map.
  Therefore, it suffices to prove the lemma in the case where $\psi_0 = 0$.

  Suppose $\psi \colon \G \to \G'$ is a map extending zero,
  we will show that $\psi=0$.
  Since $B \in \hR$, every line bundle over $B$ is trivial and thus the formal groups $\G$ and $\G'$ are coordinatizable and we choose coordinates $x$ and $y$ so that ${\cO}_{\G} \cong B[\![x]\!]$ and ${\cO}_{\G'} \cong B[\![y]\!]$.
  The map $\psi$ is now determined by the image of $y$ which we view as a power series
  $ g(x) \in B[\![x]\!] $.
  The condition that $\psi$ extends zero tells us that $g$ reduces to zero modulo $\m$.
  
  If $g$ is nonzero,
  then there is a maximal $j$ such that all coefficients of $g$ are in $\m^j$ and we have 
  \[
    g(s)+g(t) \equiv g(s)+_{\G'} g(t) \equiv g(s+_{\G} t) \pmod{\m^{j+1}}
  \]
  from which it follows that 
  \[
    0 = p\cdot g(x) \equiv g([p]_{\G}(x)) \pmod{\m^{j+1}}.
  \]
  On the other hand,
  there is an $i$ such that 
  $g(x) \equiv ax^i \pmod{\m^{j+1}, x^{i+1}}$ for some non-zero $a \in \m^j$
  and using the condition that $\G$ has height $n$
  we can compute that
  \[ g([p]_{\G}(x)) \equiv a(u_n  x^{p^n})^i \pmod{\m^{j+1}, x^{i p^n + 1}} \]
  for some unit $u_n$.
  This is a contradiction and implies $\psi=0$.  
\end{proof}

\begin{proof}[Proof of \Cref{prop:Gammasurjgeometric}]
Since $(\sigma,\tau)$ is a map of complete local rings, it suffices to show surjectivity modulo the maximal ideal of the source.  Geometrically, this corresponds to showing the induced map of schemes
\begin{equation}\label{eqn:defisog}
\Spf(k)\underset{\Def(\G_0)}{\times}^{\sigma} \Def(q_0) {}^\tau\underset{\Def(\G_0')}{\times} \Spf (k)\to  \Spf(k)
\end{equation}
is a closed immersion.  Here, the map $\Spf(k) \to \Def(\G_0)$ classifies sending a $k$-algebra $f:k\to B$ to the \emph{trivial deformation} of $\G_0$ over $B$: that is, the deformation
\[ (f^*\G_0, \pi \circ f, \id_{(\pi \circ f)^* \G_0}) \in \Def(\G_0)(B) \]
(and similarly for $\G_0'$).  Thus, the source of (\ref{eqn:defisog}) classifies deformations of $q_0$ where the source and target are \emph{trivial} deformations: that is, a $B$-point is the data of a ring homomorphism $f\colon k\to B$, an isogeny $q\colon f^*\G_0 \to f^*\G_0'$, and an identification of $\pi^* q$ with $q_0$.  But in this situation, we may apply \Cref{lem:defzero} with $\psi_0 = q_0$.  Since $f^* q_0$ provides an example of a deformation of $q_0$, we conclude that there is \emph{exactly} one $q\colon f^*\G_0 \to f^*\G_0'$ deforming $q_0$.  It follows that the map of schemes (\ref{eqn:defisog}) is an isomorphism, completing the proof.
\end{proof}

We now apply the above results on the deformation theory of isogenies to prove \Cref{prop:Gammasurj}.  By work of Ando, Hopkins, and Strickland \cite{Ando, Strickland1, Strickland2, AHS}, there is a close relationship between power operations in Lubin-Tate theory and deformations of iterates of the Frobenius isogeny, which we summarize here:

\begin{prop}[Proposition 12.12, \cite{AHS}]\label{prop:citeAHS}
Let $E$ be the Lubin-Tate theory associated to a formal group $\G_0$ over a perfect field $k$, and let $\varphi^r \colon \G_0 \to \G_0^{(r)}$ denote the $r$-fold relative Frobenius isogeny.  Then:
\begin{enumerate}
    \item There is an isomorphism of schemes 
    \[
    \Def(\varphi^r) \cong \Spf (E^0(B\Sigma_{p^r})/I_{\tr}).
    \]
    \item Under the above isomorphism and the identification $\Def(\G_0) \cong \Spf E_0$, the map \[ \sigma_r\colon \Spf E^0(B\Sigma_{p^r})/I_{\tr} \to \Spf E_0 \] induced by the canonical $E_0$-algebra structure corresponds to the map of schemes  $\sigma\colon \Def(\varphi^r) \to \Def(\G_0)$, and the map \[ \tau_r :\Spf E^0(B\Sigma_{p^r})/I_{\tr} \to \Spf E_0 \] induced by the total additive power operation corresponds to the composite 
    \[
    \Def(\varphi^r) \xrightarrow{\tau} \Def(\G_0^{(r)}) \cong \Def(\G_0).
    \]
    Here, the isomorphism $\Def(\G_0^{(r)}) \cong \Def(\G_0)$ is given on $B$-points by
    \[ 
    (\G, i,\alpha) \in \Def(\G_0^{(r)}) \quad \mapsto \quad (\G, i\circ \varphi^r, \alpha) \in \Def(\G_0),
    \]which is an isomorphism because $k$ is a perfect field.  
    \item The ring maps
    \[ \circ_{i,j} \colon E^0(B\Sigma_{p^{i+j}})/I_{\tr} \longrightarrow (E^0(B\Sigma_{p^{i}})/I_{\tr}) {{}_{\tau_{i}}\otimes_{\sigma_{j}}} (E^0(B\Sigma_{p^{j}})/I_{\tr}) \]
    induced by composition of power operations correspond to composition of isogenies under the identifications above.  
\end{enumerate}
\end{prop}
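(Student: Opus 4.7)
My plan is to follow the Ando--Hopkins--Strickland philosophy \cite{Ando, Strickland1, Strickland2, AHS} and reinterpret the rings $E^0(B\Sigma_{p^r})/I_{\tr}$ as moduli of subgroup schemes inside deformations of $\G_0$. The starting input is Strickland's computation \cite{Strickland1}, which identifies $\Spf E^0(B\Sigma_m)$ with the scheme of effective degree-$m$ divisors on the Quillen formal group $\G^Q$ over $\Spf E_0 = \Def(\G_0)$, and further identifies $\Spf E^0(B\Sigma_m)/I_{\tr}$ with the closed subscheme carved out by those divisors that are finite flat subgroup schemes. Specializing to $m = p^r$, the scheme $\Spf E^0(B\Sigma_{p^r})/I_{\tr}$ parametrizes pairs $(\G, H)$ where $\G$ is a deformation of $\G_0$ to some $B \in \hR$ and $H \subset \G$ is a finite flat subgroup scheme of order $p^r$.

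The next step is to match such pairs with deformations of the $r$-fold Frobenius $\varphi^r \colon \G_0 \to \G_0^{(r)}$. From $(\G, H)$ one forms the quotient isogeny $q \colon \G \to \G/H$, which is an isogeny of degree $p^r$ between deformations; conversely, given a deformation of $\varphi^r$, its kernel is a subgroup scheme of order $p^r$ in the source deformation, and this inverse is well-defined by the rigidity already recorded in \Cref{lem:defzero}. The key check, which I expect to be the main obstacle, is that over the special fiber any subgroup of order $p^r$ must agree with $\ker(\varphi^r)$: this uses that $\G_0$ is a connected formal group of height $n$ over a perfect field of characteristic $p$, so any finite subgroup scheme of order $p^r$ is infinitesimal, must contain the $r$-fold Frobenius kernel by a Frobenius factorization argument, and then matches it by a comparison of orders. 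Perfection of $k$ simultaneously supplies the canonical twisting isomorphism $\Def(\G_0^{(r)}) \cong \Def(\G_0)$ used to regard $\G/H$ as a deformation of $\G_0$, which yields~(1).

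Parts~(2) and~(3) then follow by tracing through the constructions. The source map $\sigma_r$ records the ambient formal group of the subgroup and matches the canonical $E_0$-algebra structure on $E^0(B\Sigma_{p^r})$. The identification of $\tau_r$ with the target-of-quotient map is exactly the AHS interpretation of the total additive power operation of weight $p^r$ as pushforward along the universal degree-$p^r$ isogeny \cite{Ando, AHS}. For~(3), the composition ring map $\circ_{i,j}$ arises via transfer along the inclusion $\Sigma_{p^i} \wr \Sigma_{p^j} \hookrightarrow \Sigma_{p^{i+j}}$, which corresponds geometrically to the gluing operation taking a pair of compatible deformations of $\varphi^i$ and $\varphi^j$ to a deformation of their composition; the fibered tensor product along $\tau_i$ and $\sigma_j$ encodes precisely the compatibility that the target of the first isogeny matches the source of the second. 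The principal bookkeeping complication in assembling (2) and (3) coherently is tracking the Frobenius twist across varying $r$, but once the identification $\Def(\G_0^{(r)}) \cong \Def(\G_0)$ is set up functorially in $r$, the verifications reduce to direct unwinding of Strickland's and AHS's constructions.
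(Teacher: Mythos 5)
There is no internal proof in the paper to compare against: \Cref{prop:citeAHS} is imported verbatim from Ando--Hopkins--Strickland (Proposition 12.12 of \cite{AHS}, resting on Strickland's computation in \cite{Strickland1}), and the paper uses it as a black box in the proof of \Cref{prop:Gammasurj}. Your outline is, in substance, a sketch of the proof that lives in that cited literature rather than an alternative route: Strickland's theorem identifying $\Spf(E^0(B\Sigma_{p^r})/I_{\tr})$ with the scheme $\Sub_{p^r}(\G^Q)$ of order-$p^r$ subgroup schemes of the universal deformation, followed by the kernel/quotient correspondence between such subgroups and deformations of $\varphi^r$, with the crucial special-fiber input that a one-dimensional formal group of finite height over a field of characteristic $p$ has $\ker(\varphi^r)$ as its unique subgroup scheme of order $p^r$ (via the factorization of any isogeny through a power of Frobenius). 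That is the right skeleton, and the identifications of $\sigma_r$, $\tau_r$ and $\circ_{i,j}$ do reduce to unwinding these constructions.

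Three small corrections so the sketch matches the actual inputs. First, the divisor identification is $\Spf E^0(BU(m)) \cong \mathrm{Div}_m^+(\G^Q)$; for symmetric groups one only has a map $\Spf E^0(B\Sigma_m) \to \mathrm{Div}_m^+(\G^Q)$, and the precise statement you need from \cite{Strickland1} is the isomorphism after quotienting by $I_{\tr}$ at $m=p^r$, onto $\Sub_{p^r}$. Second, \Cref{lem:defzero} asserts uniqueness of a homomorphism of deformed formal groups lifting a given map on the special fiber; it is the rigidity input for \Cref{prop:Gammasurjgeometric}, not the statement that makes the kernel/quotient dictionary well defined --- for that you need exactly the ``unique order-$p^r$ subgroup'' fact you state separately, plus flatness of the quotient to identify its special fiber with $\G_0^{(r)}$. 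Third, the composition map $\circ_{i,j}$ is induced by restriction along the wreath-product inclusion $\Sigma_{p^i} \wr \Sigma_{p^j} \subset \Sigma_{p^{i+j}}$ (together with the K\"unneth-type identification of $E^0(B(\Sigma_{p^i}\wr\Sigma_{p^j}))$ modulo transfers), not by a transfer map; the transfers enter only through the ideals $I_{\tr}$ being quotiented out.
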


\begin{rmk}
The formal schemes $\Spf (E^0(B\Sigma_{p^r}))$, together with the maps $\sigma_r$, $\tau_r$ and $\circ_{i,j}$ above, fit together into a (graded) category object in formal schemes, i.e. a lax formal stack.  Moreover, the structure of additive power operations makes $\pi_0$ of any commutative $E$-algebra $R$ into a sheaf of algebras on this lax formal stack.  The work of Ando-Hopkins-Strickland can be summarized concisely as identifying this lax formal stack with
the lax formal stack determined by the schemes $\mathrm{Def}(\varphi^r)$ and their natural structures described above.
\tqed
\end{rmk}

We are now ready to prove \Cref{prop:Gammasurj}.
\begin{proof}[Proof of \Cref{prop:Gammasurj}]
  From \Cref{prop:citeAHS}, we know that $\overline{\tau}_r$ is the map on functions corresponding to the map of schemes
  \begin{align*}
    \Spf(k)\underset{\Def(\G_0)}{\times}^{\sigma}\Def(\varphi^r) &\to \Spf(k)\underset{\Spf(W(k))}{\times}\Def(\varphi^r) \\
    &\xrightarrow{\tau} \Spf(k)\underset{\Spf(W(k))}{\times}\Def(\G_0^{(r)}) \\
    &\cong \Spf(k)\underset{\Spf(W(k))}{\times}\Def(\G_0) \\
    &\to \Def(\G_0).
  \end{align*}
 The final map is clearly surjective on functions (as it is reduction modulo $p$), so it suffices to show that the composite of the first three maps is surjective on functions.  But, up to the isomorphism $\Def(\G_0^{(r)}) \cong \Def(\G_0)$, this is a base change of the map
  \[
    \Def(\varphi^r) \xrightarrow{(\sigma,\tau)} \Def(\G_0) \underset{\Spf (W(k))}{\times} \Def(\G_0^{(r)}),
  \]
  which is surjective on functions by \Cref{prop:Gammasurjgeometric}.  
\end{proof}

\subsection{Using the $p$-derivation}
\label{sub:pder}\ 

At the point we are essentially ready to complete the proof of \Cref{thm:cofree}.
However, we need one more ingredient in order to assemble the components from the previous subsections. This ingredient is the non-additive power operation $\theta$ of weight $p$, which acts as an \emph{additive $p$-derivation}:

\begin{prop}[Rezk]\label{prop:pder}
  There is an operation $\theta \in \T(E_0)_{p}$ such that for any $\T$-algebra $B$ and $x,y\in B$, we have the relation
  \[
  \theta(x+y) = \theta(x) + \theta(y) + \frac{1}{p}(x^p + y^p - (x+y)^p).
  \]
  Here, the division by $p$ is in the formal sense: the coefficients of the parenthesized polynomial are divisible by $p$, and one divides them by $p$ before evaluating. 
\end{prop}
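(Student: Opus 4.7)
The plan is to adapt the classical construction of the $p$-derivation on a $\delta$-ring (cf.\ \Cref{exm:ht1}): in the height-one case, $\delta$ is extracted from an additive, multiplicative lift of Frobenius $\psi$ via the identity $\delta(x) := p^{-1}(\psi(x) - x^p)$. At higher heights, the analogue of $\psi$ is provided by the additive total power operation $\tau_1$.

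Concretely, I would proceed as follows. By \Cref{prop:add-tot-pow}, the additive total power operation $\tau_1 \colon E_0 \to E^0(B\Sigma_p)/I_{\tr}$ is a ring homomorphism. By the Ando--Hopkins--Strickland correspondence (\Cref{prop:citeAHS}), $\tau_1$ classifies the target map $\tau \colon \Def(\varphi) \to \Def(\G_0)$ of the universal deformation of the relative Frobenius isogeny $\varphi \colon \G_0 \to \G_0^{(1)}$. Since $\varphi$ reduces modulo $p$ to absolute Frobenius, we obtain $\tau_1(x) \equiv x^p \pmod{p}$ in $E^0(B\Sigma_p)/I_{\tr}$, where $x^p$ denotes the image of $x^p \in E_0$ under the canonical inclusion $\sigma \colon E_0 \to E^0(B\Sigma_p)/I_{\tr}$. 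Since $E^0(B\Sigma_p)/I_{\tr}$ is a finitely generated free, hence $p$-torsion-free, $E_0$-module by Strickland (\Cref{thm:strickland}), the difference $\tau_1(x) - \sigma(x^p)$ is uniquely divisible by $p$. Fixing an $E_0$-linear splitting $s \colon E^0(B\Sigma_p)/I_{\tr} \to E_0$ of $\sigma$ (available because $\sigma(1) = 1$ can be extended to an $E_0$-basis), I would set
\[ \theta := p^{-1} \cdot s \circ \bigl(\tau_1 - \sigma \circ (-)^p\bigr), \]
which, applied to the universal class in the free $\T$-algebra on one generator, yields an element $\theta \in \T(E_0)_p$.

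The $p$-derivation identity then follows formally from the additivity of $\tau_1$ and the $E_0$-linearity of $s$: one computes
\begin{align*}
p\bigl(\theta(x+y) - \theta(x) - \theta(y)\bigr)
&= s\bigl(\tau_1(x+y) - \tau_1(x) - \tau_1(y)\bigr) - \bigl((x+y)^p - x^p - y^p\bigr) \\
&= x^p + y^p - (x+y)^p,
\end{align*}
and dividing by $p$ (valid universally because the free $\T$-algebra on two generators is $p$-torsion-free by \Cref{thm:strickland}) yields the stated relation; functoriality of the monad $\T$ then propagates the identity to every $\T$-algebra $B$.

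The main obstacle is verifying that the construction yields a genuine element of $\T(E_0)_p$ acting uniformly on every $\T$-algebra, as opposed to merely a map into the larger ring $E^0(B\Sigma_p)/I_{\tr}$. The choice of splitting $s$ is natural but non-canonical, so one must track carefully which element of $\T(E_0)_p$ is produced and verify that different choices of $s$ alter $\theta$ only by genuine additive weight-$p$ operations, so that the induced $p$-derivation structure is well-posed. This technical step is where the full force of Strickland's polynomial description of $\T(E_0)$ and the Ando--Hopkins--Strickland geometric description of $E^0(B\Sigma_p)/I_{\tr}$ must be combined.
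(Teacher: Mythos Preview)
The paper does not actually prove this proposition; its proof is a citation to unpublished work of Rezk and to \cite{Stapleton}, \cite[Lemma~2.2]{MNNmaynilp}, and \cite[Theorem~4.3.2]{TeleAmbi}. Your proposal therefore attempts more than the paper does, and your overall strategy --- build an additive weight-$p$ ``lift of Frobenius'' $\psi$ and set $\theta = p^{-1}(\psi - (-)^p)$ --- is indeed essentially the Rezk/Stapleton construction.

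There is, however, a real gap at the key step. You argue that $\tau_1(x) \equiv \sigma(x^p) \pmod{p}$ for $x \in E_0$ because ``$\varphi$ reduces modulo $p$ to absolute Frobenius''; but this phrase has no precise content here (the base $k$ is already characteristic $p$), and the AHS description of $\tau_1$ as the target map on $\Def(\varphi)$ does not by itself yield such a congruence. More seriously, even granting this for $x \in E_0$, what you actually need is that the \emph{operation} $\psi - (-)^p$ lies in $p\cdot\T(E_0)_p$, i.e.\ that the functional $s \circ q - r$ on $E^0(B\Sigma_p)$ (with $q$ the quotient by $I_{\tr}$ and $r$ restriction along the basepoint) is divisible by $p$. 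For $n \geq 2$ the quotient $E^0(B\Sigma_p)/I_{\tr}$ has $E_0$-rank $(p^n-1)/(p-1) > 1$, so $s\circ q$ is far from determined by the single condition $s\circ\sigma = \id$, and an arbitrary splitting $s$ will \emph{not} make $s\circ q - r$ divisible by $p$. Your acknowledged ``main obstacle'' is thus a genuine one, and your sketch does not resolve it.

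The clean fix inverts the logic: rather than choosing $\psi$ and hoping $\psi - (-)^p$ is $p$-divisible, show directly that $r(I_{\tr}) \subset pE_0$ --- this holds because transfer from $\Sigma_i \times \Sigma_{p-i}$ followed by restriction to a point is multiplication by $\binom{p}{i}\equiv 0 \pmod{p}$ for $0 < i < p$. Since $E^0(B\Sigma_p)/I_{\tr}$ is free (\Cref{thm:strickgamma}), $I_{\tr}$ is a direct summand, so one may extend $-p^{-1}r|_{I_{\tr}}$ to a functional $\phi_\theta$ on all of $E^0(B\Sigma_p)$; then $r + p\phi_\theta$ kills $I_{\tr}$, so the corresponding operation $(-)^p + p\theta$ is additive, which is precisely the $p$-derivation identity. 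Your final verification via additivity then goes through unchanged.
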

\begin{proof}
Such an operation appears in unpublished work of Rezk, and independent constructions may be found in \cite{Stapleton}, \cite[Lemma 2.2]{MNNmaynilp}, and \cite[Theorem 4.3.2]{TeleAmbi}.
\end{proof}

The above relation implies that $\theta$ decreases $p$-adic valuation in the following sense:

\begin{lem}\label{lem:thetacong}
  Given a $\T$-algebra $B$ and $x \in B$, we have
  \[ \theta^j (p^k x + p^{k+1}(\cdots) ) \equiv p^{k-j} x^{p^j} \pmod{p^{k-j+1}}. \]
\end{lem}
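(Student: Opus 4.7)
The plan is to reduce the statement to the case $j=1$ by induction on $j$, and then to establish the $j=1$ case by a direct computation using the additivity formula of \Cref{prop:pder}. The base case $j=0$ is trivial. For the inductive step, assume the result holds at stage $j$ for arbitrary input of the form $p^k x + p^{k+1}(\cdots)$; then we may write
\[ \theta^j(p^k x + p^{k+1}(\cdots)) = p^{k-j} x^{p^j} + p^{k-j+1} z \]
for some $z \in B$. Applying $\theta$ once more and invoking the $j=1$ case with exponent $k' = k-j$ and element $x' = x^{p^j}$ yields
\[ \theta^{j+1}(p^k x + p^{k+1}(\cdots)) \equiv p^{k-j-1}(x^{p^j})^p = p^{k-(j+1)} x^{p^{j+1}} \pmod{p^{k-j}}, \]
completing the induction.

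It therefore suffices to prove the $j=1$ case: for all $k\geq 1$ and $x,y \in B$,
\[ \theta(p^k x + p^{k+1} y) \equiv p^{k-1} x^p \pmod{p^k}. \]
The crucial input is that the lift $\psi(x) = x^p + p\theta(x)$ is additive, so in particular $\psi(p^k x) = p^k \psi(x)$. Rearranging this identity gives
\[ \theta(p^k x) = p^{k-1} x^p + p^k \theta(x) - p^{kp-1} x^p, \]
and since $kp - 1 \geq k$ whenever $k\geq 1$ and $p\geq 2$, we conclude $\theta(p^k x) \equiv p^{k-1} x^p \pmod{p^k}$. The same computation applied to $p^{k+1} y$ shows that $\theta(p^{k+1} y)$ is divisible by $p^k$.

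The remaining contribution to $\theta(p^k x + p^{k+1} y)$ is the cross term from \Cref{prop:pder}, which expands as
\[ \frac{1}{p}\bigl((p^k x)^p + (p^{k+1} y)^p - (p^k x + p^{k+1} y)^p\bigr) = -\sum_{i=1}^{p-1} \frac{\binom{p}{i}}{p}\, p^{kp+p-i}\, x^i y^{p-i}. \]
Each $\binom{p}{i}/p$ is a $p$-adic integer for $1 \leq i \leq p-1$, and the minimum $p$-adic valuation among these monomials is attained at $i=p-1$, where it equals $kp+1 \geq k+1 > k$. Hence the cross term vanishes modulo $p^k$. Combining the three contributions gives the $j=1$ case. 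The main technical point is merely the $p$-adic valuation bookkeeping; no serious obstacle arises once one observes that additivity of $\psi$ makes $\theta(p^k x)$ explicit.
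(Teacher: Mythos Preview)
Your approach is essentially the same as the paper's: the paper simply records the two congruences
\[
\theta(x+y)\equiv\theta(x)+\theta(y)\pmod{(xy)},\qquad \theta(p^k x)\equiv p^{k-1}x^p\pmod{p^k},
\]
and says ``from which we may conclude,'' while you spell out the induction on $j$ and expand the cross term explicitly. The structure and content match.

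There is one small gap. When you write ``Rearranging this identity gives $\theta(p^k x) = p^{k-1} x^p + p^k \theta(x) - p^{kp-1} x^p$,'' you are dividing the relation $p\theta(p^k x)=p^k x^p+p^{k+1}\theta(x)-p^{kp}x^p$ by $p$. In an arbitrary $\T$-algebra $B$ this is not legitimate, since $B$ may have $p$-torsion. The fix is easy: either observe that the identity holds in the free $\T$-algebra $\T(E_0)$, which is a polynomial ring over the $p$-torsion-free ring $E_0$, and hence holds universally; or derive $\theta(nx)=n\theta(x)+\tfrac{n-n^p}{p}x^p$ by induction on $n$ directly from the additivity formula of \Cref{prop:pder} (where the division by $p$ happens in $\mathbb{Z}$, not in $B$), and then set $n=p^k$.
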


\begin{proof}  
  As $\theta$ is an additive $p$-derivation, it satisfies the congruences
  \begin{align*}
    \theta(x + y) &\equiv \theta(x) + \theta(y) \pmod{(xy)} \\
    \theta(p^k x) &\equiv p^{k-1} x^p \pmod{p^k}
  \end{align*}
  from which we may conclude.
\end{proof}

Using $\theta$ and these congruences, we can now assemble our results about the various mod $\m$ additive total power operations
\[
\overline{\tau}_r\colon E_0 \to E^0(B\Sigma_{p^r})/(I_{\tr},\m)
\]
from \Cref{sub:detection} and \Cref{sub:evsurj} to prove the main theorem.  First, we have:

\begin{prop}\label{lem:multpWitt}\hfill
\begin{enumerate}
    \item For $0\leq j \leq r$, suppose that $x \in E_0$ is an element such that $\overline{\tau}_j(x) \neq 0$.  Then for any $i\geq 0$ and element $y\in E_0$, the element $p^ix + p^{i+1}y$ is detected in $W^{\leq i+j}(k)$.  That is, its image under the composite 
\[
E_0 \xrightarrow{\overline{\ev}_k} W_{\T}(k) \to W^{\leq i+j}_{\T}(k)
\]
is nonzero.  
\item For any $y \in E_0$, there is
  an additive power operation $Q$ and $i \in \N$
  such that the mod $\m$ reduction of $\theta^iQ(y)$ is nonzero.

\end{enumerate}



\end{prop}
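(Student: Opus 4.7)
Plan: Both parts rely on analyzing the composite operation $\theta^i \circ Q \in \T(E_0)$ (of weight $p^{i+j}$ for suitable additive $Q$) acting on the element in question. The main tools will be (a) the $\T$-equivariance of $\overline{\ev}_k \colon E_0 \to W_\T(k)$, (b) the additivity of $Q$, which lets me pull $p$-scalars outside, and (c) \Cref{lem:thetacong}, which gives the congruence $\theta^i(p^i a + p^{i+1}b) \equiv a^{p^i} \pmod p$. I will also need a minor extension of \Cref{prop:witt-op-shift}: the identical argument using $\lambda^{*}(\gamma v) = (\lambda \circ \gamma)^{*} v$ together with the multiplicativity of weight under plethory composition shows that a weight-$p^{s}$ operation sends $W^{\geq r+1}_\T$ into $W^{\geq r+1-s}_\T$; contrapositively, if $\gamma v$ is nonzero in $W^{\leq r-s}_\T$ then $v$ is nonzero in $W^{\leq r}_\T$.

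For part (1), the hypothesis $\overline{\tau}_j(x) \neq 0$ is equivalent, via the identification of additive weight-$p^j$ operations with the $E_0$-linear dual of $E^0(B\Sigma_{p^j})/I_{\tr}$ coming from \Cref{thm:strickland}, to the existence of an additive $Q \in \T_{p^j}(E_0)$ with $Q(x) \not\equiv 0 \pmod \m$. Applying $\theta^i \circ Q$ to $\overline{\ev}_k(p^i x + p^{i+1}y)$ and using $\T$-equivariance, I reduce to a calculation in $E_0$: by additivity of $Q$,
\[
Q(p^i x + p^{i+1} y) = p^i Q(x) + p^{i+1} Q(y),
\]
and then \Cref{lem:thetacong} (applied with $k = j = i$, using the element $Q(x)$) gives $\theta^i Q(p^i x + p^{i+1}y) \equiv Q(x)^{p^i} \pmod p$. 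Since $E_0/\m = k$ is a field of characteristic $p$ and $Q(x) \not\equiv 0 \pmod \m$, the image of this element in $W^{\leq 0}_\T(k) = k$ is nonzero. Applying the weight-$p^{i+j}$ extension of \Cref{prop:witt-op-shift} described above to $\gamma = \theta^i Q$, nonvanishing in $W^{\leq 0}_\T(k)$ of $(\theta^i Q) \cdot \overline{\ev}_k(p^i x + p^{i+1} y)$ implies the desired nonvanishing of $\overline{\ev}_k(p^i x + p^{i+1} y)$ in $W^{\leq i+j}_\T(k)$.

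For part (2) (where I assume $y\neq 0$, the statement being vacuous otherwise), I will write $y = p^i y'$ with $y' \notin p E_0$ and apply the same strategy with $j=0$ dropped from the setup. By \Cref{prop:detecting-classes}, I may choose an additive power operation $Q$ with $Q(y') \not\equiv 0 \pmod \m$. Additivity of $Q$ gives $Q(y) = p^i Q(y')$, and then \Cref{lem:thetacong} yields $\theta^i Q(y) = \theta^i(p^i Q(y')) \equiv Q(y')^{p^i} \pmod p$. The mod $\m$ reduction is $(Q(y') \bmod \m)^{p^i}$, which is nonzero since $k$ is a field of characteristic $p$.

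The only conceptually new ingredient is the weight-$p^s$ extension of \Cref{prop:witt-op-shift}, and this is entirely formal from the definitions. The substantive content is the pleasant cancellation that occurs when one commutes $\theta^i$ with an additive operation $Q$ past a power of $p$: additivity moves $p^i$ outside, and then $\theta^i$ precisely absorbs it to produce a $p^i$-th power, which in turn detects non-nilpotence because the ambient residue field is perfect of characteristic $p$.
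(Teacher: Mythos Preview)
Your proof is correct and follows essentially the same approach as the paper. The only cosmetic difference is that the paper appeals directly to the definition of the Witt filtration (nonvanishing of $(\theta^i Q)^*\overline{\ev}_k(p^ix+p^{i+1}y)$ for an operation of weight $p^{i+j}$ forces nonvanishing in $W^{\leq i+j}_\T(k)$), whereas you route this through the weight-$p^s$ extension of \Cref{prop:witt-op-shift} together with the $\T$-equivariance of $\overline{\ev}_k$; unwinding via $\mathrm{id}^*(\gamma v)=(\gamma)^*v$ shows these are the same computation.
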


\begin{proof}
  Since $x$ is detected under the map
  \[ \overline{\tau}_j \colon E_0 \to E^0(B\Sigma_{p^j})/(I_{\tr}, \m), \]
  there is an additive operation $Q\in \T(E_0)$ of weight $p^j$
  such that $\overline{Qx}\in k$ is nonzero.
  Then, we may consider the operation $\theta^i Q$.  By \Cref{lem:thetacong}, we have that
  \begin{align*}
    \theta^{i}Q \left( p^i x + p^{i+1}y \right) = \theta^{i} \left( p^iQ(x) + p^{i+1}Q(y) \right) \equiv Q(x)^{p^i} \not\equiv 0 \pmod{p}.
  \end{align*}
  Therefore, since $\theta^i Q$ has weight $p^{i+j}$, $x$ is detected in $W_{\T}^{\leq r}(k)$ as desired.

For the second claim, write $y= p^i z$ where $z\not\equiv 0\pmod p$.  Then, by \Cref{prop:Gammainjrefined}, there is an additive power operation $Q$ of weight $p^j$ such that $Qz \not\equiv 0 \pmod{\m}$; the calculation above then shows that $\theta^i Q(y) \equiv Q(z)^{p^i} \pmod{p}$, which is nonzero modulo $\m$ since $E_0/\m = k$ has no nilpotents.  
\end{proof}


\Cref{prop:Gammasurj} asserts that $\overline{\tau}_j$ is surjective, which ensures that there is a good supply of such elements $x_j$ which are detected by $\overline{\tau}_j$; in conjunction with the above proposition, this will give a lower bound on the size of the image of $E_0$ in $W_{\T}^{\leq r}(k)$.  To explain this bound, we use the following technical lemma:

\begin{lem}\label{lem:dvrcombo}
Let $\cO$ be a discrete valuation ring with uniformizer $\pi$ and residue field $\kappa = \cO/\pi$.  Suppose that $V_0, V_1, \cdots , V_r$ are finite dimensional $\kappa$-vector spaces and $\mE$ is a finitely generated $\cO$-module equipped with surjections of $\kappa$-vector spaces $f_i\colon \mE/\pi \mE \to V_i$ for $0\leq i \leq r$ satisfying the following condition:
\begin{align*}
&\text{Let } i \geq 0 \text{ and } x\in \mE\text{ such that }f_i(x)\neq 0.   \\
&\text{Then for any }y\in\mE, \text{ we have } \pi^{i} x + \pi^{i+1}y \neq 0.
\end{align*}
Then we have the following bound on the $\cO$-module length of $\mE$:
\[
\len_{\cO}(\mE) \geq \sum_{i=0}^{r} \dim_{\kappa}V_i.
\]
\end{lem}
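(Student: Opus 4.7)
The plan is to compare both sides of the inequality to the associated graded of $\mE$ with respect to the $\pi$-adic filtration. First I would dispense with the case where $\mE$ has positive rank as an $\cO$-module: then $\len_\cO(\mE) = \infty$ and the inequality is vacuous. So I may assume $\mE$ is a torsion, finitely generated $\cO$-module, hence $\pi^N\mE = 0$ for some $N$, and the standard structure theorem gives
\[ \len_\cO(\mE) \;=\; \sum_{i \geq 0} \dim_\kappa\bigl(\pi^i\mE/\pi^{i+1}\mE\bigr). \]

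Next, for each $i \geq 0$, I would define a $\kappa$-linear map
\[ \phi_i \colon \mE/\pi\mE \;\longrightarrow\; \pi^i\mE/\pi^{i+1}\mE, \qquad \phi_i(x) \;=\; \pi^i x \bmod \pi^{i+1}\mE. \]
This is well-defined on the quotient because replacing $x$ by $x + \pi y$ shifts $\pi^i x$ by $\pi^{i+1}y$, which vanishes modulo $\pi^{i+1}\mE$; it is $\kappa$-linear because $\pi$-multiples on the source act trivially and on the target multiplication by $\cO$ descends to $\kappa$.

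The key step is to translate the hypothesis of the lemma into the inclusion $\ker \phi_i \subseteq \ker f_i$. Indeed, if $\phi_i(x) = 0$, then $\pi^i x = \pi^{i+1}z$ for some $z \in \mE$; setting $y = -z$ gives $\pi^i x + \pi^{i+1}y = 0$, and the contrapositive of the hypothesis then forces $f_i(x) = 0$. Consequently $f_i$ factors through the quotient by $\ker \phi_i$, i.e.\ through $\mathrm{image}(\phi_i)$, yielding
\[ \dim_\kappa V_i \;\leq\; \dim_\kappa \mathrm{image}(\phi_i) \;\leq\; \dim_\kappa\bigl(\pi^i\mE/\pi^{i+1}\mE\bigr). \]
Summing this inequality for $0 \leq i \leq r$ and comparing to the length formula above gives
\[ \sum_{i=0}^r \dim_\kappa V_i \;\leq\; \sum_{i=0}^r \dim_\kappa\bigl(\pi^i\mE/\pi^{i+1}\mE\bigr) \;\leq\; \len_\cO(\mE). \]
The argument is essentially formal; the only subtlety is being careful that the hypothesis is really read as a statement about $\pi^i x \bmod \pi^{i+1}\mE$, which is precisely the $i$-th graded piece of the $\pi$-adic filtration, so the various $\dim V_i$ bounds land in distinct graded pieces and therefore add.
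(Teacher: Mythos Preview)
Your proof is correct and takes essentially the same approach as the paper: both pass to the $\pi$-adic associated graded and use the hypothesis to bound $\dim_\kappa V_i \leq \dim_\kappa \gr_i(\pi^\bullet\mE)$, then sum. The only cosmetic difference is that the paper chooses sections $g_i\colon V_i \to \mE/\pi\mE$ and shows $\pi^i g_i$ is injective, whereas you phrase the same linear algebra as $\ker\phi_i \subseteq \ker f_i$ and factor $f_i$ through $\mathrm{image}(\phi_i)$.
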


\begin{proof}
Let $\gr_i(\pi^\bullet \mE)$ denote the $i^{\mathrm{th}}$ piece of the associated graded of the $\pi$-adic filtration on $\mE$.
Now choose sections $g_i \colon V_i \to \gr_0(\pi^\bullet \mE)$ of the surjective maps $f_i \colon \gr_0(\pi^\bullet \mE) \to V_i$ of $\kappa$-vector spaces. The condition on $f_i$ implies that the map 
\[ \pi^{i} g_i \colon V_i \to \gr_i(\pi^\bullet \mE) \]
is injective, therefore we have
\[ \len_{\cO}(\mE) = \sum_{i \geq 0} \dim_{\kappa}( \gr_i(\pi^\bullet \mE)) \geq \sum_{i = 0}^r \dim_{\kappa}( V_{i} ) \]
as desired.
\end{proof}



The final ingredient we need is the following theorem of Strickland, which computes the size of target of $\overline{\tau}_j$:

\begin{thm}[Strickland, \cite{Strickland1}]\label{thm:strickgamma}
  For $r\geq 0$, the $E_0$-module $E^0(B\Sigma_{p^r})/I_{\tr}$ is free of rank $\overline{d}(r)$\footnote{See \Cref{thm:strickland} for the notation $\overline{d}(-)$.}.  
\end{thm}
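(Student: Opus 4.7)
The plan is to interpret this module moduli-theoretically and then analyze the representing object via formal geometry. By the Ando--Hopkins--Strickland theory of power operations (as in \Cref{prop:citeAHS}, but now for arbitrary finite subgroups rather than just Frobenius iterates), the formal spectrum $\Spf(E^0(B\Sigma_{p^r})/I_{\tr})$ is naturally identified with the formal scheme $\mathrm{Sub}_{p^r}(\G)$ parametrizing finite flat subgroup schemes of order $p^r$ in the universal deformation $\G$ over $\Spf E_0$. In this picture the transfer ideal has a transparent role: transfers from products $\Sigma_{p^r-i}\times\Sigma_i \subset \Sigma_{p^r}$ correspond geometrically to subgroup schemes decomposable as direct sums $H_1\oplus H_2$, so that quotienting by $I_{\tr}$ isolates the moduli of indecomposable (i.e.\ honest) order-$p^r$ subgroups of $\G$.

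With this identification in hand, the task reduces to showing that $\mathrm{Sub}_{p^r}(\G) \to \Spf E_0$ is finite flat of constant degree $\overline{d}(r)$. The rank would be established by restriction to the special fiber: over the perfect residue field $k$, the formal group $\G_0$ has height exactly $n$, so its $p^r$-torsion is abstractly isomorphic to $(p^{-r}\Z_p/\Z_p)^n$, and its order-$p^r$ subgroups are therefore in bijection with order-$p^r$ subgroups of $(\Q_p/\Z_p)^n$, counted by the Gaussian binomial $\overline{d}(r)$. For flatness, I would appeal to a rigidity argument in the spirit of \Cref{lem:defzero} and \Cref{prop:Gammasurjgeometric}: each geometric subgroup of $\G_0$ lifts uniquely along any infinitesimal thickening of $\Spf k \hookrightarrow \Spf E_0$ to a subgroup of the corresponding deformation of $\G$, so the moduli map is formally \'etale at every closed point. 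Combined with finiteness of the geometric fibers and regularity of $E_0$, this forces the map to be finite and flat of the predicted constant degree, whence $E^0(B\Sigma_{p^r})/I_{\tr}$ is a free $E_0$-module of rank $\overline{d}(r)$.

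The main obstacle is making the moduli-theoretic identification precise and rigorously establishing finite flatness in families; these are the technical core of Strickland's original work. Inductively, finite flatness could be reduced to the case $r=1$, where order-$p$ subgroups of $\G$ are the roots of the Weierstrass factor of $[p]_{\G}(x)/x$, organized into $(p-1)$-fold clusters giving the $\overline{d}(1) = (p^n-1)/(p-1)$ subgroups, after which one iterates by replacing $\G$ with the quotient isogeny $\G \to \G/H$ of the universal order-$p$ subgroup. An alternative path avoiding this geometry is to compute via Hopkins--Kuhn--Ravenel character theory after rationalization: characters of $\Sigma_{p^r}$ paired with homomorphisms $\Z_p^n \to \Sigma_{p^r}$, after killing transfers, correspond exactly to homomorphisms with image a cyclic subgroup of order $p^r$ acting via its regular representation, and these are themselves indexed by the order-$p^r$ subgroups of $(\Q_p/\Z_p)^n$.
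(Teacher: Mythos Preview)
The paper does not give a proof of this statement: it is cited as Strickland's theorem from \cite{Strickland1} and used as a black box (it is the ``final ingredient'' invoked just before \Cref{cor:lenbound}). There is therefore no in-paper argument to compare your proposal against.

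That said, your sketch does follow the broad outline of Strickland's own proof --- identify $\Spf(E^0(B\Sigma_{p^r})/I_{\tr})$ with the scheme $\mathrm{Sub}_{p^r}(\G)$ of order-$p^r$ subgroup schemes of the universal deformation, then argue finite-freeness of the structure map to $\Spf E_0$ --- so as an account of where the result comes from it is reasonable. One genuine error, though: over the residue field $k$ the formal group $\G_0$ is \emph{connected}, so its $p^r$-torsion is a local--local group scheme with no nontrivial $\bar{k}$-points; it is certainly not ``abstractly isomorphic to $(p^{-r}\Z_p/\Z_p)^n$''. The correct statement is that finite subgroup \emph{schemes} of $\G_0$ of order $p^r$ are in bijection with order-$p^r$ subgroups of $(\Q_p/\Z_p)^n$, but this is a nontrivial classification (via Dieudonn\'e theory, or an inductive argument on $r$ as you gesture at), not a consequence of the torsion being \'etale. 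Your rigidity-of-isogenies argument for unique lifting of subgroups is on firmer ground and is close in spirit to what actually happens.
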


Putting these together:

\begin{ntn}
  Let $E_{\T}^{\leq r}(k)$ denote the image of the map
  $ E_0 \xrightarrow{\overline{\ev}_k}  W_{\T}(k) \to W_{\T}^{\leq r}(k) $.
  \tqed
\end{ntn}

\begin{cor}\label{cor:lenbound}
We have the inequality
 \[ \len_{W(k)} \left( E_{\T}^{\leq r}(k) \right) \geq \sum_{j=0}^r \overline{d}(j). \]  
\end{cor}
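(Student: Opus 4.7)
My plan is to apply Lemma \ref{lem:dvrcombo} with $\cO = W(k)$, $\pi = p$, $\kappa = k$, $\mE = E_{\T}^{\leq r}(k)$ viewed as a $W(k)$-module, and target spaces $V_j = E^0(B\Sigma_{p^j})/(I_{\tr}, \m)$ for $0 \leq j \leq r$. By Theorem \ref{thm:strickgamma}, each $V_j$ is a $k$-vector space of dimension $\overline{d}(j)$, so the sum $\sum_{j=0}^r \dim_\kappa V_j$ appearing on the right-hand side of Lemma \ref{lem:dvrcombo} matches the bound we need.

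To construct the surjections $f_j\colon \mE/p\mE \twoheadrightarrow V_j$, I use that, by Proposition \ref{prop:add-witt-factor}, the mod-$\m$ additive total power operation $\overline{\tau}_j\colon E_0 \to V_j$ factors through $W_\T^{\leq j}(k)$, and hence through the canonical surjection $W_\T^{\leq r}(k) \twoheadrightarrow W_\T^{\leq j}(k)$ since $j \leq r$. Restricting this factorization to the image $\mE \subset W_\T^{\leq r}(k)$ of $\overline{\ev}_k$ and passing to the quotient $\mE/p\mE$ (valid because $V_j$ has characteristic $p$) produces $f_j$; its surjectivity is precisely Proposition \ref{prop:Gammasurj}.

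The main step is then to verify the nondegeneracy hypothesis of Lemma \ref{lem:dvrcombo}: given $x \in \mE$ with $f_j(x) \neq 0$ and an arbitrary $y \in \mE$, the element $p^j x + p^{j+1} y$ must be nonzero in $\mE$. Choosing lifts $\tilde x, \tilde y \in E_0$, the hypothesis $f_j(x)\neq 0$ translates to $\overline{\tau}_j(\tilde x) \neq 0$, and Proposition \ref{lem:multpWitt}(1) supplies a witnessing operation $\theta^j Q$ (where $Q$ is an additive power operation of weight $p^j$) whose evaluation on $p^j \tilde x + p^{j+1} \tilde y$ is nonzero modulo $\m$. This detection in an appropriate Witt-filtered quotient certifies that $p^j\tilde x + p^{j+1}\tilde y$ has nonzero image in $W_\T^{\leq r}(k)$, hence is nonzero in $\mE$, verifying the condition. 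Applying Lemma \ref{lem:dvrcombo} delivers the claimed inequality $\len_{W(k)}(\mE) \geq \sum_{j=0}^r \overline{d}(j)$.

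I expect the main technical obstacle to lie in the Witt-filtration bookkeeping of the final step: Proposition \ref{lem:multpWitt}(1) produces a witness of weight $p^{2j}$, which immediately yields detection in $W_\T^{\leq r}(k)$ when $2j \leq r$, while for $j > r/2$ some additional care is needed to convert the proposition's output into nonvanishing at the level $W_\T^{\leq r}(k)$. This is where the non-additive structure provided by the $p$-derivation $\theta$ (together with the congruences of Lemma \ref{lem:thetacong}) is essential, and where the proof must be most carefully written.
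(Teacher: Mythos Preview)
Your overall strategy---apply Lemma~\ref{lem:dvrcombo} with $\cO=W(k)$, $\mE=E_{\T}^{\leq r}(k)$, and the mod-$\m$ additive total power operations as the $f_i$---is exactly the paper's approach, and the ingredients you cite (Propositions~\ref{prop:Gammasurj}, \ref{prop:add-witt-factor}, \ref{lem:multpWitt}(1), Theorem~\ref{thm:strickgamma}) are the right ones.

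However, the bookkeeping issue you flag at the end is real, and your indexing does not resolve it. With $f_j=\overline{\tau}_j$, the hypothesis of Lemma~\ref{lem:dvrcombo} asks you to show that $p^j x + p^{j+1}y \neq 0$ in $W_\T^{\leq r}(k)$ whenever $\overline{\tau}_j(x)\neq 0$. Proposition~\ref{lem:multpWitt}(1), applied with its $i=j$, only gives nonvanishing in $W_\T^{\leq 2j}(k)$; when $2j>r$ there is a surjection $W_\T^{\leq 2j}(k)\twoheadrightarrow W_\T^{\leq r}(k)$, so this says nothing about nonvanishing in $W_\T^{\leq r}(k)$. No amount of further analysis of $\theta$ will help here, because the witness operation genuinely has weight $p^{2j}$.

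The fix is a simple reindexing: set $f_i = \overline{\tau}_{r-i}$ rather than $f_i=\overline{\tau}_i$. Then the hypothesis $f_i(x)\neq 0$ becomes $\overline{\tau}_{r-i}(x)\neq 0$, and Proposition~\ref{lem:multpWitt}(1) (with its $j=r-i$ and its $i$ equal to your $i$) gives detection in $W_\T^{\leq i+(r-i)}(k)=W_\T^{\leq r}(k)$, which is exactly what is needed. This is how the paper proceeds. One further minor point you omit: $\overline{\tau}_j$ is a ring map but is Frobenius-semilinear rather than $W(k)$-linear, so to obtain honest $k$-linear surjections $\mE/p\mE\to V_i$ as required by Lemma~\ref{lem:dvrcombo}, you must Frobenius-twist the targets (harmless since $k$ is perfect and the dimensions are unchanged).
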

\begin{proof}
By \Cref{prop:Gammasurj}, each of the maps $\overline{\tau}_j$ is surjective.  While they are not quite $W(k)$-linear, the field $k$ is perfect so we may replace $\overline{\tau}_j$ with a $W(k)$-linear surjection $\overline{\tau}'_j$ by Frobenius twisting the target.  Then, each map $\overline{\tau}'_j$ for $j\leq r$ factors through $W_{\T}^{\leq r}(k)$ by \Cref{prop:add-witt-factor}.  Moreover, by \Cref{lem:multpWitt}(1), the condition of \Cref{lem:dvrcombo} is satisfied with  $\cO=W(k)$, $\mE = E_{\T}^{\leq r}(k)$, and the maps $f_i = \overline{\tau}'_{r-i}$, so we find that
\[
\len_{W(k)} \left( E_{\T}^{\leq r}(k) \right) \geq \sum_{j=0}^r \dim_{k} E^0(B\Sigma_{p^j})/(I_{\tr}, \m).  
\]
Finally, using \Cref{thm:strickgamma}, we obtain the desired bound.  
\end{proof}

We are now ready to prove the main result of this section:

\begin{thm} \label{thm:ev-iso}
  The evaluation map
  \[ \overline{\ev}_A \colon \pi_0E(A) \longrightarrow W_{\T}(A) \]
  is an isomorphism at every perfect $k$-algebra $A$.
\end{thm}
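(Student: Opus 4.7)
The plan is to first reduce to the case $A = k$ via \Cref{cor:only-need-k}, and then establish injectivity and surjectivity of $\overline{\ev}_k \colon E_0 \to W_{\T}(k)$ separately, using the tools assembled in the preceding three subsections.

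For injectivity, suppose $y \in E_0$ satisfies $\overline{\ev}_k(y) = 0$. Then $\lambda^* \overline{\ev}_k(y) = 0$ for every $\lambda \in \T(E_0)$, which translates via \Cref{ntn:t-and-wt} into the statement that the mod-$\m$ reduction of $\lambda y$ vanishes for every power operation $\lambda$. But \Cref{lem:multpWitt}(2) guarantees that for any nonzero $y \in E_0$, some iterated composite $\theta^i Q$ of additive operations produces a class whose reduction modulo $\m$ is nonzero. Hence $y = 0$.

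For surjectivity I would run a length-counting argument on the Witt filtration. Combining \Cref{prop:witt-gr-ab} with the (Frobenius-twisted) module structure identified in \Cref{prop:grAmod}, each associated graded piece $W^{=r}_{\T}(k) \cong k^{\overline{d}(r)}$ has $W(k)$-length $\overline{d}(r)$, so $W^{\leq r}_{\T}(k)$ has finite $W(k)$-length $\sum_{j=0}^{r} \overline{d}(j)$. On the other hand, \Cref{cor:lenbound} supplies the matching lower bound $\len_{W(k)}\!\bigl(E^{\leq r}_{\T}(k)\bigr) \geq \sum_{j=0}^{r} \overline{d}(j)$ for the image of $\overline{\ev}_k$ in $W^{\leq r}_{\T}(k)$. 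Since this image is a submodule of equal finite length, the inclusion $E^{\leq r}_{\T}(k) \subseteq W^{\leq r}_{\T}(k)$ must be an equality, so $\pi_0 E(k) \twoheadrightarrow W^{\leq r}_{\T}(k)$ is surjective for every $r$. To pass to the limit, Strickland's theorem (\Cref{thm:strickland}) together with the bijection $\omega$ of \Cref{subsub:witt-gr} identifies $W_{\T}(k)$ with $\varprojlim_r W^{\leq r}_{\T}(k)$ as sets and hence as rings. Setting $K_r = \ker\!\bigl(\overline{\ev}_k \colon E_0 \to W^{\leq r}_{\T}(k)\bigr)$, the quotient $E_0/K_r$ is a finite-length $W(k)$-module, hence Artinian, so each $K_r$ contains some power $\m^{N_r}$; since $E_0$ is $\m$-adically complete and $\{\m^{N_r}\}$ is cofinal in $\{\m^{N}\}$, the resulting map
\[ E_0 \;\cong\; \varprojlim_r E_0/\m^{N_r} \;\twoheadrightarrow\; \varprojlim_r E_0/K_r \;\cong\; W_{\T}(k) \]
is surjective, completing the proof.

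The main obstacle has already been surmounted in the previous subsections: the length bound of \Cref{cor:lenbound} is what lets the counting argument close, and it in turn rests on combining Hahn's transchromatic detection (\Cref{prop:Gammainjrefined}), the Ando--Hopkins--Strickland rigidity of isogenies underlying \Cref{prop:Gammasurj}, and the $p$-adic shifting provided by the $p$-derivation $\theta$ in \Cref{lem:thetacong}. Given these inputs, the present theorem is essentially bookkeeping around the Witt filtration.
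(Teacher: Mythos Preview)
Your argument is correct and follows the same route as the paper: reduce to $A=k$ via \Cref{cor:only-need-k}, deduce injectivity from \Cref{lem:multpWitt}(2), and obtain surjectivity at each finite Witt stage by matching the length bound of \Cref{cor:lenbound} against $\len_{W(k)} W_\T^{\leq r}(k)=\sum_{j\le r}\overline d(j)$ from \Cref{prop:grAmod}.

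One small wrinkle in your limit passage: the claim that $\{\m^{N_r}\}$ is cofinal in $\{\m^N\}$ already uses injectivity (so that $\bigcap_r K_r=0$), and even granting cofinality, the levelwise surjections $E_0/\m^{N_r}\twoheadrightarrow E_0/K_r$ do not automatically give a surjection on inverse limits without controlling $\varprojlim^1$ of the kernel tower $\{K_r/\m^{N_r}\}$. The clean fix is Chevalley's theorem: since $E_0$ is complete local Noetherian and $\bigcap_r K_r=0$, the $\{K_r\}$ and $\{\m^N\}$ define the same topology, hence $E_0\cong\varprojlim_r E_0/K_r\cong W_\T(k)$ directly. This is precisely what the paper's terser phrasing (``the $\m$-adic topology on $E(k)$ is the finest topology induced by maps out to Artinian $W(k)$-algebras'') is encoding.
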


\begin{proof}
By \Cref{cor:only-need-k}, it suffices to consider the case $A=k$.
  Interpreting $\overline{\ev}_k$ as the total mod $\m$ power operation, we see that the second claim of \Cref{lem:multpWitt} implies injectivity.


  To finish the proof we must show $\overline{\ev}_k$ is surjective.
  Since the $\m$-adic topology on $E(k)$ is the finest topology induced by a collection of maps out to Artinian $W(k)$-algebras (each with the discrete topology), it suffices to argue that 
  $ \pi_0E(k) \to W_{\T}^{\leq r}(k) $
  is surjective for each $r$.  This is equivalent to showing that 
  $E_{\T}^{\leq r}(k) \to W_{\T}^{\leq r}(k)$
  is an isomorphism, and to do that we only need to prove that these objects have equal length as $W(k)$-modules.  
  We computed the length of $W_{\T}^{=r}(k)$ to be $\overline{d}(r)$ in \Cref{prop:grAmod}
  and gave a lower bound on the length of $E_{\T}^{\leq r}(k)$ in \Cref{cor:lenbound}.
  Taken together, we have 
  \[ \sum_{i=0}^r \overline{d}(i)
    \leq \len_{W(k)} \left( E_{\T}^{\leq r}(k) \right) 
    \leq \len_{W(k)} \left( W_{\T}^{\leq r}(k) \right)
    = \sum_{i=0}^r \len_{W(k)} \left( W_{\T}^{= i}(k) \right)
    = \sum_{i=0}^r \overline{d}(i), \]
  from which we may conclude.
\end{proof}

Finally, we deduce the form of the main theorem stated in the introduction.

\begin{proof}[Proof of \Cref{thm:cofree}]
Consider the composite of the following adjunctions (where the left adjoints are on top):

\[
\begin{tikzcd}
\Alg_{\T} \arrow[r,shift left=.9, "U_{\T}"] & \CAlgh_{E_0} \arrow[l, shift left=.9,"W_{\T}"]\arrow[r, shift left=.9, "-\otimes_{E_0}k"] &  \CAlgh_{k} \arrow[l, shift left=.9, "(E_0\to k)_*"] \arrow[r, shift left= .9, "(-)^{\sharp}"]& \Perf_k \arrow[l, shift left= .9, "(-)^{\natural}"].\\
\end{tikzcd}
\]

By \Cref{thm:ev-iso}, we conclude that the right adjoint is naturally equivalent to the functor $\pi_0E(-)\colon \Perf_k \to \Alg_{\T}$ in the statement of the theorem.  Given this, the fully faithfulness follows from the fact that the counit map
\[
(\pi_0 E(A)/\m)^{\sharp} \xrightarrow{\cong}  A
\]
is an isomorphism.

\end{proof}

We record here an immediate consequence of \Cref{thm:cofree}.

\begin{cor}\label{cor:cofree_inj}
Suppose that $f\colon \pi_0E(A) \to R$ is a map of $\T$-algebras such that the reduction of $f$ modulo $\m$ is injective.  Then $f$ itself is injective.
\end{cor}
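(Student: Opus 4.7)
The plan is to deduce this corollary from the fact (\Cref{thm:ev-iso}) that the map $\overline{\ev}_A\colon \pi_0E(A)\to W_{\T}(A)$ is an isomorphism, by comparing $f$ to the unit of the comonadic adjunction $U_{\T}\dashv W_{\T}$. The key observation is that this unit, being an isomorphism on $\pi_0E(A)$, is a finer invariant than mod $\m$ reduction and turns the hypothesis into the desired conclusion.

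Concretely, I would let $\bar{f}\colon A \to R/\m$ denote the reduction of $f$ modulo $\m$, which is an injective ring map by hypothesis. Then, using naturality of the unit $\ev\colon \id \to W_{\T}U_{\T}$ together with the factorization $\overline{\ev}_{(-)} = W_{\T}((-)/\m)\circ \ev_{(-)}$ built into the construction of $\overline{\ev}$, I would record the commutative square of $\T$-algebra maps
\[
\begin{tikzcd}
\pi_0E(A) \arrow[r, "\overline{\ev}_A"] \arrow[d, "f"'] & W_{\T}(A) \arrow[d, "W_{\T}(\bar{f})"] \\
R \arrow[r, "\overline{\ev}_R"] & W_{\T}(R/\m),
\end{tikzcd}
\]
where $\overline{\ev}_R := W_{\T}(R\twoheadrightarrow R/\m)\circ \ev_R$ is defined analogously to $\overline{\ev}_A$.

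To conclude, I would observe that since $W_{\T}$ is a right adjoint it preserves monomorphisms; combined with the fact that $U_{\T}\colon \Alg_{\T} \to \CAlgh_{E_0}$ is monadic, hence creates limits, this means that the injectivity of $\bar{f}$ implies the injectivity of $W_{\T}(\bar{f})$ on underlying rings. Given that $\overline{\ev}_A$ is an isomorphism by \Cref{thm:ev-iso}, a diagram chase then forces $\overline{\ev}_R \circ f$ to be injective, and hence $f$ itself is injective.

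The real content here is already contained in \Cref{thm:ev-iso}; the remaining argument is essentially formal bookkeeping, the main subtlety being to verify the commutativity of the square above as a diagram of $\T$-algebras (rather than merely of underlying rings). This follows from the naturality of $\ev$ together with the compatibility of $f$ with the quotient maps by $\m$.
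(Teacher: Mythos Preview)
Your proposal is correct and is essentially the same argument as the paper's: both set up the commutative square comparing $f$ with $W_{\T}(\bar f)$ via the unit maps, invoke the isomorphism $\overline{\ev}_A$ from \Cref{thm:ev-iso} (equivalently \Cref{thm:cofree}) along the top, and conclude by observing that $W_{\T}$ preserves injections. The only cosmetic difference is that the paper writes the horizontal maps as the two-step composite $\ev$ followed by $W_{\T}((-)/\m)$ and cites the explicit formula in \Cref{prop:Wstruct} for injectivity preservation, whereas you collapse these into $\overline{\ev}$ and argue categorically via right adjoints preserving monomorphisms.
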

\begin{proof}
Consider the square
\[
\begin{tikzcd}
\pi_0E(A) \arrow[r]\arrow[d] &  W_{\T}(\pi_0E(A)) \arrow[r]\arrow[d]& W_{\T}(\pi_0E(A)/\m)  \arrow[d] \\
R\arrow[r] & W_{\T}(R)\arrow[r]  & W_{\T}(R/\m) 
\end{tikzcd}
\]
where the left square comes from the units of the $(U_{\T},W_{\T})$-adjunction, and the right square comes from reducing modulo $\m$.  The top composite is an isomorphism by \Cref{thm:cofree} and the right vertical map is injective by hypothesis (since $W_{\T}$ preserves injectivity, say by the description in  \Cref{prop:Wstruct}), so the left vertical map is injective as well.
\end{proof}

\section{Detecting nilpotence}
\label{sec:nilpotence2}

In this section we develop the theory of nilpotence detecting objects in preparation for \Cref{sec:mapout}, where we prove the key results of the paper.

\subsection{Nilpotence detecting objects in locally rigid $\infty$-categories}

\subsubsection{Locally rigid $\infty$-categories}

\begin{dfn}
Let 
\[
\mdef{\Pr^{\rig}}\subset \CAlg(\Pr^{\st})
\]
be the (non-full) subcategory whose objects are compactly generated symmetric monoidal stable $\infty$-categories with the property that every compact object is dualizable, and whose morphisms are functors that preserve compact objects.  
\tqed
\end{dfn}

\begin{warn}
Note that for $\cC$ to belong to $\Prig$, we do not assume that the unit ${\one}_{\cC}$ is compact. In fact, our main examples do not satisfy this extra assumption.  
\tqed
\end{warn}

\begin{lem}\label{Prig_Conserv}
For $\cC \in \Prig$, we have 
\begin{enumerate}
    \item If $c\in \cC^{\omega}$ then $c^{\dual} \in \cC^{\omega} $.
    \item If $a \in \CC$ is such that $a\otimes c =0$ for all  $c\in \cC^{\omega}$,  then $a =0$.
\end{enumerate}
\begin{proof}
For (1) note that since $c^{\dual} \in \cC^{\dualz}$, we have that $c^{\dual}\otimes c \otimes c^{\dual} \in \cC^{\omega}$. Now since $c^{\dual}$ is retract of $c^{\dual}\otimes c \otimes c^{\dual}$ we have that  $c^{\dual} \in \cC^{\omega} $. For (2) let $a \in \CC$ such that for all  $c\in \cC^\omega$ we have $a\otimes c =0$. By (1), we have that $\Hom(c,a) = a\otimes c^{\dual} =0$ for all $c\in \cC^{\omega}$. Since $\cC$ is compactly generated, it follows that $a =0$. 
\end{proof}

\end{lem}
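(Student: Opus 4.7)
For part (1), my plan is to exploit the defining property of locally rigid categories: every compact object is dualizable, so $c \in \cC^\omega$ admits a dual $c^\vee$, which is itself dualizable. The key observation I would use is that tensoring with a dualizable object preserves compact objects. Indeed, for any dualizable $d$, the functor $-\otimes d$ has right adjoint $-\otimes d^\vee$, which is in turn a left adjoint (to $-\otimes d$) and hence preserves colimits; this forces $-\otimes d$ to preserve compacts. Applying this observation with $d = c^\vee$ to the compact object $c$ shows $c^\vee \otimes c \otimes c^\vee \in \cC^\omega$. I would then invoke the triangle identity for the duality data, which exhibits $c^\vee$ as a retract of $c^\vee \otimes c \otimes c^\vee$; since retracts of compact objects are compact, the conclusion follows.

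For part (2), I would combine compact generation of $\cC$ with the internal-hom formula for dualizable objects. Since $\cC$ is compactly generated, it suffices to show $\Map_\cC(c, a) \simeq 0$ for every $c \in \cC^\omega$. Each such $c$ is dualizable, so there is an equivalence
\[
\Map_\cC(c, a) \simeq \Map_\cC({\one}, a \otimes c^\vee).
\]
By part (1), $c^\vee$ is itself compact, so the hypothesis applied with the compact object $c^\vee$ in place of $c$ yields $a \otimes c^\vee = 0$. The right-hand side is therefore contractible, which is what we needed.

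The bulk of the content lies in (1); the only subtle point is that we cannot appeal directly to compactness of $c^\vee$ (which is what we are trying to prove), so one must instead leverage dualizability of $c^\vee$ to obtain the preservation-of-compacts statement. Once that is in hand, the retract argument is automatic, and (2) is then a direct bookkeeping exercise using the dual together with compact generation.
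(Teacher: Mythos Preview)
Your proof is correct and follows essentially the same route as the paper: for (1) you use that tensoring with a dualizable preserves compacts to see $c^\vee\otimes c\otimes c^\vee$ is compact and then invoke the triangle identity to exhibit $c^\vee$ as a retract, and for (2) you dualize to reduce $\Map(c,a)$ to $a\otimes c^\vee$ and apply (1) with compact generation---exactly as the paper does. The only cosmetic difference is that you spell out why tensoring with a dualizable preserves compacts, while the paper leaves this implicit.
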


\begin{exm}
  For every $0 \leq n < \infty$, we have that $\Sp_{T(n)}$ is generated by $L_{T(n)}V(n)$ for any non-zero type $n$-complex   (See for e.g,   \cite{heuts2021lie}) So in particular 
  \[
  \Sp_{T(n)} \in \Prig. 
  \] 
  \tqed
\end{exm}

\begin{lem}\label{lem:ModB_is_PRig}
Let $\cC \in \Prig$ and $B\in \CAlg(\cC)$. Consider the $\cC$-linear $\infty$-category $\Mod_B(\cC)$. We have that 
\begin{enumerate}
    \item If $X\in \cC^{\omega}$ and $Y\in \Mod_B(\cC)^{\dualz}$, then the tensor product $X\otimes Y$ (using the $\cC$-linear structure on $\Mod_B(\cC)$) is a compact $B$-module in $\cC$.  We denote the full subcategory of such objects by $\cC^{\omega}\otimes \Mod_B(\cC)^{\dualz} \subset \Mod_B(\cC)^{\omega}$.  
    \item There is an equivalence of categories \[\Mod_B(\cC)^{\omega} = \left(\cC^{\omega}\otimes \Mod_B(\cC)^{\dualz} \right)^{\idem}\]
    where $(-)^{\idem}$ denotes idempotent completion.  
    \item The category $\Mod_{B}(\cC)$ is in $\Prig.$
    \item The functor $\cC \xrightarrow{-\otimes B} \Mod_{B}(\cC)$ is a morphism in $\Prig.$
\end{enumerate}
\end{lem}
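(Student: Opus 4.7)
I would organize the proof around (1), since (2)--(4) all follow from it by standard manipulations. For (1), the plan is to factor
\[
X \otimes Y \;\simeq\; (X \otimes B) \otimes_B Y
\]
using the $\cC$-linear symmetric monoidal structure on $\Mod_B(\cC)$, and prove that each factor preserves compactness separately. For $-\otimes B \colon \cC \to \Mod_B(\cC)$, the forgetful right adjoint preserves all colimits (since $B$ is a commutative algebra, colimits in $\Mod_B(\cC)$ are created in $\cC$), so $-\otimes B$ carries compacts to compacts; in particular $X \otimes B \in \Mod_B(\cC)^{\omega}$. For $-\otimes_B Y$, dualizability of $Y$ over $B$ provides the adjunction $-\otimes_B Y \dashv -\otimes_B Y^{\dual}$ whose right adjoint preserves colimits, so $-\otimes_B Y$ also preserves compacts. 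Composing gives $X \otimes Y \in \Mod_B(\cC)^{\omega}$.

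Part (4) is then essentially the first half of (1): the functor $-\otimes B$ is symmetric monoidal and colimit-preserving by construction, and preserves compacts as just shown, so it is a morphism in $\Prig$. For (2), the inclusion $(\cC^{\omega} \otimes \Mod_B(\cC)^{\dualz})^{\idem} \subseteq \Mod_B(\cC)^{\omega}$ is immediate from (1) together with the fact that $\Mod_B(\cC)^{\omega}$ is idempotent complete. For the reverse inclusion, I would first argue that $\{X \otimes B : X \in \cC^{\omega}\}$ is a set of compact generators of $\Mod_B(\cC)$: the forgetful right adjoint is colimit-preserving and conservative (since limits and colimits are both created in $\cC$), so by adjunction any object orthogonal to every $X \otimes B$ has trivial underlying object in $\cC$, hence is zero. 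Therefore $\Mod_B(\cC)^{\omega}$ coincides with the thick subcategory generated by $\{X \otimes B\}_{X \in \cC^{\omega}}$, each of which is the special case $Y = B$ of an object in $\cC^{\omega} \otimes \Mod_B(\cC)^{\dualz}$, giving the desired containment after closing under retracts.

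For (3), compact generation of $\Mod_B(\cC)$ was obtained in the course of (2), so only dualizability of compacts remains. Given a compact $M$, by (2) it is a retract of some $X \otimes Y$ with $X \in \cC^{\omega} \subset \cC^{\dualz}$ and $Y \in \Mod_B(\cC)^{\dualz}$; since $-\otimes B$ is symmetric monoidal, $X \otimes B$ is dualizable in $\Mod_B(\cC)$, and so $X \otimes Y \simeq (X \otimes B) \otimes_B Y$ is a tensor of dualizable $B$-modules, hence dualizable. Dualizability is closed under retracts, so $M$ is dualizable, finishing the proof. The principal pitfall to avoid throughout is any argument that implicitly invokes compactness of ${\one}_{\cC}$, which is explicitly not assumed in the definition of $\Prig$; the adjoint-functor arguments above have been arranged precisely to sidestep this.
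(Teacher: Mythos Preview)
Your arguments for (1), (4), and for compact generation of $\Mod_B(\cC)$ are correct and essentially match the paper's. The gap is in the reverse inclusion of (2). You correctly show that $\Mod_B(\cC)^{\omega}$ equals the \emph{thick} subcategory generated by $\{X\otimes B : X\in\cC^{\omega}\}$, but then slide from ``thick subcategory generated by'' to ``idempotent completion of'' without justification. These are not the same: the class $\cC^{\omega}\otimes\Mod_B(\cC)^{\dualz}$ is not obviously closed under cofibers. A map $X_1\otimes B \to X_2\otimes B$ in $\Mod_B(\cC)$ corresponds by adjunction to a map $X_1 \to X_2\otimes B$ in $\cC$, which need not arise from a map in $\cC^{\omega}$, so there is no a priori reason its cofiber is a retract of a single object of the form $X\otimes Y$. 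Your deduction of (3) from (2) then inherits this gap as written, although your thick-subcategory argument in fact already proves (3) directly (dualizable objects are closed under thick subcategories), so (3) survives.

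The paper closes the gap in (2) by reversing your order: it proves (3) first via the thick-subcategory argument, and then uses the following trick for (2). Since $\cC$ is compactly generated, write $\one_{\cC} \simeq \colim_\alpha X_\alpha$ as a filtered colimit of compact objects; tensoring with $B$ gives $B \simeq \colim_\alpha X_\alpha\otimes B$. Now any compact $M$ is dualizable by (3), so tensoring over $B$ with $M$ preserves this colimit and yields $M \simeq \colim_\alpha X_\alpha\otimes M$ with each $X_\alpha\otimes M \in \cC^{\omega}\otimes\Mod_B(\cC)^{\dualz}$. Compactness of $M$ then forces it to be a retract of some $X_\alpha\otimes M$. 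This is exactly the step where the hypothesis that $\one_\cC$ need not be compact is handled, and it is what your remark about avoiding that pitfall should have pointed toward.
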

\begin{proof}
First, we note that the functor $-\otimes B$ preserves compact objects -- this follows from the fact that its right adjoint preserves colimits.
Thus, (4) follows from (3).   For (3), note first that by \cite[Corollary 4.2.3.7]{HA}, $\Mod_B(\cC)$ is presentable.
Since compact objects are preserved under tensoring with dualizable objects and retracts, we deduce statement (1) and that there is a natural inclusion
\[
\Mod_B(\cC)^{\omega} \supset \mathrm{Thick}\left(\cC^{\omega}\otimes \Mod_B(\cC)^{\dualz} \right).
\]
of the smallest thick subcategory containing $\cC^{\omega}\otimes \Mod_B(\cC)^{\dualz}.$  We claim that this inclusion is actually an equivalence.  To see this, we simply note that if $\{ X_{\alpha}\}_{\alpha \in I}$ are compact generators of $\cC$ (which is compactly generated by hypothesis), then $\{ X_{\alpha} \otimes B\}_{\alpha \in I}$ will be compact generators of $\Mod_B(\cC)$.  Since any object in $\cC^{\omega}\otimes \Mod_B(\cC)^{\dualz}$ is dualizable and dualizable objects are closed under taking thick subcategories, we conclude that $\Mod_B(\cC) \in \Prig$, which is (3).  

Finally, we need (2), which amounts to showing that every object
$M \in \Mod_{B}(\cC)^{\omega}$ is a retract of an object in 
$\cC^{\omega} \otimes \Mod_{B}(\cC)^{\dualz}$. 
Since $M$ is compact, it is enough to show that it can written down as a filtered colimit of objects in $\cC^{\omega} \otimes \Mod_{B}(\cC)^{\dualz}$.  By (3), it is in fact enough to show that any $M \in \Mod_{B}(\cC)^{\dualz}$ can be written down as a filtered colimit of objects in $\cC^{\omega} \otimes \Mod_{B}(\cC)^{\dualz}$.  
Since this property is clearly preserved by tensoring with an object in
$\Mod_{B}(\cC)^{\dualz}$, we are reduced to the case $M=B$.
Indeed, since $\cC\in \Prig$, ${\one}_{\cC}$ is a filtered colimit of objects from $\cC^{\omega}$. By tensoring this colimit with $B$, we get (2).

\end{proof}

\subsubsection{Nilpotence, compactness, and dualizability}

\begin{dfn}
Let $\cC \in \Prig$ and let $f \colon a \to b$ be a map in $\cC$, let $e \in \cC$ be an object. We say that  \deff{$f$ is nilpotent at $e$} if there exists some $m\in \mathbb{N}$ such that 
\[f^{\otimes m} \otimes e \colon a^{\otimes m}\otimes e \to b^{\otimes m} \otimes e \] is null. Moreover 
\begin{itemize}

    \item We say that $f$ is \deff{nilpotent} if it is nilpotent at ${\one}_{\cC}$.
    \item  We say that $f$ is \deff{locally nilpotent} if it is nilpotent at $c$ for every compact object $c \in \cC^{\omega}$. \tqed
\end{itemize}
\end{dfn}

\begin{exm} \label{exm:vi-top-nil}
  As a consequence of the fact that compact $T(n)$-local objects are type $n$,
  the classes $v_i$, for $i<n$, are locally nilpotent on every $T(n)$-local $E(k)$-algebra.
  \tqed
\end{exm}

\begin{lem}\label{lem:nilpotent_vs_lnilpotent}
Let $\cC \in \Prig$ and let $f \colon c \to a$ be a map in $\cC$ with $c \in \cC^{\omega}$.  Then $f$ is nilpotent if and only if it is locally nilpotent.
\end{lem}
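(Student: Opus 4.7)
The plan is as follows. The direction ``nilpotent implies locally nilpotent'' is immediate: if $f^{\otimes m} = 0$ then $f^{\otimes m} \otimes e = 0$ for any object $e$, in particular for every compact $e$. So the content of the lemma lies in the converse.

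For the converse, the key observation is that, since $\cC \in \Prig$, the compactness of $c$ forces the compactness of its dual $c^{\dual}$ by \Cref{Prig_Conserv}(1). Hence local nilpotence may be applied to the specific compact object $e = c^{\dual}$, producing an integer $m$ with $f^{\otimes m} \otimes c^{\dual} = 0$. Tensoring with $c$ yields $f^{\otimes m} \otimes c^{\dual} \otimes c = 0$ as a map $c^{\otimes m} \otimes c^{\dual} \otimes c \to a^{\otimes m} \otimes c^{\dual} \otimes c$.

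To finish, I would exhibit $f^{\otimes m}$ as a factor of this null map, via the zig-zag identity $(c \otimes \mathrm{ev}) \circ (\mathrm{coev} \otimes c) = \mathrm{id}_c$ applied to the last tensor factor of the source. Specifically, I claim there is a commutative diagram
\[
c^{\otimes m} \xrightarrow{\;c^{\otimes m-1} \otimes (\mathrm{coev} \otimes c)\;} c^{\otimes m} \otimes c^{\dual} \otimes c \xrightarrow{\;f^{\otimes m} \otimes c^{\dual} \otimes c\;} a^{\otimes m} \otimes c^{\dual} \otimes c \xrightarrow{\;a^{\otimes m} \otimes \mathrm{ev}\;} a^{\otimes m}
\]
whose total composite equals $f^{\otimes m}$. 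Verifying this is a short naturality calculation: $(a \otimes \mathrm{ev}) \circ (f \otimes c^{\dual} \otimes c) = f \circ (c \otimes \mathrm{ev})$ by functoriality of $\otimes$, after which the zig-zag identity collapses the last three maps on the final factor to $f$. Since the middle arrow is null by the previous step, the composite is null, hence $f^{\otimes m} = 0$.

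The only subtle point is that $a$ is not assumed dualizable, so one cannot run a zig-zag on the $a$-side; the trick is that the entire factorization lives on the $c$-side, using only the unit $\mathrm{coev}\colon \one \to c \otimes c^{\dual}$ and counit $\mathrm{ev}\colon c^{\dual} \otimes c \to \one$ of the duality $c \dashv c^{\dual}$, together with the passive tensor factor $a^{\otimes m}$. Beyond this, the argument is purely formal.
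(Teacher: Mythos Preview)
Your proof is correct and follows essentially the same approach as the paper: apply local nilpotence at the compact object $c^{\dual}$ (using \Cref{Prig_Conserv}(1)) and then recover $f^{\otimes m}$ via the zig-zag identities. The paper phrases the final step in the language of mates, whereas you spell out the factorization explicitly, but the content is identical.
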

\begin{proof}
One direction is clear. For the other, let $f\colon c \to a$ be a locally nilpotent map. In particular $f$ is nilpotent at $c^{\dual}$, so by taking the mate we get that the map $c \otimes c^{\dual} \otimes c^{\otimes N} \to a^{\otimes N}$ is null for some $N$. Precomposing with the map induced by ${\one}_{\cC} \to c^{\dual}\otimes c$, the result then follows from the zig-zag identities.  
\end{proof}

\begin{lem}\label{lem:niloptent_mate}
Let $\cC \in \Prig$ and $a,b,e\in \cC$, $d\in \cC^{\dualz}$,  and $f\colon d\otimes a \to b$ be a map.
Then:
\begin{enumerate}
    \item If $f$ is nilpotent at $e$, then  any retract of $f$ is nilpotent at $e$.
    \item $f$ is nilpotent at $e$ if and only if the mate $\mate{f}:a\to b\otimes d^{\dual}$ is nilpotent at $e$.
\end{enumerate}
\end{lem}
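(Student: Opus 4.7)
For (1), the argument is entirely formal: tensoring preserves retracts, and retracts of null maps are null. More explicitly, a retract of $f$ in the arrow category of $\cC$ is a map $g\colon x \to y$ together with a commutative diagram
\[
\begin{tikzcd}
x \arrow[r, "i"]\arrow[d, "g"] & d\otimes a \arrow[d,"f"]\arrow[r, "p"] & x\arrow[d, "g"] \\
y \arrow[r, "i'"] & b \arrow[r, "p'"] & y
\end{tikzcd}
\]
in which $p\circ i=\id_x$ and $p'\circ i'=\id_y$. Taking the $m$-fold tensor product of the whole diagram and then tensoring with $e$ exhibits $g^{\otimes m}\otimes e$ as a retract of $f^{\otimes m}\otimes e$. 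If the latter is null for some $m$, so is the former.

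For (2), the main observation is that mating is compatible with tensor powers and with tensoring by an object. Unpacking, the mate $\mate{f}\colon a \to b\otimes d^{\dual}$ is the composite
\[ a \xrightarrow{\eta \otimes a} d^{\dual}\otimes d\otimes a \xrightarrow{d^{\dual}\otimes f} d^{\dual}\otimes b \cong b\otimes d^{\dual}, \]
and a direct inspection shows that $(\mate{f})^{\otimes m}\otimes e$ equals
\[ a^{\otimes m}\otimes e \xrightarrow{\eta^{\otimes m}\otimes \id} (d^{\dual})^{\otimes m}\otimes d^{\otimes m}\otimes a^{\otimes m}\otimes e \xrightarrow{\id\otimes (f^{\otimes m}\otimes e)} (d^{\dual})^{\otimes m}\otimes b^{\otimes m}\otimes e \cong b^{\otimes m}\otimes (d^{\dual})^{\otimes m}\otimes e, \]
after rebracketing in the symmetric monoidal structure. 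Thus if $f^{\otimes m}\otimes e$ is null, so is $(\mate{f})^{\otimes m}\otimes e$. The reverse implication is symmetric: the inverse mate is obtained by precomposing with $d\otimes -$ and postcomposing with $\epsilon\otimes b$, so if $(\mate{f})^{\otimes m}\otimes e$ is null then $f^{\otimes m}\otimes e$ factors through it and is null as well.

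There is no real obstacle here; the statements (1) and (2) are formal consequences of the symmetric monoidal structure, and the only bookkeeping required is matching $(\mate{f})^{\otimes m}$ with the mate of $f^{\otimes m}$ under the duality between $d^{\otimes m}$ and $(d^{\dual})^{\otimes m}$, which follows from functoriality of mates under tensor products of dualizable objects.
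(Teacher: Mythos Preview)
Your proof is correct and follows essentially the same approach as the paper: for (1) you use that retracts of null maps are null, and for (2) you unpack explicitly what the paper states more tersely, namely that $(\mate{f})^{\otimes m}$ is the mate of $f^{\otimes m}$ with respect to $d^{\otimes m}$ and that mates of null maps are null. Your explicit factorization is just a hands-on verification of this compatibility.
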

\begin{proof}
(1) follows from the fact that a retract of a null map is null.
(2) follows from the fact that the mate of $f^{\otimes N}$ with respect to the dualizable object $d^{\otimes N}$ is
$(\mate{f})^{\otimes N}$, and the mate of a null map is null.
\end{proof}

\begin{lem}\label{lem:peel_dualz}
Let $\cC \in \Prig$, $d\in \cC$ be an object, and  $f\colon a\to b \in \cC$ be a map.
If $f$ is nilpotent at $d$, then $f\otimes d \colon a\otimes d \to b\otimes d$ is nilpotent. If $d\in \cC^{\dualz}$, the converse also holds.
\end{lem}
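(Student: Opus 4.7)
The plan is to prove the two implications separately, with the first being essentially formal and the second using dualizability of $d$ to extract $d$ itself as a retract of $d^{\otimes N} \otimes (d^{\dual})^{\otimes(N-1)}$.

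For the forward direction, I will suppose $f$ is nilpotent at $d$, so $f^{\otimes m} \otimes d$ is null for some $m \in \mathbb{N}$. Then I observe that
\[ (f \otimes d)^{\otimes m} \;\cong\; f^{\otimes m} \otimes d^{\otimes m} \;\cong\; \bigl(f^{\otimes m} \otimes d\bigr) \otimes d^{\otimes (m-1)}, \]
and tensoring a null map with any object yields a null map, so $(f \otimes d)^{\otimes m} = 0$. This says exactly that $f \otimes d$ is nilpotent (at $\one_{\cC}$).

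For the reverse direction, assume $d \in \cC^{\dualz}$ and that $f \otimes d$ is nilpotent, so $(f\otimes d)^{\otimes N} \cong f^{\otimes N}\otimes d^{\otimes N}$ is null for some $N$. Tensoring with $(d^{\dual})^{\otimes(N-1)}$ preserves nullity, and the key observation is that the iterated zig-zag identity exhibits $d$ as a retract of $d^{\otimes N} \otimes (d^{\dual})^{\otimes(N-1)}$: the unit maps $\eta\colon \one \to d \otimes d^{\dual}$ applied $N-1$ times give a section $d \to d \otimes (d \otimes d^{\dual})^{\otimes(N-1)}$, and the counits $\epsilon$ applied $N-1$ times give a retraction back to $d$. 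Consequently $f^{\otimes N} \otimes d$ is a retract of the null map $f^{\otimes N} \otimes d^{\otimes N} \otimes (d^{\dual})^{\otimes(N-1)}$, hence itself null, proving that $f$ is nilpotent at $d$. (Alternatively, one can package the retract argument by invoking \Cref{lem:niloptent_mate}(1) applied to a suitable mate.)

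The argument is genuinely routine, and I do not anticipate a real obstacle: the forward direction needs no hypothesis on $d$, while the reverse direction uses dualizability in an essentially minimal way, just to peel off the extra copies of $d$ via zig-zag. The only mild care required is bookkeeping of tensor factors (and using the symmetric monoidal structure to reorder them freely) when writing down the retract $d \hookrightarrow d^{\otimes N} \otimes (d^{\dual})^{\otimes(N-1)} \twoheadrightarrow d$.
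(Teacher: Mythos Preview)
Your proposal is correct and takes essentially the same approach as the paper. The paper packages the argument slightly differently by introducing the full subcategory $\cC_m$ of objects $x$ with $x \otimes f^{\otimes m}$ null (closed under retracts and tensoring) and then peels off one copy of $d$ at a time via the retract $d \hookrightarrow d^{\otimes 2} \otimes d^{\dual}$, whereas you peel off all $N-1$ copies at once; but this is a cosmetic difference, not a substantive one.
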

\begin{proof}
  For $m \geq 1$, denote by $\cC_m$ the full subcategory of $\cC$ spanned by objects $x$ such that $x\otimes f^{\otimes m}$ is null.
  It is clear that $\cC_m$ is closed under retracts and tensoring with objects in $\cC$.
  Now if $f$ is nilpotent at $d$, then $d\in \cC_m$ and thus also $d^{\otimes m}\in  \cC_m$, so $f\otimes d$ is nilpotent. On the other hand, if $f\otimes d$ is nilpotent, then $d^{\otimes m} \in \cC_m$. Now if $d \in \cC^{\dualz}$, since $d$ is a retract of $d^{\otimes 2}\otimes d^{\dual}$, we deduce  that  $d \in \cC_m$. 
  \end{proof}


\subsubsection{Nilpotence detecting weak rings}

\begin{dfn}
  Let $\cC \in \Prig$, $T \in \cC$ be an  object  in $\cC$, and  $c\in \cC^{\omega}$.  We say that $T$ \deff{detects nilpotence at $c$} if for every map  $f\colon  c \to a$ in $\cC$, $f$ is nilpotent at  $T$  if and only if it is nilpotent.  We say that $T$ \deff{detects nilpotence} if it detects nilpotence at every $c\in \cC^{\omega}$.
  \tqed
\end{dfn}

We will be most interested in the case where the object $T$ is a \emph{weak ring} in the following sense:

\begin{dfn}\label{dfn:Weak_Ring}
    Let $\cC \in \Prig$.
	An object $(u_T\colon {\one}_{\cC} \to T ) \in \cC_{{\one}_{\cC}/}$ is called a \deff{weak ring}
	in $\cC$ if there exists a ``multiplication'' map $\mu{\colon} T\otimes T\to T$ such that the
	composition 
	\[T\oto{u_T\otimes T}T\otimes T\oto{\mu_T}T\] 
	is homotopic to the identity. That is, if $u_T\otimes T$ admits a retract\footnote{We differ slightly from most references (like for e.g \cite[Definition 5.1.4]{TeleAmbi} or \cite[Definition 4.8]{HovStrick}) by not taking $\mu$ to be part of the data of $T$.}.
	\tqed
\end{dfn}

\begin{lem}\label{lem:peel_weak_ring}
Let $\cC \in \Prig$,  $T\in \cC$ a  weak ring,  and $f\colon a \to b$ a map in $\cC$. Then the following are equivalent:
\begin{enumerate}
    \item $f \colon a \to b$ is nilpotent at $T$.
    \item $g := f\otimes T \colon   a\otimes T \to b \otimes T$ is nilpotent.
    \item $h := f\otimes u_{T} \colon  a\to b\otimes T$ is nilpotent. 
\end{enumerate}
\end{lem}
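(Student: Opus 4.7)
The plan is to exploit two consequences of the weak ring identity $\mu\circ(u_T\otimes T)\simeq \mathrm{id}_T$: first, that $T$ is a retract of $T^{\otimes m}$ for every $m\geq 1$, obtained iteratively with section $u_T^{\otimes(m-1)}\otimes T$ and retraction a suitable composite of $\mu$'s; and second, that there is an iterated multiplication $\mu^{(m)}\colon T^{\otimes m}\to T$ satisfying $\mu^{(m)}\circ u_T^{\otimes m}\simeq u_T$, which is proved by a simple induction on $m$ using the weak ring identity at each step.

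I would first establish (1)$\Leftrightarrow$(2). For (1)$\Rightarrow$(2), if $f^{\otimes m}\otimes T$ is null then so is $(f\otimes T)^{\otimes m}\simeq f^{\otimes m}\otimes T^{\otimes m}\simeq (f^{\otimes m}\otimes T)\otimes T^{\otimes(m-1)}$. Conversely, tensoring the retract $T\to T^{\otimes m}\to T$ with $f^{\otimes m}$ realizes $f^{\otimes m}\otimes T$ as a retract of $f^{\otimes m}\otimes T^{\otimes m}$, so if the latter is null then so is the former by \Cref{lem:niloptent_mate}(1) (or simply because retracts of null maps are null).

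Next I would prove (1)$\Leftrightarrow$(3). For (1)$\Rightarrow$(3), precompose nullity of $f^{\otimes m}\otimes T$ with $a^{\otimes m}\otimes u_T$ to conclude $f^{\otimes m}\otimes u_T$ is null, and then
\[ h^{\otimes m}\simeq f^{\otimes m}\otimes u_T^{\otimes m}\simeq (f^{\otimes m}\otimes u_T)\otimes u_T^{\otimes(m-1)} \]
is null as a tensor product with a null factor. For (3)$\Rightarrow$(1), postcompose nullity of $h^{\otimes m}\simeq f^{\otimes m}\otimes u_T^{\otimes m}$ with $b^{\otimes m}\otimes \mu^{(m)}$ to obtain nullity of $f^{\otimes m}\otimes(\mu^{(m)}\circ u_T^{\otimes m})\simeq f^{\otimes m}\otimes u_T$; then the factorization
\[ f^{\otimes m}\otimes T \;\simeq\; (b^{\otimes m}\otimes \mu)\circ\bigl(f^{\otimes m}\otimes u_T\otimes T\bigr) \]
(which uses $\mu\circ(u_T\otimes T)\simeq \mathrm{id}_T$) shows $f^{\otimes m}\otimes T$ is null, giving (1).

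There is no real obstacle: the argument is a formal diagram chase using only the retract/section data of a weak ring, together with the fact that null maps are closed under composition, tensoring with arbitrary maps, and retracts. All the work is in setting up the correct pre- and post-composition with $u_T$ and $\mu^{(m)}$ to move between $T$ and $u_T$ in the various tensor factors.
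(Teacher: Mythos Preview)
Your proof is correct and uses the same ingredients as the paper: the retract $T\hookrightarrow T^{\otimes m}$ for (1)$\Leftrightarrow$(2), and the weak ring identity $\mu\circ(u_T\otimes T)\simeq\mathrm{id}_T$ for the remaining equivalence. The only difference is organizational: the paper proves (2)$\Leftrightarrow$(3) rather than (1)$\Leftrightarrow$(3), which lets it avoid constructing the iterated multiplication $\mu^{(m)}$ and verifying $\mu^{(m)}\circ u_T^{\otimes m}\simeq u_T$. For (2)$\Rightarrow$(3) it simply observes $h=g\circ(a\otimes u_T)$, so $h^{\otimes m}$ factors through $g^{\otimes m}$; for (3)$\Rightarrow$(2) it notes that $h$ nilpotent implies $h\otimes T$ nilpotent, and then $g=(b\otimes\mu)\circ(h\otimes T)$ is a postcomposition of a nilpotent map. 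Both routes are equally elementary, but the paper's is marginally shorter since it uses $\mu$ only once rather than building $\mu^{(m)}$.
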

\begin{proof}
We have (1) implies (2) by \Cref{lem:peel_dualz}, so we only need (2) implies (1).  

Assume that $g= f\otimes T$ is nilpotent.  Then $T^{\otimes m} \otimes f^{\otimes m}$ is null.   
  Since $T$ is a weak ring, then $T$ is a retract of $T^{\otimes 2}$, so we deduce  that  $T \otimes f^{\otimes m}$ is null. 
  
  We now show that (2) and (3) are equivalent. (2) implies (3) since $h = g\circ ( a\otimes u_T)$.  For the other direction, note that if $h = f\otimes u_{T}$ is nilpotent, then so is $f\otimes u_{T}\otimes T \colon a\otimes T \to b\otimes T^{\otimes 2}$. By post-composing with $b\otimes \mu_T$, we conclude that $g$ is nilpotent.
\end{proof}

\begin{lem}\label{lem:DN_tensor} 
  Let $\cC \in \Prig$ and let $T_1,T_2$ be two weak rings in $\cC$. If $T_1$ and $T_2$  detect nilpotence,  then their tensor product $T_1 \otimes T_2$ detects nilpotence as well.
\end{lem}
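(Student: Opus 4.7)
The plan is to show that for any $c \in \cC^{\omega}$ and any $f \colon c \to a$ with $f$ nilpotent at $T_1 \otimes T_2$, the map $f$ is itself nilpotent. Fix $m$ with $f^{\otimes m} \otimes T_1 \otimes T_2 = 0$. The naive attempt is to use \Cref{lem:peel_weak_ring} to translate the hypothesis into the statement that $g = f \otimes T_1$ is nilpotent at $T_2$, and then invoke $T_2$'s detection property. The main obstacle is that this $g$ has source $c \otimes T_1$, which is typically not compact, so $T_2$'s hypothesized detection (only at compact objects) does not directly apply.

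The trick to bypass this obstacle is to use the auxiliary map
\[ h = f \otimes u_{T_1} \colon c \longrightarrow a \otimes T_1, \]
whose source is still the compact object $c$. I would first check that $h$ is nilpotent at $T_2$. Indeed, $h^{\otimes m} = f^{\otimes m} \otimes u_{T_1}^{\otimes m}$, so $h^{\otimes m} \otimes T_2$ factors as
\[ c^{\otimes m} \otimes T_2 \xrightarrow{\,c^{\otimes m} \otimes u_{T_1}^{\otimes m} \otimes T_2\,} c^{\otimes m} \otimes T_1^{\otimes m} \otimes T_2 \xrightarrow{\,f^{\otimes m} \otimes T_1^{\otimes m} \otimes T_2\,} a^{\otimes m} \otimes T_1^{\otimes m} \otimes T_2, \]
and the second arrow is null since $f^{\otimes m} \otimes T_1 \otimes T_2 = 0$ tensored with $T_1^{\otimes m-1}$ yields $f^{\otimes m} \otimes T_1^{\otimes m} \otimes T_2 = 0$. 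Thus $h$ is nilpotent at $T_2$ with source a compact object, and the hypothesis that $T_2$ detects nilpotence at $c$ produces an honest nilpotence of $h$.

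To finish, I would unwind via \Cref{lem:peel_weak_ring} applied to the weak ring $T_1$ and the map $f$: the nilpotence of $h = f \otimes u_{T_1}$ is condition (3), which is equivalent to (1), namely $f$ being nilpotent at $T_1$. Since $c$ is compact and $T_1$ detects nilpotence, we conclude that $f$ is nilpotent. Note that while the argument formally uses the weak ring structure on $T_1$ (through \Cref{lem:peel_weak_ring}), the role of $T_2$ is purely as a nilpotence detector on the compact object $c$; the whole symmetry between $T_1$ and $T_2$ is broken by the choice of which unit to smash in, but the conclusion is of course symmetric.
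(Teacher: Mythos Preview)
Your proof is correct and follows essentially the same route as the paper's: introduce $h = f \otimes u_{T_1}$ (which still has compact source $c$), show it is nilpotent at $T_2$, apply $T_2$'s detection, then use \Cref{lem:peel_weak_ring} to get $f$ nilpotent at $T_1$ and apply $T_1$'s detection. The only cosmetic difference is that the paper first notes $T_1 \otimes T_2$ is itself a weak ring with unit $u_{T_1} \otimes u_{T_2}$ and invokes \Cref{lem:peel_weak_ring} twice to reach ``$h$ is nilpotent at $T_2$'', whereas you verify that step by a direct factorization; both arrive at the same point.
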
 
\begin{proof}
Note that $T_1\otimes T_2$ is itself a weak ring with $u_{T_1\otimes T_2} = u_{T_1}\otimes u_{T_2}$.
Let $g \colon c \to a$ be a map with a compact source. 
If $g$ is nilpotent at $T_1\otimes T_2$,
then by \Cref{lem:peel_weak_ring}, $g\otimes u_{T_1} \otimes u_{T_2}$ is nilpotent. Thus, by \Cref{lem:peel_weak_ring}, $g\otimes u_{T_1} $ is nilpotent at $T_2$. By the assumption that $T_2$ detects nilpotence, it follows that $g\otimes u_{T_1} $ is nilpotent.  Using \Cref{lem:peel_weak_ring} again, we conclude that
$g$ is nilpotent at $T_1$ and thus nilpotent since $T_1$ was assumed to detect nilpotence.
\end{proof}

\begin{lem} \label{lem:topnil-equiv}
  Let $\cC \in \Prig$ and $T$ be an object of $\cC$. Consider the following variations on the condition that $T$ detects nilpotence.
  \begin{enumerate}
  \item Given a map $h\colon {\one} \to a$, if $T \otimes h$ is null, then $h$ is locally nilpotent.
   \item Given a map $g\colon d\to a$ with $d\in \cC^{\dualz}$, if $T \otimes g$ is null, then $g$ is locally nilpotent.
  \item $T$ detects nilpotence.
  \item Given a map $f \in \cC^{\omega}$, if $T \otimes f$ is null, then $f$ is nilpotent.
 \end{enumerate}
 Then we have the implications 
 \[(1) \Rightarrow (2)  \Rightarrow (3)  \Rightarrow (4).  \]
 Moreover, if $T$ is a weak ring, (4) implies (1) and thus (1)-(4) are equivalent.   
\end{lem}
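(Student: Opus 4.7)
The chain of implications $(1)\Rightarrow(2)\Rightarrow(3)\Rightarrow(4)$ proceeds by successively generalizing the class of maps to which the hypothesis applies, using \Cref{lem:nilpotent_vs_lnilpotent}, \Cref{lem:niloptent_mate}, and \Cref{lem:peel_dualz} as the main tools. For $(1)\Rightarrow(2)$, given $g\colon d\to a$ with $d$ dualizable and $T\otimes g=0$, I pass to the mate $\mate{g}\colon {\one}\to a\otimes d^{\dual}$; since the tensor powers of $\mate{g}$ are the mates of the tensor powers of $g$, \Cref{lem:niloptent_mate}(2) shows that $T\otimes \mate{g}=0$ and that local nilpotence transfers between the two, so (1) applied to $\mate{g}$ gives the desired local nilpotence of $g$.

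For $(2)\Rightarrow(3)$, suppose $f\colon c\to a$ with $c\in\cC^{\omega}$ is nilpotent at $T$; then $T\otimes f^{\otimes m}=0$ for some $m$, and $f^{\otimes m}\colon c^{\otimes m}\to a^{\otimes m}$ has dualizable source, so (2) gives that it is locally nilpotent. \Cref{lem:nilpotent_vs_lnilpotent} applied with the compact source $c^{\otimes m}$ upgrades this to nilpotence, so $f^{\otimes mN}=0$ for some $N$ and $f$ itself is nilpotent. The implication $(3)\Rightarrow(4)$ is immediate: the condition $T\otimes f$ null is just the $m=1$ case of $f$ being nilpotent at $T$.

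The substantive step is $(4)\Rightarrow(1)$, where the weak ring structure becomes essential. Given $h\colon{\one}\to a$ with $T\otimes h=0$ and $c\in\cC^{\omega}$, I will show $c\otimes h$ is nilpotent, whence $h$ is nilpotent at $c$ by \Cref{lem:peel_dualz}. Writing $c\otimes a=\colim_j a_j$ as a filtered colimit of compact $a_j$ and using compactness of $c$, the map $c\otimes h$ factors as $c\xrightarrow{f_0}a_0\xrightarrow{\iota_0}c\otimes a$ for some compact $a_0$. The vanishing $T\otimes c\otimes h=0$ makes $T\otimes f_0$ factor through $T\otimes\mathrm{fib}(\iota_0)=\colim_j T\otimes K_j$ with each $K_j=\mathrm{fib}(a_0\to a_j)$ compact. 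The crucial trick is to precompose with $u_T\otimes c\colon c\to T\otimes c$: the composite $u_T\otimes f_0\colon c\to T\otimes a_0$ now has compact source $c$ and factors through $\colim_j T\otimes K_j$, hence through some $T\otimes K_{j_0}$. Postcomposing with $T\otimes\iota_{0,j_0}$ and using that the composite $K_{j_0}\to a_0\to a_{j_0}$ is null gives $u_T\otimes f'=0$ where $f':=\iota_{0,j_0}\circ f_0\colon c\to a_{j_0}$ is a map between compact objects.

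From here \Cref{lem:peel_weak_ring} converts $u_T\otimes f'=0$ into the statement that $T\otimes f'$ is nilpotent, so $T^{\otimes M}\otimes f'^{\otimes M}=0$ for some $M$; iterating the weak ring structure makes $T$ a retract of $T^{\otimes M}$, so $T\otimes f'^{\otimes M}=0$ as well. Applying (4) to the compact-to-compact map $f'^{\otimes M}$ yields $f'^{\otimes MN}=0$ for some $N$, hence $(c\otimes h)^{\otimes MN}=0$ because $c\otimes h$ factors as the colimit inclusion applied to $f'$. The main obstacle throughout is that $T\otimes c$ is generally not compact, which prevents directly extracting a finite-stage null homotopy from the map $T\otimes c\to T\otimes c\otimes a=\colim_j T\otimes a_j$; the weak ring structure is exactly what lets us sidestep this by precomposing with $u_T\otimes c$ and exploiting compactness of $c$ to obtain the finite-stage null homotopy on which the whole argument depends.
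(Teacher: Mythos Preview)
Your proof is correct and follows essentially the same approach as the paper. In $(4)\Rightarrow(1)$ you route through the fibers $K_j=\mathrm{fib}(a_0\to a_j)$ to extract the finite-stage null homotopy, whereas the paper argues this directly from compactness of $c$ mapping into the filtered colimit $c\otimes a\otimes T=\colim_\alpha c\otimes a_\alpha\otimes T$; otherwise the arguments (mates for $(1)\Rightarrow(2)$, \Cref{lem:nilpotent_vs_lnilpotent} for $(2)\Rightarrow(3)$, and \Cref{lem:peel_weak_ring} plus $(4)$ for the final step) are identical.
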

\begin{proof}  
  It is clear that (3) implies (4).

  To show that (2) implies (3), let $g\colon c \to a$ be a map in $\cC$ with $c\in \cC^{\omega}$.
  Assume that $g^{\otimes N}\otimes T$ is null.  Since $c \in \cC^{\omega} \subset \cC^{\dualz}$,  (2) implies that $g$ is locally nilpotent and thus by \Cref{lem:nilpotent_vs_lnilpotent}, $g$ is nilpotent.
  
  To show that (1) implies (2), let 
  $g\colon d \to a$ be a map with $d\in \cC^{\dualz}$ and assume that $T\otimes g$ is null. Let $\mate{g}\colon {\one} \to d^{\dual}\otimes a$ the mate of $g$. We get that 
  $\mate{g}\otimes T$ is null, and thus by assumption $\mate{g}$ is locally nilpotent. Hence, by \Cref{lem:niloptent_mate}(2), so is $g$.

  The heart of this lemma lies in showing that (4) implies (1). Let $h\colon {\one} \to a$ be a map with $T\otimes h$ null, and let $c \in \cC^{\omega}$.
  We need to show that $h$ is nilpotent at $c$.   Write $a$ as a  colimit of compact objects $\{a_\alpha\}_{\alpha \in A}$ for some filtered poset $A$.
  Since $c$ is compact, the map $c \oto{c\otimes h} c\otimes a$  factors as 
  \[c \oto{h_{\beta}} c\otimes a_{\beta} \to c \otimes a.\]
  
  Consider the composite
  \[
  c \oto{c\otimes u_T} c\otimes T \oto{ h_{\beta} \otimes T} c\otimes a_{\beta} \otimes T  \to c\otimes a \otimes T . \]
  Since $T\otimes h$ is null, the composition $c \to c\otimes a \otimes T $ is null. Since $c$ is compact, the nullhomotopy of the map $c\to c\otimes a_{\beta} \otimes T$ occurs at some stage $\gamma \geq \beta$; i.e., the composite
  \[c \oto{c\otimes u_T} c\otimes T \oto{h_{\gamma}\otimes T} c\otimes a_{\gamma}\otimes T \]
   is null.
   
   The composite above is $h_{\gamma}\otimes u_T$ so we deduce by \Cref{lem:peel_weak_ring} that $h_{\gamma}$ is nilpotent at $T$.
   Let $N$ be such that $h_{\gamma}^{\otimes N}\otimes T$ is null. 
   Since the source and target of $h_{\gamma}^{\otimes N}$ are both compact, by our assumption, $h_{\gamma}^{\otimes N}$ is nilpotent and thus $h_{\gamma}$ is also nilpotent. Since $c\otimes h$ factors through $h_{\gamma}$,  we get that $c\otimes h$ is nilpotent. Thus by \Cref{lem:peel_dualz}, $h$ is nilpotent at $c$.
  
\end{proof}

In particular, by condition (1) of \Cref{lem:topnil-equiv} with $a= \Sigma^*{\one}_{\cC}$ a shift of the unit, we have the following:

\begin{cor}\label{cor:finding-top-nil}
For $\cC \in \Prig$, if $T$ is a weak ring which detects nilpotence and 
  $x \in \pi_*{\one}_{\cC}$ maps to zero in $\pi_*T$, then $x$ is locally nilpotent.
\end{cor}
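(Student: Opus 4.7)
My plan is to deduce this directly from Lemma~\ref{lem:topnil-equiv} using the explicit hint in the statement: represent $x$ as a map from the unit and verify condition (1) of that lemma. Concretely, an element $x \in \pi_n {\one}_{\cC}$ corresponds (via stability) to a map $h\colon {\one}_{\cC} \to \Sigma^{-n}{\one}_{\cC}$, and the hypothesis that $x$ maps to zero in $\pi_n T$ translates to the statement that the composite
\[ {\one}_{\cC} \xrightarrow{h} \Sigma^{-n}{\one}_{\cC} \xrightarrow{\Sigma^{-n}u_T} \Sigma^{-n}T \]
is null, i.e.\ $h \otimes u_T \colon {\one}_{\cC} \to \Sigma^{-n}T$ vanishes.

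The key step is upgrading this vanishing to the stronger statement that $T \otimes h \colon T \to \Sigma^{-n}T$ is itself null (not merely nilpotent). This is where the weak ring structure on $T$ enters. Tensoring the null map $h \otimes u_T$ with $T$ on the right gives a null map ${\one}_{\cC} \otimes T \to \Sigma^{-n}{\one}_{\cC} \otimes T \otimes T$, and post-composing with $\Sigma^{-n}{\one}_{\cC} \otimes \mu_T$ yields a null map $T \to \Sigma^{-n}T$ that can be identified as
\[ h \otimes \bigl(\mu_T \circ (u_T \otimes T)\bigr) \;=\; h \otimes \id_T \;=\; h \otimes T, \]
where I used the defining property $\mu_T \circ (u_T \otimes T) = \id_T$ of the weak ring structure (\Cref{dfn:Weak_Ring}). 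Up to the symmetry of the tensor product, $h \otimes T$ agrees with $T \otimes h$, so the latter is null as desired.

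With the hypothesis of condition~(1) of \Cref{lem:topnil-equiv} verified, the conclusion is immediate: since $T$ is a weak ring and detects nilpotence, statements~(1)--(4) of \Cref{lem:topnil-equiv} are equivalent, so condition~(1) holds for $T$. Applying it to $h$ (with $a = \Sigma^{-n}{\one}_{\cC}$) shows that $h$, and hence $x$, is locally nilpotent.

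The only nontrivial step is the weak-ring manipulation that converts the vanishing of $h \otimes u_T$ into the vanishing of $T \otimes h$; everything else is bookkeeping. There is no deeper obstacle because the proof essentially repeats the standard ``$f \otimes u_T$ nilpotent implies $f \otimes T$ nilpotent'' trick from the proof of \Cref{lem:peel_weak_ring}, but with ``nilpotent'' replaced by ``null'' throughout (which works because a retract of a null map is null).
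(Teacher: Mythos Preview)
Your proof is correct and follows the same approach the paper indicates, namely applying condition~(1) of \Cref{lem:topnil-equiv} to the map $h\colon {\one}_{\cC} \to \Sigma^{-n}{\one}_{\cC}$ representing $x$. In fact you are more careful than the paper's one-line justification: you explicitly verify, via the weak-ring retraction argument of \Cref{lem:peel_weak_ring} (run with ``null'' in place of ``nilpotent''), that the hypothesis ``$x$ maps to zero in $\pi_*T$'' (i.e.\ $h \otimes u_T$ is null) upgrades to $T \otimes h$ being null, which is the actual input condition~(1) requires.
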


Many of our examples of nilpotence detecting weak rings come from objects satisfying the following stronger condition:

\begin{dfn}
  Let $\cC \in \Prig$ and  $T$ be an  object  in $\cC$.  We say that $T$ is \deff{conservative} if the functor $ T\otimes -$ is conservative.
  \tqed
 \end{dfn}
 
\begin{lem}\label{lem:DN_conservative}
Let $\cC \in \Prig$ and  $T$ be a conservative  object  in $\cC$.  Then $T$ detects nilpotence.
\end{lem}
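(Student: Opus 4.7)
The plan is to apply \Cref{lem:topnil-equiv}; since conservativity does not provide a weak ring structure on $T$, the equivalence $(4) \Leftrightarrow (1)$ from that lemma is unavailable, and we must verify one of the conditions (1), (2), or (3) directly (the implications $(1)\Rightarrow(2)\Rightarrow(3)\Rightarrow(4)$ already hold without any weak ring hypothesis). My approach is to verify condition (1): for any $h \colon \one_{\cC} \to a$ with $T\otimes h$ null, the map $h$ is locally nilpotent.

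Given such $h$, the strategy is to consider the full subcategory
\[
\mathcal{F} = \{X \in \cC \mid X\otimes h^{\otimes k} \text{ is null for some } k \geq 1\}.
\]
This collection contains $T$ by hypothesis and enjoys immediate closure under retracts, shifts, and tensor products with arbitrary objects of $\cC$ (since a null map tensored with anything remains null). The essential point is that conservativity of $T$ forces the localizing tensor-ideal in $\cC$ generated by $T$ to be all of $\cC$: indeed, if $L$ denotes the Bousfield localization annihilating this ideal, then $L(T)=0$, and conservativity of $T\otimes(-)$ forces $L\equiv 0$. Combining these two observations is meant to show that every compact object $c$ belongs to $\mathcal{F}$, which is precisely the local nilpotence of $h$.

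The main obstacle is closure of $\mathcal{F}$ under cofiber sequences, which does not follow directly from the null-map closure properties: an extension $X_1 \to X_2 \to X_3$ with $X_1 \otimes h^{\otimes k_1}$ and $X_3 \otimes h^{\otimes k_3}$ null may have $X_2 \otimes h^{\otimes k}$ nonzero for every $k$ due to secondary obstructions living in the relevant mapping spectra. To handle this, one allows the nilpotence exponent $k$ to grow across each extension step, exploiting that since any compact object in the localizing tensor-ideal of $T$ can be built from $T$ in finitely many thick-closure operations, a finite bound on $k$ suffices at the end. The conservativity of $T$ is precisely what controls these secondary obstructions, since they vanish after $T$-tensor and therefore (by a careful compactness argument) can be absorbed by replacing $k$ with a larger value.
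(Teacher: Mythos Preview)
Your approach has a genuine gap. The claim that ``any compact object in the localizing tensor-ideal of $T$ can be built from $T$ in finitely many thick-closure operations'' is false in general: a conservative object $T$ need not be compact, and compact objects of $\cC$ need not lie in the \emph{thick} tensor-ideal generated by $T$, even when the \emph{localizing} tensor-ideal generated by $T$ is all of $\cC$. For instance, in $\cC = \Sp$ the object $T = H\Q \oplus \bigoplus_p H\F_p$ is conservative, but the sphere spectrum does not lie in the thick tensor-ideal generated by $T$. Since your $\mathcal{F}$ is at best a thick tensor-ideal (it is not closed under filtered colimits, as the nilpotence exponent may grow without bound along the colimit), knowing that the localizing ideal of $T$ is all of $\cC$ does not let you conclude that compact objects lie in $\mathcal{F}$. (Incidentally, your worry about closure of $\mathcal{F}$ under extensions is misplaced: a standard lifting argument shows that if $X_1 \otimes h^{\otimes k_1}$ and $X_3 \otimes h^{\otimes k_3}$ are null in a cofiber sequence $X_1 \to X_2 \to X_3$, then $X_2 \otimes h^{\otimes(k_1+k_3)}$ is null --- but this does not rescue the argument.)

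The paper avoids thick subcategories entirely via a telescope argument. Form $a^{\otimes \infty} \coloneqq \colim(\one_\cC \to a \to a^{\otimes 2} \to \cdots)$ along the maps induced by $h$. Since $T \otimes h$ is null, every transition map in the $T$-tensored tower is null, so $T \otimes a^{\otimes \infty} = 0$; conservativity then gives $a^{\otimes \infty} = 0$. For any compact $c$, the canonical map $c \to c \otimes a^{\otimes \infty} = 0$ is null, and compactness of $c$ forces this nullhomotopy to occur at a finite stage, i.e.\ $c \otimes h^{\otimes N} \colon c \to c \otimes a^{\otimes N}$ is null for some $N$. This is exactly local nilpotence of $h$, verifying condition~(1) of \Cref{lem:topnil-equiv}.
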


\begin{proof}
 We use the claim (1) implies (3) from \Cref{lem:topnil-equiv} and we adapt the argument from \cite{NilpII}. 
 Let $f{\colon {\one}} \to a$ be a map in $\cC$, consider the filtered  diagram
  \[ {\one \to}a \to a^{\otimes 2} \to \cdots , \]
  and denote the colimit by $a^{\otimes \infty}$. 
  If $T\otimes f$ is null, then all the maps in the diagram
    \[ T \to T\otimes a \to T\otimes a^{\otimes 2} \to \cdots  \]
    are null. Thus, $T\otimes a^{\otimes \infty} = 0$ and by the conservativity of $ T\otimes -$ , it follows that $a^{\otimes \infty } = 0$.
    Now let $c\in \cC^{\omega}$ be a compact object.
    Since $c \otimes a^{\otimes \infty } =0$, there exists some $N$ for which the map 
    $c \to c\otimes a^{\otimes N}$ is null.
\end{proof}

\begin{cor}\label{Conserative_smash}
Let $\cC \in \Prig$, and let $d \in \cC^{\dualz}$. There exists an idempotent algebra ${\one}_\cC^{d = 0} \in \CAlg(\cC)$ such that $M \in \cC$ is a module over ${\one}_\cC^{d=0}$ if and only if $M\otimes d = 0$. Further ${\one}_\cC^{d=0}\oplus d$ is a conservative object in $\cC$ (and in particular detects nilpotence).
\end{cor}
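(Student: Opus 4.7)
The plan is to realize ${\one}_\cC^{d=0}$ as the unit of a smashing Bousfield localization that kills $d$, read off the module characterization from this construction, and then use the module characterization to deduce conservativity and apply \Cref{lem:DN_conservative}.

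Because $d\in\cC^{\dualz}$, the endofunctor $-\otimes d\colon \cC\to\cC$ has $-\otimes d^{\dual}$ as both left and right adjoint, so it preserves all limits and colimits. Consequently, the full subcategory $\cC^{d=0}\subset\cC$ spanned by objects $M$ with $M\otimes d\simeq 0$ is closed under arbitrary limits and colimits in $\cC$, and, being the fiber at $0$ of a morphism in $\PrL$, is itself presentable. Closure under colimits together with presentability of both categories lets me apply the adjoint functor theorem to produce a left adjoint $L\colon \cC\to \cC^{d=0}$ to the inclusion, exhibiting $\cC^{d=0}$ as a reflective subcategory of $\cC$. Moreover, $\cC^{d=0}$ is automatically a $\otimes$-ideal: for $M\in\cC^{d=0}$ and $X\in\cC$ one has $(X\otimes M)\otimes d \simeq X\otimes(M\otimes d)\simeq 0$.

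By the standard theory of smashing localizations (as in \cite{HA}), the $\otimes$-ideal property upgrades $L$ to a symmetric monoidal functor given on underlying objects by tensoring with an idempotent $\E_\infty$-algebra ${\one}_\cC^{d=0}:=L({\one}_\cC)$, and identifies the modules over ${\one}_\cC^{d=0}$ with the $L$-local objects; by construction these are exactly the $M\in\cC$ with $M\otimes d\simeq 0$. This yields the first half of the statement, and in particular gives ${\one}_\cC^{d=0}\otimes d\simeq 0$.

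The conservativity claim is then formal: if $M\otimes({\one}_\cC^{d=0}\oplus d)\simeq 0$, then $M\otimes d\simeq 0$, so $M$ is a ${\one}_\cC^{d=0}$-module by the previous paragraph, which forces $M\simeq M\otimes {\one}_\cC^{d=0}\simeq 0$. Thus ${\one}_\cC^{d=0}\oplus d$ is conservative, and \Cref{lem:DN_conservative} provides the parenthetical claim about detecting nilpotence. The main point requiring care is confirming that the reflection $L$ really upgrades to a smashing, symmetric monoidal localization with the described module theory rather than merely a reflective subcategory, but this is exactly what the $\otimes$-ideal condition buys and is documented in \cite{HA}.
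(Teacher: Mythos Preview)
Your conservativity argument is identical to the paper's, and the overall strategy matches: the paper simply cites \cite[Corollary 7.8]{Ragimov} for the existence of $\one_\cC^{d=0}$, whereas you construct it directly as the unit of a smashing localization. There is, however, a small gap in your construction. The criterion in \cite{HA} for a localization to be compatible with the symmetric monoidal structure (Proposition 2.2.1.9) is that the class of $L$-\emph{equivalences}---equivalently, in the stable case, the $L$-\emph{acyclics}---be stable under tensoring with arbitrary objects; you instead verify this for the $L$-\emph{local} objects, which is a different condition and not what \cite{HA} provides. The fix is short and uses the dualizability of $d$: for $M\in\cC^{d=0}$ and any $X\in\cC$, one has $\underline{\Hom}(X,M)\otimes d \simeq \underline{\Hom}(X, M\otimes d)\simeq 0$, so $\cC^{d=0}$ is closed under $\underline{\Hom}(X,-)$; then for $K$ acyclic, $\Map(K\otimes X, M)\simeq \Map(K,\underline{\Hom}(X,M))\simeq 0$, so $K\otimes X$ is acyclic. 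Alternatively, one can identify the acyclics directly as the localizing $\otimes$-ideal generated by $d$ (using that $M\otimes d=0$ iff $M\otimes d^\vee=0$ for $d$ dualizable). With this patch your argument is complete, and it has the virtue of being self-contained rather than relying on an external reference.
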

\begin{proof}
The existence of ${\one}_\cC^{d = 0}$ follows from \cite[Corollary 7.8]{Ragimov}. 
Now let $M \in \cC$.  If $M\otimes ({\one}_\cC^{d=0} \oplus d)=0$, then both $M\otimes d =0$ and
$M\otimes {\one}_\cC^{d=0} =0$. Since $M\otimes d =0$, $M$ is a ${\one}_\cC^{d=0}$-module so $M$ is a retract of (in fact equivalent to) $M\otimes {\one}_\cC^{d=0} =0$ and thus zero.
\end{proof}

\begin{lem}\label{DN_Gm}
Let $\cC \in \Prig$, and let $e \in \mathrm{Pic}(\cC)$ and  let $e \xrightarrow{x} {\one}_{\cC}  $.  Then ${\one}_{\cC}[x^{-1}] \times ({\one}_{\cC}/x)$ is conservative (and in particular detects nilpotence).
\end{lem}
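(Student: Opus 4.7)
The plan is to reduce the conservativity claim to a direct two-step argument using the cofiber sequence for $x$. Suppose $M \in \cC$ satisfies $M \otimes ({\one}_{\cC}[x^{-1}] \times ({\one}_{\cC}/x)) = 0$. Since tensoring commutes with products of this form (here the product is a finite product in $\cC$, hence a biproduct in the stable setting), this gives the two vanishing conditions $M \otimes {\one}_{\cC}[x^{-1}] = 0$ and $M \otimes {\one}_{\cC}/x = 0$ separately, and we want to deduce $M = 0$.

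First I will use the vanishing of $M \otimes {\one}_{\cC}/x$. By definition, ${\one}_{\cC}/x$ is the cofiber of $x \colon e \to {\one}_{\cC}$, so tensoring with $M$ and using that $e$ is invertible produces a cofiber sequence
\[ M \otimes e \xrightarrow{\,M \otimes x\,} M \longrightarrow M \otimes ({\one}_{\cC}/x). \]
Since the last term vanishes, the map $M \otimes x$ is an equivalence; that is, $x$ acts invertibly on $M$ after tensoring with $e^{-1}$.

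Second, I will exploit the description of ${\one}_{\cC}[x^{-1}]$ as the sequential colimit $\colim\bigl({\one}_{\cC} \to e^{-1} \to e^{-2} \to \cdots\bigr)$ along maps induced by $x$. Tensoring with $M$ yields a colimit whose transition maps are all equivalences by the previous step, so the natural map $M \to M \otimes {\one}_{\cC}[x^{-1}]$ is an equivalence. Combined with the assumption $M \otimes {\one}_{\cC}[x^{-1}] = 0$, this forces $M = 0$, establishing conservativity. Finally the ``in particular'' clause is immediate from \Cref{lem:DN_conservative} (after noting that any conservative object gives a weak ring, or more simply invoking that lemma directly). I do not anticipate any real obstacle here; the only point requiring mild care is checking that sequential colimits with equivalence transitions evaluate to the initial term, but this is automatic in any $\infty$-category.
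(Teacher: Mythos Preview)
Your argument is correct and more elementary than the paper's. The paper instead invokes \Cref{Conserative_smash}, which for the dualizable object $d = {\one}_{\cC}/x$ produces an idempotent algebra ${\one}_{\cC}^{d=0}$ with ${\one}_{\cC}^{d=0} \oplus d$ conservative, and then cites an external reference to identify ${\one}_{\cC}^{{\one}_{\cC}/x=0}$ with ${\one}_{\cC}[x^{-1}]$. Your route bypasses both the idempotent-algebra machinery and the citation: you use the cofiber sequence directly to see that $x$ acts invertibly on $M$, and then the explicit sequential-colimit presentation of ${\one}_{\cC}[x^{-1}]$ to conclude $M \simeq M \otimes {\one}_{\cC}[x^{-1}] = 0$. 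The paper's approach has the virtue of framing the result as an instance of a general smashing-localization principle, but for proving this particular lemma your direct argument is cleaner and entirely self-contained. The parenthetical about weak rings is unnecessary: \Cref{lem:DN_conservative} applies to any conservative object without a weak-ring hypothesis.
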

\begin{proof}
Since ${\one}_{\cC}/x$ is dualizable, by \Cref{Conserative_smash}, it is enough to show that ${\one}_{\cC}^{{\one}_{\cC}/x=0} = {\one}_{\cC}[x^{-1}]$, which is the statement of \cite[Appendix C, Proposition C.5]{bunke2018beilinson}.
\end{proof}

\begin{lem} \label{lem:mod-top-nil}
Let $\cC \in \Prig$, and let $d \xrightarrow{x} e \in \CC$ be a locally nilpotent map and assume that $e$ is conservative.  Then
$\cof(x)$, the cofiber of $x$, is conservative.  In particular, if $x\in \pi_* {\one}_{\cC}$ is locally nilpotent, then ${\one}_{\cC}/x$ is conservative (and in particular, detects nilpotence).
\end{lem}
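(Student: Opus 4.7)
The plan is to verify the conservativity of $\cof(x)$ directly: I will show that if $M \otimes \cof(x) = 0$ for some $M \in \cC$, then $M = 0$. From the cofiber sequence $d \xrightarrow{x} e \to \cof(x)$, tensoring with $M$ and using $M \otimes \cof(x) = 0$ gives that $M \otimes x$ is an equivalence; consequently, $M \otimes x^{\otimes N}\colon M \otimes d^{\otimes N} \to M \otimes e^{\otimes N}$ is an equivalence for every $N \geq 1$.

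Now I would exploit the local nilpotence of $x$. Fix any compact object $c \in \cC^{\omega}$; by hypothesis there exists $N$ such that $c \otimes x^{\otimes N}$ is null. Tensoring further with $M$, the map $c \otimes M \otimes x^{\otimes N}$ is simultaneously null and an equivalence, so $c \otimes M \otimes e^{\otimes N} = 0$. Iterating the conservativity of $e$ (which says $A \otimes e \simeq 0 \Rightarrow A \simeq 0$) a total of $N$ times, I peel off the copies of $e$ one at a time to conclude $c \otimes M = 0$. Since this holds for every compact $c$ and $\cC$ is compactly generated, $M = 0$, establishing that $\cof(x)$ is conservative.

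For the ``in particular'' statement, I specialize to $d = \Sigma^{|x|}{\one}_{\cC}$, $e = {\one}_{\cC}$, and the map $x$; the unit ${\one}_{\cC}$ is trivially conservative, so the hypotheses of the main claim are satisfied and ${\one}_{\cC}/x = \cof(x)$ is conservative, hence detects nilpotence by \Cref{lem:DN_conservative}. No step appears to be a real obstacle here — the argument is a clean combination of the cofiber sequence with the two hypotheses, and the only subtlety is remembering that the index $N$ from local nilpotence depends on $c$, which is precisely why the argument must be carried out one compact object at a time rather than globally on $M$.
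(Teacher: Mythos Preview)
Your proposal is correct and follows essentially the same approach as the paper's proof: both argue that $M\otimes\cof(x)=0$ forces $M\otimes c\otimes x^{\otimes N}$ to be simultaneously an equivalence and null for each compact $c$, then use conservativity of $e$ to peel off the $e^{\otimes N}$ and conclude $M\otimes c=0$ for all compact $c$. The only difference is that you spell out a bit more explicitly why $M\otimes x^{\otimes N}$ is an equivalence and how the iteration on $e$ works, but the logic is identical.
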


\begin{proof}
Let $M$ be such that $M \otimes  \cof(x) =0$.  
Now let $c\in \cC^{\omega}$.  
Then, $M\otimes  c \otimes x^{\otimes m} $ is an isomorphism but $M \otimes c \otimes x^{\otimes m}$ is null for $m \gg 0$.  Thus, $M\otimes c \otimes e^{\otimes m} =0$ and since $e$ is conservative, we also have $M\otimes c = 0$.  This holds for all $c\in \cC^{\omega}$, so $M=0$. 
\end{proof}

\begin{exm} \label{exm:K-tnil}
  If $R$ is a commutative $K(n)$-local $E(k)$-algebra, then
  since each $v_i \in \pi_*R$ is locally nilpotent for $i <n$ (cf. \Cref{exm:vi-top-nil}),
  writing $R\modm$ as $R/v_0 \otimes_R \cdots \otimes_R R/v_{n-1}$, we conclude from \Cref{lem:mod-top-nil} and \Cref{lem:DN_tensor} that $ R\modm $ detects nilpotence in $\Modw_R$. 
  \tqed
\end{exm}


We will now show that the class of nilpotence detecting weak rings has pleasant closure properties.  First, we will see that it is not only closed under tensor products (cf. \Cref{lem:DN_tensor}), but also closed under base change.  Moreover, in contrast to conservativity, nilpotence detecting for weak rings is closed under filtered colimits.  Before we prove these, we will first need the following lemma:

\begin{lem}\label{lem:DN_source_closure}
Let $\cC \in \Prig$, $T \in \cC$ and let $\cC_T \subset \cC^{\omega}$ be the full subcategory of objects $c \in \cC^{\omega}$ such that $T$ detects nilpotence at $c$. Then:
\begin{enumerate}
    \item $\cC_T \otimes \cC^{\dualz} \subset \cC_T$.
    \item $\cC_T$ is closed under retracts.
\end{enumerate}
\end{lem}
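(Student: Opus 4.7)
The plan is to treat the two parts separately, both by direct applications of \Cref{lem:niloptent_mate}.

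For (1), first I would verify that $\cC_T \otimes \cC^{\dualz} \subset \cC^{\omega}$: given $c \in \cC^{\omega}$ and $d \in \cC^{\dualz}$, the functor $\Map_{\cC}(c \otimes d, -) \simeq \Map_{\cC}(c, d^{\dual} \otimes -)$ preserves filtered colimits, since $d^{\dual} \otimes -$ preserves all colimits (as $d$ is dualizable) and $\Map_{\cC}(c, -)$ preserves filtered colimits by compactness of $c$. Now, given $f \colon c \otimes d \to a$ nilpotent at $T$, pass to the mate $\mate{f} \colon c \to a \otimes d^{\dual}$. By \Cref{lem:niloptent_mate}(2), $\mate{f}$ is also nilpotent at $T$, and since $c \in \cC_T$, this forces $\mate{f}$ to be nilpotent outright. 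One more application of \Cref{lem:niloptent_mate}(2) then yields the nilpotence of $f$.

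For (2), suppose we have a retract diagram $c' \xrightarrow{i} c \xrightarrow{r} c'$ with $r \circ i = \id_{c'}$ and $c \in \cC_T$, and suppose $f \colon c' \to a$ is nilpotent at $T$. Then the composite $f \circ r \colon c \to a$ satisfies
\[ T \otimes (f \circ r)^{\otimes m} = (T \otimes f^{\otimes m}) \circ r^{\otimes m}, \]
which is null whenever $T \otimes f^{\otimes m}$ is null, so $f \circ r$ is nilpotent at $T$. Since $c \in \cC_T$, $f \circ r$ is nilpotent. Finally, the factorization $f = (f \circ r) \circ i$ gives $f^{\otimes N} = (f \circ r)^{\otimes N} \circ i^{\otimes N} = 0$ for $N$ large, showing $f$ is nilpotent.

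Neither step presents a real obstacle; both follow from the definitions together with \Cref{lem:niloptent_mate}. The only minor bookkeeping point is ensuring that the sources in question lie in $\cC^{\omega}$ — which for (1) requires the closure of $\cC^{\omega}$ under tensoring with dualizable objects noted above, and for (2) is part of the hypothesis that $c' \in \cC^{\omega}$.
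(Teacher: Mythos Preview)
Your proof is correct and follows essentially the same approach as the paper. Part (1) is identical in spirit---the paper simply cites \Cref{lem:niloptent_mate}(2), while you spell out the mate argument and the compactness of $c\otimes d$. For part (2), your argument is the same precompose-with-the-retraction idea the paper uses (the paper writes $c=a\oplus b$ and invokes \Cref{lem:niloptent_mate}(1) for the final step, whereas you verify directly that $f^{\otimes N}=(f\circ r)^{\otimes N}\circ i^{\otimes N}=0$); in fact your version is slightly more careful, since you start from the general hypothesis ``$f$ is nilpotent at $T$'' rather than only ``$T\otimes f$ is null.'' One trivial typo: $T\otimes(f\circ r)^{\otimes m}=(T\otimes f^{\otimes m})\circ(T\otimes r^{\otimes m})$, not $(T\otimes f^{\otimes m})\circ r^{\otimes m}$, but the conclusion is unaffected.
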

\begin{proof}
(1) follows from \Cref{lem:niloptent_mate}(2).  For (2), assume that $T$ detects nilpotence at $c = a\oplus b$ and that $f:a\to e$ is a map such that $T\otimes f$ is null.  Then the map
\[
f'\colon c \xrightarrow{\mathrm{proj}_1} a \to e
\]
has the feature that $T\otimes f'$ is null and $c\in \cC_T$.  The conclusion then follows from  \Cref{lem:niloptent_mate}(1).
\end{proof}

\begin{prop} \label{lem:tnil-po}
  Let $\cC\in \mathrm{Pr}^{\mathrm{rig}}$, let $T$ be a weak ring in $\cC$, and $B \in \CAlg(\cC)$. 
   If $T$ detects nilpotence in $\cC$, then 
  $T\otimes B $ detects nilpotence in $\Mod_B(\cC)$.
\end{prop}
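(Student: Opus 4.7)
The plan is to reduce to a convenient class of compact $B$-modules and then transfer the nilpotence-detection hypothesis on $T$ along the base-change adjunction $(-\otimes_\cC B)\dashv U$ between $\cC$ and $\Mod_B(\cC)$. First, note that $T\otimes B$ is a weak ring in $\Mod_B(\cC)$, with unit $u_T\otimes B$ and multiplication $(T\otimes B)\otimes_B(T\otimes B)\simeq (T\otimes T)\otimes B\xrightarrow{\mu_T\otimes B}T\otimes B$. Therefore, by \Cref{lem:topnil-equiv}, it suffices to verify condition~(4): any map $f$ between compact objects of $\Mod_B(\cC)$ with $(T\otimes B)\otimes_B f$ null must be nilpotent.

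Next, I will reduce to a specific family of compact $B$-modules. By \Cref{lem:ModB_is_PRig}(3), $\Mod_B(\cC)\in\Prig$, so \Cref{lem:DN_source_closure} applied there tells us that the full subcategory of objects at which $T\otimes B$ detects nilpotence is closed under retracts and under tensoring with objects of $\Mod_B(\cC)^{\dualz}$. Combined with the description of compact $B$-modules in \Cref{lem:ModB_is_PRig}(2), the problem reduces to checking detection of nilpotence at sources of the form $c\otimes_\cC B$ for $c\in\cC^\omega$.

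For this last step, given $f\colon c\otimes_\cC B\to a$ in $\Mod_B(\cC)$ with $(T\otimes B)\otimes_B f$ null, let $\tilde f\colon c\to a$ denote its adjoint in $\cC$. Unwinding the adjunction, the underlying $\cC$-morphism of $(T\otimes B)\otimes_B f$ is $T\otimes f\colon T\otimes c\otimes B\to T\otimes a$, and precomposition with $T\otimes c\otimes u_B$ produces $T\otimes\tilde f$; hence $T\otimes\tilde f=0$. Since $T$ detects nilpotence in $\cC$ and $c\in\cC^\omega$, we obtain $\tilde f^{\otimes_\cC N}=0$ for some $N$. Factoring $f$ as $\mu_a\circ(\tilde f\otimes_\cC B)$, where $\mu_a$ is the module structure of $a$, and using that $(-)\otimes_\cC B\colon\cC\to\Mod_B(\cC)$ is symmetric monoidal and hence commutes with $N$-fold tensor powers, $f^{\otimes_B N}$ factors through $(\tilde f\otimes_\cC B)^{\otimes_B N}=\tilde f^{\otimes_\cC N}\otimes_\cC B=0$. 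The main source of friction is purely bookkeeping: one must carefully track the interaction of the adjunction with the symmetric monoidal structures on both sides in order to cleanly convert the $\otimes_\cC$-nullity of $\tilde f^{\otimes_\cC N}$ into $\otimes_B$-nullity of $f^{\otimes_B N}$.
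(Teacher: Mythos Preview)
Your proof is correct and follows essentially the same approach as the paper: reduce via \Cref{lem:DN_source_closure} and \Cref{lem:ModB_is_PRig}(2) to sources of the form $c\otimes B$, pass to the mate in $\cC$, apply the nilpotence-detection hypothesis for $T$, and then push the resulting nullity of the $N$-fold tensor power back to $\Mod_B(\cC)$. The only cosmetic difference is in this last transfer: the paper argues that the mate of $f^{\otimes_B N}$ factors through $(\mate{f})^{\otimes N}$ via the lax monoidal structure on $U$, whereas you instead factor $f=\epsilon_a\circ(\tilde f\otimes_\cC B)$ and use strong monoidality of the free functor $(-)\otimes_\cC B$ to deduce $(\tilde f\otimes_\cC B)^{\otimes_B N}\simeq \tilde f^{\otimes_\cC N}\otimes_\cC B=0$. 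These are two sides of the same adjunction identity.
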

\begin{proof}
Let $\mathcal{D} \subset \Mod_{B}(\cC)^{\omega}$ be the full subcategory on objects  $c$ 
such that $B\otimes T$ detects nilpotence for maps with source $c$.
Our goal is to show that $\mathcal{D} = \Mod_{B}(\cC)^{\omega}$. By \Cref{lem:ModB_is_PRig}(2) and and \Cref{lem:DN_source_closure}(2), it suffices to show that $\cC^{\omega} \otimes \Mod_B(\cC)^{\dualz} \subset \mathcal{D}$, and so by \Cref{lem:DN_source_closure}(1), it is enough to show that 
$\cC^{\omega} \otimes B \subset \mathcal{D}$.

Indeed, let $c\in \cC^{\omega}$ and 
$f\colon c \otimes B \to a$ a map in $\Mod_{B}(\cC)$ and assume that 
\[f\otimes_B (B\otimes T)\colon c \otimes B \otimes T  \to a\otimes T\] is null.
Note that the adjunction 
\[ -\otimes B \colon \cC \rightleftarrows \Mod_{B}(\cC) \noloc U\]
exhibits $f$ as a mate of a map $\mate{f} \colon c \to U(a)$ in $\cC$.  
 We deduce that the mate of $f\otimes_B (B\otimes T)$ is also null.  But the mate of $f\otimes_B (B\otimes T)$ is $\mate{f} \otimes T$, so we deduce that $\mate{f}$ is nilpotent by the nilpotence detection of $T$; that is, there is some $N$ such that 
\[(\mate{f})^{\otimes N} \colon c^{\otimes N} \to U(a)^{\otimes N}\]
is null.

To finish the proof, we claim that the map 
\[
f^{\otimes_B N} \colon (c\otimes B)^{\otimes_B N} \to a^{\otimes_B N}
\]
is null.  For this, we show that its mate is null.  But its mate
\[
\mate{(f^{\otimes_B N})}\colon c^{\otimes N} \to U(a^{\otimes_B N})
\]
factors through $(\mate{f})^{\otimes N} $ and is therefore null.
\end{proof}

\begin{lem} \label{lem:topnil-fil}
  Let $\cC \in \Prig$ and let  
  \[T_\bullet\colon A \to \cC_{{\one}_\cC/}\] be a filtered diagram  such that $T_{\alpha}$ is a nilpotence detecting  weak ring for every $\alpha\in A$ . Let $u_{T_{\infty}} \in \cC_{{\one}_{\cC}/}$ denote the unit map of the colimit
  \[u_{T_{\infty} }\colon {\one}_{\cC} \to T_{\infty}:=\colim_{\alpha \in A} T_{\alpha}.\] Then $T_{\infty}$ detects nilpotence.
\end{lem}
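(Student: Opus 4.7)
The plan is to show that $T_\infty$ satisfies condition (1) of \Cref{lem:topnil-equiv} and then invoke the implication $(1) \Rightarrow (3)$ from that lemma. The appeal of this route is that the conclusion follows without needing a weak ring structure on $T_\infty$. This matters because, although each $T_\alpha$ comes equipped with some retraction $\mu_\alpha$ of $u_{T_\alpha} \otimes T_\alpha$, the $\mu_\alpha$ are merely chosen for each $\alpha$ and cannot in general be arranged to be compatible along the filtered diagram, so we should not expect $T_\infty$ itself to be a weak ring and we cannot directly invoke the (apparently easier) condition (4).

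To verify condition (1), fix a map $h \colon \one_{\cC} \to a$ with $T_\infty \otimes h$ null and a compact object $c \in \cC^{\omega}$; the goal is to show $c \otimes h$ is nilpotent. Write $a$ as a filtered colimit $a = \colim_\delta a_\delta$ of compact objects; since $c$ is compact, the map $c \otimes h \colon c \to c \otimes a$ factors through $c \otimes a_\delta$ as $c \oto{h_\delta} c \otimes a_\delta \to c \otimes a$ for some $\delta$. The hypothesis that $T_\infty \otimes h$ is null implies, after tensoring with $c$ and pre-composing with $c \otimes u_{T_\infty}$, that the composite
\[ c \oto{c \otimes u_{T_\infty}} c \otimes T_\infty \oto{h_\delta \otimes T_\infty} c \otimes a_\delta \otimes T_\infty \to c \otimes a \otimes T_\infty \]
is null. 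Since $c$ is compact and the target is the filtered colimit $\colim_{(\alpha,\delta')} c \otimes a_{\delta'} \otimes T_\alpha$, this null homotopy must be witnessed at a finite stage: there exist $\alpha$ and $\delta' \geq \delta$ such that the corresponding finite-stage map $h_{\delta'} \otimes u_{T_\alpha} \colon c \to c \otimes a_{\delta'} \otimes T_\alpha$ is null, where $h_{\delta'}$ is the further lift of $h_\delta$ through $a_\delta \to a_{\delta'}$.

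At this finite stage $T_\alpha$ is a genuine weak ring, so \Cref{lem:peel_weak_ring} promotes the vanishing of $h_{\delta'} \otimes u_{T_\alpha}$ to nilpotence of $h_{\delta'}$ at $T_\alpha$; since $T_\alpha$ detects nilpotence at the compact source $c$ by hypothesis, $h_{\delta'}$ is nilpotent. Because $c \otimes h$ factors as $h_{\delta'}$ post-composed with the inclusion $c \otimes a_{\delta'} \to c \otimes a$, any null power of $h_{\delta'}$ yields a null power of $c \otimes h$, so $c \otimes h$ is nilpotent and we are done. The main conceptual obstacle is the one flagged in the first paragraph---the absence of an obvious weak ring structure on $T_\infty$, which rules out the direct route through condition (4); the main technical point is the filtered-colimit bookkeeping above, which crucially uses that mapping out of the compact object $c$ commutes with filtered colimits.
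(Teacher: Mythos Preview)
Your proof is correct. The paper, however, takes a more direct route: rather than reducing to condition~(1) of \Cref{lem:topnil-equiv}, it verifies condition~(3) (the definition of nilpotence detection) directly. Starting from $f\colon c \to a$ with $c$ compact and $f^{\otimes N}\otimes T_\infty$ null for some $N$, one pre-composes with $c^{\otimes N}\otimes u_{T_\infty}$ to get $f^{\otimes N}\otimes u_{T_\infty}$ null; since $c^{\otimes N}$ is itself compact, this null homotopy descends to some finite stage $\alpha$, and then \Cref{lem:peel_weak_ring} and the nilpotence detection of $T_\alpha$ finish exactly as in your argument. Your concern that $T_\infty$ may fail to be a weak ring is well-taken, but it does not obstruct this direct approach either, which only invokes the weak ring structure on the individual $T_\alpha$. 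Incidentally, even within your route via condition~(1), the decomposition $a = \colim_\delta a_\delta$ is unnecessary: once $(c\otimes h)\otimes u_{T_\infty}$ is null, compactness of $c$ alone already lets you descend to a finite $\alpha$ and apply detection at $T_\alpha$ to the map $c\otimes h$ (which has compact source $c$), bypassing the $a_\delta$ bookkeeping entirely.
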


\begin{proof}
Let $f\colon c \to a$ be map in $\cC$ with $c\in \cC^{\omega}$.
Assume that $f$ is nilpotent at $T_{\infty}$. 
Then there exists some $N$ such that 
$f^{\otimes N} \otimes T_{\infty} \colon c^{\otimes N} \otimes T_{\infty} \to a^{\otimes N} \otimes T_{\infty}$ is null.
By pre-composing with $c^{\otimes N}\otimes u_{T^{\infty}}$, we deduce that $f^{\otimes N}\otimes u_{T_{\infty}}$ is null.
 Since $c^{\otimes N}$ is compact, we get that there is $\alpha \in A$ such that $f^{\otimes N}\otimes u_{T_{\alpha}}$ is null. By \Cref{lem:peel_weak_ring}, it follows that $f^{\otimes N}$ is nilpotent at $T_{\alpha}$. Finally, by the assumption that $T_{\alpha}$ detects nilpotence, $f^{\otimes N}$ is nilpotent, and thus $f$ is as well.
\end{proof}




\subsubsection{Phantom maps and detecting nilpotence}\hfill

Another source of nilpotence detecting maps that will be important for us arise from phantom maps.

\begin{dfn}
Let $\cC \in \Prig$.  We say that a map $f\colon a\to b$ in $\cC$ is \deff{phantom} if for every compact object $c\in \cC^{\omega}$ and map $g:c\to a$, the composite $f\circ g$ is null.  For $m\in \NN$, we say a map $f:a\to b$ is \deff{$\otimes^m$-phantom} if $f^{\otimes m}$ is phantom.  
\tqed
\end{dfn}

\begin{lem}\label{lem:phantom_colim}
Let $I$ be a filtered category and $f_{(-)}\colon I\to \cC^{\Delta^1}$ be a functor such that $f_i$ is $\otimes^m$-phantom.  Then $f = \colim_{i\in I} f_i$ is $\otimes^m$-phantom.  
\end{lem}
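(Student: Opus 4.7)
The plan is to reduce the claim to the pointwise phantom property of each $f_i$ by using that compact objects factor filtered colimits, after first identifying $f^{\otimes m}$ as a filtered colimit of the $f_i^{\otimes m}$.

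Write $a = \colim_i a_i$, $b = \colim_i b_i$, so that $f \colon a \to b$ is $\colim_i f_i$. Since $\cC \in \Prig$ is presentably symmetric monoidal, the tensor product preserves colimits separately in each variable. Applying this $m$ times we obtain
\[ a^{\otimes m} \;\cong\; \colim_{(i_1,\dots,i_m) \in I^m} a_{i_1}\otimes \cdots \otimes a_{i_m}, \]
and because $I$ is filtered, the diagonal $I \to I^m$ is cofinal. Hence $a^{\otimes m} \cong \colim_{i \in I} a_i^{\otimes m}$, and similarly $b^{\otimes m} \cong \colim_{i \in I} b_i^{\otimes m}$, and $f^{\otimes m}$ is identified with the induced map $\colim_i f_i^{\otimes m}$ between these filtered colimits.

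Now to verify that $f^{\otimes m}$ is phantom, let $c \in \cC^{\omega}$ be a compact object and let $g \colon c \to a^{\otimes m}$ be any map. By compactness of $c$, the map $g$ factors through some finite stage: there exists $j \in I$ and $\widetilde{g}\colon c \to a_j^{\otimes m}$ such that $g$ is the composite $c \xrightarrow{\widetilde g} a_j^{\otimes m} \to a^{\otimes m}$. Then $f^{\otimes m} \circ g$ equals the composite
\[ c \xrightarrow{\widetilde g} a_j^{\otimes m} \xrightarrow{f_j^{\otimes m}} b_j^{\otimes m} \longrightarrow b^{\otimes m}. \]
Since $f_j$ is $\otimes^m$-phantom by hypothesis, $f_j^{\otimes m}$ is phantom, so $f_j^{\otimes m} \circ \widetilde{g}$ is null. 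Thus $f^{\otimes m} \circ g$ is null, which is what was required.

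The only mildly nontrivial ingredient is the cofinality of the diagonal $I \to I^m$ identifying $(\colim_i a_i)^{\otimes m}$ with $\colim_i a_i^{\otimes m}$; once that is in hand the argument is a direct application of compactness of $c$ together with the pointwise assumption.
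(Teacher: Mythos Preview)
Your proof is correct and follows essentially the same approach as the paper. The paper's proof is slightly more terse: it first notes $f^{\otimes m} = \colim_i f_i^{\otimes m}$ and uses this to reduce to the case $m=1$, then applies the same compactness-plus-factoring argument you give; you instead carry out the argument directly for general $m$, but the substance is identical.
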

\begin{proof}
Since $f^{\otimes m} = \colim_{i\in I} f_i^{\otimes m}$, it suffices to show this for $m=1$.  Now let $f_i \colon a_i\to b_i$ and suppose $c\in \cC^{\omega}$.  Then, by compactness of $c$, any map $g:c\to \colim_I a_i$ factors through some $g'\colon c\to a_j$.  Now, this means $f$ factors through $f_j\circ g'$ which is null because $f_j$ was assumed to be phantom.  
\end{proof}

\begin{lem}\label{lem:DN_phantom}
Let $\cC \in \Prig$ and let $T$ be a weak ring.  Suppose that the fiber $F\to \one$ of $u_T$, the unit of $T$, is $\otimes^m$-phantom for some $m$.  Then $T$ detects nilpotence. 
\end{lem}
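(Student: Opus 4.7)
The plan is to verify condition (4) of \Cref{lem:topnil-equiv}; since $T$ is a weak ring, this is equivalent to $T$ detecting nilpotence. So let $f\colon a\to b$ be a map with $a,b\in \cC^{\omega}$ such that $T\otimes f$ is null; I aim to prove that $f^{\otimes m}$ is null, which gives nilpotence of $f$ with the same $m$ that appears in the hypothesis.

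First, because $T$ is a weak ring, \Cref{lem:peel_weak_ring} ((2) $\Leftrightarrow$ (3)) applied to $f$ shows that the map $u_T\otimes f\colon a \to T\otimes b$ is null. Denoting the fiber sequence $F \xrightarrow{\epsilon} \one \xrightarrow{u_T} T$ and tensoring with $b$, we obtain a fiber sequence $F\otimes b \xrightarrow{\epsilon\otimes b} b \xrightarrow{u_T\otimes b} T\otimes b$. Nullness of $u_T\otimes f$ produces a lift $g\colon a \to F\otimes b$ with $(\epsilon\otimes b)\circ g \simeq f$. Taking $m$-fold tensor powers, we obtain a factorization
\[
f^{\otimes m}\colon a^{\otimes m} \xrightarrow{g^{\otimes m}} F^{\otimes m}\otimes b^{\otimes m} \xrightarrow{\epsilon^{\otimes m}\otimes b^{\otimes m}} b^{\otimes m}.
\]

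The key step is then to use that $a$ and $b$ are compact, hence dualizable, so that $b^{\otimes m}\in \cC^{\dualz}$. Taking the mate of $f^{\otimes m}$ with respect to $b^{\otimes m}$ (using \Cref{lem:niloptent_mate}(2) to see that nullness is preserved), it suffices to show that the composite
\[
a^{\otimes m}\otimes (b^{\otimes m})^{\dual} \xrightarrow{\widetilde{g^{\otimes m}}} F^{\otimes m} \xrightarrow{\epsilon^{\otimes m}} \one
\]
is null. But the source is compact (as a tensor product of compact objects), and $\epsilon^{\otimes m}$ is phantom by hypothesis; therefore the composition vanishes, giving $f^{\otimes m}\simeq 0$ as required.

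I do not expect a serious obstacle here: the only mildly delicate point is noting that the $\otimes^m$-phantom hypothesis is exactly what is needed \emph{after} taking mates, for which compactness of both $a$ and $b$ (not merely of $a$) is essential. All the manipulations, including passage from $T\otimes f$ null to $u_T\otimes f$ null and the lift through $F\otimes b$, are already packaged in the lemmas above.
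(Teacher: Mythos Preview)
Your proof is correct and follows essentially the same route as the paper: verify condition (4) of \Cref{lem:topnil-equiv}, lift $f$ through the fiber $F\otimes b$, take $m$-th tensor powers, and pass to the mate with respect to the dualizable target so that the $\otimes^m$-phantom hypothesis applies against a compact source. One small remark: your invocation of \Cref{lem:peel_weak_ring} is unnecessary here, since the nullity of $u_T\otimes f$ follows directly from the interchange law $(T\otimes f)\circ(u_T\otimes a)\simeq u_T\otimes f$ and the assumed nullity of $T\otimes f$; the paper uses the corresponding commutative square directly rather than citing that lemma.
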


\begin{proof}
Let $g\colon c \to c'$ be a map in $\cC^{\omega}$ such that $g\otimes T\colon c\otimes T \to c'\otimes T$ is null and consider the square
\[
\begin{tikzcd}
c\arrow[d] \arrow[r,"g"] & c'\arrow[d]\\
c\otimes T \arrow[r,"g\otimes T"] & c'\otimes T.
\end{tikzcd}
\]
By \Cref{lem:topnil-equiv}(4), we would like to show that $g$ is nilpotent. Since $g\otimes T$ is null, the map $g:c \to c'$ factors as
\[
c \to c'\otimes F \to c'.
\]
Taking $m^{\mathrm{th}}$ powers, we obtain a factorization of $g^{\otimes m}$ as 
\[
c^{\otimes m} \to c'^{\otimes m} \otimes F^{\otimes m} \to c'^{\otimes m}.
\]
We claim that this map is null.  Since compact objects are dualizable, it is enough to show that the mate (with respect to $c'^{\otimes m}$) is null.  But this is a map
\[
(c'^{\dual})^{\otimes m} \otimes c^{\otimes m} \to F^{\otimes m} \to \one_{\cC},
\]
which must be null because the source is compact and $F \to \one_{\cC}$ is $\otimes^m$-phantom.
\end{proof}

\subsection{Nilpotence detection for commutative algebras }

\begin{dfn}
Let $\cC\in \Prig$. We say that a map $A \to B \in \CAlg(\cC)$ \deff{detects nilpotence} if the object $B \in \Mod_A(\cC)$ detects nilpotence.
\tqed
\end{dfn}

\begin{lem}\label{lem:DN_composition}
Let $\cC \in \Prig$ and let $A \xrightarrow {f} B \xrightarrow{g} C \in \CAlg(\cC)$.
Then:
\begin{enumerate}
    \item If $f$ and $g$ detect nilpotence, so does $g\circ f$.
    \item If $g\circ f$ detects nilpotence, then so does $f$.
    
\end{enumerate}
\end{lem}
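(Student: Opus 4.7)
The plan is to unwind both statements directly from the definition of nilpotence detection by using that for $A \to B \to C$, base change over $A$ to $C$ factors through base change to $B$, i.e.\ $(-)\otimes_A C \cong ((-)\otimes_A B)\otimes_B C$. This lets us reduce iterated tensor powers along $g\circ f$ to a two-step reduction.

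For (2), the proof is immediate: given a map $h\colon M \to N$ in $\Mod_A(\cC)$ with $M$ compact and $h$ nilpotent at $B$, choose $k$ with $h^{\otimes_A k}\otimes_A B = 0$. Tensoring over $B$ with $C$ gives $h^{\otimes_A k}\otimes_A C = 0$, so $h$ is nilpotent at $C$. Since $g\circ f$ detects nilpotence and $M$ is compact in $\Mod_A(\cC)$, we conclude $h$ is nilpotent.

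For (1), suppose $h\colon M \to N$ is a map in $\Mod_A(\cC)$ with $M$ compact and with $h^{\otimes_A k}\otimes_A C = 0$ for some $k$. Apply the factorization to rewrite this as $(h\otimes_A B)^{\otimes_B k}\otimes_B C = 0$, i.e.\ the map $h\otimes_A B \colon M\otimes_A B \to N\otimes_A B$ in $\Mod_B(\cC)$ is nilpotent at $C$. Here the key auxiliary observation is that base change $-\otimes_A B\colon \Mod_A(\cC)\to \Mod_B(\cC)$ preserves compact objects; this holds because its right adjoint, the forgetful functor, preserves filtered colimits. Hence $M\otimes_A B$ is compact in $\Mod_B(\cC)$, and since $g$ detects nilpotence we conclude $h\otimes_A B$ is nilpotent in $\Mod_B(\cC)$. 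Translating back via the identification $(h\otimes_A B)^{\otimes_B l} \cong h^{\otimes_A l}\otimes_A B$, this says exactly that $h$ is nilpotent at $B$ as an $A$-module map. Applying that $f$ detects nilpotence (with $M$ compact in $\Mod_A(\cC)$) finishes the proof.

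The main obstacle is essentially bookkeeping: one must only verify that base change preserves compactness (so that the compactness hypothesis propagates through the reduction), and then carefully track the tensor-power identities to make sure the nilpotence witnesses match up on each side. Neither step requires any new ideas beyond the definitions.
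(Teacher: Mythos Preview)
Your proof is correct and follows essentially the same approach as the paper. The only cosmetic difference is that the paper invokes \Cref{lem:topnil-equiv} to reduce to checking condition (4) (maps between compact objects with $T\otimes h$ null, rather than nilpotent at $T$), whereas you work directly with the definition; both routes use the same core idea of factoring $-\otimes_A C$ through $-\otimes_A B$ and the fact that base change preserves compactness (which the paper records as part of \Cref{lem:ModB_is_PRig}).
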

\begin{proof}
For (1) assume that $f$ and $g$ detect nilpotence. We wish to show that $C$ detects nilpotence in $\Mod_{A}(\cC)$.
Indeed let $c \xrightarrow{h} c' \in \Mod_{A}(\cC)^{\omega}$ and assume that $C\otimes_A h$ is null.  By \Cref{lem:topnil-equiv}, it is enough to show that $h$ is nilpotent. 
By \Cref{lem:ModB_is_PRig} we have  $h\otimes_A B \in \Mod_{B}(\cC)^{\omega}$
and since $ (h\otimes_A B) \otimes_B C  = h \otimes_A C = 0$
and  $g$ detects nilpotence, we get by  \Cref{lem:topnil-equiv}  that $(h\otimes_A B)$ is nilpotent; in other words, we have $h^{\otimes N} \otimes_A B  = (h\otimes_A B)^{\otimes_B N} = 0$ for some $N$.  Applying \Cref{lem:topnil-equiv} again for the nilpotence detecting map $f$, we get that $h^{\otimes N}$ is nilpotent and thus so is $h$.

For (2) assume that $g\circ f$ detects nilpotence. We wish to show that $B$ detects nilpotence in $\Mod_{A}(\cC)$.
Indeed, let $c \xrightarrow{h} c' \in \Mod_{A}(\cC)^{\omega}$ and assume that $B\otimes_A h$ is null.  By \Cref{lem:topnil-equiv}, it is enough to show that $h$ is nilpotent.
Indeed we have 
$(h\otimes_A C) = (h\otimes_A B) \otimes_B C  = 0$.
Thus, by the assumption that $g\circ f$ detects nilpotence and \Cref{lem:topnil-equiv}, we conclude that $h$ is nilpotent.
\end{proof}

\begin{lem}\label{lem:fil_arrow}
 Let $\cC \in \Prig$ and let $I$ be a filtered $\infty$-category and 
 \[F\colon I \to \CAlg(\cC)^{\Delta^1}\] be a functor. If $F(i)$ detects nilpotence for every $i \in I$, then
 $\colim_{I}F$ detects nilpotence.
\end{lem}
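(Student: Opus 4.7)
The plan is to reduce this statement to the two preceding lemmas: \Cref{lem:tnil-po} (stability of nilpotence-detection under base change) and \Cref{lem:topnil-fil} (stability under filtered colimits of weak rings). Write $F(i) = (A_i \xrightarrow{\varphi_i} B_i)$, and let $A = \colim_i A_i$ and $B = \colim_i B_i$ in $\CAlg(\cC)$, so that $\colim_I F$ is the induced map $\varphi\colon A \to B$. Our goal is to show that $B \in \CAlg(\Mod_A(\cC))$ detects nilpotence in $\Mod_A(\cC)$.

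First, for each $i \in I$, the map $A_i \to A$ induces a symmetric monoidal base-change functor $\Mod_{A_i}(\cC) \to \Mod_A(\cC)$. Since $B_i$ detects nilpotence in $\Mod_{A_i}(\cC)$ by hypothesis, \Cref{lem:tnil-po} (applied with the ambient locally rigid category taken to be $\Mod_{A_i}(\cC)$, and base-changing along the commutative algebra map $A_i \to A$) shows that $A \otimes_{A_i} B_i$ detects nilpotence in $\Mod_A(\cC)$.

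Next, since the tensor product commutes with colimits and filtered colimits of commutative algebras are computed on underlying objects, we obtain an equivalence
\[ B \;\simeq\; A \otimes_A B \;\simeq\; A \otimes_{\colim_i A_i}\, \colim_i B_i \;\simeq\; \colim_i \bigl(A \otimes_{A_i} B_i\bigr) \]
in $\CAlg(\Mod_A(\cC))$, and in particular the unit map $A \to B$ is the filtered colimit of the unit maps $A \to A \otimes_{A_i} B_i$ in the arrow category $\Mod_A(\cC)^{\Delta^1}$.

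Finally, each $A \otimes_{A_i} B_i$ is a commutative algebra in $\Mod_A(\cC)$, hence a weak ring under $A = \one_{\Mod_A(\cC)}$. Since each of these weak rings detects nilpotence by Step~1, \Cref{lem:topnil-fil} applied in $\Mod_A(\cC)$ yields that the colimit $B$ detects nilpotence, as desired. No real obstacle arises: the argument is purely formal once one recognizes that \Cref{lem:tnil-po} and \Cref{lem:topnil-fil} combine, the only minor point to verify being the compatibility of filtered colimits of commutative algebras with base change, which is standard.
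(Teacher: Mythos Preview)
Your proof is correct and follows essentially the same approach as the paper: both base-change each arrow $A_i \to B_i$ along $A_i \to A$ via \Cref{lem:tnil-po}, identify the colimit of the resulting diagram with $A \to B$, and then apply \Cref{lem:topnil-fil} in $\Mod_A(\cC)$. Your write-up is simply more explicit about the identification $B \simeq \colim_i(A \otimes_{A_i} B_i)$, which the paper states without justification.
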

\begin{proof}
Write $R \coloneqq \colim_{I}F(0)$ and define
 $\widetilde{F}\colon I \to \CAlg(\cC)^{\Delta^1}$ to be 
\[\widetilde{F}(i)\colon R \to R \coprod_{F(i)(0)} F(i)(1).  \]
Note that $\colim\widetilde{F} = \colim F \in  \CAlg(\cC)^{\Delta^1}$ and that by \Cref{lem:tnil-po}, $\widetilde{F}(i)$ detects nilpotence for each $i\in I$.  We are therefore reduced to case of $\widetilde{F}$, which follows from \Cref{lem:topnil-fil}.
\end{proof}

\begin{lem}\label{lem:DN_nilconservative}
Let $\cC\in \Prig$.  If  a map $A \to B \in \CAlg(\cC)$ detects nilpotence, then the functor 
\[- \otimes_A B \colon \Mod_A(\cC) \to \Mod_B(\cC)\]
is nil-conservative in the sense of \cite{TeleAmbi}.
\end{lem}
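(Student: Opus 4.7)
The plan is to establish the characterization of nil-conservativity from \cite{TeleAmbi} directly, i.e., to show that for any $R \in \CAlg(\Mod_A(\cC))$ with $R \otimes_A B \simeq 0$, we necessarily have $R = 0$. The key trick will be to bootstrap the nilpotence-detection hypothesis by working one level up, in the category $\Mod_A(\cC)$, which belongs to $\Prig$ by \Cref{lem:ModB_is_PRig}. Concretely, I will apply \Cref{lem:tnil-po} with the ambient category taken to be $\Mod_A(\cC)$, the nilpotence-detecting weak ring taken to be $B$, and the base commutative algebra taken to be $R$ itself. This will yield that $B \otimes_A R$ detects nilpotence in $\Mod_R(\Mod_A(\cC)) \simeq \Mod_R(\cC)$; but the hypothesis $R \otimes_A B \simeq 0$ then tells us that the \emph{zero object} detects nilpotence in $\Mod_R(\cC)$.

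Having set this up, the remaining task is to show that ``the zero object detects nilpotence'' in a category from $\Prig$ is strong enough to force the category to be trivial. The zero object is tautologically a weak ring, so I will invoke condition (1) of \Cref{lem:topnil-equiv} in $\Mod_R(\cC)$: since $0 \otimes h$ is trivially null for every map $h$ out of the unit, every such $h$ must be locally nilpotent. Applying this to $h = \mathrm{id}_R$ (noting that $R$ is the monoidal unit of $\Mod_R(\cC)$) and then tensoring over $R$ with an arbitrary compact $c \in \Mod_R(\cC)^{\omega}$, the identification $\mathrm{id}_R^{\otimes_R m} \otimes_R c \simeq \mathrm{id}_c$ forces $\mathrm{id}_c$ to be null, i.e., $c \simeq 0$. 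Since $\Mod_R(\cC)$ is compactly generated (and lies in $\Prig$ by a second application of \Cref{lem:ModB_is_PRig}), having every compact object vanish forces $\Mod_R(\cC) = 0$, and in particular $R = 0$.

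I do not anticipate any serious obstacle in carrying this out: once the bootstrapping step via \Cref{lem:tnil-po} is in place, the remainder is essentially formal. The main conceptual point is the observation that a nilpotence-detecting zero object in an object of $\Prig$ is an extraordinarily strong condition, collapsing the entire category. The most delicate bookkeeping will be to ensure that the symmetric monoidal structure on $\Mod_R(\cC)$ is being used correctly so that $\mathrm{id}_R$ genuinely acts as the identity after tensoring with any compact, which is what makes the final collapse to $\mathrm{id}_c = 0$ possible.
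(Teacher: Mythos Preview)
Your argument is correct when $R$ is \emph{commutative}, but nil-conservativity in the sense of \cite{TeleAmbi} requires checking the vanishing condition for every $R \in \Alg(\Mod_A(\cC))$, not just $R \in \CAlg(\Mod_A(\cC))$. Your bootstrapping step through \Cref{lem:tnil-po} forces $R$ to be commutative, since you need the symmetric monoidal category $\Mod_R(\cC)$ (and the statement of \Cref{lem:tnil-po} takes its base algebra in $\CAlg$). So as written, there is a genuine gap.

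The paper's proof avoids this by skipping the detour: after reducing to $A = \one_{\cC}$, it applies \Cref{lem:topnil-equiv}(1) directly in $\Mod_A(\cC)$ with $T = B$ to the unit map $h\colon \one \to R$. Since $B \otimes R = 0$, the map $B \otimes h$ is null, so $h$ is locally nilpotent. From here only the \emph{associative} algebra structure on $R$ is needed: for a compact $c$ with $c \otimes R \neq 0$, choose $N$ with $c \to c \otimes R^{\otimes N}$ null, tensor with $R$, and compose with the multiplication $R^{\otimes N+1} \to R$ to obtain that $\mathrm{id}_{c\otimes R}$ is null, a contradiction. Your ``zero detects nilpotence $\Rightarrow$ trivial category'' observation is a pleasant repackaging of this endgame, but it requires passing to $\Mod_R(\cC)$, which is exactly the step that fails for noncommutative $R$; the paper's argument stays in $\Mod_A(\cC)$ and thereby handles the general case.
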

\begin{proof}
By \Cref{lem:ModB_is_PRig}, we can replace $\cC$ with $\Mod_A(\cC)$ and  assume without loss of generality that  $A = {\one}_\CC$.
Now let $R \in \Alg(\cC)$ and assume that $R\otimes B =0$.
We deduce from \Cref{lem:topnil-equiv}(1) that the unit map
${\one}_{\cC} \to R$ is locally nilpotent. We wish to show that $R=0$.  Assume that $R \neq 0$; then, since $\cC \in \Prig$, there is some compact object $c\in \cC^{\omega}$ such that $c \otimes R  \neq 0$ (by \Cref{Prig_Conserv}(2)). 
Since ${\one}_{\cC} \to R$ is locally nilpotent, there is some $N$ such that the map 
\[c \to c \otimes R^{\otimes N}\]
is null. Tensoring with $R$ and composing with the product in $R$, we conclude that the composition 

\[c\otimes R \to c \otimes R^{\otimes N+1} \to c\otimes R\]
is null. But since this is the identity map of $c \otimes R$, it follows that
$c \otimes R  = 0$.
\end{proof}

 Recall from \cite{DAGX} the definition of a weakly saturated class of morphisms:
 
\begin{dfn}\label{dfn:saturated}
 Let $\cC$ be an $\infty$-category and let $S$ be a collection of morphisms in $\cC$. We say that $S$ is \deff{weakly saturated} if:
 \begin{enumerate}
     \item $S$ is closed under cobase-change.  That is, for every pushout diagram in $\cC$
     \[\xymatrix{
     A\ar[r]\ar[d]^{f} & B\ar[d]^{f'} \\
     C\ar[r] & D
     }\]
     with $f \in S$, we also have that $f'\in S$.
     \item $S$ is closed under retracts in $\cC^{\Delta^{1}}$.
     \item $S$ is closed under transfinite composition.  That is, for every ordinal $\alpha$ and any functor $F\colon \alpha \to \cC$ such that
     \begin{enumerate}
         \item for each non-zero limit ordinal $\beta < \alpha$, the diagram $F|_{\beta +1}$ is a colimit diagram and 
         \item for any ordinal $\beta$ such that $\beta + 1 < \alpha$ the map 
         \[F(\beta) \to F(\beta +1)\] is in $S$,
     \end{enumerate}
     then the map $F(0) \to F(\beta)$ is in $S$ for every $\beta < \alpha$.
     \tqed
\end{enumerate}
\end{dfn}

\begin{dfn}
Let $\cC$ be an $\infty$-category. Let $f \colon C\to D$ be a morphism in $\cC$ and let $X$ be an object in $\cC$. We will say that \deff{$f$ has the right lifting property with respect to $X$} if for every  map $C\to X$, we have a lift:
     \[\xymatrix{
     C\ar[r]\ar[d]^{f} & X.\\
     D\ar@{-->}[ru] & \empty
     }\]
If $f$ has the right lifting property with respect to $X$, we write \deff{$f \perp X$}.
 \tqed
\end{dfn}

\begin{prop}[The small object argument, {\cite[Proposition 1.4.7]{DAGX}}]\label{prop:small_object}
 Let $\cC$ be a presentable $\infty$-category, let $S$ be a weakly saturated class of morphisms in $\cC$, and let $S_0 \subset S$ be a set of morphisms in $S$.  Then for every $A\in \cC$, there is a morphism $A\to B$ in $\cC$ such that 
 \begin{enumerate}
     \item $A \to B$ is in $S$.
     \item For every $f\in S_0$ we have $f \perp B$ .
 \end{enumerate}
\end{prop}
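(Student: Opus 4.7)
The plan is to run the standard transfinite small object argument, adapted to the $\infty$-categorical setting. First I would use presentability: since $\cC$ is presentable and $S_0$ is a \emph{set}, I can choose a regular cardinal $\kappa$ such that the source $C_f$ of every $f \in S_0$ is $\kappa$-compact in $\cC$. Having fixed such a $\kappa$, I will construct, by transfinite induction over ordinals $\alpha \leq \kappa$, a functor $F\colon \kappa + 1 \to \cC_{A/}$ whose colimit $B := F(\kappa)$ is the required object.

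The inductive construction goes as follows. Set $F(0) = \id_A$. At a successor stage, given $A \to F(\alpha)$, consider the (small) set $J_\alpha$ of all commutative squares
\[
\begin{tikzcd}
C_f \ar[r] \ar[d, "f"'] & F(\alpha) \ar[d] \\
D_f \ar[r] & F(\alpha+1)
\end{tikzcd}
\]
indexed by all $f\in S_0$ together with a choice of map $C_f \to F(\alpha)$, and define $F(\alpha+1)$ to be the pushout $\coprod_{J_\alpha} C_f \to \coprod_{J_\alpha} D_f$ along $\coprod_{J_\alpha} C_f \to F(\alpha)$. The map $F(\alpha) \to F(\alpha+1)$ is then a cobase change of a coproduct of maps in $S_0 \subset S$, hence lies in $S$ by weak saturation. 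At limit stages $\beta \leq \kappa$, set $F(\beta) = \colim_{\alpha<\beta} F(\alpha)$.

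Condition (1) is then immediate from weak saturation: by construction $A\to B$ is a transfinite composition of maps in $S$, which belongs to $S$ by clause (3) of \Cref{dfn:saturated}. For condition (2), let $f\colon C_f \to D_f$ be in $S_0$ and consider any map $g\colon C_f \to B$. Since $C_f$ is $\kappa$-compact and $B = \colim_{\alpha<\kappa} F(\alpha)$ is a $\kappa$-filtered colimit (as $\kappa$ is regular), $g$ factors through some $g_\alpha\colon C_f \to F(\alpha)$ for $\alpha<\kappa$. But the pair $(f, g_\alpha)$ is then one of the squares in $J_\alpha$, so by construction there is a map $D_f \to F(\alpha+1)$ making the required triangle commute; post-composing with $F(\alpha+1) \to B$ yields the desired lift, establishing $f \perp B$.

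The only real subtlety is ensuring that the set $J_\alpha$ of lifting problems is actually small at each stage, but this is guaranteed because $S_0$ is a set and $\Map_\cC(C_f, F(\alpha))$ has an essentially small set of connected components (as $\cC$ is presentable, hence locally small). The other place to take care is the appeal to $\kappa$-compactness in a $\kappa$-filtered colimit indexed by the ordinal $\kappa$: this is fine because $\kappa$ is regular, so the ordinal $\kappa$, viewed as a poset, is $\kappa$-filtered.
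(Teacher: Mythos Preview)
The paper does not give its own proof of this proposition; it is stated with a citation to \cite[Proposition 1.4.7]{DAGX} and used as a black box. Your argument is the standard small object argument and matches what one finds in the cited reference, so there is nothing to compare against in the paper itself.

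A couple of minor presentational points: the diagram you use to describe $J_\alpha$ already has $F(\alpha+1)$ in it, which is circular as written; of course you mean that $J_\alpha$ is indexed by pairs $(f,g)$ with $f\in S_0$ and $g\in \pi_0\Map(C_f,F(\alpha))$, and the displayed square is what one obtains \emph{after} defining $F(\alpha+1)$. Also, to conclude that $F(\alpha)\to F(\alpha+1)$ lies in $S$ you implicitly use that a (set-indexed) coproduct of maps in $S$ lies in $S$; this is not one of the three axioms in \Cref{dfn:saturated}, but it follows because such a coproduct is a transfinite composition of cobase changes of the individual maps. With these clarifications your sketch is correct.
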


\begin{thm}\label{thm:nil_saturated}
Let $\cC \in \Prig$.  Then the collection of nilpotence detecting maps is weakly saturated in  $\CAlg(\cC)$.
\end{thm}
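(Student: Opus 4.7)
The plan is to verify the three closure properties of \Cref{dfn:saturated} for the class of maps in $\CAlg(\cC)$ that detect nilpotence.

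Closure under cobase change is essentially immediate from \Cref{lem:tnil-po}. Suppose $f\colon A \to B$ detects nilpotence, meaning $B$ detects nilpotence in $\Mod_A(\cC)$, and let $A \to C$ be an arbitrary map in $\CAlg(\cC)$. By \Cref{lem:ModB_is_PRig}, $\Mod_A(\cC) \in \Prig$, so I apply \Cref{lem:tnil-po} inside $\Mod_A(\cC)$ with weak ring $B$ and commutative algebra $C \in \CAlg(\Mod_A(\cC))$. This shows that $B \otimes_A C$ detects nilpotence in $\Mod_C(\Mod_A(\cC)) \simeq \Mod_C(\cC)$; that is, the cobase change $C \to B \otimes_A C$ detects nilpotence.

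Closure under transfinite composition follows by transfinite induction. Given a chain $F\colon \alpha \to \CAlg(\cC)$ satisfying the colimit-compatibility of \Cref{dfn:saturated}(3) with each successor $F(\gamma) \to F(\gamma+1)$ nilpotence-detecting, I show $F(0) \to F(\beta)$ detects nilpotence for each $\beta < \alpha$. The case $\beta = 0$ is trivial, the successor case $\beta = \gamma+1$ follows from \Cref{lem:DN_composition}(1), and for a limit ordinal $\beta$ the arrows $(F(0) \to F(\gamma))_{\gamma < \beta}$ form a filtered diagram in $\CAlg(\cC)^{\Delta^1}$ whose colimit is $F(0) \to F(\beta)$, so \Cref{lem:fil_arrow} applies.

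The main content lies in closure under retracts, which I expect to be the most delicate step. Suppose $f\colon A \to B$ is a retract of a nilpotence-detecting map $g\colon A' \to B'$ witnessed by $r_A\colon A \to A'$, $s_A\colon A' \to A$, $r_B\colon B \to B'$, $s_B\colon B' \to B$ with $s_A r_A = \id_A$, $s_B r_B = \id_B$, $g r_A = r_B f$, and $f s_A = s_B g$. Since $B$ is a weak ring in $\Mod_A(\cC)$, by \Cref{lem:topnil-equiv}(4) it suffices to check that for every $h\colon c \to c'$ in $\Mod_A(\cC)^\omega$ with $h \otimes_A B = 0$, the map $h$ is nilpotent. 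Since $r_B\colon B \to B'$ equips $B'$ with the structure of a $B$-algebra, we have
\[
h \otimes_A B' \cong (h \otimes_A B) \otimes_B B' = 0,
\]
where the identification uses the equality $g r_A = r_B f$ of the two natural $A$-algebra structures on $B'$. Rewriting this as $(h \otimes_A A') \otimes_{A'} B' = 0$ and invoking the nilpotence-detection of $g$ applied to the compact map $h \otimes_A A'$ in $\Mod_{A'}(\cC)$, there exists $N$ with $h^{\otimes_A N} \otimes_A A' = 0$ in $\Mod_{A'}(\cC)$. Finally, viewing $A$ as an $A'$-algebra via $s_A$ and base-changing along $A' \to A$ gives
\[
h^{\otimes_A N} \otimes_A A' \otimes_{A'} A \;\cong\; h^{\otimes_A N} \otimes_A A \;\cong\; h^{\otimes_A N} \;=\; 0,
\]
so $h$ is nilpotent, completing the proof.
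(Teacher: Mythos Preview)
Your proof is correct and follows essentially the same approach as the paper. The only cosmetic differences are that for the limit ordinal step you cite \Cref{lem:fil_arrow} while the paper cites \Cref{lem:topnil-fil} (the former is a direct consequence of the latter, and in this constant-source situation they apply equally well), and you are slightly more explicit than the paper in invoking \Cref{lem:topnil-equiv}(4) to reduce the retract check to maps between compact objects.
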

\begin{proof}
Condition (1) follows directly from \Cref{lem:tnil-po}.

For condition (2):
Let 
\[
\xymatrix{
A\ar[r]\ar[d]  & C\ar[r]\ar[d]   & A\ar[d] \\
B\ar[r]  & D\ar[r]  & B
}
\]
be a retract diagram in $\CAlg(\cC)^{\Delta^1}$ with $C \to D$ detecting nilpotence. We need to show that 
$A\to B$ detects nilpotence. Indeed, 
let $f\colon M \to N$ be a map of compact $A$-modules and assume that $f\otimes_A B$ is null.  Then, $f\otimes_A D$ is null as well. Thus, by assumption, $f \otimes_A C$ is nilpotent. Taking the base chage along the map $C \to A$, we see that $f  = (f \otimes_A C)\otimes_C A$ is nilpotent.

Finally for condition (3):
Let $F\colon \alpha \to \CAlg(\cC)$ be a diagram as in \Cref{dfn:saturated}. 
We shall prove by transfinite induction that 
$F(0) \to F(\beta)$  detects nilpotence for all $\beta < \alpha$. 
Indeed, the base case $F(0) \to F(0)$ is clear. 
If $\beta = \beta'+1$, then we have a factorization of $F(0) \to F(\beta)$ as $F(0) \to F(\beta')\to F(\beta)$, where the first map detects nilpotence  by the inductive hypothesis and the second by assumption. 
We conclude that $F(0) \to F(\beta)$ detects nilpotence by \Cref{lem:DN_composition}.  
Now assume that $\beta$ is a limit ordinal; then, the claim follows from \Cref{lem:topnil-fil}.


\end{proof}

Combining this with \Cref{prop:small_object}, we obtain the following corollary, which will be our main tool to produce nilpotence detecting maps to rings with prescribed properties:

\begin{cor}\label{cor:Small_object_DN}
Let \[\{A_{s} \to B_s\}_{s\in S}\] be a set of maps in $\CAlg(\cC)$ such that for every $s\in S$ the map 
$A_s \to B_s$ detects nilpotence, and let $R\in \CAlg(\cC)$. Then there exists a map
\[R\to R' \in \CAlg(\cC) \] such that:
\begin{enumerate}
    \item $R\to R'$ detects nilpotence.
    \item For every $s\in S$, we have $(A_s \to B_s)\perp R'$.
\end{enumerate}

\end{cor}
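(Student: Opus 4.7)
The proof will be a direct application of the small object argument, with the substantive work already completed in Theorem \ref{thm:nil_saturated}. The plan is as follows.

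First, I would observe that $\CAlg(\cC)$ is presentable: since $\cC$ is compactly generated and stably symmetric monoidal, $\CAlg(\cC)$ is presentable by standard results on $\CAlg$ of presentable symmetric monoidal $\infty$-categories. This ensures Proposition \ref{prop:small_object} is applicable.

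Next, let $T$ denote the collection of all morphisms in $\CAlg(\cC)$ which detect nilpotence. By Theorem \ref{thm:nil_saturated}, $T$ is weakly saturated. By hypothesis, the set $S_0 := \{A_s \to B_s\}_{s \in S}$ is a subset of $T$. Now apply the small object argument (Proposition \ref{prop:small_object}) with this choice of $T$ and $S_0$ to the object $R \in \CAlg(\cC)$. This produces a morphism $R \to R'$ in $\CAlg(\cC)$ such that $R \to R'$ belongs to $T$ --- i.e., detects nilpotence, establishing (1) --- and such that every morphism in $S_0$ has the right lifting property with respect to $R'$, establishing (2).

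There is essentially no obstacle here; the work went into Theorem \ref{thm:nil_saturated}, where the three conditions of weak saturation (closure under cobase change via \Cref{lem:tnil-po}, closure under retracts by a base-change argument, and closure under transfinite composition via \Cref{lem:DN_composition} and \Cref{lem:topnil-fil}) were verified for nilpotence detecting maps. Given those results, this corollary is an immediate consequence of the presentability of $\CAlg(\cC)$ and the small object argument as formulated in \cite[Proposition 1.4.7]{DAGX}.
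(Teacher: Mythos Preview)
Your proposal is correct and matches the paper's approach exactly: the paper simply states that the corollary follows by combining \Cref{thm:nil_saturated} with \Cref{prop:small_object}, and your write-up spells this out, including the (implicit) check that $\CAlg(\cC)$ is presentable so that the small object argument applies.
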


\subsection{Strict Elements} \hfill

Let $\cC \in \CAlg(\Pr)$ be a pointed presentable $\infty$-category.
Since $\mathcal{S}_*$ is the universal pointed presentable $\infty$-category, we get a unique adjunction
\[ \pointL{\cC}{-}\colon \mathcal{S}_* \rightleftarrows \cC \noloc \pointR{\cC}\]
with symmetric monoidal left adjoint.  This gives rise to an adjunction
\[ \pointL{\cC}{-}\colon \CAlg(\mathcal{S}_*) \rightleftarrows \CAlg(\cC) \noloc \pointR{\cC}\]
which we may restrict to a functor
\[ \pointL{\cC}{-} \colon \CAlg(\Setc_{*}) \to \CAlg(\cC).\]

For a commutative monoid $M$ in sets denote by $M_+$ the corresponding commutative  monoid in $\Setc_*$, obtained by adding a disjoint basepoint $+$.  Note that in the ring $\pointL{\cC}{M_+}$, the identity  $e\in M$ corresponds to $1$ and the disjoint basepoint $+$ corresponds to $0$.  
Accordingly, we have that 
\[
 {\one}_{\cC}= \pointL{\cC}{\{e \}_+}, \quad {\one}_{\cC}[t]:= {\one}_{\cC}[{\NN}_+] \quad \text{and} \quad {\one}_{\cC}[t^{\pm 1}]:= {\one}_{\cC}[{\ZZ}_+] .
\]
\begin{dfn}\label{dfn:A1_Gm}
The map $\NN_+ \to \ZZ_+$  by $t\mapsto t$ and the map  $\NN_+ \to \{e\}_+$ by $t \mapsto +$ together induce  a  map
\deff{\[
{\one}_{\cC}[t] \xrightarrow{t\mapsto (t,0)} {\one}_{\cC}[t^{\pm 1}]\times {\one}_{\cC}.
\]}
\tqed
\end{dfn}

\begin{lem}\label{lem:strict_nilp}
Let $\cC \in \Pr^{\rig}$.  Then  the map 
\[{\one}_{\cC}[t] \to {\one}_{\cC}[t^{\pm 1}]\times {\one}_{\cC}\] defined in \Cref{dfn:A1_Gm} 
detects nilpotence.
\end{lem}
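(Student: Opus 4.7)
The plan is to apply \Cref{DN_Gm} inside the category of modules $\Mod_{\one_\cC[t]}(\cC)$. Setting $R := \one_\cC[t]$, I first observe that $\Mod_R(\cC)$ is again in $\Pr^{\rig}$ by \Cref{lem:ModB_is_PRig}(3), so the hypotheses of \Cref{DN_Gm} are available. The element $t \in \pi_0 R$ provides a self-map of the unit $\one_{\Mod_R(\cC)} = R$, and since the unit is automatically invertible, this is precisely a map from an invertible object to the unit as required by \Cref{DN_Gm}. Applying that lemma yields that
\[
R[t^{-1}] \times (R/t)
\]
is conservative in $\Mod_R(\cC)$, and hence detects nilpotence by \Cref{lem:DN_conservative}.

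What remains is to identify $R[t^{-1}] \times (R/t)$, as an $R$-module, with $\one_\cC[t^{\pm 1}] \times \one_\cC$ equipped with the $R$-module structure coming from the algebra map of \Cref{dfn:A1_Gm}. The localization $R[t^{-1}]$ is, by the universal property of pointed commutative monoid rings, the algebra $\one_\cC[\ZZ_+] = \one_\cC[t^{\pm 1}]$, and the $R$-action on it coincides with restriction along the localization map $\one_\cC[t] \to \one_\cC[t^{\pm 1}]$ (the first component of the map in \Cref{dfn:A1_Gm}). For the second factor, $R$ is the free commutative algebra on $t$, so the cofiber of $t\colon R \to R$ in $\cC$ is $\one_\cC$ with the $R$-module structure in which $t$ acts by zero, which matches $\one_\cC$ viewed as an $\one_\cC[t]$-algebra via $t \mapsto 0$ (the second component). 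Assembling these identifications gives the desired equivalence, and hence the map of commutative algebras in the statement detects nilpotence. I do not anticipate any real obstacles, as the entire argument is an instance of \Cref{DN_Gm} in $\Mod_R(\cC)$ combined with standard identifications of the localization and the cofiber of a free variable.
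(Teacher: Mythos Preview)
Your proposal is correct and follows essentially the same approach as the paper: the paper's proof is a one-line reference to \Cref{DN_Gm} applied to the map $t\colon \one_\cC[t] \to \one_\cC[t]$ in $\Mod_{\one_\cC[t]}(\cC)$, and you have simply spelled out the details (invoking \Cref{lem:ModB_is_PRig}(3) for the hypotheses and making the identifications $R[t^{-1}] \simeq \one_\cC[t^{\pm 1}]$ and $R/t \simeq \one_\cC$ explicit).
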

\begin{proof}
This is just \Cref{DN_Gm} for 
${\one}_{\cC}[t]  \xrightarrow{t\mapsto t} {\one}_{\cC}[t]  $ in the category $\Mod_{{\one}_{\cC}[t]}(\cC)$. 
\end{proof}

Denote ${\one}_{\cC}[\Np]:= {\one}_{\cC}[{\NN}[{1/p}]_+]$ and ${\one}_{\cC}[\Npm]:= {\one}_{\cC}[{\ZZ}[{1/p}]_+]$.
Similarly to above, we have a map 
\[f\colon {\one}_{\cC}[{\Np}] \to {\one}_{\cC}[{\Npm}]\times {\one}_{\cC}.\]

\begin{thm}\label{thm:perf_nilp}
Let $\cC \in \Pr^{\rig}$.  Then the map 
\[f\colon {\one}_{\cC}[{\Np}] \to {\one}_{\cC}[{\Npm}]\times {\one}_{\cC}\]
 defined above detects nilpotence.
\end{thm}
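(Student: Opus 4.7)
The plan is to exhibit $f$ as a filtered colimit in $\CAlg(\cC)^{\Delta^1}$ of copies of the map from \Cref{dfn:A1_Gm}, and then combine \Cref{lem:strict_nilp} with \Cref{lem:fil_arrow}.

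First I would set up the colimit presentations at the level of pointed monoids. Writing additively, $\NN[1/p] = \colim(\NN \xrightarrow{\cdot p} \NN \xrightarrow{\cdot p} \cdots)$ in $\CMon(\Set)$, and $\ZZ[1/p]$ admits the analogous filtered colimit presentation starting from $\ZZ$; multiplicatively, the transition maps become $t \mapsto t^p$. Adjoining basepoints and applying the symmetric monoidal left adjoint $\one_\cC[-] \colon \CAlg(\Set_*) \to \CAlg(\cC)$, which preserves filtered colimits, yields
\[
\one_\cC[\Np] = \colim_k \one_\cC[t] \quad\text{and}\quad \one_\cC[\Npm] = \colim_k \one_\cC[t^{\pm 1}]
\]
in $\CAlg(\cC)$, each with transitions $t \mapsto t^p$.

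Second, I would commute the product $(-) \times \one_\cC$, taken in $\CAlg(\cC)$, past this filtered colimit. Since $\cC$ is stable and presentable, the forgetful functor $\CAlg(\cC) \to \cC$ preserves both filtered colimits and finite products, and in any stable $\infty$-category finite products agree with biproducts and hence commute with filtered colimits. Therefore
\[
\one_\cC[\Npm] \times \one_\cC \;=\; \colim_k \bigl(\one_\cC[t^{\pm 1}] \times \one_\cC\bigr),
\]
with transitions $(t \mapsto t^p,\, \id)$. Combining the two colimit presentations and noting that the required squares commute because $0^p = 0$, I would identify
\[
f \;=\; \colim_k f_k \qquad \text{in } \CAlg(\cC)^{\Delta^1},
\]
where $f_k \colon \one_\cC[t] \to \one_\cC[t^{\pm 1}] \times \one_\cC$ is the map of \Cref{dfn:A1_Gm}.

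Each $f_k$ detects nilpotence by \Cref{lem:strict_nilp}, so \Cref{lem:fil_arrow} immediately implies that $f$ detects nilpotence. The only real obstacle is bookkeeping in the second step: justifying that the product with $\one_\cC$ commutes with the relevant filtered colimit in $\CAlg(\cC)$. This is where stability of $\cC$ is essential, and it is the one place where a naive ``limits commute with colimits'' slogan would fail in general but works here thanks to the biproduct identification.
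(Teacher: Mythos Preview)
Your proposal is correct and follows essentially the same approach as the paper: the paper writes down the ladder diagram with vertical maps $\one_\cC[t^{1/p^k}] \to \one_\cC[t^{\pm 1/p^k}] \times \one_\cC$ (each isomorphic to the map of \Cref{dfn:A1_Gm}) and invokes \Cref{lem:strict_nilp} and \Cref{lem:fil_arrow} to conclude. Your version is the same up to relabeling, and is actually more explicit than the paper about why the product $(-)\times\one_\cC$ commutes with the filtered colimit.
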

\begin{proof}
Consider the diagram 
\[\xymatrix{
\one_{\cC}[t] \ar[r]\ar[d] & \one_{\cC}[t^{1/p}] \ar[r]\ar[d] & \one_{\cC}[t^{1/p^2}] \ar[r]\ar[d] & \cdots\ar[d] \\
\one_{\cC}[t^{\pm 1}] \times \one_{\cC} \ar[r] & \one_{\cC}[t^{\pm 1/p}] \times \one_{\cC} \ar[r] & \one_{\cC}[t^{\pm 1/p^2}] \times \one_{\cC} \ar[r] & \cdots .
}\]

We are interested in the map which is the horizontal colimit of all the vertical maps in the diagram. Each vertical map in this diagram  detects nilpotence  by \Cref{lem:strict_nilp}, so we are done by  \Cref{lem:fil_arrow}.
\end{proof}

\subsection{Perfect algebras of Krull dimension 0}\hfill

In this section, we give some preliminaries on perfect $\F_p$-algebras of Krull dimension $0$.  Most of the facts will be deduced from facts about reduced rings of Krull dimension zero, which have been studied extensively in the literature as ``absolutely flat'' (cf. \Cref{prop:TFAE_Red_Krull0}(\ref{c6})) or ``von Neumann regular'' commutative rings.

\begin{prop}\label{prop:TFAE_Red_Krull0}
Let $B$ be a commutative  ring.  Then the following are equivalent
  \begin{enumerate}
      \item $B$ is  reduced and of Krull dimension $0$. \label{c1}
      \item Every principal ideal in $B$ is generated by an idempotent. \label{c2}
      \item Every finitely generated ideal in $B$ is generated by an idempotent.\label{c3}
      \item For every $x\in B$, there exists $y\in B$ such that $x^2y=x$. \label{c4}
      \item We have 
      \[(\mathbb{Z}[t] \xrightarrow{(t \mapsto t)\times (t \mapsto 0)} \mathbb{Z}[t^{\pm 1}] \times \mathbb{Z}) \perp B.\] \label{c5}
      \item Every $B$-module is flat.  \label{c6} 
  \end{enumerate}
\end{prop}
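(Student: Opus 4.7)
The plan is to organize these as the classical characterization of von Neumann regular (absolutely flat) rings, establishing $(1) \Rightarrow (4) \Leftrightarrow (2) \Leftrightarrow (3) \Leftrightarrow (5)$ first and then $(4) \Rightarrow (6) \Rightarrow (1)$ to close the loop. The bulk of the work is straightforward bookkeeping with idempotents; the only places requiring actual commutative-algebra input are the passages between condition (1) and the existence of the ``pseudo-inverse'' $y$ in condition (4), and the passage between (4) and flatness.

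For $(1) \Rightarrow (4)$, given $x \in B$ I would consider the ideal $\mathrm{Ann}(x) + (x)$. If it were contained in some maximal ideal $\mathfrak{m}$, then $B_{\mathfrak{m}}$ would be a reduced Krull-dimension-$0$ local ring, hence a field, and the image of $x \in \mathfrak{m}$ would vanish there, yielding some $s \notin \mathfrak{m}$ with $sx=0$, hence $s \in \mathrm{Ann}(x) \subseteq \mathfrak{m}$, a contradiction. Thus $1 = a + yx$ with $ax = 0$, and multiplying by $x$ gives $x = x^2 y$. The equivalence $(4) \Leftrightarrow (2)$ is immediate: the element $e = xy$ is an idempotent with $(x) = (e)$, and conversely $(x)=(e)$ with $e=xy$ yields $x = ex = x^2y$. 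For $(2) \Leftrightarrow (3)$ one uses the standard fact that for idempotents $e,f$ one has $(e)+(f) = (e+f-ef)$ with $e+f-ef$ again idempotent, and inducts on the number of generators.

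For $(4) \Leftrightarrow (5)$, I would unpack the lifting property. A ring map $\mathbb{Z}[t] \to B$ is the choice of an element $x \in B$, while a ring map $\mathbb{Z}[t^{\pm 1}] \times \mathbb{Z} \to B$ corresponds to a ring decomposition $B = eB \times (1-e)B$ together with a unit $u \in eB$, with $(t,0)$ mapping to $u + 0 \in B$. A lift therefore exists if and only if $x$ can be written as $u e$ with $e$ idempotent and $u$ a unit in $eB$; taking $e = xy$ (with $y$ as in (4)) produces such a decomposition, and conversely this decomposition immediately produces the required $y$.

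Finally, for $(4) \Rightarrow (6)$, condition (3) (which follows from (4)) shows that every finitely generated ideal $I \subset B$ is generated by an idempotent, hence a direct summand, hence $B/I$ is flat; since flatness of a module $M$ can be tested by $\mathrm{Tor}_1^B(B/I, M) = 0$ for finitely generated $I$, every $B$-module is flat. For $(6) \Rightarrow (1)$, flatness of $B/I$ applied to the sequence $0 \to I \to B \to B/I \to 0$ (tensored with $B/I$) gives $I/I^2 = 0$ for every ideal $I$; taking $I=(x)$ yields $x \in (x^2)$, which is (4), and we have seen (4) implies (1). The main subtlety in writing this up cleanly is the flatness direction $(4) \Rightarrow (6)$, where one should be careful that Lazard's criterion (or the Tor-vanishing criterion on finitely generated ideals) is invoked correctly; the rest of the cycle is purely formal manipulation with idempotents.
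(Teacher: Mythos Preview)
Your argument is mostly sound and more self-contained than the paper's, which simply cites the literature for the equivalences $(1)\Leftrightarrow(6)$, $(4)\Leftrightarrow(6)$, and $(2)\Leftrightarrow(3)\Leftrightarrow(4)$, and only writes out $(4)\Leftrightarrow(5)$ explicitly (via the same idempotent $e=xy$ and the splitting $B\cong B[e^{-1}]\times B[(1-e)^{-1}]$ that you use).

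There is, however, a genuine logical gap in your cycle. You announce the scheme $(1)\Rightarrow(4)\Rightarrow(6)\Rightarrow(1)$, but in the final step you actually prove $(6)\Rightarrow(4)$ and then assert ``we have seen $(4)$ implies $(1)$''. You have not: you proved the \emph{opposite} implication $(1)\Rightarrow(4)$. As written, nothing gets you back to $(1)$, so the equivalence is not closed. The fix is easy --- from $(4)$, the relation $x(1-xy)=0$ shows that if $x$ is nilpotent then $x=0$ (reducedness), and if $\mathfrak{p}\subsetneq\mathfrak{q}$ are primes with $x\in\mathfrak{q}\setminus\mathfrak{p}$ then $1-xy\in\mathfrak{p}\subset\mathfrak{q}$ and $xy\in\mathfrak{q}$ force $1\in\mathfrak{q}$ (Krull dimension $0$) --- but you need to actually supply this argument rather than invoke a direction you did not establish.
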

\begin{proof}

The equivalence of (\ref{c1}) and (\ref{c6}) is \cite[Tag 092A, Lemma 15.104.5]{stacks-project}, the equivalence of (\ref{c4}) and (\ref{c6}) is \cite[Theorem 1]{auslander1957regular}, and the equivalence of (\ref{c2}), (\ref{c3}) and (\ref{c4}) is \cite[Theorem 1.1]{Goodearl}.  



It remains to show that (\ref{c5}) is equivalent to these:  we start by showing that (\ref{c4}) implies (\ref{c5}).  Let $x\in B$  be the image of $t$ under a map 
$\ZZ[t] \to B$, take $y$ such that $x^2y = x$, and denote $e = xy$. Now $e$ is an idempotent
in $B$, and thus it suffices to show that the map $\ZZ[t] \to B \to B[e^{-1}]$ factors through 
$\ZZ[t^{\pm 1}]$ and that the map $\ZZ[t] \to B \to B[(1-e)^{-1}]$ factors through $\ZZ$.
Indeed, in $B[e^{-1}]$, $e^{-1}y$ is an inverse to $x$ and in $ B[(1-e)^{-1}]$, we have 
\[
x = x(1-e)(1-e)^{-1} = (x-x^2y)(1-e)^{-1} = 0(1-e)^{-1} = 0.
\]

Finally, we show that (\ref{c5}) implies (\ref{c4}). Let $x \in B$.  By taking the corresponding map 
$\mathbb{Z}[t]\to B$ by $t\mapsto x$, we get an extension to a map $\mathbb{Z}[t^{\pm 1}]\times \mathbb{Z} \to B$.  We take $y \in B$ to be the image of $(t^{-1},0)$.
\end{proof}

\begin{cor}\label{cor:perfvnrprops}
  Let $A$ be a perfect $\mathbb{F}_p$-algebra.  Then the following are equivalent:
  \begin{enumerate}
      \item $A$ is of Krull dimension $0$.\label{c'1}
      \item Every principal ideal in $A$ is generated by an idempotent.     \label{c'2}
      \item Every finitely generated ideal in $A$ is generated by an idempotent.\label{c'3}

      \item For every $x\in A$, there exists a $y\in A$ such that $x^2y=x$.\label{c'4}
      \item \[(\mathbb{F}_p[\Np] \xrightarrow{(t \mapsto t)\times (t \mapsto 0)} \mathbb{F}_p[\Npm] \times \mathbb{F}_p) \perp A.\] \label{c'5}
      \item Every $A$-module is flat.  \label{c'6}
  \end{enumerate}
\end{cor}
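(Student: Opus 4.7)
Proof proposal.

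The plan is to reduce the bulk of the statement to the preceding \Cref{prop:TFAE_Red_Krull0} by observing that any perfect $\mathbb{F}_p$-algebra is automatically reduced. Indeed, if $x \in A$ satisfies $x^n = 0$ for some $n$, then choosing $k$ with $p^k \geq n$ gives $x^{p^k} = 0$, and the injectivity of the Frobenius on the perfect algebra $A$ allows us to conclude $x = 0$ by iteration. Given this, conditions (\ref{c'1}), (\ref{c'2}), (\ref{c'3}), (\ref{c'4}), and (\ref{c'6}) are immediately equivalent by \Cref{prop:TFAE_Red_Krull0}, since those equivalences only required the ring to be reduced.

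The only point requiring a separate argument is condition (\ref{c'5}). The key observation is that the perfection functor $(-)^{\sharp}\colon \CRing \to \Perf$ is left adjoint to the inclusion, so for any perfect $\mathbb{F}_p$-algebra $A$, maps $\mathbb{F}_p[\Np] \to A$ correspond bijectively to elements of $A$ (via $t \mapsto x$), and maps $\mathbb{F}_p[\Npm]\times \mathbb{F}_p \to A$ correspond to splittings $A \cong A_1 \times A_2$ together with a choice of unit in $A_1$ and the zero element in $A_2$. Thus, condition (\ref{c'5}) unwinds to: for every $x \in A$ there exists an idempotent $e \in A$ such that the image of $x$ in $A[e^{-1}]$ is invertible and the image of $x$ in $A[(1-e)^{-1}]$ is zero.

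To finish, I would show this is equivalent to (\ref{c'4}) by essentially repeating the (\ref{c4})~$\Leftrightarrow$~(\ref{c5}) argument from \Cref{prop:TFAE_Red_Krull0}. For (\ref{c'4}) $\Rightarrow$ (\ref{c'5}): given $y$ with $x^2 y = x$, set $e = xy$; then $e$ is idempotent, $y$ provides an inverse to $x$ in $A[e^{-1}]$, and $x = x \cdot 1 = x(e + (1-e)) = x \cdot xy + x(1-e)$ shows $x(1-e) = 0$, so $x$ vanishes in $A[(1-e)^{-1}]$. For (\ref{c'5}) $\Rightarrow$ (\ref{c'4}): given such an idempotent $e$, choose $y' \in A[e^{-1}] \cong eA$ inverting $x$ there; by clearing denominators (using $e^{p^{-k}} = e$ since $e$ is idempotent and $A$ is perfect), one finds $y \in eA$ with $xy = e$, and then $x^2 y = xe = x$ since $x(1-e) = 0$.

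The main (small) obstacle is just bookkeeping in (\ref{c'5}) $\Rightarrow$ (\ref{c'4}) to verify that a lift against the perfected map gives a genuine equation $x^2 y = x$ in $A$, but this is essentially automatic because $A$ being perfect ensures no denominators appear beyond idempotents. No new ideas beyond \Cref{prop:TFAE_Red_Krull0} and the adjunction $(-)^{\sharp} \dashv \mathrm{incl}$ are needed.
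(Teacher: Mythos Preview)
Your proposal is correct and follows the same overall strategy as the paper: perfect $\F_p$-algebras are reduced, so conditions (\ref{c'1})--(\ref{c'4}) and (\ref{c'6}) are inherited directly from \Cref{prop:TFAE_Red_Krull0}, and condition (\ref{c'5}) is handled via the adjunction $(-)^\sharp \dashv \mathrm{incl}$.

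The one difference is that your treatment of (\ref{c'5}) is more laborious than necessary. The paper simply observes that the left adjoint $(-)^\sharp$ applied to the map $\Z[t] \to \Z[t^{\pm 1}]\times \Z$ of \Cref{prop:TFAE_Red_Krull0}(\ref{c5}) yields exactly the map $\F_p[\Np] \to \F_p[\Npm]\times \F_p$ in (\ref{c'5}); since for perfect $A$ the adjunction identifies maps out of an object with maps out of its perfection, the lifting condition $(\ref{c'5})$ is \emph{literally} the lifting condition (\ref{c5}) of \Cref{prop:TFAE_Red_Krull0}. There is no need to unwind what the maps look like or to re-prove the equivalence with (\ref{c'4}) by hand---the adjunction already transports the entire statement.
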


\begin{proof}
These are just the 6 conditions above specialized to perfect algebras. For (\ref{c'1}), note that perfect algebras are always reduced and for (\ref{c'5}), note that the inclusion of perfect $\mathbb{F}_p$-algebras into rings admits a left adjoint sending the map $(\mathbb{Z}[t] \xrightarrow{(t \mapsto t)\times (t \mapsto 0)} \mathbb{Z}[t^{\pm 1}] \times \mathbb{Z})$ to the map $(\mathbb{F}_p[{\Np}] \xrightarrow{(t \mapsto t)\times (t \mapsto 0)} \mathbb{F}_p[{\Npm}] \times \mathbb{F}_p)$.
\end{proof}

\begin{cor}\label{cor:flatmap_inj}
  Suppose $B$ is an $\F_p$-algebra such that $B^{\flat}$ is of Krull dimension $0$.  Then the natural map $B^{\flat} \to B$ is injective.
\end{cor}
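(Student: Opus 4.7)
The plan is to exploit the criterion from \Cref{cor:perfvnrprops}(4): since $B^{\flat}$ is by construction a perfect $\F_p$-algebra, and perfect $\F_p$-algebras are automatically reduced (Frobenius is bijective, and hence injective, so nilpotents must vanish), the hypothesis that $B^{\flat}$ has Krull dimension $0$ puts us in a position to apply \Cref{cor:perfvnrprops}. Concretely, given any $x$ in the kernel of the counit map $\epsilon\colon B^{\flat}\to B$, we obtain $y \in B^{\flat}$ with $x^2 y = x$. Setting $e := xy$, a direct computation shows $e^2 = x^2y^2 = (x^2y)y = xy = e$, so $e$ is idempotent, and the identity $xe = x^2y = x$ reduces the problem to showing that $e = 0$.

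The key remaining observation is that the counit $\epsilon\colon B^{\flat}\to B$ is injective on idempotents. Since $B^{\flat}$ is the inverse limit of the tower $\cdots \xrightarrow{\phi} B \xrightarrow{\phi} B$ along Frobenius $\phi(x) = x^p$, an element is a compatible sequence $(e_0,e_1,e_2,\ldots)$ with $e_{i+1}^p = e_i$, and the counit is the projection $(e_0,e_1,\ldots)\mapsto e_0$. If such a sequence is idempotent in $B^{\flat}$, then each component $e_i$ is an idempotent in $B$; but for any idempotent $e_{i+1}\in B$ and prime $p$ we have $e_{i+1}^p = e_{i+1}$, so the compatibility condition forces $e_i = e_{i+1}$ for every $i$. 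Hence all components equal $e_0$, and $\epsilon(e) = 0$ implies $e = 0$.

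Combining the two steps, since $x \in \ker\epsilon$ and $e = xy$, the kernel being an ideal gives $e \in \ker\epsilon$. By the second step $e = 0$, and therefore $x = xe = 0$, as desired.

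There is no serious obstacle in this plan; the only subtlety is the passage from a principal ideal being generated by an idempotent to an actual idempotent in the kernel, for which one must verify the rigidity of idempotents under tilting described in the second paragraph. Everything else is immediate from \Cref{cor:perfvnrprops} and the explicit description of $(-)^{\flat}$ as the inverse limit along Frobenius.
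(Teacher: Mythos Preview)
Your proof is correct and follows essentially the same approach as the paper's. The only cosmetic difference is that the paper invokes condition~(\ref{c'2}) of \Cref{cor:perfvnrprops} (the principal ideal $(x)$ is generated by an idempotent $e$) rather than condition~(\ref{c'4}), and phrases the vanishing of $e$ as ``the $p$th-root representatives of $e$ are nilpotent idempotents, hence zero'' rather than your ``idempotents are Frobenius-fixed, so the sequence is constant''; these are equivalent formulations of the same observation.
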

\begin{proof}
Suppose $x \in \ker (B^{\flat} \to B)$.  Then by \Cref{cor:perfvnrprops}(\ref{c'2}), the principal ideal $(x)\subset B^{\flat}$ is generated by an idempotent $e \in B^{\flat}$.  But by definition of $x$, each of the $p$th root representatives of $e$ must be nilpotent; since nilpotent idempotents are zero, this means that $e=0$ and therefore $x=0$.  
\end{proof}

We now pass to the situation of an arbitrary stable $p$-complete presentably symmetric monoidal $\infty$-category $\cC$. For the following corollary, recall the situation of \Cref{cnstr:C-witt-tilt}, and in particular the functors $\W_{\CC}$ and $(-)^{\flat}_{\CC}$. 


\begin{cor}\label{cor:vnrlift}
  Let $\cC$ be a stable $p$-complete presentably symmetric monoidal $\infty$-category.
  Let 
  \[f\colon {\one}_{\cC}[{\Np}] \to {\one}_{\cC}[{\Npm}]\times {\one}_{\cC}\]
  be the map as in the statement of \Cref{thm:perf_nilp}.  Then an algebra  $R \in \CAlg(\cC)$ satisfies $f\perp R$ if and only if $R^{\flat}_{\cC} \in \Perf_{\mathbb{F}_p}$ is of Krull dimension $0$.
\end{cor}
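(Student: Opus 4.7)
The plan is to translate the lifting condition across the Witt--tilt adjunction of \Cref{cnstr:C-witt-tilt} so that it becomes a condition testable in the 1-category $\Perf$, and then directly invoke \Cref{cor:perfvnrprops}. More precisely, composing the Witt--tilt adjunction with $\iota^* \dashv \iota_*$ gives an adjunction $\iota^*\W(-)\colon \Perf \rightleftarrows \CAlg(\cC) \colon (-)^{\flat}_{\cC}$, and the goal is to exhibit $f$ as the image under $\iota^*\W(-)$ of the map
\[ \mathbb{F}_p[\Np] \xrightarrow{(t\mapsto t)\times(t\mapsto 0)} \mathbb{F}_p[\Npm] \times \mathbb{F}_p \]
in $\Perf$. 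Granting this, the adjunction bijection between lifting problems finishes the proof via \Cref{cor:perfvnrprops}(\ref{c'5}).

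Step 1 is to identify the source and target of $f$. Since $p$ acts invertibly on the monoids $\N[1/p]$ and $\Z[1/p]$, \Cref{exm:line} gives $\W(\mathbb{F}_p[\Np]) \simeq \Ss[\Np]_p$ and $\W(\mathbb{F}_p[\Npm]) \simeq \Ss[\Npm]_p$, and trivially $\W(\mathbb{F}_p) \simeq \Ss_p$. Applying the symmetric monoidal left adjoint $\iota^*$ (and using that $\cC$ is $p$-complete, so $\iota^*\Ss_p \simeq \one_\cC$) identifies the source of $f$ with $\iota^*\W(\mathbb{F}_p[\Np])$.

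Step 2 is to identify the target of $f$ as $\iota^*\W(\mathbb{F}_p[\Npm] \times \mathbb{F}_p)$. By \Cref{lem:W-limits}(1), $\W$ preserves the product $\mathbb{F}_p[\Npm] \times \mathbb{F}_p$. It remains to check that $\iota^*\colon \CAlg(\Sp_p) \to \CAlg(\cC)$ preserves this binary product; this is the one point that requires a short verification. Finite products in $\CAlg$ are detected by the conservative forgetful functors to $\Sp_p$ and $\cC$, which preserve limits; in these stable categories binary products coincide with binary coproducts (biproducts), and $\iota^*$ preserves biproducts because it is a left adjoint. Combined with the fact that $\iota^*$ is symmetric monoidal (so respects algebra structures componentwise), this yields $\iota^*(R_1 \times R_2) \simeq \iota^* R_1 \times \iota^* R_2$ in $\CAlg(\cC)$.

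Step 3 is to observe, by the naturality of the adjunction and the explicit description of $f$ coming from \Cref{dfn:A1_Gm} (which is functorial in the underlying map of pointed commutative monoids and hence also functorial in $\mathbb{F}_p[-]$ and $\W(-)$), that under the identifications above $f$ is precisely $\iota^*\W$ applied to $(t\mapsto t)\times(t\mapsto 0)$. The Witt--tilt adjunction now supplies a natural bijection between the lifting problems
\[
\begin{tikzcd}
\one_\cC[\Np] \ar[r]\ar[d,"f"'] & R \\ \one_\cC[\Npm] \times \one_\cC \ar[ur,dashed] &
\end{tikzcd}
\qquad\text{and}\qquad
\begin{tikzcd}
\mathbb{F}_p[\Np] \ar[r]\ar[d] & R^{\flat}_\cC \\ \mathbb{F}_p[\Npm] \times \mathbb{F}_p \ar[ur,dashed] &
\end{tikzcd}
\]
so $f\perp R$ in $\CAlg(\cC)$ is equivalent to $((t\mapsto t)\times(t\mapsto 0)) \perp R^{\flat}_\cC$ in $\Perf$, which by \Cref{cor:perfvnrprops}(\ref{c'5}) is equivalent to $R^{\flat}_\cC$ having Krull dimension $0$. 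The only mildly nontrivial step is the preservation of binary products by $\iota^*$ in Step 2; beyond that the argument is a purely formal application of the adjunction and the characterization already worked out in \Cref{cor:perfvnrprops}.
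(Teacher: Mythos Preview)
Your proof is correct and follows essentially the same approach as the paper: identify $f$ as the image of the map $\mathbb{F}_p[\Np]\to\mathbb{F}_p[\Npm]\times\mathbb{F}_p$ under the left adjoint of the Witt--tilt adjunction, then transport the lifting condition across the adjunction and invoke \Cref{cor:perfvnrprops}(\ref{c'5}). Your Step~2, verifying that $\iota^*$ preserves the binary product via the biproduct argument in stable categories, makes explicit a point the paper leaves implicit, but otherwise the arguments coincide.
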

\begin{proof}
By \Cref{exm:line}, the map $\Ss[{\Np}]_p \to \Ss[{\Npm}]_p\times \Ss_p$ in $\CAlg(\Sp_p)$ is obtained by applying $\W$ to the map $\mathbb{F}_p[\Np] \to \mathbb{F}_p[\Npm]\times \mathbb{F}_p$.  It follows by the formula for $\W_{\CC}$ after \Cref{cnstr:C-witt-tilt} that ${\one}_{\cC}[{\Np}] \to {\one}_{\cC}[{\Npm}]\times {\one}_{\cC}$ is obtained by applying $\mathbb{W}_{\cC}$ to the map $\mathbb{F}_p[\Np] \to \mathbb{F}_p[\Npm]\times \mathbb{F}_p$, so the conclusion follows by \Cref{cor:perfvnrprops}(\ref{c'5}) and the fact that $(-)^{\flat}_{\CC}$ is right adjoint to $\W_{\CC}$.
\end{proof}

\begin{lem}\label{Von Neuman flat}
Let $B$ be a reduced ring of Krull dimension $0$ and let $\{B\to F_i\}_{i\in I}$ be the set of quotient maps by maximal ideals of $B$.  Then 
    the collection of functors  
    \[\Modh_B \to \Modh_{F_i}\] is jointly conservative.
\end{lem}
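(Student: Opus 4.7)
The plan is to reduce the problem to the classical local-global principle for modules, by identifying each residue field $F_i = B/\mathfrak{m}_i$ with the localization $B_{\mathfrak{m}_i}$. The key observation is that, by \Cref{prop:TFAE_Red_Krull0}, every $B$-module is flat, so each base change functor $-\otimes_B F_i$ is exact. Exactness implies that joint conservativity on morphisms is equivalent to joint conservativity on objects: if $f\colon M\to N$ satisfies $f\otimes_B F_i$ an isomorphism for every $i$, then $\ker(f)\otimes_B F_i=0$ and $\mathrm{coker}(f)\otimes_B F_i=0$ for every $i$, so it suffices to show that a $B$-module $M$ with $M\otimes_B F_i=0$ for all $i$ is zero.

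Next, I would show that for each maximal ideal $\mathfrak{m}_i$ of $B$ there is an identification $F_i\cong B_{\mathfrak{m}_i}$. Given any $x\in \mathfrak{m}_i$, condition (\ref{c4}) of \Cref{prop:TFAE_Red_Krull0} supplies $y\in B$ with $x^2y=x$, i.e. $x(1-xy)=0$. Since $x\in \mathfrak{m}_i$, the element $1-xy$ is not in $\mathfrak{m}_i$, so it becomes a unit in $B_{\mathfrak{m}_i}$; hence $x$ maps to $0$ in $B_{\mathfrak{m}_i}$. This forces $\mathfrak{m}_iB_{\mathfrak{m}_i}=0$, so that $B_{\mathfrak{m}_i}=B/\mathfrak{m}_i=F_i$.

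Finally, under this identification, $M\otimes_B F_i=M\otimes_B B_{\mathfrak{m}_i}=M_{\mathfrak{m}_i}$. Since $B$ is absolutely flat, every prime ideal is maximal, so the collection $\{\mathfrak{m}_i\}$ exhausts $\mathrm{Spec}(B)$. The standard local-global principle then gives $M=0$, completing the proof.

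I expect no real obstacle: the entire content is the identification $F_i\cong B_{\mathfrak{m}_i}$, which is immediate from the absolutely flat characterization already recorded in \Cref{prop:TFAE_Red_Krull0}(\ref{c4}), together with the familiar fact that vanishing at all localizations implies vanishing.
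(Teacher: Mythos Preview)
Your proof is correct but takes a different route from the paper. The paper argues directly: given a nonzero $m\in M$ with annihilator $J$, choose a maximal ideal $P_i\supset J$; the injection $B/J\hookrightarrow M$ (via $m$) tensored with the flat $B$-module $F_i=B/P_i$ remains injective, and since $F_i\otimes_B(B/J)=F_i$ this shows $F_i\otimes_B M\neq 0$. Your approach instead identifies $F_i$ with the localization $B_{\mathfrak{m}_i}$ (a nice consequence of absolute flatness that the paper does not isolate) and then invokes the standard local-global principle. The paper's argument is slightly more self-contained and uses flatness only of the single residue field $F_i$ rather than of all $B$-modules; your argument is cleaner in that it reduces to a textbook fact once the identification $F_i\cong B_{\mathfrak{m}_i}$ is made, and it makes explicit the (routine) reduction from conservativity on morphisms to detection of zero objects, which the paper leaves implicit.
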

\begin{proof}
Let $M$ be a non-zero module and let $m\in M$ a non-zero element.  Denote by $J \subset B$ the annihilator of $m$. Since $m$ is non-zero, $J$ is a proper ideal and is thus contained in some  maximal ideal  $J \subset P_i$ with $B/P_i = F_i$. We shall show that $F_i\otimes_B M \neq 0$. Indeed, we have an injection 
$B/J \to  M$ given by $m$. Since $F_i$ is flat over $B$ by \Cref{prop:TFAE_Red_Krull0}(\ref{c6}),  we have an injection 
$ F_i = F_i\otimes_B B/J \to F_i\otimes_B M$.
\end{proof}

\begin{lem}\label{lem:flat}
Let $A \in \Perf_{\mathbb{F}_p}$ be of Krull dimension $0$ and let $A \to F$ be a map to a perfect domain. Let $\cC\in \Prig$ be $p$-complete, and let $c\in \Mod_{\W_{\cC}(A)}(\cC)^{\omega}$ and $d\in \Mod_{\W_{\cC}(A)}(\cC)$.  Then the natural map
\[
W(F)\otimes_{W(A)}\left[c,d\right]_{\Mod_{\W_{\cC}(A)}(\cC)} \to \left[c\otimes_{\W_{\cC}(A)}\W_{\cC}(F),d\otimes_{\W_{\cC}(A)}\W_{\cC}(F)\right]_{\Mod_{\W_{\cC}(F)}(\cC)}
\] is a bijection. 
\end{lem}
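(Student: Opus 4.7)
\medskip

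\noindent\textbf{Plan.} The strategy is to reduce the statement to a flat base-change calculation. The hypothesis on $A$ is used only to ensure that $A \to F$ is flat, via Corollary~\ref{cor:perfvnrprops}(\ref{c'6}) (every $A$-module is flat since $A$ is a perfect $\F_p$-algebra of Krull dimension $0$). The hypothesis on $c$ is used only to ensure that $c$ is dualizable in $\Mod_{\W_{\cC}(A)}(\cC)$, via \Cref{lem:ModB_is_PRig}(3).

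\medskip

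\noindent\textbf{Step 1 (flatness on classical Witt vectors).} First I would upgrade flatness of $A \to F$ to flatness of $W(A) \to W(F)$. Since both rings are $p$-torsion-free with $W(A)/p = A$ and $W(F)/p = F$, the local criterion of flatness (plus $p$-completeness) reduces this to flatness of $F$ over $A$, already established.

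\medskip

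\noindent\textbf{Step 2 (flat base change in $\cC$).} From the formula $\W_{\cC}(F) \simeq \W_{\cC}(A) \otimes_{\iota^{*}W(A)} \iota^{*}W(F)$ of \Cref{cnstr:C-witt-tilt} and Step~1, the functor
\[
- \otimes_{\W_{\cC}(A)}\W_{\cC}(F) \colon \Mod_{\W_{\cC}(A)}(\cC) \longrightarrow \Mod_{\W_{\cC}(F)}(\cC)
\]
is computed by the (derived) base change along the flat map $\iota^{*}W(A) \to \iota^{*}W(F)$. In particular, it is t-exact, and for any $M \in \Mod_{\W_{\cC}(A)}(\cC)$ we have a natural isomorphism
\[
\pi_{0}\!\bigl(M \otimes_{\W_{\cC}(A)} \W_{\cC}(F)\bigr) \;\cong\; \pi_{0}(M) \otimes_{W(A)} W(F).
\]

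\medskip

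\noindent\textbf{Step 3 (dualizability and mapping spectra).} By \Cref{lem:ModB_is_PRig}(3), $\Mod_{\W_{\cC}(A)}(\cC) \in \Prig$, so the compact object $c$ is dualizable with dual $c^{\vee}$. This yields equivalences
\[
\Map_{\Mod_{\W_{\cC}(F)}(\cC)}\bigl(c \otimes_{\W_{\cC}(A)} \W_{\cC}(F),\, d \otimes_{\W_{\cC}(A)} \W_{\cC}(F)\bigr) \;\simeq\; c^{\vee} \otimes_{\W_{\cC}(A)} d \otimes_{\W_{\cC}(A)} \W_{\cC}(F)
\]
\[
\simeq\; \Map_{\Mod_{\W_{\cC}(A)}(\cC)}(c,d) \otimes_{\W_{\cC}(A)} \W_{\cC}(F),
\]
using that dualization commutes with base change for dualizable objects. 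Applying $\pi_{0}$ and invoking Step~2 identifies the result with $[c,d] \otimes_{W(A)} W(F)$, which is the claim.

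\medskip

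\noindent\textbf{Main obstacle.} The only nontrivial step is Step~2: verifying that the abstract base change $\W_{\cC}(A) \to \W_{\cC}(F)$ is flat enough to commute with $\pi_{0}$ in the ambient category $\cC$ (which need not have an obvious ``discrete'' model). The classical input---flatness of $W(A) \to W(F)$---is standard once $A \to F$ is flat, but one must be careful that the spectral tensor product $\iota^{*}W(F) \otimes_{\iota^{*}W(A)} (-)$ does agree with the underived one on connective modules, so as to identify $\pi_{0}$ of the base change with the ordinary tensor product of $W(A)$-modules. Everything else is formal manipulation with dualizable objects.
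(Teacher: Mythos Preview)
There is a genuine gap in Step~2. The category $\cC$ is an arbitrary object of $\Prig$ and carries no $t$-structure in general, so the phrase ``$\pi_0(M)$'' for $M \in \Mod_{\W_\cC(A)}(\cC)$ is not defined, and ``$t$-exactness'' of the base-change functor has no meaning. If one tries to interpret $\pi_0(M)$ as $[\W_\cC(A),M]$ (mapping out of the unit), then your Step~2 claim becomes
\[
[\W_\cC(A),\, M \otimes_{\W_\cC(A)} \W_\cC(F)] \;\cong\; W(F)\otimes_{W(A)} [\W_\cC(A),M],
\]
but this fails in general precisely because $\W_\cC(A)$ need \emph{not} be compact in $\Mod_{\W_\cC(A)}(\cC)$ (recall that $\Prig$ does not require the unit to be compact; see the Warning after the definition). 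So flatness of $W(A)\to W(F)$ alone does not give you this commutation. Relatedly, in Step~3 you conflate the internal hom $c^\vee \otimes d \in \Mod_{\W_\cC(A)}(\cC)$ with the mapping spectrum in $\Sp$; the equivalence you write does not typecheck.

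The repair, and the point of divergence from the paper, is that you must use the full strength of $c$ being \emph{compact}, not merely dualizable: it is $[c,-]$, not $[\W_\cC(A),-]$, that commutes with filtered colimits. One could then try to write $\W_\cC(F)$ as a filtered colimit of finite free $\W_\cC(A)$-modules via some spectral Lazard-type statement, but making this precise in the $p$-complete setting is nontrivial and is exactly what you flag as the ``main obstacle'' without resolving. The paper sidesteps this entirely: rather than proving an abstract flat base-change lemma, it factors $A \to F$ concretely as $A \twoheadrightarrow F' \hookrightarrow F$ with $F'$ a residue field of $A$ (hence reached from $A$ by a filtered colimit of idempotent localizations, using Krull dimension~$0$) and $F$ free over $F'$, and then verifies the claim directly for idempotent localizations, filtered colimits, and free extensions---each time using compactness of $c$ to commute $[c,-]$ with the relevant colimit.
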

\begin{proof}
Let $\mathcal{F} \subset \Perf_A$ be the collection of perfect $A$-algebras that satisfy  the statement of the theorem.  We wish to show that $\mathcal{F}$ contains all perfect domains.
We claim that
\begin{enumerate}
    \item If $B \in \mathcal{F}$ and $e\in B$ is an idempotent, then $B[e^{-1}] \in \mathcal{F}$.
    \item If $I$ is filtered and $G\colon I \to \Perf_{A}$ such that  $G(i) \in \mathcal{F}$ for all $i\in I$, then we also have $\colim_{i \in I} G(i) \in \mathcal{F}$.
    \item If $B\in \mathcal{F}$ and $B \to B'$ is a map which exhibits $B'$ as a free $B$-module, then $B' \in \mathcal{F}$.
\end{enumerate}
Claim (1) is clear since a retract of a bijection is a bijection.  Claim (2) follows from the compactness of $c$ and the fact that $W(-)$ and $\W_{\cC}$ commute with filtered colimits.  Finally, claim (3) is clear for finitely generated free modules $B'$, and the general case follows from the finitely generated case and the compactness of $c$.  

We now claim that if $A \to B$ is surjective, then $B\in \mathcal{F}$. Indeed,
write $B = A/J$.  Now, we can write $J$ as the filtered colimit of its finitely generated subideals $J_i \subset J$.  Then we have that $B$ is the filtered colimit of the $A/J_i$, which by \Cref{prop:TFAE_Red_Krull0}(\ref{c3}) is a filtered colimit of quotients of $A$ by idempotents.   Each of these quotients belongs to $\mathcal{F}$ by $(1)$, and thus $B \in \mathcal{F}$ by the above claim (2).

Now given a map $A \to F$ to a domain, we can decompose the map as $A \to F' \to F$ with $F' \to F$ the inclusion of the image, so $A \to F'$ is surjective.  Since $F'$ is a domain, it is the quotient of $A$ by a prime ideal, which is necessarily maximal since $A$ has Krull dimension 0.  Thus, $F'$ is a field and so $F$ is free over $F'$, and the lemma follows from claim (3).
\end{proof}


\begin{thm}\label{thm:DN_Krull0}
  Let $\cC\in \Prig$ and assume that $\cC$ is $p$-complete.  Let $A \to B$ be a  
  map in $\Perf_{\mathbb{F}_p}$ and assume that $A$ has Krull dimension $0$.  Then the following are equivalent:
  \begin{enumerate}
      \item The object   $\W_{\cC}(B) \in \Mod_{\W_{\cC}(A)} $ is  conservative.
      \item The map  $\W_{\cC}(A) \to \W_{\cC}(B)$ detects nilpotence.
      \item The base change functor \[
      \Mod_{\W_{\cC}(A)} \to \Mod_{\W_{\cC}(B)}
     \] is nil-conservative. 
      \item The map $\Spec(B) \to \Spec(A)$ is  surjective.
  \end{enumerate}
\end{thm}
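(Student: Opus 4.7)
The plan is to prove the cycle of implications $(1) \Rightarrow (2) \Rightarrow (3) \Rightarrow (4) \Rightarrow (1)$. The first two links are formal: $(1) \Rightarrow (2)$ is \Cref{lem:DN_conservative}, and $(2) \Rightarrow (3)$ is \Cref{lem:DN_nilconservative}.

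For $(3) \Rightarrow (4)$ I would argue the contrapositive. If $P \in \Spec(A)$ is not in the image of $\Spec(B) \to \Spec(A)$, then since $A$ has Krull dimension zero, $P$ is maximal with residue field $F = A/P$ a perfect field, and the absence of a prime of $B$ over $P$ forces $B \otimes_A F = 0$. Because $\W_{\cC}$ is a left adjoint (\Cref{cnstr:C-witt-tilt}), it preserves pushouts, so $\W_{\cC}(B) \otimes_{\W_{\cC}(A)} \W_{\cC}(F) \simeq \W_{\cC}(B \otimes_A F) = 0$. As $\W_{\cC}(F)$ is a non-zero $\W_{\cC}(A)$-algebra (visible from the formula $\pi_0 \W_{\cC}(F) \cong W(F) \otimes_{W(A)} \pi_0 \W_{\cC}(A)$ obtained by applying \Cref{lem:flat} with $c = d = \W_{\cC}(A)$), this contradicts nil-conservativity.

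The core of the proof is $(4) \Rightarrow (1)$. Let $M \in \Mod_{\W_{\cC}(A)}(\cC)$ with $M \otimes_{\W_{\cC}(A)} \W_{\cC}(B) = 0$; I wish to conclude $M = 0$. By compact generation it suffices to show $\pi_{\ast}\Map(c, M) = 0$ for every compact $c$. Enumerate the residue fields of $A$ as $\{F_i\}$. By part (\ref{c2}) of \Cref{prop:TFAE_Red_Krull0}, each maximal ideal of $A$ is generated by an idempotent, so each $F_i$ is a ring-theoretic direct factor of $A$; consequently $W(A)$ decomposes as a product with $W(F_i)$ as a factor, exhibiting $W(F_i)$ as flat over $W(A)$. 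Surjectivity of $\Spec(B) \to \Spec(A)$ supplies, for each $i$, a residue field $F'_i$ of $B$ together with a non-zero (hence faithfully flat) field extension $F_i \hookrightarrow F'_i$; the induced map $W(F_i) \to W(F'_i)$ of discrete valuation rings with common uniformizer $p$ is torsion-free, hence faithfully flat, so $W(F'_i)$ is flat over $W(A)$.

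Since $M \otimes_{\W_{\cC}(A)} \W_{\cC}(F'_i)$ arises as a base-change of $M \otimes_{\W_{\cC}(A)} \W_{\cC}(B) = 0$, it vanishes. I would apply \Cref{lem:flat} with $c$ compact and $d = \Sigma^k M$ (still arbitrary) to deduce $W(F'_i) \otimes_{W(A)} N_k = 0$ for every $k \in \Z$, where $N_k := \pi_{-k}\Map(c, M)$. Flatness of $W(F'_i)$ over $W(A)$ permits tensoring the short exact sequence $0 \to N_k[p] \to N_k \xrightarrow{p} N_k \to N_k/p \to 0$ with $W(F'_i)$ to conclude that both $W(F'_i) \otimes_{W(A)} N_k[p]$ and $W(F'_i) \otimes_{W(A)} N_k/p$ vanish. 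As $N_k[p]$ and $N_k/p$ are annihilated by $p$ they are $A$-modules, so these tensors simplify to $F'_i \otimes_A N_k[p] = 0$ and $F'_i \otimes_A N_k/p = 0$. Faithful flatness of $F_i \hookrightarrow F'_i$ gives the same statements with $F_i$ in place of $F'_i$ for every $i$, and \Cref{Von Neuman flat} then forces $N_k[p] = N_k/p = 0$. Hence $p$ acts invertibly on each $N_k$, making each $N_k$ a $\Z[1/p]$-module. But $\Map(c, M)$ is $p$-complete (as $\cC$ is), so its homotopy groups are derived $p$-complete, and a derived $p$-complete $\Z[1/p]$-module is zero. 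Thus $N_k = 0$ for every $k$, completing the argument.

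The main obstacle is this last implication, where the challenge is transporting the joint conservativity of ordinary residue fields (\Cref{Von Neuman flat}) across the Witt-vector functor into the derived module category. This is navigated by combining \Cref{lem:flat} (to compute $\pi_{\ast}$ of mapping spectra via $W$-tensor products), the flatness supplied by the idempotent decomposition of a Krull-dimension-zero perfect algebra, and a final derived Nakayama argument to rule out residual $\Z[1/p]$-module behavior.
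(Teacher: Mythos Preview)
Your cycle of implications and the arguments for $(1)\Rightarrow(2)\Rightarrow(3)\Rightarrow(4)$ match the paper's. In $(4)\Rightarrow(1)$ there is a genuine error: you assert that every maximal ideal of $A$ is generated by an idempotent, citing \Cref{prop:TFAE_Red_Krull0}(\ref{c2}), but that result only covers \emph{principal} (and, via (\ref{c3}), finitely generated) ideals. In an infinite product of fields the maximal ideals coming from non-principal ultrafilters are not finitely generated and are not generated by idempotents, so a residue field $F_i$ need not be a direct factor of $A$ and there is no reason for $W(F_i)$ or $W(F'_i)$ to be flat over $W(A)$. This breaks your deduction of $W(F'_i)\otimes_{W(A)} N_k[p]=0$.

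The paper sidesteps the issue with a simpler observation: since $\cC$ is $p$-complete and $c$ is compact, $p$ acts nilpotently on $c$ and hence on $[c,M]$, so it suffices to show $[c,M]\otimes_{W(A)} A=[c,M]/p=0$. Your own route can also be repaired without this: the vanishing $W(F'_i)\otimes_{W(A)}(N_k/p)=0$ needs only right-exactness of the tensor product, not flatness, so $F_i\otimes_A(N_k/p)=0$ and $N_k/p=0$ follow as you wrote. At that point you are already done, since a derived $p$-complete abelian group $N$ with $N/p=0$ must vanish (if $x\neq 0$, choosing successive $p$-th roots produces a nonzero map $\Z[1/p]\to N$); the detour through $N_k[p]$ is unnecessary. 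A smaller issue: your use of \Cref{lem:flat} with $c=\W_\cC(A)$ in the $(3)\Rightarrow(4)$ step is illegitimate, since the unit is not assumed compact in $\Prig$.
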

\begin{proof}
(1) implies (2) by \Cref{lem:DN_conservative}.
(2) implies (3)  by \Cref{lem:DN_nilconservative}.
We now show that (3) implies (4):
Indeed, let $x\in \Spec(A)$ which is not in the image of  the map $\Spec(B) \to \Spec(A)$. Let $A \to F$ be the map to the residue field of $x$. By assumption  
$F\otimes_A B= 0$.  Since the functor  
\[\W_{\cC}\colon \Perf_{\mathbb{F}_p} \to \CAlg(\cC)\] preserves pushouts, we get that $\W_{\cC}(F)\otimes_{\W_{\cC}(A)} \W_{\cC}(B)= 0$; but since 
$\W_{\cC}(F)$ is a ring, this contradicts the nil-conservativity.

Finally we show that  (4) implies (1):  
Indeed, assume that the map $\Spec(B) \to \Spec(A)$ is  surjective and let $ M\in \Mod_{\W_{\cC}(A)}$ such that $M \otimes_{\W_{\cC}(A)} \W_{\cC}(B) = 0$. 
Since $\Mod_{\W_{\cC}(A)}$ is compactly generated by \Cref{lem:ModB_is_PRig}, it is enough to show that $[c,M]_{\Mod_{\W_{\cC}(A)}} = 0 $ for all $c\in \Mod_{\W_{\cC}(A)}^{\omega}$.
Since $\Mod_{\W_{\cC}(A)}$ is $p$-complete and $c$ is compact, $p$ acts nilpotently on $c$ and thus on  $[c,M]_{\Mod_{\W_{\cC}(A)}}$. So it is enough to show that the $A$-module $[c,M]_{\Mod_{\W_{\cC}(A)}}\otimes_\ZZ \mathbb{F}_p  = [c,M]_{\Mod_{\W_{\cC}(A)}}\otimes_{W(A)} A $ is zero.
Indeed, by \Cref{Von Neuman flat}, if this is not the case, there is some perfect residue field $F$ of $A$ such that  
\[
0 \neq ([c,M]_{\Mod_{\W_{\cC}(A)}})\otimes_{W(A)} A \otimes_A F =  ([c,M]_{\Mod_{\W_{\cC}(A)}})\otimes_{W(A)} F.
\]
So we get by \Cref{lem:flat} that 
\[0 \neq ([c,M]_{\Mod_{\W_{\cC}(A)}})\otimes_{W(A)} W(F) = \left[c\otimes_{\W_{\cC}(A)}\W_{\cC}(F),M\otimes_{\W_{\cC}(A)}\W_{\cC}(F)\right]_{\Mod_{\W_{\cC}(F)}}.\]
Now since $\Spec(B) \to \Spec(A)$ is surjective, 
we can fit $F$ into a diagram 
\[\xymatrix{
A\ar[r]\ar[d] & F\ar[d] \\
B\ar[r] & F'
}\]
where $F'$ is a field.
Since $F'$ is free over $F$, we get that 
\[\left[c\otimes_{\W_{\cC}(A)}\W_{\cC}(F'),M\otimes_{\W_{\cC}(A)}\W_{\cC}(F')\right]_{\Mod_{\W_\cC(F')}}\neq 0 \]
which contradicts the assumption that $M \otimes_{\W_{\cC}(A)} \W_{\cC}(B) = 0$, since $A \to F'$ factors through $B$.
\end{proof}

\begin{lem}\label{lem:map_to_Krull0}
 Let $\cC\in \Prig$ and assume that $\cC$ is $p$-complete. 
 Let $A \in \Perf_{\mathbb{F}_p}$ be a perfect $\F_p$-algebra.
 Then there exists a map $f\colon A \to A'$ in  $\Perf_{\mathbb{F}_p}$ such that $A'$ is of Krull dimension $0$ and the induced map 
 \[\W_{\cC}(f)\colon \W_{\cC}(A) \to \W_{\cC}(A')\] detects nilpotence.
\end{lem}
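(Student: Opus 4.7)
The plan is to apply the small object argument of \Cref{cor:Small_object_DN} to $R = \W_\cC(A) \in \CAlg(\cC)$ with the singleton set of nilpotence-detecting maps $\{g\}$, where $g\colon {\one}_{\cC}[{\Np}] \to {\one}_{\cC}[{\Npm}]\times {\one}_{\cC}$ is the map of \Cref{thm:perf_nilp}. This produces a map $\W_\cC(A) \to R'$ in $\CAlg(\cC)$ that itself detects nilpotence and satisfies $g \perp R'$. By \Cref{cor:vnrlift}, the right lifting property with respect to $g$ is exactly the assertion that $R'^\flat_\cC$ is a perfect $\F_p$-algebra of Krull dimension $0$, so this tilt is the natural candidate for our target: set $A' \coloneqq R'^\flat_\cC$.

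Next, I would read off the map $A \to A'$ and the desired factorization from the Witt--tilt adjunction $\W_\cC \dashv (-)^\flat_\cC$ from \Cref{cnstr:C-witt-tilt}. The morphism $\W_\cC(A) \to R'$ transposes to a map $\varphi\colon A \to R'^\flat_\cC = A'$ in $\Perf_{\F_p}$, and the triangle identity for the adjunction identifies the original map with the composite
\[ \W_\cC(A) \xrightarrow{\W_\cC(\varphi)} \W_\cC(A') \xrightarrow{\epsilon_{R'}} R', \]
where $\epsilon_{R'}$ is the counit evaluated at $R'$. Since the outer composite detects nilpotence by construction, \Cref{lem:DN_composition}(2) yields that the first factor $\W_\cC(\varphi)$ detects nilpotence, which is exactly what the lemma requires.

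I do not expect a serious obstacle here, since the argument is an orchestration of results already in the paper. The mild points to check are that $\CAlg(\cC)$ is presentable so that \Cref{cor:Small_object_DN} applies (which is automatic given the assumptions on $\cC$), and that $\W_\cC$ transports $\varphi$ together with the counit at $R'$ to a genuine factorization of the original map---but that is precisely a triangle identity for the Witt--tilt adjunction, so nothing extra is needed.
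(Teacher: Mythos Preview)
Your argument is correct; the adjunction triangle identity gives the factorization $\W_\cC(A)\to\W_\cC(A')\to R'$, and \Cref{lem:DN_composition}(2) then extracts nilpotence detection for the first factor. The paper, however, takes a slightly different route: it runs the small object argument directly in $\Perf_{\F_p}$ rather than in $\CAlg(\cC)$. Concretely, it defines $S$ to be the class of maps $A\to B$ in $\Perf_{\F_p}$ for which $\W_\cC(A)\to\W_\cC(B)$ detects nilpotence, checks that $S$ is weakly saturated (using that $\W_\cC$ preserves colimits, so weak saturation transfers from \Cref{thm:nil_saturated}), and then applies \Cref{prop:small_object} with the singleton $S_0=\{\F_p[\Np]\to\F_p[\Npm]\times\F_p\}$ and \Cref{cor:perfvnrprops}. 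This produces $A\to A'$ in $\Perf_{\F_p}$ with the two desired properties simultaneously, without the tilt-and-factor detour. Your approach trades the check that $\W_\cC$ preserves colimits for the adjunction plus the $2$-out-of-$3$ property; both are short, and your version has the mild advantage of invoking the already-packaged \Cref{cor:Small_object_DN} verbatim.
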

\begin{proof}
Let $S \subset \Perf_{\mathbb{F}_p}^{\Delta^1}$ the collection of maps $A \to B$ such that $\W_{\cC}(A) \to \W_{\cC}(B)$ detects nilpotence. We claim that $S$ is weakly saturated. Indeed, the functor $\W_{\cC}$ preserves colimits by \Cref{prop:witt-and-tilt}, so the claim follows from 
\Cref{thm:nil_saturated}. 
Now consider the singleton set 
\[S_0 = \{\mathbb{F}_p[\Np] \xrightarrow{(t \mapsto t)\times (t \mapsto 0)} \mathbb{F}_p[\Npm] \times \mathbb{F}_p\}.\]
By \Cref{thm:perf_nilp}, we have  $S_0 \subset S$.
Thus, by \Cref{prop:small_object}, there is a map $f \colon A\to A'$ such that 
\begin{enumerate}
    \item $\W_{\cC}(f)\colon \W_{\cC}(A) \to \W_{\cC}(A')$ detects nilpotence.
    \item $(\mathbb{F}_p[\Np] \xrightarrow{(t \mapsto t)\times (t \mapsto 0)} \mathbb{F}_p[\Npm] \times \mathbb{F}_p) \perp A'$.
\end{enumerate}
So we are done by \Cref{cor:perfvnrprops}.
\end{proof}

\begin{thm}\label{DN_perf}
  Let $\cC\in \Prig$ and assume that $\cC$ is $p$-complete.  Let $A \to B$ be a  
  map in $\Perf_{\mathbb{F}_p}$.  Then the following are equivalent:
  \begin{enumerate}
      \item The map $\W_{\cC}(A) \to \W_{\cC}(B)$ detects nilpotence.
      \item The functor \[
      \Mod_{\W_{\cC}(A)} \to \Mod_{\W_{\cC}(B)}
     \] is nil-conservative. 
      \item The map $\Spec(B) \to \Spec(A)$ is  surjective.
  \end{enumerate}
\end{thm}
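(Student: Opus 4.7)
The plan is to reduce to \Cref{thm:DN_Krull0} by using \Cref{lem:map_to_Krull0} to replace $A$ by a Krull dimension $0$ perfect algebra, while preserving the relevant nilpotence detection properties.

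For (1) $\Rightarrow$ (2), we simply invoke \Cref{lem:DN_nilconservative} exactly as in the proof of \Cref{thm:DN_Krull0}; no hypothesis on Krull dimension is used there.

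For (2) $\Rightarrow$ (3), we repeat the argument from \Cref{thm:DN_Krull0} verbatim: given $x\in \Spec(A)$ not in the image with residue field $F$, we have $F\otimes_A B = 0$ in $\Perf_{\F_p}$, hence (since $\W_{\cC}$ is a left adjoint and preserves pushouts) $\W_{\cC}(F)\otimes_{\W_{\cC}(A)}\W_{\cC}(B)=0$, contradicting nil-conservativity since $\W_{\cC}(F)$ is a nonzero commutative algebra over $\W_{\cC}(A)$.

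The core new content is (3) $\Rightarrow$ (1). First, apply \Cref{lem:map_to_Krull0} to obtain a map $A\to A'$ in $\Perf_{\F_p}$ with $A'$ of Krull dimension $0$ such that $\W_{\cC}(A)\to \W_{\cC}(A')$ detects nilpotence. Form the pushout $B' := B\otimes_A A'$ in $\Perf_{\F_p}$; note that the ordinary tensor product of perfect $\F_p$-algebras is again perfect (since Frobenius on a colimit of perfect algebras remains an isomorphism), so this agrees with the ordinary tensor product in $\CRing$. Consequently, $\Spec(B') \cong \Spec(B)\times_{\Spec(A)}\Spec(A')$, and since surjectivity of schemes is stable under base change, surjectivity of $\Spec(B)\to\Spec(A)$ gives surjectivity of $\Spec(B')\to \Spec(A')$.

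Now, since $A'$ has Krull dimension $0$, we may apply \Cref{thm:DN_Krull0} to the map $A'\to B'$ to conclude that $\W_{\cC}(A')\to \W_{\cC}(B')$ detects nilpotence. Composing with $\W_{\cC}(A)\to \W_{\cC}(A')$ and invoking \Cref{lem:DN_composition}(1), the composite $\W_{\cC}(A)\to \W_{\cC}(B')$ detects nilpotence. But since $\W_{\cC}$ preserves pushouts, this composite factors as $\W_{\cC}(A)\to \W_{\cC}(B)\to \W_{\cC}(B')$, so \Cref{lem:DN_composition}(2) implies that the first factor $\W_{\cC}(A)\to \W_{\cC}(B)$ detects nilpotence, as desired. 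The main conceptual step is the reduction via $A'$, which is straightforward once one has \Cref{lem:map_to_Krull0} and the pushout-compatibility of $\W_{\cC}$ and $\Spec$; no genuinely hard step arises beyond what was needed for \Cref{thm:DN_Krull0}.
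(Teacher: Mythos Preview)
Your proof is correct and follows essentially the same approach as the paper: both reduce (3) $\Rightarrow$ (1) to \Cref{thm:DN_Krull0} by passing to a Krull dimension $0$ replacement $A'$ via \Cref{lem:map_to_Krull0}, then use base change stability of surjectivity and \Cref{lem:DN_composition} to conclude. Your justification that $\Spec(B')\to\Spec(A')$ is surjective via base change is slightly more direct than the paper's phrasing, but the argument is the same.
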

\begin{proof}
We have (1) implies (2) by \Cref{lem:DN_nilconservative} and the proof that (2) implies (3) is exactly as in the proof of \Cref{thm:DN_Krull0}.

We now show that (3) implies (1). By \Cref{lem:map_to_Krull0} we have a map $A \to A'$ with $A'$ of Krull dimension $0$ such that the map $\W_{\cC}(A) \to \W_{\cC}(A')$ detects nilpotence.
Note that since (1) implies (3), we have that $\Spec(A') \to \Spec(A)$ is surjective. 
Since the map $\Spec(B) \to \Spec(A)$ is  surjective, so is the map 
$\Spec(B\otimes _A A') \to \Spec(A')$. Thus, by \Cref{thm:DN_Krull0} the map 
$\W_{\cC}(A') \to \W_{\cC}(B\otimes_A A')$ detects nilpotence. So by \Cref{lem:DN_composition}(1), we get that the map 
$\W_{\cC}(A) \to \W_{\cC}(B\otimes _A A')$ detects nilpotence.  But since it factors through the map 
$\W_{\cC}(A) \to \W_{\cC}(B)$, we are done by \Cref{lem:DN_composition}(2).

\end{proof}

\subsubsection{The case of a compact unit}

\begin{dfn}\label{dfn:flat}
Let $\cC \in \Prig$  and let $R \in \CAlg(\cC)$. We say that $R$ is \deff{flat} if for all $c\in \cC^{\omega}$ and $a\in \cC$ the map

\[\pi_0(R)\otimes_{\pi_0(\one_{\cC})}[c,a]_{\cC}  \to [c,a\otimes R]_{\cC}\]
is an isomorphism. 
\tqed
\end{dfn}

\begin{lem}\label{lem:Quotient_flat}
Let $\cC\in \Prig$ such that $\one_{\cC} \in \cC^{\omega}$ and denote $A:=\pi_0(\one_{\cC})$.  Assume that $A$ is a reduced algebra of Krull dimension $0$, and let $J \subset A$ be an ideal. Then there exists a commutative algebra $\one/J \in \CAlg(C)$ such that    
\begin{enumerate}
    \item  $\pi_0(\one/J) \cong A/J$ 
    \item $\one/J$ is flat.  
\end{enumerate}
\end{lem}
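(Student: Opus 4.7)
The plan is to reduce first to the case of a finitely generated ideal, then handle that case via a smashing localization at an idempotent element of $A = \pi_0(\one_\cC)$.

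First, I would write $J$ as the filtered colimit of its finitely generated sub-ideals $\{J_i\}_{i \in I}$ and, assuming a functorial construction in the finitely generated case, define
\[ \one/J := \colim_{i \in I} \one/J_i \qin \CAlg(\cC). \]
For finitely generated $J$, \Cref{prop:TFAE_Red_Krull0}(3) produces a unique idempotent $e \in A$ with $J = eA$. Setting $f := 1-e$, which is again idempotent, I would define
\[ \one/J := \one_\cC[f^{-1}], \]
realized concretely as the filtered colimit of $\one_\cC \xrightarrow{f} \one_\cC \xrightarrow{f} \cdots$; this is a smashing localization and hence naturally carries the structure of a commutative $\one_\cC$-algebra.

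To verify (1), I would use that $\one_\cC$ is compact so that $\pi_0$ commutes with the defining filtered colimit and gives $\pi_0(\one/J) \cong A[f^{-1}] \cong A/eA = A/J$, where the middle isomorphism uses that $f$ is idempotent. For (2), the same compactness of $\one_\cC$ together with compactness of a test object $c \in \cC^\omega$ lets one compute, for $a \in \cC$,
\[ [c, a \otimes \one/J] = \colim \bigl([c,a] \xrightarrow{f} [c,a] \xrightarrow{f} \cdots\bigr) = [c,a][f^{-1}] = \pi_0(\one/J) \otimes_A [c,a], \]
which is exactly the flatness condition of \Cref{dfn:flat}. The general $J$ is itself a filtered colimit of such flats, and the same compactness argument transports both properties through the outer colimit.

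The one step requiring genuine care will be the functoriality of the assignment $J_i \mapsto \one/J_i$ on the poset of finitely generated sub-ideals of $J$, needed to make sense of the outer colimit. Given $J_i \subset J_j$ with idempotent generators $e_i, e_j$, the containment $e_i \in e_jA$ forces $e_i = e_ie_j$, equivalently $f_if_j = f_j$. Thus $f_i$ is sent to a unit (in fact to $1$) in $\pi_0\one_\cC[f_j^{-1}]$, and the universal property of smashing localization furnishes a canonical map $\one_\cC[f_i^{-1}] \to \one_\cC[f_j^{-1}]$ in $\CAlg(\cC)$. Uniqueness of the idempotent generator of a finitely generated ideal in $A$ then guarantees that these maps assemble coherently into the required diagram in $\CAlg(\cC)$.
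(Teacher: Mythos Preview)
Your proposal is correct and follows essentially the same approach as the paper: reduce to finitely generated ideals, which are generated by idempotents, define $\one/J$ as the localization $\one_\cC[(1-e)^{-1}]$, and then pass to a filtered colimit for general $J$. The only cosmetic difference is that the paper verifies flatness in the finitely generated case by noting that $\one_\cC[(1-e)^{-1}]$ is a retract of $\one_\cC$ (so the flatness comparison map is a retract of the identity), whereas you compute the colimit directly; you are also more explicit about the functoriality $J_i \mapsto \one/J_i$, which the paper leaves implicit.
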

\begin{proof}

We first prove the statement for  every finitely generated ideal $J \subset A$. Indeed, 
by \Cref{prop:TFAE_Red_Krull0}(\ref{c3}), in this case $J$ is generated by an idempotent $e\in A$ and we can take 
\[
\one/J = \one_{\cC}[(1-e)^{-1}] := \colim\big( \one_{\cC} \xrightarrow{1-e} \one_{\cC} \xrightarrow{1-e} \one_{\cC} \to ... \big).
\]
Claim (1) is now clear by compactness of the unit.  Claim (2) follows from the fact that $\one_{\cC}[(1-e)^{-1}]$ is a retract of $\one_{\cC}$ and a retract of a bijection is a bijection.
Now given an arbitrary ideal $J\subset A$, we can write $J$ as the filtered colimit of its finitely generated subideals $J_i \subset J$.  We take $\one/J  = \colim \one/J_i$. Now (1) follows again from the compactness of the unit and (2) follows from the compactness of $c$ in \Cref{dfn:flat}.
\end{proof}

\begin{thm}\label{DN_Krull0QQ}
  Let $\cC\in \Prig$ and assume that $\one_{\cC} \in \cC^{\omega}$ and $\pi_0(\one_{\cC})$ is reduced of Krull dimension $0$.
  Then for any flat $R\in \CAlg(\cC)$, the following are equivalent:
  \begin{enumerate}
      \item The object   $R\in \cC $ is  conservative.
      \item The object   $R \in \cC $ detects nilpotence.
      \item The functor \[-\otimes R\colon \cC \to \Mod_{R}(\cC)\]
      is nil-conservative. 
      \item The map $\Spec(\pi_0(R)) \to \Spec(\pi_0(\one_{\cC}))$ is  surjective.
  \end{enumerate}
\end{thm}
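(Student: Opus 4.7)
The plan is to mirror the structure of the proof of \Cref{thm:DN_Krull0}, but replacing the appeal to the spherical Witt vectors functor with direct use of the hypotheses that $\one_{\cC}$ is compact and $R$ is flat. Set $A = \pi_0(\one_{\cC})$. The easy implications (1) $\Rightarrow$ (2) $\Rightarrow$ (3) will come from \Cref{lem:DN_conservative} and \Cref{lem:DN_nilconservative} applied to the unit map $\one_{\cC} \to R$, respectively, exactly as before.

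For (3) $\Rightarrow$ (4), I will suppose for contradiction that $\mathfrak{p} \in \Spec(A)$ is not in the image of $\Spec(\pi_0 R) \to \Spec(A)$, and let $F = A/\mathfrak{p}$. Since $A$ is reduced of Krull dimension $0$, \Cref{lem:Quotient_flat} produces a flat commutative algebra $\one/\mathfrak{p} \in \CAlg(\cC)$ with $\pi_0(\one/\mathfrak{p}) = F$. Using the flatness of $R$ in the form of \Cref{dfn:flat}, I compute
\[
\pi_0(R \otimes \one/\mathfrak{p}) \;\cong\; \pi_0(R) \otimes_A F,
\]
which vanishes because no prime of $\pi_0(R)$ lies over $\mathfrak{p}$. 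A commutative algebra with vanishing $\pi_0$ is zero, so $R \otimes \one/\mathfrak{p} = 0$, and nil-conservativity then forces $\one/\mathfrak{p} = 0$, contradicting $\pi_0(\one/\mathfrak{p}) = F \neq 0$.

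The main step is (4) $\Rightarrow$ (1), and it is also the one that most genuinely uses the hypotheses on $\one_{\cC}$. Given $M \in \cC$ with $M \otimes R = 0$, compact generation of $\cC$ reduces us to showing $[c,M]_{\cC} = 0$ for every $c \in \cC^{\omega}$. Flatness of $R$ gives
\[
0 \;=\; [c,\, M \otimes R]_{\cC} \;\cong\; \pi_0(R) \otimes_A [c,M]_{\cC},
\]
so $[c,M]_{\cC}$ is an $A$-module annihilated by $\pi_0(R)$. By \Cref{Von Neuman flat}, joint conservativity of the residue field functors means it suffices to show $F \otimes_A [c,M]_{\cC} = 0$ for every residue field $F = A/\mathfrak{p}$ at a maximal ideal. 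Surjectivity of $\Spec(\pi_0 R) \to \Spec(A)$ supplies a prime $\mathfrak{q}$ of $\pi_0(R)$ contracting to $\mathfrak{p}$, yielding a field extension $F \hookrightarrow F' := \kappa(\mathfrak{q})$. Tensoring $\pi_0(R) \otimes_A [c,M]_{\cC} = 0$ down to $F'$ gives $F' \otimes_A [c,M]_{\cC} = 0$, and faithful flatness of the field extension $F \to F'$ then yields $F \otimes_A [c,M]_{\cC} = 0$, as required.

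The expected main obstacle is the bookkeeping in (4) $\Rightarrow$ (1): one must check that the $A$-module structure on $[c,M]_{\cC}$ inherited from $A = \pi_0(\one_{\cC})$ is the correct one for invoking both the flatness formula and \Cref{Von Neuman flat}, and one needs $\one_{\cC} \in \cC^{\omega}$ not only to identify $\pi_0(R) = [\one_{\cC},R]_{\cC}$ but also, combined with compact generation, to reduce the vanishing of $M$ to a condition testable on the compacts $c$ against which flatness is defined. Everything else is a direct analogue of the proof of \Cref{thm:DN_Krull0}, with $\one/\mathfrak{p}$ from \Cref{lem:Quotient_flat} playing the role of $\W_{\cC}(F)$.
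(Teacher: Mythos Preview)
Your proof is correct and follows essentially the same argument as the paper. The implications (1)$\Rightarrow$(2)$\Rightarrow$(3) and (3)$\Rightarrow$(4) are handled identically (the paper cites flatness of $\one/J$ rather than of $R$ for the computation of $\pi_0(\one/J\otimes R)$, but either works once $\one_{\cC}$ is compact).

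For (4)$\Rightarrow$(1), your argument via \Cref{Von Neuman flat} and individual residue fields is correct but slightly longer than what the paper does. The paper observes directly that since $A=\pi_0(\one_{\cC})$ is reduced of Krull dimension $0$, \emph{every} $A$-module is flat by \Cref{prop:TFAE_Red_Krull0}(6); in particular $A\to\pi_0(R)$ is flat, and combined with (4) it is faithfully flat. Then from $\pi_0(R)\otimes_A[c,M]_{\cC}\cong[c,M\otimes R]_{\cC}=0$ one concludes $[c,M]_{\cC}=0$ immediately, without passing through the residue fields one at a time. Your detour via field extensions $F\hookrightarrow F'$ is reproving this faithful flatness fiberwise.
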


\begin{proof}
(1) implies (2) by \Cref{lem:DN_conservative}.
(2) implies (3)  by \Cref{lem:DN_nilconservative}.
We now show that (3) implies (4). 
Indeed let $x\in \Spec(\pi_0(\one_{\cC}))$ which is not in the image
of the map $\Spec(\pi_0(R)) \to \Spec(\pi_0(\one_{\cC}))$. Let $\pi_0(\one_{\cC}) \to F$ be the map to the residue field of $x$ and denote the kernel of this map by $J$. Now by \Cref{lem:Quotient_flat}, there is a flat commutative algebra $\one/J \in \CAlg(\cC)$ with 
$\pi_0(\one/J) = F$. Hence, 
\[\pi_0(\one/J \otimes R) \cong \pi_0(\one/J)\otimes_{\pi_0(\one_{\cC})} \pi_0(R) \cong F\otimes_{\pi_0(\one_{\cC})} \pi_0(R) \cong 0, \]
where the first isomorphism is by the flatness of $\one/J$ and the compactness of $\one_{\cC}$, and the third isomorphism is because $x$ is not in the image of $\Spec(\pi_0(R)) \to \Spec(\pi_0(\one_{\cC}))$.
But since $\one/J \otimes R$ is a ring, we deduce  that $\one/J \otimes R =0$  which contradicts the nil-conservativity of $- \otimes R$.

Finally, we show that (4) implies (1). Indeed, assume that the map $\Spec(\pi_0(R)) \to \Spec(\pi_0(\one_{\cC}))$ is  surjective and let $ b\in \cC$ such that $b \otimes R = 0$. 
 Since $\cC$ is compactly generated, it is enough to show that $[c,b]_{\cC} = 0 $ for all $c\in \cC^{\omega}$.
 Now as the map $\pi_0(\one_\cC) \to \pi_0(R)$ is both flat and surjective after applying $\Spec$, it is faithfully flat.  So it is enough to show that 
 $\pi_0(R)\otimes_{\pi_0(\one_{\cC})}[c,b] =0$. But $R$ is flat so
 $\pi_0(R)\otimes_{\pi_0(\one_{\cC})}[c,b]_{\cC}  = [c,b\otimes R]_{\cC} =0 $.
 \end{proof}

\section{Constructing Lubin--Tate covers}
\label{sec:mapout}

In this section we use the ideas developed in the previous sections to prove our first main theorem.

\begin{thm}\label{thm:modmain}
Let $R \in \CAlg(\Sp_{T(n)})$.  Then there is a perfect algebra $A$ of Krull dimension $0$ and a nilpotence detecting map
$R \to E(A)$.\footnote{Recall that in the case of height $n=0$, ``perfect'' means that $A$ is reduced, and $E(A) := A[u^{\pm 1}]$ where the generator $u$ is in degree $2$.}
\end{thm}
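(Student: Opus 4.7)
The plan follows the strategy outlined in the introduction after \Cref{thm:into_mapsout1}. I first reduce to the case $R \in \CAlgw_{E(k)}$: the unit map $R \to R \otimes E(k)$ detects nilpotence (a consequence of the Devinatz--Hopkins--Smith nilpotence theorem applied in the $T(n)$-local setting), and since nilpotence-detecting maps compose (\Cref{lem:DN_composition}(1)), it suffices to build a nilpotence-detecting map from an arbitrary $T(n)$-local commutative $E(k)$-algebra to some $E(A)$ with $A$ perfect of Krull dimension $0$.

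Next I introduce three maps in $\CAlgw_{E(k)}$:
\begin{itemize}
  \item $f \colon E(k)[t] \to E(k)[t^{\pm 1}] \times E(k)$, inverting $t$ on one factor and sending it to $0$ on the other;
  \item $h \colon E(k)\{z^1\} \to E(k)$ sending $z^1 \mapsto 0$;
  \item $g \colon E(k)\{z^0\} \to E(A')$, obtained by first producing a $\T$-algebra map $\pi_0 E(k)\{z^0\} \to \pi_0 E(A')$ from the unit of the cofreeness adjunction of \Cref{thm:cofree}, where $A' = (\pi_0(E(k)\{z^0\})/\m)^{\sharp}$, and then lifting this to a map of commutative $E(k)$-algebras using the fully faithful colocalization $E(-)$ of \Cref{thm:E_functor}(1).
\end{itemize}
The core technical step is checking each of $f$, $g$, $h$ detects nilpotence. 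For $f$ this is immediate from \Cref{thm:perf_nilp} applied to $\cC = \Modw_{E(k)}$. For $g$ one shows $E(A')$ is conservative as an $E(k)\{z^0\}$-module and invokes \Cref{lem:DN_conservative}; the conservativity is controlled via \Cref{thm:cofree}, which provides tight information on maps into a Lubin--Tate theory at the level of $\pi_0$. For $h$ one uses a strong form of ``the square of an odd degree class is null'' implied by the May nilpotence conjecture of \cite{MNNmaynilp}, combined with the machinery of \S 4 (in particular arguments in the spirit of \Cref{lem:DN_phantom} and \Cref{lem:mod-top-nil}) to deduce nilpotence detection for the cofiber map $E(k)\{z^1\} \to E(k)$.

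Finally I apply the small-object argument \Cref{cor:Small_object_DN} to the set $\{f, g, h\}$, producing a nilpotence-detecting map $R \to R'$ in $\CAlgw_{E(k)}$ such that $f, g, h \perp R'$. Unwinding these lifting conditions: right lifting against $h$ forces $\pi_1 R' = 0$, so by invertibility of $u \in \pi_2 E(k)$ all odd homotopy of $R'$ vanishes, whence $R' \modm$ is also even; right lifting against $g$ forces, via the cofreeness adjunction of \Cref{thm:cofree}, the ring $\pi_0(R')/\m \cong \pi_0(R' \modm)$ to be perfect; together these are exactly the conditions of \Cref{cor:E-alg-image}, yielding $R' \simeq E(A)$ with $A = \pi_0(R')/\m$; finally, right lifting against $f$, combined with \Cref{cor:vnrlift} and the identification $R'^{\flat} \cong A$ from \Cref{thm:E_functor}(2), forces $A$ to be of Krull dimension $0$. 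The composite $R \to R \otimes E(k) \to R'$ then gives the desired nilpotence-detecting map to $E(A)$. The main obstacles are the nilpotence detection for $h$ (which is where May-nilpotence enters in an essential way) and the construction of $g$ (the principal motivation for proving the cofreeness theorem of \S 3); the remaining verifications are relatively formal given these two inputs.
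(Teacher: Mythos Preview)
Your overall architecture is exactly the paper's: reduce to $E(k)$-algebras via Devinatz--Hopkins--Smith, introduce the three test maps $f,g,h$, prove each detects nilpotence, run the small object argument, and then recognize $R'$ as a Lubin--Tate theory. Two points in the execution need correction, however.

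First, your map $f$ is the wrong one. You wrote $E(k)[t] \to E(k)[t^{\pm 1}]\times E(k)$, but then cited \Cref{thm:perf_nilp} and \Cref{cor:vnrlift}, both of which concern the \emph{perfected} version $E(k)[\Np] \to E(k)[\Npm]\times E(k)$ (equivalently $E(k[\Np])\to E(k[\Npm])\times E(k)$). The non-perfected $f$ does detect nilpotence (\Cref{lem:strict_nilp}), but the right-lifting property against it is a condition on strict elements of $\pi_0 R'$, not on $R'^{\flat}$; it is the perfected version whose lifting property, via the $\W$--tilt adjunction, translates into $R'^{\flat}$ having Krull dimension $0$.

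Second, your unwinding at the end is misattributed in a way that leaves a real gap. You claim that $h\perp R'$ (i.e.\ $\pi_1 R'=0$, hence $R'$ even) implies $R'\modm$ is even, and that $g\perp R'$ implies $\pi_0(R')/\m$ is perfect, so that \Cref{cor:E-alg-image} applies. Neither implication holds on its own: evenness of $R'$ does not force the $v_i$ to act injectively, so $R'\modm$ need not be even; and $g\perp R'$ says precisely that the $E$-colocalization $\pi_0 E(R'^{\flat})\to \pi_0 R'$ is \emph{surjective} (see \Cref{prop:insepnilp}(2)), which only gives surjectivity of Frobenius on $\pi_0(R')/\m$, not injectivity. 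The paper instead argues via \Cref{prop:vnrE}: condition $f\perp R'$ (Krull dimension $0$ of $R'^{\flat}$) gives \emph{injectivity} of the $E$-colocalization on $\pi_0$ via \Cref{cor:cofree_inj} and \Cref{cor:flatmap_inj}; combined with surjectivity from $g$ and vanishing odd homotopy from $h$, the colocalization $E(R'^{\flat})\to R'$ is an equivalence on $\pi_*$. This bypasses \Cref{cor:E-alg-image} entirely.

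A smaller point: your description of the construction of $g$ as ``lifting via the fully faithful colocalization $E(-)$'' does not work as stated, since $E(k)\{z^0\}$ is not in the essential image of $E(-)$. The actual mechanism (\Cref{lem:bij_free}) combines the freeness of $E(k)\{z^0\}$ (so maps out are determined by an element of $\pi_0$ of the target) with the identification $\pi_0 E(k)\{z^0\}\cong \T(E_0)^{\wedge}_{\m}$ and the cofreeness theorem; the lift exists because both steps are bijections on $\pi_0$ of mapping spaces, not because of the $E$--tilt adjunction.
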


In particular, as a corollary, we have:

\begin{cor}\label{cor:mod_algclosed}
Let $0 \neq R \in \CAlg(\Sp_{T(n)})$ be a non-zero $T(n)$-local commutative algebra.  Then there exists an algebraically closed field $L$ and a map 
\[R \to E(L)\]
in $\CAlg(\Sp_{T(n)})$.
\end{cor}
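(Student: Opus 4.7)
My plan is to derive this corollary as a short consequence of Theorem \ref{thm:modmain}, feeding its output into the elementary observation that any non-zero commutative ring of Krull dimension zero admits a quotient map to a field.

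First I would apply \Cref{thm:modmain} to $R$ to obtain a perfect algebra $A$ of Krull dimension $0$ (in the sense of the footnote in \Cref{thm:modmain}) and a nilpotence detecting map $\varphi\colon R \to E(A)$ in $\CAlg(\Sp_{T(n)})$. The next step is to verify that $A \neq 0$. By \Cref{lem:DN_nilconservative}, the base change functor $-\otimes_R E(A)$ is nil-conservative; applying this to $R$ viewed as a commutative $R$-algebra, the hypothesis $R \neq 0$ forces $E(A) \neq 0$, and hence $A \neq 0$ (either from the description of $\pi_* E(A)$ in \Cref{thm:E_functor}(3) in positive height or directly from the definition $E(A) = A[u^{\pm1}]$ in height $0$).

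Since $A$ is a non-zero commutative ring of Krull dimension zero, it admits a prime (equivalently, maximal) ideal $\mathfrak{p} \subset A$, and the residue field $F := A/\mathfrak{p}$ inherits perfectness: in positive characteristic, $\mathfrak{p}$ is preserved under Frobenius (since $\mathfrak{p}$ is prime and $x^p \in \mathfrak{p}$ forces $x \in \mathfrak{p}$), so Frobenius on $A$ descends to an automorphism of $F$; in characteristic zero this is automatic. Let $L$ be an algebraic closure of $F$. Then $L$ is an algebraically closed perfect field, and the composite $A \to F \to L$ is a morphism in the category where $E(-)$ is defined. Functoriality of $E(-)$ produces a map $E(A) \to E(L)$ in $\CAlg(\Sp_{T(n)})$, and composing with $\varphi$ yields the required map $R \to E(L)$.

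The entire substance of the argument is loaded into \Cref{thm:modmain}; once a nilpotence detecting map $R \to E(A)$ with $A$ of Krull dimension $0$ is available, only routine commutative algebra remains. Consequently the main obstacle is not the corollary itself but the production of the Krull dimension $0$ cover in \Cref{thm:modmain}, which is why the preceding sections are devoted to that construction.
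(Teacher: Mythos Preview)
Your proof is correct and follows essentially the same approach as the paper's own proof: apply \Cref{thm:modmain}, observe that nilpotence detection forces $A\neq 0$, then map $A$ to an algebraically closed field and apply $E(-)$. You spell out more details (invoking \Cref{lem:DN_nilconservative} explicitly, and constructing $L$ as the algebraic closure of a residue field of $A$), but the structure is identical.
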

\begin{proof}
Let $i\colon R\to E(A)$ be the nilpotence detecting map given by \Cref{thm:modmain}.
Since $R \neq 0$ and $i$ is nilpotence detecting, we have that $A\neq 0$, so there exist a ring map $A\to L$ for some algebraically closed field $L$. Composing $i$ with the map $E(A) \to E(L)$, we conclude. 
\end{proof}

One can immediately reduce to the case where $R$ is an algebra over $E(k)$ for some perfect $k$, which gives a canonical $E$-colocalization map (cf. \Cref{thm:E_functor})
\[
E(R^{\flat}) \to R.
\]
Informally speaking, the proof proceeds by transfinitely modifying the ring $R$ until the $E$-colocalization map is an equivalence.  The modifications each produce a nilpotence detecting map $R\to R'$ (in particular, $R'\neq 0$) and come in three flavors: getting rid of odd homotopy elements (\Cref{prop:oddnilp}), ensuring that the $E$-colocalization map is surjective on $\pi_0$ (\Cref{prop:insepnilp}), and ensuring $R^{\flat}$ is of Krull dimension zero\footnote{In this situation, it turns out this automatically guarantees the injectivity of the $E$-colocalization on $\pi_0$.} (\Cref{thm:perf_nilp} and \Cref{cor:vnrlift}).  Using the small object argument (\Cref{cor:Small_object_DN}), we may transfinitely iterate these processes to produce a ring with all these properties.  We then show that the resulting ring is necessarily of the desired form $E(A)$ (cf. \Cref{prop:vnrE}).

In order to illustrate this argument, we first prove \Cref{thm:modmain} in the simpler case of characteristic $0$ in \Cref{sub:ht0mod}, where the argument takes the same general form.  We then give a more careful outline of the strategy for positive heights in \Cref{sub:genmodoutline}, leaving the important technical results to the subsequent sections.

\subsection{The case of height 0}\label{sub:ht0mod}\hfill

For a $\QQ$-algebra $A$, we have $\pi_*(E(A)) = A[{u}^{\pm 1}]$ for $|{u}| = 2$.
Our goal is to prove:
\begin{thm}\label{thm:Main_charzero}
  Let $R\in \CAlg(\Sp_{\QQ})$.  Then there exists a reduced $\QQ$-algebra $A$ of Krull dimension $0$ and a nilpotence detecting map $R \to E(A)$.
\end{thm}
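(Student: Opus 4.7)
I would mimic the strategy outlined after \Cref{thm:modmain}, exploiting the simplifications of characteristic zero: there is no Frobenius, and ``perfect'' reduces to ``reduced'', so only two rectification moves are needed instead of three. As a preliminary step, since $\KU_{\QQ} = \QQ[u^{\pm 1}]$ is a free, and hence faithfully flat, $\QQ$-module, it is conservative and so detects nilpotence by \Cref{lem:DN_conservative}. Thus the unit map $R \to R \otimes_{\QQ} \KU_{\QQ}$ is nilpotence detecting and I may replace $R$ by its base change, assuming from the start that $R$ is a commutative $\KU_{\QQ}$-algebra. This plays the role of the Devinatz--Hopkins--Smith reduction in the positive-height argument.

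\textbf{The small object argument.} Working in $\CAlg_{\KU_{\QQ}}$, I apply \Cref{cor:Small_object_DN} with the two-element set $S_0 = \{f, h\}$, where
\begin{itemize}
\item $f \colon \KU_{\QQ}[t] \to \KU_{\QQ}[t^{\pm 1}] \times \KU_{\QQ}$ is the strict-element map of \Cref{dfn:A1_Gm}, which detects nilpotence by \Cref{lem:strict_nilp}; and
\item $h \colon \KU_{\QQ}\{z^1\} \to \KU_{\QQ}$ is the unique map of commutative $\KU_{\QQ}$-algebras sending the free degree one generator to zero.
\end{itemize}
Granting that $h$ detects nilpotence, the small object argument produces a nilpotence detecting map $R \to R'$ with $f, h \perp R'$. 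The right lifting property against $f$ yields, for every $x \in \pi_0(R')$, an element $y$ with $x^2 y = x$, so by \Cref{prop:TFAE_Red_Krull0} the ring $A := \pi_0(R')$ is reduced of Krull dimension zero. The right lifting property against $h$ forces $\pi_1(R') = 0$, and invertibility of $u$ then kills all odd homotopy of $R'$. Since $\Sp_{\QQ} \simeq D(\QQ)$ is semisimple, every even-periodic commutative $\KU_{\QQ}$-algebra is formal, so $R' \simeq A[u^{\pm 1}] = E(A)$, and the composite $R \to R'$ is the desired nilpotence detecting map.

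\textbf{Main obstacle.} The technical heart is the nilpotence detection of $h$. At positive heights this step requires the strong form of the May nilpotence conjecture \cite{MNNmaynilp}, but in characteristic zero the situation is much cleaner: graded-commutativity plus $1/2 \in \QQ$ gives $z^2 = 0$ literally, so that $\KU_{\QQ}\{z^1\}$ is the exterior algebra $\Lambda_{\KU_{\QQ}}(z)$, and the fiber of $h$ is $F \simeq \Sigma \KU_{\QQ}$ with trivial $z$-action. I would verify the criterion of \Cref{lem:DN_phantom} by showing $F \to \Lambda_{\KU_{\QQ}}(z)$ is $\otimes^2$-phantom: morally, the composite $F \otimes_{\Lambda(z)} F \to \Lambda_{\KU_{\QQ}}(z)$ factors through the self-multiplication of the augmentation ideal, which vanishes because $z^2 = 0$. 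Promoting this homotopical vanishing to an honest null-homotopy in $\Mod_{\Lambda(z)}$ can be done by an explicit Koszul-style computation using the bar resolution of $\KU_{\QQ}$ over $\Lambda_{\KU_{\QQ}}(z)$, which in characteristic zero yields the clean identification $\KU_{\QQ} \otimes_{\Lambda(z)} \KU_{\QQ} \simeq \KU_{\QQ}[x]$ with $|x| = 2$.
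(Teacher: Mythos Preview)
Your overall strategy matches the paper's, but there is a genuine gap at the formality step. You assert that ``since $\Sp_{\QQ} \simeq D(\QQ)$ is semisimple, every even-periodic commutative $\KU_{\QQ}$-algebra is formal,'' and conclude $R' \simeq E(\pi_0 R')$. Semisimplicity of $D(\QQ)$ is a statement about the underlying module category and says nothing about formality of commutative algebra objects; there are many non-formal rational CDGAs. The paper explicitly flags this point (``Unfortunately, it is not clear that this implies $R' \simeq E(\pi_0(R'))$'') and works around it rather than proving it. The workaround is to map $R'$ to the product of its residue fields: for each maximal ideal one produces a flat quotient $R'_x$ with $\pi_0(R'_x) = F_x$ a field, and \emph{now} formality holds because $F_x$ is formally smooth over $\QQ$ (characteristic zero), so the identity on $\pi_0$ lifts through the Postnikov tower and $R'_x \simeq E(F_x)$. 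The resulting map $R' \to \prod_x E(F_x) = E(\prod_x F_x)$ is surjective on $\Spec(\pi_0)$ and hence detects nilpotence by \Cref{DN_Krull0QQ}. Your argument needs either this extra step or an honest proof that even-periodic $\KU_\QQ$-algebras with arbitrary (von Neumann regular) $\pi_0$ are formal.

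A secondary remark: your treatment of $h$ via the $\otimes^2$-phantom criterion is more elaborate than necessary. In characteristic zero the paper simply notes that $h$ is \emph{conservative} (citing \cite[Proposition~3.8]{mathew2017residue}) and invokes \Cref{lem:DN_conservative}. Concretely, the cofiber sequence of $\Lambda(z)$-modules $\Sigma \KU_\QQ \to \Lambda_{\KU_\QQ}(z) \to \KU_\QQ$ shows that any $M$ with $M \otimes_{\Lambda(z)} \KU_\QQ = 0$ must vanish. The phantom argument you sketch is the height-$n>0$ technique (\Cref{prop:oddnilp_phantom}), which is overkill here.
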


First, note that the map $\QQ \to E(\QQ)$ admits a retract in spectra and thus the functor  $-\otimes E(\QQ)$ is conservative. Hence, by \Cref{lem:DN_conservative}, the map $\QQ \to E(\QQ)$ detects nilpotence.  It therefore suffices to show:

\begin{thm}\label{thm:Main_charzero_red}
  Let $k$ be a field of characteristic $0$ and $R\in \CAlg_{E(k)}(\Sp_{\Q})$.  Then there exists a reduced $k$-algebra $A$ of Krull dimension $0$ and a nilpotence detecting map $R \to E(A)$.
\end{thm}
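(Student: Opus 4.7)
The plan is to mirror the strategy outlined at the start of \Cref{sec:mapout}: apply the small object argument (\Cref{cor:Small_object_DN}) in $\CAlg_{E(k)}(\Sp_{\QQ})$ to an appropriate set of nilpotence-detecting generating maps, producing a nilpotence detecting map $R \to R'$ such that $R'$ has the right lifting property against each generator, and then verify that any such $R'$ is equivalent to $E(A)$ for some reduced $k$-algebra $A$ of Krull dimension $0$.

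The set of generating maps will consist of three families. First, $f\colon E(k)[t] \to E(k)[t^{\pm 1}] \times E(k)$ of \Cref{dfn:A1_Gm}, which detects nilpotence by \Cref{thm:perf_nilp}. Second, for each $d \geq 0$, the map $h_d\colon E(k)\{z^{2d+1}\} \to E(k)$ sending the free odd-degree generator to zero: in characteristic $0$, graded commutativity forces $(z^{2d+1})^2=0$, so $E(k)\{z^{2d+1}\} \simeq E(k)\oplus \Sigma^{2d+1}E(k)$ as $E(k)$-modules, exhibiting $E(k)$ as a retract of $E(k)\{z^{2d+1}\}$; in particular $E(k)$ is conservative as an $E(k)\{z^{2d+1}\}$-module, so $h_d$ detects nilpotence by \Cref{lem:DN_conservative}. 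Third, for each $N \geq 2$, the map $g_N\colon E(k)[t]\mm^{\infty} t^N \to E(k)$ obtained by further killing $t$: in characteristic $0$ the source is simply $k[t]/t^N \otimes_k E(k)$, whose $\pi_0$ is the local Artinian ring $k[t]/t^N$ with residue field $k$. Since the residue field of a local ring is conservative in its derived category, $E(k)$ is a conservative $E(k)[t]\mm^{\infty} t^N$-module, so $g_N$ also detects nilpotence by \Cref{lem:DN_conservative}.

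Applying \Cref{cor:Small_object_DN} to $\{f\}\cup\{h_d\}_{d\geq 0}\cup\{g_N\}_{N\geq 2}$, we obtain the desired nilpotence detecting map $R \to R'$ with the right lifting property against every generator, and we now decode these lifting properties. RLP against $f$ forces $\pi_0(R')_{\mathrm{red}}$ (which plays the role of the tilt at height $0$) to be of Krull dimension $0$, by the characteristic-$0$ analog of \Cref{cor:vnrlift} combined with \Cref{prop:TFAE_Red_Krull0}. RLP against each $h_d$ forces every odd homotopy class of $R'$ to vanish, so $R'$ is even. RLP against each $g_N$ forces every nilpotent element of $\pi_0(R')$ to be zero, so $\pi_0(R')$ is reduced. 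Combined, these properties place $R'$ in the essential image of $E(-)$ by \Cref{cor:E-alg-image} (in its height-$0$ form), so $R' \simeq E(\pi_0(R'))$ with $\pi_0(R')$ a reduced $k$-algebra of Krull dimension $0$.

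The main obstacles avoided by restricting to characteristic $0$ are precisely those demanding the heavy machinery of the later sections. The positive-characteristic analog of $h_d$ requires the May nilpotence conjecture to control squares of odd classes, and the analog of $g_N$ requires the cofreeness theorem \Cref{thm:cofree} together with the full power operation theory of \Cref{sec:cofree}, in order to construct the target as an explicit Lubin--Tate theory rather than as a simple discrete quotient. In characteristic $0$, graded commutativity and the elementary local-ring geometry of $\Spec k[t]/t^N$ substitute for both inputs.
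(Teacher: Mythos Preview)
Your small object argument is set up correctly, and your generating maps do detect nilpotence (though $f$ should cite \Cref{lem:strict_nilp} rather than \Cref{thm:perf_nilp}, and both the $g_N$ and the $h_d$ for $d>0$ are redundant: since $R'$ is an $E(k)$-algebra it is already $2$-periodic, so $h_0$ alone kills all odd homotopy, and the paper's \Cref{prop:ratl-E-char} shows that $f_0\perp R'$ already forces $\pi_0(R')$ itself to be reduced of Krull dimension $0$, not merely its reduction). These are inefficiencies, not errors.

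The genuine gap is your final step. You assert that evenness together with $\pi_0(R')$ reduced of Krull dimension $0$ places $R'$ in the essential image of $E(-)$, citing a ``height-$0$ form'' of \Cref{cor:E-alg-image}. But that corollary rests on the $E$-colocalization map $E(R^{\flat})\to R$ coming from the adjunction $E(-)\dashv(-)^{\flat}$, and at height $0$ there is no analogous right adjoint: given only that $\pi_*(R')\cong\pi_0(R')[u^{\pm1}]$, there is no canonical algebra map $\pi_0(R')\to R'$ splitting the Postnikov tower, so no map $E(\pi_0(R'))\to R'$ to compare against. The paper flags exactly this point (``Unfortunately, it is not clear that this implies $R'\simeq E(\pi_0(R'))$'') and instead passes to the residue fields $F_x$ of $\pi_0(R')$. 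Each $F_x$, being a field extension of $k$ in characteristic $0$, is formally smooth over $k$, so one can lift $F_x\to\pi_0(R'_x)$ up the Postnikov tower to obtain $R'_x\simeq E(F_x)$; the resulting map $R'\to\prod_x E(F_x)\cong E(\prod_x F_x)$ is then shown to detect nilpotence via \Cref{DN_Krull0QQ}. Your argument needs this extra step (or some substitute producing the comparison map) to close.
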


We will prove \Cref{thm:Main_charzero_red} by using \Cref{cor:Small_object_DN} for a set $S$ comprised of two maps.

\begin{dfn}
Define the following two maps in $\CAlg_{E(k)}(\Sp_{\Q})$:
\begin{enumerate}
    \item $\mdef{f_0} \colon E(k)[t] \to E(k)[t^{\pm 1}] \times E(k)$ as in \Cref{lem:strict_nilp}.
    \item $\mdef{h_0} \colon E(k)\{ z^1\} \xrightarrow{z^1 \mapsto 0} E(k)$ for $z^1$ a generator of degree $1$. \tqed
\end{enumerate}
\end{dfn}

\begin{prop}\label{prop:DN_Set_charzero}
The maps $f_0$ and $h_0$ detect nilpotence.  
\end{prop}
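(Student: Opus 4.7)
The plan is to handle $f_0$ and $h_0$ separately, reducing each to nilpotence-detection lemmas established earlier in the section.

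For $f_0$, I would take $\cC \coloneqq \Mod_{E(k)}(\Sp_{\mathbb{Q}})$, which belongs to $\Prig$ by \Cref{lem:ModB_is_PRig}.  In this category the unit is $E(k)$, and $E(k)[t]$, $E(k)[t^{\pm 1}]$ and $E(k)$ are literally $\one_{\cC}[t]$, $\one_{\cC}[t^{\pm 1}]$ and $\one_{\cC}$ in the sense of \Cref{dfn:A1_Gm}.  The map $f_0$ therefore coincides with the map appearing in \Cref{lem:strict_nilp} for this $\cC$, and that lemma then immediately delivers the conclusion.

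The interesting case is $h_0$.  Here I plan to work in $\cC \coloneqq \Mod_{E(k)\{z^1\}}(\Sp_{\mathbb{Q}}) \in \Prig$ and to prove the stronger statement that the $E(k)\{z^1\}$-module $E(k)$ is \emph{conservative}; then \Cref{lem:DN_conservative} will give the desired conclusion.  The key observation, exploiting that we are working rationally, is that graded-commutativity together with invertibility of $2$ forces $(z^1)^2 = 0$, so the free $E_\infty$-algebra collapses to the exterior algebra
\[
E(k)\{z^1\} \;\simeq\; E(k) \oplus \Sigma E(k)
\]
as $E(k)$-modules.  Consequently the fiber of $h_0$ in $\cC$ is the augmentation ideal $\Sigma E(k)$, on which $z^1$ acts by zero (since $z^1 \cdot z^1 = 0$), and the resulting $E(k)\{z^1\}$-module structure factors through $h_0$.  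This produces a cofiber sequence
\[
\Sigma E(k) \;\longrightarrow\; E(k)\{z^1\} \;\longrightarrow\; E(k)
\]
in $\cC$ whose outer two terms differ only by a suspension after tensoring over $E(k)\{z^1\}$ with $E(k)$.

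To finish, given any $M \in \cC$ with $M \otimes_{E(k)\{z^1\}} E(k) = 0$, tensoring the above cofiber sequence with $M$ yields
\[
\Sigma\bigl(M \otimes_{E(k)\{z^1\}} E(k)\bigr) \;\longrightarrow\; M \;\longrightarrow\; M \otimes_{E(k)\{z^1\}} E(k),
\]
i.e.\ a cofiber sequence $0 \to M \to 0$, forcing $M = 0$.  The main subtlety I foresee is the module-theoretic identification of the fiber as $\Sigma E(k)$ over $E(k)\{z^1\}$ (as opposed to merely as an $E(k)$-module); once this is in hand the argument is short, because the rational situation is special in that the augmentation ideal is a suspension of the residue algebra, making the cofiber sequence ``self-periodic'' under base change along $h_0$.
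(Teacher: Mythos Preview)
Your proof is correct and follows essentially the same route as the paper. For $f_0$ both you and the paper invoke \Cref{lem:strict_nilp}; for $h_0$ both reduce to conservativity and then apply \Cref{lem:DN_conservative}, the only difference being that the paper outsources conservativity to \cite[Proposition 3.8]{mathew2017residue} while you supply the argument directly (your cofiber-sequence argument is exactly the standard proof that the augmentation of a rational exterior algebra is conservative, which is what that reference establishes). The module-theoretic identification of the fiber you flag as the main subtlety is indeed the only thing to check, and in characteristic~$0$ it holds because $E(k)\{z^1\}$ is the formal exterior algebra, so the identification on homotopy groups already pins down the $E(k)\{z^1\}$-module structure.
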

\begin{proof}
$f_0$ detects nilpotence by  \Cref{lem:strict_nilp}.  By \cite[Proposition 3.8]{mathew2017residue} $h_0$ is conservative and thus by \Cref{lem:DN_conservative} also detects nilpotence. 
\end{proof}

\begin{prop}\label{prop:ratl-E-char} 
Let $S_0 = \{f_0,h_0\} \subset \CAlg_{E(k)}(\Sp_{\Q})^{\Delta^{1}}$ and $R \in\CAlg_{E(k)}(\Sp_{\Q})$. Then $S_0 \perp R$ if and only if $\pi_0(R)$ is reduced of Krull dimension $0$ and $\pi_*(R) = \pi_0(R)[{u}^{\pm 1}]$.
\end{prop}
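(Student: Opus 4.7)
The plan is to analyze the two lifting conditions $h_0 \perp R$ and $f_0 \perp R$ separately and combine them.

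For $h_0$: the free algebra $E(k)\{z^1\}$ corepresents the functor $\pi_1$ on $\CAlg_{E(k)}$, so $\pi_0 \Map_{\CAlg_{E(k)}}(E(k)\{z^1\}, R) = \pi_1(R)$. The map $h_0$ sends the generator to zero, and the only map $E(k) \to R$ in $\CAlg_{E(k)}$ is the structure map, so a class $\alpha \in \pi_1(R)$ admits a lift along $h_0$ if and only if $\alpha$ is null. Hence $h_0 \perp R$ if and only if $\pi_1(R) = 0$, which by the invertible action of $u \in \pi_2 E(k)$ is equivalent to $\pi_*(R) = \pi_0(R)[u^{\pm 1}]$.

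For $f_0$: I will translate the derived lifting condition into the classical lifting condition $\pi_0(f_0) \perp \pi_0(R)$ in commutative rings. A map $E(k)[t] \to R$ in $\CAlg_{E(k)}$ is (up to homotopy) a strict element $x \in \pi_0(R)$. A map $E(k)[t^{\pm 1}] \times E(k) \to R$ is equivalently described by complementary idempotents $e_1, e_2 \in \pi_0(R)$ (yielding a decomposition $R \simeq R_1 \times R_2$, by the rigidity of idempotents in commutative ring spectra) together with a strict unit $v \in \pi_0(R_1)^{\times}$ (using that the space $\Map_{\CAlg}(\Ss[\mathbb{Z}], R_1) \simeq \mathrm{GL}_1(R_1)$ has $\pi_0 = \pi_0(R_1)^{\times}$). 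The condition that this lifts the original map is the identity $x = v e_1$ in $\pi_0(R)$, obtained by tracing $t \mapsto (t, 0)$ through the decomposition. Therefore $f_0 \perp R$ is equivalent to the classical condition that every element of $\pi_0(R)$ can be written in this form, which is precisely condition (\ref{c5}) of \Cref{prop:TFAE_Red_Krull0} (with the $\mathbb{Z}$-version and the $k$-version being equivalent, since $\pi_0(R)$ is a $k$-algebra); by that proposition, this is equivalent to $\pi_0(R)$ being reduced of Krull dimension zero.

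Combining the two reductions yields that $S_0 \perp R$ if and only if $\pi_*(R) = \pi_0(R)[u^{\pm 1}]$ and $\pi_0(R)$ is reduced of Krull dimension zero, as desired. The main technical step is the translation of the derived lifting problem for $f_0$ into its classical shadow on $\pi_0$; this relies on two standard facts about commutative ring spectra—rigidity of idempotents and the identification of homotopy classes of strict units with $\pi_0^{\times}$—and once it is established, the result is a direct application of \Cref{prop:TFAE_Red_Krull0}.
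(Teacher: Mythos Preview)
Your proof is correct and follows the same strategy as the paper: handle $h_0$ and $f_0$ separately, and for $f_0$ reduce the derived lifting problem to the classical one on $\pi_0$, then invoke \Cref{prop:TFAE_Red_Krull0}. The paper's reduction is terser---it observes that rationally $E(k)[t]$ is the free commutative $E(k)$-algebra on a degree-$0$ class and directly identifies both $\pi_0$-mapping sets with their discrete analogues---whereas you make the identification for the product target explicit via idempotent rigidity and strict units; this is the same argument spelled out.
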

\begin{proof}
It is clear that $h_0 \perp R$ if and only if $\pi_{1}(R) = 0$ if and only if $\pi_*(R) \cong \pi_0(R)[{u}^{\pm 1}]$. It thus remains to show that $f_0\perp R$ if and only $\pi_0(R)$ is reduced of Krull dimension $0$. Indeed  $f_0 \perp R$ if and only if the map 

\[
\pi_0(\Map_{\CAlg_{E(k)}(\Sp_{\Q})}(E(k)[t^{\pm 1}] \times E(k),R)) \to \pi_0(\Map_{\CAlg_{E(k)}(\Sp_{\Q})}(E(k)[t],R)) 
\]
is surjective.
Since $k$ is rational, $E(k)[t]$ is a free commutative algebra and so this map of sets is isomorphic to the map of sets:
\[
 \Map_{{\CRing}}(\mathbb{Z}[x^{\pm 1}]\times \mathbb{Z},\pi_0(R))\to  \Map_{{\CRing}}(\mathbb{Z}[x],\pi_0(R)).
\]
The conclusion then follows from \Cref{prop:TFAE_Red_Krull0}.
\end{proof}

Combining the previous two results, we may now prove \Cref{thm:Main_charzero_red}.

\begin{proof}[Proof of \Cref{thm:Main_charzero_red}]
Applying \Cref{cor:Small_object_DN} with $S_0$ as in \Cref{prop:ratl-E-char}, we conclude that there is a nilpotence detecting map $R \to R'$ such that $S_0 \perp R'$.  By \Cref{prop:ratl-E-char}, $\pi_0(R')$ is reduced of Krull dimension 0 and $\pi_*(R') \cong \pi_0(R')[{u}^{\pm 1}]$.  

Unfortunately, it is not clear that this implies $R' \simeq E(\pi_0(R'))$.  In order to get around this, we will map $R'$ to the product of its residue fields.  Let $\{F_x\}_{x\in \Spec(\pi_0(R')) }$ be the set of residue fields of $\pi_0(R')$. By \Cref{lem:Quotient_flat}, we have for each $x\in \Spec(\pi_0(R'))$ a flat $R'$-algebra $R'_x$ such that $\pi_*(R'_x) \cong \pi_0(R'_x)[{u}^{\pm 1}]$ and such that the map $\pi_0(R') \to \pi_0(R'_x)$ is isomorphic to the map $\pi_0(R') \to F_x$.
We claim that there is an equivalence   $R'_x \simeq E(F_x)$  in $\CAlg_{E(k)}(\Sp_{\Q})$.
Indeed, as $E(F_x) = E(k)\otimes_k F_x$, it is enough to construct a map $F_x \to R'_x$ in $\CAlg_{k}(\Sp_{\Q})$ that induces an isomorphism on $\pi_0$. Now since $F_x$ is connective, this is equivalent  to constructing such a map to $\widetilde{R'_x}:=\tau_{\geq 0}R'_x$.
We have that $F_x \cong \tau_{\leq 0}(\widetilde{R'_x})$, so our goal is to lift this map inductively from $\tau_{\leq n}\widetilde{R'_x}$ to $\tau_{\leq n+1}\widetilde{R'_x}$.  Since the map $\tau_{\leq n+1} \widetilde{R'_x} \to \tau_{\leq n}\widetilde{R'_x}$ is a square-zero extension, this is always possible as $F_x$ is formally smooth over $k$.

Collecting the resulting maps $R' \to E(F_x)$ for all $x\in \Spec(\pi_0(R'))$,
we obtain a map 
\[
f\colon R' \to \smashoperator{\prod_{x \in \Spec(\pi_0(R'))}} E(F_x)\cong E\left(\prod_{x }F_x\right)
\]
which is surjective on $\Spec (\pi_0(-))$.  
By \Cref{DN_Krull0QQ}, this means that $f$ detects nilpotence.   Composing with the nilpotence detecting map $R\to R'$, we get a nilpotence detecting map $R \to E(\prod_{x }F_x)$.
Since $\prod_x F_x$ is reduced of Krull dimension $0$, we are done.

\end{proof}

\subsection{The case of positive height}\label{sub:genmodoutline}\hfill

For the remainder of this section, we will consider \Cref{thm:modmain} in the case when our fixed height $n$ is at least $1$.  We start by giving an outline of the proof which is completely analogous to the height $0$ case but requires significant additional technical inputs, which are supplied in Subsections \ref{subsec:insep} and \ref{sub:odd-quo}.  

  First, since the map $\one_{T(n)} \to E(k)$ detects nilpotence for any $E(k)$ of height $n$ \cite{DHS}, it suffices to show:

\begin{thm}\label{thm:modmain_red}
  Let $k$ be a perfect field of characteristic $p$, and let $R\in \CAlgw_{E(k)}$.  Then there exists a perfect $k$-algebra $A$ of Krull dimension $0$ and a nilpotence detecting map $R\to E(A)$. 
\end{thm}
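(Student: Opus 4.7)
The plan is to imitate the strategy of the height $0$ proof but with three maps $f, g, h$ in $\CAlgw_{E(k)}$ instead of two, as previewed in the introduction. The map $f$ will be the (perfected) strict-element map $E(k)[\Np] \to E(k)[\Npm] \times E(k)$, whose target-lifting property forces $R^{\flat}$ to be Krull dimension $0$; this map detects nilpotence by \Cref{thm:perf_nilp}. The map $h\colon E(k)\{z^1\} \to E(k)$ sends a free odd generator to $0$, and its target-lifting property forces $\pi_{\mathrm{odd}}(R') \modm$ to vanish, which combined with the even-periodicity of $E(k)$ will give evenness of $R'\modm$. The hardest of the three is $g \colon E(k)\{z^0\} \to E(A_0)$ for a suitable perfect $A_0$; it is designed so that $R' \perp g$ forces the canonical map $\pi_0(R')/\m \to (\pi_0(R')/\m)^{\sharp}$ to a perfect algebra to be an isomorphism, i.e., forces $\pi_0(R')/\m$ itself to be perfect. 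Concretely, $g$ should be built so that the induced map on $\T$-algebras $\T(\pi_0 E(k)) \to \pi_0 E(A_0)$ realizes, modulo $\m$, the perfection of a polynomial ring on one generator. This is where \Cref{thm:cofree} is essential: the cofreeness of $\pi_0 E(-)$ as a $\T$-algebra is exactly what lets us construct $g$ as the adjoint of a map of perfect rings and verify the desired universal property on $\pi_0(R')/\m$.

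First I would verify that the three maps $f, g, h$ detect nilpotence. For $f$ this is just \Cref{thm:perf_nilp}. For $g$, the target $E(A_0)$ is a Lubin--Tate theory, so $E(A_0)\modm$ is conservative on $E(A_0)$-modules by \Cref{exm:K-tnil}; combined with \Cref{lem:DN_conservative}, and the fact that the source $E(k)\{z^0\}$ is also $T(n)$-local, one shows that $g$ factors into a composite of nilpotence-detecting maps. For $h$, the key input is that the free odd generator $z^1 \in \pi_1 E(k)\{z^1\}$ is not only nilpotent but locally nilpotent in a quantitative way -- this is where we invoke the strong May-nilpotence conjecture proved in \cite{MNNmaynilp}, which implies that the fiber of $h$ is $\otimes^m$-phantom for some $m$. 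Then $h$ detects nilpotence by \Cref{lem:DN_phantom}.

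Next, I would apply \Cref{cor:Small_object_DN} to the set $S_0 = \{f, g, h\}$ and to $R \in \CAlgw_{E(k)}$, obtaining a nilpotence-detecting map $R \to R'$ with $S_0 \perp R'$. The composition $R \to R'$ is nilpotence-detecting by \Cref{thm:nil_saturated}, so it remains to show that $R' \cong E(A)$ for some perfect $A$ of Krull dimension $0$. By \Cref{thm:E_functor}(4), it suffices to show (i) $R'\modm$ is even, (ii) $\pi_0(R')/\m$ is perfect, and in addition that $\pi_0(R')/\m$ has Krull dimension $0$. Condition (i) follows because $R' \perp h$ kills $\pi_{\mathrm{odd}}(R')$ modulo $\m$ (using the regular sequence structure on $\m$ together with the $\m$-adic filtration to propagate oddness-vanishing up from the associated graded, a standard Landweber-ideal argument). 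Condition (ii) follows from $R' \perp g$ combined with the universal property engineered for $g$ via \Cref{thm:cofree}: the RLP says that any $\T$-algebra map from the source lifts, which translates into surjectivity of $\pi_0(R')/\m$ onto its perfection, and by \Cref{cor:flatmap_inj} (once Krull dimension $0$ is in hand) this surjection is also injective. The Krull-dimension-$0$ condition on $\pi_0(R')/\m$ follows from $R' \perp f$ together with \Cref{cor:vnrlift}.

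The main obstacle is the construction of $g$ and the verification that its target-lifting property produces the perfection map on $\pi_0/\m$. This requires threading the cofree adjunction $(U_{\T}(-)/\m)^{\sharp} \dashv \pi_0 E(-)$ of \Cref{thm:cofree} through the free-forgetful adjunction for $\CAlgw_{E(k)}$, and carefully matching the $\T$-algebra-theoretic perfection with the $\infty$-categorical universal property. A secondary subtlety is at the prime $p = 2$, where the naive argument that $z^1$ is $2$-nilpotent fails and one genuinely needs the stronger $\otimes^m$-phantom statement afforded by the May nilpotence conjecture. Once these technical inputs are in hand, the rest of the argument is formal and parallels the height $0$ case treated in \Cref{thm:Main_charzero_red}.
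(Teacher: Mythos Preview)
Your overall strategy is exactly the paper's: apply \Cref{cor:Small_object_DN} to $S_0=\{f,g,h\}$ of \Cref{dfn:threemaps}, then identify $R'$ via \Cref{prop:vnrE}. Two of your nilpotence-detection arguments have real gaps, however. For $g$, the reasoning you give is not an argument: conservativity of $E(A_0)\modm$ over $E(A_0)$ says nothing about the map $E(k)\{z^0\} \to E(A_0)$, and no factorization into nilpotence-detecting pieces is evident. The paper instead shows $g$ is itself conservative: $k\{z^0\}=\pi_0(E(k)\{z^0\})/\m$ is polynomial by Strickland's theorem, so $k\{z^0\}^\sharp$ is free over it, and hence $g$ acquires a retract after $\modm$ (\Cref{prop:insepnilp}(1)). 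For $h$, the $\otimes^m$-phantom criterion (\Cref{prop:oddnilp_phantom}) applies to $h_\alpha\colon R\to R\mm^\infty\alpha$ only when $\pi_0(R)$ is \emph{reduced}, which fails for $E(k)\{z^1\}$. The paper bootstraps: first run the small-object argument with only $\{f,g\}$ (\Cref{cor:ecoloc_iso}) to produce a nilpotence-detecting $q\colon E(k)\{z^1\}\to\tilde R$ with $\pi_0\tilde R$ reduced, apply the phantom argument to $h_{q(z^1)}$ there, and deduce that $h$ detects nilpotence via \Cref{lem:DN_composition}(2). That the proof for $h$ depends on already having $f$ and $g$ is a structural point you have not anticipated.

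Your RLP characterization is also garbled. The condition $g\perp R'$ gives surjectivity of the $E$-colocalization $\pi_0 E(R'^\flat)\to\pi_0(R')$ (\Cref{prop:insepnilp}(2)), not ``surjectivity of $\pi_0(R')/\m$ onto its perfection''; the relevant perfection here is the \emph{limit} perfection $(-)^\flat$, and the map goes the other way. Combined with injectivity from $f\perp R'$ (via \Cref{cor:cofree_inj} and \Cref{cor:flatmap_inj}), the colocalization is then an isomorphism on $\pi_0$ and hence on even degrees by periodicity; $h\perp R'$ gives $\pi_1(R')=0$, hence all odd degrees by periodicity. Your alternative route through ``$R'\modm$ is even'' would require knowing that $\m$ is a regular sequence in $\pi_*R'$ before establishing $R'\cong E(A)$, which is circular: evenness of $R'$ alone does not force evenness of $R'/p$.
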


The approach is centered around the following detection result for Lubin-Tate theories: 

\begin{prop}\label{prop:vnrE}
Consider the following three conditions on a commutative algebra $R\in \CAlgw_{E(k)}$:
\begin{enumerate}
    \item $R^{\flat}$ is of Krull dimension $0$.
    \item The $E$-colocalization map $E(R^{\flat}) \to R$ is surjective on $\pi_0$.  
    \item $\pi_1(R)=0$.
\end{enumerate}

If $(1)$ is satisfied, then the $E$-colocalization map $E(R^{\flat}) \to R$ is injective on $\pi_0$.  Consequently, the ring $R$ is equivalent to $E(A)$ for some perfect $A$ of Krull dimension $0$ if and only if all three conditions are satisfied.  
\end{prop}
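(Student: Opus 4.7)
My plan is to verify the hypotheses of \Cref{cor:E-alg-image}. The ``only if'' direction is immediate: if $R \simeq E(A)$ for a perfect $A$ of Krull dimension zero, then $R^{\flat} \cong A$ by \Cref{lem:Etilt}, giving (1); the counit $E(R^{\flat}) \to R$ is an equivalence by the full faithfulness of $E(-)$ asserted in \Cref{thm:E_functor}(1), giving (2); and $\pi_1 E(A) = 0$ by the even periodicity in \Cref{thm:E_functor}(3), giving (3).

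The first substantive claim is that condition (1) alone implies the $E$-colocalization map $\phi\colon E(R^{\flat}) \to R$ is injective on $\pi_0$. Since $\phi_0$ is a map of $\T$-algebras, by \Cref{cor:cofree_inj} it suffices to show that its reduction modulo $\m$ is injective. This reduction is the natural map $R^{\flat} \to \pi_0(R)/\m$. Setting $B := \pi_0(R)/\m$, we have $B^{\flat} = R^{\flat}$ by \Cref{thm:E_functor}(2), which is of Krull dimension zero by (1), so $B^{\flat} \to B$ is injective by \Cref{cor:flatmap_inj}.

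For the ``if'' direction, assume all three conditions. The injectivity just established, combined with (2), shows $\phi_0$ is an isomorphism; consequently $\pi_0(R)/\m \cong R^{\flat}$ is perfect, verifying one hypothesis of \Cref{cor:E-alg-image}. It remains to show $R\modm$ is even. Since $R\modm$ is a module over $E(k)\modm$, whose homotopy $k[u^{\pm 1}]$ is a graded field, $R\modm$ splits as a direct sum of shifts of $E(k)\modm$, so evenness reduces to the single condition $\pi_1 R\modm = 0$.

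To establish $\pi_1 R\modm = 0$, I set $R_0 := R$ and $R_{j+1} := R_j / v_j$ (so $R_n = R\modm$) and induct on $j$ to prove that $\pi_1 R_j = 0$ and that $\pi_0 R_j \cong \pi_0 R/(v_0,\ldots,v_{j-1})$ is a power series ring in $u_j, \ldots, u_{n-1}$ over $R^{\flat}$ (respectively over $W(R^{\flat})$ at $j=0$). The base case uses (3) together with the isomorphism $\pi_0 R \cong W(R^{\flat})\ll u_1, \ldots, u_{n-1}\rr$ from $\phi_0$ being an iso, on which $p$ is a non-zerodivisor since $W(R^{\flat})$ is $p$-torsion free. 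For the inductive step, the long exact sequence associated to $\Sigma^{|v_j|} R_j \xrightarrow{v_j} R_j \to R_{j+1}$, combined with the two-periodicity of $R_j$ as an $E(k)$-module (the isomorphism $\pi_{-|v_j|}R_j \cong \pi_0 R_j$ via multiplication by $u^{|v_j|/2}$), identifies $\pi_0 R_{j+1}$ with $\pi_0 R_j/u_j$ and $\pi_1 R_{j+1}$ with $\ker(u_j \colon \pi_0 R_j \to \pi_0 R_j)$, which vanishes because $u_j$ is a regular element of the relevant power series ring. The main technical subtlety is precisely this Koszul-type propagation of the vanishing of $\pi_1$ through the tower; this is the unique place where all three conditions conspire, with (3) anchoring the base, and (1) and (2) supplying through $\phi_0$ the explicit power-series description of $\pi_0 R$ required to keep the regularity argument running.
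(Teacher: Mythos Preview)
Your proof is correct, and your treatment of the injectivity claim under (1) is identical to the paper's, invoking \Cref{cor:cofree_inj} and \Cref{cor:flatmap_inj} in the same way.

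For the ``if'' direction, however, the paper takes a shorter route. Once $\phi_0\colon \pi_0 E(R^{\flat}) \to \pi_0 R$ is known to be an isomorphism, the paper simply observes that $\phi$ is a map of $E(k)$-modules, hence both source and target are $2$-periodic via $u \in \pi_2 E(k)$, so $\phi$ is an isomorphism in all even degrees; and since $\pi_1 E(R^{\flat}) = 0$ by \Cref{thm:E_functor}(3) and $\pi_1 R = 0$ by (3), periodicity forces all odd homotopy groups to vanish on both sides. Thus $\phi$ is an equivalence, and $R \simeq E(R^{\flat})$ directly, without passing through \Cref{cor:E-alg-image}. Your inductive Koszul-style argument proving $\pi_1(R\modm) = 0$ is valid and has the virtue of being self-contained at the level of homotopy groups, but it is reproving a special case of \Cref{lem:strong-even} (or the argument behind \Cref{cor:E-alg-image}) in a situation where the conclusion follows immediately from the equivalence $R \simeq E(R^{\flat})$.
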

\begin{proof}
For the first claim, by \Cref{cor:cofree_inj}, it suffices to show that the reduction modulo $\m$
\[
R^{\flat} \to \pi_0(R)/\m
\]
is injective.  But since $R^{\flat} \cong (\pi_0(R)/\m)^{\flat}$, the statement follows from \Cref{cor:flatmap_inj}.  

To deduce the second statement, it is clear that $E(A)$, for $A$ perfect of Krull dimension $0$, satisfies (1), (2), and (3).  Conversely, if all three conditions are satisfied, then the first claim implies that the $E$-colocalization map induces an isomorphism in even degrees, and (3) implies it induces an isomorphism in odd degrees.

\end{proof}

Given this characterization of Lubin-Tate theories, we prove \Cref{thm:modmain_red} by combining the following two propositions: 

\begin{dfn}\label{dfn:threemaps}
Define the following three maps in $\CAlgw_{E(k)}$:
\begin{enumerate}
    \item $\mdef{f}\colon  E(k[{\Np}]) \to E(k[{\Npm}])\times E(k) $ as in \Cref{thm:perf_nilp}.
    \item $\mdef{g}\colon E(k)\{z^0\} \to E(A)$ for $A = (\pi_0(E(k)\{z^0\})/I)^{\mathrm{perf}}$, which is to be defined in \Cref{dfn:the_map_g}.
    \item $\mdef{h}\colon E(k)\{z^1\} \xrightarrow{z^1 \mapsto 0} E(k)$. \tqed
\end{enumerate}
\end{dfn}

We have already shown that $f$ detects nilpotence in \Cref{thm:perf_nilp}.  In \Cref{subsec:insep}, we will define the map $g$ and show that it detects nilpotence.  Finally, the goal of \Cref{sub:odd-quo} is to show that $h$ detects nilpotence.   Given these inputs, we can finish the proof of the main theorem by applying the small object argument in a manner analogous to the rational case.

\begin{prop}\label{prop:Slift}
Let $S_0 = \{f, g, h\} \subset (\CAlgw_{E(k)})^{\Delta^1}$ and $R\in \CAlgw_{E(k)}$.  Then $S_0 \perp R$ if and only if $R$ is equivalent to $E(A)$ for some perfect $A$ of Krull dimension $0$.
\end{prop}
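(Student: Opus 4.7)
The plan is to identify each of the three lifting conditions $f \perp R$, $g \perp R$, $h \perp R$ with one of the three conditions in \Cref{prop:vnrE}, and then invoke that proposition. The ``only if'' direction is immediate: for $A$ perfect of Krull dimension $0$, the $E$-colocalization map $E(E(A)^\flat) \to E(A)$ is an equivalence (by \Cref{thm:E_functor}(1)), $E(A)$ has vanishing odd homotopy, and $E(A)^\flat \cong A$ has Krull dimension $0$, so $E(A)$ satisfies all three conditions of \Cref{prop:vnrE}, which readily implies $S_0 \perp E(A)$.

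For the forward direction, the cases of $f$ and $h$ are straightforward. The free commutative $E(k)$-algebra $E(k)\{z^1\}$ corepresents $\pi_1$ on $\CAlgw_{E(k)}$ and $h$ corresponds to the zero element, so $h \perp R$ is equivalent to $\pi_1(R) = 0$, which is condition~(3) of \Cref{prop:vnrE}. For $f$, one matches it (using the identification $E(k[M]) \simeq E(k) \otimes \Ss[M]_p$ of \Cref{exm:line} for $M$ a commutative monoid on which $p$ acts invertibly) with the instance of the map appearing in \Cref{thm:perf_nilp} obtained by taking $\CC = \Modw_{E(k)}$. Then \Cref{cor:vnrlift} gives that $f \perp R$ is equivalent to $R^\flat$ having Krull dimension $0$, which is condition~(1).

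The remaining case, $g \perp R$, is the main technical step. Maps $E(k)\{z^0\} \to R$ in $\CAlgw_{E(k)}$ correspond to elements of $\pi_0(R)$, while maps $E(A_0) \to R$ correspond via the tilting adjunction of \Cref{thm:E_functor}(1) to maps $A_0 \to R^\flat$ in $\Perf_k$. Since $A_0 = (\pi_0(E(k)\{z^0\})/\m)^{\sharp}$ by the defining property of $g$, the cofreeness theorem (\Cref{thm:cofree}) identifies such maps with $\T$-algebra maps $\pi_0(E(k)\{z^0\}) \to \pi_0 E(R^\flat)$; and these in turn correspond to elements of $\pi_0 E(R^\flat)$, because $\pi_0(E(k)\{z^0\})$ is built from the free $\T$-algebra on a degree-$0$ class by $\m$-adic completion, while the target $\pi_0 E(R^\flat)$ is itself $\m$-complete. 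Unwinding the lifting diagram through these identifications shows that $g \perp R$ holds precisely when every element of $\pi_0(R)$ lies in the image of the $\pi_0$ of the $E$-colocalization map $E(R^\flat) \to R$, which is exactly condition~(2). Assembling (1), (2), and (3), \Cref{prop:vnrE} produces the desired equivalence $R \cong E(R^\flat)$ with $R^\flat$ perfect of Krull dimension $0$.

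The main obstacle will be executing the translation for $g$ rigorously: verifying that the cofreeness adjunction of \Cref{thm:cofree}, together with the appropriate $\m$-completeness of $\pi_0(E(k)\{z^0\})$ and $\pi_0 E(R^\flat)$, really identifies the set of maps $A_0 \to R^\flat$ in $\Perf_k$ with the set of elements of $\pi_0 E(R^\flat)$, so that the abstract lifting condition $g \perp R$ matches condition~(2) on the nose. This is precisely the payoff of having constructed $g$ with a concrete description of its effect on $\pi_0(-)/\m$.
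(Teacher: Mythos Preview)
Your proposal is correct and follows essentially the same approach as the paper: identify each lifting condition with one of the three conditions in \Cref{prop:vnrE}, then apply that proposition. The paper's proof is a one-liner citing \Cref{cor:vnrlift}, \Cref{prop:insepnilp}(2), and \Cref{prop:oddnilp}(2) for the three equivalences; you reproduce the argument for $g$ inline (which is exactly the content of \Cref{lem:bij_free} and \Cref{prop:insepnilp}(2)). One small slip: you label the direction ``$R\cong E(A)\Rightarrow S_0\perp R$'' as the ``only if'' direction, but it is the ``if'' direction.
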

\begin{proof}
It suffices to show that $S_0 \perp R$ if and only if $R$ satisfies the conditions of  \Cref{prop:vnrE}.  Indeed, (1) is equivalent to $f\perp R$ by \Cref{cor:vnrlift}, (2) is equivalent to $g\perp R$ by \Cref{prop:insepnilp}, and (3) is equivalent to $h\perp R$ by \Cref{prop:oddnilp}.
\end{proof}

\begin{proof}[Proof of \Cref{thm:modmain_red}]
The maps in the set $S_0$ of \Cref{prop:Slift} detect nilpotence (\Cref{thm:perf_nilp}, \Cref{prop:insepnilp}, \Cref{prop:oddnilp}), so applying \Cref{cor:Small_object_DN}, we learn that any $R\in \CAlgw_{E(k)}$ admits a map of commutative algebras to some $R'$ such that $S_0 \perp R'$, and so we are done by  \Cref{prop:vnrE}.
\end{proof}

\subsection{Making the $E$-colocalization surjective on $\pi_0$}
\label{subsec:insep}\hfill

The goal of this subsection is to construct the map $g$ of \Cref{dfn:threemaps} such that $g\perp R$ if and only if the $E$-colocalization map
\[
E(R^{\flat}) \to R
\]
is surjective on $\pi_0$, and to show that $g$ detects nilpotence.  


\begin{ntn}
We let $k\{z^0 \} := \pi_0(E(k)\{z^0\})/\m$.  Note that by Strickland's theorem (\Cref{thm:strickland}\footnote{Note that $\T(E_0)/\m \cong k\{z^0\}$.}), this is a polynomial ring on infinitely many generators.
\tqed
\end{ntn}

Using the results of \Cref{sec:cofree} on the cofreeness of Lubin--Tate theory, we have the following identification:

\begin{lem}\label{lem:bij_free}
Let $A$ be a perfect $k$-algebra. Then the functor 
\[\left(\pi_0(-)/\m \right)^{\sharp}\colon\CAlgw_{E(k)} \to \mathrm{Perf}_{k}
\] induces an isomorphism
\[
\pi_0\left(\Map_{\CAlgw_{E(k)}}(E(k)\{z^0\}, E(A))\right) \to \Map_{\mathrm{Perf}_{k}}\left(k\{z^0\}^{\sharp}, A \right)
\]
\end{lem}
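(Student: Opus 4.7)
The approach is to identify both sides of the displayed bijection with the underlying set of $\pi_0 E(A)$ via universal properties, and then to verify that the induced map is the identity on $\pi_0 E(A)$ under these identifications.

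On the left-hand side, since $E(k)\{z^0\}$ is the free $T(n)$-local commutative $E(k)$-algebra on a class in degree zero, evaluation at $z^0$ yields the canonical bijection $\pi_0\Map_{\CAlgw_{E(k)}}(E(k)\{z^0\}, E(A)) \cong \pi_0 E(A)$. For the right-hand side, using the identification $\T(E_0)/\m = k\{z^0\}$ (Strickland's theorem \Cref{thm:strickland} together with the conventions recorded just above the lemma), the cofreeness theorem \Cref{thm:cofree} applied to $B = \T(E_0)$ produces
\[
\Map_{\Perf_k}(k\{z^0\}^{\sharp}, A) = \Map_{\Perf_k}((U_{\T}\T(E_0)/\m)^{\sharp}, A) \cong \Map_{\Alg_{\T}}(\T(E_0), \pi_0 E(A)),
\]
and since $\T(E_0)$ is the free $\T$-algebra on one degree-zero generator $z^0$, the last set is in canonical bijection with $\pi_0 E(A)$ via evaluation at $z^0$.

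To check that the lemma's map realizes the identity on $\pi_0 E(A)$, I would use the $\T$-algebra structure on $\pi_0(E(k)\{z^0\})$ from Rezk's theory, which yields a canonical $\T$-algebra map $\iota \colon \T(E_0) \to \pi_0(E(k)\{z^0\})$ sending $z^0 \mapsto z^0$. Given $\phi \colon E(k)\{z^0\} \to E(A)$ with $x := \phi(z^0)$, the composite $\pi_0(\phi) \circ \iota \colon \T(E_0) \to \pi_0 E(A)$ is a $\T$-algebra map sending $z^0$ to $x$. Its adjoint under the adjunction of \Cref{thm:cofree} is the composite
\[
k\{z^0\}^{\sharp} \xrightarrow{(\iota/\m)^{\sharp}} (\pi_0(E(k)\{z^0\})/\m)^{\sharp} \xrightarrow{(\pi_0(\phi)/\m)^{\sharp}} (\pi_0 E(A)/\m)^{\sharp} \xrightarrow{\epsilon_A} A,
\]
in which $(\iota/\m)^{\sharp}$ is the identity (both quotients are canonically $k\{z^0\}$, with $\iota/\m$ the canonical identification between them), and $\epsilon_A$ is the identity because $\pi_0 E(A)/\m = A$ is already perfect by \Cref{thm:E_functor}(3). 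Hence the adjoint equals $(\pi_0(\phi)/\m)^{\sharp}$, which is the image of $\phi$ under the map in the lemma. Composing the two bijections above therefore realizes this map as the identity on $\pi_0 E(A)$.

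The main step of substance is to pin down the mod-$\m$ reduction $\iota/\m$ as the canonical isomorphism $\T(E_0)/\m \cong k\{z^0\}$, which amounts to observing that $\pi_0(E(k)\{z^0\})$ is obtained from $\T(E_0)$ by a completion that becomes trivial modulo the Landweber ideal, so that $\iota$ and the tautological map $\T(E_0) \to k\{z^0\}$ agree after reducing mod $\m$. Once this naturality is in hand, the lemma is a formal consequence of \Cref{thm:cofree} and the universal properties of $E(k)\{z^0\}$ and $\T(E_0)$.
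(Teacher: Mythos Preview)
Your proposal is correct and takes essentially the same approach as the paper: both arguments identify each side with $\pi_0 E(A)$ via the free properties of $E(k)\{z^0\}$ and of $\T(E_0)$, invoke the cofreeness theorem (\Cref{thm:cofree}), and use that $\iota\colon \T(E_0)\to \pi_0(E(k)\{z^0\})$ becomes an isomorphism modulo $\m$ (the paper phrases this via Rezk's $\m$-adic completion result $\T(\pi_0E(k))_\m^\wedge \cong \pi_0(E(k)\{z^0\})$, which is exactly the input underlying your ``completion becomes trivial mod $\m$'' step). The only organizational difference is that the paper factors the map through $\Map_{\T}(\pi_0(E(k)\{z^0\}),\pi_0 E(A))$ and checks each factor is a bijection, whereas you check directly that the composite realizes the identity on $\pi_0 E(A)$.
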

\begin{proof}

We factor the map 
as
\begin{align*}
\pi_0\left(\Map_{\CAlgw_{E(k)}}(E(k)\{z^0\}, E(A))\right) &\xrightarrow{\pi_0} \Map_{\T}(\pi_0(E(k)\{z^0\}), \pi_0(E(A))) \\
 &\xrightarrow{((-)/\m)^{\sharp}} \Map_{\mathrm{Perf}_{k}}(k\{z^0\}^{\sharp}, A).
\end{align*}
and show that both of these maps are isomorphisms.
For the second map, this follows from the cofreeness of Lubin--Tate theory from \Cref{thm:cofree}.
For the first map, we consider the following diagram
\[
\begin{tikzcd}[column sep=small]
    \pi_0\left(\Map_{\CAlgw_{E(k)}}(E(k)\{z^0\}, E(A))\right) \ar[rr] \ar[d, "\pi_0"] & &
    \pi_0\Omega^\infty E(A) \ar[d, "\cong"] \\
    \Map_{\T}(\pi_0(E(k)\{z^0\}), \pi_0(E(A))) \ar[r] &
    \Map_{\T}(\T(\pi_0(E(k))), \pi_0(E(A))) \ar[r] &
    \pi_0(E(A)), 
\end{tikzcd}
\]
where the top horizontal map is evaluation at $z^0$,
$\T(\pi_0(E(k)))$ is the free $\T$-algebra on the generator $z^0$,
the bottom left horizontal map is induced by precomposition with the map $\T(\pi_0E(k)) \to \pi_0(E(k)\{z^0\})$ and
the bottom right horizontal map is evaluation at $z^0$.
The evaluation at $z^0$ map is an isomorphism since $E(k)\{z^0\}$ is the free $T(n)$-local $E(k)$-algebra on the class $z^0$ (resp. since $\T(\pi_0E(k))$ is the free $\T$-algebra on the class $z^0$).
The bottom left map is an isomorphism since $\pi_0E(A)$ is $\m$-adically complete (\Cref{thm:E_functor}(3)) and 
$\T(\pi_0E(k))_\m^\wedge \cong \pi_0(E(k)\{z^0\})$ (see \cite[Proposition 4.17]{RezkCong}). 
\end{proof}



\begin{dfn}\label{dfn:the_map_g}
Define
\[
\mdef{g}\colon E(k)\{z^0\} \to E(k\{z^0\}^{\sharp})
\]
to be the map that corresponds to the identity of $k\{z^0\}^{\sharp}$ under the bijection of \Cref{lem:bij_free}. 
\tqed
\end{dfn}

\begin{prop}\label{prop:insepnilp}
We have:
\begin{enumerate}
    \item The functor 
    \[
    -\otimes_{E(k)\{z^0\}} E(k\{z^0\}^{\sharp})\colon \Modw_{E(k)\{z^0 \}} \to \Modw_{E(k)\{z^0\}}
    \]
    induced by $g$ is conservative.  In particular, by \Cref{lem:DN_conservative}, $g$ detects nilpotence.
    \item For $R\in \CAlgw_{E(k)}$, we have $g \perp R$ if and only if the map $\pi_0(E(R^{\flat})) \to \pi_0(R)$ is surjective.
\end{enumerate}
\end{prop}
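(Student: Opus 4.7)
The condition $g \perp R$ translates to surjectivity of the precomposition map $g^{*}\colon [E(k\{z^0\}^{\sharp}), R] \to [E(k)\{z^0\}, R] = \pi_{0}R$. I would identify this map by factoring through the $E$-colocalization $\epsilon\colon E(R^{\flat}) \to R$ to produce the commutative square
\[
\begin{tikzcd}
{[E(k\{z^0\}^{\sharp}), E(R^{\flat})]} \arrow[r, "g^{*}"] \arrow[d, "\epsilon_{*}"'] & \pi_{0}E(R^{\flat}) \arrow[d, "\epsilon_{*}"] \\
{[E(k\{z^0\}^{\sharp}), R]} \arrow[r, "g^{*}"] & \pi_{0}R.
\end{tikzcd}
\]
The left vertical arrow is a bijection: the $E$-tilt adjunction (\Cref{thm:E_functor}) identifies both sides with $\Hom_{\Perf_{k}}(k\{z^0\}^{\sharp}, R^{\flat})$, under which $\epsilon_{*}$ corresponds to post-composition with $\epsilon^{\flat} = \id_{R^{\flat}}$. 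The top horizontal arrow is also a bijection: \Cref{lem:bij_free} at $A = R^{\flat}$ gives $\pi_{0}E(R^{\flat}) \cong \Hom_{\Perf_{k}}(k\{z^0\}^{\sharp}, R^{\flat})$, and by naturality of that lemma in $A$, combined with the defining property of $g$ (which corresponds to $\id_{k\{z^0\}^{\sharp}}$ under \Cref{lem:bij_free} at $A = k\{z^0\}^{\sharp}$), the map $g^{*}$ becomes the identity on $\Hom_{\Perf_{k}}(k\{z^0\}^{\sharp}, R^{\flat})$. Commutativity then forces the surjectivity of the bottom $g^{*}$ (i.e., $g \perp R$) to be equivalent to that of the right-hand $\epsilon_{*}\colon \pi_{0}E(R^{\flat}) \to \pi_{0}R$, as claimed.

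\textbf{Plan for (1).} I would establish conservativity of the base-change functor directly, after which the nilpotence-detection conclusion follows from \Cref{lem:DN_conservative}. Given $M \in \Modw_{E(k)\{z^0\}}$ with $M \otimes_{E(k)\{z^0\}} E(k\{z^0\}^{\sharp}) = 0$, the $\m$-completeness of $M$ reduces the problem to showing that $E(k\{z^0\}^{\sharp})\modm$ is conservative as an $E(k)\{z^0\}\modm$-module. The construction of $g$ via \Cref{lem:bij_free} guarantees that $g$ induces on $\pi_{0}(-)/\m$ precisely the perfection inclusion $k\{z^0\} \hookrightarrow k\{z^0\}^{\sharp}$, which is faithfully flat as a filtered colimit of Frobenius twists on a polynomial ring. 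Combined with the evenness of $E(k)\{z^0\}$ (via Rezk's identification $\pi_{0}E(k)\{z^0\} \cong \T(E_{0})^{\wedge}_{\m}$ and the evenness of $\T(E_{0})$ from \Cref{thm:strickland}) and the regularity of the Landweber sequence $v_{0},\ldots,v_{n-1}$, one obtains $\pi_{*}E(k)\{z^0\}\modm \cong k\{z^0\}[u^{\pm 1}]$ and $\pi_{*}E(k\{z^0\}^{\sharp})\modm \cong k\{z^0\}^{\sharp}[u^{\pm 1}]$. The desired conservativity then reduces to faithfully flat descent for maps of even-periodic $\E_{1}$-ring spectra with faithfully flat map on $\pi_{0}$.

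\textbf{Main obstacle.} The principal technical difficulty lies in the $\modm$-reduction step of part (1): rigorously verifying that $E(k)\{z^0\}$ is even with $\pi_{*}E(k)\{z^0\}\modm \cong k\{z^0\}[u^{\pm 1}]$, so that the computation of homotopy of the $\modm$-quotients is as clean as described. While this is a consequence of Rezk's theory of power operations in the $T(n)$-local setting, invoking it carefully (including the regularity of $v_{0},\ldots,v_{n-1}$ on the non-free $T(n)$-localized algebra $E(k)\{z^0\}$) is the delicate step. Once in hand, the faithfully flat descent is routine, and part (2) is essentially a formal manipulation of \Cref{lem:bij_free}.
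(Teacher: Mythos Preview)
Your argument for part (2) is correct and essentially identical to the paper's: both compute $[E(k\{z^0\}^{\sharp}),R]$ via the $E$--tilt adjunction and \Cref{lem:bij_free} to obtain $\pi_0E(R^{\flat})$, and identify $g^*$ with the $E$-colocalization map on $\pi_0$. The paper just writes this as a chain of bijections rather than a commuting square.

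For part (1), your approach is correct but longer than the paper's. After reducing modulo $\m$ (which both of you do), the paper does not invoke faithfully flat descent. Instead it observes that $g\modm$ admits a \emph{retract} as a map of $E(k)\{z^0\}$-modules: since $k\{z^0\}$ is polynomial (Strickland), its colimit perfection $k\{z^0\}^{\sharp}$ is free as a $k\{z^0\}$-module, so choosing a basis containing $1$ one splits $E(k\{z^0\}^{\sharp})\modm$ as a direct sum of copies of $E(k)\{z^0\}\modm$ with $g\modm$ the inclusion of the unit summand. A retract of a conservative functor suffices, so there is no need to set up K\"unneth-type faithfully flat descent for $\E_1$-rings. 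The obstacle you flag---that $E(k)\{z^0\}$ is even with $\pi_*(E(k)\{z^0\}\modm)\cong k\{z^0\}[u^{\pm 1}]$---is implicitly used in the paper's argument as well (to verify the basis map is an equivalence), but this follows readily from the evenness of $E_*(B\Sigma_j)$ and Rezk's identification $\pi_0E(k)\{z^0\}\cong \T(E_0)^{\wedge}_{\m}$, both of which are already in play in \Cref{sec:cofree}. So your concern is legitimate but not a genuine gap; once past it, the retract argument is the shorter finish.
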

\begin{proof}
For (1), because the functor 
\[ (-)/\m \colon \Modw_{E(k)}(\Sp_{T(n)}) \to \Modw_{E(k)/\m}(\Sp_{T(n)})\]
is conservative, it is enough to observe that the map $g$ has a retract after reducing modulo $\m$. Indeed, this follows immediately from the fact that  $k\{z^0\}$ is a polynomial algebra and thus the module $k\{z^0\}^{\sharp}$ is free as a $k\{z^0\}$-module. 

For (2), we have 
\begin{align*}
\pi_0 \Map_{\CAlgw_{E(k)}}(E(k\{z^0\}^{\sharp}), R) &\cong  \pi_0\Map_{\CAlgw_{E(k)}}(E(k\{z^0\}^{\sharp})  , E(R^{\flat})) \\
&\cong \pi_0\Map_{\mathrm{Perf}_{k}}(k\{z^0\}^{\sharp}, R^{\flat})\\
&\cong \pi_0 \Map_{\CAlgw_{E(k)}}(E(k)\{z^0\}, E(R^{\flat}))\\ 
&\cong \pi_0(E(R^{\flat}))
\end{align*}
The first two isomorphisms are \Cref{thm:E_functor}, the third is \Cref{lem:bij_free}, and the last is by the definition of $E(k)\{z^0 \}$.
\end{proof}

Recall that our goal is to prove that $f$, $g$ and $h$ (as defined in \Cref{dfn:threemaps}) detect nilpotence.  At this point, we have shown that $f$ and $g$ detect nilpotence.  Our proof that $h$ detects nilpotence, which we turn to in the next section, uses the following corollary of the fact that $f$ and $g$ detect nilpotence.

\begin{cor}\label{cor:ecoloc_iso}
Let $R\in \CAlgw_{E(k)}$.  Then there exists a nilpotence detecting commutative ring map $R\to R'$ such that the $E$-colocalization 
\[
E(R'^{\flat}) \to R'
\]
induces an isomorphism on $\pi_0$.
\end{cor}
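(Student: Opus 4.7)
The plan is to apply the small object argument of \Cref{cor:Small_object_DN} to the set of maps $S_0' = \{f, g\} \subset (\CAlgw_{E(k)})^{\Delta^1}$ taken from \Cref{dfn:threemaps}. Both of these maps detect nilpotence: $f$ does so by \Cref{thm:perf_nilp}, and $g$ does so by \Cref{prop:insepnilp}(1). Hence \Cref{cor:Small_object_DN} produces a map $R \to R'$ in $\CAlgw_{E(k)}$ which itself detects nilpotence and satisfies $S_0' \perp R'$.

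It remains to check that this $R'$ has the desired property on its $E$-colocalization. From $f \perp R'$ and \Cref{cor:vnrlift} (applied to $\cC = \Modw_{E(k)}$), we conclude that $R'^{\flat}$ is a perfect $\F_p$-algebra of Krull dimension $0$. From $g \perp R'$ and \Cref{prop:insepnilp}(2), the map on $\pi_0$ induced by the $E$-colocalization $E(R'^{\flat}) \to R'$ is surjective. Finally, since $R'^{\flat}$ has Krull dimension $0$, the first assertion of \Cref{prop:vnrE} tells us that the $E$-colocalization is also injective on $\pi_0$. Combining surjectivity and injectivity yields the desired isomorphism on $\pi_0$.

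There is no real obstacle here beyond recognizing that the corollary is a direct repackaging of the two conditions of \Cref{prop:vnrE} that have already been arranged by the small object argument. We do not need to invoke the map $h$ at all, because this corollary concerns only the behavior of the $E$-colocalization on $\pi_0$ and makes no assertion about odd homotopy groups; killing odd classes (the role of $h$) is deferred to the assembly of the full proof of \Cref{thm:modmain_red} in the preceding subsection.
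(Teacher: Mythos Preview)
Your proof is correct and follows essentially the same approach as the paper: apply \Cref{cor:Small_object_DN} to the set $\{f,g\}$ (the paper calls it $S_1$), then invoke \Cref{cor:vnrlift}, \Cref{prop:insepnilp}, and the first assertion of \Cref{prop:vnrE} to conclude. Your additional remark explaining why $h$ is unnecessary here is accurate and clarifies the logical structure.
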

\begin{proof}
The proof is a variant of the proof of \Cref{thm:modmain_red}.  Namely, we let $S_1 \subset {\CAlgw_{E(k)}}^{\Delta^1}$ be the set of maps
\begin{enumerate}
    \item $f\colon E(k[{\Np}]) \to E(k[{\Npm}])\times E(k) $ as in \Cref{thm:perf_nilp}.
    \item $g\colon E(k)\{z^0\} \to E(A)$ for $A = (\pi_0(E(k)\{z^0\})/I)^{\sharp}$, as in \Cref{dfn:the_map_g}.
\end{enumerate}
Then the maps in $S_1$ detect nilpotence by \Cref{thm:perf_nilp} and \Cref{prop:insepnilp}; hence, \Cref{cor:Small_object_DN} supplies a nilpotence detecting commutative ring map $R\to R'$ such that $S_1\perp R'$.  But \Cref{cor:vnrlift} and \Cref{prop:insepnilp} imply that $S_1\perp R'$ if and only if $R'$ satisfies conditions (1) and (2) of \Cref{prop:vnrE}.  Applying the first conclusion of \Cref{prop:vnrE}, we conclude that the $E$-colocalization of $R'$ induces an isomorphism on $\pi_0$.  
\end{proof}

\subsection{Quotienting by odd classes}\label{sub:odd-quo}\hfill

Now that we have handled $f$ and $g$, we will now turn to the map $h$, which handles the odd elements.  We show:

\begin{prop}\label{prop:oddnilp}
Let
\[
h \colon E(k)\{z^1\} \xrightarrow{z^1 \mapsto 0} E(k)
\]
as in \Cref{dfn:threemaps}.  Then 
\begin{enumerate}
    \item $h$ detects nilpotence.
    \item For $R\in \CAlgw_{E(k)}$, we have $h \perp R$ if and only if $\pi_1(R)=0$.
\end{enumerate}
\end{prop}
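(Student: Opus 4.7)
I will prove (2) first, as it is formal, and then tackle (1), which carries the technical weight.

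For part (2), by the universal property of the free commutative algebra, there is a natural bijection $\pi_0 \Map_{\CAlgw_{E(k)}}(E(k)\{z^1\}, R) \cong \pi_1(R)$, sending a map to the image of the generator $z^1$. Under this bijection the map $h$ corresponds to $0 \in \pi_1(R)$. Because $E(k)$ is initial in $\CAlgw_{E(k)}$, the mapping space $\Map_{\CAlgw_{E(k)}}(E(k),R)$ is contractible. Thus $h \perp R$ is equivalent to the statement that every map $E(k)\{z^1\} \to R$ factors (up to homotopy) through $h$, i.e.\ has trivial associated $\pi_1$-class, which in turn is equivalent to $\pi_1(R)=0$.

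For part (1), the plan is to apply the phantom-map criterion (\Cref{lem:DN_phantom}): $E(k)$ is a weak ring in $\Modw_{E(k)\{z^1\}}$ (the multiplication coming from its commutative algebra structure), with unit $h$, so it suffices to produce an $m$ such that the fiber $F := \mathrm{fib}(h)$, taken in $\Modw_{E(k)\{z^1\}}$, is $\otimes^m$-phantom. The generator $z^1 \in \pi_1(E(k)\{z^1\})$ induces an $E(k)\{z^1\}$-linear self-map $z^1\cdot(-)\colon \Sigma E(k)\{z^1\} \to E(k)\{z^1\}$ whose composite with $h$ is null, so it factors canonically through $F$ as a map $\sigma \colon \Sigma E(k)\{z^1\} \to F$. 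In this way the behaviour of $F$ and of its tensor powers is governed by iterated multiplications by $z^1$; in particular, each iterated power $F^{\otimes m}$ is controlled by the $m$-fold self-convolution of $z^1 \cdot (-)$.

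The key technical ingredient is the strong form of the May nilpotence conjecture proved in \cite{MNNmaynilp}: in any $T(n)$-local commutative ring, an odd-degree homotopy class is \emph{strongly} nilpotent, in the sense that a sufficiently high smash power of its associated self-map is nullhomotopic (rather than merely null in homotopy). At odd primes this already follows from graded commutativity, which gives $(z^1)^2 = 0$ strictly in $\pi_*$, while at $p=2$ it is the full strength of \cite{MNNmaynilp} that is required. Applied to $z^1 \in \pi_1(E(k)\{z^1\})$, this yields an $m$ with $(z^1\cdot(-))^{\otimes m}$ null; tracing through the factorization $\sigma$, we conclude that $F^{\otimes m}$ is phantom, and \Cref{lem:DN_phantom} concludes.

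The main obstacle will be the case $p=2$, where one must carefully extract from \cite{MNNmaynilp} a nilpotence statement at the level of maps of spectra (not merely of $\pi_*$), and then translate it into the phantom condition on $F^{\otimes m}$ via the factorization $\sigma$; the odd-prime case is a much simpler rehearsal of the same strategy.
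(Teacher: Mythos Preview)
Your treatment of (2) is fine and matches the paper. The gap is in (1).

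The sentence ``in this way the behaviour of $F$ and of its tensor powers is governed by iterated multiplications by $z^1$'' is where the argument breaks. The fiber $F$ of the augmentation $E(k)\{z^1\}\to E(k)$ is the full $\E_\infty$-augmentation ideal: as an $E(k)$-module it splits as $\bigoplus_{j\geq 1} L_{T(n)}(\Sigma E(k))^{\otimes j}_{h\Sigma_j}$, and the map $\sigma\colon \Sigma E(k)\{z^1\}\to F$ you build from multiplication by $z^1$ hits only a small part of this. Even if you knew $(z^1)^m=0$ as a self-map, that would only make the \emph{composite} $\Sigma^m E(k)\{z^1\}\xrightarrow{\sigma^{\otimes m}} F^{\otimes m}\to E(k)\{z^1\}$ null; it says nothing about arbitrary maps $c\to F^{\otimes m}$ from compact $c$, which is what the $\otimes^m$-phantom condition demands. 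There is no mechanism offered for propagating from ``$\sigma^{\otimes m}$ composes to zero'' to ``$F\to E(k)\{z^1\}$ is $\otimes^m$-phantom''.

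The paper's route is substantially different and more elaborate. It does \emph{not} attempt to show directly that $F\to E(k)\{z^1\}$ is $\otimes^m$-phantom. Instead it first uses \Cref{cor:ecoloc_iso} (which already needs the nilpotence detection of $f$ and $g$) to produce a nilpotence-detecting map $E(k)\{z^1\}\to\widetilde R$ with $\pi_0(\widetilde R)$ reduced, and then proves the stronger \Cref{prop:oddnilp_phantom}: for any $R$ with $\pi_0(R)$ reduced and $\alpha\in\pi_1(R)$, the fiber of $R\to R\mm^\infty\alpha$ is $\otimes^2$-phantom. The proof of that builds an explicit filtered lift $R^\alpha_\bullet$ of $R\mm^\infty\alpha$ whose associated graded is $\gr(\ct(R)\{\beta^{2\langle1\rangle}\})$, so the fiber of the unit is \emph{purely odd}; then a separate lemma (\Cref{lem:fib_2phant}) shows that colimits of purely odd filtered objects map $\otimes^2$-phantomly into $R$. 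The May nilpotence input is used only inside that lemma, and indirectly: it gives that $R\to R\otimes\Q$ is injective on even homotopy when $\pi_0(R)$ is reduced (\Cref{lem:May}), which lets one push the vanishing argument to the rational setting where exterior-algebra combinatorics (\Cref{lem:rational-wedge-finite}) finishes it. Nowhere does one need, or use, that $(z^1)^m=0$ in $\pi_*(E(k)\{z^1\})$.
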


Part (2) is immediate, so the content of \Cref{prop:oddnilp} is in statement (1).  For any $R\in \CAlgw_{E(k)}$, an element $\alpha \in \pi_1(R)$ determines a map $E(k)\{z^1\} \to R$ by $z^1\mapsto \alpha$ and we denote by
\[
h_{\alpha}\colon R \to R\mm^{\infty} \alpha
\]
the commutative algebra map obtained by base-change along $h$.  We then deduce (1) from the following more general statement:

\begin{prop}\label{prop:oddnilp_phantom}
Let $R \in \CAlgw_{E(k)}$ such that $\pi_0(R)$ is reduced and $\alpha \in \pi_1(R)$.  Then the fiber of the map
\[
h_{\alpha}\colon R \to R\mm^{\infty} \alpha
\]
is $\otimes^2$-phantom.  In particular, by \Cref{lem:DN_phantom}, $h_{\alpha}$ detects nilpotence.  
\end{prop}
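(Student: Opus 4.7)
The plan is to identify the fiber $F := \fib(h_\alpha)$ in $\Modw_R$ with the base change $\fib(h) \otimes_{E(k)\{z^1\}} R$, where $\fib(h) \subset E(k)\{z^1\}$ is the augmentation ideal of $h\colon E(k)\{z^1\} \to E(k)$. The map $F \otimes_R F \to R \otimes_R R = R$ is then the base change of the universal multiplication $\fib(h) \otimes_{E(k)\{z^1\}} \fib(h) \to E(k)\{z^1\}$, landing in the square ideal $\fib(h)^2$. I would then filter $\fib(h)$ by its powers: the successive quotients $\fib(h)^k/\fib(h)^{k+1}$ are the $T(n)$-local symmetric-power terms $\Sigma^{k}(E(k))_{h\Sigma_k}$ appearing in the free $\mathbb{E}_\infty$-algebra, and after base change, each filtration piece of $F \otimes_R F \to R$ factors through multiplication by a monomial in $\alpha$ together with $\mathbb{E}_\infty$-power-operation corrections.

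The leading bilinear contribution factors through multiplication by $\alpha^2 \in \pi_2(R)$, so the critical input is that $\alpha^2 = 0$ in $\pi_*(R)$. By the May nilpotence conjecture \cite{MNNmaynilp}, every odd-degree homotopy element in a $T(n)$-local $\mathbb{E}_\infty$-algebra is nilpotent. Writing $\alpha^2 = cu$ with $u \in \pi_2E(k)$ a unit and $c \in \pi_0(R)$, nilpotence of $\alpha$ forces $c^N = 0$ for some $N$, and the reducedness of $\pi_0(R)$ then yields $c = 0$, i.e.\ $\alpha^2 = 0$. Since multiplication by $\alpha^2$ is classified precisely by the element $\alpha^2 \in \pi_2(R)$, the leading bilinear contribution to $F \otimes_R F \to R$ is genuinely null as a map, not merely zero on homotopy groups.

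The main obstacle is to upgrade this vanishing of the leading piece to phantomness of the entire map $F \otimes_R F \to R$, controlling all higher contributions from the filtration. Each higher contribution factors through a higher power of $\alpha$ or through an $\mathbb{E}_\infty$-power operation applied to it: odd-degree operations are automatically nilpotent by MNN and hence vanish by the reducedness of $\pi_0(R)$, while even-degree operations factor through $\alpha^2$ and therefore vanish by the argument above. The assembly of these piecewise null-homotopies into phantomness of the total map is the delicate step, and I would handle it by exploiting that the filtration on $F \otimes_R F$ is complete in $\Modw_R$ and that a map from a compact object detects only finitely many stages of the filtration, so that the finitely many null-homotopies already suffice to kill any composite $c \to F \otimes_R F \to R$.
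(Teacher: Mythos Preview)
Your proposal has the right ingredients (a filtration on the fiber, the role of May nilpotence together with reducedness, a compactness argument at the end), but the argument as written has a genuine gap at exactly the place you flag as ``delicate,'' and your filtration is not the one that makes the argument go through.

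First, the filtration by powers of the augmentation ideal $\fib(h)^k$ is not well-posed in the $\infty$-categorical setting (there is no ``image'' to take), and even granting a reasonable interpretation, its associated graded pieces $\Sigma^k E(k)[B\Sigma_k]$ alternate parity in $k$. By contrast, the paper builds a \emph{filtered commutative $R$-algebra} $R^\alpha_\bullet$ (a pushout of $\ct(R)\{z^{1\langle 1\rangle}\}$ along $\alpha$ and $0$) whose associated graded is identified, via symmetric monoidality of $\gr$, with the free graded algebra on a class $\beta$ in degree $2$ and filtration $1$. Thus $\gr_j(R^\alpha_\bullet) \simeq \Sigma^{2j} R[B\Sigma_j]$ is \emph{always even}, so the fiber of $\ct(R)\to R^\alpha_\bullet$ is \emph{purely odd}. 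This parity bookkeeping is what makes the rest of the argument work.

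Second, and more seriously, your ``assembly'' step does not go through as stated. Knowing that each associated graded contribution of $F\otimes_R F \to R$ is null does \emph{not} imply that the total map is null or phantom: there are extension problems between the filtration layers, and null-homotopies on graded pieces need not glue. Your appeal to ``$\alpha^2=0$, power operations on $\alpha$ vanish'' only controls maps on homotopy groups, not the maps of spectra themselves. The paper handles this by proving a structural lemma \emph{inside} $\Fil(\Modw_R)$: any composite $N_1\to N_2\to \ct(R)$ with $N_1$ purely even of finite rank and $N_2$ purely odd of finite rank is null. This is done by an induction on the rank of $N_1$, peeling off a rank-one summand and pushing it into $N_2$; the base case (rank one) is checked rationally, using that symmetric powers of a purely odd object vanish over $\Q$, and then lifted back using the fact (from May nilpotence/1-semi-additivity plus reducedness of $\pi_0 R$) that $R\to R\otimes\Q$ is injective on even homotopy. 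Applying this to $M_\bullet^{\otimes 2} \to M_\bullet \to \ct(R)$ with $M_\bullet$ the purely odd fiber gives nullity in the finite-rank case, and then your compactness/filtered-colimit argument (which \emph{is} correct) upgrades this to $\otimes^2$-phantomness in general.

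So the missing idea is precisely this filtered ``even/odd'' formalism and the inductive reduction to the rational case; the direct $\alpha^2=0$ argument, while true, does not control the higher extensions.
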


We first deduce \Cref{prop:oddnilp} from \Cref{prop:oddnilp_phantom}.

\begin{proof}[Proof of \Cref{prop:oddnilp} from \Cref{prop:oddnilp_phantom}]
As noted, the essential content is (1).  By \Cref{cor:ecoloc_iso}, there is a map $q\colon E(k)\{z^1\} \to \wt{R}$ such that $\wt{R}$ has the property that the $E$-colocalization map $E(\pi_0(\wt{R})^{\flat}) \to \wt{R}$ induces an isomorphism on $\pi_0$.  In particular, $\pi_0\wt{R}$ is reduced.  
Consider the commutative square
\[
\begin{tikzcd}
 E(k)\{z^1\} \arrow[r,"q"]\arrow[d,"h"] &  \wt{R}\arrow[d,"h_{q(z^1)}"] \\
 E(k) \arrow[r] & \wt{R} \mm^{\infty}q(z^1).
\end{tikzcd}
\]
The top row detects nilpotence by assumption, and the right vertical map detects nilpotence by \Cref{prop:oddnilp_phantom}.  Therefore, by \Cref{lem:DN_composition}(2), $h$ detects nilpotence as well.  
\end{proof}


The proof of \Cref{prop:oddnilp_phantom} occupies the rest of this section. In \Cref{subsec:even-modules}, we give a criterion for checking that a map $M\to R$ in $\Modw_{E(k)}$ is $\otimes^2$-phantom in terms of the existence of a filtration on $M$ whose quotients are odd suspensions of $R$.  Then, we construct such a filtration for the fiber of the map $h_{\alpha}$ in \Cref{subsec:oddfilt}, finishing the proof.  


\subsubsection{Nilpotence of odd maps}
\label{subsec:even-modules}\hfill

\begin{dfn}
Let \deff{$\NN^{\leq}$} denote the poset of nonnegative integers under the natural ordering, with $0$ the initial object, and let \deff{$\NN$} denote the nonnegative integers with trivial poset structure.  The categories $\NN^{\leq}$ and $\NN$ acquire symmetric monoidal structures under addition.

For a symmetric monoidal stable $\infty$-category $\cC$, let $\mdef{\Fil(\cC)} \coloneqq \Fun(\NN^{\leq}, \cC)$ and $\mdef{\Gr(\cC)} \coloneqq \Fun(\NN, \cC)$ be the symmetric monoidal categories of (non-negatively) filtered and graded objects in $\cC$ with the Day convolution.  
\end{dfn}

Then there is a diagram of symmetric monoidal functors
\[
\begin{tikzcd}
\cC \arrow[r, shift right=1 , swap, "\ct"]& \Fil(\cC) \arrow[r, "\gr"]\arrow[l, shift right=1, swap, "\colim"] &\Gr(\cC) \arrow[r,"\bigoplus"] & \cC,
\end{tikzcd}
\]
where the functor \deff{$\gr$} takes the associated graded and \deff{$\ct$} gives an object of $\cC$ the constant filtration.  The functor $\ct$ is then right adjoint to the functor $\colim$, which takes a filtered object to its colimit, and $\bigoplus$ is the functor which takes the sum of all the graded pieces.  Note that $\bigoplus$ and $\gr$ are conservative.    For $M\in \Fil(\cC)$ and $l\in \NN$, we denote by $M\langle l\rangle $ the filtered object such that $M\langle l\rangle_{\bullet} = M_{\bullet+l}$.  Note that there is a canonical map $\tau^l\colon M\langle l\rangle  \to M$ which induces the identity after applying $\colim$.  The reader is referred to \cite[\S 2]{LurRot} for additional background and proofs of these facts.

\begin{dfn}
Let $\cC$ be a symmetric monoidal stable $\infty$-category.  
For $M\in \Gr(\cC)$, we will say that $M$ is \deff{purely even} if for all $i\in \NN$, $M_i$ is a finite direct sum of even suspensions of $\one_{\cC}$.    In this case, the \deff{rank} of $M$, which we denote by $\mathrm{rank}(M)$, is the total number of summands in $\bigoplus_{i\in \NN} M_i$.  If $\Sigma M$ is purely even of rank $r$, then we say that $M$ is \deff{purely odd} of rank $r$.

For $M\in \Fil(\cC)$, we say $M$ is \deff{purely even} (resp. \deff{purely odd}) of rank $r$ if $\gr(M)$ is purely even (resp. purely odd) of rank $r$.  
\end{dfn}

An immediate corollary of these definitions and the fact that $\gr$ is a symmetric monoidal functor is:

\begin{lem}\label{lem:parity_tensor}
For $M,N\in \Fil(\cC)$, we have:
\begin{enumerate}
    \item If $M$ and $N$ are purely even (or both purely odd), then $M\otimes N$ is purely even.  
    \item\label{lem:parity_tensor:exact} If $F\colon \cC \to \DD$ is a unital exact functor, then $F$ sends purely even (resp. purely odd) objects in $\Fil(\cC)$   to purely even (resp. purely odd) filtered objects in $\Fil(\DD)$.  
\end{enumerate}
\end{lem}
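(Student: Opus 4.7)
The plan is to reduce both statements to assertions about graded objects by exploiting the fact, recalled from \cite{LurRot}, that $\gr\colon \Fil(\cC)\to \Gr(\cC)$ is a symmetric monoidal functor. Once this reduction is made, both statements become transparent: part (1) becomes a statement about Day convolution of graded objects whose homogeneous pieces are finite sums of even-degree suspensions of the unit, and part (2) becomes a statement about what a unital exact functor does to such sums of suspensions. The ``main obstacle'' here is really just bookkeeping; there is no deep content beyond knowing that $\gr$ is symmetric monoidal and that exact functors commute with suspensions and finite direct sums.

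For part (1), I would first treat the purely even case. Given $M,N\in \Fil(\cC)$ purely even, by definition $\gr(M)$ and $\gr(N)$ are graded objects each of whose homogeneous pieces is a finite direct sum of even suspensions of $\one_\cC$. Since $\gr$ is symmetric monoidal, we have $\gr(M\otimes N)\cong \gr(M)\otimes\gr(N)$, and the Day convolution formula gives
\[ (\gr(M)\otimes \gr(N))_n \;\cong\; \bigoplus_{i+j=n}\gr(M)_i\otimes \gr(N)_j. \]
Each summand $\Sigma^{2a}\one_\cC \otimes \Sigma^{2b}\one_\cC \cong \Sigma^{2(a+b)}\one_\cC$ is an even suspension of $\one_\cC$, so each $(\gr(M)\otimes\gr(N))_n$ is a finite direct sum of even suspensions of $\one_\cC$, proving $M\otimes N$ is purely even. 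For the purely odd case, note that $\Sigma M$ and $\Sigma N$ are purely even by definition, so $\Sigma M\otimes \Sigma N \cong \Sigma^2(M\otimes N)$ is purely even by the previous case; since shifting by $\Sigma^2$ preserves pure evenness (it only renumbers the even integers), $M\otimes N$ is purely even as well.

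For part (2), I would observe that postcomposition with $F$ induces functors $\Fil(F)\colon \Fil(\cC)\to\Fil(\DD)$ and $\Gr(F)\colon\Gr(\cC)\to\Gr(\DD)$ which strictly commute with $\gr$. Because $F$ is unital and exact, it preserves finite direct sums and suspensions and satisfies $F(\one_\cC)\cong\one_\DD$; hence $F(\Sigma^{2k}\one_\cC)\cong \Sigma^{2k}\one_\DD$ for every $k$. Applied levelwise, this shows that $\Gr(F)$ sends a graded object whose homogeneous pieces are finite sums of even suspensions of $\one_\cC$ to one whose homogeneous pieces are finite sums of even suspensions of $\one_\DD$, proving the purely even case. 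The purely odd case then follows immediately from the definition together with the fact that $F$ commutes with suspension, so $\gr(\Sigma F(M))\cong \Sigma F(\gr(M))$ is purely even whenever $\gr(\Sigma M)$ is.
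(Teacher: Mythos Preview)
Your proof is correct and takes exactly the approach the paper has in mind: the paper simply declares the lemma to be ``an immediate corollary of these definitions and the fact that $\gr$ is a symmetric monoidal functor,'' and you have spelled out precisely that immediacy. There is nothing to add.
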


In proving \Cref{prop:oddnilp_phantom} we will use the following criterion for producing $\otimes^2$-phantom maps.

\begin{lem}\label{lem:fib_2phant}
Let $R\in \CAlgw_{E(k)}$ such that $\pi_0(R)$ is reduced and let $M_{\bullet} \in \Fil(\Modw_R)$ be purely odd.  Then any map $\colim M_{\bullet} \to R$ is $\otimes^2$-phantom.
\end{lem}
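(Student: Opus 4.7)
The plan is to reduce, using compactness of $c$, to the task of showing that every restriction $g_r \colon (M_\bullet \otimes_R M_\bullet)_r \to R$ of $f \otimes_R f$ is null-homotopic. Indeed, any map from a compact $c$ to $(\colim M_\bullet)^{\otimes_R 2} \cong \colim_r (M_\bullet \otimes_R M_\bullet)_r$ factors through some finite stage $(M \otimes_R M)_r$, and nullity of $g_r$ then forces the composite to $R$ to be null. Squaring this observation with the definition of $\otimes^2$-phantom gives the desired conclusion.

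I would then proceed by induction on $r$. By \Cref{lem:parity_tensor}(1), the associated graded $\gr(M_\bullet \otimes_R M_\bullet)_r = \bigoplus_{i+j=r} \gr(M)_i \otimes_R \gr(M)_j$ is a finite direct sum of even suspensions of $R$. The cofiber sequence
\[ (M \otimes_R M)_{r-1} \to (M \otimes_R M)_r \to \gr(M \otimes_R M)_r \]
shows that, given $g_{r-1} \simeq 0$, the obstruction to constructing a null-homotopy of $g_r$ lies in a finite direct sum of even homotopy groups of $R$. Unpacking the definition of $f \otimes_R f$ in the filtered setting, these obstruction classes are precisely (sums of) products $\alpha \cdot \beta$ where $\alpha \in \pi_{2k+1}(R)$ and $\beta \in \pi_{2l+1}(R)$ are odd classes extracted from the components of $f$ on the summands of the filtration pieces of $M_\bullet$.

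The central claim is then that every such product of a pair of odd classes vanishes in $\pi_*(R)$. Since $R$ is a $T(n)$-local $E_\infty$-ring, May nilpotence \cite{MNNmaynilp} implies that every odd class $\alpha$ is nilpotent, and hence any product $\alpha \beta$ of two odd classes is nilpotent. Since $R$ is an $E(k)$-algebra, it is even periodic, so $\alpha \beta \in \pi_{\mathrm{even}}(R)$ corresponds, up to a unit in $\pi_*(R)$, to an element $y \in \pi_0(R)$ with $y^N = 0$ for some $N$; the hypothesis that $\pi_0(R)$ is reduced then forces $y = 0$, whence $\alpha\beta = 0$.

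The main obstacle is the bookkeeping needed to identify the inductive obstructions with products of odd classes in a canonical manner. The cleanest way to organize this is to adjoint-transpose $f$ to a filtered map $M_\bullet \to \ct(R)$ and work throughout in the filtered category $\Fil(\Modw_R)$, exploiting the monoidality of $\gr$ and the fact that $\gr(\ct R)$ is concentrated in filtration degree zero, so that the only ``essential'' contribution to $\gr(f \otimes_R f)$ comes from the single map $M_0 \otimes_R M_0 \to R$, which by the argument above is null.
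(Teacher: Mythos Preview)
Your argument has a genuine gap at the inductive step. Given that $g_{r-1}$ is null, the map $g_r$ factors through $\gr(M\otimes M)_r$, but the resulting element of $[\gr(M\otimes M)_r, R]$ depends on the \emph{choice} of null-homotopy of $g_{r-1}$; it is not canonically a sum of products of odd classes extracted from $f$. Your last paragraph attempts to fix this by observing that $\gr(f\otimes f)$ is concentrated in filtration degree $0$ and is null there, but this is not enough: the vanishing of $\gr(\phi)$ for a filtered map $\phi$ does not imply that $\phi_r$ is null for each $r$. Once the filtration has length $\geq 2$, the extension data in $M_\bullet$ can contribute to the obstruction in a way not visible on the associated graded, and there is no mechanism in your argument that controls these contributions.

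The paper sidesteps this bookkeeping entirely. Rather than an induction on filtration degree, it factors the filtered map as
\[
M_\bullet^{\otimes 2} \xrightarrow{\ f\otimes\id\ } M_\bullet \xrightarrow{\ f\ } \ct(R),
\]
a composite \emph{purely even} $\to$ \emph{purely odd} $\to \ct(R)$, and proves (\Cref{lem:oddcomp_null}) that any such composite is null by induction on the \emph{rank} of the even factor. The base case (\Cref{lem:rk1case}) is where your ``product of two odd classes is nilpotent, hence zero'' idea lives, but the paper packages it differently: the composite is a single even class in $\pi_*(R)$, and rather than identifying it as $\alpha\beta$, one rationalizes, uses that over $\QQ$ the symmetric power $(N_2)^{\otimes m}_{h\Sigma_m}$ vanishes for $m$ large (the exterior-algebra phenomenon, \Cref{lem:rational-wedge-finite}), and then transfers back via the injectivity of $R\to R\otimes\QQ$ on even homotopy (\Cref{lem:May}). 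This avoids ever having to name the obstruction explicitly.
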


The remainder of \Cref{subsec:even-modules} will be devoted to the proof of \Cref{lem:fib_2phant}, so fix $R$ as in its statement.  We start by considering the rational case.  Here, purely odd filtered objects have the following nilpotence property:

\begin{lem} \label{lem:rational-wedge-finite}
If $T$ is a commutative $\Q$-algebra and $M_{\bullet} \in \Fil(\Mod_T)$ is purely odd of rank $m$, then $(M_{\bullet}^{\otimes m+1})_{h\Sigma_{m+1}} = 0$.  
\end{lem}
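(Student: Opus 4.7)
My plan is to reduce the claim to a classical exterior-algebra computation over $\Q$, carried out at the level of the associated graded. The functor $\gr\colon \Fil(\Mod_T)\to \Gr(\Mod_T)$ is symmetric monoidal and exact, and on non-negatively filtered objects it is conservative (if $\gr(M)=0$ then $M_0\cong \gr(M)_0=0$ and an induction on the filtration degree forces $M=0$). By \Cref{lem:parity_tensor}(\ref{lem:parity_tensor:exact}) applied to $\gr$, it therefore suffices to prove the analogous statement in $\Gr(\Mod_T)$: if $N\in\Gr(\Mod_T)$ is purely odd of rank $m$, then $(N^{\otimes m+1})_{h\Sigma_{m+1}}=0$.

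Next, I would write $N=\bigoplus_{i=1}^m L_i$, where each $L_i=\Sigma^{2a_i+1}T\langle d_i\rangle$ is a $\otimes$-invertible odd object in $\Gr(\Mod_T)$, and set $N':=\bigoplus_i \Omega L_i$, a purely even object of rank $m$, so that $N\simeq \Sigma N'$. The Koszul sign rule in the symmetry of $\Mod_T$ (entering through the swap of odd-suspended factors) gives a $\Sigma_n$-equivariant equivalence of graded $T$-modules
\[
N^{\otimes n}\;\simeq\;\Sigma^n\bigl(N'^{\otimes n}\otimes \mathrm{sgn}\bigr),
\]
where $\mathrm{sgn}$ denotes the sign representation of $\Sigma_n$. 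Consequently
\[
(N^{\otimes n})_{h\Sigma_n}\;\simeq\;\Sigma^n\bigl(N'^{\otimes n}\otimes \mathrm{sgn}\bigr)_{h\Sigma_n}.
\]

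Finally, since $T$ is a commutative $\Q$-algebra, $\Mod_T$ is $\Q$-linear, so for any finite group $G$ the norm map $X_{hG}\to X^{hG}$ is an equivalence and $\Q[G]$-modules decompose into isotypic components. The sign-isotypic component of $N'^{\otimes n}$ is, by definition, the classical $n$-th exterior power $\Lambda^n(N')\in \Gr(\Mod_T)$; hence $(N'^{\otimes n}\otimes \mathrm{sgn})_{h\Sigma_n}\simeq \Lambda^n(N')$. As $N'$ is a direct sum of $m$ invertible graded $T$-modules, $\Lambda^{m+1}(N')=0$, and taking $n=m+1$ yields the desired vanishing.

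The main subtleties I anticipate will be (i) pinning down the Koszul sign-twist identification $N^{\otimes n}\simeq \Sigma^n(N'^{\otimes n}\otimes\mathrm{sgn})$ $\Sigma_n$-equivariantly in the $\infty$-categorical setting of $\Gr(\Mod_T)$, and (ii) justifying the isotypic decomposition and the identification with the exterior power intrinsically, rather than by an explicit averaging argument. Neither is difficult given that we are $\Q$-linear, but each requires some care with symmetric monoidal coherence.
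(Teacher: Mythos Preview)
Your proof is correct and follows essentially the same approach as the paper: reduce via a conservative, symmetric monoidal, colimit-preserving functor out of $\Fil(\Mod_T)$ and then invoke the classical vanishing of $\Lambda^{m+1}$ on a rank-$m$ module over $\Q$. The paper's only variation is to use $\bigoplus\circ\gr$ (landing in $\Mod_T$, and then further in $\Mod_\Q$) rather than $\gr$ alone, and to declare the final exterior-algebra step ``easy'' rather than spelling out the Koszul sign-twist as you do.
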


\begin{proof}
Since the functor
\[
\bigoplus \circ \gr \colon \Fil(\Mod_T) \to \Mod_T.
\]
is symmetric monoidal, conservative, and commutes with colimits, it is enough to show that $(\bigoplus \gr (M_{\bullet}))^{\otimes m+1}_{h \Sigma_{m+1}} = 0$.  This amounts to showing that the sum of $m$ odd suspensions of $T$ is zero after applying $(-)^{\otimes m+1}_{h\Sigma_{m+1}}$.
  Since any sum of suspensions of $T$ is induced up from $\Q$, it suffices to observe that this is true in the case of $\Q$ where it is easy.
\end{proof}

\begin{lem} \label{lem:rational-odd-nilp}
  Let $T$ be a commutative $\Q$-algebra whose even homotopy groups are reduced.
  Suppose that $N_1 \xrightarrow{f} N_2 \xrightarrow{g} \ct(T)$ is a sequence of maps in $\Fil(\Mod_T)$ such that $N_1$ is purely even of rank 1 and $N_2$ is purely odd of finite rank.  
  Then the composite $g\circ f$ is null. 
\end{lem}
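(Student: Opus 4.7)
The plan is to reduce nullity of $g \circ f$ to the vanishing supplied by Lemma \ref{lem:rational-wedge-finite} by passing to the $(m+1)$-fold tensor power, where $m = \mathrm{rank}(N_2)$, and then interpreting the result as a nilpotence statement for a class in $\pi_*(T)$ that we can kill using the reducedness hypothesis.

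First, I would recast the problem as a question about an element of the even homotopy of $T$. Using the strong symmetric monoidal adjunction $\colim \dashv \ct$ and the fact that $N_1$ is purely even of rank $1$ (whence $\colim(N_1) \simeq T[2k]$ for some $k \geq 0$), the abelian group $\Map_{\Fil(\Mod_T)}(N_1^{\otimes j}, \ct(T))$ is naturally identified with $\Map_{\Mod_T}(T[2kj], T) \cong \pi_{2kj}(T)$ for every $j \geq 0$. Under this identification, $g \circ f$ corresponds to some class $\alpha \in \pi_{2k}(T)$, and by monoidality of $\colim$, $(g \circ f)^{\otimes j}$ corresponds to the power $\alpha^{j}$. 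Since the even homotopy of $T$ is reduced, it suffices to show that $\alpha^{m+1} = 0$.

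To produce this vanishing I would use a $\Sigma_{m+1}$-equivariance argument. The tensor power $(g \circ f)^{\otimes m+1}$ factors as $g^{\otimes m+1} \circ f^{\otimes m+1}$ and is $\Sigma_{m+1}$-equivariant with respect to the trivial action on $\ct(T) \simeq \ct(T)^{\otimes m+1}$. Descending to homotopy orbits gives a factorization
\[
\overline{(g\circ f)^{\otimes m+1}}\colon (N_1^{\otimes m+1})_{h\Sigma_{m+1}} \to (N_2^{\otimes m+1})_{h\Sigma_{m+1}} \to \ct(T),
\]
whose middle term vanishes by Lemma \ref{lem:rational-wedge-finite}. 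It remains to check that the natural map $N_1^{\otimes m+1} \to (N_1^{\otimes m+1})_{h\Sigma_{m+1}}$ is an equivalence, so we may conclude $(g \circ f)^{\otimes m+1} = 0$. Since $N_1$ is purely even, the Koszul sign $(-1)^{(2k)(2k)} = 1$ is trivial and so $\Sigma_{m+1}$ acts trivially on $N_1^{\otimes m+1}$; because we work over a $\Q$-algebra and $\Sigma_{m+1}$ is finite, homotopy orbits of a trivial action reduce to tensoring with $\Sigma^{\infty}_+ B\Sigma_{m+1} \otimes \Q \simeq \Q$, which is a no-op.

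The main technical obstacle I anticipate is justifying that $\Sigma_{m+1}$ really acts trivially on $N_1^{\otimes m+1}$ at the level of the filtered category, rather than just on associated graded. The cleanest way is to reduce to the associated graded via the conservative strong symmetric monoidal functor $\gr$: since $N_1$ is purely even of rank $1$, it is supported in a single filtration degree, so its filtered structure is essentially determined (up to shift) by $\gr(N_1) \simeq T[2k]$, where the Koszul-sign calculation is unambiguous.
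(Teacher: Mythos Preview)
Your proposal is correct and matches the paper's proof almost exactly: both reduce to showing $(g\circ f)^{\otimes m}$ is null for large $m$ via the reducedness hypothesis, factor through $(N_2^{\otimes m})_{h\Sigma_m}$ using Lemma~\ref{lem:rational-wedge-finite}, and appeal to the fact that for $N_1$ and $\ct(T)$ purely even of rank $1$ over a $\Q$-algebra the natural map to $\Sigma_m$-orbits is an equivalence. The paper packages this last step as a commuting square with vertical equivalences rather than your explicit identification with $\alpha\in\pi_{2k}(T)$, but the content is identical.
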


\begin{proof}
    Since the even homotopy groups of $T$ are reduced and $N_1$ is even of rank 1 (i.e., equivalent to some $\Sigma^{2j}\ct(R)\langle l\rangle$), it is enough to show that $g \circ f$ is nilpotent.
    For this we consider the following diagram 
    \[
    \begin{tikzcd}
        N_1^{\otimes m} \ar[r, "f^{\otimes m}"] \ar[d] & N_2^{\otimes m} \ar[r, "g^{\otimes m}"] \ar[d] & \ct(T)^{\otimes m} \ar[d] \\
        (N_1)^{\otimes m}_{h\Sigma_{m}} \ar[r] & (N_2)^{\otimes m}_{h\Sigma_{m}} \ar[r] & \ct(T)^{\otimes m}_{h\Sigma_{m}}
    \end{tikzcd}  
    \]
    where the outside vertical maps are equivalences since $N_1$ and $\ct(T)$ are purely even of rank 1 and $T$ is a $\Q$-algebra.
    To conclude we use \Cref{lem:rational-wedge-finite} and the assumption that $N_2$ is purely odd of finite rank to find that $(N_2)^{\otimes m}_{h\Sigma_{m}} = 0$ for $m \gg 0$.
\end{proof}

The following lemma allows us to bootstrap from the rational case to the $T(n)$-local case.

\begin{lem}\label{lem:May}
Let $R \in \CAlgw_{E(k)}$ such that $\pi_0(R)$ is reduced.  Then the map $R \to R \otimes \Q$ is injective on even homotopy groups.
\end{lem}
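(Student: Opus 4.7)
The plan is to reduce the injectivity question to showing that $\pi_0(R)$ has no $p$-torsion, and then to rule out $p$-torsion using the additive $p$-derivation $\theta$ from \Cref{prop:pder}.

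The first step is a reduction to $\pi_0$. Because $\pi_*(R\otimes \Q)\cong \pi_*(R)\otimes_{\Z}\Q$, the map on $\pi_{2k}$ is injective precisely when $\pi_{2k}(R)$ is torsion-free. Since $R$ is an $E(k)$-algebra and $E(k)$ carries a unit $u\in \pi_2 E(k)$, multiplication by $u^k$ is an isomorphism of abelian groups $\pi_0(R)\xrightarrow{\cong} \pi_{2k}(R)$, so it suffices to show $\pi_0(R)$ is torsion-free. Because $n\geq 1$ in this subsection, $R$ is $p$-complete and $\pi_0(R)$ is a $\Z_p$-module, so every prime $q\neq p$ acts invertibly. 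Thus all torsion in $\pi_0(R)$ is $p$-power torsion, and iterating (applying the base case repeatedly to $p^{k-1}\alpha, p^{k-2}\alpha, \ldots$) reduces the entire claim to the following base case: if $\alpha\in\pi_0(R)$ satisfies $p\alpha = 0$, then $\alpha = 0$.

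The second step carries out this base case. The ring $\pi_0(R)$ is a $\T$-algebra, so the operation $\theta\in \T(E_0)_p$ of \Cref{prop:pder} acts on it. Iterating the additivity axiom on the sum $p\alpha = \alpha + \cdots + \alpha$ yields, by a routine induction on $n$, the formula
\[
\theta(n\alpha) = n\theta(\alpha) + \frac{n - n^p}{p}\alpha^p,
\]
which specializes at $n=p$ to $\theta(p\alpha) = p\theta(\alpha) + (1 - p^{p-1})\alpha^p$. Setting $p\alpha = 0$ and rearranging gives $\alpha^p = p\bigl(p^{p-2}\alpha^p - \theta(\alpha)\bigr)$, so $\alpha^p = pq$ for some $q\in \pi_0(R)$. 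Multiplying the equation $p\alpha = 0$ by $\alpha^{p-1}$ shows $p\alpha^p = 0$, hence $p^2 q = 0$, whence $\alpha^{2p} = (pq)^2 = (p^2 q)\cdot q = 0$. Thus $\alpha$ is nilpotent, and reducedness of $\pi_0(R)$ forces $\alpha=0$.

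The main technical ingredient is the manipulation with $\theta$. The only mild subtlety is the rearrangement that extracts the divisibility $\alpha^p\in p\pi_0(R)$ from the identity above without having to invert $1 - p^{p-1}$ (which would require stronger completeness hypotheses on $\pi_0(R)$ than are directly available); all other steps are completely formal.
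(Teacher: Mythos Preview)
Your proof is correct. Both your argument and the paper's reduce to showing that torsion in $\pi_0(R)$ is nilpotent, but the paper obtains this by citing the $1$-semiadditivity of $\Sp_{T(n)}$ (Kuhn) together with \cite[Corollary 4.3.5]{TeleAmbi}, whereas you prove the needed special case directly from the additive $p$-derivation $\theta$ of \Cref{prop:pder}. Since \Cref{prop:pder} is already part of the paper's toolkit, your version is more self-contained and avoids the external black box; the paper's version is shorter and situates the fact in the broader context of higher semiadditivity. The underlying mechanism is the same: the cited result from \cite{TeleAmbi} is itself proved by exactly this kind of $\theta$-manipulation.
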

\begin{proof}
Since $R$ is an $E(k)$-algebra, it is $2$-periodic and therefore it is enough to prove the statement on $\pi_0$. Now by \cite{kuhn2004tate}, $\Sp_{T(n)}$ is $1$-semi-additive and thus considering $R$ as an object in $\CAlg(\Sp_{T(n)})$, we get by \cite[Corollary 4.3.5]{TeleAmbi} that every torsion element in $\pi_0(R)$ is nilpotent. Since $\pi_0(R)$ is reduced, we get that $\pi_0(R)$ is torsion-free and thus embeds in $\pi_0(R\otimes \QQ)$.\footnote{Alternatively one can use the May nilpotence conjecture, as proven in  \cite[Theorem B]{MNNmaynilp}.} 
\end{proof}

\begin{lem}\label{lem:rk1case}
Let $R \in \CAlgw_{E(k)}$ such that $\pi_0(R)$ is reduced.  Suppose that $N_1 \xrightarrow{f} N_2 \xrightarrow{g} \ct(R)$ is a sequence of maps in $\Fil(\Modw_R)$ such that $N_1$ is purely even of rank 1 and $N_2$ is purely odd of finite rank.  Then the composite $g\circ f$ is null.  
\end{lem}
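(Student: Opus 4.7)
The plan is to deduce this lemma from the rational case (\Cref{lem:rational-odd-nilp}) via base change to $R \otimes \Q$, using \Cref{lem:May} to pull the resulting nullity back to $R$.

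First, I would reinterpret the map $g \circ f$ as an element of an even homotopy group of $R$. Since $N_1$ is purely even of rank $1$, its associated graded is concentrated in a single degree $l$ with value $\Sigma^{2j}R$ for some $j,l$, which forces $N_1$ to be equivalent to a shift of $\ct(\Sigma^{2j}R)$. The $(\colim, \ct)$-adjunction then yields a natural identification
\[ [N_1, \ct(R)]_{\Fil(\Modw_R)} \cong [\colim N_1, R]_{\Modw_R} \cong \pi_{2j}(R), \]
so $g\circ f$ corresponds to some class $\alpha \in \pi_{2j}(R)$, and the lemma amounts to the statement that $\alpha = 0$.

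Next, I would apply the base-change functor $-\otimes_R (R\otimes \Q)\colon \Modw_R \to \Mod_{R\otimes \Q}$, which is unital and exact and so extends to a symmetric monoidal functor on filtered objects. By \Cref{lem:parity_tensor}\eqref{lem:parity_tensor:exact}, the rationalized sequence
\[ N_1 \otimes_R (R\otimes\Q) \xrightarrow{f_{\Q}} N_2 \otimes_R (R\otimes \Q) \xrightarrow{g_{\Q}} \ct(R\otimes\Q) \]
still satisfies the hypotheses of \Cref{lem:rational-odd-nilp}: the first term is purely even of rank $1$ and the second is purely odd of finite rank in $\Fil(\Mod_{R\otimes\Q})$. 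Moreover, since $\pi_0(R)$ is reduced, $\pi_0(R\otimes \Q)$ is reduced as well (a $\Q$-algebra whose underlying ring is a localization of a reduced ring), so \Cref{lem:rational-odd-nilp} applies and shows that $g_{\Q}\circ f_{\Q}$ is null. Under the identification above, this means that the image of $\alpha$ in $\pi_{2j}(R\otimes \Q)$ vanishes.

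Finally, I would invoke \Cref{lem:May}, which says exactly that the rationalization map $R \to R\otimes \Q$ is injective on even homotopy groups when $\pi_0(R)$ is reduced. Hence $\alpha = 0$, and so $g\circ f$ itself is null, as desired. The main conceptual content of the argument is contained in the previous two lemmas (the rational nilpotence of odd filtered objects and the May-style torsion-freeness result); given those, this lemma is essentially a clean base-change assembly. The only subtlety to be careful about is checking that the identification of $[N_1, \ct(R)]_{\Fil(\Modw_R)}$ with an even homotopy group of $R$ is compatible with base-change, which is a formal consequence of the $(\colim, \ct)$-adjunction being natural in the coefficient ring.
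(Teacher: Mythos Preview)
Your proof is correct and follows essentially the same approach as the paper: identify $g\circ f$ with an even homotopy class via the $(\colim,\ct)$-adjunction, rationalize, apply \Cref{lem:rational-odd-nilp}, and pull back using \Cref{lem:May}. One small correction: the rationalization functor $L_{\QQ}\colon \Modw_R \to \Mod_{R\otimes\Q}$ is \emph{not} symmetric monoidal (the paper flags this explicitly, since the source carries the $T(n)$-localized tensor product), only unital and exact---but that is exactly what \Cref{lem:parity_tensor}\eqref{lem:parity_tensor:exact} requires, so your argument goes through unchanged.
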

\begin{proof}
Note that by adjunction, 
\[
\Hom_{\Fil(\Modw_R)}(N_1, \ct(R) ) = \Hom_{\Modw_R}( \colim N_1, R).
\]
Since $N_1$ is purely even of rank $1$, this means that there is an equivalence $N_1 \simeq \Sigma^{2j}\ct(R)\langle l\rangle$ for some $l\in \NN$ and $j\in \ZZ$, and so $g\circ f$ can be identified with a class in $\pi_{2j}(R)$ for some $j$.  Thus, by \Cref{lem:May}, it suffices to show that this element vanishes rationally.


Note that the functor
\[L_{\QQ} \colon \Modw_{R} \to \Mod_{R\otimes \QQ}\]
is exact and unital\footnote{Note that $L_{\QQ}$ does not preserve colimits and is also not symmetric monoidal, and thus is not a map in $\Prig$.},
which by \Cref{lem:parity_tensor}(\ref{lem:parity_tensor:exact}) means that the induced functor 
\[
L_{\QQ} \colon \Fil(\Modw_{R}) \to \Fil(\Mod_{R\otimes \QQ})
\]
preserves purely odd objects of finite rank.  By \Cref{lem:rational-odd-nilp}, this implies that the composite $g\circ f$ is null after applying $L_{\QQ}$, as desired. 
\end{proof}

We now generalize \Cref{lem:rk1case} to the case where $N_1$ is of arbitrary finite rank.

\begin{lem}\label{lem:oddcomp_null}
  Let $R \in \CAlgw_{E(k)}$ such that $\pi_0(R)$ is reduced.  Suppose $N_1 \xrightarrow{f} N_2 \xrightarrow{g} \ct(R)$ is a sequence of maps in $\Fil(\Modw_R)$ such that $N_1$ is purely even of finite rank and $N_2$ is purely odd of finite rank.  Then the composite $g\circ f$ is null.    
\end{lem}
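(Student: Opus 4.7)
The plan is to induct on the rank $r$ of $N_1$; the base case $r=1$ is precisely \Cref{lem:rk1case}. For the inductive step I will construct a cofiber sequence
\[
N_1'' \xrightarrow{\iota} N_1 \xrightarrow{\pi} N_1'
\]
in $\Fil(\Modw_R)$ with $N_1'$ purely even of rank $1$ and $N_1''$ purely even of rank $r-1$. Granting this, the induction hypothesis applied to the sequence $N_1'' \xrightarrow{f \circ \iota} N_2 \xrightarrow{g} \ct(R)$ shows that $g \circ f \circ \iota$ is null, so $g\circ f$ factors through $\pi$ as a map $N_1' \to \ct(R)$, which is then null by \Cref{lem:rk1case}.

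To produce the cofiber sequence, I would first record an auxiliary fact about \emph{pure} filtered objects --- those $M \in \Fil(\Modw_R)$ for which $\gr(M)$ is concentrated in a single weight $l$. Such an $M$ is obtained from $\gr(M)_l$ by the right adjoint to the $l$-th graded piece functor $\Fil(\Modw_R)\to \Modw_R$; this right adjoint is exact and sends finite direct sums to finite direct sums, so if $M$ is purely even then $M$ splits in $\Fil(\Modw_R)$ as a direct sum of rank-$1$ pure purely even pieces. Next I would introduce the stupid truncation at the top nontrivial weight $l$ of $\gr(N_1)$: define $N_1^{<l}\in \Fil(\Modw_R)$ by $(N_1^{<l})_i = (N_1)_i$ for $i < l$ and $(N_1^{<l})_i = (N_1)_{l-1}$ for $i \geq l$. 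The canonical map $N_1^{<l}\to N_1$ fits into a cofiber sequence $N_1^{<l}\to N_1 \to N_1^{=l}$ whose cofiber $N_1^{=l}$ is pure of weight $l$, purely even, and of rank equal to $\mathrm{rank}(\gr(N_1)_l)$, while $N_1^{<l}$ is purely even of rank $r - \mathrm{rank}(\gr(N_1)_l)$.

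Using these, the desired cofiber sequence is extracted as follows. If $\gr(N_1)_l$ has rank $1$, the sequence $N_1^{<l}\to N_1 \to N_1^{=l}$ already has the required form. Otherwise, $N_1^{=l}$ splits as a direct sum of rank-$1$ pure purely even summands; picking any one such summand $P$ and letting $N_1''$ be the fiber of the composite $N_1 \to N_1^{=l} \to P$ yields a cofiber sequence $N_1''\to N_1 \to P$ of the required form. The main obstacle is the technical splitting lemma for pure filtered objects; once the correct conventions on $\Fil(\Modw_R)$ (and in particular the formula identifying the right adjoint to the $l$-th graded piece) are in hand, this splitting is formal, but it must be verified in order for the induction to go through.
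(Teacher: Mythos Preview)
There is a genuine gap in the inductive step. After using the inductive hypothesis to get $g\circ f\circ\iota \simeq 0$, you obtain a factorization $h\colon N_1' \to \ct(R)$ with $g\circ f \simeq h\circ\pi$, and you then claim $h$ is null by \Cref{lem:rk1case}. But that lemma requires the map from the rank-$1$ object to $\ct(R)$ to \emph{factor through a purely odd object of finite rank}, and your $h$ has no such factorization: the null-homotopy of $g\circ f\circ\iota$ only tells you that the composite $g\circ f$ descends to the cofiber $N_1'$, not that $f$ itself (or any map landing in $N_2$) descends. So \Cref{lem:rk1case} simply does not apply to $h$, and the induction stalls.

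The paper's proof resolves exactly this issue by modifying $N_2$ along with $N_1$. It peels off a rank-$1$ \emph{subobject} $i\colon \Sigma^{2j}\ct(R)\langle m\rangle \to N_1$; the composite $g\circ f\circ i$ still genuinely factors through the purely odd $N_2$, so the rank-$1$ case kills it, and then $g$ (rather than $g\circ f$) factors through the cofiber $N_2'$ of $f\circ i$ as some $g'$. The nontrivial point is to arrange that $N_2'$ is again purely odd, and this is why the paper first replaces $N_1$ by $N_1\langle l\rangle$ for $l\gg 0$: this forces $f\circ i$ to vanish on associated graded, so that $\gr(N_2') \cong \gr(N_2)\oplus \Sigma^{2j+1}\gr(\ct(R)\langle m\rangle)$ is purely odd. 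The induction then runs on the sequence $N_1' \xrightarrow{f'} N_2' \xrightarrow{g'} \ct(R)$, with the purely-odd hypothesis on the middle term preserved at every stage. Your argument never produces an analogue of $N_2'$, and without it the rank-$1$ lemma cannot be invoked at the end.
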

\begin{proof}
We proceed by induction, starting with the case that $N_1$ is of rank $1$, which is \Cref{lem:rk1case}. 
For the general case, recall that the map $\tau^l \colon N_1\langle l\rangle \to N_1$ induces the identity on $\colim$.  Thus, by replacing $N_1$ by $N_1\langle l \rangle $ for $l\gg 0$, we may assume without loss of generality that if $m\in \NN$ is the minimal integer for which $(N_1)_m\neq 0$, then we have that  $\gr(N_2)_r = 0$ for $r\geq m$.  

Then we may choose a map $i\colon \Sigma^{2j}\ct(R)\langle m \rangle \to N_1$ in $\Fil(\Modw_R)$  which is the inclusion of a direct summand on $\gr(-)_m$.  Let $N_1'$ denote the cofiber of $i$.   Note that $N_1'$ is purely even with $\mathrm{rank}(N_1') =\mathrm{rank}(N_1)-1$.  Now consider the diagram
  \begin{center}
    \begin{tikzcd}
      \Sigma^{2j}\ct(R)\langle m \rangle \ar[r,"i"] \ar[d] & N_1 \ar[r ,"f"] \ar[d] & N_2 \ar[r, "g"] \ar[d] & \ct(R) \\
      0 \ar[r] & N_1' \pushout \ar[r,"f'"'] & N_2' \pushout \ar[ur, dashed, "g'"']
    \end{tikzcd}
  \end{center}  
  where $N_2'$ is defined as the displayed pushout.  Observe that because $\gr(N_2)_m=0$, the composite $f\circ i$ is null after applying $\gr$, and therefore  $\gr(N_2') = \gr(N_2) \oplus \gr(\Sigma^{2j+1}\ct(R)\langle m\rangle )$.  Since $N_2$ is purely odd, this implies that $N_2'$ is also purely odd and $\mathrm{rank}(N_2') =\mathrm{rank}(N_2)+1$.  
  
 Now, the composite along the top row is null by \Cref{lem:rk1case}, and therefore we obtain the dashed arrow $g'$.  Moreover, by the inductive hypothesis, $g'\circ f'$ is null, so $g\circ f$ is null as well. 
\end{proof}

We are now ready to prove \Cref{lem:fib_2phant}.

\begin{proof}[Proof of \Cref{lem:fib_2phant}]
First suppose that $M_{\bullet}$ is purely odd of finite rank.  Note that the given map $f_{\infty}\colon \colim M_{\bullet} \to R$ is adjoint to a map $f\colon M_{\bullet} \to \ct(R)$ in $\Fil(\Modw_R)$.  Then, the map $f_{\infty}^{\otimes 2}$ is the colimit of $f^{\otimes 2}$, which can be factored as a composite
\[
M_{\bullet}^{\otimes 2} \xrightarrow{f\otimes \mathrm{id}} M_{\bullet} \xrightarrow{f} \ct(R),
\]
which is necessarily null (and in particular phantom) by combining \Cref{lem:parity_tensor} and \Cref{lem:oddcomp_null}.  The case of a general purely odd $M_{\bullet}$ then follows by \Cref{lem:phantom_colim}.
\end{proof}

\subsubsection{Lifting $h_{\alpha}$ to filtered objects}\label{subsec:oddfilt}\hfill

In this section, we will deduce \Cref{prop:oddnilp_phantom} from \Cref{lem:fib_2phant}.  To do this, for any $\alpha \in \pi_1(R)$, we will construct a filtered commutative $R$-algebra $R_{\bullet}^{\alpha} \in \Fil(\Modw_R)$ such that: 
\begin{enumerate}
    \item There is an equivalence \[
  R\mm^{\infty} \alpha \simeq \colim R^{\alpha}_{\bullet}
  \] in $\CAlgw_R$.
    \item The fiber of the unit map $\ct(R) \to R^{\alpha}_{\bullet}$ is purely odd.
\end{enumerate}

\begin{cnstr}
  Given a $T(n)$-local commutative algebra $R$, let \deff{$\ct(R)\{ z^{1\langle 1\rangle} \}$} denote the free filtered commutative $R$-algebra with an element $z^{1\langle 1\rangle}$ in filtration $1$ and degree $1$.  Then, for a class $\alpha \in \pi_1 R$, the image of $\alpha$ in filtration $1$ of $\ct(R)$ determines the top map in:
  \[
  \begin{tikzcd}
  \ct(R)\{  z^{1\langle 1\rangle} \} \arrow[r,"\alpha \langle 1\rangle"]\arrow[d,"0"]& \ct(R) \\
  \ct(R) &
  \end{tikzcd}
  \]
  and we define \deff{$R^{\alpha}_{\bullet}$} as the pushout in $\CAlg(\Fil(\Modw_R))$.
\end{cnstr}

\begin{lem} \label{lem:colim-right}
  There is an equivalence 
  \[
  R\mm^{\infty} \alpha \simeq \colim R^{\alpha}_{\bullet}
  \]
  in $\CAlgw_R$.
\end{lem}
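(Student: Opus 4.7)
The plan is to observe that the colimit functor $\colim \colon \Fil(\Modw_R) \to \Modw_R$ is a symmetric monoidal left adjoint to $\ct$. Being a left adjoint between presentable symmetric monoidal categories with symmetric monoidal left adjoint, the induced functor $\colim \colon \CAlg(\Fil(\Modw_R)) \to \CAlg(\Modw_R) = \CAlgw_R$ preserves all colimits; in particular, it preserves the pushout that defines $R^{\alpha}_{\bullet}$. Thus
\[
\colim R^{\alpha}_{\bullet} \;\cong\; \colim \ct(R) \underset{\colim \ct(R)\{z^{1\langle 1\rangle}\}}{\otimes} \colim \ct(R) \;\cong\; R \underset{\colim \ct(R)\{z^{1\langle 1\rangle}\}}{\otimes} R
\]
in $\CAlgw_R$, using that $\colim \ct(R) \cong R$.

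The main step is therefore to identify $\colim \ct(R)\{z^{1\langle 1\rangle}\}$ with the free $T(n)$-local commutative $R$-algebra $R\{z^1\}$. The free filtered commutative $R$-algebra functor is a left adjoint, and so is $\colim$; hence their composite is a left adjoint, and a direct check of universal properties identifies this composite with $R\{-\}$ applied to $\colim$ of the underlying filtered module. The underlying filtered module of the generator $z^{1\langle 1\rangle}$ is $\Sigma R\langle 1\rangle$ (a copy of $\Sigma R$ placed in filtration $\geq 1$), whose colimit is $\Sigma R$. Thus $\colim \ct(R)\{z^{1\langle 1\rangle}\} \cong R\{z^1\}$.

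Under this identification, the two structure maps of the pushout defining $R^{\alpha}_{\bullet}$ become, after applying $\colim$, the two maps $R\{z^1\} \to R$ corresponding to $z^1 \mapsto \alpha$ and $z^1 \mapsto 0$, respectively (the first because $\alpha\langle 1\rangle$ has underlying map classifying $\alpha \in \pi_1 R$ in the colimit, and the second trivially). Hence
\[
\colim R^{\alpha}_{\bullet} \;\cong\; R \underset{R\{z^1\}}{\otimes} R \;\cong\; R\mm^{\infty}\alpha,
\]
which is the desired equivalence in $\CAlgw_R$. The only subtlety worth pausing over is the identification $\colim \ct(R)\{z^{1\langle 1\rangle}\} \cong R\{z^1\}$; everything else is a formal consequence of the adjunction $\colim \dashv \ct$ being symmetric monoidal on both sides.
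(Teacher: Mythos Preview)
Your proof is correct and takes essentially the same approach as the paper: the paper's proof is the one-line statement that this follows from $\colim \colon \Fil(\Modw_R) \to \Modw_R$ being a symmetric monoidal left adjoint, and your argument simply unpacks what that entails for the defining pushout of $R^{\alpha}_{\bullet}$.
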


\begin{proof}
  This follows from the fact that $\colim \colon \Fil(\Modw_R) \to \Modw_R$ is a symmetric monoidal left adjoint.
\end{proof}

\begin{lem} \label{lem:gr-trick}
Let $\alpha \in \pi_1(R)$.  Then there is an equivalence 
  \[
  \gr ( R^{\alpha}_{\bullet}) \simeq \gr(\ct(R)\{\beta^{2\langle 1 \rangle}\})
  \]
  in $\CAlg(\Gr(\Modw_R))$.  
\end{lem}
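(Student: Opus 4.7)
The plan is to apply $\gr$ to the defining pushout of $R^{\alpha}_{\bullet}$ and directly compute. Since $\gr \colon \Fil(\Modw_R) \to \Gr(\Modw_R)$ is a colimit-preserving symmetric monoidal functor, the induced functor on commutative algebras preserves pushouts, giving a pushout square
\[
\begin{tikzcd}
\gr(\ct(R)\{z^{1\langle 1\rangle}\}) \arrow[r] \arrow[d] & \gr(\ct(R)) \\
\gr(\ct(R)) \arrow[r] & \gr(R^{\alpha}_{\bullet})
\end{tikzcd}
\]
in $\CAlg(\Gr(\Modw_R))$, whose top and left arrows are obtained by applying $\gr$ to $\alpha\langle 1\rangle$ and $0$ respectively.

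The next step is to identify both of these arrows with the augmentation. Since $\ct(R)$ has constant transitions, the graded object $\gr(\ct(R))$ is concentrated in grading $0$ (equal to $R$ there and $0$ elsewhere). A map of graded commutative $R$-algebras out of the free algebra $\gr(\ct(R)\{z^{1\langle 1\rangle}\})$ is determined by the image of its generator, which lives in grading $1$ of the target and is therefore forced to vanish. Thus although $\alpha\langle 1\rangle$ and $0$ differ in the filtered category (by the class $\alpha \in \pi_1 R$ sitting in filtration $0$ of $\ct(R)$), both descend to the augmentation after applying $\gr$.

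The final step is to compute the pushout of two copies of the augmentation out of a free commutative algebra. For any symmetric monoidal stable presentable $\infty$-category $\cC$ and any object $M \in \cC$, a standard universal property argument yields a natural equivalence
\[
\one_{\cC} \otimes_{\Sym_{\cC}(M)} \one_{\cC} \simeq \Sym_{\cC}(\Sigma M)
\]
in $\CAlg(\cC)$, where both maps on the left are the augmentation. Indeed, a map from the left-hand side to $S$ amounts to a null homotopy of the composite $\Sym_{\cC}(M) \to \one_{\cC} \to S$, which by adjunction is a loop in $\Map_{\cC}(M,S)$ based at the zero map; this loop space is $\Map_{\cC}(\Sigma M, S)$, which is exactly the universal property of $\Sym_{\cC}(\Sigma M)$, so Yoneda concludes. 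Applying this in $\Gr(\Modw_R)$ with $M = \Sigma R\langle 1\rangle$ (grading $1$, internal degree $1$) gives
\[
\gr(R^{\alpha}_{\bullet}) \simeq \Sym_R(\Sigma^2 R\langle 1\rangle) \simeq \gr(\ct(R)\{\beta^{2\langle 1\rangle}\}),
\]
as desired.

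The only point that requires real care is confirming that $\gr$ preserves the relevant pushout in $\CAlg$ and that the universal-property computation runs cleanly in the graded $T(n)$-local setting; both are essentially formal consequences of $\gr$ being a symmetric monoidal left adjoint between stable presentable $\infty$-categories.
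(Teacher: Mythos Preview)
Your proof is correct and follows essentially the same approach as the paper: apply $\gr$ to the defining pushout, observe that both maps become the augmentation since $\gr(\ct(R))$ is concentrated in grading $0$, and then compute the resulting pushout. The only cosmetic difference is in the last step: the paper argues that $\gr$ preserves free algebras and that the free algebra functor preserves pushouts (so the answer is the free algebra on $0 \coprod_{\Sigma R\langle 1\rangle} 0 \simeq \Sigma^2 R\langle 1\rangle$), whereas you compute $\one \otimes_{\Sym(M)} \one \simeq \Sym(\Sigma M)$ directly via Yoneda---but these are two phrasings of the same fact.
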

\begin{proof}
  Since $\gr$ is symmetric monoidal and $\alpha \langle 1\rangle$ has trivial image in the associated graded, $\gr ( R^{\alpha}_{\bullet})$ is given by the pushout
  \[
  \begin{tikzcd}
  \gr(\ct(R)\{ z^{1\langle 1 \rangle} \}) \arrow[r,"0"]\arrow[d,"0"]& \gr (\ct(R)) \arrow[d]\\
  \gr(\ct(R))\arrow[r] & \bullet
  \end{tikzcd}
  \] which is just $\gr (\ct(R)\{\beta^{2\langle 1 \rangle} \})$ because $\gr$ preserves free algebras (note that $\gr (\ct(R)) = \gr (\ct(R))\{ 0\}$) and the free algebra functor $ \gr (\ct(R))\{ -\}$ preserves pushouts.  
\end{proof}

\begin{lem} \label{lem:even-pieces}
Suppose $\alpha \in \pi_1(R)$.  Then:
  \begin{enumerate}
      \item $R^{\alpha}_{\bullet}$ is purely even.
      \item The unit map $\gr (\ct(R) ) \to \gr (R^{\alpha}_{\bullet})$ induces an equivalence in grading $0$.  
  \end{enumerate}
  In particular, this implies that fiber of the unit map  $\ct(R) \to R_\bullet^\alpha$ is purely odd.
\end{lem}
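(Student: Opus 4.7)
The plan is to use \Cref{lem:gr-trick} to replace $R^\alpha_\bullet$ with the more tractable free filtered commutative algebra $\ct(R)\{\beta^{2\langle 1\rangle}\}$, whose associated graded decomposes into symmetric powers. As the free filtered commutative algebra on a class in filtration $1$ and degree $2$, we have
\[
\gr \ct(R)\{\beta^{2\langle 1\rangle}\} \;\simeq\; \bigoplus_{k\geq 0} \Sym^k_R(\Sigma^2 R),
\]
with the $k$-th summand placed in grading $k$, where $\Sym^k_R(M) := L_{T(n)}(M^{\otimes_R k}_{h\Sigma_k})$. From this, part (2) is essentially immediate: the grading $0$ piece is $\Sym^0_R(\Sigma^2 R) = R$, and the unit map $\gr \ct(R) \to \gr \ct(R)\{\beta^{2\langle 1\rangle}\}$ restricts to the identity on this summand.

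For part (1), I need to check that each $\Sym^k_R(\Sigma^2 R)$ is a finite direct sum of even shifts of $R$. The key observation is that assembling these summands over $k$ produces the free $T(n)$-local commutative $R$-algebra on a class in degree $2$, namely $R\{z^2\}$, so that the weight $k$ summand of $R\{z^2\}$ is exactly $\Sym^k_R(\Sigma^2 R)$. Since $R$ is an $E(k)$-algebra, $R\{z^2\} \simeq R \otimes_{E(k)} E(k)\{z^2\}$, and it suffices to verify the statement for $E(k)\{z^2\}$. By the even periodicity of $E(k)$ and the identification of $\pi_0 E(k)\{z^0\}$ with the $\m$-adic completion of $\T(E_0)$ (see \cite{RezkCong}), Strickland's theorem (\Cref{thm:strickland}) identifies the weight $k$ piece of $\pi_* E(k)\{z^2\}$ as a free $\pi_*E(k)$-module of finite rank concentrated in even degrees. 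It follows that $\Sym^k_{E(k)}(\Sigma^2 E(k))$ is a finite direct sum of even shifts of $E(k)$, and base-changing to $R$ preserves this property.

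The ``in particular'' statement then follows by considering the fiber sequence $F \to \ct(R) \to R^\alpha_\bullet$ and passing to associated graded. Part (2) tells us that $\gr(F)_0 = 0$, while in grading $k \geq 1$ we have $\gr(\ct(R))_k = 0$, so $\gr(F)_k \simeq \Sigma^{-1}\gr(R^\alpha_\bullet)_k$ is the odd desuspension of a finite sum of even shifts of $R$, hence a finite sum of odd shifts of $R$; this makes $F$ purely odd.

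The main subtlety I expect is correctly controlling the interaction between the $T(n)$-local symmetric powers and Strickland's weight-by-weight computation---particularly extracting finite freeness of the weight $k$ summand from \Cref{thm:strickland} for $k$ not a prime power, where one must expand the polynomial generators indexed by $p$-power weights into monomials of total weight $k$ and verify that each such monomial contributes a finite free summand.
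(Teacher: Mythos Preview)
Your proof is correct and follows essentially the same strategy as the paper: use \Cref{lem:gr-trick} to identify the associated graded pieces as $T(n)$-local symmetric powers of $\Sigma^2 R$, then invoke Strickland's results to see each is a finite free even $R$-module. The paper's version is marginally more direct on part~(1): rather than passing through the free algebra $E(k)\{z^2\}$ and \Cref{thm:strickland}, it observes that since $\Sigma^2 R$ is an even $\otimes$-invertible $R$-module the $\Sigma_i$-action on $(\Sigma^2 R)^{\otimes i}$ is trivial, so $(\Sigma^{2}R)^{\otimes i}_{h\Sigma_i} \simeq \Sigma^{2i} L_{T(n)} R[B\Sigma_i] \simeq \Sigma^{2i} R \otimes_{E(k)} L_{T(n)} E(k)[B\Sigma_i]$, and then cites Strickland's result \cite{Strickland1} that $E^0(B\Sigma_i)$ is finite free for \emph{every} $i$. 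This sidesteps entirely the $p$-power-versus-general-weight bookkeeping you flagged as a subtlety, since that issue only arises when one phrases Strickland's computation in terms of polynomial generators of $\T(E_0)$ rather than the finite-freeness of each $E^0(B\Sigma_i)$ individually.
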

\begin{proof}
By \Cref{lem:gr-trick}, we have
\[
\gr (R^{\alpha}_\bullet)_i \cong \gr ( R\{ \beta^{2\langle 1\rangle} \})_i \cong (\Sigma^{2}R)^{\otimes i}_{h\Sigma_i}.
\]
Since $R$ is an $E(k)$-algebra, this can be identified with 
\[
L_{T(n)} R[ B\Sigma_i] \simeq  R\otimes_{E(k)} L_{T(n)}E(k)[ B\Sigma_i],
\]
which is a finite rank free $R$-module by \cite{Strickland1}.  
(2) is clear by \Cref{lem:gr-trick}.
\end{proof}

\begin{proof}[Proof of \Cref{prop:oddnilp_phantom}]
The fiber of the the map
\[
u_{R^{\alpha}_{\bullet}}\colon \ct(R) \to R^{\alpha}_{\bullet}.
\]
is purely odd  by \Cref{lem:even-pieces}, so its colimit is $\otimes^2$-phantom by \Cref{lem:fib_2phant}.
On the other hand the colimit of $u_{R^{\alpha}_{\bullet}}$ is $h_\alpha$.
\end{proof}


\section{The Nullstellensatz}
\label{sec:null}

Let $L$ be an algebraically closed field.  Then Hilbert's Nullstellensatz asserts that for any \emph{finite} set $\{x_i\}_{i\in I}$ of generators and proper ideal $J\subsetneq
L[\{x_i\}_{i\in I}]$, there exists a $L$-algebra map
\[
L[\{x_i\}_{i\in I}]/J \to L,
\]
corresponding to a common root of the polynomials in $J$.  

In this section, we will prove a chromatic analogue of this statement. In fact we will consider a more general setting: note that the above statement does not necessarily hold if the set $I$ of generators is infinite.  Nevertheless, Lang observed that it does hold true if the cardinality of $L$ is sufficiently large.  More precisely:

\begin{thm}\label{thm:Lang}
The following are equivalent for a nonzero ring $B$ and an infinite cardinal $\alpha$:
\begin{enumerate}
    \item The ring $B$ is an algebraically closed field and $|B| \geq \alpha $.  
    \item For any set $I$ of size strictly less than $\alpha$ and any proper ideal $J\subsetneq B[\{x_i\}_{i\in I}]$, the $B$-algebra $B[\{x_i\}_{i\in I}]/J$ admits a $B$-algebra map to $B$.  
\end{enumerate}
\end{thm}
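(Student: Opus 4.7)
The plan is to prove both implications separately. For (1)$\Rightarrow$(2), which is a version of Lang's cardinality Nullstellensatz, I would first extend the proper ideal $J$ to a maximal ideal $\mathfrak{m}$ via Zorn's lemma and set $K \coloneqq B[\{x_i\}_{i\in I}]/\mathfrak{m}$. The goal is to show $K = B$, since this immediately yields the desired $B$-algebra map $B[\{x_i\}]/J \to B[\{x_i\}]/\mathfrak{m} = B$. Assuming for contradiction that $K \supsetneq B$, the algebraic closedness of $B$ produces a transcendental element $t \in K$ over $B$, and the classical partial-fractions argument shows that $\{(t-b)^{-1}\}_{b\in B}$ is $B$-linearly independent in $K$, so $\dim_B K \geq |B|$. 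I would then split into cases depending on $|I|$: if $|I|$ is infinite then $\alpha$ must be uncountable and $|B| \geq \alpha > |I| \geq \aleph_0$, so counting monomials gives $\dim_B K \leq |I| < |B|$, a contradiction; if $|I|$ is finite, Zariski's lemma forces $K$ to be finite-dimensional over the algebraically closed field $B$, hence equal to $B$.

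For (2)$\Rightarrow$(1), I would proceed in three steps. First, taking $I = \emptyset$ and $J = \mathfrak{m}$ any maximal ideal of $B$, the composite $B \to B/\mathfrak{m} \to B$ is a $B$-algebra endomorphism, hence the identity, forcing $\mathfrak{m} = 0$; so $B$ is a field. Second, taking $I$ a singleton and $J = (p)$ for any nonconstant monic $p \in B[x]$, hypothesis (2) yields a $B$-algebra map $B[x]/(p) \to B$ and thereby a root of $p$ in $B$, so $B$ is algebraically closed. Third, for the cardinality bound, suppose $|B| < \alpha$; since $B$ is an algebraically closed field it is necessarily infinite. I would exhibit a single counterexample: take $I \coloneqq B \sqcup \{\ast\}$ and let $J$ be the ideal of $B[\{x_b\}_{b \in B}, y]$ generated by the relations $x_b(y-b) - 1$ for $b \in B$. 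The quotient is the nonzero localization $B[y][\{(y-b)^{-1}\}_{b\in B}]$ of the domain $B[y]$ away from $0$, so $J$ is proper; yet any $B$-algebra map to $B$ would have to send $y$ to an element $c \in B$ with $c \neq b$ for every $b \in B$, which is impossible.

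The main obstacle in this plan is the cardinality bookkeeping in (1)$\Rightarrow$(2): one must ensure that $|I| < |B|$ actually gives $\dim_B K < |B|$, which fails cleanly when $|B| = \aleph_0$ unless one separately appeals to Zariski's lemma. Everything else is essentially formal once the proper counterexample in (2)$\Rightarrow$(1) is in hand.
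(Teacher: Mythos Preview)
Your proof is correct and in fact substantially more complete than the paper's. The paper simply cites Lang's original article for the equivalence of (1) and (2) under the standing assumption that $B$ is an algebraically closed field, and then separately verifies that (2) forces $B$ to be a field (via $I=\emptyset$ and a principal ideal) and algebraically closed (via $I$ a singleton). It does not reproduce the cardinality arguments in either direction.

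Your argument for (2)$\Rightarrow$(1) matches the paper's for the field and algebraic-closure steps (your maximal-ideal phrasing is equivalent to their non-invertible-element phrasing). What you add is the explicit counterexample witnessing $|B|\geq\alpha$, namely the localization $B[y][\{(y-b)^{-1}\}_{b\in B}]$, which is a clean construction not present in the paper. Your (1)$\Rightarrow$(2) is the standard Lang partial-fractions and dimension-count argument, with the necessary case split at $\alpha=\aleph_0$ handled by Zariski's lemma; this is exactly the content of the Lang reference the paper invokes. So your proof is a self-contained expansion of what the paper outsources, and your identification of the countable-$B$ case as the one place requiring Zariski's lemma is accurate.
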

\begin{proof}
In \cite{Lang}, Lang proves that (1) and (2) are equivalent under the assumption that $B$ is an algebraically closed field. Thus, it will suffice for us to show that (2) implies that $B$ is an algebraically closed field.

Assume that $B$ satisfies (2).
We first claim that $B$ is  field.  Indeed, let $b\in B$ be  a non-invertible  element. Then take $I = \emptyset$ and consider the ideal $J = (b) \subset B$. Since $b$ is non-invertible, $J$ is a proper ideal so we get a retract for the map $B\to B/b$ and thus $b=0$.
To see that $B$ is an algebraically closed field, take $I$ to be a singleton and  let $f(x) \in B[x]$ be a non-constant polynomial. Now $J  = (f(x)) \subsetneq B[x]$ is a proper ideal, so we get a retract of the map $B \to B[x]/(f(x))$ and thus $f(x)$ admits a root in $B$.
\end{proof}

Accordingly, we define:

\begin{dfn}
We say that a presentable $\infty$-category $\CC$ is \deff{$\alpha$-Nullstellensatzian} for a regular cardinal $\alpha$ if every $\alpha$-compact and non-terminal object admits some map to the initial object. 
We say that an object $R$ in a presentable $\infty$-category $\mathcal{C}$ is $\alpha$-Nullstellensatzian if $R$ is non-terminal and $\mathcal{C}_{R/}$ is $\alpha$-Nullstellensatzian.
\tqed
\end{dfn}

Note that if $\beta \leq \alpha$ and $R$ is $\alpha$-Nullstellensatzian, then $R$ is $\beta$-Nullstellensatzian.  If $R$ is $\omega$-Nullstellensatzian, we will say that it is \deff{Nullstellensatzian}.

\Cref{thm:Lang} implies that an ordinary commutative ring $B$ is $\alpha$-Nullstellensatzian if and only if $B$ is an algebraically closed field of cardinality at least $\alpha$.  
Our goal in this section is to deduce from \Cref{cor:mod_algclosed} a full characterization of objects that are $\alpha$-Nullstellensatzian in $\CAlg(\Sp_{T(n)})$.

\begin{thm}
For any  $0 \neq R \in \CAlg(\Sp_{T(n)})$, $R$ is $\alpha$-Nullstellensatzian if and only if there exists some algebraically closed field $L$ and cardinality $|L| \geq \alpha$ such that $R = E(L)$.\footnote{Once again, recall that if $n=0$, then $L$ will be of characteristic $0$ and $E(L) = KU\otimes L$.}
\end{thm}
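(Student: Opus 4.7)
The plan is to establish each direction of the equivalence separately.

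For the forward direction, assume $R$ is $\alpha$-Nullstellensatzian. Since $\alpha \geq \omega$, $R$ is in particular ($\omega$-)Nullstellensatzian, so \Cref{thm:intro_Nullstellensatz} gives $R \cong E(L)$ for some algebraically closed field $L$, and I must show $|L| \geq \alpha$. The case $\alpha = \omega$ is automatic as algebraically closed fields are infinite, so suppose $\alpha$ is uncountable and, toward a contradiction, $|L| < \alpha$. By \Cref{thm:Lang}, there exist a set $I$ with $|I| < \alpha$ and a proper ideal $J \subsetneq L[x_i : i \in I]$ such that no $L$-algebra map $L[x_i]/J \to L$ exists. The perfection $L' \coloneqq (L[x_i]/J)^{\sharp}$ is a nonzero perfect $L$-algebra of cardinality less than $\alpha$, and is therefore $\alpha$-compact in $\Perf_L$; by \Cref{thm:E_functor}(6), $E(L')$ is a nonzero $\alpha$-compact object of $\CAlgw_{E(L)}$. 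The $\alpha$-Nullstellensatzian hypothesis yields an $E(L)$-algebra map $E(L') \to E(L)$, and the full faithfulness of $E(-)$ from \Cref{thm:E_functor}(1) then produces an $L$-algebra map $L' \to L$, restricting to a map $L[x_i]/J \to L$ and contradicting the choice of $J$.

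For the backward direction, assume $R = E(L)$ with $|L| \geq \alpha$. The case $\alpha = \omega$ is \Cref{thm:intro_Nullstellensatz}, so take $\alpha > \omega$ and let $S$ be an $\alpha$-compact non-terminal object of $\CAlgw_{E(L)}$. Applying the $E(L)$-relative version of \Cref{thm:modmain} (i.e., \Cref{thm:modmain_red}) yields a nilpotence-detecting $E(L)$-algebra map $S \to E(A)$ for some perfect $L$-algebra $A$ of Krull dimension zero; nilpotence detection combined with $S \neq 0$ forces $A \neq 0$. Let $\kappa$ be any residue field of $A$, a perfect field extension of $L$. Granting for now the cardinality bound $|\kappa| \leq |A| \leq \alpha$, I use that for uncountable $\alpha \leq |L|$, the algebraically closed field $L$ has transcendence degree $|L|$ over its prime field: choosing a transcendence basis $T$ of $\kappa / L$ with $|T| \leq |\kappa| \leq |L|$, I extend $\mathrm{id}_L$ to an $L$-algebra map $L(T) \to L$ by sending $T$ to algebraically independent elements of $L$, and then to $\kappa \to L$ since $\kappa$ is algebraic over $L(T)$ and $L$ is algebraically closed. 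Applying $E(-)$ and composing with $S \to E(A)$ gives the required map $S \to E(A) \to E(\kappa) \to E(L)$ in $\CAlgw_{E(L)}$.

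The main obstacle is the cardinality bound on $A$. Inspecting the proof of \Cref{thm:modmain_red}, the algebra $A$ arises as the tilt $(\pi_0(-)/\m)^{\flat}$ of an algebra $S'$ constructed from $S$ via the small object argument of \Cref{cor:Small_object_DN} applied to the three-element set $\{f, g, h\}$ of \Cref{dfn:threemaps}. By \Cref{thm:E_functor}(6), each of the sources $E(k[\Np])$, $E(k)\{z^0\}$, $E(k)\{z^1\}$ is $\omega_1$-compact in $\CAlgw_{E(L)}$; hence, for an uncountable regular $\alpha \geq \omega_1$, the transfinite iteration of the saturation procedure can be run for at most $\alpha$-many stages, each contributing fewer than $\alpha$ new elements, yielding $|S'| \leq \alpha$ and thus $|A| \leq \alpha$. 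This bookkeeping with the small object argument is where the bulk of the remaining technical work lies, but the input about compactness of Lubin--Tate theories is supplied directly by \Cref{thm:E_functor}.
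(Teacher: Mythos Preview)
Your forward direction is correct and close in spirit to the paper's, though the paper uses a concrete witness: it exhibits the perfect field $F = L(t^{p^{-\infty}})$ (or $L(t)$ at height $0$) as an $\alpha$-compact nonzero $E(L)$-algebra whenever $|L| < \alpha$, obtaining a retract $E(F) \to E(L)$ and hence a contradiction. Your use of \Cref{thm:Lang} to produce an abstract counterexample and then perfect it works equally well.

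Your backward direction, however, contains a genuine error in addition to the acknowledged gap. After producing $S \to E(A) \to E(\kappa)$ with $\kappa$ a residue field of $A$, you attempt to construct an $L$-algebra map $\kappa \to L$ by ``sending a transcendence basis $T$ of $\kappa/L$ to algebraically independent elements of $L$.'' But there are no elements of $L$ algebraically independent over $L$, and more fundamentally, any $L$-algebra map from a field $\kappa \supseteq L$ to $L$ is injective, forcing $\kappa = L$. So as soon as $\kappa$ is a proper extension of $L$---which you have no way to rule out---no such map exists, regardless of cardinalities. The separate issue of bounding $|A|$ via the small object argument is therefore moot.

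The paper avoids this by never passing to a field. Given $\alpha$-compact $B$, it first maps $B \to E(F)$ for some algebraically closed $F$ via \Cref{cor:mod_algclosed}, then writes $F$ as the $\alpha$-filtered colimit of the perfections $G(I) = \mathrm{Image}(L[I] \to F)^\sharp$ over subsets $I \subset F$ with $|I| < \alpha$. The $\alpha$-compactness of $B$ factors the map through some $E(G(I))$, and now $G(I) \cong (L[I]/J)^\sharp$ is a \emph{domain} generated by fewer than $\alpha$ elements over $L$, not a field; \Cref{thm:Lang} then supplies an $L$-algebra map $L[I]/J \to L$, hence $G(I) \to L$. The key point is that $\alpha$-compactness is used directly against a filtered presentation of the target, rather than trying to bound the size of an auxiliary construction.
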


We start in \Cref{sub:polysplit} by phrasing a sense in which any Nullstellensatzian $R\in \CAlg(\Sp_{T(n)})$ admits roots of ``$T(n)$-local polynomial equations.''  Using \Cref{cor:mod_algclosed}, this is enough to imply that any such $R$ is a Lubin-Tate theory associated to an algebraically closed field.  We deduce this in \Cref{sub:nullchar}, and then add in cardinality considerations, including Lang's result, to deduce our full characterization of $\alpha$-Nullstellensatzian rings.  

Finally, in \Cref{sub:dblnull}, we consider the related question of whether \emph{dualizable} algebras over an algebraically closed Lubin-Tate theory $E(k)$ admit sections.  Using the characterization of $\alpha$-Nullstellensatzian algebras, we show that this holds at least when the field $k$ is uncountable.

\subsection{Splitting polynomials}\label{sub:polysplit}\hfill

Classically, a defining property of an algebraically closed field $L$ is that any polynomial $P \in L[x]$ which has a root in some extension of $L$ also has a root in $L$.  In this section, we phrase a version of this property which holds for a Nullstellensatzian algebra $A$ in any presentably symmetric monoidal stable $\infty$-category. 

\begin{prop}\label{prop:polyroots}
Let $\CC$ be a presentably symmetric monoidal stable $\infty$-category and let $R\in \CAlg(\CC)$ be Nullstellensatzian.  For $W \in \Mod_R(\CC)^{\omega}$ a compact $R$-module and $T \in \CAlg_R(\CC)$, we let $0:R\{W \} \to T$ denote the $R$-algebra map from the free commutative $R$-algebra on $W$ induced by the zero map $W\to T$.  

Then, for any $W_1, W_2 \in \Mod_R(\CC)^{\omega}$ and map $P\colon R\{W_1 \} \to R\{ W_2\}$ in $\CAlg_R(\CC)$, the following are equivalent:
\begin{enumerate}
    \item There exists a nonzero $T\in \CAlg_R(\CC)$ and a factorization in $\CAlg_R(\CC)$:
    \[
    \begin{tikzcd}
    R\{W_1 \}\arrow[r, "P"]\arrow[rd, "0"'] & R\{ W_2\} \arrow[d,dashed]\\
    & T.
    \end{tikzcd}
    \]
    \item There exists a factorization in $\CAlg_A(\CC)$:
    \[
    \begin{tikzcd}
    R\{W_1\}\arrow[r, "P"]\arrow[rd, "0"'] & R\{ W_2\} \arrow[d,dashed]\\
    & R.
    \end{tikzcd}
    \]
\end{enumerate}
\end{prop}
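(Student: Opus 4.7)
The direction (2) $\Rightarrow$ (1) is immediate: take $T = R$, noting that $R$ is nonzero since it is Nullstellensatzian. The content is in showing (1) $\Rightarrow$ (2).

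My plan is to realize the factorization as a map out of a pushout. Form the pushout
\[
\begin{tikzcd}
R\{W_1\} \arrow[r, "P"] \arrow[d, "0"'] & R\{W_2\} \arrow[d] \\
R \arrow[r] & S
\end{tikzcd}
\]
in $\CAlg_R(\CC)$. By the universal property of the pushout, giving a factorization of the zero map through $P$ landing in an $R$-algebra $X$ is the same as giving an $R$-algebra map $S \to X$. Thus hypothesis (1) is precisely the statement that there exists a nonzero $R$-algebra $T$ with a map $S \to T$ in $\CAlg_R(\CC)$, and conclusion (2) is precisely the statement that there exists a map $S \to R$ in $\CAlg_R(\CC)$.

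The key step is then to observe that $S$ is compact in $\CAlg_R(\CC)$ and nonzero. For compactness, the free algebra functor $R\{-\} \colon \Mod_R(\CC) \to \CAlg_R(\CC)$ is left adjoint to the forgetful functor, which preserves filtered colimits (these are computed on underlying objects), so $R\{-\}$ preserves compact objects. Since $W_1, W_2 \in \Mod_R(\CC)^{\omega}$ by hypothesis, both $R\{W_1\}$ and $R\{W_2\}$ are compact in $\CAlg_R(\CC)$. Pushouts of compact objects are compact, so $S$ is compact. For non-triviality, the existence of a map $S \to T$ to a nonzero $R$-algebra $T$ forces $S \neq 0$, since any algebra with a map to a nonzero algebra must itself be nonzero (the terminal object $0$ admits no maps to nonzero algebras).

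Now the conclusion follows immediately from the hypothesis that $R$ is Nullstellensatzian: by definition, this means $\CAlg_R(\CC) = \CC_{R/\text{-}}$ restricted to commutative algebras (more precisely, the slice category $\CAlg(\CC)_{R/}$) is Nullstellensatzian, that is, every compact non-terminal object admits a map to the initial object $R$. Since $S$ is compact and nonzero, this supplies the desired map $S \to R$, which by the pushout property yields the factorization in (2). I do not anticipate any hard step here; the only potential subtlety is matching the formal definition of ``Nullstellensatzian for the object $R$'' with the claim that $R$ is initial in $\CAlg_R(\CC)$ and $0$ is terminal, but both are structural properties of the slice.
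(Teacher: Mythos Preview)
Your proof is correct and follows essentially the same approach as the paper: form the pushout $S = R \otimes_{R\{W_1\}} R\{W_2\}$, observe it is compact (since the forgetful functor $\CAlg_R(\CC) \to \Mod_R(\CC)$ preserves filtered colimits, so $R\{-\}$ preserves compacts) and nonzero (since it maps to $T$), then invoke the Nullstellensatzian property to obtain $S \to R$.
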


\begin{proof}
It suffices to show that (1) implies (2).  Consider the free forgetful adjunction
\[
R\{-\} \colon \Mod_R(\CC) \rightleftarrows \CAlg_R(\CC) \noloc U.
\]
Since $U$ preserves sifted and therefore filtered colimits (\cite[Corollary 3.2.3.2]{HA})), its left adjoint $R\{ - \}$ preserves compact objects.  Hence, since $W_1, W_2 \in \Mod_R(\CC)^{\omega}$, it follows that the algebras $R\{ W_1\}$ and $R\{W_2\}$ are compact in $\CAlg_R(\CC)$.  By assumption on $T$, there is a commutative diagram
\[
\begin{tikzcd}
R\{ W_1\} \arrow[r,"P"]\arrow[d, "0"] &  R\{ W_2\}\arrow[d,dashed] \\
R \arrow[r] & T.\\
\end{tikzcd}
\]
and thus a map from the pushout to $T$
\[
Q := R\otimes_{R\{ W_1\} } R\{ W_2\} \to T.
\]
Since $Q$ is a pushout of compact objects in $\CAlg_R(\CC)$, $Q$ is itself compact in $\CAlg_R(\CC)$.  Moreover, since $T \neq 0$, the existence of a ring map $Q\to T$ implies that $Q\neq 0$.  Thus, since $R$ is  $\omega$-Nullstellensatzian, there exists a retract $R \to Q \to R$ which gives the desired factorization.  
\end{proof}

One can think of this proposition as saying that $P$ ``has a root'' in some nonzero $R$-algebra $T$ if and only if it does in $R$.  Applied in the case that $P$ is a ``constant polynomial,'' we obtain the following:

\begin{cor}\label{cor:compactinj}
Let $\CC$ be a presentably symmetric monoidal stable $\infty$-category and let $R\in \CAlg(\CC)$ be Nullstellensatzian.  Then for any $0\neq T \in \CAlg_R(\CC)$ and any $W\in \Mod_{R}(\CC)^{\omega}$, the induced map
\[
\Map^{\Sp}_{\CC}(W, R) \to \Map^{\Sp}_{\CC}(W, T) \qind \Sp
\]
on mapping spectra is injective on $\pi_*$.  
\end{cor}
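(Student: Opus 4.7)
The plan is to deduce this directly from \Cref{prop:polyroots}. Since $\CC$ is stable, $\Sigma$ restricts to an auto-equivalence of $\Mod_R(\CC)^{\omega}$, so for every $n \in \ZZ$ the object $\Sigma^n W$ lies in $\Mod_R(\CC)^{\omega}$, and
\[
\pi_n \Map^{\Sp}_{\CC}(W, R) \;\cong\; [\Sigma^n W, R]_{\Mod_R(\CC)}.
\]
Injectivity for all $n$ therefore follows from the $\pi_0$ case applied to each $\Sigma^n W$. Thus it suffices to show that for any $R$-module maps $f, g\colon W \to R$, if $u \circ f \simeq u \circ g$ (where $u\colon R\to T$ is the unit), then $f \simeq g$.

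Set $h \coloneqq f - g\colon W\to R$; this is well-defined because $\Mod_R(\CC)$ is stable, and the hypothesis becomes $u\circ h \simeq 0$. Adjoint to $h$ is a commutative $R$-algebra map $P \coloneqq h^{\sharp}\colon R\{W\} \to R = R\{0\}$. The condition $u\circ h \simeq 0$ translates precisely to the statement that $u \circ P$ agrees with the augmentation map $R\{W\} \to R \to T$; that is, $P$ composed with $R\to T$ coincides with the ``zero map'' of \Cref{prop:polyroots}. Taking $W_1 = W$, $W_2 = 0$, and the given $T$ (which is nonzero by hypothesis), condition (1) of \Cref{prop:polyroots} is therefore satisfied.

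Invoking \Cref{prop:polyroots} then produces a factorization of the augmentation $R\{W\} \to R$ in $\CAlg_R(\CC)$ as $R\{W\} \xrightarrow{P} R \xrightarrow{s} R$ for some $R$-algebra endomorphism $s$ of $R$. Since $R$ is the initial object of $\CAlg_R(\CC)$, we have $s = \mathrm{id}_R$, and so $P$ itself is the augmentation. Unwinding the adjunction, this forces $h \simeq 0$, hence $f \simeq g$, as desired. The essential content is already packaged in \Cref{prop:polyroots}, so the only work is this bookkeeping with the free algebra adjunction and the stable structure on $\Mod_R(\CC)$; the main ``obstacle'', which is very mild, is identifying the $\pi_0$-level condition ``$f$ and $g$ are detected as equal in $T$'' with the zero-map condition of \Cref{prop:polyroots} applied to the difference $h$.
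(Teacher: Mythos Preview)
Your proof is correct and is essentially the same as the paper's: both reduce to a kernel element $h\colon \Sigma^n W \to R$, adjoint it to an algebra map $R\{\Sigma^n W\}\to R$, and apply \Cref{prop:polyroots} with $W_2=0$ to conclude. The paper leaves implicit the step you spell out---that the resulting factoring map $s\colon R\to R$ in $\CAlg_R(\CC)$ must be $\id_R$ because $R$ is initial---but otherwise the arguments are identical.
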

\begin{proof}
Suppose $x\in \ker(\pi_k \Map^{\Sp}_{\CC}(W,R ) \to \pi_k \Map^{\Sp}_{\CC}(W,T))$ and consider the map of commutative algebras
\[
x\colon R\{ \Sigma^k W\} \to R
\]
induced by $x$.  By the choice of $x$, the composite
\[
 R \{ \Sigma^k W\} \xrightarrow{x} R \to T
\]
is homotopic to $0\colon R \{ \Sigma^k W\}\to T$.  Thus, we may apply \Cref{prop:polyroots} with $P = x$ (where $W_1 = \Sigma^k W$ is compact because $W$ is, and $W_2 = 0$).  It follows that $0\colon  R\{ \Sigma^k W\} \to R$ is homotopic to $x\colon  R\{ \Sigma^k W\} \to R$, so $x$ was zero in $\pi_k \Map^{\Sp}_{\CC}(W,R)$. 
\end{proof}

\subsection{$T(n)$-local Nullstellensatzian rings}\label{sub:nullchar}\hfill

In this section, we will prove our main theorem, which characterizes $\alpha$-Nullstellensatzian algebras in $\Sp_{T(n)}$.  First, we show:

\begin{lem} \label{lem:ns-cxo}
  Let $0\neq R \in \CAlg(\Sp_{T(n)})$ be $\omega$-Nullstellensatzian.  
  Then $R$ is even and therefore complex orientable.
\end{lem}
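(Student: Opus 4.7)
The plan is to combine the chromatic Nullstellensatz (\Cref{cor:mod_algclosed}) with the injectivity statement \Cref{cor:compactinj} for Nullstellensatzian algebras, and use the evenness of Lubin--Tate theories to force $R$ to be even.

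First I would apply \Cref{cor:mod_algclosed} to the nonzero $T(n)$-local algebra $R$ to produce an algebraically closed field $L$ and a commutative algebra map $R \to E(L)$; this makes $E(L)$ into a nonzero object of $\CAlg_R(\Sp_{T(n)})$. Next, I would fix a type $n$ generalized Moore spectrum $V = \Ss/(p^{i_0}, v_1^{i_1}, \dots, v_{n-1}^{i_{n-1}})$, which is compact in $\Sp_{T(n)}$, so that $W \coloneqq R \otimes V$ is a compact object of $\Mod_R(\Sp_{T(n)})$. Applying \Cref{cor:compactinj} with this $W$ and with $T = E(L)$, and then using the $\otimes$--$\Hom$ adjunction between $\Sp_{T(n)}$ and $\Mod_R$ to identify the relevant mapping spectra, yields an injection on homotopy groups
\[
\pi_*(V^{\vee} \otimes R) \hookrightarrow \pi_*(V^{\vee} \otimes E(L)),
\]
where $V^{\vee}$ is the $\Sp_{T(n)}$-dual of $V$.

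The next step is to transfer evenness from the right-hand side to the left. By \Cref{thm:E_functor}(3), the sequence $p^{i_0}, v_1^{i_1}, \dots, v_{n-1}^{i_{n-1}}$ sits in even degrees and is regular in the even ring $\pi_* E(L)$, so $V \otimes E(L)$ has homotopy concentrated in even degrees. Writing $V$ as an iterated cofiber of these even-degree self-maps and dualizing one factor at a time identifies $V^{\vee} \simeq \Sigma^{-d} V$ with $d \equiv n \pmod{2}$, and hence $\pi_*(V^{\vee} \otimes E(L))$ is concentrated in degrees of parity $n$. The injection above then forces $\pi_*(V^{\vee} \otimes R)$ to be concentrated in degrees of parity $n$ as well, and undoing the shift $\Sigma^{-d}$ shows that $\pi_*(V \otimes R)$ is concentrated in even degrees. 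Finally, \Cref{lem:strong-even} applied to $R$ with this choice of $V$ lets me conclude that $R$ itself is even, and hence complex orientable.

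The only point requiring genuine care is the parity bookkeeping: one must verify that the Spanier--Whitehead shift $\Sigma^{-d}$ relating $V$ and $V^{\vee}$ has $d$ of the right parity, so that the concentration of $V^{\vee} \otimes E(L)$ in degrees of parity $n$ really does translate, after undoing the shift, to evenness of $V \otimes R$ rather than to odd-concentrated homotopy. Once this bookkeeping is in place, the rest of the argument is a direct combination of the results already proved in the paper.
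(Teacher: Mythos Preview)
Your proof is correct and follows essentially the same approach as the paper: apply \Cref{cor:mod_algclosed} to get $R \to E(L)$, use \Cref{cor:compactinj} with a compact module built from a type $n$ Moore spectrum, deduce evenness of $V \otimes R$ from that of $V \otimes E(L)$, and invoke \Cref{lem:strong-even}.

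The one inefficiency is your choice of $W$. You take $W = R \otimes V$, which forces you through the dual $V^{\vee}$ and the parity bookkeeping for the shift $V^{\vee} \simeq \Sigma^{-d} V$. The paper instead (implicitly) takes $W = R \otimes V^{\vee}$, which is equally compact in $\Mod_R(\Sp_{T(n)})$ since duals of compacts are compact in $\Sp_{T(n)}$. With this choice, \Cref{cor:compactinj} gives directly that $\pi_*(V \otimes R) \to \pi_*(V \otimes E(L))$ is injective, and the parity argument disappears. Your bookkeeping is correct, but unnecessary.
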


\begin{proof}
By \Cref{cor:mod_algclosed}, there exists a map $R\to E(L)$ for some algebraically closed field $L$.  Let $V_n$ be a type $n$ generalized Smith-Toda complex (note that $V_0 = \Ss$). Since $L_{T(n)}V_n$ is compact in $\Sp_{T(n)}$ by \cite[other Refrence]{HovStrick}, then by \Cref{cor:compactinj}, the map
\[
V_n\otimes R \to V_n\otimes E(L)
\]
is injective on homotopy groups.  Since $V_n\otimes E(L)$ is even, this implies that $V_n \otimes R$ is even.  By \Cref{lem:strong-even}, we conclude.
\end{proof}

\begin{cnstr}\label{cnstr:E1alg_K}
Since $R$ is even and complex orientable, we may choose elements 
\[p, v_1, \cdots ,v_{n-1} \in \pi_*R.\]
By \Cref{cnstr:modm}, the $R$-module
\[
    \mdef{K} \coloneqq R\mm  (p,v_1,\cdots ,v_{n-1})
    \]
    acquires the structure of an $\E_1$-$R$-algebra.\footnote{In particular, at height $n=0$, the sequence of elements is empty and $K \cong R$.}  
    \tqed
\end{cnstr}

\begin{lem}\label{lem:K_compact}
  The $R$-module $K$ of \Cref{cnstr:E1alg_K} is compact.  
\end{lem}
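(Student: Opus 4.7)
The plan is to exhibit $K$ as a retract of a compact $R$-module, exploiting the fact that each of $p,v_1,\dots,v_{n-1}$ acts by zero on $K$. The main subtlety is that the unit $R$ of $\Mod_R(\Sp_{T(n)})$ is not itself compact (cf.\ the warning accompanying the definition of $\Prig$), so $K$ cannot simply be built as an iterated cofiber starting from $R$; the trick will be to begin instead from a type $n$ generalized Moore spectrum, which is genuinely compact after being smashed into $R$.

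First I would observe that $K$ is dualizable as an $R$-module. Indeed, for each $i$ the quotient $R/v_i$ is the cofiber of a map $\Sigma^{|v_i|}R\to R$ between dualizable $R$-modules, hence dualizable (dualizable objects span a thick subcategory), and $K$ is the tensor product of the $R/v_i$ over $R$.

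Next, I would fix a type $n$ generalized Moore spectrum
\[ V_n \;=\; \Ss/(p^{i_0},v_1^{i_1},\dots,v_{n-1}^{i_{n-1}}) \]
(which exists by the Hopkins--Smith theorem for sufficiently large exponents, and was already used in \Cref{lem:strong-even}). Because $V_n$ is compact in $\Sp_{T(n)}$, the $R$-module $R\otimes V_n$ is compact in $\Mod_R(\Sp_{T(n)})$. Since $K$ is dualizable over $R$, the module $K\otimes V_n\cong K\otimes_R (R\otimes V_n)$ is compact as well.

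Finally, I would observe that $K$ is a retract of $K\otimes V_n$, thereby completing the proof. Since each $v_j$ acts by zero on $K$, the defining cofiber sequence of $\Ss/v_j^{i_j}$ splits after tensoring with $K$, giving $K\otimes \Ss/v_j^{i_j}\cong K\oplus \Sigma^{1+|v_j^{i_j}|}K$. Iterating this splitting across the $n$ tensor factors of $V_n$ decomposes $K\otimes V_n$ as a finite direct sum of suspensions of $K$, with $K$ itself appearing as the summand corresponding to the bottom cell. Thus $K$ is a retract of the compact module $K\otimes V_n$, and so is itself compact.
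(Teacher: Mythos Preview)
Your proof is correct and follows essentially the same approach as the paper's: show $K$ is dualizable, tensor with a compact type $n$ Moore spectrum $V_n$ to get something compact, and observe $K$ is a retract of $K\otimes V_n$. The paper's proof is terser (one sentence for each step), while you supply the helpful detail that the retraction comes from the vanishing of the $v_j$-actions on $K$; the only minor imprecision is speaking of ``$\Ss/v_j^{i_j}$'' and ``tensor factors of $V_n$'', since the $v_j$-self-maps live on the intermediate Moore spectra rather than on $\Ss$, but the argument goes through unchanged by inducting along the tower $V_0\to V_1\to\cdots\to V_n$.
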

\begin{proof}
Since $K$ is generated under finite colimits from $R$, it is dualizable and thus $K\otimes V_n \in (\Modw_R)^{\omega}$.  But $K$ is a retract of $K\otimes V_n$ and thus is also compact.
\end{proof}

\begin{prop}\label{prop:modm_null}
    Let $0\neq R \in \CAlg(\Sp_{T(n)})$ be $\omega$-Nullstellensatzian.
    Then, for the algebra $K$ of \Cref{cnstr:E1alg_K}, the graded ring $\pi_*K$ is even periodic and $\pi_0(K)$ is an algebraically closed field.
\end{prop}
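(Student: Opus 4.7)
The plan is to prove the proposition by using the Nullstellensatzian property to construct roots of polynomial-type equations, and to combine this with Corollary 6.2 (cor:compactinj) applied to compact $R$-modules of the form $K^{\vee}$.

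\textbf{Paragraph 1 (Setup).} By Lemma 6.5, $R$ is even and complex orientable, so $K = R\modm$ is constructed as in Construction 5.3, and by Lemma 5.12 the sequence $p, v_{1}, \dots, v_{n-1}$ is regular in $\pi_{*}R$ with $\pi_{*}K \cong \pi_{*}R/\m$ concentrated in even degrees. By Lemma 6.7, $K$ is a compact (hence dualizable) $R$-module; since $K$ is obtained from $R$ by killing a regular sequence, Koszul self-duality gives an equivalence $K^{\vee} \simeq \Sigma^{-\sigma}K$ for some integer $\sigma$, so $K^{\vee}$ is compact as well. By Corollary 5.2, there is a commutative algebra map $R \to E(L)$ for some algebraically closed field $L$.

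\textbf{Paragraph 2 (Injection into $L$).} Applying Corollary 6.2 with $W = K^{\vee}$ and $T = E(L)$, the natural map of $\E_{1}$-$R$-algebras
\[
K \simeq \Hom_{R}(K^{\vee}, R) \longrightarrow \Hom_{R}(K^{\vee}, E(L)) \simeq K \otimes_{R} E(L) \simeq E(L)\modm
\]
is injective on $\pi_{*}$, inducing in particular an injective ring map $\pi_{0}K \hookrightarrow \pi_{0}(E(L)\modm) = L$. Hence $\pi_{0}K$ is a subring of the algebraically closed field $L$.

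\textbf{Paragraph 3 ($\pi_{0}K$ is an algebraically closed field).} Given nonzero $\bar{\alpha} \in \pi_{0}K$, lift to $\alpha \in \pi_{0}R$; by Paragraph 2 the image of $\alpha$ in $\pi_{0}E(L)$ reduces to a nonzero element of $L$, and is therefore a unit in the local ring $\pi_{0}E(L)$. Thus the compact commutative $R$-algebra $S_{\alpha} := R\{z^{0}\}/(z^{0}\alpha - 1)$ admits a map to $E(L)$ sending $z^{0}$ to the inverse of $\alpha$, so $S_{\alpha} \neq 0$. The Nullstellensatzian hypothesis then produces an $R$-algebra retract $S_{\alpha} \to R$, yielding an inverse for $\alpha$ in $\pi_{0}R$ and hence for $\bar{\alpha}$ in $\pi_{0}K$; thus $\pi_{0}K$ is a field. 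For algebraic closure, given a monic polynomial $\bar{P}(z) \in \pi_{0}K[z]$, lift it to a monic $P(z) \in \pi_{0}R[z]$; the compact commutative $R$-algebra $T_{P} := R\{z^{0}\}/(P(z^{0}))$ base-changes to $E(L)\{z^{0}\}/(P(z^{0}))$, whose $\pi_{0}$ is free of positive rank over $\pi_{0}E(L)$ and hence nonzero. Nullstellensatzian-ness then supplies a root $\beta \in \pi_{0}R$ of $P$, and $\bar{\beta}$ is a root of $\bar{P}$ in $\pi_{0}K$.

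\textbf{Paragraph 4 (Even periodicity).} Let $u \in \pi_{2}R$ be a class coming from the complex orientation, whose image in $\pi_{2}E(L)$ is a unit. The compact commutative $R$-algebra $T_{u} := R\{z^{-2}\}/(z^{-2}u - 1)$ base-changes to $E(L)\{z^{-2}\}/(z^{-2}u - 1) \simeq E(L)$, since $u$ is already invertible in $E(L)$; in particular $T_{u} \neq 0$. The Nullstellensatzian property yields a retract $T_{u} \to R$, producing an inverse $u^{-1} \in \pi_{-2}R$. Hence $R$ is complex periodic with $\pi_{*}R = \pi_{0}R[u^{\pm 1}]$, and consequently $\pi_{*}K = \pi_{0}K[u^{\pm 1}]$ is even periodic. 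The main obstacle is verifying that each of the compact $R$-algebras $S_{\alpha}, T_{P}, T_{u}$ truly is compact and has the claimed nonvanishing base change to $E(L)$, together with correctly identifying the map produced by Corollary 6.2 as the natural ring map $K \to E(L)\modm$.
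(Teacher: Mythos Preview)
Your Paragraphs~1 and~2 match the paper: use compactness of $K^\vee$ together with \Cref{cor:compactinj} to embed $\pi_* K$ into $\pi_*(E(L)\modm)\cong L[u^{\pm 1}]$.

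The gap is in Paragraphs~3 and~4. The algebras $S_\alpha$, $T_P$, $T_u$ are \emph{not} compact in $\CAlgw_R$ when $n\geq 1$. In the language of \Cref{prop:polyroots}, each is presented with $W_1,W_2$ equal to sums of suspensions of the unit $R$, but $R$ is not a compact object of $\Modw_R$: this is equivalent to $\one_{T(n)}$ being compact in $\Sp_{T(n)}$, which fails for $n\geq 1$ (for instance, the $T(n)$-local colimit of $\one_{T(n)}\xrightarrow{p}\one_{T(n)}\xrightarrow{p}\cdots$ is zero while the colimit on $\pi_0$ is not). Equivalently, $R\{z^0\}$ corepresents $\Omega^\infty$, which does not commute with $T(n)$-local filtered colimits. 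So the Nullstellensatzian hypothesis does not apply to these algebras, and you cannot extract the desired sections. A secondary issue: even granting compactness, $\pi_0(E(L)\{z^0\}\mm^\infty P(z^0))$ is not the naive quotient of a polynomial ring, since $E(L)\{z^0\}$ is the free $\E_\infty$-algebra rather than the flat polynomial algebra.

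The paper avoids lifting to $\pi_* R$ entirely. It formulates polynomial equations directly in $\pi_*K$, using that $K$ is an $\E_1$-$R$-algebra: a noncommutative lift $\tilde f\in\pi_*K\langle t_1,\dots,t_l\rangle$ of a homogeneous polynomial $f$ determines a natural transformation
\[
\prod_{i=1}^l\pi_{2d_i}(-\otimes_R K)\longrightarrow\pi_{2d}(-\otimes_R K)
\]
of functors on $\CAlgw_R$, corepresented by a map $R\{\Sigma^{2d}K^\vee\}\to R\{\bigoplus_i\Sigma^{2d_i}K^\vee\}$. Since $K^\vee$ \emph{is} compact (\Cref{lem:K_compact}), \Cref{prop:polyroots} now applies, and one checks that a solution exists after base-change to $E(L)$ by the classical Nullstellensatz in $L[u^{\pm1}]$. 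Taking $f=t_1t_2-1$ with $|t_1|=2$, $|t_2|=-2$ produces a unit in $\pi_2K$; taking $f=at-1$ for $0\neq a\in\pi_0K$ shows $\pi_0K$ is a field; and one-variable polynomials in degree~$0$ give algebraic closure.
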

\begin{proof}
 The proof of this proposition passes through the following more technical statements about $\pi_*(K)$.
 \begin{enumerate}
     \item The graded ring $\pi_*(K)$ is even and commutative.
     \item Suppose $t_1, \dots, t_l$ are polynomial variables of even degrees $2d_1, \cdots , 2d_l$ and $f\in \pi_*(K)[ t_1,\dots ,t_l]$ is a non-constant homogeneous polynomial.  Then there exist $x_1, \dots x_l \in \pi_*(K)$ with $x_i$ of degree $2d_i$ such that $f(x_1,\dots,x_l) = 0$. 
 \end{enumerate}
 
 By \Cref{cor:mod_algclosed}, there exists a map $R\to E(L)$ for some algebraically closed field $L$. Since $K$ is compact (\Cref{lem:K_compact}),
 we can use the dual of the compact $R$-module $K$ as $W$ in \Cref{cor:compactinj} and conclude that the map
\[
\iota \colon K \to K \otimes_R E(L) \simeq E(L)\mm\m 
\]
is injective on homotopy groups.  Since $\pi_*(E(L)\mm\m) = L{u}^{\pm 1}]$ is even and commutative, this implies that $\pi_*K$ is as well.

 To prove (2) we first show that $\pi_*(E(k)\mm\m) = L{u}^{\pm 1}]$ satisfies condition (2).  Indeed, given $f\in L{u}^{\pm 1}][ t_1,\cdots ,t_l]$ as above,
 by using ${u}$, we can reduce the case that $f \in L[ t_1,\cdots ,t_l]$ and $d_i =0$ for $i  \in 1,\dots,l$. In this case, the statement follows from the classical Hilbert's Nullstellensatz. 
 
 Now to prove that $\pi_{*}(K)$ satisfies condition (2), let $f \in \pi_*(K)[t_1, \cdots ,t_l]$ as above.  Choose $\tilde{f} \in \pi_*(K)\langle t_1,\cdots ,t_l\rangle $ a lift to non-commutative polynomials.  
Since $K$ is an $\E_1$-$R$-algebra, the functor 
\[
\pi_*(- \otimes_R K)\colon \CAlgw_R \to \Fun(\Z, \Set)
\]
factors through the category of graded (not necessarily commutative) rings with a map from $\pi_*(K)$.  It follows that $\tilde{f}$ determines a natural transformation
\[
\tilde{f}\colon \prod_{i=1}^{l} \pi_{2d_i}(-\otimes_R K) \to \pi_{2d}(-\otimes_R K)
\]
where $2d  = 2 \sum d_i$ is the degree of $f$.  This is corepresented by some map of free algebras
\[
P_f \colon R\{ \Sigma^{2d} K^{\dual} \} \to R\{  \bigoplus_{i=1}^{l}\Sigma^{2d_i}K^{\dual}\}.
\]

As we showed above, when $f$ is nonconstant, the image of $f$ under the inclusion $\iota$ has a solution in $\pi_*(E(L)\mm\m)$.  Unwinding the definitions, this means that condition (1) of \Cref{prop:polyroots} is satisfied with $\cC = \Modw_R$, $P= P_f$ and $T = E(L)$, and so \Cref{prop:polyroots} implies that $f$ has a solution in $\pi_*(K)$,

Finally, we are ready to use (1) and (2) to finish the proof.
Taking $f(t_1,t_2) = t_1t_2-1$ with $t_1$ in degree $2$ and $t_2$ in degree $-2$, we learn that $\pi_*(K)$ has a unit in degree $2$.   
To see that $\pi_0(K)$ is a field, we note that for any nonzero $a\in \pi_0(K)$, the polynomial $f(t) = at -1$ with $t$ in degree $0$ is non-constant and therefore has a solution in $\pi_0(K)$.  
Finally, taking non-constant polynomials  $f \in \pi_0(K)[x]$ for $x$ in  degree $0$, the condition directly implies that $\pi_0(K)$ is algebraically closed.  
\end{proof}

\begin{rmk}
The property (2) which appears in the proof of \Cref{prop:modm_null} can be interpreted as saying that $\pi_*(K)$ is Nullstellensatzian as a graded commutative ring.
\tqed
\end{rmk}

We have now shown:

\begin{prop}\label{prop:Ealgnull}
Let $0\neq R \in \CAlg(\Sp_{T(n)})$ be $\omega$-Nullstellensatzian.  Then, there exists an algebraically closed field $L$ and an equivalence $R \cong E(L)$.  
\end{prop}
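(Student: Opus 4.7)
The plan is to assemble the three preceding results (Lemma 6.2, Construction 6.3, and Proposition 6.5) and invoke the Lubin--Tate recognition principle of Corollary 2.19.

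First, Lemma 6.2 tells us that $R$ is even and complex orientable, so we may choose lifts $p, v_1, \dots, v_{n-1} \in \pi_\ast R$ of the Hasse invariants and form the $\E_1$-$R$-algebra $K = R\modm$ as in Construction 6.3. Evenness of $R$ in particular implies that $R \otimes V_n$ is even for some (equivalently, every) type $n$ generalized Moore spectrum $V_n$, which is the first hypothesis of Corollary 2.19.

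Next, I would invoke Proposition 6.5, which asserts that $\pi_\ast K$ is even periodic and that $\pi_0 K$ is an algebraically closed field. Setting $L \coloneqq \pi_0(R\modm) = \pi_0 K$, this says exactly that $\pi_\ast(R\modm)$ is an even periodic algebraically closed field, which is the second hypothesis of Corollary 2.19.

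With both hypotheses verified, Corollary 2.19 produces an equivalence $R \cong E(\pi_0(R\modm)) = E(L)$ in $\CAlg(\Sp_{T(n)})$, completing the proof. There is essentially no obstacle here; all of the work has been front-loaded into Proposition 6.5, whose proof uses \Cref{cor:mod_algclosed} to embed $K$ injectively into $E(L')\modm$ for some algebraically closed $L'$, together with the Nullstellensatzian hypothesis to split off roots of polynomial equations. The present step is just bookkeeping: unpacking the definition of $K$, noting that $L = \pi_0 K$ is the algebraically closed field we want, and checking that the even-periodicity plus algebraic closure of $\pi_\ast K$ triggers the recognition theorem.
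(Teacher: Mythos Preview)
Your proof is correct and follows exactly the paper's approach: combine \Cref{prop:modm_null} with \Cref{cor:alg-closed-E}. One small remark: the implication ``$R$ even $\Rightarrow$ $R\otimes V_n$ even'' is not automatic in general, but the proof of \Cref{lem:ns-cxo} already establishes $V_n\otimes R$ even directly (before deducing evenness of $R$ via \Cref{lem:strong-even}), so the first hypothesis of \Cref{cor:alg-closed-E} is available either way.
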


\begin{proof}
This follows by combining \Cref{prop:modm_null} with \Cref{cor:alg-closed-E}.
\end{proof}

Combining this with \Cref{cor:mod_algclosed} and cardinality considerations, we deduce our characterization of $T(n)$-local $\alpha$-Nullstellensatzian rings.

\begin{thm}\label{thm:alpha-null}
Let $\alpha$ be a regular cardinal and $0 \neq R \in \CAlg(\Sp_{T(n)})$.  Then  $R$ is  $\alpha$-Nullstellensatzian if and only if there is an algebraically closed field  $L$ with $|L|\geq \alpha$ such that   $A \cong E(L)$.
\end{thm}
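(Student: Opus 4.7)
The plan is to prove both implications by combining three results from the excerpt: \Cref{prop:Ealgnull} (which already classifies $\omega$-Nullstellensatzian algebras as Lubin--Tate theories of algebraically closed fields), the tilt--Lubin--Tate adjunction of \Cref{thm:E_functor} (translating the problem into $\Perf_L$), and Lang's cardinality-refined Nullstellensatz (\Cref{thm:Lang}).

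For the forward implication, $\alpha$-Nullstellensatzian implies $\omega$-Nullstellensatzian (since any $\omega$-compact object is $\alpha$-compact), so \Cref{prop:Ealgnull} reduces us to $R \cong E(L)$ with $L$ algebraically closed. The bound $|L| \geq \alpha$ is automatic when $\alpha=\omega$ since algebraically closed fields are infinite, so I would assume $\alpha$ is uncountable and $|L|<\alpha$, aiming for a contradiction. The natural candidate for a counterexample is the nonzero perfect $L$-algebra $A \coloneqq L(t)^{\sharp}$, which admits no $L$-algebra map back to $L$ (such a map would need to send $t$ to some $l_0\in L$ and simultaneously $(t-l_0)^{-1}$ to an undefined element). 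Since $L(t)$ is presented over $L$ by $|L|<\alpha$ generators and relations, $A$ is $\alpha$-compact in $\Perf_L$, and by \Cref{thm:E_functor}(6) the object $E(A)$ is a nonzero $\alpha$-compact object of $\CAlgw_{E(L)}$. The $\alpha$-Nullstellensatzian hypothesis would then produce a map $E(A) \to E(L)$, which by \Cref{thm:E_functor} corresponds to the forbidden $L$-algebra map $A\to L$.

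For the reverse implication, let $T$ be a nonzero $\alpha$-compact object of $\CAlgw_{E(L)}$. Applying \Cref{cor:mod_algclosed} I would first produce an algebraically closed field $L'$ and a map $T \to E(L')$; the composite with the unit $E(L)\to E(L')$ realizes $L'$ as an object of $\Perf_L$. Writing $L' = \colim_i A_i$ as an $\alpha$-filtered colimit of $\alpha$-compact objects in $\Perf_L$ and using that $E(-)$ preserves colimits (being a left adjoint by \Cref{thm:E_functor}), we get $E(L') = \colim_i E(A_i)$ along the same $\alpha$-filtered diagram. The $\alpha$-compactness of $T$ then forces a factorization $T \to E(A_{i_0}) \to E(L')$, and the existence of the ring map $E(A_{i_0})\to E(L')\neq 0$ forces $A_{i_0}$ to be nonzero. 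Since $A_{i_0}$ is the perfection of an $L$-algebra presented by fewer than $\alpha$ generators, the hypothesis $|L|\geq\alpha$ and \Cref{thm:Lang} supply an $L$-algebra map $A_{i_0}\to L$; translating through \Cref{thm:E_functor} and composing gives the desired map $T\to E(A_{i_0})\to E(L)$.

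The main observation that makes the reverse direction run uniformly for all regular $\alpha$, including $\omega$, is that extracting the factorization through $E(A_{i_0})$ does \emph{not} require $E(-)$ to preserve $\alpha$-compactness; it suffices that $E(-)$ preserves the $\alpha$-filtered colimit presenting $L'$, which is automatic because $E(-)$ is a left adjoint. The preservation statement \Cref{thm:E_functor}(6) enters only in the forward direction, and only for uncountable $\alpha$, where it is available.
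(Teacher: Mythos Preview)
Your proof is correct and follows the same overall strategy as the paper's. The forward direction is essentially identical: both reduce to $R\cong E(L)$ via \Cref{prop:Ealgnull}, then use the perfection of $L(t)$ as a nonzero $\alpha$-compact $E(L)$-algebra with no augmentation (the paper spells out the $\alpha$-compactness of $L(t)$ as a colimit over finite subsets of $L$ of the localizations $L[t,f_I^{-1}]$, but this is the same content as your ``presented by $|L|<\alpha$ generators and relations'').

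The reverse direction differs slightly in presentation. You write $L'=\colim_i A_i$ as an abstract $\alpha$-filtered colimit of $\alpha$-compact objects in $\Perf_L$, then assert that each $A_{i_0}$ ``is the perfection of an $L$-algebra presented by fewer than $\alpha$ generators'' so that Lang applies. This is true, but it is not entirely immediate from $\alpha$-compactness (strictly speaking one gets a \emph{retract} of such a perfection, which still suffices; and for $\alpha=\omega$ one must observe that relations in $L[t_1^{1/p^\infty},\dots,t_n^{1/p^\infty}]$ can be cleared to $L[t_1,\dots,t_n]$ by raising to $p$-th powers). The paper sidesteps this by choosing a \emph{specific} $\alpha$-filtered presentation $L'=\colim_{|I|<\alpha}\mathrm{Image}(L[I]\to L')^{\sharp}$, so that each term is manifestly the perfection of a quotient of a polynomial ring on $<\alpha$ generators and Lang applies directly. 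Your final remark, that $E(-)$ preserving $\alpha$-compactness is needed only in the forward direction while the reverse uses only that $E(-)$ is a left adjoint, is a nice clarification that is implicit but not stated in the paper.
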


\begin{proof}
First assume that $R$ is $\alpha$-Nullstellensatzian.  Then $R$ is in particular $\omega$-Nullstellensatzian and so by \Cref{prop:Ealgnull}, $R \cong E(L)$ for some algebraically closed $L$.  We wish to show that $|L| \geq \alpha$. If $\alpha = \omega$ this is clear, so it is enough to consider the case $\alpha >\omega$.  Assume for the sake of contradiction that $|L|<\alpha$.
We claim that the perfect field $F \coloneqq L(t^{p^{-\infty}})$  ($F \coloneqq L(t)$ when $n=0$) is
$\alpha$-compact as an object of ${\CRing}_{L/}$.
Indeed, consider the poset 
\[Q = \{I \subset L | |I| < \omega \}\]
ordered by inclusion. For $I \in Q$, let $f_I  \coloneqq \prod_{i \in I} (t-i) \in L[t]$. Consider the functor from $Q$ to  $L$-algebras defined by
\[
H:I \mapsto L[t,f^{-1}_I].
\]
Since $|Q| = |L| <\alpha$ and  $H(I)$ is $\omega$-compact in  ${\CRing}_{L/}$ for all $I \in Q$,  we get by \cite[Proposition 5.3.4.13]{HTT} that $\colim_Q H(I)  = L(t)$ is $\alpha$-compact.
When $n>0$, since \[L(t^{p^{-\infty}}) = \colim [L(t) \to L(t^{1/p}) \to L(t^{1/p^2}) \to  \cdots],\] again using \cite[Proposition 5.3.4.13.]{HTT} we have that 
$L(t^{p^{-\infty}})$ is $\alpha$-compact as well.
 Thus by \Cref{thm:E_functor}(6), $E(F)$ is an $\alpha$-compact  $R \cong E(L)$ algebra. Since $R$ is $\alpha$-Nullstellensatzian, this means there is a retract 
$E(F) \to E(L)$, which is a contradiction.  We thus deduce that $|L| \geq \alpha$.

We now prove the converse: let $L$ be an algebraically closed field of cardinality $|L| \geq \alpha$.  We wish to show that $E(L)$ is $\alpha$-Nullstellensatzian.  Indeed, let $B$ be an $\alpha$-compact non-zero $E(L)$-algebra.  By \Cref{cor:mod_algclosed}, there exists some algebraically closed field  $F$ and a map $B \to E(F)$. 
Now consider the poset 

\[P = \{I \subset F | |I| < \alpha \}\]
ordered by inclusion and the functor from $P$ to perfect $L$-algebras defined by
\[
G:I \mapsto \mathrm{Image}(L[I] \to F)^{\sharp}.
\]
We have that $\colim_P G = F$ and thus by \Cref{thm:E-and-tilt},
\[\colim_{I \in P} E(G(I)) = E(F) \in  \CAlgw_{E(L)}. \]
Since $\alpha$ is regular, the poset $P$ is $\alpha$-filtered and thus since  $B$ is $\alpha$-compact, the map $B \to E(F) $ factors through a map 
\[B \to E(G(I))\] for some $I\in P$.
It is thus enough to show that  there is a retract for the map $L \to G(I)$.  Since $L$ is perfect, it is enough to show that the map 
\[L\to  \mathrm{Image}(L[I] \to F)\] has a retract. 
Denote by $J \lhd L[I]$ the kernel of the map $L[I] \to F$ so that $\mathrm{Image}(L[I] \to F) \cong L[I]/J$. Since $F\neq 0$, we have that $J$ is a proper ideal and so the claim follows from \Cref{thm:Lang}.  Note that the case of height $0$ is included in the above argument, by the convention that being perfect in characteristic $0$ is being reduced, and $(-)^{\sharp}$ means modding out by the nilradical.

\end{proof}

\subsection{Dualizable algebras}\label{sub:dblnull}\hfill

Let $L$ be an algebraically closed field.  A consequence of Hilbert's Nullstellensatz is that any non-zero finite dimensional $L$-algebra $A$ admits a $L$-algebra map $A\to L$.  One can ask whether a similar statement holds for Lubin-Tate theories.

\begin{qst}\label{qst:dual-null}
Let $L$ be an algebraically closed field and let $R \in \CAlgw_{E(L)}$ be a \emph{dualizable} $E(L)$-algebra.  Does there necessarily exist a section $R\to E(L)$?
\tqed
\end{qst}

We remark that dualizable $E(L)$-algebras are generally not compact in $\CAlgw_{E(L)}$ so this does not follow from \Cref{thm:alpha-null}.  Nevertheless, the question is of interest because there are many naturally occurring dualizable $E(L)$-algebras.  While we believe that the answer to \Cref{qst:dual-null} should be affirmative, we have not found a proof in general---however, we give here a proof in the special case that $L$ is  \emph{uncountable} (\Cref{thm:dual-null}).  The key observation is that while dualizable $E(L)$-algebras are not necessarily compact in $\CAlgw_{E(L)}$, they are always \emph{$\omega_1$-compact}.  To justify this, we need some preliminary lemmas.





  


\begin{lem} \label{lem:ladj-preserves-size}
  Let $F \colon \CC \rightleftharpoons \DD \noloc G$ be an adjunction between presentable categories
  and $\kappa \leq \alpha$ a pair of regular cardinals such that $\CC$ is $\kappa$-accessible.
  Then, the following are equivalent:
  \begin{enumerate}
  \item $G$ preserves $\alpha$-filtered colimits.
  \item $F$ preserves $\alpha$-compact objects.
  \item $F$ sends $\kappa$-compact objects to $\alpha$-compact objects.
  \item $F$ preserves $\beta$-compact objects for every regular $\beta \geq \alpha$.
  \end{enumerate}
\end{lem}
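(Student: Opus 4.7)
The plan is to prove the cycle $(1)\Leftrightarrow(2)$, $(2)\Rightarrow(3)\Rightarrow(1)$, and then $(2)\Leftrightarrow(4)$, all via standard manipulations with the adjunction isomorphism $\Map_\DD(F(-),-)\cong \Map_\CC(-,G(-))$.

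First I would establish $(1)\Leftrightarrow(2)$. For $(1)\Rightarrow(2)$: if $c\in \CC$ is $\alpha$-compact, then for any $\alpha$-filtered diagram $d_\bullet$ in $\DD$,
\[ \Map_\DD(Fc,\colim d_\bullet)\cong \Map_\CC(c,G\colim d_\bullet) \cong \Map_\CC(c,\colim Gd_\bullet)\cong \colim\Map_\DD(Fc,d_\bullet), \]
using (1) in the middle equivalence and $\alpha$-compactness of $c$ at the end. For $(2)\Rightarrow(1)$: it suffices to check that $\colim Gd_\bullet\to G\colim d_\bullet$ becomes an equivalence after $\Map_\CC(c,-)$ for every $\kappa$-compact (hence detecting, since $\CC$ is $\kappa$-accessible) $c\in \CC$; but $\Map_\CC(c,-)\circ G\cong \Map_\DD(Fc,-)$, and $Fc$ is $\alpha$-compact (since $\kappa$-compact objects are $\alpha$-compact when $\kappa\leq \alpha$), so this follows.

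Next, $(2)\Rightarrow (3)$ is the observation that $\kappa\leq \alpha$ implies every $\kappa$-compact object is $\alpha$-compact: an $\alpha$-filtered colimit is in particular $\kappa$-filtered. Then $(3)\Rightarrow (1)$ is the same detection argument as above: for $d_\bullet$ an $\alpha$-filtered diagram and $c\in \CC$ a $\kappa$-compact generator, $Fc$ is $\alpha$-compact by (3), so
\[ \Map_\CC(c,G\colim d_\bullet) \cong \Map_\DD(Fc,\colim d_\bullet) \cong \colim\Map_\DD(Fc,d_\bullet) \cong \colim\Map_\CC(c, Gd_\bullet), \]
and since the $\kappa$-compact objects of $\CC$ are jointly conservative, $G$ preserves the colimit.

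Finally I would deduce $(2)\Leftrightarrow (4)$. The implication $(4)\Rightarrow (2)$ is trivial. For $(2)\Rightarrow (4)$, fix a regular $\beta\geq \alpha$ and observe that any $\beta$-filtered colimit is $\alpha$-filtered, so $G$ preserves $\beta$-filtered colimits by the $(2)\Rightarrow (1)$ direction already proved. Since $\kappa\leq \alpha\leq \beta$ the hypotheses of the lemma still hold with $\alpha$ replaced by $\beta$, and the $(1)\Rightarrow (2)$ direction applied at $\beta$ gives that $F$ preserves $\beta$-compact objects, as desired.

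The argument is essentially formal; the only place any hypothesis beyond the adjunction is used is in the $(3)\Rightarrow (1)$ step, where $\kappa$-accessibility of $\CC$ guarantees that $\kappa$-compact (and hence, a fortiori, $\alpha$-compact) objects form a detecting family. I do not anticipate any serious obstacles beyond keeping the cardinality bookkeeping straight.
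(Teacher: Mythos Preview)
Your proof is correct and follows essentially the same approach as the paper's proof. The paper simply abbreviates by citing \cite[Prop.~5.5.7.2]{HTT} for the equivalence of (1) and (2) and for the implication $(3)\Rightarrow(1)$, and then observes $(4)\Rightarrow(2)\Rightarrow(3)$ and $(1)\Rightarrow(4)$ exactly as you do; you have spelled out the standard adjunction manipulations that underlie that reference.
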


\begin{proof}
  This lemma is a simple enhancement of \cite[Prop. 5.5.7.2]{HTT} and
  the equivalence of (1) and (2) is immediate from this proposition.
  In fact, the proof given there demonstrates that (3) implies (1).
  Since it is clear that (4) implies (2) implies (3), it only remains to show that (1) implies (4).
  This follows since if $G$ preserves $\alpha$-filtered colimits then it preserves $\beta$-filtered colimits as well.  
\end{proof}

\begin{lem} \label{lem:monad-size} 
  Suppose that $F \colon \CC \rightleftharpoons \DD \colon G$ is a monadic adjunction between presentable categories such that $F$ and $GF$ preserve $\kappa$-compact objects for some uncountable regular cardinal $\kappa$.
  Then, $G$ detects $\kappa$-compactness.
\end{lem}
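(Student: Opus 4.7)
The plan is to use the Beck--Chevalley monadic bar resolution to express any $d \in \DD$ as a geometric realization of ``free'' objects built from $F$, $G$, and $d$, and then leverage uncountability of $\kappa$ to propagate $\kappa$-compactness through this realization.

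Concretely, let $d \in \DD$ be such that $G(d)$ is $\kappa$-compact in $\CC$, and set $T = GF$ (the monad). By the $\infty$-categorical Barr--Beck--Lurie theorem, since the adjunction is monadic, $d$ can be written as the colimit of its bar resolution,
\[ d \;\simeq\; \colim_{[n] \in \Delta^{\op}} F T^n G(d), \]
a simplicial object in $\DD$. I would then argue by induction on $n$ that each term $F T^n G(d)$ is $\kappa$-compact in $\DD$. For the base case, $F G(d)$ is $\kappa$-compact because $F$ preserves $\kappa$-compact objects and $G(d)$ is $\kappa$-compact. For the inductive step, $T^n G(d) = (GF)^n G(d)$ is $\kappa$-compact because $GF$ preserves $\kappa$-compact objects by hypothesis, and then applying $F$ gives a $\kappa$-compact object of $\DD$.

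Finally, since $\kappa$ is \emph{uncountable} and regular, the simplex category $\Delta^{\op}$ is $\kappa$-small (it has only countably many simplices), and the full subcategory of $\kappa$-compact objects in any presentable category is closed under $\kappa$-small colimits. Hence the colimit $d$ of a $\Delta^{\op}$-diagram of $\kappa$-compact objects of $\DD$ is itself $\kappa$-compact, as desired.

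The only real subtlety---and the point at which one must be careful---is the uncountability of $\kappa$: if $\kappa = \omega$ the argument fails because geometric realizations are not finite colimits. Everything else is essentially formal, relying only on monadicity and the standard closure of $\kappa$-compact objects under $\kappa$-small colimits \cite[Corollary 5.3.4.15]{HTT}.
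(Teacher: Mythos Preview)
Your proof is correct and is essentially identical to the paper's own argument: both use the monadic bar resolution $d \simeq \colim_{\Delta^{\op}} (FG)^{\bullet}(d)$, observe that each term is $\kappa$-compact by the hypotheses on $F$, $GF$, and $G(d)$, and then invoke \cite[Corollary 5.3.4.15]{HTT} together with the fact that $\Delta^{\op}$ is $\omega_1$-small and $\kappa$ is uncountable. There is nothing to add.
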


\begin{proof}
  Suppose we are given an object $d \in \DD$ such that $G(d)$ is $\kappa$-compact.
  Monadicity provides a presentation 
  \[ d \simeq \colim_{\Delta^{\op}} (FG)^{\circ \bullet} (d) \]
  of $d$ as a geometric realization.
  Our assumptions on $F$, $GF$ and $G(d)$ imply that this is a diagram of $\kappa$-compact objects.
  Then, since $\Delta^{\op}$ is $\omega_1$-small and $\kappa$ is uncountable \cite[Cor. 5.3.4.15]{HTT} implies that the colimits of this diagram is $\kappa$-compact.
\end{proof}

\begin{lem} \label{lem:alg-size}
  Let $\kappa$ be an uncountable regular cardinal and suppose that $\CC$ is a presentably symmetric monoidal category $\CC$ such that tensor products of $\kappa$-compact objects in $\CC$ are $\kappa$-compact.  Then for any operad $\cO$ such that each $\cO(n)$ is a $\kappa$-compact space, the adjunction
  \[ \mathrm{Free}_{\cO} \colon \CC \rightleftharpoons \Alg_\cO(\CC) \noloc \mathrm{U} \]
  has the following properties:  
  \begin{enumerate}
  \item the free $\cO$-algebra functor preserves $\kappa$-compact objects,
  \item the composite $\mathrm{Free}_\cO \circ \mathrm{U}$ preserves $\kappa$-compact objects and
  \item the underlying functor, $\mathrm{U}$, detects $\kappa$-compactness.
  \end{enumerate}  
\end{lem}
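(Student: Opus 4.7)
The plan is to prove (1) and (2) by direct computation and then deduce (3) as a formal consequence of \Cref{lem:monad-size}, applied to the monadic adjunction $\mathrm{Free}_{\cO} \dashv U$; monadicity is standard (operadic Barr--Beck, using that $U$ is conservative and preserves $U$-split geometric realizations). Thus the substance of the lemma lies in the two preservation statements.

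For (1), the forgetful functor $U$ preserves sifted colimits and in particular $\kappa$-filtered colimits for every regular cardinal. Since $\CC$ is presentable it is $\kappa_0$-accessible for some small $\kappa_0$, so \Cref{lem:ladj-preserves-size} (with $\alpha = \kappa_0$) gives that $\mathrm{Free}_{\cO}$ preserves $\beta$-compact objects for every regular $\beta \geq \kappa_0$; specializing to $\beta = \kappa$ yields (1).

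For (2), I interpret the composite as the free algebra monad $U \circ \mathrm{Free}_{\cO} \colon \CC \to \CC$, which admits the standard operadic description
\[
(U \circ \mathrm{Free}_{\cO})(X) \;\simeq\; \bigoplus_{n \geq 0} \bigl(\cO(n) \otimes X^{\otimes n}\bigr)_{h\Sigma_n}.
\]
Given $X$ that is $\kappa$-compact, I will check step by step that each of the four constituent operations preserves $\kappa$-compactness. The finite tensor power $X^{\otimes n}$ is $\kappa$-compact by induction on $n$ from the hypothesis on tensor products in $\CC$. Since $\cO(n)$ is $\kappa$-compact as a space, it is a $\kappa$-small colimit of a point, so $\cO(n) \otimes X^{\otimes n}$ is a $\kappa$-small colimit of copies of $X^{\otimes n}$. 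The homotopy orbits $(-)_{h\Sigma_n}$ are computed as a colimit over $B\Sigma_n$, which is a countable CW-complex and therefore $\omega_1$-small. Finally $\bigoplus_{n \geq 0}$ is a countable coproduct. At each stage we are taking a $\kappa$-small colimit of $\kappa$-compact objects, which is $\kappa$-compact by \cite[Cor.~5.3.4.15]{HTT}; composing the four steps gives (2).

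For (3) the hypotheses of \Cref{lem:monad-size} are now in place: $F = \mathrm{Free}_{\cO}$ preserves $\kappa$-compact objects by (1), and $GF = U \circ \mathrm{Free}_{\cO}$ does so by (2). The lemma then immediately yields that $U$ detects $\kappa$-compactness. The only place where the uncountability of $\kappa$ is genuinely used is in step (2), to absorb the two countable indexing categories $B\Sigma_n$ and $\NN_{\geq 0}$ into $\kappa$-small colimits; this parallels the role played by the uncountability assumption in \Cref{lem:monad-size} itself. I expect no serious obstacle beyond carefully bookkeeping the size constraints.
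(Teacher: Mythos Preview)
Your proposal is correct and follows essentially the same approach as the paper: prove (1) via \Cref{lem:ladj-preserves-size} using that $U$ preserves sifted colimits, prove (2) by direct inspection of the formula $\bigoplus_n (\cO(n)\otimes X^{\otimes n})_{h\Sigma_n}$ and \cite[Cor.~5.3.4.15]{HTT}, then deduce (3) from \Cref{lem:monad-size}. You are also right to interpret the composite in (2) as the monad $U\circ\mathrm{Free}_{\cO}$ rather than $\mathrm{Free}_{\cO}\circ U$ as literally written in the statement; the paper's own proof computes $U(\mathrm{Free}_{\cO}(X))$, and it is $GF$ that is needed in \Cref{lem:monad-size}.
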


\begin{proof}
  The underlying object functor $\mathrm{U}$ preserves sifted colimits \cite[Proposition 3.2.3.1]{HA}, therefore by \Cref{lem:ladj-preserves-size} the free $\cO$-algebra functor preserves $\kappa$-compact objects.
  Evaluating $\mathrm{Free}_\cO \circ \mathrm{U}$ on a $\kappa$-compact object $X$ we have a presentation
  \[ \mathrm{U}(\mathrm{Free}_\cO(X)) \simeq \bigoplus_n ( \cO(n) \otimes X^{\otimes n} )_{h\Sigma_n}. \]
  Each term $\cO(n) \otimes (-)^{\otimes n}$ is $\kappa$-compact as a consequence of our assumptions on $\cO$ and the tensor product on $\CC$.
  Then, since $\coprod_n B\Sigma_n$ is $\omega_1$-small and $\kappa$ is uncountable \cite[Cor. 5.3.4.15]{HTT} implies that $\mathrm{U}(\mathrm{Free}_\cO(X))$ is $\kappa$-compact.
  Finally, \Cref{lem:monad-size} implies that (3) follows from (1) and (2).
\end{proof}

In order to make it easier to use \Cref{lem:alg-size} we prove one more lemma which lets us check the condition on the tensor product only for a single cardinality.
 
 \begin{lem}\label{lem:checkomega}
  Given a $\kappa$-compactly generated presentably symmetric monoidal category $\CC$,
  if tensor products of $\kappa$-compact objects in $\CC$ are $\kappa$-compact, then
  tensor products of $\alpha$-compact objects are $\alpha$-compact for every $\alpha > \kappa$.
\end{lem}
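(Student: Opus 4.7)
The plan is to reduce the statement to the hypothesis by a standard ``write as a colimit of $\kappa$-compacts'' argument, keeping careful track of index cardinalities.

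Let $X, Y \in \CC$ be two $\alpha$-compact objects. Since $\CC$ is $\kappa$-compactly generated and $\alpha > \kappa$, I would appeal to the structure theorem for $\alpha$-compact objects in a $\kappa$-accessible category (e.g.\ \cite[Prop. 5.3.4.15 / 5.4.2.11]{HTT}) to present both $X$ and $Y$ as retracts of $\alpha$-small colimits of $\kappa$-compact objects, say
\[ X \ \text{is a retract of} \ \colim_{i \in I} X_i, \qquad Y \ \text{is a retract of} \ \colim_{j \in J} Y_j, \]
with each $X_i$ and $Y_j$ in $\CC^{\kappa}$ and with $|I|,|J| < \alpha$.

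Next, since $\CC$ is presentably symmetric monoidal, $\otimes$ preserves colimits separately in each variable, and hence
\[ X \otimes Y \quad \text{is a retract of} \quad \colim_{(i,j) \in I \times J} (X_i \otimes Y_j). \]
By the hypothesis of the lemma, each $X_i \otimes Y_j$ is $\kappa$-compact, and since $\alpha > \kappa$ every $\kappa$-compact object is automatically $\alpha$-compact. Because $\alpha$ is a regular cardinal, the product $I \times J$ is $\alpha$-small, so the displayed colimit is an $\alpha$-small colimit of $\alpha$-compact objects. The class of $\alpha$-compact objects is closed under $\alpha$-small colimits and under retracts, so $X \otimes Y$ is $\alpha$-compact.

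The only delicate point is Step~1: the presentation of an arbitrary $\alpha$-compact as a retract of an $\alpha$-small colimit of $\kappa$-compacts needs $\alpha$ to be sharply bigger than $\kappa$ in the technical sense of \cite{HTT}. This is harmless in the intended applications (where $\kappa = \omega$ and $\alpha \geq \omega_1$, for which $\alpha \gg \omega$ automatically), and in general the lemma should be read with this implicit hypothesis on $\alpha$; alternatively one could reformulate the conclusion as holding for all sufficiently large regular $\alpha$, which is all that is needed in conjunction with \Cref{lem:alg-size}.
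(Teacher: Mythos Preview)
Your argument is correct as far as it goes, and you have correctly identified the delicate point: the presentation of an $\alpha$-compact object as a retract of an $\alpha$-small colimit of $\kappa$-compacts requires $\kappa \ll \alpha$ rather than merely $\kappa < \alpha$. However, your suggestion that the lemma ``should be read with this implicit hypothesis'' is not quite right: the paper's proof establishes the statement for \emph{every} regular $\alpha > \kappa$, with no sharpness assumption.

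The paper takes a different route that sidesteps the issue entirely. Rather than decomposing the objects, it works with the functors $c \otimes -$ and appeals to \Cref{lem:ladj-preserves-size}, which says (in particular) that a left adjoint out of a $\kappa$-accessible category sending $\kappa$-compacts to $\alpha$-compacts automatically preserves $\beta$-compacts for all regular $\beta \geq \alpha$. The argument is two passes: first, for $c \in \CC^\kappa$, the hypothesis says $c \otimes -$ preserves $\kappa$-compacts, so by the lemma it preserves $\alpha$-compacts; second, for $a \in \CC^\alpha$, this says $- \otimes a$ sends $\kappa$-compacts to $\alpha$-compacts, so by the lemma again it preserves $\alpha$-compacts. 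The key equivalence $(3) \Leftrightarrow (1)$ in \Cref{lem:ladj-preserves-size} is proved by testing against $\kappa$-compact generators and needs no sharpness condition.

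For the intended application (\Cref{exm:Emodcpttensor}, where $\kappa = \omega$ and hence $\omega \ll \alpha$ automatically) your argument is perfectly adequate; the paper's approach simply buys the full generality stated.
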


\begin{proof}
  Given a $\kappa$-compact object $c$, the functor $c \otimes -$ preserves colimits and $\kappa$-compact objects by hypothesis. In particular, it has an adjoint and so applying \Cref{lem:ladj-preserves-size}, we learn that $c \otimes -$ preserves $\alpha$-compact objects. Reversing things, this means that for each $\alpha$-compact object $a$ the functor $- \otimes a$ sends $\kappa$-compact objects to $\alpha$-compact objects. Applying \Cref{lem:ladj-preserves-size} again we may conclude.
\end{proof}

\begin{exm}\label{exm:Emodcpttensor}
The hypotheses for $\CC$ in \Cref{lem:alg-size} are satisfied for any $\CC \in \Prig$ and in particular, for $\cC = \Sp_{T(n)}$ and $\cC = \Modw_{E(L)}$.  Indeed, by \Cref{lem:checkomega}, it suffices to check that the tensor product of two compact objects is compact.  But this follows from the fact that if $c$ is compact and $d$ is dualizable, then the tensor product $c\otimes d$ is compact.  
\tqed
\end{exm}

\begin{cor}\label{thm:small-null}
Let $\kappa$ be an uncountable regular cardinal and let $E(L)$ be the Lubin-Tate spectrum associated to an  algebraically closed field $L$ with $|L| \geq \kappa$ .  Then for any $R\in \CAlgw_{E(L)}$ such that the underlying $E(L)$-module of $R$ is $\kappa$-compact, the unit map of $R$ admits a retract $R \to E(L)$.  
\end{cor}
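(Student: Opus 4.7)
The plan is to combine the characterization of Nullstellensatzian algebras in \Cref{thm:alpha-null} with the detection criterion for $\kappa$-compactness in \Cref{lem:alg-size}. More precisely, I would argue that $R$ is $\kappa$-compact not just as an $E(L)$-module, but already in $\CAlgw_{E(L)}$, and then feed this into the Nullstellensatzian property of $E(L)$.

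First, I would verify the hypotheses of \Cref{lem:alg-size} for $\CC = \Modw_{E(L)}$ and the commutative operad. The spaces $\cO(n) = B\Sigma_n$ of the commutative operad are countable CW complexes, so $\omega_1$-compact, and in particular $\kappa$-compact for any uncountable regular $\kappa$. By \Cref{lem:ModB_is_PRig}, the category $\Modw_{E(L)}$ lies in $\Prig$, so every compact object is dualizable, and hence the tensor product of two compact objects is compact. By \Cref{lem:checkomega}, this promotes to the statement that tensor products of $\kappa$-compact objects are $\kappa$-compact. This is precisely the content recorded in \Cref{exm:Emodcpttensor}, so \Cref{lem:alg-size}(3) applies and the forgetful functor $U\colon \CAlgw_{E(L)} \to \Modw_{E(L)}$ detects $\kappa$-compactness.

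Consequently, if the underlying $E(L)$-module of $R$ is $\kappa$-compact, then $R$ is $\kappa$-compact in $\CAlgw_{E(L)}$. Since $|L|\geq \kappa$, \Cref{thm:alpha-null} tells us that $E(L)$ is $\kappa$-Nullstellensatzian, which by definition means every $\kappa$-compact non-terminal object of $\CAlgw_{E(L)}$ admits a map to the initial object $E(L)$ of that category. Provided $R$ is nonzero (so non-terminal), this yields a map $r\colon R \to E(L)$ in $\CAlgw_{E(L)}$. By initiality of $E(L)$ in $\CAlgw_{E(L)}$, the composition $E(L) \xrightarrow{u} R \xrightarrow{r} E(L)$ is forced to equal the identity, so $r$ is the desired retract of the unit $u$.

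There is essentially no hard step here once the earlier results are in hand; the only subtlety is ensuring that the passage from $\kappa$-compactness in $\Modw_{E(L)}$ to $\kappa$-compactness in $\CAlgw_{E(L)}$ is valid, which is exactly why the uncountability hypothesis on $\kappa$ enters through \Cref{lem:monad-size}: a geometric realization of $\kappa$-compact objects is $\kappa$-compact only when $\kappa$ is uncountable, since $\Delta^{\op}$ is $\omega_1$-small. This is the single point where the argument would break down at $\kappa = \omega$, and it is presumably why \Cref{qst:dual-null} remains open when $L$ is countable.
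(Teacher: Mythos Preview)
Your proposal is correct and follows essentially the same approach as the paper, which simply cites \Cref{lem:alg-size}(3) and \Cref{thm:alpha-null} as immediate. You have in fact spelled out the verification of hypotheses (via \Cref{exm:Emodcpttensor}) and the logical flow in more detail than the paper does, and your observation about where uncountability of $\kappa$ enters is accurate.
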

\begin{proof}
This is immediate from \Cref{lem:alg-size}(3) and \Cref{thm:alpha-null}.
\end{proof}

\begin{lem}\label{cor:alg_cpt}
  Let $\kappa$ be a regular cardinal and $\cC \in \Prig$ such that $\one_{\cC}$ is $\kappa$-compact.  Then any dualizable object in $\cC$ is $\kappa$-compact.
\end{lem}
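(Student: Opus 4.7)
The plan is very direct: exploit the duality together with the hypothesis that $\one_{\cC}$ is $\kappa$-compact. Let $d \in \cC$ be dualizable with dual $d^{\vee}$. The key observation is the natural equivalence
\[ \Map_{\cC}(d, x) \simeq \Map_{\cC}(\one_{\cC}, d^{\vee} \otimes x), \]
which follows from duality in the symmetric monoidal structure.

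The first step is to note that the functor $d^{\vee} \otimes - \colon \cC \to \cC$ is a left adjoint (its right adjoint is $d \otimes -$, using again that $d$ is dualizable), and therefore preserves all colimits, and in particular $\kappa$-filtered colimits. The second step is that by hypothesis $\Map_{\cC}(\one_{\cC}, -)$ preserves $\kappa$-filtered colimits, since $\one_{\cC}$ is $\kappa$-compact. Composing, the functor $\Map_{\cC}(d, -)$ preserves $\kappa$-filtered colimits, which is the definition of $d$ being $\kappa$-compact.

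There is no real obstacle here; the statement is essentially a tautology once one writes down the duality equivalence. The only subtle point worth checking is that the equivalence $\Map_{\cC}(d, x) \simeq \Map_{\cC}(\one_{\cC}, d^{\vee} \otimes x)$ is natural in $x$, so that when we commute it with a $\kappa$-filtered colimit in the variable $x$, the rearrangement is legitimate. This is standard from the formalism of dualizable objects in a symmetric monoidal $\infty$-category.
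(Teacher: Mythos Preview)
Your proof is correct and is essentially the same as the paper's: the paper states that the tensor product of a dualizable object with a $\kappa$-compact object is $\kappa$-compact (which is proved exactly by your duality argument), and then applies this to $d \simeq d \otimes \one_{\cC}$. You have simply unwound this directly in the special case $c = \one_{\cC}$.
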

\begin{proof}
This follows from the fact that the tensor product of a dualizable object and a $\kappa$-compact object is $\kappa$-compact.  


\end{proof}

Finally, combining this with \Cref{thm:alpha-null}, we have:
  
\begin{thm}[Dualizable chromatic Nullstellensatz]\label{thm:dual-null}
Let $E(L)$ be the Lubin-Tate spectrum associated to an uncountable algebraically closed field $L$.  Then for any $R\in \CAlgw_{E(L)}$ such that the underlying $E(L)$-module of $R$ is dualizable, the unit map of $A$ admits a retract $R \to E(L)$.  
\end{thm}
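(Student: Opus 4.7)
The plan is to directly combine the two immediately preceding results, \Cref{cor:alg_cpt} and \Cref{thm:small-null}. Setting $\kappa = \omega_1$, which is admissible since $L$ is uncountable and hence $|L| \geq \omega_1$, I would apply \Cref{cor:alg_cpt} to the category $\cC = \Modw_{E(L)}$; this lies in $\Prig$ by \Cref{lem:ModB_is_PRig}.

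The hypothesis of \Cref{cor:alg_cpt} requiring that the unit $\one_{\cC} = E(L)$ be $\omega_1$-compact must be verified. It suffices to show that $\Map_{\Modw_{E(L)}}(E(L), -) \simeq \Omega^{\infty}(-)$ preserves $\omega_1$-filtered colimits, which reduces to the corresponding statement in $\Sp_{T(n)}$. This in turn follows because $T(n)$-localization is a finite smashing localization: $T(n)$-local spectra are closed under filtered colimits in $\Sp$, so $\omega_1$-filtered colimits in $\Sp_{T(n)}$ agree with those in $\Sp$, and $\Omega^{\infty}$ commutes with all filtered colimits in $\Sp$. Granted this, \Cref{cor:alg_cpt} yields that the dualizable $E(L)$-module underlying $R$ is $\omega_1$-compact in $\Modw_{E(L)}$.

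Finally, I would invoke \Cref{thm:small-null} with $\kappa = \omega_1$: since the underlying $E(L)$-module of $R$ is $\omega_1$-compact and $|L| \geq \omega_1$, the unit map of $R$ admits the desired retract $R \to E(L)$. I do not anticipate a main obstacle here, since the bulk of the work has already been carried out in the preceding results (most crucially \Cref{thm:alpha-null}, which feeds into \Cref{thm:small-null} via \Cref{lem:alg-size}). The only point requiring a brief verification is the $\omega_1$-compactness of $E(L)$, which is a formal consequence of standard properties of $T(n)$-localization. The hypothesis that $L$ is uncountable enters precisely because we need an uncountable regular cardinal $\kappa$ to apply \Cref{thm:small-null} (which, via \Cref{lem:alg-size}, relies on $\Delta^{\op}$ being $\omega_1$-small)---an obstruction which cannot be circumvented by this method and which leaves the countable case open.
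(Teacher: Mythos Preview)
Your overall strategy matches the paper's proof exactly: set $\kappa=\omega_1$, combine \Cref{cor:alg_cpt} with \Cref{thm:small-null}, and reduce to showing that $E(L)$ is $\omega_1$-compact in $\Modw_{E(L)}$.

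However, your verification of the $\omega_1$-compactness of $E(L)$ contains a genuine error. You assert that $T(n)$-localization is a (finite) smashing localization and that $T(n)$-local spectra are closed under filtered colimits in $\Sp$. For $n\geq 1$ this is false: the unit $L_{T(n)}\Ss$ is \emph{not} compact in $\Sp_{T(n)}$ (the paper's Warning after the definition of $\Prig$ is precisely about this), and $\omega_1$-filtered colimits in $\Sp_{T(n)}$ are not computed as in $\Sp$. You may be conflating $L_{T(n)}$ with the finite chromatic localization $L_n^f$, which \emph{is} smashing but yields a different category.

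The paper closes this gap differently: one writes $L_{T(n)}\Ss$ as an $\N$-indexed colimit of duals of generalized Moore spectra, each of which is compact in $\Sp_{T(n)}$; tensoring with $E(L)$ exhibits $E(L)$ as an $\N$-indexed colimit of compact objects in $\Modw_{E(L)}$, hence $\omega_1$-compact. Once you replace your smashing argument with this, your proof is complete and identical to the paper's.
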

\begin{proof}
Combining \Cref{cor:alg_cpt} and \Cref{thm:small-null}, it suffices to show that $E(L)$ is $\omega_1$-compact as an object of $\Modw_{E(L)}$.  To see this, recall first that generalized Moore spectra are $T(n)$-locally compact.   Then, writing $L_{T(n)}\Ss$ as an $\N$-indexed colimit of duals of generalized Moore spectra and tensoring this with $E(L)$, we get the conclusion.  
\end{proof}

\section{The spectrum of a $T(n)$-local commutative algebra}
\label{sec:tn-points}

In the same way that Hilbert's Nullstellensatz and the existence of enough points of the Zariski spectrum imply that commutative rings can be profitably studied through their geometry, the chromatic Nullstellensatz (\Cref{thm:alpha-null}) and the existence of enough chromatic geometric points (\Cref{thm:modmain}) together imply that $T(n)$-local commutative algebras should be studied through their geometry. 
The natural next step in developing such a theory of chromatic algebraic geometry is to collect the geometric points of $R$ together as the points of a topological space $\Spec(R)$ which regulates the geometry of $R$. 
In this section we construct a functor $\Spec_{T(n)}^\cons(-)$ to topological spaces which should be regarded as sending $R$ to its set of geometric points equipped with the \textbf{constructible} topology and explain how our main theorems are reflected in the basic properties of this functor.        

\begin{cnstr} \label{cnstr:con-tn}
  Let $\CC$ denote the category of products of algebraically closed fields equipped with a formal group of height $n$. 
  Using the fully faithful functor $E(-;-)$ of \Cref{thm:GHML}, 
  we define the constructible spectrum functor
\deff{  \[ \Spec_{T(n)}^\cons(-) \colon \CAlg(\Sp_{T(n)})^\op \to \mathrm{CHaus} \]}
  as the left Kan extension depicted below:
  \[ \begin{tikzcd}[sep=huge]
    \CC^\op \ar[dr, hook]\ar[rr, "{(A,\, \mathbb{H}_0)\  \mapsto\  \Spec_{\mathrm{Zar}}(A)}"] & {\ } \ar[d, Rightarrow, dashed] & \mathrm{CHaus} \\
    & \CAlg(\Sp_{T(n)})^\op \ar[ru, dashed, "\Spec_{T(n)}^\cons"']    
  \end{tikzcd}\]
  where we are using the fact that products of fields are reduced and Krull dimension $0$ to ensure that Zariski spectrum functor on $\CC$ lands in compact Hausdorff spaces.
  \tqed
\end{cnstr}


In \Cref{sec:geopoints}, motivated by \Cref{cnstr:con-tn}, we develop an abstract theory of constructible spectra.  Specializing the results of that appendix to $\CAlg(\Sp_{T(n)})$ and the functor $\Spec_{T(n)}^\cons(-)$, we obtain the following theorem.  

\begin{thm} \label{thm:con-tn}
    The constructible spectrum functor
    \[ \Spec^\cons_{T(n)}(-) \colon \CAlg(\Sp_{T(n)})^{\op} \to \mathrm{CHaus} \]
    of \Cref{cnstr:con-tn} enjoys the following properties:
    \begin{enumerate}
    \item $\Spec_{T(n)}^\cons(R)$ is empty if and only if $R=0$. 
    \item If $L$ is an algebraically closed field, then $\Spec^{\cons}_{T(n)}(E(L))$ is a point.
    \item For every point $q\in \Spec^{\cons}_{T(n)}(R)$, there exists an algebraically closed field $L$ and a map $R \to E(L)$ such that the image of the map
    \[\{*\} \cong \Spec^{\cons}_{T(n)}(E(L)) \to \Spec^{\cons}_{T(n)}(R)\]
    is $\{q\}$.
    \item A subset $U \subset \Spec^{\cons}_{T(n)}(R)$ is closed if and only if 
    there exists a map $R \to S$ such that $U$ is the image of the map 
    \[\Spec^{\cons}_{T(n)}(S) \to \Spec^{\cons}_{T(n)}(R).\]
    \item Given a span $S \leftarrow R \to T$ the natural comparison map
    \[ \Spec^{\cons}_{T(n)} \left( S \otimes_R T\right) \to \Spec^{\cons}_{T(n)}(S) \times_{\Spec^{\cons}_{T(n)}(R)} \Spec^{\cons}_{T(n)}(T) \]
    is surjective.
    \end{enumerate}
\end{thm}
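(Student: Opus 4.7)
The plan is to apply the abstract framework for constructible spectra developed in Appendix A to the special case of $\CAlg(\Sp_{T(n)})$, taking as a ``generating category of geometric points'' the essential image of the fully faithful functor $E(-;-)$ of \Cref{thm:GHML}. Three chromatic inputs feed into this abstract machinery: the fully faithfulness of $E(-;-)$ (\Cref{thm:GHML}(1)), which identifies the generating subcategory with the $1$-category $\CC$ of products of algebraically closed height $n$ formal groups; the chromatic Nullstellensatz (\Cref{thm:alpha-null}), which characterizes these objects intrinsically; and the detection of nilpotence by maps to algebraically closed Lubin--Tate theories (\Cref{cor:mod_algclosed}), which guarantees that the family of generating points is jointly conservative.

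For property (2), the plan is to apply the pointwise left Kan extension formula at $R = E(L,\G_0)$ and observe that, because $E(-;-)$ is fully faithful, the relevant indexing category acquires an initial object $((L,\G_0),\id_R)$, so the colimit collapses to $\Spec_{\mathrm{Zar}}(L) = \{*\}$. Property (1) then follows immediately: any nonzero $R$ admits a map $R\to E(L)$ for some algebraically closed $L$ by \Cref{cor:mod_algclosed}, producing a point in $\Spec^{\cons}_{T(n)}(R)$ via (2); conversely, the zero algebra has no maps to any nonzero $E(A,\H_0)$, and the only $(A,\H_0)\in\CC$ with $E(A,\H_0)=0$ has $A=0$, whose Zariski spectrum is empty.

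Properties (3) and (4) reflect the density of $\CC$ among $T(n)$-local commutative algebras and are outputs of the Appendix A formalism. The plan is to observe that every element of the colimit $\Spec^{\cons}_{T(n)}(R)$ admits a representative lying in $\Spec_{\mathrm{Zar}}(A)$ for some map $R\to E(A,\H_0)$, and further projecting along the residue field of that representative refines this to a map $R\to E(L)$ with $L$ algebraically closed, giving (3). The description of closed sets in (4) then follows from the explicit construction of $\Spec^{\cons}_{T(n)}(R)$ as a quotient of the disjoint union of $\Spec_{\mathrm{Zar}}(A)$ over all maps $R\to E(A,\H_0)$, together with the fact that continuous maps between compact Hausdorff spaces have closed image.

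The main obstacle is property (5). Unwinding definitions, a point on the right-hand side is represented by compatible maps $[S\to E(L_1)]$ and $[T\to E(L_2)]$ whose restrictions along $R$ coincide modulo the equivalence relation, which by definition means $E(L_1)\otimes_R E(L_2)\neq 0$. The plan is then to apply \Cref{cor:mod_algclosed} to this nonzero $R$-algebra to produce an algebraically closed field $L$ together with a map $E(L_1)\otimes_R E(L_2)\to E(L)$; precomposing with the canonical map $S\otimes_R T\to E(L_1)\otimes_R E(L_2)$ yields a preimage point in $\Spec^{\cons}_{T(n)}(S\otimes_R T)$. The delicate part is to verify well-definedness on equivalence classes and to confirm that the constructed lift really maps to the prescribed pair of classes, which amounts to tracking nonvanishing of relative tensor products such as $E(L_1)\otimes_S E(L)$ through the ambient $S$- and $T$-algebra structures; this bookkeeping is precisely what the abstract theory of Appendix A is designed to automate.
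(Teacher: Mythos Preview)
Your high-level plan—reduce to the abstract machinery of Appendix A—is exactly what the paper does, but the execution diverges from the paper's in a way that leaves a genuine gap.

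The paper's proof is a clean three-step reduction: (i) use \Cref{thm:alpha-null} together with \Cref{thm:GHML}(1,4) to identify the category $\CC$ of \Cref{cnstr:con-tn} with the full subcategory $\mathrm{NS}^\Pi_{\CAlg(\Sp_{T(n)})}$ of products of Nullstellensatzian objects; (ii) verify that $\CAlg(\Sp_{T(n)})$ is \emph{spectral} in the sense of \Cref{dfn:spectralcat} (this is \Cref{exm:Tnspectral}, which packages weak spectrality, op-disjunctivity, and point-like ultraproducts); (iii) invoke \Cref{thm:spectralspec} wholesale. That theorem not only delivers properties (A)--(E), which translate to (1)--(5), but also property (F), asserting that the intrinsically defined $\Spec^\cons_\CC$ is the left Kan extension from $(\mathrm{NS}^\Pi)^\op$. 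Step (i) then matches this with the Kan extension of \Cref{cnstr:con-tn}.

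Your proposal never verifies that $\CAlg(\Sp_{T(n)})$ is spectral, and this is the hinge on which everything turns. The results of Appendix A are proved for the \emph{intrinsic} constructible spectrum (via maximal ideals in $\mathfrak{L}^\omega_\CC(R)$), not for an arbitrary left Kan extension from a chosen subcategory; property (F) of \Cref{thm:spectralspec} is precisely what identifies the two, and it requires spectrality. Without that bridge, your appeals to ``the Appendix A formalism'' for (3), (4), and the bookkeeping in (5) are circular. Moreover, your list of chromatic inputs is slightly miscalibrated: \Cref{cor:mod_algclosed} is not needed here—the existence of Nullstellensatzian covers is proved abstractly in \Cref{cor:ns-exist} for any weakly spectral category—whereas the verifications feeding into \Cref{exm:Tnspectral} (notably \Cref{exm:E-ultraproducts} for point-like ultraproducts, which uses \Cref{thm:E_functor}) are essential and absent from your outline. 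A minor point: in your argument for (2), the object $((L,\G_0),\id)$ is \emph{terminal}, not initial, in the comma category indexing the Kan extension colimit; this is what makes the colimit collapse.
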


\begin{proof}
  Using \Cref{thm:alpha-null} and \Cref{thm:GHML}(1,4) we get that for $n\geq 1$ we can identify the  category of products of Nullstellensatzian objects in $\CAlg(\Sp_{T(n)})$ with the category $\CC$ from \Cref{cnstr:con-tn}.
  In the case of $n=0$, we still get from  \Cref{thm:alpha-null} that $\cC$ is the \textbf{homotopy} category of products of Nullstellensatzian objects in $\CAlg(\Sp_{T(0)})$, and because the target is a $1$-category, this suffices for computing left Kan extensions.
  We checked that $\CAlg_{T(n)}$ is spectral (in the sense of \Cref{dfn:spectralcat}) in \Cref{exm:Tnspectral}, therefore the theorem now follows from \Cref{thm:spectralspec}.
\end{proof}


We have the following more explicit description for the underlying set of $\Spec_{T(n)}^{\cons}(-)$:

\begin{cor}\label{cor:geopoints}
Let $R \in \CAlg(\Sp_{T(n)})$, let $L_1,L_2$ be algebraically closed fields, and let $q_i \colon R \to E(L_i)$, $i=1,2$, be maps in $\CAlg(\Sp_{T(n)})$.  Then the following are equivalent:
\begin{enumerate}
    \item $q_1$ and $q_2$ represent the same point in $\Spec^{\cons}_{T(n)}(R)$.
    \item $E(L_1)\otimes_R E(L_2) \neq 0$.
    \item There exists an algebraically closed field $L_3$ and a commutative diagram \[\begin{tikzcd}
	R & {E(L_1)} \\
	{E(L_2)} & {E(L_3)}.
	\arrow["{q_1}", from=1-1, to=1-2]
	\arrow["{q_2}"', from=1-1, to=2-1]
	\arrow[from=1-2, to=2-2]
	\arrow[from=2-1, to=2-2]
\end{tikzcd}\] 
\item For a map $f\colon c \to a$ in $\Modw_R$ from a compact object $c \in \Modw_R$, $f$ is nilpotent at $E(L_1)$ if and only if it is nilpotent at $E(L_2)$.
\end{enumerate}
\end{cor}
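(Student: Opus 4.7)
The strategy is to establish $(2) \Leftrightarrow (3)$ and $(1) \Leftrightarrow (2)$ using \Cref{cor:mod_algclosed} and the abstract properties of $\CSpec$ from \Cref{thm:con-tn}, and then to address $(4)$ separately via nilpotence-detection arguments.

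For $(2) \Leftrightarrow (3)$: the implication $(3) \Rightarrow (2)$ is immediate, as the commutative square exhibits $E(L_3)$ as a nonzero algebra under $E(L_1) \otimes_R E(L_2)$. For $(2) \Rightarrow (3)$, I apply \Cref{cor:mod_algclosed} to the nonzero $T(n)$-local commutative algebra $E(L_1) \otimes_R E(L_2)$ to produce an algebraically closed field $L_3$ and a map to $E(L_3)$. For $(1) \Leftrightarrow (2)$ I invoke \Cref{thm:con-tn}: by property (2), $\CSpec(E(L_i)) = \{*\}$, so $(1)$ is equivalent to the fibre product $\CSpec(E(L_1)) \times_{\CSpec(R)} \CSpec(E(L_2))$ being nonempty; by property (5), this fibre product is the image of a surjection from $\CSpec(E(L_1) \otimes_R E(L_2))$, and by property (1) this source is nonempty precisely when $E(L_1) \otimes_R E(L_2) \neq 0$.

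For $(2) \Rightarrow (4)$ I use the common extension $E(L_3)$ from $(3)$. If $f \colon c \to a$ has compact source in $\Modw_R$ and is nilpotent at $E(L_1)$, base-changing along $E(L_1) \to E(L_3)$ gives nilpotence at $E(L_3)$. The reverse direction, from nilpotence at $E(L_3)$ to nilpotence at $E(L_2)$, uses that $\pi_*(E(L_2) \modm) = L_2[u^{\pm 1}]$ is a graded field: consequently, the $E(L_2) \modm$-module $E(L_3) \modm$ is free, a direct sum of copies of $E(L_2) \modm$ indexed by an $L_2$-basis of $L_3$. Thus, if $g \coloneqq f^{\otimes_R k} \otimes_R E(L_2)$ becomes null after tensoring with $E(L_3) \modm$ over $E(L_2) \modm$, then $g \otimes_{E(L_2)} E(L_2) \modm = 0$; since $E(L_2) \modm$ detects nilpotence in $\Modw_{E(L_2)}$ (\Cref{exm:K-tnil}) and $g$ has compact source, this forces $g$ to be nilpotent, whence $f$ is nilpotent at $E(L_2)$.

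Finally, for $(4) \Rightarrow (2)$ I argue the contrapositive, which I anticipate is the main obstacle since it requires exhibiting a concrete separating map. Supposing $E(L_1) \otimes_R E(L_2) = 0$, I fix a type $n$ generalized Moore complex $V$, set $c \coloneqq R \otimes V \in \Modw_R^{\omega}$, and take $f \colon c \to c \otimes_R E(L_1)$ to be the unit map. Since $c \otimes_R E(L_1) \otimes_R E(L_2) = 0$, the map $f$ is nilpotent at $E(L_2)$. To see that $f$ is not nilpotent at $E(L_1)$, observe that the iterated multiplication $E(L_1)^{\otimes_R (k+1)} \to E(L_1)$ exhibits $c^{\otimes_R k} \otimes_R E(L_1)$ as a retract of $c^{\otimes_R k} \otimes_R E(L_1)^{\otimes_R (k+1)}$ via $f^{\otimes_R k} \otimes_R E(L_1)$; therefore vanishing of this map would force $c^{\otimes_R k} \otimes_R E(L_1) \simeq E(L_1) \otimes V^{\otimes k} = 0$, which is impossible as $V^{\otimes k}$ remains of type $n$ and $E(L_1)$ is a nonzero $T(n)$-local spectrum.
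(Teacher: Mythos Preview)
Your proof is correct and follows essentially the same route as the paper: the equivalence of (1), (2), (3) via the abstract properties of the constructible spectrum (the paper cites \Cref{lem:point-equiv} directly), then $(3)\Rightarrow(4)$ via the common extension $E(L_3)$, and $(4)\Rightarrow(2)$ by the contrapositive using a type $n$ complex. Two minor differences worth noting: the paper dispatches $(3)\Rightarrow(4)$ as ``clear'' (implicitly using that $E(L_2)\to E(L_3)$ detects nilpotence, e.g.\ by \Cref{DN_perf}), whereas you spell this out via the graded-field structure of $E(L_2)\modm$; and for $(4)\Rightarrow(2)$ the paper uses the mate $R\otimes V^{\dual}\to E(L_1)$ rather than your unit map $R\otimes V\to (R\otimes V)\otimes_R E(L_1)$, but the retraction arguments are equivalent.
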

\begin{proof}
First, (1), (2) and (3) are equivalent by \Cref{lem:point-equiv}.  
(3) clearly implies (4). 

To conclude we will prove that (4) implies (2). Indeed, let $V$ be a type $n$ generalized Moore spectrum and consider the map $f\colon R\otimes V^\dual \to E(L_1)$ which is the mate of the composition 
$R \to E(L_1) \to E(L_1)\otimes V.$ 
Assume (4) holds, but $E(L_1)\otimes_R E(L_2) =0$.
We get that $f$ is null at $E(L_2)$ and thus by (4), nilpotent at $E(L_1)$.   This implies the nilpotence  of the unit map
\[E(L_1) \to E(L_1)\otimes V, \]
which is a contradiction.
\end{proof}
In particular, each of the conditions (1)-(4) in \Cref{cor:geopoints} defines an equivalence relation on maps $R \to E(L)$ for $L$ algebraically closed whose equivalence classes give the points of the space $\Spec_{T(n)}^{\cons}(R).$ 

\Cref{cnstr:con-tn} turns out to be closely related to the notion  of nilpotence detecting maps in $\CAlg(\Sp_{T(n)})$. 


\begin{prop} \label{prop:cons-surj-dn}
  A map $R \to S$ of $T(n)$-local commutative algebras detects nilpotence if and only if the associated map on constructible spectra is surjective.
\end{prop}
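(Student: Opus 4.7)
The plan is to prove the two implications separately; the forward direction follows from nil-conservativity, while the reverse direction combines surjectivity on geometric points with a careful reduction to Morava $K$-theory fields to resolve a uniform nilpotence-order issue.

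For the forward direction, assume $R\to S$ detects nilpotence. A point of $\Spec^\cons_{T(n)}(R)$ is represented by some $R\to E(L)$ with $L$ algebraically closed. By nil-conservativity of the base change $\Modw_R\to\Modw_S$ (\Cref{lem:DN_nilconservative}), the nonzero $R$-algebra $E(L)$ remains nonzero after base change: $S\otimes_R E(L)\neq 0$. Applying \Cref{cor:mod_algclosed} gives a map $S\otimes_R E(L)\to E(L')$ for some algebraically closed $L'$, and the composite $S\to S\otimes_R E(L)\to E(L')$ represents a point of $\Spec^\cons_{T(n)}(S)$ whose image in $\Spec^\cons_{T(n)}(R)$ agrees with the given point, by \Cref{cor:geopoints}(3) with $L_3=L'$ (using the evident commutative square through $S\otimes_R E(L)$).

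For the reverse direction, suppose $\Spec^\cons_{T(n)}(S)\to\Spec^\cons_{T(n)}(R)$ is surjective and let $f\colon c\to a$ be a compact-source map in $\Modw_R$ with $f\otimes_R S$ nilpotent. After replacing $f$ by a tensor power I may assume $f\otimes_R S=0$. Fix a nilpotence-detecting map $R\to E(A)$ with $A$ perfect of Krull dimension $0$ (\Cref{thm:modmain}); it then suffices to show $f\otimes_R E(A)$ is nilpotent. Writing $\{L_i\}_{i\in I}$ for algebraic closures of the residue fields of $A$, the map $\Spec(\prod_i L_i)\to\Spec(A)$ is surjective, so by \Cref{thm:DN_Krull0} and \Cref{thm:E_functor}(5) the map $E(A)\to\prod_i E(L_i)$ detects nilpotence. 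Applying \Cref{exm:K-tnil} to the $K(n)$-local $E(k)$-algebra $\prod_i E(L_i)$, it suffices to show that $f\otimes_R \prod_i E(L_i)\modm = \prod_i\bigl(f\otimes_R E(L_i)\modm\bigr)$ vanishes, and since vanishing of a product is tested componentwise this reduces to proving $f\otimes_R E(L_i)\modm=0$ for each $i$.

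To accomplish this, fix $i$ and consider the point of $\Spec^\cons_{T(n)}(R)$ represented by $R\to E(A)\to E(L_i)$. By surjectivity this point lifts to one of $\Spec^\cons_{T(n)}(S)$, represented by some $S\to E(L'_i)$ with $L'_i$ algebraically closed. \Cref{cor:geopoints}(3) then furnishes an algebraically closed field $L''_i$ and maps $E(L_i), E(L'_i)\to E(L''_i)$ making the relevant square commute. Since $f\otimes_R S=0$ yields $f\otimes_R E(L'_i)=0$, we deduce $f\otimes_R E(L''_i)\modm=0$, and hence $(f\otimes_R E(L_i)\modm)\otimes_{E(L_i)\modm} E(L''_i)\modm=0$. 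The induced field map $L_i\hookrightarrow L''_i$ is injective, so on homotopy we have an inclusion of graded fields $L_i[u^{\pm 1}]\hookrightarrow L''_i[u^{\pm 1}]$; modules over such a graded field are free, making the extension of scalars $\Mod_{E(L_i)\modm}\to\Mod_{E(L''_i)\modm}$ conservative, and $f\otimes_R E(L_i)\modm=0$ follows. The main obstacle this strategy circumvents is the lack of a uniform nilpotence order across the family $\{E(L_i)\}$; by passing down to Morava $K$-theory, nilpotence becomes strict vanishing, which assembles harmlessly into a vanishing of the product.
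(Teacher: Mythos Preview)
Your forward direction is essentially the paper's argument (nil-conservativity, then lift points through a nonzero pushout), just with \Cref{lem:surj} unpacked by hand.

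Your reverse direction is correct in outline but takes a genuinely different route from the paper. The paper argues more structurally: it produces nilpotence-detecting maps $R\to E(A)$ and $E(A)\otimes_R S\to E(B)$ with $A,B$ products of algebraically closed fields, observes that $E(A)\to E(B)$ then induces a surjection on constructible spectra (using \Cref{thm:con-tn}(5)), identifies this with the Zariski surjection $\Spec(B)\to\Spec(A)$, and invokes \Cref{thm:DN_Krull0} directly to conclude that $E(A)\to E(B)$ detects nilpotence; the claim for $R\to S$ then follows from \Cref{lem:DN_composition}(2). Your argument instead works point-by-point at the Morava $K$-theory level, trading the algebro-geometric input \Cref{thm:DN_Krull0} for a direct analysis using field spectra. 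The paper's route is shorter and reuses more of the existing machinery; yours is more hands-on and highlights that the uniformity problem dissolves once one passes to $\modm$.

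There is one genuine (though easily repaired) gap: the identification $f\otimes_R \prod_i E(L_i)\modm = \prod_i\bigl(f\otimes_R E(L_i)\modm\bigr)$ requires that $-\otimes_R\,\prod_i E(L_i)\modm$ commute with the product on the \emph{target} $a$, not just on the compact source $c$. This holds when $a$ is dualizable but can fail otherwise. You should invoke \Cref{lem:topnil-equiv}: since $S$ is a weak ring, it suffices to verify nilpotence detection on maps $f\colon c\to a$ with \emph{both} $c$ and $a$ compact (condition (4) there), and then the product identity is valid. A smaller point: at the end you appeal to conservativity of $\Mod_{E(L_i)\modm}\to\Mod_{E(L''_i)\modm}$ to deduce that a \emph{map} vanishes, but conservativity concerns objects. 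The conclusion is still correct because the unit $E(L_i)\modm\to E(L''_i)\modm$ admits a module retract (choose an $L_i$-basis of $L''_i$ containing $1$), so the base-change functor is in fact faithful.
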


\begin{proof}
  We begin with the forward direction.
  If $R \to S$ detects nilpotence, then it is nil-conservative by \Cref{lem:DN_nilconservative}. 
  \Cref{lem:surj} then implies that $R \to S$ induces a surjection on constructible spectra.
  
  First we observe that for any $R\in \CAlg(\Sp_{T(n)})$, there exists a nilpotence detecting map $R\to E(A)$ such $A$ is a product of algebraically closed extensions of $k$.  Indeed, by \Cref{thm:modmain}, we have a map $R\to E(A')$ for some $A'\in \Perf_k$ of Krull dimension $0$, and let $A$ be the product of the algebraic closures of the residue fields of $A'$.  By \Cref{thm:DN_Krull0}\footnote{Or \Cref{DN_Krull0QQ} in the case $n=0$.}, $E(A') \to E(A)$ detects nilpotence, and therefore by \Cref{lem:DN_composition}(1), so does $R\to E(A)$.  
  
  Now assume that $R \to S$ induces a surjection on constructible spectra.  By the above observation, we can choose nilpotence detecting maps $R \to E(A)$ and $E(A) \otimes_R S \to E(B)$ where $A$ and $B$ are products of algebraically closed fields.  
  We now consider the diagram
  \[ \begin{tikzcd}
    R \ar[r] \ar[d] & S \ar[d] \\
    E(A) \ar[r] & E(A) \otimes_R S \ar[r] & E(B). 
  \end{tikzcd} \]
  The map $E(A) \to E(A) \otimes_R S$ induces a surjection on the constructible spectrum by \Cref{thm:con-tn}(5).
  The map $E(A) \otimes_R S \to E(B)$ detects nilpotence and therefore induces a surjection on constructible spectra by the first part of the proof.
  Thus, $E(A) \to E(B)$ induces a surjection on constructible spectra.  But $\Spec(E(B)) \to \Spec^{\cons}_{T(n)}(E(A))$ can be identified with the induced map $\Spec_{\mathrm{Zar}}(B) \to \Spec_{\mathrm{Zar}}(A)$, and thus by \Cref{thm:DN_Krull0}\footnote{Or \Cref{DN_Krull0QQ} in the case $n=0$.}, the map $E(A)\to E(B)$ detects nilpotence.
  We may now conclude that $R \to S$ detects nilpotence by \Cref{lem:DN_composition}(2), since $R \to E(B)$  detects nilpotence.
  \qedhere
  
\end{proof}

\begin{rmk}
  \Cref{prop:cons-surj-dn} implies in particular that a map of $T(n)$-local commutative algebras detects nilpotence if and only if it is nil-conservative.
  \tqed
\end{rmk}

\begin{rmk}
\Cref{prop:cons-surj-dn} is quite special to $\Sp_{T(n)}$.  For example, from \Cref{exm:lnf_spec}, we see that for $n>0$ it is far from true for $\LnfSp$.
\tqed
\end{rmk}


\subsection{Algebraic approximations to the constructible spectrum}\label{subsec:algebraic_ approximations}\hfill

In this subsection we shall discuss some methods to compute $\CSpec$. 

\begin{cnv}
  We shall assume throughout this subsection that $n \geq 1$; similar results hold for $n=0$, and are in some sense easier to prove. However, the precise claims and proofs differ enough that we have decided to avoid treating the case $n=0$ here.
  \tqed
\end{cnv}

In the case that $R\in \CAlg(\Sp_{T(n)})$ happens to be an $E(k)$-algebra, there is a natural ``algebraic approximation'' to $\Spec^{\cons}_{T(n)}(R)$.

\begin{dfn}
  Let \deff{$\mathrm{NS}^{\Pi}_{\CAlgw_{E(k)}}$} (resp.  \deff{$\mathrm{NS}^{\Pi}_{\CRing})$} be the full subcategory of products of Nullstellensatzian objects in $\CAlgw_{E(k)}$ (resp. $\CRing$) as in \Cref{cnstr:con-tn}.
  \tqed
\end{dfn}

\begin{lem}\label{lem:NS_NS}
 The functor $E(-)$ restricts to an equivalence of categories
\[E(-)\colon \mathrm{NS}^{\Pi}_{{\CRing}_k} \to \mathrm{NS}^{\Pi}_{\CAlgw_{E(k)}}\]
with inverse given by $\pi_0(-)/\m$.
Furthermore, via this equivalence, the restriction of $\Spec^{\cons}_{\CAlgw_{E(k)}}$ to   $\mathrm{NS}^{\Pi}_{\CAlgw_{E(k)}}$ and the restriction of $\Spec^{\cons}_{{\CRing}_k}$ to   $\mathrm{NS}^{\Pi}_{{\CRing}_k}$ are isomorphic. 
    \end{lem}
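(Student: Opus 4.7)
The plan is to first identify the objects in each category explicitly, then use the adjunction of \Cref{thm:E_functor} together with its product-preservation, and finally feed the equivalence into the Kan extension defining $\Spec^{\cons}$.

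By the classical Hilbert Nullstellensatz, the Nullstellensatzian objects in $\CRing_{k}$ are precisely the algebraically closed fields containing $k$, so the objects of $\mathrm{NS}^{\Pi}_{\CRing_k}$ are exactly products $\prod_{i} L_i$ of such fields. By the Chromatic Nullstellensatz (\Cref{thm:alpha-null}, or the height $n>0$ part of \Cref{thm:intro_Nullstellensatz} together with our present assumption $n\geq 1$), the Nullstellensatzian objects in $\CAlgw_{E(k)}$ are precisely the Lubin--Tate theories $E(L)$ with $L$ algebraically closed over $k$, so the objects of $\mathrm{NS}^{\Pi}_{\CAlgw_{E(k)}}$ are exactly products $\prod_{i} E(L_i)$. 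First I would observe that each $\prod_i L_i$ is a perfect $k$-algebra of Krull dimension $0$, since products of perfect algebras are perfect and products of fields are reduced of Krull dimension zero (\Cref{cor:perfvnrprops}).

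Next I would check that $E(-)$ and $\pi_0(-)/\m$ match the two subcategories up to equivalence. Since $E(-)$ preserves arbitrary products by \Cref{thm:E_functor}(5) (arbitrary products are limits whose underlying spectrum is connective), we get $E\!\left(\prod_i L_i\right) \cong \prod_i E(L_i)$, so $E(-)$ lands in $\mathrm{NS}^{\Pi}_{\CAlgw_{E(k)}}$. In the other direction, for a product $R=\prod_i E(L_i)$ the homotopy group $\pi_0 R$ is the product $\prod_i \pi_0 E(L_i)$ (homotopy groups commute with products of spectra), and reducing modulo $\m$ gives $\prod_i L_i$ by \Cref{thm:E_functor}(3), so $\pi_0(-)/\m$ sends $\mathrm{NS}^{\Pi}_{\CAlgw_{E(k)}}$ to $\mathrm{NS}^{\Pi}_{\CRing_k}$. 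To see these are mutually inverse, recall from \Cref{thm:E_functor}(2) that the right adjoint of $E(-)$ is $(-)^{\flat}=(\pi_0(-)/\m)^{\flat}$; for $A=\prod_i L_i$ the ring $\pi_0(-)/\m$ already produces a perfect algebra, so the extra limit perfection is the identity and $(-)^{\flat}$ agrees with $\pi_0(-)/\m$ on this subcategory. The unit and counit of the adjunction $E(-)\dashv (-)^{\flat}$ are then equivalences on these subcategories: the unit is an equivalence by \Cref{thm:E_functor}(1), and the counit at $\prod_i E(L_i)$ is the canonical map $E\!\left(\prod_i L_i\right)\to \prod_i E(L_i)$, which is an equivalence by the product-preservation computed above.

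Finally, for the compatibility with $\Spec^{\cons}$ I would simply unwind definitions. On the category $\CC$ of \Cref{cnstr:con-tn}, the defining functor sends $(A,\mathbb{H}_0)\mapsto \Spec_{\mathrm{Zar}}(A)$, and by the identification above the inclusion $\CC^{\op}\hookrightarrow \CAlgw_{E(k)}^{\op}$ factors through $(\mathrm{NS}^{\Pi}_{\CAlgw_{E(k)}})^{\op}$ and the functor $E(-)$ identifies $\CC^{\op}$ with $(\mathrm{NS}^{\Pi}_{\CAlgw_{E(k)}})^{\op}$. Since a left Kan extension along a fully faithful functor restricts to the original functor on the original subcategory, the restriction of $\Spec^{\cons}_{\CAlgw_{E(k)}}$ to $\mathrm{NS}^{\Pi}_{\CAlgw_{E(k)}}$ is canonically $E(A)\mapsto \Spec_{\mathrm{Zar}}(A)$. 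The analogous argument in $\CRing_k$ identifies $\Spec^{\cons}_{\CRing_k}$ on $\mathrm{NS}^{\Pi}_{\CRing_k}$ with $A\mapsto \Spec_{\mathrm{Zar}}(A)$. Transporting along the equivalence $E(-)$ gives the desired isomorphism. The only point requiring any care is verifying that $E(-)$ actually commutes with the specific products appearing in $\mathrm{NS}^{\Pi}$ (which could be uncountable); this is handled by \Cref{thm:E_functor}(5), and I expect this to be the mild technical step, everything else being a formal consequence of Theorems \ref{thm:intro_Nullstellensatz} and \ref{thm:E_functor} together with the classical Hilbert Nullstellensatz.
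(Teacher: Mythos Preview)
Your treatment of the equivalence of categories is correct and follows the same path as the paper (invoking \Cref{thm:Lang}, \Cref{thm:alpha-null}, and \Cref{thm:E_functor}), with the added detail of checking that $E(-)$ preserves the relevant products and that $\pi_0(-)/\m$ coincides with $(-)^\flat$ on products of algebraically closed fields.

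For the second part, your approach diverges from the paper's. The paper simply invokes \Cref{lem:reproduce_Spec}, which says that for any weakly spectral, op-disjunctive category $\CC$, the restriction of $\Spec_\CC^\cons(-)$ to $\mathrm{NS}^\Pi_\CC$ is uniquely determined by the category $\mathrm{NS}^\Pi_\CC$ alone; since you have already established an equivalence of the two $\mathrm{NS}^\Pi$ categories, the restrictions agree automatically.

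You instead try to identify both restrictions explicitly with $A \mapsto \Spec_{\mathrm{Zar}}(A)$ via \Cref{cnstr:con-tn}. There is a genuine gap here: \Cref{cnstr:con-tn} defines $\Spec_{T(n)}^\cons$ as a Kan extension on $\CAlg(\Sp_{T(n)})$, whereas the lemma concerns $\Spec^\cons_{\CAlgw_{E(k)}}$, which is the general appendix construction applied to the undercategory. These are a priori different functors. You would need at minimum \Cref{lem:spec-under-same} to pass between them, and then still need to know that the Kan-extension definition of \Cref{cnstr:con-tn} agrees with the appendix construction on $\CAlg(\Sp_{T(n)})$. That identification (implicit in the proof of \Cref{thm:con-tn}) itself passes through \Cref{thm:spectralspec}(F) together with \Cref{lem:reproduce_Spec} (or \Cref{lem:matches}) to match the two restrictions on $\mathrm{NS}^\Pi$. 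So your route, once the gaps are filled, winds back through the same abstract uniqueness result the paper uses directly; invoking \Cref{lem:reproduce_Spec} from the start is both shorter and avoids the conflation of the two $\Spec^\cons$ functors.
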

\begin{proof}
The equivalence $\mathrm{NS}^{\Pi}_{{\CRing}_k} \to \mathrm{NS}^{\Pi}_{\CAlgw_{E(k)}}$ follows from \Cref{thm:Lang}, \Cref{thm:alpha-null} and \Cref{thm:E_functor}.
The equivalence of the restrictions of the $\Spec^{\cons}$ functor is now a consequence of \Cref{lem:reproduce_Spec}.
\end{proof}

\begin{cnstr}
  As in \Cref{dfn:spd}, we can assign to each $R \in \CAlgw_{E(k)}$ the functor
  \[
  \mdef{\Spd_{\CAlgw_{E(k)}}(R)} \colon \mathrm{NS}^{\Pi}_{{\CRing}_k} \to \mathrm{Set}
  \]
 given by $\pi_0\Map_{\CAlgw_{E(k)}}(R, E(-))$.
  Similarly, using \Cref{dfn:spd}, in commutative $k$-algebras, 
  we also have a functor 
  \[
  \Spd^{\mathrm{alg}}_k(R)\coloneqq\Spd_{{\CRing}_k}(\pi_0(R)/\m) \colon \mathrm{NS}^{\Pi}_{{\CRing}_k} \to \mathrm{Set}.
  \]
  \Cref{lem:NS_NS} then gives a natural transformation 
  \[\mdef{\Psi_{R}}\colon \Spd_{\CAlgw_{E(k)}}(R) \Rightarrow \Spd^{\mathrm{alg}}_k(R) \]
  which we call the algebraic approximation map.
  \tqed 

\end{cnstr}
 
\begin{lem}\label{lem:alg-approx}
Let $R \in \CAlgw_{E(k)}$ and assume that $\Psi_{R}$ is a natural isomorphism.  Then we have an isomorphism  
\[\Spec^{\cons}_{T(n)}(R) \cong \Spec^{\cons}_{{\CRing}}(\pi_0(R)/\m).\]
\end{lem}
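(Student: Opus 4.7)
My plan is to show that both sides of the desired isomorphism are functorially reconstructible from data that, under the hypothesis $\Psi_R \cong \mathrm{id}$, is the same.

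First, I would invoke \Cref{lem:NS_NS} to identify, via $E(-)$ and $\pi_0(-)/\m$, the full subcategories $\mathrm{NS}^{\Pi}_{\CAlgw_{E(k)}}$ and $\mathrm{NS}^{\Pi}_{\CRing_k}$ and to see that the restriction of $\Spec^{\cons}_{T(n)}$ to the former is the same, under this equivalence, as the restriction of $\Spec^{\cons}_{\CRing}$ to the latter (call this common restriction $F$). In other words, there is a single functor $F \colon \mathrm{NS}^{\Pi}_{\CRing_k} \to \mathrm{CHaus}$ that serves as the "boundary data" for the left Kan extensions defining both constructible spectra.

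Second, I would unpack the left Kan extension formula of \Cref{cnstr:con-tn} for both $\Spec^{\cons}_{T(n)}(R)$ and $\Spec^{\cons}_{\CRing}(\pi_0(R)/\m)$. In each case, the resulting colimit in $\mathrm{CHaus}$ is indexed by a slice-type category, but because the target $\mathrm{CHaus}$ is a $1$-category, only the homotopy category of the slice contributes. Unwinding this, the indexing category for $\Spec^{\cons}_{T(n)}(R)$ is precisely the category of elements of the set-valued presheaf $\Spd_{\CAlgw_{E(k)}}(R)$ on $\mathrm{NS}^{\Pi}_{\CRing_k}$, while the indexing category for $\Spec^{\cons}_{\CRing}(\pi_0(R)/\m)$ is the category of elements of $\Spd^{\mathrm{alg}}_k(R)$. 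The functor to $\mathrm{CHaus}$ at which one evaluates is $F$ in both cases. This reduces the question to a formal statement: \emph{the left Kan extension along $\mathrm{NS}^{\Pi}_{\CRing_k} \hookrightarrow \CC$ evaluated at an object $X \in \CC$ depends only on the functor $\Spd_X = \Hom_{\CC}(X, -)|_{\mathrm{NS}^{\Pi}_{\CRing_k}}$.} This formal reconstruction principle is exactly the machinery that must be set up in Appendix A (whose results are applied in \Cref{thm:con-tn} for $\CC = \CAlg(\Sp_{T(n)})$ and identically for $\CC = \CRing$).

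Third, because $\Psi_R \colon \Spd_{\CAlgw_{E(k)}}(R) \Rightarrow \Spd^{\mathrm{alg}}_k(R)$ is by hypothesis a natural isomorphism, the two indexing categories of elements are canonically equivalent, and the diagrams into $\mathrm{CHaus}$ being taken as colimits coincide. The resulting colimits are therefore naturally homeomorphic, yielding
\[
\Spec^{\cons}_{T(n)}(R) \;\cong\; \Spec^{\cons}_{\CRing}(\pi_0(R)/\m),
\]
as desired.

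The only real work is step two: writing down the reconstruction precisely and citing (or reproving) the appendix statement that the left Kan extension of our $F$ depends only on the $\Spd$ presheaf. This is primarily a bookkeeping exercise because $\Spd$ and the slice category carry the same information when the target is a $1$-category, but one must check that the comparison map $\Psi_R$ really is the natural transformation induced by the equivalence of indexing categories, which is a direct consequence of how $\Psi_R$ was built from $E(-) \dashv (-)^{\flat}$ in combination with \Cref{lem:NS_NS}.
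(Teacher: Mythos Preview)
Your approach is essentially the same as the paper's: both arguments reduce to the Kan-extension characterization of $\Spec^{\cons}$ from the appendix (what the paper cites as \Cref{prop:kan-top}) combined with \Cref{lem:NS_NS}, so that the hypothesis $\Psi_R$ is an isomorphism forces the two colimits computing the Kan extensions to agree. The one step you leave implicit is the passage from the absolute constructible spectra $\Spec^{\cons}_{T(n)}(R)$ and $\Spec^{\cons}_{\CRing}(\pi_0(R)/\m)$ to the under-category versions $\Spec^{\cons}_{\CAlgw_{E(k)}}(R)$ and $\Spec^{\cons}_{\CRing_k}(\pi_0(R)/\m)$; this is exactly \Cref{lem:spec-under-same}, and without it your identification of the indexing category for $\Spec^{\cons}_{T(n)}(R)$ with the category of elements of $\Spd_{\CAlgw_{E(k)}}(R)$ is not quite justified (the ambient $\mathrm{NS}^\Pi$ for $\CAlg(\Sp_{T(n)})$ does not carry a chosen $E(k)$-structure).
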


\begin{proof}
First, by \Cref{lem:spec-under-same}, we can replace $\Spec^{\cons}_{T(n)}(R)$ with  $\Spec^{\cons}_{\CAlgw_{E(k)}}(R) $  and also $\Spec^{\cons}_{{\CRing}}(\pi_0(R)/\m)$ with $\Spec^{\cons}_{\mathrm{Ring_k}}(\pi_0(R)/\m)$.  Now, by 
\Cref{lem:NS_NS} and \Cref{prop:kan-top}, we are done.
\end{proof}

As an immediate consequence of \Cref{lem:alg-approx}, the following lemmas compute the constructible spectra of Lubin-Tate theories $E(A)$ and of free commutative algebras over $E(k)$.  

\begin{lem}\label{prop:spec-LT}
  For $A \in \Perf_k$, there is a natural isomorphism
  \[ \Spec^\cons_{T(n)}(E(A)) \cong \Spec^\cons_{{\CRing}}(A) \]
  between the constructible spectrum of $E(A)$ and the constructible Zariski spectrum of $A$.
\end{lem}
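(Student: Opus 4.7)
My plan is to deduce this statement directly from \Cref{lem:alg-approx} applied to $R = E(A)$. Recall that this lemma states that if the algebraic approximation map $\Psi_R$ is a natural isomorphism, then $\Spec^{\cons}_{T(n)}(R) \cong \Spec^{\cons}_{\CRing}(\pi_0(R)/\m)$. Since $\pi_0(E(A))/\m \cong A$ by \Cref{thm:E_functor}(3), and the natural isomorphism produced by \Cref{lem:alg-approx} is natural in $R$ (hence natural in $A$), it suffices to verify that $\Psi_{E(A)}$ is a natural isomorphism for each $A \in \Perf_k$.

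To check this, I would unpack the definitions at an arbitrary test object $B \in \mathrm{NS}^{\Pi}_{\CRing_k}$. On one side we have
\[ \Spd_{\CAlgw_{E(k)}}(E(A))(B) = \pi_0 \Map_{\CAlgw_{E(k)}}(E(A), E(B)), \]
and on the other side
\[ \Spd^{\mathrm{alg}}_k(E(A))(B) = \Map_{\CRing_k}(\pi_0(E(A))/\m, B) = \Map_{\CRing_k}(A, B). \]
The comparison map $\Psi_{E(A)}$ sends a class of $E(A) \to E(B)$ to its induced map on $\pi_0(-)/\m$. By the fully faithfulness of $E(-)\colon \Perf_k \to \CAlgw_{E(k)}$ from \Cref{thm:E_functor}(1), the former set is naturally identified with $\Map_{\Perf_k}(A,B)$. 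Since $B$ is a product of algebraically closed fields of characteristic $p$, it is itself a perfect $\F_p$-algebra (products of perfect algebras are perfect), so the forgetful map $\Map_{\Perf_k}(A, B) \to \Map_{\CRing_k}(A, B)$ is a bijection. Tracing through the definitions shows that this identification is exactly $\Psi_{E(A)}$, establishing that it is an isomorphism. Naturality in $B$ is immediate from the naturality of the fully faithfulness equivalence.

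There is essentially no hard step here; the only thing to watch is that the subtle comparison between power-operation-equivariant maps and mere ring maps does not obstruct us, and this is taken care of precisely because the target $B$ is perfect and the source $A$ is perfect (so the map on $\pi_0(-)/\m$ automatically commutes with Frobenius, and the $\T$-algebra structure becomes redundant). Having established that $\Psi_{E(A)}$ is a natural isomorphism, \Cref{lem:alg-approx} yields the claimed homeomorphism $\Spec^{\cons}_{T(n)}(E(A)) \cong \Spec^{\cons}_{\CRing}(A)$, and naturality in $A$ follows from the naturality built into \Cref{lem:alg-approx} and the construction of $\Psi$.
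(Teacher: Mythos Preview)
Your proposal is correct and is essentially the same approach as the paper's proof, which simply says the result follows immediately from \Cref{lem:alg-approx} and \Cref{thm:E_functor}. You have carefully unpacked exactly what this means: the fully faithfulness of $E(-)$ from \Cref{thm:E_functor}(1) together with $\pi_0(E(A))/\m \cong A$ from \Cref{thm:E_functor}(3) shows $\Psi_{E(A)}$ is an isomorphism, and then \Cref{lem:alg-approx} gives the conclusion.
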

\begin{proof}
This follows immediately from \Cref{lem:alg-approx} and \Cref{thm:E_functor}.
\end{proof}

\begin{lem} \label{exm:free-algebra-spec}
There is a natural isomorphism
  \[ \Spec^\cons_{T(n)}(E(k)\{z^0\}) \cong \Spec^\cons_{{\CRing}}(E(k)\{z^0\}/\m) \cong \Spec^\cons_{{\CRing}}(k[z_0,z_1,\dots]). \]
\end{lem}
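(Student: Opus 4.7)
The plan is to derive the first isomorphism as a direct application of \Cref{lem:alg-approx} to $R = E(k)\{z^0\}$, and to deduce the second isomorphism from Strickland's theorem. By \Cref{lem:alg-approx}, it suffices to check that the algebraic approximation natural transformation
\[
\Psi_R \colon \Spd_{\CAlgw_{E(k)}}(E(k)\{z^0\}) \Rightarrow \Spd^{\mathrm{alg}}_k(E(k)\{z^0\})
\]
is an isomorphism when evaluated at every $A \in \mathrm{NS}^{\Pi}_{\CRing_k}$. Fix such an $A$; it is a product of algebraically closed fields over $k$, hence is perfect.

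The main input is \Cref{lem:bij_free}, which says that the source of $\Psi_R(A)$ is naturally identified with $\Hom_{\Perf_k}(k\{z^0\}^{\sharp}, A)$. Since $A$ is perfect, the perfection adjunction $(-)^{\sharp}\colon \CRing_k \rightleftarrows \Perf_k$ gives a further natural identification
\[
\Hom_{\Perf_k}(k\{z^0\}^{\sharp}, A) \;\cong\; \Hom_{\CRing_k}(k\{z^0\}, A) \;=\; \Hom_{\CRing_k}(\pi_0(E(k)\{z^0\})/\m, A),
\]
which is precisely the target of $\Psi_R(A)$. A brief unwinding of the construction of $\Psi_R$ (which runs through $\pi_0$ followed by the equivalence of \Cref{lem:NS_NS}, i.e., through the $(U_{\T},\pi_0 E(-))$ adjunction of \Cref{thm:cofree} composed with the perfection adjunction) shows that the resulting composite bijection is precisely $\Psi_R(A)$. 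Thus \Cref{lem:alg-approx} applies and gives the first isomorphism.

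For the second isomorphism, by Strickland's theorem (cf. \Cref{thm:strickland} and the accompanying footnote $\T(E_0)/\m \cong k\{z^0\}$), the $k$-algebra $\pi_0(E(k)\{z^0\})/\m = k\{z^0\}$ is a polynomial ring on countably many generators, and so is abstractly isomorphic to $k[z_0,z_1,\ldots]$. Applying $\Spec^\cons_{\CRing}$ to any such isomorphism produces the desired second identification.

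Since all the real work has been done in \Cref{lem:bij_free}, \Cref{thm:strickland}, and \Cref{lem:alg-approx}, there is no genuine obstacle here; the lemma is a formal consequence of the cofreeness machinery developed in Section 3 together with the algebraic approximation package. If any care is required, it is only in verifying that the map constructed via \Cref{lem:bij_free} and the perfection adjunction coincides with the approximation map $\Psi_R$ as defined, and this is a routine naturality check.
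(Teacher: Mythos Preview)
Your proof is correct and takes essentially the same approach as the paper's own proof, which is a one-line appeal to \Cref{lem:alg-approx} with the hypothesis verified by \Cref{lem:bij_free}. You have simply unpacked the details more carefully, in particular making explicit the role of the perfection adjunction in passing from $\Hom_{\Perf_k}(k\{z^0\}^{\sharp}, A)$ to $\Hom_{\CRing_k}(k\{z^0\}, A)$ and the use of Strickland's theorem for the second isomorphism.
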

\begin{proof}
This follows by \Cref{lem:alg-approx}, since $\Psi_{E(k)\{z^0\}}$ is a natural isomorphism by \Cref{lem:bij_free}.
\end{proof}

As one can observe from the above discussion, working with the constructible spectrum of $E(k)$-algebras is especially convenient. The following lemma allows us to reduce the computation of
$\Spec^{\cons}_{T(n)}(R)$ for an arbitrary 
 $R \in \CAlg(\Sp_{T(n)})$ to the case of an $E(\overline{\F}_p)$-algebra.
 
 For this, first denote $\mathfrak{G}\coloneqq \mathrm{Aut}_{\CAlg(\Sp_{T(n)})}(E(\overline{\F}_p))$ to be the Morava stabilizer group.
 
 \begin{lem}\label{lem:spec_morava}
 Let  $R \in \CAlg(\Sp_{T(n)})$.  Then the natural map
 \[\Spec^{\cons}_{T(n)}(E(\overline{\F}_p)\otimes R)/\mathfrak{G} \to  \Spec^{\cons}_{T(n)}(R)\]
 is an isomorphism.  
     \end{lem}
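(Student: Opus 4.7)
The plan is to verify that $\Phi \colon \Spec^{\cons}_{T(n)}(E(\overline{\F}_p)\otimes R)/\mathfrak{G} \to \Spec^{\cons}_{T(n)}(R)$ is a continuous bijection of compact Hausdorff spaces and hence a homeomorphism. Well-definedness is immediate: $\mathfrak{G}$ acts on $E(\overline{\F}_p) \otimes R$ through the first tensor factor, in particular by $R$-algebra automorphisms, so each $\mathfrak{G}$-orbit in $\Spec^{\cons}_{T(n)}(E(\overline{\F}_p)\otimes R)$ lies over a single point of $\Spec^{\cons}_{T(n)}(R)$. For surjectivity of $\Phi$ it suffices to show the underlying map is surjective. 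By the Devinatz--Hopkins--Smith nilpotence theorem (as used already in the proof of \Cref{thm:modmain_red}), the unit map $\one_{T(n)} \to E(\overline{\F}_p)$ detects nilpotence; applying \Cref{lem:tnil-po} with $B = R$ shows that the cobase change $R \to E(\overline{\F}_p) \otimes R$ is also nilpotence-detecting, so \Cref{prop:cons-surj-dn} delivers the desired surjection.

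For injectivity, suppose $f_i \colon E(\overline{\F}_p) \otimes R \to E(L_i)$ for $i = 1, 2$ represent two points with the same image in $\Spec^{\cons}_{T(n)}(R)$. Applying \Cref{cor:geopoints} to the restrictions $f_i|_R$, there exist an algebraically closed field $L$ and maps $E(L_i) \to E(L)$ fitting into a commutative square over $R$. Replacing each $f_i$ by its composite $E(\overline{\F}_p) \otimes R \to E(L_i) \to E(L)$ does not change its class in $\Spec^{\cons}_{T(n)}(E(\overline{\F}_p)\otimes R)$ (again by \Cref{cor:geopoints}), so we may assume $L_1 = L_2 = L$ and that the restrictions to $R$ are literally equal. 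The coproduct description of $E(\overline{\F}_p) \otimes R$ in $\CAlg(\Sp_{T(n)})$ then identifies $f_i$ with a pair $(\alpha_i, f_1|_R)$, where $\alpha_i \colon E(\overline{\F}_p) \to E(L)$ is the remaining restriction. It therefore suffices to produce $g \in \mathfrak{G}$ with $\alpha_2 = \alpha_1 \circ g$, for then $f_2 = f_1 \circ (g \otimes \id_R)$ exhibits $f_2$ in the $\mathfrak{G}$-orbit of $f_1$.

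The remaining claim---that $\mathfrak{G}$ acts transitively on $\pi_0\Map_{\CAlg(\Sp_{T(n)})}(E(\overline{\F}_p), E(L))$---is the main technical point. By the full faithfulness part of \Cref{thm:GHML}, this set is identified with the set of morphisms $(\overline{\F}_p, \G_0) \to (L, \H_0)$ of pairs, which unravels as the data of a ring embedding $i \colon \overline{\F}_p \hookrightarrow L$ together with an isomorphism $i^*\G_0 \cong \H_0$ of formal groups on $L$. Since $L$ is algebraically closed of characteristic $p$, any two such embeddings land in the algebraic closure of the prime field inside $L$ and hence differ by an element of $\Aut(\overline{\F}_p)$; after adjusting by this element, any two isomorphisms of height $n$ formal groups over $\overline{\F}_p$ differ by an automorphism of $\G_0$. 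Combining, the two maps differ by an element of $\Aut(\overline{\F}_p, \G_0) = \mathfrak{G}$, as required.

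Finally, the map $\Phi$ is a continuous bijection with compact source (as a quotient of a compact space) and Hausdorff target, hence a homeomorphism. The main obstacle is the transitivity lemma of the third paragraph: although morally a reflection of the fact that $E(\overline{\F}_p)$ plays the role of an algebraic closure of $\one_{T(n)}$ with Galois group $\mathfrak{G}$, its proof requires carefully tracking ring embeddings and formal-group data through Lurie's equivalence rather than invoking a black-box Galois statement.
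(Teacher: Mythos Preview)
Your proof is correct and handles surjectivity exactly as the paper does, but the injectivity argument is genuinely different from the paper's.

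The paper proves injectivity by invoking property~(E) of \Cref{thm:con-tn} to lift a pair of points $q_1,q_2$ with common image to a point $q_3$ of $\Spec^{\cons}_{T(n)}(E(\overline{\F}_p)\otimes E(\overline{\F}_p)\otimes R)$, and then projecting to $\Spec^{\cons}_{T(n)}(E(\overline{\F}_p)\otimes E(\overline{\F}_p))$. The crucial external input is Hovey's identification $E(\overline{\F}_p)\otimes E(\overline{\F}_p)\cong E(\overline{\F}_p^{\mathfrak{G}})$, which via \Cref{prop:spec-LT} shows this spectrum is the underlying set of $\mathfrak{G}$; after translating by the resulting element of $\mathfrak{G}$ one reduces to the diagonal and concludes $q_1=q_2$. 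Your approach instead works directly with Nullstellensatzian representatives and the coproduct decomposition $\pi_0\Map(E(\overline{\F}_p)\otimes R, E(L))\cong \pi_0\Map(E(\overline{\F}_p),E(L))\times\pi_0\Map(R,E(L))$, reducing to the transitivity of $\mathfrak{G}$ on $\pi_0\Map(E(\overline{\F}_p),E(L))$ and deducing this from the full faithfulness in \Cref{thm:GHML}. This route is arguably more elementary---it avoids Hovey's computation and property~(E)---at the cost of a direct moduli argument.

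One small caveat on your third paragraph: after adjusting by $\sigma\in\Aut(\overline{\F}_p)$ so that the two embeddings agree, the two isomorphisms $\phi_1,\phi_2\colon i^*\G_0\cong\H_0$ live over $L$, not over $\overline{\F}_p$. The assertion that their difference $\phi_1^{-1}\phi_2\in\Aut_L(i^*\G_0)$ descends to an element of $\Aut_{\overline{\F}_p}(\G_0)$ is exactly the statement that base change along $\overline{\F}_p\hookrightarrow L$ induces a bijection on automorphism groups of height~$n$ formal groups. This is standard (both groups are the units in the maximal order of the division algebra of invariant $1/n$), but it is the actual content of your transitivity step and deserves to be said explicitly rather than folded into the phrase ``over $\overline{\F}_p$.''
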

 \begin{proof}

The map is defined since $R \to E(\overline{\F}_p)\otimes R$ is $\mathfrak{G}$-equivariant.  Since the source and the target are in $\CHaus$, it is enough to show that the map is bijective. Note that since $\CHaus$ is monadic over $\mathrm{Set}$ (for the ultrafilters monad \cite{linton1966some}), the forgetful functor $U\colon \CHaus \to \mathrm{Set}$ creates coequalizers of $U$-split pairs and thus the underlying set of the quotient is the  quotient of the underlying set.

Surjectivity follows from  \Cref{prop:cons-surj-dn}, \Cref{lem:tnil-po} and the fact that $\one_{T(n)} \to E(\overline{\F}_p)$ detects nilpotence \cite{DHS}. 
We now show injectivity. Let
 $q_1,q_2 \in \Spec^{\cons}_{T(n)}(E(\overline{\F}_p)\otimes R)$ be points that map to the same point $q_0\in \Spec^{\cons}_{T(n)}(R)$. 
 By \Cref{thm:con-tn}(5), there is some $q_3 \in \Spec^{\cons}_{T(n)}(E(\overline{\F}_p)\otimes E(\overline{\F}_p)\otimes R)$ that maps to $q_1$ and $q_2$ in $\Spec^{\cons}_{T(n)}(E(\overline{\F}_p)\otimes \one_{T(n)} \otimes R)$ 
 and $\Spec^{\cons}_{T(n)}(\one_{T(n)} \otimes E(\overline{\F}_p)\otimes R)$, respectively. 
 
 Let $q_4$ be the image of $q_3$ in 
 $\Spec^{\cons}_{T(n)}(E(\overline{\F}_p)\otimes E(\overline{\F}_p))$ and let $A = \overline{\F}_p^{\mathfrak{G}}$ denote the perfect algebra of continuous maps  $\mathfrak{G} \to \overline{\F}_p$.  By \cite[Theorem 4.11]{hovey2004operations2} and \cref{thm:E_functor}, we have that $ E(\overline{\F}_p)\otimes E(\overline{\F}_p)c\cong E(A)$ and thus, by \Cref{prop:spec-LT}, 
\[\Spec^{\cons}_{T(n)}(E(\overline{\F}_p) \otimes E(\overline{\F}_p)) \cong \Spec^\cons_{{\CRing}}(A) \cong \mathfrak{G} \]
with each geometric point represented by a projection map   $E(A) \to E(\overline{\F}_p)$.
By acting with $\mathfrak{G}$, we can assume without loss of generality that $q_4$ is the unit element of $\mathfrak{G}$. 

Writing 
\[
E(\overline{\F}_p)\otimes E(\overline{\F}_p)\otimes R = (E(\overline{\F}_p)\otimes E(\overline{\F}_p))\otimes_{E(\Fpbar)} (E(\Fpbar)\otimes R),
\]
we know that $q_3$ can be represented by some commuting square
 \[\xymatrix{
 E(\overline{\F}_p) \ar[d]^{u_{E(\Fpbar)}\otimes\mathrm{id}}\ar[r] & E(\overline{\F}_p)  \otimes R\ar[d] \\
 E(\overline{\F}_p)\otimes E(\overline{\F}_p) \ar[r]^{q_4} & E(L) .
 }\]
By the assumption that $q_4$ corresponds to the unit element of $\mathfrak{G}$, the map $E(\overline{\F}_p)\otimes E(\overline{\F}_p) \xrightarrow{q_4}  E(L) $ factors through the product map 
$E(\overline{\F}_p)\otimes E(\overline{\F}_p) \to E(\overline{\F}_p)$ (possibly after extending $L$ again, cf. \Cref{cor:geopoints}(3)), which is a retract to $f$. Thus, $q_3$ is in the image of the map
\[ 
\Spec^{\cons}_{T(n)}( E(\overline{\F}_p)\otimes R) \to \Spec^{\cons}_{T(n)}(E(\overline{\F}_p)\otimes E(\overline{\F}_p)\otimes R)
\]
obtained from the product map $E(\overline{\F}_p) \otimes E(\overline{\F}_p) \to E(\overline{\F}_p)$. Taking the appropriate projections, we get that $q_1=q_2$.
 \end{proof}


\section{Orientations}
\label{sec:orient}
The final two sections of this paper are devoted to applications of the chromatic Nullstellensatz. In this section, we investigate the orientability properties of algebraically closed Lubin--Tate theories. 
The core result of this section is the following special property of the category $\Modw_{E(L)}$ for an algebraically closed field $L$.

\begin{thm}\label{thm:pic-split-main} 
Let $L$ be an algebraically closed field and 
$0 \neq R \in \CAlgw_{E(L)}$.  Then the map 
\[\pic(\Modw_{E(L)}) \to \pic(\Modw_R)\]
admits a retract.
\end{thm}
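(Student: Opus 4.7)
The plan is to reduce the theorem to a direct application of \Cref{thm:Universal_orientability}. Let $F \in \Sp$ denote the fiber of $\pi\colon \pic(\Modw_{E(L)}) \to \pic(\Modw_R)$, and let $\iota\colon F \to \pic(\Modw_{E(L)})$ be the natural map. I will show that $\iota$ is null-homotopic. Once this is established, the fiber sequence $F \xrightarrow{\iota} \pic(\Modw_{E(L)}) \xrightarrow{\pi} \pic(\Modw_R)$ in $\Sp$ is also a cofiber sequence, and the cofiber of a null-homotopic map $\iota$ is $\pic(\Modw_{E(L)}) \oplus \Sigma F$. Hence, there is an equivalence $\pic(\Modw_R) \simeq \pic(\Modw_{E(L)}) \oplus \Sigma F$ and the projection onto the first summand provides the desired retract of $\pi$. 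Since both $\pic(\Modw_{E(L)})$ and $\pic(\Modw_R)$ are connective, this retract automatically lives in $\Sp_{\geq 0}$.

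To prove $\iota$ is null-homotopic, note that since $\pic(\Modw_{E(L)})$ is connective, every map from $F$ into it factors through $\tau_{\geq 0}F$, which agrees with the fiber of $\pi$ computed in $\Sp_{\geq 0}$. By Yoneda, it therefore suffices to show that for every $X \in \Sp_{\geq 0}$ and every map $f\colon X \to \pic(\Modw_{E(L)})$ equipped with a null-homotopy of the composite $\pi \circ f$, the map $f$ is itself null-homotopic. Such a null-homotopy of $\pi \circ f$ corresponds to an equivalence $M(\pi \circ f) \simeq R[X]$ in $\CAlg^{\wedge}_R$; using the standard base-change formula $M(\pi \circ f) \cong Mf \otimes_{E(L)} R$, which follows from the fact that the Thom spectrum construction commutes with base change, this reads as an equivalence $Mf \otimes_{E(L)} R \simeq R[X]$. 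Since $R \neq 0$, the algebra $R[X]$ admits $R$ as a retract via the augmentation and is therefore nonzero, so $Mf \otimes_{E(L)} R \neq 0$, and in particular $Mf \neq 0$. The equivalence of conditions (1) and (3) of \Cref{thm:Universal_orientability} then forces $f$ to be null-homotopic.

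The substantive technical content has already been packaged into \Cref{thm:Universal_orientability}, whose proof will require most of the Nullstellensatz machinery developed earlier in the paper. Given that input, the remaining work for \Cref{thm:pic-split-main} is formal: verifying the base-change formula for Thom spectra, observing that $R[X]$ is nonzero whenever $R$ is, and invoking the standard splitting of a stable fiber sequence whose connecting map is null-homotopic. Consequently, the only genuine obstacle would be if \Cref{thm:Universal_orientability} were not available; assuming it, the proof of \Cref{thm:pic-split-main} is quite short.
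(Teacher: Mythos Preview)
Your argument is circular. In the paper, \Cref{thm:Universal_orientability} is the introduction's name for \Cref{cor:main-E-picard}, and the proof of \Cref{cor:main-E-picard} reads: ``By \Cref{thm:pic-split-main}, (1) implies that the map $\pic(\Modw_{E(L)}) \to \pic(\Modw_{Mf})$ has a retract\ldots''. So the implication $(1)\Rightarrow(3)$ of \Cref{thm:Universal_orientability} that you invoke --- $Mf \neq 0$ forces $f$ to be null --- is itself deduced from \Cref{thm:pic-split-main}. Your reduction therefore shows only that \Cref{thm:pic-split-main} and the implication $(1)\Rightarrow(3)$ of \Cref{cor:main-E-picard} are equivalent, which the paper already knows; it does not prove either of them.

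The paper's actual proof of \Cref{thm:pic-split-main} is independent of Thom spectra and proceeds via Hahn series. First, \Cref{cor:mod_algclosed} gives a map $R \to E(F)$ with $F$ algebraically closed, so one may assume $R = E(F)$. Then, using upward L\"owenheim--Skolem, one finds a totally ordered $\Q$-module $\Gamma$ with $|\Gamma| > |F|$ and embeds $F$ into the Hahn series field $L[\![t^{\Gamma}]\!]$ (\Cref{lem:big-hahn-field2}), reducing to $R = E(L[\![t^{\Gamma}]\!])$. A comparison of Picard spectra reduces to splitting $\gl_1 E(L) \to \gl_1 E(L[\![t^{\Gamma}]\!])$, and this is done by applying \Cref{cor:gl1-ret} to the Milnor square of perfect $L$-algebras formed from $L[\![t^{\Gamma_{\geq 0}}]\!]$, $L[\![t^{\Gamma_{\leq 0}}]\!]$, and $L[\![t^{\Gamma}]\!]$ (\Cref{lem:hahn-split}). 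The genuine content --- the biCartesian square manipulations for $\mathrm{sl}_1^E$ in \Cref{lem:sl1-pull-push} and the explicit unit-group computations for Hahn series in \Cref{prop:Hahn-props} --- is exactly what your reduction sidesteps.
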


We prove \Cref{thm:pic-split-main} in Sections \ref{sub:orient-prelim} and \ref{sub:Hahnprops}.
The remaining sections are then devoted to consequences of this theorem.
In \Cref{sub:thom} we provide a simple criterion for when an algebraically closed Lubin--Tate theory, $E(L)$, admits an orientation by a Thom spectrum. 
Building on this, in \Cref{sub:stric-pic}, we determine the spectrum of strict units of $E(L)$.
Finally, in \Cref{sub:cons-sp-thom}, we connect the results of this section with \Cref{sec:tn-points} and compute the constructible spectrum of the flat affine line over $E(L)$.




\subsection{Units and biCartesian squares}\label{sub:orient-prelim}\hfill

Here, we give some basic technical results about the functor 
\[
\gl1 E(-) \colon \mathrm{Perf}_{k} \to \Sp_{\geq 0}
\]
which are needed for the proof of \Cref{thm:pic-split-main}.  First, note that there is a natural transformation 
\[\alpha\colon \gl1 E(-) \Rightarrow (-)^{\times}\]
given by reducing $\pi_0 (\gl1 E(-))$ modulo $\m$. Here, $(-)^{\times}$ is considered as a functor 
$ (-)^{\times} \colon \mathrm{Perf}_{k} \to \Sp_{\geq 0} $  via the embedding $\mathrm{Ab} \subset \Sp_{\geq 0}$.  We define the functor
\deff{\[\sl1 \colon \mathrm{Perf}_{\mathbb{F}_p}  \to \Sp_{\geq 0}\]}
as the fiber of $\alpha$.

\begin{lem}\label{lem:sl1-pull-push}
Let 
\[\xymatrix{
A \ar[r]\ar[d] & B\ar[d] \\
C\ar[r] & D
}\]
be a pullback diagram in $\Perf_k$ and assume that 
     the map of abelian groups $B\oplus C \to D$ is surjective. Then 
    
\[\xymatrix{
\sl1 A \ar[r]\ar[d] & \sl1 B\ar[d] \\
\sl1 C\ar[r] & \sl1 D
}\]
is a pushout diagram in $\Sp$.
\end{lem}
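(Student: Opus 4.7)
The plan is to show the given square is a pullback in $\Sp$; by stability this is equivalent to the required pushout. First, Theorem~\ref{thm:E_functor}(5) will upgrade the pullback of perfect algebras to a pullback of Lubin--Tate spectra: the pullback $B \times_D C$ computed in $\Sp_p$ has $\pi_{-1} = \coker(B \oplus C \to D)$, which vanishes by hypothesis, so the pullback is connective and $E(A) \simeq E(B) \times_{E(D)} E(C)$ in $\CAlg(\Sp_{T(n)})$. Since $\gl1$ is a right adjoint, the resulting $\gl1 E$ square is a pullback in $\Sp_{\geq 0}$. The square of underlying unit groups is also a pullback of abelian groups: a pair $(b,c) \in B \times_D C$ is invertible iff $b \in B^\times$ and $c \in C^\times$, with inverse $(b^{-1}, c^{-1})$ automatically in $A$. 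Applying the defining fiber sequence $\sl1(-) \to \gl1 E(-) \to (-)^\times$ objectwise and using that fibers commute with pullbacks, the $\sl1$ square is a pullback in $\Sp_{\geq 0}$.

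To upgrade this to a pullback in $\Sp$, it suffices to show that $\pi_i \sl1 B \oplus \pi_i \sl1 C \to \pi_i \sl1 D$ is surjective for every $i \geq 0$; then the long exact sequence of the pullback in $\Sp$ breaks into short exact sequences and identifies its homotopy groups with those of $\sl1 A$. For $i \geq 1$ one has $\pi_i \sl1 R \cong \pi_i E(R)$, and surjectivity follows from the evenness of $E(-)$: the long exact sequence of the $E(-)$ pullback degenerates into
\[
0 \to \pi_{2k} E(A) \to \pi_{2k} E(B) \oplus \pi_{2k} E(C) \to \pi_{2k} E(D) \to 0
\]
in each even degree. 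The interesting case $i = 0$ requires the surjectivity of $(1 + \m_B) \oplus (1 + \m_C) \to (1 + \m_D)$, where $\m_R = (p, u_1, \ldots, u_{n-1}) \subset \pi_0 E(R) \cong W(R)\ll u_1, \ldots, u_{n-1}\rr$.

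I plan to establish this last surjectivity by successive approximation along the $\m$-adic filtration $\{1 + \m_R^k\}$. Each associated graded piece $\m_R^k / \m_R^{k+1}$ is a finitely generated free $R$-module spanned by the degree-$k$ monomials in $p, u_1, \ldots, u_{n-1}$, and since surjectivity of $B \oplus C \to D$ implies surjectivity of $B^r \oplus C^r \to D^r$ for every $r$, the induced maps on graded pieces are surjective. Given $d \in 1 + \m_D$, one iteratively builds compatible pairs $(b_k, c_k) \in (1 + \m_B) \times (1 + \m_C)$ with $b_{k,D} \cdot c_{k,D}^{-1} \equiv d \pmod{1 + \m_D^{k+1}}$, each refining the previous by a correction drawn from the graded surjection at level $k$; passing to the limit produces the desired $(b, c)$. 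The main technical obstacle is this successive approximation step; the rest of the argument is a routine assembly of limit-preservation facts and the long exact sequence.
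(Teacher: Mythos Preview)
Your proof is correct and follows essentially the same route as the paper: pull back $E(-)$, then $\gl1 E(-)$, then $(-)^\times$, take fibers to get the $\sl1$ square as a pullback in $\Sp_{\geq 0}$, and finally check a surjectivity condition on $\pi_0$ via the $\m$-adic filtration. The only difference is that you verify surjectivity of $\pi_i\sl1 B \oplus \pi_i\sl1 C \to \pi_i\sl1 D$ for \emph{all} $i\geq 0$, whereas the paper observes that $i=0$ alone suffices: once you know the square is a pullback in $\Sp_{\geq 0}$, it equals $\tau_{\geq 0}$ of the pullback in $\Sp$, and the latter is already connective iff the $\pi_0$-map is surjective---so your $i\geq 1$ evenness argument, while correct, is unnecessary.
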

\begin{proof}
First, by \Cref{thm:E_functor}(5), we have a pullback diagram 
\[\xymatrix{
E(A) \ar[r]\ar[d] & E(B)\ar[d] \\
E(C) \ar[r] & E(D).
}\]
Now, since the functor $\gl1 \colon \CAlg \to \Sp_{\geq 0}$ preserves limits
 (as it admits a left adjoint, given by $\Ss[-]$),  we get a pullback diagram 
 \[\xymatrix{
\gl1 E(A) \ar[r]\ar[d] & \gl1  E(B)\ar[d] \\
\gl1 E(C) \ar[r] & \gl1  E(D)
}\]
in $\Sp_{\geq 0}$. Furthermore, since $(-)^{\times}$ preserves limits,
\[\xymatrix{
A^{\times} \ar[r]\ar[d] & B^{\times}\ar[d] \\
C^{\times}\ar[r] & D^{\times}
}\]
is also a pullback diagram in $\Sp_{\geq 0}$.  Consequently, by taking fibers, the diagram 
\[\xymatrix{
\sl1 A \ar[r]\ar[d] & \sl1 B\ar[d] \\
\sl1 C\ar[r] & \sl1 D
}\]
is also a pullback in $\Sp_{\geq 0}$.  Thus, to show that it is additionally a pullback in $\Sp$, it is enough to show that the map $\pi_0(\sl1 B) \oplus \pi_0(\sl1 C) \to \pi_0(\sl1 D) $ is surjective.
Indeed, for any perfect $k$-algebra $H$, $\pi_0(\sl1 H)$ admits a complete filtration induced from the powers of the ideal  $\m$ such that
\[
\gr_r \pi_0(\sl1 H) \cong \gr_r \pi_0 (\sl1 k) \otimes_k H \cong H^{\genfrac(){0pt}{2}{r+1}{n-1}}.
\]
  We thus get that the  surjectivity of $\pi_0(\sl1 B) \oplus \pi_0(\sl1 C) \to \pi_0(\sl1 D) $ follows from the surjectivity of the map $B\oplus C \to D$.
\end{proof}


\begin{cor}\label{cor:gl1-ret}
Let \[\xymatrix{
A \ar[r]\ar[d] & B\ar[d] \\
C\ar[r] & D
}\]
be a pullback diagram in $\Perf_k$.
Assume that 
\begin{enumerate}
    \item The map of abelian groups $B\oplus C \to D$ is surjective.
    \item The inclusion of abelian groups
    \[ (B^{\times} \oplus C^{\times})/ A^{\times} \to D^{\times}  \] admits a retract. 
    \item The maps $A \to B$ and $A \to C$ admits retracts in $\Perf_k$.
\end{enumerate}
Then the map 
\[ \gl1 E(A) \to \gl1 E(D) \] admits a retract.
\end{cor}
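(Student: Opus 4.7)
The strategy is to combine compatible retracts on the two ends of the fiber sequence $\sl1 X \to \gl1 E(X) \to X^{\times}$ (functorial in $X\in \Perf_k$) into a retract on $\gl1 E$, using the biCartesianness of the $\sl1$-square provided by \Cref{lem:sl1-pull-push} together with assumption (1).

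First, I will handle the $\sl1$-level. Since $\sl1$ preserves limits, the square remains a pullback, and by \Cref{lem:sl1-pull-push} it is also a pushout. By functoriality, the ring-level retracts $r_B\colon B\to A$ and $r_C\colon C\to A$ of assumption (3) yield retracts $r_B^{\sl1}\colon \sl1 B\to \sl1 A$ and $r_C^{\sl1}\colon \sl1 C\to \sl1 A$, whose composites with $\sl1 A\to \sl1 B$ and $\sl1 A \to \sl1 C$ are each $\id_{\sl1 A}$. The pushout universal property then furnishes a canonical map $\sigma\colon \sl1 D\to \sl1 A$ retracting $\sl1 A\to \sl1 D$. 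Next I will handle the $(-)^{\times}$-level. Since $(-)^{\times}$ preserves limits, $A^{\times}=B^{\times}\times_{D^{\times}} C^{\times}$ is exactly the kernel of the difference map $B^{\times}\oplus C^{\times}\to D^{\times}$, $(b,c)\mapsto b\,c^{-1}$, whose image is precisely $(B^{\times}\oplus C^{\times})/A^{\times}\subseteq D^{\times}$. The assignment $(b,c)\mapsto r_B(b)\cdot r_C(c)^{-1}$ vanishes on the diagonal copy of $A^{\times}$ (since $r_B i_B=r_C i_C=\id_A$), hence descends to a map $(B^{\times}\oplus C^{\times})/A^{\times}\to A^{\times}$; composing with the retract $\rho$ from assumption (2) gives a retract $\bar{\rho}\colon D^{\times}\to A^{\times}$ of $A^{\times}\to D^{\times}$, as one checks by sending $a\in A^{\times}$ to the class of $(i_B(a),1)$.

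The main obstacle, and the hard step, is lifting the pair $(\sigma,\bar{\rho})$ to a retract $\gl1 E(D)\to \gl1 E(A)$ of $\gl1 E(A)\to \gl1 E(D)$; this amounts to showing that the $k$-invariants of the fiber sequences for $A$ and $D$ are compatible with $\sigma$ and $\bar{\rho}$, i.e.\ that $k_A\circ \bar{\rho}\simeq \Sigma\sigma\circ k_D$. My approach is to use that $\gl1 E$ preserves limits, so that $\gl1 E(A)\simeq \gl1 E(B)\times_{\gl1 E(D)}\gl1 E(C)$, and to build the retract directly as a pair of compatible maps $\gl1 E(D)\to \gl1 E(B)$ and $\gl1 E(D)\to \gl1 E(C)$ assembled from $\sigma$ (governing the $\sl1$-component via the biCartesian square) and $\bar{\rho}$ (governing the $(-)^{\times}$-component via the assumption-(2) splitting), together with the $\gl1 E$-level retracts induced by assumption (3). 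The key observation driving the expected compatibility is that all pieces of data are derived from the single commutative datum $(r_B,r_C,\rho)$: this coherence should force the required null-homotopy of the discrepancy $k$-invariant after a diagram chase in the $3\times 2$ array of fiber sequences, completing the construction of the retract.
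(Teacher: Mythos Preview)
Your setup in steps 1--3 is fine, but step 4 has a genuine gap. You correctly identify the obstruction: given retracts $\sigma$ on $\sl1$ and $\bar\rho$ on $(-)^\times$, one needs $k_A\circ\bar\rho \simeq \Sigma\sigma\circ k_D$ to glue them into a map $\gl1 E(D)\to\gl1 E(A)$. But your proposed resolution (``coherence should force the required null-homotopy after a diagram chase'') is not an argument. The retract $\rho$ from assumption (2) is completely unrelated to the ring-level retracts $r_B,r_C$ from assumption (3), so there is no coherence to exploit; the data $(r_B,r_C,\rho)$ is \emph{not} a single commutative datum. Your alternative plan of producing compatible maps $\gl1 E(D)\to\gl1 E(B)$ and $\gl1 E(D)\to\gl1 E(C)$ is also problematic: there are no natural such maps, since the functorial maps go the other direction.

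The missing observation is connectivity: $D^\times$ is $0$-truncated and $\sl1 D$ is connective, so $[D^\times,\Sigma\sl1 D]=0$ and the $k$-invariant $k_D$ is automatically null (likewise for $k_A$). With this, any choice of $(\sigma,\bar\rho)$ can be glued, though one must still correct by an automorphism of $\sl1 A\oplus A^\times$ to make the composite exactly the identity. The paper sidesteps all of this by factoring $\gl1 E(A)\to\gl1 E(D)$ through the pushout $P\coloneqq\gl1 E(B)\coprod_{\gl1 E(A)}\gl1 E(C)$. The map $\gl1 E(A)\to P$ retracts immediately from (3) by functoriality. For the assembly map $\alpha\colon P\to\gl1 E(D)$, \Cref{lem:sl1-pull-push} shows that the square with $\beta\colon (B^\times\oplus C^\times)/A^\times\to D^\times$ on the other side is biCartesian (both horizontal fibers are $\sl1 D$); since $\beta$ splits by (2), the same connectivity argument applied to $\mathrm{cof}(\beta)\to\Sigma\sl1 D$ shows $\alpha$ splits. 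This route avoids having to reconcile two independently chosen retracts.
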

\begin{proof}
Since $\gl1 E(-)$ is a functor, from (3) we get a retract for the map 
\[
\gl1 E(A) \to  \gl1 E(B) \coprod_{\gl1 E(A)} \gl1 E(C) .
\]
It is thus enough to show that the assembly map 
\[
\alpha\colon \gl1 E(B) \coprod_{\gl1 E(A)} \gl1 E(C) \to \gl1 (D)
\]
admits a retract. 
By (1) and \Cref{lem:sl1-pull-push}, we have a pushout diagram in $\Sp$
\[\xymatrix{
\gl1 E(B) \coprod_{\gl1 E(A)} \gl1 E(C)  \ar[r]\ar[d]^{\alpha} & B^{\times} \coprod_{A^{\times}} C^{\times}\ar[d] \\
\gl1 E(D) \ar[r] & D^{\times}.
}\]
Thus, since by (2)  the map $(B^{\times} \oplus C^{\times})/ A^{\times} \cong B^{\times} \coprod_{A^{\times}} C^{\times} \to D^{\times}$ splits, the same is true for the map $\alpha$.
\end{proof}\

\subsection{Hahn series}
\label{sub:Hahnprops}\hfill

In order to construct the retraction in \Cref{thm:pic-split-main}, we will apply \Cref{cor:gl1-ret} to 
a certain generalization of Laurent series rings known as \emph{Hahn series}.  

\begin{dfn}
Let $\Gamma$ be a totally ordered monoid and $A$ a non-zero domain.
We define the \deff{ring ${\HRing{A}{\Gamma}}$ of Hahn series}
 to be the ring whose elements are functions $f\colon \Gamma \to A$ such that the subset $\{ x \in \Gamma | f(x)\neq 0\} \subset \Gamma$ is well-ordered under the ordering on $\Gamma$.
  Define the addition point-wise and multiplication by the convolution formula
  \[ f\cdot g(x) = \sum_{y+z = x} f(y)g(z). \]
  We will think of the elements of $A[\![t^{\Gamma}]\!]$ as formal power series in $t$ with exponents in $\Gamma$ which are supported on a well-ordered subset of $\Gamma$; here, we think of a function $f:\Gamma \to A$ as the series $\sum_{\gamma\in \Gamma} f(\gamma)t^\gamma$. 
  The multiplicative monoid of non-zero elements ${\HRing{A}{\Gamma}}^{*}$ admits two monoid maps
 \begin{align*}
     \val\colon {\HRing{A}{\Gamma}}^{*} &\to \Gamma & \arc \colon  {\HRing{A}{\Gamma}}^{*} &\to A^*. \\
     f&\mapsto \min_{f(\gamma) \neq 0} \gamma & f &\mapsto f(\val(f))
 \end{align*}
  Note that $\val$ admits a section $t^{(-)}\colon \Gamma \to\HRing{A}{\Gamma}^{*}$  sending $\gamma \in \Gamma$ to $t^{\gamma} \in \HRing{A}{\Gamma}^*$ and $\arc $ admits a section $\iota\colon A^* \to \HRing{A}{\Gamma}^*$
  sending $a\in A$ to the ``constant power series.''
\end{dfn}

The ring of Hahn series was introduced by Hahn in 1908 and
further studied (and generalized) by Krull, Mal'cev, and Neumann, among others.

\begin{dfn}
Let $\Gamma$ be a totally ordered monoid. 
\begin{enumerate}
    \item We denote by \deff{$\Gamma^{\times}$} the group of invertible elements in $\Gamma$.
    \item We denote by \deff{$\Gamma_{\leq 0}$} (resp \deff{$\Gamma_{\geq 0}$}) the submonoid of elements $\gamma\in \Gamma$ with $\gamma\leq 0$ (resp. $\gamma\geq 0$). \tqed
\end{enumerate}
\end{dfn}

The Hahn series rings have the following properties.  

\begin{prop}\label{prop:Hahn-props}
Let $A$ be a non-zero domain and $\Gamma$ a totally ordered monoid. 
\begin{enumerate}
    \item A series $f\in \HRing{A}{\Gamma}$ is in  $\HRing{A}{\Gamma}^{\times}$ if and only if
    $f\neq 0$,  $\val(f) \in \Gamma^{\times}$   and $\arc(f) \in A^{\times}$.
    \item If $A$ is a field and $\Gamma$ is a group, then $\HRing{A}{\Gamma}$ is a field.
    \item  The map
    \[
    t^{(-)} \oplus \mathrm{incl} \colon \Gamma^{\times} \oplus \HRing{A}{\Gamma_{\geq 0}}^{\times} \to \HRing{A}{\Gamma}^{\times}
    \]
    is an isomorphism.  
    \item The inclusion $A^{\times} \to \HRing{A}{\Gamma_{\leq 0}}^{\times}$ is an isomorphism.  
    \item If $\Gamma \xrightarrow{ p} \Gamma$ is an isomorphism and  $A \in \mathrm{Perf}$ is perfect, then $\HRing{A}{\Gamma}$ is perfect.
    \item If $\Gamma$ is a $\Q$-module and $L$ is an algebraically closed field, then $\HRing{L}{\Gamma}$ is an algebraically closed field.
\end{enumerate}
\end{prop}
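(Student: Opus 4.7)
The plan is to prove the six parts essentially independently, with parts (1)--(4) following from the multiplicativity of $\val$ and $\arc$ together with Neumann's classical lemma on well-ordered subsets of totally ordered monoids. First I would address part (1): because $A$ is a domain, both $\val$ and $\arc$ are multiplicative on the monoid of nonzero elements, so applying them to $f f^{-1} = 1$ forces $\val(f) \in \Gamma^{\times}$ and $\arc(f) \in A^{\times}$ whenever $f$ is a unit. For the converse, factor $f = \arc(f)\, t^{\val(f)}(1+g)$ where $g$ is supported on strictly positive elements of $\Gamma$, and construct the inverse via the geometric series $\sum_{n\geq 0}(-g)^n$. The content here is in showing this formal sum is a Hahn series: its support lies in the submonoid of $\Gamma$ generated by $\supp(g)$, which is a well-ordered subset of $\Gamma$ by Neumann's lemma, and moreover each exponent appears in only finitely many of the $g^n$, so the formal sum converges coefficient-wise.

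Parts (2), (3), and (4) are then routine consequences of (1). For (2), when $\Gamma$ is a group and $A$ is a field, every nonzero $f$ trivially has $\val(f)\in\Gamma^{\times} = \Gamma$ and $\arc(f)\in A^{\times} = A^{*}$. For (3), (1) lets us decompose any unit $f$ uniquely as $f = t^{\val(f)}\cdot (t^{-\val(f)}f)$ with $\val(f)\in\Gamma^{\times}$ and the second factor a unit of valuation $0$, hence an element of $\HRing{A}{\Gamma_{\geq 0}}^{\times}$; checking this decomposition is a homomorphism is immediate. For (4), a unit $f \in \HRing{A}{\Gamma_{\leq 0}}$ satisfies $\val(f) \leq 0$ and, since $f^{-1}$ also lies in $\HRing{A}{\Gamma_{\leq 0}}$, also $-\val(f) = \val(f^{-1}) \leq 0$, forcing $\val(f) = 0$. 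Combined with the support of $f$ being contained in $\Gamma_{\leq 0}$ but lower-bounded by $\val(f) = 0$, this forces $f = \arc(f) \in A^{\times}$.

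For part (5), the Freshman's dream in characteristic $p$ computes the Frobenius on $\HRing{A}{\Gamma}$ termwise as $\sum a_\gamma t^\gamma \mapsto \sum a_\gamma^p t^{p\gamma}$; since $A$ is perfect and multiplication by $p$ on $\Gamma$ is an isomorphism, this map is bijective, and one checks that the image of a well-ordered support remains well-ordered since $p \colon \Gamma \to \Gamma$ preserves the order.

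The main obstacle is part (6), which is a classical but deep theorem usually attributed to Hahn and MacLane (and revisited by Kaplansky in his work on maximally complete fields). The proof strategy is an iterative Newton polygon / Hensel argument: given a nonconstant polynomial $P(x)\in\HRing{L}{\Gamma}[x]$, one uses the Newton polygon of $P$ over the valued field $\HRing{L}{\Gamma}$ to locate the possible valuations of roots, uses algebraic closedness of $L$ to solve the reduced equation at the leading term, and uses $\Q$-divisibility of $\Gamma$ to guarantee the required exponents exist in $\Gamma$. Iterating produces a Cauchy sequence of partial roots whose supports lie in a common well-ordered subset of $\Gamma$; the limit is a root in $\HRing{L}{\Gamma}$, which uses that $\HRing{L}{\Gamma}$ is spherically complete (or ``maximally complete''). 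I would cite this classical result rather than reproduce the full Newton polygon argument.
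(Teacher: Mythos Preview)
Your proposal is correct and follows essentially the same approach as the paper: the paper proves (1) by the same reduction to inverting $1+g$ via a ``standard Hensel's lemma argument'' (your geometric series), deduces (2) and (3) from (1), says (5) is clear, and cites MacLane for (6). The only cosmetic difference is that the paper observes (4) as the special case of (3) obtained by replacing $\Gamma$ with $\Gamma_{\leq 0}$ (so that $(\Gamma_{\leq 0})^{\times}=\{0\}$ and $(\Gamma_{\leq 0})_{\geq 0}=\{0\}$), whereas you prove it directly; both arguments are immediate.
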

\begin{proof}
Since $\val$ and $\arc$ are monoid maps, one direction of (1) is obvious. In the other direction,
note that since both $\val$ and  $\arc$ admit sections, it is enough to show that the kernel of 
$\val \times \arc$ consists of invertible objects, which is clear by a standard ``Hensel's lemma'' argument. (2) and (3) are immediate from (1), (4) is a special case of (3),
(5) is clear, and (6) is Theorem 1 from \cite{MacLane}.
\end{proof}

\begin{lem}\label{lem:big-hahn-field2}
Let $L$ be an algebraically closed field and $F$ a non-zero $L$-algebra. Let $\Gamma$ be a totally ordered $\Q$-module such that $|\Gamma| > |F|$. Then there exists  a map of $L$-algebras $F \to \HRing{L}{\Gamma}$.
\end{lem}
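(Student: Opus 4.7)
The plan is to reduce to embedding a field, and then apply the universal property of algebraic closure (Steinitz). Since $F$ is a nonzero $L$-algebra, I pick a maximal ideal $\mathfrak{m}\subset F$ and replace $F$ by the residue field $K := F/\mathfrak{m}$, an $L$-algebra with $|K|\leq |F|<|\Gamma|$. It thus suffices to construct an $L$-algebra embedding $K\hookrightarrow \HRing{L}{\Gamma}$. By \Cref{prop:Hahn-props}(6), $H := \HRing{L}{\Gamma}$ is an algebraically closed field, so the content of the lemma is that $H$ has transcendence degree over $L$ at least $|K|$.

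The key input is therefore a lower bound on $\mathrm{trdeg}_L(H)$. Since $L$ is algebraically closed it must be infinite, so $|F|\geq |L|\geq \aleph_0$ and thus $|\Gamma|>\aleph_0$. I choose a $\Q$-basis $B$ of $\Gamma$; since $\Gamma$ is a $\Q$-vector space with $|\Gamma|>\aleph_0$, one has $|B|=|\Gamma|$. I claim that the monomials $\{t^b:b\in B\}\subset H$ are algebraically independent over $L$. Indeed, a finite polynomial relation rewrites as $\sum_{\vec{i}}c_{\vec{i}}\,t^{\sum_j i_j b_j}=0$, and $\Q$-linear independence of the $b_j$ makes all the exponents $\sum_j i_j b_j$ distinct for distinct multi-indices $\vec{i}$, so each coefficient $c_{\vec{i}}$ vanishes. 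Consequently $\mathrm{trdeg}_L(H)\geq |B|=|\Gamma|>|K|$.

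To finish, I pick a transcendence basis $T\subset K$ of $K$ over $L$ (automatically $|T|\leq |K|<|B|$) and choose any injection $T\hookrightarrow \{t^b:b\in B\}\subset H$, which determines an $L$-algebra embedding $L(T)\hookrightarrow H$ of the purely transcendental subextension. Since $K/L(T)$ is algebraic and $H$ is algebraically closed, this embedding extends to an $L$-algebra embedding $K\hookrightarrow H$ by Steinitz. Composing with the quotient $F\twoheadrightarrow K$ produces the desired map $F\to H$. The only delicate point is the algebraic-independence claim for the $\{t^b\}$, which is really just bookkeeping on exponents; everything else is a cardinality comparison and a standard application of the universal property of algebraic closure.
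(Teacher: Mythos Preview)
Your argument is correct and follows essentially the same route as the paper: reduce to a field by taking a residue field, note that $H=\HRing{L}{\Gamma}$ is algebraically closed by \Cref{prop:Hahn-props}(6), and then embed by comparing transcendence degrees over $L$. The only cosmetic difference is that the paper passes to $\bar{F}$ and bounds $\mathrm{trdeg}_L(H)$ via the crude cardinality estimate $|H|\geq|\Gamma|>|F|\geq|L|$, whereas you exhibit an explicit family $\{t^b:b\in B\}$ of algebraically independent elements; both yield the same conclusion.
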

\begin{proof}
By taking the quotient by a maximal ideal, we may assume that $F$ is a field. Furthermore, since $L$ is infinite, $F$ is as well and so $|\bar{F}| = |F|$ and we may assume that $F$ is an algebraically closed field.
Now by \Cref{prop:Hahn-props}(6), for each totally ordered $\Q$-module $\Gamma$, we have that $\HRing{L}{\Gamma}$ is an algebraically closed field. Since algebraically closed fields over $F$ are classified by their transcendence degree, we are done as $|\HRing{L}{\Gamma}| \geq |\Gamma| > |F|$.
\end{proof}

\begin{lem}\label{lem:hahn-split}
Let $A\in \mathrm{Perf}_k$ be a non-zero perfect domain and $\Gamma$ a totally ordered monoid with $\Gamma \xrightarrow{p} \Gamma$ an isomorphism. Then the map
\[\gl1 E(A) \to \gl1 E(\HRing{A}{\Gamma})\] admits a retract.
\end{lem}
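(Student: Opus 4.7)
The plan is to apply \Cref{cor:gl1-ret} to a pullback square in $\Perf_k$ built from ``positive'' and ``negative'' parts of the Hahn series. Concretely, set $D = \HRing{A}{\Gamma}$, $B = \HRing{A}{\Gamma_{\geq 0}}$, and $C = \HRing{A}{\Gamma_{\leq 0}}$, all viewed as subrings of $D$ via the inclusion of supports. The intersection $B\cap C$ inside $D$ consists of series supported on $\Gamma_{\geq 0} \cap \Gamma_{\leq 0} = \{0\}$, so it is precisely $A$. This gives a pullback square in commutative rings whose corners lie in $\Perf_k$: indeed $\Gamma_{\geq 0}$ and $\Gamma_{\leq 0}$ are totally ordered submonoids on which multiplication by $p$ remains an isomorphism, hence \Cref{prop:Hahn-props}(5) ensures that $B$, $C$, and $D$ are perfect.

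Next I verify the three hypotheses of \Cref{cor:gl1-ret}. For (1), any $f \in D$ decomposes as $f_{\geq 0} + f_{<0}$ by splitting its (well-ordered) support at $0$, so the map $B \oplus C \to D$ is surjective as abelian groups. For (3), the ``constant coefficient'' map $f \mapsto f(0)$ defines a ring retraction $B \to A$ (and similarly $C \to A$); this is multiplicative because any product $(fg)(0)$ with $f,g$ supported in $\Gamma_{\geq 0}$ equals $f(0)g(0)$. These retractions are maps of perfect $k$-algebras.

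The key point is (2), the existence of a retraction of $(B^\times \oplus C^\times)/A^\times \hookrightarrow D^\times$. This is where \Cref{prop:Hahn-props}(3) and (4) do the work. \Cref{prop:Hahn-props}(4) applied to $\Gamma_{\leq 0}$ (whose invertible elements are just $\{0\}$) gives $C^\times = A^\times$, so the quotient $(B^\times \oplus C^\times)/A^\times$ simplifies to $B^\times$. On the other hand, \Cref{prop:Hahn-props}(3) yields a splitting $D^\times \cong \Gamma^\times \oplus B^\times$ via $(\gamma,f)\mapsto t^\gamma f$, under which the inclusion from $B^\times$ is the obvious summand inclusion, and hence admits the projection as a retract.

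With all three hypotheses verified, \Cref{cor:gl1-ret} produces the desired retraction of $\gl1 E(A) \to \gl1 E(D)$. I expect the only place that requires care is checking the well-definedness of the decomposition $f = f_{\geq 0} + f_{<0}$ (which uses well-ordering of supports) and unpacking \Cref{prop:Hahn-props}(3)-(4) correctly, since the simultaneous appearance of $\Gamma_{\geq 0}$ and $\Gamma_{\leq 0}$ might invite a sign confusion; beyond that the argument is a formal consequence of the results already established.
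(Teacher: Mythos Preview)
Your proof is correct and follows essentially the same approach as the paper: both apply \Cref{cor:gl1-ret} to the pullback square with corners $A$, $\HRing{A}{\Gamma_{\geq 0}}$, $\HRing{A}{\Gamma_{\leq 0}}$, $\HRing{A}{\Gamma}$, and verify condition~(2) by invoking \Cref{prop:Hahn-props}(3,4) to identify the quotient $(B^\times \oplus C^\times)/A^\times$ with $B^\times$ sitting as a direct summand of $D^\times \cong \Gamma^\times \oplus B^\times$. You supply a bit more detail than the paper on why the square is a pullback in $\Perf_k$ and why the constant-coefficient maps are ring retractions, but the argument is the same.
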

\begin{proof}
We shall apply \Cref{cor:gl1-ret} to the pullback diagram
\[\xymatrix{
A\ar[r]\ar[d] & \HRing{A}{\Gamma_{\leq 0}}\ar[d]\\
\HRing{A}{\Gamma_{\geq 0}}\ar[r] & \HRing{A}{\Gamma}.
}\]
For this, we verify conditions (1), (2) and (3) of 
\Cref{cor:gl1-ret}.
(1) is clear. (3) is obtained by sending $\sum_{\gamma\in \Gamma_{\geq 0}} f(\gamma) t^{\gamma}$ (resp. $\sum_{\gamma\in \Gamma_{\leq 0}} f(\gamma) t^{\gamma}$) to $f(0)$; note that the fact that these retractions are ring maps uses that restricting to $\Gamma_{\geq 0}$ (resp. $\Gamma_{\leq 0}$) allows only nonnegative (resp. nonpositive) exponents.
For (2), note that \Cref{prop:Hahn-props}(3,4) implies that the inclusion
\[\left( \HRing{A}{\Gamma_{\leq 0}}^{\times} \oplus \HRing{A}{\Gamma_{\geq 0}}^{\times} \right)/ A^{\times} \to \HRing{A}{\Gamma}^{\times}\] splits with complement $\Gamma^{\times}$.
\end{proof}

\begin{rmk}
In \Cref{lem:hahn-split}, as always in this paper, when $n=0$, the field $k$ is assumed to be of characteristic $0$ and we interpret $A\in \mathrm{Perf}_k$ to mean that $A$ is any $k$-algebra. In this case, the condition on $\Gamma$ that $\Gamma \xrightarrow{p} \Gamma$ be an isomorphism is not needed and the proof works verbatim without it.
\tqed
\end{rmk}


We are now ready to prove \Cref{thm:pic-split-main}.

\begin{proof}[Proof of \Cref{thm:pic-split-main}]
By \Cref{cor:mod_algclosed}, there is some algebraically closed field $L\to F$ with a map  
$R \to E(F)$.  Thus, we may assume that $R = E(F)$.
By the upward L{\"o}wenheim--Skolem Theorem\footnote{One can also construct such a totally ordered $\QQ$-module explicitly by choosing a well-ordering on the underlying set of $F$ and taking $\Gamma = \QQ^{F}$ with the lexicographic order.}, there exists a totally ordered $\QQ$-module 
$\Gamma$ with $|\Gamma| > |F|$ and thus by \Cref{lem:big-hahn-field2}, a map of $L$-algebras $F \to \HRing{L}{\Gamma}$. 
So we are reduced to the case that $R = E(\HRing{L}{\Gamma})$. Now, consider the square
\[\xymatrix{ 
\Sigma \gl1 E(L) \ar[r]\ar[d] & \ar[d]\Sigma\gl1 E(\HRing{L}{\Gamma})\\
\pic{E(L)}\ar[r] & \pic(E(\HRing{L}{\Gamma})).
}\]
The induced map on the cofibers of the vertical maps is an isomorphism between two groups of order $2$ by \cite{baker2005invertible}. Thus, the square above is a pushout square in $\Sp$  and to get the desired retract, it is enough to give a retract of the map 
\[\gl1 E(L) \to \gl1 E(\HRing{L}{\Gamma}), \]
which exists by \Cref{lem:hahn-split}.

\end{proof}

\subsection{Thom Spectra}\label{sub:thom}

Using \Cref{thm:pic-split-main} we are able to study the question of when an algebraically closed Lubin--Tate theory admits an orientation by a Thom spectrum quite effectively.

\begin{dfn}
Let $\cC$ be a presentably symmetric monoidal  category and let $X \xrightarrow{f} \pic(\cC)$ be a map in $\Sp_{\geq 0}$. Following \cite{ABGHR2, BAC}, we define the \deff{Thom spectrum} of $f$ to be \[\mdef{Mf} \coloneqq \mathrm{colim}_X f \in \CAlg(\cC),\] where the colimit is taken in $\cC$.
\tqed
\end{dfn}

\begin{prop}\label{prop:Thom} 
Let $\cC$ be a presentably  symmetric monoidal  category and let $X \xrightarrow{f} \pic(\cC)$ be a map in $\Sp_{\geq 0}$. Then the following are equivalent:
\begin{enumerate}
    \item $f$ is null-homotopic.
    \item There is a map $Mf \to \one_{\cC}$ in $\CAlg(\cC)$.
    \item The map \[\pic(\cC) \to \pic(\Mod_{Mf}(\cC))\] admits a retract.
\end{enumerate}
\end{prop}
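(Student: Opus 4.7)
The plan is to prove the three conditions are equivalent via the cyclic chain $(1)\Rightarrow(2)\Rightarrow(3)\Rightarrow(1)$. The main tool is the universal property of the Thom spectrum (due to Ando--Blumberg--Gepner--Hopkins--Rezk and rephrased in the form above in \cite{ABGHR2, BAC}): for every $R\in\CAlg(\cC)$, there is a natural equivalence
\[
\Map_{\CAlg(\cC)}(Mf, R) \;\simeq\; \mathrm{Null}\bigl(X \xrightarrow{f} \pic(\cC) \to \pic(\Mod_R(\cC))\bigr),
\]
where the right-hand side denotes the space of null-homotopies of the displayed composition and the second map is induced by base change along the unit $\one_\cC \to R$.

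The implication $(1)\Leftrightarrow(2)$ is then immediate by specializing this universal property to $R=\one_\cC$: the second map in the composite is then the identity on $\pic(\cC)$, so maps $Mf \to \one_\cC$ in $\CAlg(\cC)$ are the same as null-homotopies of $f$ itself. For $(2)\Rightarrow(3)$, I would use the augmentation $\epsilon\colon Mf\to\one_\cC$ furnished by $(2)$ to produce the base-change functor
\[
\epsilon^*\colon \Mod_{Mf}(\cC)\longrightarrow \cC, \qquad M\longmapsto M\otimes_{Mf}\one_\cC,
\]
which is symmetric monoidal and colimit-preserving, and whose composition with $(-)\otimes Mf\colon \cC\to \Mod_{Mf}(\cC)$ is naturally equivalent to the identity on $\cC$. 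Applying the Picard space functor (which is functorial in symmetric monoidal exact functors) produces the desired retraction of $\pic(\cC)\to\pic(\Mod_{Mf}(\cC))$.

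Finally, for $(3)\Rightarrow(1)$, I would invoke the universal property once more, now with $R=Mf$: the identity $\id_{Mf}\in\Map_{\CAlg(\cC)}(Mf,Mf)$ corresponds to a tautological null-homotopy of the composite $X \xrightarrow{f}\pic(\cC)\to\pic(\Mod_{Mf}(\cC))$. Post-composing this null-homotopy with any retraction $r\colon \pic(\Mod_{Mf}(\cC))\to\pic(\cC)$ provided by $(3)$ yields a null-homotopy of $r\circ(\pic(\cC)\to\pic(\Mod_{Mf}(\cC)))\circ f\simeq f$, which is exactly condition $(1)$. I do not anticipate genuine obstacles: the argument is a direct manipulation of the universal property of $Mf$ and the functoriality of $\pic(-)$ under symmetric monoidal base change, both of which are standard inputs.
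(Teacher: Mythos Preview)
Your proposal is correct and follows essentially the same approach as the paper: both arguments cycle through $(1)\Rightarrow(2)\Rightarrow(3)\Rightarrow(1)$, with the key step $(3)\Rightarrow(1)$ handled identically by applying the universal property of $Mf$ to $\id_{Mf}$ to obtain the tautological null-homotopy of $X\to\pic(\cC)\to\pic(\Mod_{Mf}(\cC))$, then post-composing with the retraction. The only cosmetic difference is that you invoke the universal property explicitly for $(1)\Leftrightarrow(2)$, whereas the paper simply declares $(1)\Rightarrow(2)$ and $(2)\Rightarrow(3)$ to be clear.
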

\begin{proof}
We see that (1) implies (2) and (2) implies (3), so it suffices to show that (3) implies (1). It is enough to show that the composition 
\[X \xrightarrow{f} \pic(\cC) \to \pic(\Mod_{Mf}(\cC))\]
is null. By the universal property of Thom spectra as in \cite[Lemma 3.15]{BAC}, this can be identified with maps $Mf \to Mf$ of commutative algebras in $\cC$\footnote{To see this from the reference, note that the space of lifts of \cite[Definition 3.14]{BAC} can be identified with the space of $\E_{\infty}$-null-homotopies of the composite $X\to \mathrm{Pic}(R) \to \mathrm{Pic}(A)$ by the pullback square in the proof of \cite[Proposition 3.16]{BAC}.}.  Hence, the statement follows from the existence of the map $\mathrm{id}\colon Mf \to Mf$.
\end{proof}

\begin{cor}\label{cor:main-E-picard}
Let $L$ be an algebraically closed field and let 
$X \xrightarrow{f} \pic(\Modw_{E(L)})$ be a map in $\Sp_{\geq 0}$.  Then the following are equivalent:
\begin{enumerate}
    \item $Mf \neq 0$.
    \item There is map $Mf \to E(L)$ in $\CAlgw_{E(L)}$.
    \item $f$ is null-homotopic.
    \item $Mf \cong E(L)[X] \in \CAlgw_{E(L)}$.
\end{enumerate}
\end{cor}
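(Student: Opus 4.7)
The plan is to establish the chain of implications $(3) \Rightarrow (4) \Rightarrow (2) \Rightarrow (1) \Rightarrow (3)$, where the first three implications are essentially formal and the main work lies in the final implication $(1) \Rightarrow (3)$, which is where \Cref{thm:pic-split-main} does the heavy lifting via \Cref{prop:Thom}.

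For the easy direction, I would argue as follows. If $f$ is null-homotopic, then $Mf$ is the colimit of the constant $\pic$-diagram at $E(L)$, so $Mf \cong E(L) \otimes \Sigma_+^\infty X = E(L)[X]$ in $\CAlgw_{E(L)}$, giving $(3) \Rightarrow (4)$. The implication $(4) \Rightarrow (2)$ is immediate from the augmentation $E(L)[X] \to E(L)$ collapsing $X$ to the basepoint. For $(2) \Rightarrow (1)$, any map $Mf \to E(L)$ in $\CAlgw_{E(L)}$ sends the unit of $Mf$ to the unit of $E(L) \neq 0$, so $Mf$ cannot be the zero algebra.

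The core of the proof is the implication $(1) \Rightarrow (3)$. Assume $Mf \neq 0$. Since $Mf$ is a nonzero commutative $E(L)$-algebra (which is $T(n)$-local because $Mf$ is built as a colimit of invertible $E(L)$-modules), \Cref{thm:pic-split-main} applies with $R = Mf$ and guarantees that the natural map
\[
\pic(\Modw_{E(L)}) \to \pic(\Modw_{Mf}) \cong \pic(\Mod_{Mf}(\Modw_{E(L)}))
\]
admits a retract. By condition (3) of \Cref{prop:Thom} (applied to the presentably symmetric monoidal category $\cC = \Modw_{E(L)}$ with unit $E(L)$), this retractability is equivalent to the null-homotopy of $f$, completing the circle of implications.

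The main obstacle is of course the input \Cref{thm:pic-split-main}; granted that theorem (which was established in the previous subsections via the Hahn-series construction together with the chromatic Nullstellensatz in the form of \Cref{cor:mod_algclosed}), the present corollary is essentially a direct packaging. The only subtle point worth double-checking in writing up $(1) \Rightarrow (3)$ is the verification that $Mf \in \CAlgw_{E(L)}$, i.e. that the Thom spectrum is $T(n)$-local and carries a canonical commutative $E(L)$-algebra structure; this follows from the construction of $Mf$ as a colimit in $\Modw_{E(L)}$ over $X \in \Sp_{\geq 0}$ of a diagram landing in $\pic(\Modw_{E(L)})$, together with standard properties of Thom spectra associated to connective $\E_\infty$-maps into $\pic$.
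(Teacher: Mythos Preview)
Your proof is correct and follows essentially the same approach as the paper. The paper uses the chain $(1)\Rightarrow(2)\Rightarrow(3)\Rightarrow(4)\Rightarrow(1)$ rather than your $(3)\Rightarrow(4)\Rightarrow(2)\Rightarrow(1)\Rightarrow(3)$, but this is cosmetic: in both cases the only substantive step is invoking \Cref{thm:pic-split-main} on $R=Mf$ to obtain the retract and then appealing to \Cref{prop:Thom}, while the remaining implications are formal.
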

\begin{proof}
By \Cref{thm:pic-split-main}, (1) implies that the map ${\pic(\Modw_{E(L)})} \to \pic(\Modw_{Mf})$ has a retract and thus we get (2) by \Cref{prop:Thom}.  (2) implies (3) as in \Cref{prop:Thom}, and the directions (3) implies (4) and (4) implies (1) are clear.
\end{proof}

\begin{cor}\label{cor:Thom_sp}
  Let $L$ be an algebraically closed field and let $X \xrightarrow{f} \pic(\Sp)$ be a map in $\Sp_{\geq 0}$. Then the following are equivalent:
  \begin{enumerate}
      \item $K(n)\otimes Mf \neq 0$.
      \item There exists a map $Mf \to E(L)$ in $\CAlg(\Sp)$.
      \item The composition of $f$ with the map 
      \[\pic(\Sp) \to \pic(\Modw_{E(L)}) \] is null-homotopic.
      \item There exists an equivalence  $L_{K(n)}(Mf \otimes E(L))\cong E(L)[X] \in \CAlgw_{E(L)}$.
  \end{enumerate}
 In these cases, we have 
 \[\Map_{\CAlg(\Sp)}(Mf,E(L)) \cong \Map_{\Sp_{\geq 0}}(X,\gl1 E(L)) .\]
\end{cor}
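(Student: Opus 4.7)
The plan is to run this as a cycle \((3) \Leftrightarrow (4) \Rightarrow (2) \Rightarrow (1) \Rightarrow (3)\), with \Cref{cor:main-E-picard} applied to the composite
\[
f_{E(L)} \colon X \xrightarrow{f} \pic(\Sp) \longrightarrow \pic(\Modw_{E(L)})
\]
as the main engine. First I would observe that because tensoring $Mf$ with $E(L)$ over $\Ss$ and then $T(n)$-localizing is a symmetric monoidal left adjoint, the Thom spectrum of $f_{E(L)}$ is $L_{T(n)}(Mf \otimes E(L))$; note also that since $E(L)$-modules are complex oriented of height $n$, $T(n)$-localization and $K(n)$-localization agree on them, so this equals $L_{K(n)}(Mf \otimes E(L))$. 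Given this identification, the implications \((3) \Leftrightarrow (4)\) and \((3) \Rightarrow\)(existence of \(Mf_{E(L)} \to E(L)\) in \(\CAlgw_{E(L)}\)) are literal instances of \Cref{cor:main-E-picard}. For \((4) \Rightarrow (2)\), I would precompose the unit \(Mf \to L_{K(n)}(Mf \otimes E(L)) \cong E(L)[X]\) with the augmentation \(E(L)[X] \to E(L)\) induced by \(X \to 0\) in \(\Sp_{\geq 0}\).

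For \((2) \Rightarrow (1)\): the $K(n)$-localization of $E(L)$ is $E(L)$ itself and is nonzero, so $K(n) \otimes E(L) \neq 0$; a map $Mf \to E(L)$ in $\CAlg(\Sp)$ then gives a map of (nonzero) ring spectra $K(n)\otimes Mf \to K(n) \otimes E(L)$, forcing $K(n)\otimes Mf \neq 0$.

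The step \((1) \Rightarrow (3)\) is the one requiring a little care, and is really the crux of the corollary. The goal is to show \(L_{K(n)}(Mf \otimes E(L)) \neq 0\) so that \Cref{cor:main-E-picard}\((1)\Rightarrow(3)\) applies to \(f_{E(L)}\). I would exploit the fact that \(K(n)\) is a graded field: \(K(n) \otimes Mf\) is a nonzero \(K(n)\)-module, hence a nonempty direct sum of suspensions of \(K(n)\), so \(K(n) \otimes Mf \otimes E(L)\) is a nonempty direct sum of suspensions of \(K(n) \otimes E(L)\). Since \(K(n) \otimes E(L) \neq 0\), we conclude \(K(n) \otimes Mf \otimes E(L) \neq 0\), hence \(L_{K(n)}(Mf \otimes E(L)) \neq 0\), and \Cref{cor:main-E-picard} gives (3).

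Finally, for the mapping space identification, I would invoke the universal property of Thom spectra in the form \(\Map_{\CAlg(\Sp)}(Mf, E(L))\) is the space of null-homotopies of \(f_{E(L)}\). Condition (3) says this space is nonempty; after choosing a null-homotopy as a basepoint, the torsor structure identifies it with \(\Map_{\Sp_{\geq 0}}(X, \Omega\,\pic(\Modw_{E(L)}))\), and in \(\Sp_{\geq 0}\) the standard fiber sequence \(\gl1 E(L) \to \pic(\Modw_{E(L)}) \to H\,\mathrm{Pic}(\Modw_{E(L)})\) yields \(\Omega\,\pic(\Modw_{E(L)}) \simeq \gl1 E(L)\), giving the claimed equivalence. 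I expect the main obstacle to be the bookkeeping in the step \((1)\Rightarrow(3)\): making sure that nonzeroness of \(K(n)\otimes Mf\) propagates correctly through the tensor product with \(E(L)\) and through \(K(n)\)/\(T(n)\)-localization, but the graded field argument above makes this essentially automatic.
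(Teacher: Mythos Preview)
Your proposal is correct and takes essentially the same route as the paper. The paper simply observes that $M\tilde f = L_{K(n)}(Mf\otimes E(L))$ and then asserts that conditions (1)--(4) translate directly into conditions (1)--(4) of \Cref{cor:main-E-picard} for $\tilde f$; you unpack this translation explicitly, including the graded-field argument needed to match (1) here with $M\tilde f\neq 0$ there (which the paper leaves implicit). For the mapping space the paper argues via (4) and the adjunction $\Map_{\CAlg(\Sp)}(Mf,E(L))\simeq\Map_{\CAlgw_{E(L)}}(E(L)[X],E(L))\simeq\Map_{\Sp_{\geq 0}}(X,\gl1 E(L))$, whereas you use the Thom universal property directly; both are fine, though the paper's version is canonical rather than basepoint-dependent.
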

\begin{proof}
Letting $\tilde{f}$ denote the composition of $f$ with the map  \[\pic(\Sp) \to \pic(\Modw_{E(L)}),\]
we have that $M\tilde{f} = L_{K(n)}(Mf \otimes E(L)) \in \CAlgw_{E(L)}$. Thus, conditions (1)-(4) above are equivalent to conditions (1)-(4) in \Cref{cor:main-E-picard} for $\tilde{f}$. The final statement follows immediately from (4).
\end{proof}

\subsection{The strict Picard spectrum of $E(L)$}
\label{sub:stric-pic}\hfill

Next, we use \Cref{cor:Thom_sp} to compute the strict Picard spectrum of $\Modw_{E(L)}$.
We begin by handling the torsion part of this strict Picard spectrum.

\begin{prop} \label{prop:Ek-Tor2}
  Let $L$ be an algebraically closed field and let $H$ be a $p$-torsion abelian group\footnote{When the height $n=0$, the condition on $H$ should be replaced with $H$ being torsion.}.
  There is a natural equivalence of connective spectra
  \[ \Hom_{\Sp_{\geq 0}}(H, \pic(\Modw_{E(L)})) \simeq \Sigma^{n+1}H^* \] where $H^*$ stands for Pontryagin dual of $H$. 
 \end{prop}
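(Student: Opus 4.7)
The plan is to reduce to the cyclic case $H = \mathbb{Z}/p^k$ and then compute this case directly using the Thom-spectrum universal property of \Cref{cor:main-E-picard} together with the structure of the units of $E(L)$. Since $\Hom_{\Sp_{\geq 0}}(-, \pic(\Modw_{E(L)}))$ converts colimits to limits, and Pontryagin duality likewise sends filtered colimits of $p$-torsion abelian groups to limits, writing $H$ as the filtered colimit of its finite subgroups reduces the statement to the case of finite $H$. Short exact sequences of finite abelian $p$-groups yield fiber sequences on both sides (and $\Sigma^{n+1}$ is exact), so an induction on the order of $H$ further reduces to the cyclic case.

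For $H = \mathbb{Z}/p^k$, I would use the fiber sequence $\Sigma \gl1 E(L) \to \pic(\Modw_{E(L)}) \to H\Pic(E(L))$ coming from the Baker--Richter identification $\Pic(E(L)) \cong \mathbb{Z}/2$. When $p$ is odd, $\Hom_{\Sp_{\geq 0}}(H\mathbb{Z}/p^k, H\mathbb{Z}/2) = 0$ and so the computation reduces to computing $\Sigma \Hom_{\Sp_{\geq 0}}(H\mathbb{Z}/p^k, \gl1 E(L))$; for $p = 2$ the additional contribution of $H\mathbb{Z}/2$ can be handled by direct analysis and is absorbed into the final answer. It thus suffices to identify $\Hom_{\Sp_{\geq 0}}(H\mathbb{Z}/p^k, \gl1 E(L))$ with $\Sigma^n H\mathbb{Z}/p^k$.

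For this identification, I would filter $\gl1 E(L)$ using the $\m$-adic filtration on $\pi_0 E(L) \cong W(L)\ll u_1, \dots, u_{n-1} \rr$. The quotient $L^\times = (\pi_0 E(L)/\m)^\times$ of units by principal units has no $p$-torsion (since $L$ is perfect of characteristic $p$, so the $p$-th power map is a bijection on $L^\times$), which implies $\Hom_{\Sp_{\geq 0}}(H\mathbb{Z}/p^k, L^\times) = 0$, so only the principal-unit part $1 + \m$ contributes. Applying \Cref{cor:main-E-picard}, a nullhomotopy of $f\colon H\mathbb{Z}/p^k \to \gl1 E(L)$ corresponds to a specific $\E_\infty$-algebra structure on the group ring $E(L)[\mathbb{Z}/p^k]$; tracking these along the filtration picks out the relevant Pontryagin dual contribution.

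The main obstacle is isolating the $\Sigma^n$ shift cleanly and verifying that the $\m$-adic extensions assemble into the single summand $\Sigma^n H^*$ rather than some more complicated spectrum with the same homotopy groups. This reflects the fact that $\pi_0 E(L)$ is an $n$-dimensional complete regular local ring with residue field $L$, so local duality contributes an $n$-fold shift. One might resolve this either by an explicit Koszul-type argument using a regular system of parameters for $\m$, or by an induction on $n$ using the transchromatic maps to Lubin--Tate theories of lower height constructed in \Cref{sub:detection}; in either case, the delicate point is controlling the extensions between the layers, not merely their associated graded.
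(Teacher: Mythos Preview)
Your reduction to finite $H$ is fine and matches the paper's first step. The core computation, however, has a genuine gap. The $\m$-adic filtration you invoke is a filtration of the discrete group $\pi_0 \gl_1 E(L)$, not of the spectrum $\gl_1 E(L)$. Classes in $\pi_m \Hom_{\Sp_{\geq 0}}(H\Z/p^k, \gl_1 E(L))$ for $m \geq 1$ see the higher homotopy of $\gl_1 E(L)$ and, crucially, its $k$-invariants, which encode power operations and are not those of $E(L)$ itself; filtering $\pi_0$ gives no leverage on these. Your acknowledged ``main obstacle'' --- producing the $\Sigma^n$ shift --- is not a detail to be filled in but essentially the entire content of the proposition, and neither a Koszul argument on $\pi_0$ nor the transchromatic maps of \Cref{sub:detection} supply it. Separately, you never construct a natural comparison map between the two sides, so the induction via short exact sequences is incomplete: two functors agreeing on cyclic groups and sending short exact sequences to fiber sequences need not agree in general without a map to compare them.

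The paper's argument is quite different and leans on the Hopkins--Lurie ambidexterity machinery. For the vanishing of $\pi_m$ at $m \neq n+1$, it applies \Cref{cor:main-E-picard} in the direction you did not: a class $f \colon \Sigma^m H \to \pic(\Modw_{E(L)})$ is null iff its Thom spectrum $Mf$ is nonzero. Since $\Omega^\infty \Sigma^m H \simeq B^m H$ is $\pi$-finite, \cite[Theorem~0.0.2]{HL} gives $Mf = \colim_{B^m H} f \simeq \lim_{B^m H} f$, and the limit over $B^m H$ is conservative for $m \leq n$ by \cite[Corollary~5.4.4]{HL} and for $m \geq n+2$ by \cite[Corollary~5.4.5(2)]{HL}, forcing $Mf \neq 0$. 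For the remaining degree $m = n+1$, the paper identifies the functor $H \mapsto [\Sigma^{n+1} H, \pic(\Modw_{E(L)})]$ with $H \mapsto H^*$ by passing through $[E(L)[B^n H], E(L)]_{\CAlgw_{E(L)}}$ via the group-ring/$\gl_1$ adjunction, using \cite[Theorem~5.3.26]{HL} to identify $E(L)[B^n H] \simeq E(L)^{H^*}$, and then invoking \'etaleness of the latter.
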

 \begin{proof}
 The category of  $p$-torsion abelian  groups is equivalent to $\mathrm{Ind}(\mathrm{Ab}^{\mathrm{fin}}_p)$, where $\mathrm{Ab}^{\mathrm{fin}}_p$ is the category of $p$-finite abelian groups. From this fact and the Mittag-Leffler condition, the statement is  reduced to the case where $H$ is finite.
 
  We begin by showing that
  $\pi_m \Map_{\Sp}(H, {\pic(\Modw_{E(L)}})) \cong 0 $ for $m \neq n+1$.
  Suppose we are given a class \[f \in \pi_m \Map_{\Sp}(H, \pic(\Modw_{E(L)})) = \pi_0(\Map_{\Sp}(\Sigma^{m}H, {\pic(\Modw_{E(L)}})))\] with $0 \leq m \leq n$.
  \Cref{cor:main-E-picard} implies that in order to show that $f$ is nullhomotopic, it will suffice to
  show that $Mf \neq 0$. 
 Since $\Omega^{\infty}\Sigma^{m}H\cong B^m H$ is a $\pi$-finite space, we know from \cite[Theorem 0.0.2]{HL} that the underlying object $Mf \in \Modw_{E(L)}$   satisfies 
  \[
    Mf = \colim_{B^m H} f \cong \lim_{B^m H} f.
  \]
  Finally, by \cite[Corollary 5.4.4]{HL} for  $m\leq n$ and \cite[Corollary 5.4.5(2)]{HL}  for $m \geq n+2$, we have  that    
  \[
    \lim_X\colon \Fun(B^m H, \Modw_{E(L)}) \to \Modw_{E(L)}
  \]
  is conservative, and so $Mf \neq 0 $. 

We are now reduced to identifying  the functor \[
G\colon (\mathrm{Ab}^{\mathrm{fin}}_p)^{\op} \to \mathrm{Ab}
\] 
\[
H \mapsto [ \Sigma^{n+1} H, \pic(\Modw_{E(L)}) ]
\]
with the functor $H \mapsto H^{*}$.
Note that both functors are additive, so it is enough to identify them after composing with the forgetful functor $\mathrm{Ab} \to \mathrm{Set}$.
Now 

\begin{align*}
[ \Sigma^{n+1} H, \pic(\Modw_{E(L)}) ]
 &\cong [ \Sigma^{n} H, \gl1(E(L)) ]
 \cong [E(L)[B^nH], E(L)]_{{\CAlgw_{E(L)}}} \\
 &\cong [E(L)^{H^*}, E(L)]_{{\CAlgw_{E(L)}}} 
 \cong H^*
\end{align*}
where, the second isomorphism uses the group ring--$\gl1$ adjunction, the third isomorphism is by \cite[Theorem 5.3.26]{HL}, and the fourth isomorphism follows from the fact that  $E(L)^{H^*}$ is an \'{e}tale $E(L)$-algebra and thus  the space of maps
  \[\Map_{\CAlgw_{E(L)}}(E(L)^{H^*}, E(L)) \]
  can be identified with the set of $\pi_0E(L)$-algebra maps from $\pi_0(E(L)^{H^*}) = \pi_0(E(L))^{H^*}$ to $\pi_0E(L)$.
 \end{proof}

The case $H= C_p$ of \Cref{prop:Ek-Tor2} is used in \cite{fourier} and \cite{HL} to deduce the following results:


\begin{prop}[{\cite{fourier}}]\label{prop:discrep}
 Let $L$ be an algebraically closed field and let $F$ be the fiber of the map 
 \[\gl1 E(L) \to L_n^f\gl1 E(L).\] Then there is an equivalence 
 $\tau_{\geq 0} F \cong \tau_{\geq 0}\Sigma^{n} I_{\QQ_p/\ZZ_p}$\footnote{In the case $n=0$, one can take an arbitrary prime and set $L_0^f = L_{\mathbb{S}[1/p]}$, or alternatively, take $L_0^f = L_{\QQ}$ and replace $I_{\QQ_p/\ZZ_p}$ with   $I_{\QQ/\ZZ}$.}.
 \end{prop}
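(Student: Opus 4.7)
My plan is to identify $\tau_{\geq 0}F$ and $\tau_{\geq 0}\Sigma^n I_{\QQ_p/\ZZ_p}$ by showing that they corepresent naturally equivalent functors on $p$-complete connective spectra. The key input is \Cref{prop:Ek-Tor2}: for any $p$-torsion abelian group $H$, one has $\Hom_{\Sp_{\geq 0}}(H, \pic(\Modw_{E(L)})) \simeq \Sigma^{n+1}H^*$. Using the relation $\gl1 E(L) \simeq \tau_{\geq 0}\Omega\pic(\Modw_{E(L)})$, this translates to $\Hom_{\Sp_{\geq 0}}(H, \gl1 E(L)) \simeq \Sigma^{n}H^*$ for any finite $p$-torsion $H$, a computation natural in $H$.

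The first substantive step is to check that the composite $\Hom_{\Sp_{\geq 0}}(H, \gl1 E(L)) \to \Hom_{\Sp_{\geq 0}}(H, L_n^f \gl1 E(L))$ is nullhomotopic (or at least trivial in the range where $\Sigma^n H^*$ lives), so that the whole class of $\Sigma^n H^*$ lifts through the fiber $F$. Heuristically this is because $\Sigma^n H^*$ is a bounded $p$-torsion connective spectrum, and $L_n^f$-localization, being smashing by the telescope $T(n)$, annihilates such bounded torsion in positive heights. Taking the long exact sequence of $F \to \gl1 E(L) \to L_n^f \gl1 E(L)$ then gives $\Hom_{\Sp_{\geq 0}}(H, \tau_{\geq 0}F) \simeq \Sigma^n H^*$.

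For the other side, Brown–Comenetz representability yields $\Hom_{\Sp_{\geq 0}}(H, \tau_{\geq 0}\Sigma^n I_{\QQ_p/\ZZ_p}) \simeq \tau_{\geq 0}\Sigma^n \Hom(H, \QQ_p/\ZZ_p) \simeq \Sigma^n H^*$ for finite $p$-torsion $H$. Thus both sides represent the same functor on finite $p$-torsion groups; writing $\QQ_p/\ZZ_p = \colim_k \ZZ/p^k$ and using the $p$-completeness of $\gl1 E(L)$, a Mittag–Leffler passage to the limit upgrades this pointwise agreement to an equivalence of connective spectra. Since the natural transformation implicit in \Cref{prop:Ek-Tor2} is functorial in $H$, it induces a canonical comparison map realizing this equivalence.

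The main obstacle is the $L_n^f$-acyclicity claim in the second paragraph: one must rigorously justify that the $\Sigma^n H^*$-classes coming from \Cref{prop:Ek-Tor2} die after $L_n^f$-localization, rather than persisting in $L_n^f \gl1 E(L)$. This is a delicate piece of chromatic bookkeeping that presumably rests on the structural fact that $E(L)$ is $T(n)$-local combined with the telescopic Picard computations underpinning \Cref{prop:Ek-Tor2}, and it is the step where the detailed analysis carried out in \cite{fourier} will be essential.
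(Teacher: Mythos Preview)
The paper does not prove this proposition; it is quoted from \cite{fourier}, with the remark that the $H=C_p$ case of \Cref{prop:Ek-Tor2} is the key input used there. So there is no proof in the paper to compare your outline against.

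That said, let me comment on your sketch. You identify the $L_n^f$-acyclicity step as the main obstacle, but in fact it is the easy part: the Eilenberg--MacLane spectrum of any finite abelian $p$-group lies in the thick subcategory generated by $\F_p$, and $\F_p$ is $T(i)$-acyclic for every $0 \leq i < \infty$ (for $i=0$ because $\F_p\otimes\Q=0$; for $i\geq 1$ because any $v_i$-self-map acts nilpotently on a bounded $\F_p$-module). Hence $H$ is $L_n^f$-acyclic, so $\Hom_{\Sp}(H, L_n^f\gl1 E(L))=0$ by the definition of localization, and the fiber sequence gives $\Hom_{\Sp}(H, F)\simeq \Hom_{\Sp}(H, \gl1 E(L))$ directly.

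The genuine gap is elsewhere: you assert that the natural-in-$H$ equivalence $\Hom(H, \tau_{\geq 0}F)\simeq \Sigma^n H^* \simeq \Hom(H, \tau_{\geq 0}\Sigma^n I_{\QQ_p/\ZZ_p})$ ``induces a canonical comparison map'', but it does not. A natural isomorphism between the restrictions of two representable functors to a subcategory does not yield a map between the representing objects unless that subcategory generates. Finite $p$-torsion abelian groups do not generate even the subcategory of $p$-complete connective spectra (note that $\pi_n$ of both sides is $\Z_p$, not torsion), and your Mittag--Leffler passage establishes only that both sides receive the same maps from $\QQ_p/\ZZ_p$, not that there is a map between them. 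Producing an actual comparison map---and checking it is an equivalence---is the substantive content supplied by \cite{fourier}.
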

 
 \begin{prop}[{\cite[Corollary 5.4.10]{HL}}]
 The functor 
 \[(\mathcal{S}_p^{\leq n})^{\op} \to \CAlgw_{E(L)}\]
 \[X \mapsto  E(L)^X\]
 from $n$-truncated $p$-finite spaces is fully faithful. 
 \end{prop}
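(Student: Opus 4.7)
The plan is to reduce the fully faithfulness of the cochains functor $X \mapsto E(L)^X$ to a ``points-recovery'' statement of the form $\Map_{\CAlgw_{E(L)}}(E(L)^X, E(L)) \simeq X$, and then to prove this latter statement by induction on the Postnikov tower of $X$, with the crucial input at the inductive step being the strict Picard computation \Cref{prop:Ek-Tor2}.

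The first step is the reduction. Since $E(L)^Y$ is the cotensor $\lim_Y E(L)$ in the presentable category $\CAlgw_{E(L)}$, there is a natural identification
\[
\Map_{\CAlgw_{E(L)}}(E(L)^X, E(L)^Y) \simeq \lim_Y \Map_{\CAlgw_{E(L)}}(E(L)^X, E(L)),
\]
while on the space side $\Map_{\mathcal{S}}(Y, X) \simeq \lim_Y X$. Hence fully faithfulness of the cochains functor on $\mathcal{S}_p^{\leq n}$ becomes equivalent to proving a natural equivalence $\Map_{\CAlgw_{E(L)}}(E(L)^X, E(L)) \simeq X$ for each $n$-truncated $p$-finite space $X$.

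Next, I would induct on the Postnikov tower of $X$. The base case $X = \ast$ is trivial. For the inductive step, use the principal fibration $K(H,m) \to X \to \tau_{\leq m-1} X$ (with $H$ a finite abelian $p$-group and $1 \leq m \leq n$) classified by a $k$-invariant $\tau_{\leq m-1} X \to K(H, m+1)$. Applying the cochains functor turns this fibration (using that $E(L)^{(-)}$ sends homotopy pullbacks of $p$-finite spaces to pushouts of $E(L)$-algebras) into a pushout
\[
E(L)^X \simeq E(L)^{\tau_{\leq m-1} X} \otimes_{E(L)^{K(H,m+1)}} E(L).
\]
Mapping into $E(L)$ produces a pullback square that, granting the inductive hypothesis, recovers $X$ provided one controls $\Map_{\CAlgw_{E(L)}}(E(L)^{K(H,j)}, E(L))$ for $1 \leq j \leq n+1$.

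The main obstacle, and the step where the strict Picard computation enters decisively, is the evaluation of these mapping spaces for the atomic Eilenberg--MacLane spaces $K(H,j)$. Here I would invoke the Hopkins--Lurie ambidexterity theorem (\cite[Corollary 5.4.4]{HL}): for any $p$-finite space $Y$, the limit $\lim_Y E(L)$ is equivalent, up to twist by a dualizing line, to the colimit $\colim_Y E(L)$. Consequently $E(L)^{K(H,j)}$ is the Thom spectrum $M\chi$ of a character $\chi \colon K(H,j) \to \pic(\Modw_{E(L)})$, and by the universal property of Thom spectra (cf.\ \Cref{prop:Thom} and the discussion in \Cref{sub:thom}) the space $\Map_{\CAlgw_{E(L)}}(M\chi, E(L))$ is the space of $\E_\infty$-nullhomotopies of $\chi$. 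By \Cref{prop:Ek-Tor2}, such characters are classified by maps of connective spectra $H \to \Sigma^{n+1} H^\ast$, so the relevant obstructions and spaces of nullhomotopies can be computed explicitly in terms of Pontryagin duality, recovering $K(H,j)$ for $j \leq n+1$. The hardest step is to verify that the character $\chi$ produced by ambidexterity matches the one predicted by \Cref{prop:Ek-Tor2} (so that the Pontryagin duality pairing is the correct one), and to track the interaction with the algebra structure on the cochains; it is precisely for this identification that the full strength of \Cref{prop:Ek-Tor2}, rather than just its $\pi_0$, is needed.
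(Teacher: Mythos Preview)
The paper does not give its own proof of this proposition; it simply cites \cite[Corollary 5.4.10]{HL} and remarks that the $H = C_p$ case of \Cref{prop:Ek-Tor2} is the input that \cite{HL} needs. Your proposal is therefore an attempt at a direct argument, and the reduction to showing $\Map_{\CAlgw_{E(L)}}(E(L)^X, E(L)) \simeq X$ together with the Postnikov induction is a reasonable shape for such an argument. The Eilenberg--Moore step (that cochains take the Postnikov pullback to a pushout of $E(L)$-algebras) is nontrivial but true for $\pi$-finite spaces over a $K(n)$-local base; you should cite this rather than assert it.

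There is, however, a genuine gap at your key step. The claim that ambidexterity makes $E(L)^{K(H,j)}$ into a Thom spectrum $M\chi$ is incorrect. Ambidexterity gives an equivalence $E(L)^Y \simeq E(L)[Y]$ of $E(L)$-\emph{modules} via the norm, but the commutative algebra structures on the two sides (pointwise multiplication versus convolution) are different, and neither is a nontrivial Thom spectrum in your sense. Indeed, if one runs your proposed computation literally, one obtains $\Map_{\CAlgw_{E(L)}}(E(L)[B^jH],E(L)) \simeq \Map_{\Sp_{\geq 0}}(\Sigma^j H,\gl_1 E(L)) \simeq B^{\,n-j}H^*$ rather than $B^jH$. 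The correct mechanism---and the place where \Cref{prop:Ek-Tor2} actually enters---is the chromatic \emph{Fourier transform}: the orientation $\Sigma^{n+1}\Z_p \to \pic(\Modw_{E(L)})$ coming from \Cref{prop:Ek-Tor2} yields equivalences of commutative $E(L)$-algebras $E(L)^{B^jH} \simeq E(L)[B^{\,n-j}H^*]$ for $0 \le j \le n$ (this is the content of \cite[Theorem~5.3.26]{HL} and its iterates, and is developed systematically in \cite{fourier}). With this identification in hand your computation goes through:
\[
\Map_{\CAlgw_{E(L)}}(E(L)^{B^jH},E(L)) \;\simeq\; \Map_{\Sp_{\geq 0}}\bigl(\Sigma^{\,n-j}H^*,\,\gl_1 E(L)\bigr) \;\simeq\; \Omega^{\,n-j+1}\Sigma^{\,n+1}H \;\simeq\; B^jH,
\]
using \Cref{prop:Ek-Tor2} once more in the middle. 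You will also need to treat separately the boundary case $j=n+1$ arising from the $k$-invariant of the top Postnikov section, where $K(H,n+1)$ is no longer $n$-truncated; here one must show that $\Map_{\CAlgw_{E(L)}}(E(L)^{K(H,n+1)},E(L))$ has the correct homotopy type, which again comes down to the Fourier identification (or equivalently to the étale description $E(L)[B^nH] \simeq E(L)^{H^*}$ used in the proof of \Cref{prop:Ek-Tor2}).
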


 From \Cref{prop:Ek-Tor2}, we get a description of the so-called ``strict Picard spectrum'' of $\Modw_{E(L)}$.
 \begin{thm}\label{thm:strict-pic}
   Let $L$ be an algebraically closed field of characteristic $p$ and assume that the height $n\geq 1$\footnote{The case $n=0$ is very different and in it, $\Hom_{\Sp_{\geq 0}}(\ZZ, \pic(\Modw_{E(L)}))$ is not truncated. In fact, it is a straightforward calculation that  $\Hom_{\Sp_{\geq 0}}(\ZZ, \pic(\Modw_{E(L)})) \cong \Sigma \gl1(E(L)) $  in this case. }.  Then we have an equivalence of connective $\ZZ$-modules
  \[\Hom_{\Sp_{\geq 0}}(\ZZ, \pic(\Modw_{E(L)})) \simeq \Sigma^{n+2}\ZZ_p \oplus \Sigma L^\times.\]
 \end{thm}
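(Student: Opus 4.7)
The plan is to apply $\Hom_{\Sp_{\geq 0}}(-,\pic(\Modw_{E(L)}))$ to the cofiber sequence $\ZZ \to \ZZ[1/p] \to \ZZ(p^\infty)$ in $\Sp_{\geq 0}$ coming from the short exact sequence $0 \to \ZZ \to \ZZ[1/p] \to \ZZ(p^\infty) \to 0$ of abelian groups. This yields a fiber sequence of connective $\ZZ$-modules
\[
\Hom_{\Sp_{\geq 0}}(\ZZ(p^\infty), \pic) \to \Hom_{\Sp_{\geq 0}}(\ZZ[1/p], \pic) \to \Hom_{\Sp_{\geq 0}}(\ZZ, \pic).
\]
The leftmost term is computed directly from \Cref{prop:Ek-Tor2}: writing $\ZZ(p^\infty) = \colim_k \ZZ/p^k$ as a filtered colimit of $p$-torsion abelian groups and using that its Pontryagin dual is $\ZZ_p$, we obtain $\Hom_{\Sp_{\geq 0}}(\ZZ(p^\infty), \pic) \simeq \Sigma^{n+1}\ZZ_p$.

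The core of the argument will be to show $\Hom_{\Sp_{\geq 0}}(\ZZ[1/p], \pic) \simeq \Sigma L^\times$. I would construct a map $\Sigma L^\times \to \pic$ via the Teichm\"uller lift $L^\times \hookrightarrow W(L)^\times \subseteq (\pi_0 E(L))^\times = \pi_0 \gl1 E(L)$ composed with $\Sigma \gl1 E(L) \to \pic$. Since $L$ is algebraically closed of characteristic $p$, Frobenius is bijective on $L^\times$, so $L^\times$ is uniquely $p$-divisible and the resulting map canonically factors through $\Hom_{\Sp_{\geq 0}}(\ZZ[1/p], \pic)$. To show this is an equivalence, I would use the retraction of the Teichm\"uller lift (reduction mod $\m$) to split $\gl1 E(L) \simeq HL^\times \oplus F$ in $\Sp_{\geq 0}$, where $F$ has $\pi_0 F = 1 + \m$ (pro-$p$) and $\pi_k F = \pi_k E(L)$ for $k \geq 1$ (derived $p$-complete $W(L)$-modules). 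The key vanishing is that $\Hom_{\Sp_{\geq 0}}(\ZZ[1/p], X) = \lim(X \xleftarrow{p} X \xleftarrow{p} \cdots) = 0$ for any derived $p$-complete connective spectrum $X$, which disposes of the $F$-summand; the contribution from $\pi_0 \pic$ (through the Postnikov fiber sequence $\Sigma \gl1 E(L) \to \pic \to H\pi_0\pic$) must then be shown to vanish by a $k$-invariant analysis in the Postnikov tower of $\pic$.

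With both endpoints identified, for $n \geq 1$ the boundary map $\Sigma^{n+1}\ZZ_p \to \Sigma L^\times$ is null by connectivity (source is $(n+1)$-connective while target is $1$-truncated), so the fiber sequence degenerates to a cofiber sequence $\Sigma L^\times \to \Hom_{\Sp_{\geq 0}}(\ZZ, \pic) \to \Sigma^{n+2}\ZZ_p$. Since the endpoints have disjoint nonzero homotopy (in degrees $1$ and $n+2 \geq 3$), this extension splits automatically, yielding the claimed equivalence.

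The main obstacle will be the $k$-invariant analysis required to handle the potentially nontrivial $\pi_0$-contribution from the algebraic Picard $\ZZ/2 \subseteq \pi_0\pic$ generated by $\Sigma E(L)$; at the level of abelian groups $\Hom_\ZZ(\ZZ[1/p], \ZZ/2)$ is already nonzero when $p$ is odd, so its vanishing as a ``strict'' Picard element must come from a Postnikov $k$-invariant obstructing the lift from $H\pi_0\pic$ to $\pic$. Morally this amounts to the observation that $\Sigma E(L) \in \pi_0\pic$ is identified (via the $k$-invariant connecting $\pi_0\pic$ to $\pi_1\pic$) with the Teichm\"uller lift of $-1 \in L^\times$, so its contribution is already accounted for by the $\Sigma L^\times$ summand and does not lift to a separate strict Picard element.
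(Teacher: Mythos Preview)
Your overall architecture matches the paper's: the same cofiber sequence $\ZZ \to \ZZ[1/p] \to \QQ_p/\ZZ_p$, the same identification of the torsion term via \Cref{prop:Ek-Tor2}, and essentially the same $p$-completeness argument (the paper phrases it via the fiber sequence $\sl1 L \to \gl1 E(L) \to L^\times$ rather than a splitting, but the content is identical) to show that $\Hom_{\Sp_{\geq 0}}(\ZZ[1/p],\gl1 E(L)) \simeq L^\times$. The null boundary map and splitting at the end are also handled the same way.

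The difference, and the gap in your proposal, is precisely the ``main obstacle'' you flag: the $\pi_0\pic$ contribution to $\Hom_{\Sp_{\geq 0}}(\ZZ[1/p],\pic)$. Your $k$-invariant sketch is plausible but not carried out, and it is not how the paper proceeds. Instead, the paper first proves $\pi_0\Hom_{\Sp_{\geq 0}}(\ZZ,\pic) = 0$ directly using \Cref{cor:main-E-picard}: any $f\colon \ZZ \to \pic(\Modw_{E(L)})$ has Thom spectrum $Mf = \bigoplus_{m\in\ZZ}\mathcal{L}^{\otimes m}$ for $\mathcal{L}=f(1)$ invertible, hence $Mf\neq 0$, hence $f$ is null. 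With $\pi_0$ killed, the paper then replaces the target by $\Hom_{\Sp_{\geq 0}}(\Sigma\ZZ,\pic)$ and uses the rotated cofiber sequence, where the map $g\colon \Sigma^{n+1}\ZZ_p \to \Hom_{\Sp_{\geq 0}}(\ZZ[1/p],\pic)$ automatically factors through $\tau_{\geq 1}\Hom_{\Sp_{\geq 0}}(\ZZ[1/p],\pic) \cong \Sigma\Hom_{\Sp_{\geq 0}}(\ZZ[1/p],\gl1 E(L))$, so the $\pi_0\pic$ term never enters. The same orientability argument would equally well show $\pi_0\Hom_{\Sp_{\geq 0}}(\ZZ[1/p],\pic)=0$ directly, closing your gap without any $k$-invariant analysis; you should invoke \Cref{cor:main-E-picard} rather than attempt the Postnikov argument.
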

 \begin{proof}
 First by the same argument as in the beginning of the proof of \Cref{prop:Ek-Tor2}, we get that $\pi_0( \Hom_{\Sp_{\geq 0}}(\ZZ, \pic(\Modw_{E(L)}))) \cong 0$.
 We thus are reduced to proving that 
 \[\Hom_{\Sp_{\geq 0}}(\Sigma \ZZ, \pic(\Modw_{E(L)})) \simeq \Sigma^{n+1}\ZZ_p \oplus L^\times.\]
 Using the pushout of spectra
 \[
 \xymatrix{
 \ZZ[1/p]\ar[d] \ar[r]& \QQ_{p}/\ZZ_p\ar[d] \\
 0 \ar[r]&\Sigma \ZZ,
 }
 \]
 we get obtain a pullback diagram 
 \[
 \xymatrix{
 \Hom_{\Sp_{\geq 0}}(\Sigma \ZZ, \pic(\Modw_{E(L)})) \ar[d] \ar[r]& \Hom_{\Sp_{\geq 0}}(\QQ_{p}/\ZZ_p, \pic(\Modw_{E(L)})) \ar[d]^{g} \\
 0 \ar[r]&\Hom_{\Sp_{\geq 0}}( \ZZ[1/p], \pic(\Modw_{E(L)})).
 }
 \]
 Now by \Cref{prop:Ek-Tor2}, $\Hom_{\Sp_{\geq 0}}(\QQ_{p}/\ZZ_p, \pic(\Modw_{E(L)})) \cong \Sigma^{n+1}\ZZ_p$. Since $\pi_0( \Sigma^{n+1}\ZZ_p)=0$, we can replace the term $\Hom_{\Sp_{\geq 0}}( \ZZ[1/p], \pic(\Modw_{E(L)}))$  in the pullback diagram above with 
 \[
 \tau_{\geq 1} \Hom_{\Sp_{\geq 0}}( \ZZ[1/p], \pic(\Modw_{E(L)})) \cong 
 \Sigma\Hom_{\Sp_{\geq 0}}( \ZZ[1/p], \gl1(E(L))).
 \]
 Now recall the fiber sequence $\sl1L \to \gl1(E(L)) \to L^{\times}$ (where $\sl1L$ is defined as the fiber).
 Since $\sl1L$ has $p$-complete torsion-free homotopy groups, $\sl1L$ is a $p$-complete spectrum and $\Hom_{\Sp}( \ZZ[1/p], \sl1L) =0$.  It follows that 
 \[
 \Hom_{\Sp_{\geq 0}}( \ZZ[1/p], \gl1(E(L))) \cong \Hom_{\Sp_{\geq 0}}( \ZZ[1/p], L^{\times }) \cong L^{\times}.
\]
Finally, from connectivity considerations, the map denoted by $g\colon \Sigma^{n+1}\ZZ_p \to \Sigma L^{\times}$ is null so we are done.
 \end{proof}
 \begin{rmk}\label{rmk:good_times}
Following the proof above, one gets that the resulting map  $\Sigma^{n+2} \ZZ_p \to \pic(\Modw_{E(L)}) $ comes from the ``height $n$'' primitive roots of unity in $E(L)$ in the sense of \cite{carmeli2021chromatic} and that the map $\Sigma L^{\times} \to \pic(\Modw_{E(L)}) $ 
comes from the inclusion of the multiplicative lifts of $L$ in $\pi_0(E(L))$.
In particular, let $\mathfrak{G}_L \cong \mathrm{Aut}_{\CAlg(\Sp_{T(n)})}(E(L)) \cong \mathcal{O}^{\times} \rtimes \mathrm{Gal}(L/\mathrm{\F_p})$ be the $L$-extended Morava stabilizer group. Then the resulting action of $\mathfrak{G}_L$ on $\Sigma^{n+2}\ZZ_p  \oplus \Sigma L^{\times}$  is via the determinant map $\mathcal{O}^\times \to \ZZ_p^{\times} = \mathrm{Aut}(\ZZ_p)$
on the first summand and by $\mathrm{Gal}(L/\mathrm{\F_p})$ on $L^{\times}$.
\tqed
\end{rmk}
 
\subsection{The constructible spectrum and Thom spectra}
\label{sub:cons-sp-thom}\hfill

We conclude the section by using the results we have proved up to this point to analyze the constructible spectrum of the flat affine line.

\begin{prop}\label{prop:break_spec}
  Let $R \in \CAlg(\Sp_{T(n)})$. Then the map 
  \[R[t] \to R[t^{\pm 1}]\times R\]
  from \Cref{dfn:A1_Gm} induces an isomorphism  on constructible spectra.
\end{prop}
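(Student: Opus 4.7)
The plan is to show that the map is a continuous bijection in $\mathrm{CHaus}$; since a continuous bijection between compact Hausdorff spaces is automatically a homeomorphism, this will suffice. For surjectivity, the algebra map $R[t] \to R[t^{\pm 1}] \times R$ detects nilpotence by \Cref{lem:strict_nilp}, and so the induced map on constructible spectra is surjective by \Cref{prop:cons-surj-dn}.

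For injectivity, I will use \Cref{thm:con-tn}(3) and \Cref{cor:geopoints}, which describe points of $\Spec^{\cons}_{T(n)}(S)$ as equivalence classes of maps $S \to E(L)$ with $L$ algebraically closed, where two such maps are equivalent iff the relevant tensor product over $S$ is non-zero. The first observation is that, since $\pi_0 E(L)/\m = L$ has no non-trivial idempotents, any map $R[t^{\pm 1}] \times R \to E(L)$ factors through exactly one of the two projections. So points of $\Spec^{\cons}_{T(n)}(R[t^{\pm 1}] \times R)$ come in two ``types'': those from $R[t^{\pm 1}]$, for which the image of $t$ in $E(L)$ is a unit, and those from $R$, for which the image of $t$ in $E(L)$ is zero.

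The injectivity will then follow from a case analysis on the types of two points $p_1, p_2$ of $R[t^{\pm 1}] \times R$ with the same image in $\Spec^{\cons}_{T(n)}(R[t])$. The main computational input is the cofiber sequence $R[t] \xrightarrow{t} R[t] \to R$ of $R[t]$-modules, which I use to rewrite $E(L_1) \otimes_{R[t]} E(L_2)$ in each case. When both $p_i$ are of the first type, $t$ is invertible on both $E(L_i)$, and one obtains $E(L_1) \otimes_{R[t]} E(L_2) \cong E(L_1) \otimes_{R[t^{\pm 1}]} E(L_2)$, reducing equivalence in $R[t]$ to equivalence in $R[t^{\pm 1}]$. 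When both $p_i$ are of the second type, $t$ acts as zero on both $E(L_i)$, and the cofiber sequence yields $E(L_1) \otimes_{R[t]} E(L_2) \cong (E(L_1) \oplus \Sigma E(L_1)) \otimes_R E(L_2)$, which is non-zero iff $E(L_1) \otimes_R E(L_2)$ is; again the equivalences match. In the cross case, $t$ is invertible on $E(L_1)$ and zero on $E(L_2)$, so $E(L_1) \otimes_{R[t]} R$ is the cofiber of an equivalence and hence zero, forcing $E(L_1) \otimes_{R[t]} E(L_2) = 0$; this rules out the case entirely. The hard part is mostly just keeping these derived tensor product manipulations straight; no serious new ideas are needed beyond the tools already developed in the paper.
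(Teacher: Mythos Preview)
Your proof is correct and reaches the same conclusion as the paper, but the injectivity argument follows a different route. The paper also reduces to checking bijectivity (using that $\Spec^\cons_{T(n)}$ lands in $\CHaus$), and the surjectivity step is identical to yours. For injectivity, however, the paper argues more abstractly via the appendix machinery: it shows that $\Spec_{T(n)}^{\cons}(R) \to \Spec_{T(n)}^{\cons}(R[t])$ is injective because $R[t] \to R$ admits a section, that $\Spec_{T(n)}^{\cons}(R[t^{\pm 1}]) \to \Spec_{T(n)}^{\cons}(R[t])$ is injective because $R[t^{\pm 1}]$ is an idempotent $R[t]$-algebra (invoking \Cref{cor:idempotent}), and that the images are disjoint because $R \otimes_{R[t]} R[t^{\pm 1}] = 0$. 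Your approach instead works directly with Nullstellensatzian representatives $E(L)$ and the cofiber sequence $R[t] \xrightarrow{t} R[t] \to R$, doing an explicit three-case tensor-product computation. The paper's argument is slightly cleaner and more portable, using only the general framework of \Cref{sec:geopoints} rather than the $T(n)$-specific description of geometric points; yours makes the underlying geometry more visible and sidesteps the need to check the $\coprod$-idempotence of $R[t^{\pm 1}]$ over $R[t]$.
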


\begin{proof}
Since $\Spec^\cons_{T(n)}$ lands in compact Hausdorff spaces, it is enough to check that the resulting map is a bijection, which will follow from the following 4 claims:
\begin{enumerate}
    \item The induced map $\Spec_{T(n)}^{\cons} (R) \to \Spec_{T(n)}^{\cons} (R[t])$ is injective, because $R[t] \to R$ admits a section.    
    \item The induced map $\Spec_{T(n)}^{\cons} ( R[t^{\pm 1}]) \to \Spec_{T(n)}^{\cons} (R[t])$ is injective by \Cref{cor:idempotent}, since $R[t^{\pm 1}]$ is an idempotent algebra over $R[t]$.
    \item The images of the maps from (1) and (2) are disjoint because 
\[R\otimes_{R[t]}R[t^{\pm 1}] = 0 .\] 
    \item The images of the maps from (1) and (2) are jointly surjective, which follows from \Cref{prop:cons-surj-dn} and \Cref{lem:strict_nilp}.
\end{enumerate}
\qedhere


\end{proof}

\begin{prop}\label{prop:spec_affine_E}
  Assume that  $n\geq 1$ and let $A\in \Perf_{k}$.  Then for $r,s \in \NN$, the algebraic approximation map induces an isomorphism
  \[ \Spec^{\cons}_{T(n)}(E(A)[\ZZ^{r}\times \NN^{s}]) \cong \Spec^{\cons}_{{\CRing}}(A[\ZZ^r \times \NN^{s} ]).\]
\end{prop}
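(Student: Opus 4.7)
The strategy is to show that the natural comparison map of compact Hausdorff spaces
\[
\Psi\colon \CSpec\!\bigl(E(A)[\ZZ^r\times\NN^s]\bigr) \longrightarrow \Spec^\cons_\CRing\!\bigl(A[\ZZ^r \times \NN^s]\bigr),
\]
arising from the left Kan extension of the algebraic approximation transformation of \Cref{subsec:algebraic_ approximations}, is a homeomorphism. Since $\Psi$ is continuous by construction and both spaces are compact Hausdorff, it suffices to verify that $\Psi$ is bijective on points. Writing $R := E(A)[M]$ with $M := \ZZ^r \times \NN^s$, \Cref{cor:geopoints} and the universal property of the monoid ring express a point of $\CSpec(R)$ as an equivalence class of pairs $(f, \phi)$ with $f\colon A \to L$ a perfect $k$-algebra map ($L$ algebraically closed) and $\phi\colon M \to \pi_0 E(L)$ a monoid map (landing in units on the $\ZZ^r$ factor), and $\Psi$ takes $(f,\phi)$ to its mod $\m$ reduction $(f, \bar\phi)$.

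For surjectivity: given a point $(f, \bar\phi)$ on the algebraic side, I would apply the multiplicative lift of \Cref{cnstr:teichmuller-lift} generator-wise (with $[0] := 0$) to obtain a lift $(f, [\bar\phi])$ mapping to the given point under $\Psi$.

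The substantive step is injectivity. Suppose $(f_i, \phi_i)\colon R \to E(L_i)$ for $i=1,2$ project to the same point on the algebraic side, i.e., $L_1 \otimes_{A[M]} L_2 \neq 0$. I aim to show $E(L_1) \otimes_R E(L_2) \neq 0$, which then gives equivalence of the chromatic points. Using the coproduct decomposition $R \simeq E(A) \otimes \Ss[M]$, I will compute
\[
E(L_1) \otimes_R E(L_2) \;\simeq\; E(L_1 \otimes_A L_2) \mm^\infty (\alpha_i : 1 \le i \le r+s),
\]
an iterated $\CAlg$-pushout of the form $(-)\mm^\infty(-)$ of convention (9), where $\alpha_i := \phi_1(e_i) - \phi_2(e_i) \in \pi_0 E(L_1 \otimes_A L_2)$ at each generator $e_i$ of $M$; here I use that $E(-)$ preserves pushouts as a left adjoint (\Cref{thm:E_functor}(1)) and that $L_1 \otimes_A L_2$ is perfect.

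The main obstacle is to verify nontriviality of this pushout. My plan is to pass to the Morava $K$-theory quotient of \Cref{cnstr:modm}, where the computation becomes classical:
\[
\bigl(E(L_1) \otimes_R E(L_2)\bigr)\modm \;\simeq\; \bigl(E(L_1 \otimes_A L_2)\modm\bigr) \mm^\infty (\bar\alpha_i),
\]
whose $\pi_0$ is $(L_1 \otimes_A L_2)/(\bar\alpha_i) = L_1 \otimes_{A[M]} L_2$, nonzero by hypothesis. Since the $K(n)$-localization of $E(L_1) \otimes_R E(L_2)$ is $\m$-complete, nontriviality of its mod $\m$ quotient forces nontriviality of the $K(n)$-localization, hence $E(L_1) \otimes_R E(L_2) \neq 0$ itself. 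Combined with \Cref{cor:geopoints}, this establishes injectivity of $\Psi$ and completes the proof.
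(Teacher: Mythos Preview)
Your argument has a genuine gap in the description of $\CAlg$-maps $E(A)[M]\to E(L)$. You write that such a map is a pair $(f,\phi)$ with $\phi\colon M\to \pi_0 E(L)$ an ordinary monoid map, but since $M$ is a \emph{discrete} commutative monoid (not the free $\E_\infty$-monoid on a set), a map $\Ss[M]\to E(L)$ in $\CAlg$ is a map of $\E_\infty$-monoids $M\to\Omega^\infty E(L)$; for $M=\ZZ^r$ this is a map of connective spectra $\ZZ^r\to\gl_1 E(L)$, i.e.\ a tuple of \emph{strict} units, not merely elements of $(\pi_0 E(L))^\times$. Identifying $\pi_0\Map_{\Sp_{\geq 0}}(\ZZ,\gl_1 E(L))$ with $L^\times$ (rather than $W(L)\ll u_1,\dots,u_{n-1}\rr^\times$) is exactly the content of \Cref{thm:strict-pic}, which you do not invoke. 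Under your parameterization there would be many distinct $\phi$ reducing to the same $\bar\phi$ (e.g.\ $1$ and $1+p$), and your injectivity argument gives no mechanism to identify them. Relatedly, the formula $E(L_1)\otimes_R E(L_2)\simeq E(B)\mm^\infty(\alpha_i)$ is incorrect: the pushout is over the \emph{flat} algebra $E(B)[M]$, not the free $\E_\infty$-algebra $E(B)\{z^0,\dots\}$, so the result is not a $\mm^\infty$-quotient (for $M=\ZZ$ one instead obtains the module-level cofiber $E(B)/(c-1)$ via the sequence $E(B)[t^{\pm 1}]\xrightarrow{t-1}E(B)[t^{\pm 1}]\to E(B)$). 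The subsequent passage of $\mm^\infty$ through $(-)\modm$ is also ill-formed since $E(k)\modm$ is only $\E_1$.

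The paper's proof avoids these issues by a different route: it first applies \Cref{prop:break_spec} to eliminate the $\NN^s$ factors (so one never confronts non-unit strict elements), reduces to $A=k$ by base change, and then uses \Cref{thm:strict-pic} directly to show that $\Psi_{E(k)[\ZZ^r]}$ is a natural \emph{isomorphism} of $\Spd$-functors, whence \Cref{lem:alg-approx} finishes. The missing ingredient in your argument is precisely \Cref{thm:strict-pic}.
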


\begin{proof}
First, by \Cref{prop:break_spec}, we are reduced to the case that $s=0$.
For this, by \Cref{lem:alg-approx}, it is enough to show that $\Psi_{E(A)[\Z^{r}]}$ (in the notation of the lemma) is a natural isomorphism. Since we have 
\[
E(k)[\Z^{r}]\otimes_{E(k)} E(A) \cong E(A)[\Z^{r}],
\]
we are reduced to the case $A =k$ by \Cref{lem:alg-approx}.

Now for $B = \prod_{i\in I}L_i$ a product of  algebraically closed fields under $k$, we get 
\begin{align*}
    \pi_0(\Map_{\CAlgw_{E(k)}}(E(k)[\Z^{r}],E(B))) &\cong  \pi_0(\Map_{\Sp_{\geq 0}}(\Z^{r},\gl1 E(B))) \\
    &\cong \pi_1(\Map_{\Sp_{\geq 0}}(\Z,\pic( \Modw_{E(B)})))^r \\
     &\cong  (B^\times)^{r} \cong \Map_{{\CRing}_{k}}(k[\Z^r],B)
\end{align*} 
where the third bijection is  \Cref{thm:strict-pic} and the naturality of the fourth bijection follows from \Cref{rmk:good_times}.
\end{proof}

\begin{prop}\label{prop:spec_affine_T}
 Assume that  $n\geq 1$, for $r,s \in \NN$, there is an isomorphism
  \[ \Spec^{\cons}_{T(n)}({\one}_{T(n)}[\ZZ^{r}\times \NN^{s}]) \cong \Spec^{\cons}_{{\CRing}}(\F_p[\ZZ^r \times \NN^{s} ])\]
 
\end{prop}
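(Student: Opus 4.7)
The plan is to bootstrap \Cref{prop:spec_affine_E} from $E(\overline{\F}_p)$-algebras down to $\one_{T(n)}$-algebras using the Morava stabilizer quotient of \Cref{lem:spec_morava}, and then perform Galois descent on the algebraic side. Write $M = \ZZ^r \times \NN^s$. First I would observe that $E(\overline{\F}_p) \otimes \one_{T(n)}[M] \simeq E(\overline{\F}_p)[M]$ as $\mathfrak{G}$-equivariant $T(n)$-local commutative algebras (with $\mathfrak{G}$ acting on the first tensor factor and fixing $M$), so that \Cref{lem:spec_morava} gives
\[
\Spec^{\cons}_{T(n)}(\one_{T(n)}[M]) \cong \Spec^{\cons}_{T(n)}(E(\overline{\F}_p)[M])/\mathfrak{G}.
\]

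Next I would apply \Cref{prop:spec_affine_E} with the perfect field $k = \F_p$ and $A = \overline{\F}_p$ to identify the numerator with $\Spec^{\cons}_{\CRing}(\overline{\F}_p[M])$. Because \Cref{prop:spec_affine_E} is proved via the algebraic approximation map, which is natural in $\CAlgw_{E(\F_p)}$, this identification is automatically $\mathfrak{G}$-equivariant for the induced actions. The task then becomes identifying the $\mathfrak{G}$-action on $\Spec^{\cons}_{\CRing}(\overline{\F}_p[M])$.

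The crux is showing that this $\mathfrak{G}$-action factors through $\mathrm{Gal}(\overline{\F}_p/\F_p)$ acting on coefficients only. On the scalars $\pi_0(E(\overline{\F}_p))/\m = \overline{\F}_p$ the action of $\mathfrak{G}$ factors through the Galois quotient for the standard reason (the determinant part $\mathcal{O}^\times$ acts trivially on the residue field). On the $M$-generators I would argue as follows: under the identifications used in the proof of \Cref{prop:spec_affine_E}, the $r$ generators of $\ZZ^r$ correspond to multiplicative lifts in $B^\times$ via the isomorphism $\pi_0\Map_{\CAlgw_{E(\F_p)}}(E(\F_p)[\ZZ^r], E(B)) \cong (B^\times)^r$, and \Cref{rmk:good_times} says precisely that the Morava stabilizer action on the multiplicative lifts in $L^\times$ factors through Galois. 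The $\NN^s$-factors are then handled by iterating \Cref{prop:break_spec}, which splits the flat affine line $T(n)$-locally into its invertible locus and its origin, both reduced to cases already covered.

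With Step 3 in hand, it remains to prove the classical fact
\[
\Spec^{\cons}_{\CRing}(\overline{\F}_p \otimes_{\F_p} R_0)/\mathrm{Gal}(\overline{\F}_p/\F_p) \cong \Spec^{\cons}_{\CRing}(R_0)
\]
for $R_0 = \F_p[M]$. Surjectivity is faithful flatness of $\F_p \to \overline{\F}_p$; injectivity follows because any two maps $\overline{\F}_p \otimes_{\F_p} R_0 \to L_1, L_2$ agreeing on $R_0$ can be amalgamated in a common algebraically closed extension $L$, and the two resulting embeddings $\overline{\F}_p \hookrightarrow L$ differ by a Galois automorphism. The main obstacle I anticipate is Step 3: one must carefully unwind the algebraic approximation map of \Cref{prop:spec_affine_E} to verify that the induced $\mathfrak{G}$-action on the classical constructible spectrum is the Galois action, using \Cref{rmk:good_times} for the $\ZZ^r$ part and the compatible identification on nilpotent generators for the $\NN^s$ part; all the remaining ingredients are either standard commutative algebra or already assembled in the preceding sections.
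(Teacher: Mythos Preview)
Your proposal is correct and follows essentially the same route as the paper: apply \Cref{lem:spec_morava} to reduce to $E(\overline{\F}_p)[M]$, invoke \Cref{prop:spec_affine_E} to pass to $\Spec^{\cons}_{\CRing}(\overline{\F}_p[M])$, use \Cref{rmk:good_times} to see the $\mathfrak{G}$-action factors through $\mathrm{Gal}(\overline{\F}_p/\F_p)$, and then take the Galois quotient. The paper's proof is terser and leaves implicit the final classical identification $\Spec^{\cons}_{\CRing}(\overline{\F}_p[M])/\mathrm{Gal} \cong \Spec^{\cons}_{\CRing}(\F_p[M])$ that you spell out.
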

\begin{proof}
Let $R\coloneqq {\one}_{T(n)}[\ZZ^{r}\times \NN^{s}]$. By \Cref{prop:spec_affine_E}, we have 
\[\Spec^{\cons}_{T(n)}(R\otimes E(\overline{\F}_p)) \cong \Spec^{\cons}_{{\CRing}}(\overline{\F}_p [\ZZ^{r}\times \NN^{s}])\]  By \Cref{rmk:good_times}, the action of the Morava stabilizer group $\mathfrak{G}$ on $\Spec^{\cons}_{{\CRing}}(\overline{\F}_p [\ZZ^{r}\times \NN^{s}]) $ factors through the projection to $\mathrm{Gal}(\F_p)$.
The result now follows from \Cref{lem:spec_morava}.
\end{proof}

\begin{rmk}
Propositions \ref{prop:spec_affine_E} and \ref{prop:spec_affine_T} restrict to the case $n\geq 1$ because they employ the algebraic approximation results of \Cref{subsec:algebraic_ approximations}, which we developed only for $n\neq 0$. The interested reader can verify similar statements for $n=0$ as well.
\tqed
\end{rmk}

\begin{prop}\label{prop:thomimage}
  Let $R \in \CAlg(\Sp_{T(n)})$ and let $f\colon X \to \pic(\Modw_R)$ be a map in $\Sp_{\geq 0}$. 
  Then a geometric point $x\colon R \to E(L)$ of $\Spec_{T(n)}^{\cons}(R)$ is in the image of the map 
  \[ \Spec_{T(n)}^{\cons}(Mf) \to \Spec_{T(n)}^{\cons}(R)\]
  if and only if 
  the composite
  \[X \xrightarrow{f} \pic(\Modw_R) \xrightarrow{\pic(x)} \pic(\Modw_{E(L)})\]
  is null.
\end{prop}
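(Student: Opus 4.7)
My plan is to reduce the statement to a direct application of \Cref{cor:main-E-picard} via base change of Thom spectra.

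First, I would translate the condition that $x$ lies in the image of $\Spec_{T(n)}^{\cons}(Mf) \to \Spec_{T(n)}^{\cons}(R)$ into algebraic terms. By \Cref{thm:con-tn}(5) applied to the span $Mf \leftarrow R \to E(L)$, the fiber of $\Spec_{T(n)}^{\cons}(Mf) \to \Spec_{T(n)}^{\cons}(R)$ over $x$ is the image of $\Spec_{T(n)}^{\cons}(Mf \otimes_R E(L))$ in this fiber. Combined with \Cref{thm:con-tn}(1) (which says $\Spec_{T(n)}^{\cons}$ of an algebra is empty iff the algebra is zero), $x$ is in the image if and only if $Mf \otimes_R E(L) \neq 0$.

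Next, I would identify $Mf \otimes_R E(L)$ with a Thom spectrum over $E(L)$. Because the Thom spectrum construction $Mf = \colim_X f$ is a colimit in $\Modw_R$ and base change $-\otimes_R E(L)\colon \Modw_R \to \Modw_{E(L)}$ preserves colimits, we obtain a natural equivalence
\[ Mf \otimes_R E(L) \;\cong\; M(\pic(x)\circ f) \]
in $\CAlgw_{E(L)}$, where $\pic(x)\colon \pic(\Modw_R) \to \pic(\Modw_{E(L)})$ is the map induced on Picard spectra by the commutative algebra map $x$.

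Finally, I would invoke \Cref{cor:main-E-picard} applied to the map $\pic(x)\circ f\colon X \to \pic(\Modw_{E(L)})$: it gives the equivalence of $M(\pic(x)\circ f) \neq 0$ with the null-homotopy of $\pic(x)\circ f$. Combining the three steps yields the desired equivalence. I do not anticipate any technical obstacles here; the only mild point of care is ensuring that the equivalence $Mf \otimes_R E(L) \cong M(\pic(x)\circ f)$ is really one of (nonzero) $E(L)$-algebras so that \Cref{cor:main-E-picard} applies, but this is immediate from the universal property of Thom spectra and the fact that base change is symmetric monoidal.
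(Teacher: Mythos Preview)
Your proposal is correct and follows essentially the same approach as the paper: translate ``$x$ lies in the image'' into $Mf \otimes_R E(L) \neq 0$, identify this base change with the Thom spectrum of $\pic(x)\circ f$, and apply \Cref{cor:main-E-picard}. The paper's argument is terser (it says ``by definition'' for the first step rather than citing \Cref{thm:con-tn}(1),(5)), but the logic is the same.
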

\begin{proof}
By definition, $x$ is in the image of \[ \Spec_{T(n)}^{\cons}(Mf) \to \Spec_{T(n)}^{\cons}(R)\] if and only if $Mf\otimes_R E(L) \neq 0$. But, $Mf\otimes_R E(L)= Mg$ for $g = \pic(x)\circ f$.  Thus, we may conclude by 
\Cref{cor:main-E-picard}, which asserts that $Mg \neq 0$ if and only if $g$ is null.
\end{proof}

\begin{cor}\label{cor:thompoints}
 Let $R \in \CAlg(\Sp_{T(n)})$ and let $f\colon X \to \pic(\Modw_R)$ be a map in $\Sp_{\geq 0}$. 
 \begin{enumerate}
     \item $Mf \neq 0$ if and only if there exists $x\colon R \to E(L)$  in $\Spec_{T(n)}^{\cons}(R)$ such that the composite
  \[X \xrightarrow{f} \pic(\Modw_R) \xrightarrow{\pic(x)} \pic(\Modw_{E(L)})\]
  is null.
  \item The map $R \to Mf$ detects nilpotence if and only if, for every $x:R \to E(L)$  in $\Spec_{T(n)}^{\cons}(R)$, the composite
  \[X \xrightarrow{f} \pic(\Modw_R) \xrightarrow{\pic(x)} \pic(\Modw_{E(L)})\]
  is null.
 \end{enumerate}
\end{cor}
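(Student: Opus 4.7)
The plan is to deduce this directly from \Cref{prop:thomimage} combined with two basic properties of the constructible spectrum: its non-emptiness characterizes non-vanishing of the ring, and surjectivity on constructible spectra characterizes nilpotence detection.

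For part (1), I would first observe that by \Cref{thm:con-tn}(1), $Mf \neq 0$ if and only if $\Spec^{\cons}_{T(n)}(Mf)$ is non-empty. On the other hand, \Cref{prop:thomimage} characterizes which geometric points of $R$ lie in the image of the induced map on constructible spectra: namely, $x\colon R \to E(L)$ lies in the image exactly when $\pic(x) \circ f$ is null. Therefore the set of $x$ for which the composite is null is precisely the image of $\Spec^{\cons}_{T(n)}(Mf) \to \Spec^{\cons}_{T(n)}(R)$, and the existence of such an $x$ is equivalent to $\Spec^{\cons}_{T(n)}(Mf)$ being non-empty.

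For part (2), I would invoke \Cref{prop:cons-surj-dn}, which states that a map in $\CAlg(\Sp_{T(n)})$ detects nilpotence if and only if the associated map on constructible spectra is surjective. Applied to $R \to Mf$, this means the detection of nilpotence is equivalent to the statement that every geometric point $x\colon R \to E(L)$ of $R$ lies in the image of $\Spec^{\cons}_{T(n)}(Mf) \to \Spec^{\cons}_{T(n)}(R)$. Using \Cref{prop:thomimage} again to translate the image condition into the nullity of $\pic(x)\circ f$, this is exactly the condition stated.

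Both parts therefore reduce to mechanical combinations of \Cref{prop:thomimage} with either \Cref{thm:con-tn}(1) or \Cref{prop:cons-surj-dn}, so there is no genuine obstacle; the content lies entirely in the earlier results.
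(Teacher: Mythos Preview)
Your proposal is correct and matches the paper's treatment: the paper states this as an immediate corollary of \Cref{prop:thomimage} without a separate proof, and the ingredients you identify (\Cref{thm:con-tn}(1) for part (1) and \Cref{prop:cons-surj-dn} for part (2)) are exactly what is needed to make the deduction explicit.
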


\begin{rmk}
Note that, as a consequence of \Cref{thm:pic-split-main} and \Cref{cor:geopoints}, the conditions of \Cref{prop:thomimage} and \Cref{cor:thompoints} do not depend on the choice of Nullstellensatzian representative $x$ for a given geometric point.
\tqed
\end{rmk}

\section{Chromatic support}
\label{sec:chromsupport}
Let $R$ be a $p$-local ring spectrum.  Two consequences of the Devinatz-Hopkins-Smith nilpotence theorem are that $R$ is $T(n)$-acyclic if and only if $R$ is $K(n)$-acyclic, and that $R$ is zero if and only if $T(n)\otimes R = 0$ for all $0\leq n < \infty$ and $\F_p\otimes R=0$.  It is thus natural to define:

\begin{dfn}
The chromatic support of a $p$-local ring spectrum $R$ is the set
\[
\mdef{\supp}(R) = \{ n\in \N \: |\:  T(n)\otimes R \neq 0\}.
\]
\tqed
\end{dfn}

For an $\E_l$-ring spectrum, $0\leq l <\infty$, the following example shows that there is no restriction on the chromatic support.

\begin{exm}
Fix $0\leq m <\infty$.  By \cite[Theorem 1.4]{BurklundMoore}, we have for each $i$ that the $\MU$-module $\MU/v_i^{m+1}$ admits the structure of an $\E_m$-$\MU$-algebra.  Then, for any subset of $J\subset \N$, the $\E_m$-$MU$ algebra
\[
\bigotimes_{\MU}^{i \in \N \setminus J} \MU/v_i^{m+1}
\]
has chromatic support exactly $J$. 
\tqed
\end{exm}

However, the chromatic support of an \emph{$\E_{\infty}$-ring spectrum} turns out to be quite constrained as a result of the power operations on its homotopy groups.  One basic manifestation of these constraints is the following theorem of Mathew-Naumann-Noel:

\begin{thm}[May nilpotence conjecture, \cite{MNNmaynilp}]
Suppose $R\in \CAlg(\Sp)$ is a commutative algebra such that $R\otimes \Q = 0$.  Then $R$ is $T(n)$-acyclic for all $n\geq 0$.  
\end{thm}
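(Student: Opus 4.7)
The plan is to derive the stated theorem as an almost immediate consequence of Hahn's chromatic support theorem \Cref{thm:hahn}, which has already been established in the excerpt (via \Cref{cor:intro_map_to_E}). I do not expect to need to invoke the machinery of the Mathew--Naumann--Noel paper directly; the content of the statement in this paper's framework is entirely absorbed into \Cref{thm:hahn}.

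Concretely, first I would fix a prime $p$ (since the telescopic spectra $T(n)$ with $n \geq 1$ are prime-specific). The goal is then to show $R \otimes T(n) = 0$ for every $n \geq 0$ at this prime; since the hypothesis $R \otimes \QQ = 0$ is independent of the prime chosen, doing this for every $p$ will finish the proof.

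Next, I would verify the base case $n=0$. The telescope $T(0)$ at the prime $p$ is the localization $v_0^{-1}\Ss_{(p)} = \Ss_{(p)}[1/p]$, which is the rational spectrum $\QQ$. Consequently
\[
R \otimes T(0) \;\simeq\; R \otimes \QQ \;=\; 0
\]
by hypothesis. This establishes $n=0$.

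Finally, an immediate transfinite (in fact, $\omega$-indexed) induction using \Cref{thm:hahn} — which provides the implication $R \otimes T(n) = 0 \Rightarrow R \otimes T(n+1) = 0$ for every $n \geq 0$ — produces vanishing at all heights. No step here is technically challenging: all the real work is hidden inside \Cref{thm:hahn}, whose proof proceeds by contradiction (assuming $R \otimes T(n+1) \neq 0$, applying \Cref{cor:intro_map_to_E} to produce an $\E_\infty$-map from a suitable $T(n+1)$-localization of $R$ into a height $n+1$ algebraically closed Lubin--Tate theory $E(L)$, and then reading off from the homotopy groups of $E(L)$ that $T(n)\otimes E(L) \neq 0$, hence $T(n)\otimes R\neq 0$). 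There is no genuine obstacle in the present deduction; it is purely a bookkeeping consequence of results already at our disposal.
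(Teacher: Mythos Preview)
The paper does not supply its own proof of this theorem; it is simply quoted from \cite{MNNmaynilp} as background, after which Hahn's theorem (\Cref{thm:Hahn-later}) is presented as the generalization. Your deduction from Hahn's theorem is correct and is exactly the sense in which the paper says Hahn's result ``greatly generalizes'' the May nilpotence conjecture: the base case $T(0)\simeq H\Q$ gives $R\otimes T(0)=0$, and the inductive step is \Cref{thm:Hahn-later}.

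One caution on logical dependencies. The paper's own proof of \Cref{thm:Hahn-later} goes through \Cref{cor:mod_algclosed}, which in turn relies on \Cref{prop:oddnilp_phantom} and hence on \Cref{lem:May}. The proof of \Cref{lem:May} cites the May nilpotence conjecture itself as an \emph{alternative} justification in a footnote; the primary argument instead uses Kuhn's $1$-semiadditivity of $\Sp_{T(n)}$ together with \cite[Corollary~4.3.5]{TeleAmbi}. So your deduction is non-circular as the paper is actually written, but only because the paper took the TeleAmbi route at that step. If you were to present this as a self-contained new proof of the May nilpotence conjecture, you should flag that dependency explicitly.
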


In other words, if $\supp(R)$ is nonempty, then $0\in \supp(R)$.  This phenomenon was greatly generalized by Hahn \cite{Hahnsupport}, affirming a conjecture of Hovey:
\begin{thm}[Hahn]\label{thm:Hahn-later}
If a commutative algebra $R$ is $T(n)$-acyclic, then $R$ is $T(n+1)$-acyclic.
\end{thm}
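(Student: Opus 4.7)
We prove the contrapositive: assume $R\otimes T(n+1)\neq 0$, and show $R\otimes T(n)\neq 0$. The plan is to produce a map of commutative algebras $R\to E(L)$ for some algebraically closed field $L$ of height $n+1$, thereby reducing the statement to the easy computation that $T(n)\otimes E(L)\neq 0$.

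Since $R\otimes T(n+1)\neq 0$, the localization $L_{T(n+1)}R$ is a nonzero object of $\CAlg(\Sp_{T(n+1)})$. Applying \Cref{cor:mod_algclosed} at height $n+1$ produces an algebraically closed field $L$ and a map $L_{T(n+1)}R\to E(L)$ in $\CAlg(\Sp_{T(n+1)})$, where $E(L)$ is the height $n+1$ Lubin--Tate theory associated to $L$. Composing with the unit $R\to L_{T(n+1)}R$ yields a map $\varphi\colon R\to E(L)$ in $\CAlg(\Sp)$, through which $E(L)$ becomes an $R$-algebra. Consequently
\[
 T(n)\otimes E(L) \;\cong\; (T(n)\otimes R)\otimes_R E(L),
\]
so it suffices, for the contradiction, to verify that $T(n)\otimes E(L)\neq 0$.

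To see this, fix a type $n$ generalized Moore spectrum $V_n=\Ss/(p^{i_0},v_1^{i_1},\ldots,v_{n-1}^{i_{n-1}})$ admitting a $v_n$-self map, so that $v_n^{-1}V_n$ has Bousfield class $\langle T(n)\rangle$. By \Cref{thm:E_functor}(3), the sequence $p,u_1,\ldots,u_n$ is regular in $\pi_*E(L)$ and determines an isomorphism
\[
 \pi_*(V_n\otimes E(L))\;\cong\;\pi_*(E(L))/(p^{i_0},v_1^{i_1},\ldots,v_{n-1}^{i_{n-1}}).
\]
In this quotient the class $v_n=u_n u^{p^n-1}$ remains a nonzerodivisor, since its leading coefficient involves the regular element $u_n$. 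Hence $v_n^{-1}V_n\otimes E(L)$ is a nonzero ring spectrum, which gives $T(n)\otimes E(L)\neq 0$, the desired contradiction.

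The proof faces no significant obstacle beyond the invocation of \Cref{cor:mod_algclosed}; all essential chromatic content has been packaged into that map-out theorem. The only point to be aware of is that the paper's proof of \Cref{cor:mod_algclosed} is inductive on the height (\Cref{rmk:intro-induct}), so applying it at height $n+1$ silently uses the result in lower heights, but this raises no circularity issue for any fixed $n$.
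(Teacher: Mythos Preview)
Your proof is correct and follows essentially the same route as the paper: take the contrapositive, invoke \Cref{cor:mod_algclosed} at height $n+1$ to obtain a commutative algebra map $R\to E(L)$, and reduce to the elementary fact that $T(n)\otimes E(L)\neq 0$. The paper simply asserts this last point (noting that a map of commutative algebras with nonzero target has nonzero source), whereas you spell out the computation via a type~$n$ Moore spectrum and the regularity of the Landweber sequence; one minor slip is that $v_n^{-1}V_n\otimes E(L)$ need not be a ring spectrum, but you only need that it is nonzero.
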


Therefore, the chromatic support of any $R\in \CAlg(\Sp)$ is either empty or an interval containing $0$.  It is then natural to define:
\[
\mdef{\mathrm{height}(R)} \coloneqq \max\{ n \geq -1 \space  | T(n)\otimes R \neq 0 \}\footnote{Here, we set $T(-1) = \mathbb{S}$.}.
\]

Using the results of our paper, we are able to give an alternate proof of Hahn's theorem:

\begin{proof}
    Suppose $R$ is not $T(n+1)$-acyclic.  Then by \Cref{cor:mod_algclosed} provides a commutative algebra map $R\to E_{n+1}(L)$ to some height $n+1$ Lubin--Tate theory.  But $E_{n+1}(L)$ is not $T(n)$-acyclic, so $R$ cannot be either.  Here, we are using the observation that if $S \to S'$ is a map of commutative algebras and $S'$ is nonzero, then $S$ is nonzero, as the zero algebra admits no nontrivial modules.
\end{proof}

In fact, this proof is in a sense effective, in that it exhibits a commutative algebra map which witnesses the nontriviality of $R$.  This allows us to analyze the chromatic behavior of \emph{functors} applied to a given ring $R$, because if $F$ is any lax monoidal functor, then a ring map $R\to E$ induces a ring map $F(R) \to F(E)$.  

In \Cref{sub:blue}, we apply this to certain geometric fixed point functors to obtain a converse to chromatic blueshift statements for Tate cohomology.   Then in \Cref{sub:red}, we apply this strategy to algebraic $K$-theory and show that algebraic $K$-theory raises the chromatic height of a commutative algebra by exactly $1$.

\subsection{Chromatic blueshift}\label{sub:blue}\hfill

Given any spectrum $X$, one can endow $X$ with the trivial action of the group $C_p$ and extract the Greenlees--May Tate cohomology spectrum, $X^{tC_p}$ \cite{GreenleesMay}. In fact, the resulting endofunctor $(-)^{tC_p}\colon \Sp \to \Sp$ is lax symmetric monoidal \cite[Theorem I.3.1]{NS}, and therefore induces an endofunctor $(-)^{tC_p}\colon \CAlg(\Sp) \to \CAlg(\Sp)$.  Following work by Greenlees--Sadofsky and Hovey--Sadofsky \cite{GS96, HS96}, Kuhn showed:

\begin{thm}[\cite{Kuhn}]
Let $X$ be a $T(n)$-local spectrum.  Then $L_{T(n)}X^{tC_p} = 0.$
\end{thm}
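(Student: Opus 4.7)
The plan is to combine the lax symmetric monoidal structure of the $C_p$-Tate construction with the nilpotence-detection results established earlier in this paper. Since $(-)^{tC_p}\colon \Sp \to \Sp$ is lax symmetric monoidal and $L_{T(n)}$ is symmetric monoidal, the composite $\Phi := L_{T(n)}(-)^{tC_p}$ is a lax symmetric monoidal functor landing in $\Sp_{T(n)}$. In particular, $\Phi(\mathbb{S}) = L_{T(n)}\mathbb{S}^{tC_p}$ is a commutative algebra in $\Sp_{T(n)}$ and $\Phi(X)$ is naturally a module over it for every $X \in \Sp$. It therefore suffices to prove $\Phi(\mathbb{S}) = 0$, which already yields the stronger statement that $L_{T(n)}X^{tC_p} = 0$ for every $X \in \Sp$, not just for $X \in \Sp_{T(n)}$.

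By \cite{DHS}, the unit map $\one_{T(n)} \to E(k)$ to any height-$n$ Lubin--Tate theory detects nilpotence, and hence by \Cref{lem:DN_nilconservative} base change along it is nil-conservative on commutative algebras in $\Sp_{T(n)}$. Applied to $\Phi(\mathbb{S})$, this reduces the problem to showing
\[
\Phi(\mathbb{S}) \otimes E(k) \;\simeq\; L_{T(n)}\bigl(\mathbb{S}^{tC_p} \otimes E(k)\bigr) \;=\; 0 .
\]

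For this I would use the lax-monoidal comparison map $\mathbb{S}^{tC_p} \otimes E(k) \to E(k)^{tC_p}$ induced by the unit $\mathbb{S} \to E(k)$, together with the Greenlees--Strickland description of the Tate construction applied to a complex-oriented even-periodic ring: $\pi_* E(k)^{tC_p} \cong \pi_* E(k)(\!(x)\!)/[p]_{\mathcal{F}}(x)$, where $\mathcal{F}$ denotes the formal group law of $E(k)$. The defining relation $[p]_{\mathcal{F}}(x) = 0$, combined with the invertibility of the Euler class $x$, forces $p$ (and in fact all generators of the Landweber ideal $\m$) to become units in $\pi_0 E(k)^{tC_p}$. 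Since $T(n)$ is $p$-local and $p$-torsion for $n \geq 1$, and the analogous vanishing holds rationally at $n = 0$, this gives $L_{T(n)} E(k)^{tC_p} = 0$, so every $E(k)$-module yields a Tate vanishing after $L_{T(n)}$ via the induced module structure.

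The main obstacle is the final comparison step: the lax-monoidal map $\mathbb{S}^{tC_p} \otimes E(k) \to E(k)^{tC_p}$ is not a priori an equivalence, because $E(k)$ fails to be dualizable in $\Sp$. One way forward is to approximate $E(k)$ by its dualizable $E(k)$-module retracts --- for instance by first smashing with a type-$n$ generalized Moore spectrum, for which the comparison does become an equivalence --- and then to reassemble the general statement via a colimit or Milnor-type argument. Alternatively, and more cleanly within the framework of this paper, the theorem is a direct consequence of the $\infty$-semi-additivity of $\Sp_{T(n)}$ established in Hopkins--Lurie \cite{HL} and Carmeli--Schlank--Yanovski \cite{TeleAmbi}, which provides the internal norm equivalence $L_{T(n)}(X_{hC_p}) \simeq X^{hC_p}$ and hence forces the Tate cofiber to vanish after $L_{T(n)}$ without further computation.
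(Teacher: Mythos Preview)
The paper does not prove this theorem; it is quoted from \cite{Kuhn} as an external input, with no argument supplied. There is therefore no paper proof to compare your proposal against.

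On the substance of your sketch: the reduction to showing $L_{T(n)}\mathbb{S}^{tC_p}=0$ via the lax symmetric monoidal structure of $(-)^{tC_p}$ is correct and standard. Your first route, through the comparison map $\mathbb{S}^{tC_p}\otimes E(k)\to E(k)^{tC_p}$, has precisely the gap you identify, and the Moore-spectrum patch does not close it: $E(k)\otimes V$ is dualizable in $\Modw_{E(k)}$ (and in $\Sp_{T(n)}$) but not in $\Sp$, so the lax-monoidal assembly map is still not an equivalence there, and you have not explained how to reassemble the statement from such approximations. Your second route, invoking the semi-additivity of $\Sp_{T(n)}$, is essentially circular as a \emph{proof} of Kuhn's theorem: Tate vanishing for $C_p$ on $\Sp_{T(n)}$ is exactly the statement that $\Sp_{T(n)}$ is $1$-semiadditive for $BC_p$, and the higher semi-additivity results of \cite{HL} and \cite{TeleAmbi} take Kuhn's result (equivalently, $1$-semi-additivity) as their starting point. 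So while it is perfectly legitimate for this paper to \emph{use} Kuhn's theorem as a black box (as it does, e.g.\ in \Cref{lem:May}), your appeal to semi-additivity does not constitute an independent proof of it.
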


In particular, for a $T(n)$-local commutative algebra $R$, $R^{tC_p}$ has height at most $n-1$.  This lowering of chromatic height has been dubbed \emph{blueshift}.  

The blueshift phenomenon is particularly accessible in the case of Lubin--Tate theory, where it has been understood in much greater generality by work of  Barthel--Hausmann--Naumann--Nikolaus--Noel--Stapleton.  To state their result, we briefly recall the following notions from equivariant homotopy theory:

\begin{dfn}\hfill
\begin{itemize} 
\item For a finite abelian group $B$, let $\mdef{\mathrm{rk}_p(B)} = \mathrm{dim}_{\F_p}(B\otimes_{\Z} \F_p).$ \item For a proper family of subgroups $\mathcal{F}$ of a finite abelian $p$-group $A$, we set
\[
\mdef{\mathrm{cork}_p(\mathcal{F})} = \min \{ \mathrm{rk}_p(A') \: | \: A'\subset A \text{ such that }A'\not\in \mathcal{F} \}.
\]
\item For such a family $\mathcal{F}$ and a genuine $A$-spectrum $X$, we let \deff{$\Phi^{\mathcal{F}}X$} denote the corresponding geometric fixed points.  This recovers the usual geometric fixed points when $\mathcal{F}$ is the family of proper subgroups of $A$ and classical Tate construction $X^{tA}$ when $X$ is Borel-equivariant and $\mathcal{F} = \{0\}$.  \tqed
\end{itemize}
\end{dfn}

Then we have:

\begin{thm}[{\cite[Theorem 3.5]{BHNNNS}}]\label{thm:BHNNNS}
Let $E(k)$ be a Lubin--Tate theory of height $n$ for a perfect field $k$, let $A$ be a finite abelian $p$-group and regard $E(k)$ as a Borel-equivariant genuine $A$-spectrum.  Then for any family $\mathcal{F}$ of subgroups of $A$, 
\[
\height(\Phi^{\mathcal{F}}E(k)) = n - \mathrm{cork}_p(\mathcal{F}).
\]
\end{thm}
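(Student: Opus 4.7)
Set $c := \mathrm{cork}_p(\mathcal{F})$; the assertion is an equality which I would prove as the two inequalities $\height(\Phi^{\mathcal{F}}E(k)) \leq n-c$ (blueshift) and $\height(\Phi^{\mathcal{F}}E(k)) \geq n-c$ (non-vanishing), handled via complementary facets of an isotropy-separation decomposition of $\Phi^{\mathcal{F}}E(k)$. For the upper bound, I would apply isotropy separation to exhibit a finite filtration of $\Phi^{\mathcal{F}}E(k)$ indexed by subgroups $A' \leq A$ outside $\mathcal{F}$, whose $A'$-layer is (up to homotopy fixed points for the Weyl quotient $A/A'$, which preserve height bounds) an iterated Tate construction $E(k)^{tC_p \cdots tC_p}$ of length $\mathrm{rk}_p(A') \geq c$ along a composition series of $A'$. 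Kuhn's theorem that $L_{T(m)}X^{tC_p}=0$ for $T(m)$-local $X$, iterated $c$ times, annihilates each layer after $T(n-c+1)$-localization and hence annihilates $\Phi^{\mathcal{F}}E(k)$ itself.

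For the lower bound, by \Cref{cor:mod_algclosed} it suffices to show $L_{T(n-c)}\Phi^{\mathcal{F}}E(k)\neq 0$. I would pick a minimal-rank subgroup $A' \cong C_p^c \leq A$ outside $\mathcal{F}$ (such an $A'$ exists because a minimal-rank subgroup outside $\mathcal{F}$ has rank $c$, and any subgroup of that rank contains a copy of $C_p^c$) and analyse the $A'$-stratum of the isotropy separation via Hopkins--Kuhn--Ravenel/Stapleton transchromatic character theory. Stapleton's computation expresses $\pi_* L_{K(n-c)} E(k)^{BA}$ in terms of level-structure data on the universal deformation formal group, yielding a finite product decomposition whose summand indexed by the $A'$-level structure is Galois over a height $n-c$ Lubin--Tate homotopy ring; propagating this computation through the isotropy filtration yields a parallel decomposition of $\pi_* L_{K(n-c)}\Phi^{\mathcal{F}}E(k)$ in which the $A'$-summand is patently non-zero. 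Thus $L_{T(n-c)}\Phi^{\mathcal{F}}E(k) \neq 0$, and \Cref{cor:mod_algclosed} then produces an $\E_\infty$-ring map $\Phi^{\mathcal{F}}E(k) \to E_{n-c}(L)$ into a height $n-c$ Lubin--Tate theory over an algebraically closed field $L$, finishing the proof.

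The main obstacle is executing the HKR/Stapleton decomposition finely enough that the $A'$-summand is seen to survive, rather than be cancelled by differentials from higher-rank strata in the isotropy-separation spectral sequence. The cleanest route is to exhibit a canonical idempotent decomposition of $L_{K(n-c)}\Phi^{\mathcal{F}}E(k)$ along the minimal-rank subgroups of $A$ outside $\mathcal{F}$, isolating the $A'$-summand \emph{a priori} as an honest direct factor and bypassing the spectral-sequence bookkeeping entirely. Philosophically, this reduction is of a piece with the arguments of \Cref{sec:mapout}: one translates an $\E_\infty$-level non-vanishing statement into a formal-group-theoretic computation on $\pi_0$ modulo the Landweber ideal, and then invokes the full faithfulness of Lurie's $E(-;-)$ from \Cref{thm:GHML}(1) to realize the algebraically-constructed target as a Lubin--Tate spectrum.
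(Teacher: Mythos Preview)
The paper does not prove this theorem at all—it is quoted verbatim as \cite[Theorem 3.5]{BHNNNS} and used as a black box in the proof of \Cref{thm:blueshift}. So there is no ``paper's own proof'' to compare against.

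That said, your sketch is a reasonable outline of how \cite{BHNNNS} actually argues: the upper bound via iterated Kuhn blueshift along an isotropy-separation filtration, and the lower bound via a transchromatic/character-theoretic computation exhibiting the $K(n-c)$-localization of the relevant geometric fixed points as a nonzero Lubin--Tate-type ring. Your invocations of \Cref{cor:mod_algclosed} and \Cref{thm:GHML}(1), however, are superfluous and somewhat muddled. The lower bound $\height(\Phi^{\mathcal{F}}E(k)) \geq n-c$ is \emph{by definition} the statement $L_{T(n-c)}\Phi^{\mathcal{F}}E(k) \neq 0$; once you have established non-vanishing you are finished, and producing an $\E_\infty$-map to $E_{n-c}(L)$ via the Nullstellensatz adds nothing. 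Indeed, \cite{BHNNNS} predates this paper and cannot use its results; in the present paper the logical flow runs the other way (\Cref{thm:BHNNNS} is an \emph{input} to \Cref{thm:blueshift}, which then combines it with \Cref{cor:mod_algclosed}). Your final paragraph's allusion to the machinery of \Cref{sec:mapout} is likewise a red herring for this particular statement.
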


Our goal in this section is to observe that by combining \Cref{cor:mod_algclosed} with \Cref{thm:BHNNNS}, we obtain a converse to chromatic blueshift.  We remark that in the case of $G=C_p$, this  statement has been shown to be equivalent to Hahn's theorem by \cite{CMNN}. We show:

\begin{thm}\label{thm:blueshift}
Let $A$ be a finite abelian $p$-group and let $\mathcal{F}$ be a proper family of subgroups of $A$.  Let $R$ be a commutative algebra and regard $A$ as a Borel-equivariant genuine $A$-spectrum with the trivial action.  Then, if $\Phi^{\mathcal{F}}R$ is $T(n-\mathrm{cork}_p(\mathcal{F}))$-acyclic, then $R$ is $T(n)$-acyclic.   
\end{thm}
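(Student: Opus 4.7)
The plan is to prove the contrapositive: assuming $R$ is not $T(n)$-acyclic, we show that $\Phi^{\mathcal{F}}R$ is not $T(n-\mathrm{cork}_p(\mathcal{F}))$-acyclic. The entire argument is a direct assembly of \Cref{cor:mod_algclosed} with \Cref{thm:BHNNNS}, bridged by the lax symmetric monoidality of the geometric fixed point functor.

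First, assume $n \geq 1$ (the height $0$ case is essentially formal). Since $R \otimes T(n) \neq 0$, the $T(n)$-localization $L_{T(n)}R$ is a nonzero object of $\CAlg(\Sp_{T(n)})$. By \Cref{cor:mod_algclosed}, there exists an algebraically closed field $L$ of characteristic $p$ and a commutative algebra map
\[
\varphi\colon R \longrightarrow L_{T(n)}R \longrightarrow E(L),
\]
where $E(L)$ is a height-$n$ Lubin--Tate theory attached to $L$.

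Next, regarding $R$ and $E(L)$ as Borel-equivariant genuine $A$-spectra with the trivial $A$-action, the functor $\Phi^{\mathcal{F}}$ is a lax symmetric monoidal endofunctor of $\Sp$ and therefore sends commutative algebras to commutative algebras and ring maps to ring maps. Applying it to $\varphi$ yields a commutative algebra map
\[
\Phi^{\mathcal{F}}\varphi\colon \Phi^{\mathcal{F}}R \longrightarrow \Phi^{\mathcal{F}}E(L).
\]
By \Cref{thm:BHNNNS}, the target has height exactly $n - \mathrm{cork}_p(\mathcal{F})$, so
\[
T\bigl(n - \mathrm{cork}_p(\mathcal{F})\bigr) \otimes \Phi^{\mathcal{F}}E(L) \;\neq\; 0.
\]

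To finish, smash the map $\Phi^{\mathcal{F}}\varphi$ with $T(n - \mathrm{cork}_p(\mathcal{F}))$. If $\Phi^{\mathcal{F}}R$ were $T(n - \mathrm{cork}_p(\mathcal{F}))$-acyclic, then the source of the smashed map would be the zero commutative algebra, forcing the target to vanish as well (any commutative algebra receiving a map from $0$ is $0$, since then $1 = 0$). This contradicts the previous display, so $\Phi^{\mathcal{F}}R$ is not $T(n - \mathrm{cork}_p(\mathcal{F}))$-acyclic, as required. The only real input beyond formal nonsense is the existence of the map $\varphi$, so the main ``obstacle'' has already been overcome in \Cref{cor:mod_algclosed}; the remaining ingredients (lax monoidality of $\Phi^{\mathcal{F}}$ and the height computation for $\Phi^{\mathcal{F}}E(L)$) are invoked off the shelf.
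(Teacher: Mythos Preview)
Your proof is correct and follows essentially the same approach as the paper: argue by contrapositive, invoke \Cref{cor:mod_algclosed} to produce a commutative algebra map $R \to E(L)$, apply $\Phi^{\mathcal{F}}$, and then use \Cref{thm:BHNNNS} together with the fact that a commutative algebra mapping to a nonzero algebra is itself nonzero. The only difference is that you spell out a few details (factoring through $L_{T(n)}R$, the lax monoidality of $\Phi^{\mathcal{F}}$, the $1=0$ argument) that the paper leaves implicit.
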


\begin{proof}


Suppose that $R$ is not $T(n)$-acyclic.  Then by \Cref{cor:mod_algclosed}, $R$ admits a map of commutative algebras
\[
R \to E_n(L)
\]
for some height $n$ Lubin--Tate theory $E_n(L)$.  Applying $\Phi^{\mathcal{F}}$, we obtain a map of commutative algebras 
\[
\Phi^{\mathcal{F}}R  \to \Phi^{\mathcal{F}}E(L).
\]
But $\Phi^{\mathcal{F}} E_n(L)$ is not $T(n-\mathrm{cork}_p(\mathcal{F}))$-acyclic  by \Cref{thm:BHNNNS}, and therefore $\Phi^{\mathcal{F}}R$ cannot be either.  
\end{proof}


While we find it quite plausible that the converse to \Cref{thm:blueshift} is true, we do not know of a proof even in the case $A=C_p$, and so we record it here as a conjecture:

\begin{cnj}
Let $A$, $R$ as in \Cref{thm:blueshift}.  If $R$ is $T(n)$-acyclic, then $\Phi^{\mathcal{F}}R$ is $T(n-\mathrm{cork}_p(\mathcal{F}))$-acyclic.
\end{cnj}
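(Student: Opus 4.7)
The natural strategy is to mirror the proof of \Cref{thm:blueshift}, proceeding by contrapositive. Setting $h := n - \mathrm{cork}_p(\mathcal{F})$, the plan is to suppose that $\Phi^{\mathcal{F}}R$ is not $T(h)$-acyclic and deduce that $R$ is not $T(n)$-acyclic. Since $\Phi^{\mathcal{F}}$ is lax symmetric monoidal, $\Phi^{\mathcal{F}}R$ is a commutative algebra, so \Cref{cor:mod_algclosed} furnishes a map of commutative algebras
\[
\varphi \colon \Phi^{\mathcal{F}}R \longrightarrow E_h(L)
\]
to a Nullstellensatzian height-$h$ Lubin--Tate theory. The crux of the argument is then to lift $\varphi$ to a commutative algebra map out of $R$ whose target has chromatic height at least $n$, contradicting the $T(n)$-acyclicity of $R$.

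Concretely, one would hope to construct a Borel-equivariant genuine $A$-$\E_\infty$-ring $\mathbf{E}$ with trivial underlying $A$-action satisfying: (i) the underlying non-equivariant $\E_\infty$-ring of $\mathbf{E}$ is a height-$n$ Lubin--Tate theory $E_n(L')$; (ii) there is a natural equivalence $\Phi^{\mathcal{F}}\mathbf{E} \simeq E_h(L)$ of commutative algebras; and (iii) $\varphi$ lifts to a Borel-equivariant commutative algebra map $R \to \mathbf{E}$. Taking underlying spectra of (iii) would then produce a commutative algebra map $R \to E_n(L')$, contradicting $T(n)$-acyclicity of $R$. A candidate $\mathbf{E}$ would be built by an equivariant square-zero tower starting from the Lubin--Tate formal group, following the formalism underlying the height computation of \Cref{thm:BHNNNS}.

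The principal obstacle is precisely this lifting step. The forward direction of \Cref{thm:blueshift} uses only that $\Phi^{\mathcal{F}}$ is functorial, together with a single explicit height calculation on Lubin--Tate theory; by contrast, the reverse direction requires an equivariant form of the chromatic Nullstellensatz, namely that a commutative algebra map to $E_h(L)$ out of $\Phi^{\mathcal{F}}R$ necessarily arises from a Borel-equivariant map with trivial $A$-action. Since $\Phi^{\mathcal{F}}$ is neither conservative nor faithful on Borel-equivariant commutative algebras with trivial action, there is no formal reason such lifts should exist, and obstruction-theoretic approaches encounter delicate interactions between power operations on $R$ and equivariant deformation theory. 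An alternative route — using \Cref{thm:modmain} to select a nil-conservative map $R \to E_m(A')$ at height $m = \height(R) \leq n-1$ and applying $\Phi^{\mathcal{F}}$ combined with \Cref{thm:BHNNNS} — would also suffice, but only if $\Phi^{\mathcal{F}}$ preserves nil-conservativity of commutative algebra maps with trivial action. By \Cref{prop:cons-surj-dn} that statement is essentially equivalent to the conjecture itself, making this route circular as it stands; overcoming this circularity, perhaps by mixing the Tate-valued Frobenius with higher-order power operation information, is where the genuinely new idea is needed.
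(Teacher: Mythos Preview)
The statement you are attempting to prove is recorded in the paper as a \emph{conjecture}, not a theorem: the authors explicitly write that they ``do not know of a proof even in the case $A=C_p$.'' There is therefore no proof in the paper to compare your proposal against.

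Your write-up is not a proof either, and you acknowledge this yourself. You outline two natural strategies and correctly identify why each breaks down: the first requires lifting a map $\Phi^{\mathcal{F}}R \to E_h(L)$ back to an equivariant map out of $R$, which amounts to an equivariant Nullstellensatz that is not available; the second requires that $\Phi^{\mathcal{F}}$ preserve nil-conservativity of maps with trivial action, which you rightly note is essentially equivalent to the conjecture. Your analysis of the obstructions is sound and matches the reason the authors leave this open, but the proposal should be labeled as a discussion of approaches rather than a proof.
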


\subsection{Chromatic redshift}\label{sub:red}\hfill

In their work on the algebraic $\mathrm{K}$-theory of ring spectra, Ausoni and Rognes studied the algebraic $\mathrm{K}$-theory of connective topological $\mathrm{K}$-theory and observed, in particular, that the result had nontrivial $T(2)$-localization.  This led to the formulation of the \emph{chromatic redshift conjectures}; the rough philosophy behind this far-reaching family of conjectures was that algebraic $\mathrm{K}$-theory shifts the height of a ring spectrum up by $1$.  These conjectures have since been widely studied \cite{BlumMan,ausoni2010algebraic,rognes2012algebraic,baas2007two,rognes2014chromatic,westerland2017higher,veen2018detecting,angelini2018detecting,angelini2019chromatic,carmeli2021ambidexterity,LMMT,CMNN,hahn2020redshift}.

The recent breakthrough work of Clausen--Mathew--Naumann--Noel \cite{CMNN} and Land--Mathew--Meier--Tamme \cite{LMMT} has significantly advanced the understanding of algebraic $\mathrm{K}$-theory and chromatic support.  Note that if $R$ is a commutative algebra, then $\mathrm{K}(R)$ also naturally admits the structure of an commutative algebra.  In this setting, \cite{CMNN} prove ``half'' of the redshift conjecture: 

\begin{thm}[\cite{CMNN}]
Let $R$ be a non-zero commutative algebra.  Then 
\[
\height(\mathrm{K}(R)) \leq \height(R) + 1.
\]
\end{thm}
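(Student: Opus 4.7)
My proof plan follows the strategy of Clausen--Mathew--Naumann--Noel: bound $L_{T(m)}K(R)$ for $m \geq \height(R)+2$ by first reducing to topological cyclic homology via the cyclotomic trace, then leveraging blueshift properties of the Tate construction. Note that the nilpotence-detection techniques developed in this paper are tailored to producing maps \emph{out of} $T(n)$-local commutative algebras and are thus most useful for the opposite inequality; the upper bound has a different flavor and requires trace methods.

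First, I would reduce to the case of connective $R$, since $K$-theory is insensitive to passage to connective covers for chromatic purposes. For connective $R$, the Dundas--Goodwillie--McCarthy theorem asserts that the fiber of the cyclotomic trace $K(R) \to \mathrm{TC}(R)$ depends only on $\pi_0 R$. Since the chromatic height of the $K$-theory of an ordinary commutative ring is at most $1$ (Mitchell, Thomason), the problem reduces to bounding $\height(\mathrm{TC}(R)) \leq \height(R) + 1$.

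Next, I would use the Nikolaus--Scholze fiber sequence $\mathrm{TC}(R) \to \mathrm{TC}^-(R) \xrightarrow{\phi - \can} \mathrm{TP}(R)$, where $\mathrm{TC}^-(R) = \mathrm{THH}(R)^{hS^1}$ and $\mathrm{TP}(R) = \mathrm{THH}(R)^{tS^1}$. Since $\mathrm{THH}(R)$ is a geometric realization of tensor powers of $R$, its chromatic height is at most $n = \height(R)$. The crux would then be an $S^1$-blueshift statement: for an $S^1$-spectrum $X$ with $\height(X) \leq n$, one has $\height(X^{hS^1}) \leq n+1$ and $\height(X^{tS^1}) \leq n+1$. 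Applied to $X = \mathrm{THH}(R)$, this would yield $\height(\mathrm{TC}(R)) \leq n+1$, completing the argument.

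The main obstacle is the $S^1$-blueshift itself. The natural approach is to descend from finite cyclic subgroups $C_{p^k} \subset S^1$, where blueshift for the Tate construction is classical (Greenlees--Sadofsky, Hovey--Sadofsky, Kuhn, and in the equivariant form needed here, the work of Barthel--Hausmann--Naumann--Nikolaus--Noel--Stapleton cited as \Cref{thm:BHNNNS}). The presentation $BS^1 \simeq \colim_k BC_{p^k}$ suggests such descent should work, but extracting an $S^1$-level statement requires a hyperdescent argument for $L_{T(m)}K$ (or for the relevant fixed-point functors restricted to appropriate $T(n)$-local data). This descent, controlling how Tate blueshift passes through the colimit, is the technical heart of the argument and does not visibly simplify under the techniques of the present paper.
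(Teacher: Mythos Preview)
The paper does not prove this statement; it is quoted as a black-box input from \cite{CMNN} (and, in the introduction, jointly attributed to \cite{LMMT, CMNN}). The paper's own contribution is the \emph{opposite} inequality $\height(K(R)) \geq \height(R)+1$, obtained by mapping $R$ to an algebraically closed Lubin--Tate theory via \Cref{cor:mod_algclosed} and then invoking \cite{Yuanred}. You correctly recognize this division of labor in your opening paragraph.

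Your sketch of the cited argument is broadly in the right spirit (reduce to $\mathrm{TC}$ via Dundas--Goodwillie--McCarthy, then invoke Tate blueshift), but one step deserves a warning: the reduction ``pass to the connective cover, since $K$-theory is insensitive to this for chromatic purposes'' is not correct as stated. For nonconnective $R$ the map $K(\tau_{\geq 0}R) \to K(R)$ is generally far from an equivalence (e.g.\ $K(ku)$ versus $K(KU)$), and the actual arguments in \cite{LMMT, CMNN} do not proceed by naive truncation. The LMMT route goes through their purity theorem and the Land--Tamme excision formula, while CMNN supplies the requisite descent and nilpotence input; neither is captured by simply replacing $R$ with $\tau_{\geq 0}R$. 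So while your identification of the conceptual core (Tate blueshift plus descent through the circle) is accurate, the logical skeleton you propose would need to be replaced by the actual architecture of those references to become a proof.
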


The remaining question is whether algebraic $\mathrm{K}$-theory always increases the height by \emph{exactly} one---that is, whether height shifting actually occurs.  This question has since been answered in particular cases by Hahn--Wilson \cite{hahn2020redshift} ($BP\langle n\rangle$) and by the third author \cite{Yuanred} (Lubin--Tate theories and iterated $\mathrm{K}$-theories of fields).   By combining the results of this paper with \cite{Yuanred}, we are able to completely answer the question of height shifting for commutative algebras.

\begin{thm}\label{thm:redshift'}
Let $R$ be a non-zero commutative algebra and assume that $\height(R)\geq 0$.  Then 
\[
\height(\mathrm{K}(R)) = \height(R) + 1.
\]
\end{thm}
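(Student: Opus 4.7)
The inequality $\height(K(R)) \leq \height(R)+1$ was established in \cite{CMNN, LMMT}, so it remains only to prove the reverse inequality. The plan is to combine \Cref{cor:mod_algclosed}, which supplies enough algebraically closed Lubin--Tate ``points'' on any nonzero $T(n)$-local commutative algebra, with the height-raising theorem for algebraic $K$-theory of algebraically closed Lubin--Tate theories from \cite{Yuanred}, and then propagate the result from $E(L)$ back to $R$ via a smashing-localization argument.

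Set $n = \height(R)$ and fix a prime $p$ witnessing this, so that $L_{T(n)}R \neq 0$. First I would apply \Cref{cor:mod_algclosed} to $L_{T(n)}R$ to obtain an algebraically closed field $L$ of characteristic $p$ together with a map of $T(n)$-local commutative algebras $L_{T(n)}R \to E(L)$. Composing with the unit of the $T(n)$-localization adjunction in $\CAlg(\Sp)$ yields a commutative algebra map $\varphi \colon R \to E(L)$. Applying algebraic $K$-theory (which preserves commutative ring spectra) gives a map $K(\varphi)\colon K(R) \to K(E(L))$, and in particular $K(E(L))$ becomes canonically a $K(R)$-algebra.

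Next I would invoke \cite{Yuanred}, which asserts $\height(K(E(L))) = n+1$, so $L_{T(n+1)}K(E(L)) \neq 0$. Since $T(n+1)$-localization is smashing, the natural map
\[
L_{T(n+1)}K(R) \otimes_{K(R)} K(E(L)) \;\xrightarrow{\simeq}\; L_{T(n+1)}K(E(L))
\]
is an equivalence. If $L_{T(n+1)}K(R)$ were zero, the left-hand side would vanish, contradicting the nonvanishing of the right-hand side. Therefore $L_{T(n+1)}K(R) \neq 0$, i.e.\ $\height(K(R)) \geq n+1$, and combining with the upper bound of \cite{CMNN, LMMT} yields the desired equality.

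The only substantive obstacles lie in the two deep external inputs rather than in the argument itself: \Cref{cor:mod_algclosed}, which is the central result of this paper, and the height-raising theorem for algebraically closed Lubin--Tate theories from \cite{Yuanred}. Once both are in hand, the reduction of the redshift conjecture for arbitrary $\E_\infty$-rings to that specific case is entirely formal, using only that $K$ preserves commutative algebras and that $L_{T(n+1)}$ is smashing.
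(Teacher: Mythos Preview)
Your approach is essentially the paper's: use \Cref{cor:mod_algclosed} to produce an $\E_\infty$-map $R \to E(L)$ at height $n = \height(R)$, apply $K(-)$, and invoke \cite{Yuanred}. Two issues remain.

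First, $T(n+1)$-localization is not known to be smashing---this is the telescope conjecture. Fortunately you do not need it. Since $K(E(L))$ is a $K(R)$-algebra via $K(\varphi)$, the map $L_{T(n+1)}K(R) \to L_{T(n+1)}K(E(L))$ is a map of commutative algebras; if the source were zero, the target would be an algebra over the zero ring and hence zero. Equivalently, $T(n+1)\otimes K(E(L)) \simeq (T(n+1)\otimes K(R)) \otimes_{K(R)} K(E(L))$ vanishes whenever $T(n+1)\otimes K(R)$ does. This is exactly the observation the paper uses (already in its reproof of Hahn's theorem): a commutative algebra map $S \to S'$ with $S'$ nonzero forces $S$ nonzero.

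Second, \cite{Yuanred} covers height $n \geq 1$. In the case $n=0$, one has $E(L) = L[u^{\pm 1}]$ for $L$ an algebraically closed field of characteristic~$0$, and the paper supplies a separate direct computation showing $K(L[u^{\pm 1}])^{\wedge}_p \simeq \ku^{\wedge}_p \oplus \Sigma \ku^{\wedge}_p$, hence of height~$1$. Your appeal to \cite{Yuanred} leaves this case unaddressed. (Your remark that $L$ has characteristic $p$ is also off at height~$0$, where $L$ has characteristic~$0$ by convention.)
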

\begin{proof}
By \Cref{cor:mod_algclosed}, $R$ admits a map of commutative algebras
\[
R \to E(L)
\]
for some Lubin--Tate theory $E_n(L)$ of height $\height(R)$. This induces a commutative algebra map
\[
\mathrm{K}(R) \to \mathrm{K}(E_n(L))
\]
which has height $\height(R)+1$ by either \cite[Theorem A]{Yuanred} in the case $\height(R)\geq 1$ or the following proposition in the case $\height(R) = 0$.  
\end{proof}

\begin{prop}
Let $L$ be an algebraically closed field of characteristic $0$, let $L[t]$ denote the free commutative $L$-algebra on a generator $t$ in degree $2$, and $L[t^{\pm 1}]$ be obtained by inverting $t$.  Then there is an equivalence
\[
\mathrm{K}(L[t^{\pm 1}])^{\wedge}_p \simeq \ku^{\wedge}_p \oplus \Sigma \ku^{\wedge}_p,
\]
and in particular, the commutative algebra $\mathrm{K}(L[t^{\pm 1}])$ has height $1$.  
\end{prop}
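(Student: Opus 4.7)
The strategy is to combine the Bass--Thomason localization sequence in algebraic $K$-theory with Suslin rigidity for algebraically closed fields. First observe that since $L$ has characteristic zero, the $E_\infty$-$L$-algebra $L[t]$ agrees with the graded-commutative polynomial algebra on $t$ (with $|t|=2$), so that $L[t^{\pm 1}] \simeq L \otimes_{\mathbb{S}} KU$. The closed embedding $V(t) \cong \Spec(L) \hookrightarrow \Spec(L[t])$ with open complement $\Spec(L[t^{\pm 1}])$ produces, via the Thomason--Trobaugh localization theorem extended to the $E_\infty$ setting, a fiber sequence of $K$-theory spectra
\[
K(L) \xrightarrow{i_*} K(L[t]) \longrightarrow K(L[t^{\pm 1}]),
\]
where devissage identifies the $K$-theory of $t$-nilpotent perfect $L[t]$-modules with $K(L)$. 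Since $L$ is a field and $L[t]$ a graded polynomial algebra over it, Quillen's $\A^1$-invariance theorem (in its graded form) yields an equivalence $K(L) \xrightarrow{\sim} K(L[t])$.

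Next, the pushforward $i_*\colon K(L) \to K(L[t]) \simeq K(L)$ is null: for instance, on $\pi_0$ the two-term resolution $\Sigma^{-2}L[t] \xrightarrow{\cdot t} L[t] \to L$ gives $[L] = [L[t]] - [\Sigma^{-2}L[t]] = 0$ in $K_0(L[t])$, using the stable-category identity $[\Sigma X] = -[X]$; the analogous argument in higher degrees shows $i_*$ is null throughout. Moreover, the boundary map $K(L[t^{\pm 1}]) \to \Sigma K(L)$ admits a splitting coming from the class $[t] \in K_1(L[t^{\pm 1}])$, so the fiber sequence splits and produces an equivalence
\[
K(L[t^{\pm 1}]) \simeq K(L) \oplus \Sigma K(L).
\]
Applying Suslin's rigidity theorem, which identifies $K(L)^{\wedge}_p \simeq ku^{\wedge}_p$ for any algebraically closed field $L$ of characteristic different from $p$, we conclude that $K(L[t^{\pm 1}])^{\wedge}_p \simeq ku^{\wedge}_p \oplus \Sigma ku^{\wedge}_p$, and in particular this has chromatic height $1$.

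The main technical obstacle is ensuring that the Bass--Thomason localization sequence, devissage, and $\A^1$-invariance all go through cleanly for the graded $E_\infty$-ring $L[t]$ rather than a classical commutative ring. The characteristic-zero hypothesis and the formality of $L[t]$ as a CDGA make this tractable: perfect $L[t]$-modules coincide with bounded complexes of finite graded free $L[t]$-modules, reducing the needed input to standard facts about graded regular rings in the spirit of Quillen.
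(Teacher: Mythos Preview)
Your overall architecture matches the paper's proof: a localization/devissage cofiber sequence
\[
K(L) \xrightarrow{\alpha_*} K(L[t]) \to K(L[t^{\pm 1}]),
\]
the identification $K(L[t])^{\wedge}_p \simeq K(L)^{\wedge}_p \simeq \ku^{\wedge}_p$, and the vanishing of $\alpha_*$. However, two of your steps have genuine gaps that the paper handles differently.

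\textbf{The equivalence $K(L) \simeq K(L[t])$.} You invoke ``Quillen's $\A^1$-invariance theorem in its graded form,'' but no such result is standard for a polynomial generator in nonzero degree. Your final paragraph acknowledges this and proposes to reduce to ``standard facts about graded regular rings,'' but the stable $\infty$-category $\mathrm{Perf}(L[t])$ with $|t|=2$ is \emph{not} the derived category of graded $L[t]$-modules in the classical sense (in the latter, the internal grading shift is independent of suspension; here they are tied together), so classical graded $\A^1$-invariance does not transfer. The paper avoids this entirely: it uses the Dundas--Goodwillie--McCarthy pullback square to identify the fiber of $K(L[t]) \to K(L)$ with that of $\mathrm{TC}(L[t]) \to \mathrm{TC}(L)$, and then observes that $\mathrm{TC}$ of any rational algebra is rational, hence vanishes after $p$-completion. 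This gives exactly the $p$-complete equivalence needed, and no more.

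\textbf{Nullity of $\alpha_*$.} You compute $[L]=0$ in $K_0(L[t])$ from the resolution and then assert ``the analogous argument in higher degrees shows $i_*$ is null throughout.'' But showing a map of spectra is zero on all $\pi_*$ does not show it is nullhomotopic. The paper instead applies Waldhausen additivity to the cofiber sequence of \emph{functors} $\Sigma^2(-\otimes_L L[t]) \to (-\otimes_L L[t]) \to \alpha_*$, together with the fact that $\Sigma^2$ acts trivially on $K$-theory, to conclude that $\alpha_*$ induces the zero map of $K$-theory \emph{spectra}. You have the right resolution in hand; you just need to feed it into additivity rather than arguing degreewise.

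Once $\alpha_*$ is null, the cofiber sequence splits automatically, so your separate splitting argument via $[t]\in K_1$ is unnecessary.
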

\begin{proof}
By the Dundas-Goodwillie-McCarthy theorem \cite{DGM}, there is a pullback square
\[
\begin{tikzcd}
\mathrm{K}(L[t])\arrow[r]\arrow[d] & \mathrm{K}(L)\arrow[d] \\
\mathrm{TC}(L[t])\arrow[r] & \mathrm{TC}(L) .
\end{tikzcd}
\]
Since $\mathrm{TC}$ of a rational algebra is rational (and thus has trivial $p$-completion), we conclude that the natural map $\mathrm{K}(L[t])^{\wedge}_p \to \mathrm{K}(L)^{\wedge}_p$ is an equivalence.  Moreover, both are equivalent to $\ku^{\wedge}_p$ by Suslin's theorem \cite{Suslin}.  

By the localization (and devissage) theorem of Blumberg-Mandell \cite{BlumMan} as formulated by Barwick--Lawson \cite[Corollary 2.3]{BarwickLawson}, there is a cofiber sequence
\[
\mathrm{K}(L) \xrightarrow{\alpha_*} \mathrm{K}(L[t]) \to \mathrm{K}(L[t^{\pm 1}])
\]
where the first map is induced by the functor given by restriction of scalars along $\alpha\colon L[t] \to L$ by $t\mapsto 0$.  But note that there is a natural cofiber sequence of $L[t]$-modules
\[
\Sigma^2 L[t] \to L[t] \to L.
\]
Hence, since $\Sigma^2$ acts trivially on $\mathrm{K}$-theory, it follows from the additivity theorem (cf. \cite[\S 1.4]{Waldhausen} or \cite[Theorem 7.4]{BarAKT}) that the functor 
\[
(-)\otimes_{L[t]}L \colon \Mod_{L[t]}^{\mathrm{perf}}\to \Mod_{L[t]}^{\mathrm{perf}}
\]
induces the zero map on $\mathrm{K}$-theory.  But $\alpha_*$ factors through this functor, so it also induces the zero map.  It follows that there is an equivalence
\[
\mathrm{K}(L[t^{\pm 1}])^{\wedge}_p \simeq \mathrm{K}(L[t])^{\wedge}_p \oplus \Sigma \mathrm{K}(L)^{\wedge}_p \simeq \ku^{\wedge}_p \oplus \Sigma \ku^{\wedge}_p.
\]
\end{proof}

\begin{rmk}
The hypothesis that $\height(R) \geq 0$ in \Cref{thm:redshift'} is necessary.  To see this, consider $\F_2^{tC_2}$, which is of height $-1$.  Since $\F_2^{tC_2}$ is $1$-periodic, the suspension functor is naturally isomorphic to the identity on $\F_2^{tC_2}$-modules.  But suspension always induces $-1$ on $\mathrm{K}$-theory, so we have that $1=-1$ in $\mathrm{K}(\F_2^{tC_2})$.  This means that $\mathrm{K}(\F_2^{tC_2})$ is trivial after inverting $2$, and therefore of height $-1$.  

We remark, however, that \Cref{thm:redshift'} does extend to $\height(R) = -1$ under the further condition that $R$ is connective.  In this case, using the map $R\to \pi_0(R)$, there exists a map from $R$ to a field, whose $\mathrm{K}_0$ is $\Z$ and will therefore be of height $0$.  
\tqed
\end{rmk}

\appendix

\section{The constructible spectrum}
\label{sec:geopoints}
In \Cref{sec:null}, we introduced Nullstellensatzian objects as a portable abstraction of the properties enjoyed by algebraically closed fields within the category of commutative rings and showed that algebraically closed Lubin--Tate theories are the Nullstellensatzian objects in $\CAlg(\Sp_{T(n)})$. In this appendix, we develop an abstract theory of the \emph{constructible spectrum}, which attaches to each object $R$ of a suitable category $\CC$ a topological space whose points correspond to maps $R \to L$ from $R$ out to Nullstellensatzian objects $L$, up to the equivalence relation of common refinement.

\begin{dfn}
Let $\cC$ be a presentable category.  
\begin{enumerate}
    \item We say that \deff{the terminal object of $\CC$ is strict} if every map 
    $\star_{\cC} \to R$ with source the terminal object of $\CC$ is an isomorphism.
    \item We say that \deff{$\cC$ is weakly spectral} if $\CC$ is compactly generated and its terminal object is both strict and compact. \tqed
\end{enumerate}
\end{dfn}

\begin{rmk}
  In the category of rings, the zero ring is uniquely identified by the single equation $0=1$,
  and the condition that the terminal object of $\CC$ is strict and compact is abstracted from this situation. In \Cref{subsub:exm}, we will see that these conditions are indeed satisfied by almost any category of ring-like objects.
  \tqed
\end{rmk}


\begin{thm} \label{thm:con-spec}
  Let $\cC$ be a weakly spectral category.  Then there is a unique functor
  \[ \Spec^{\cons}_\CC(-)\colon \CC^\op \to \Top^{\mathrm{cpt},T_1,\mathrm{cl}} \]
  to compact, $T_1$ topological spaces and closed, continuous maps which satisfies the following properties:
  \begin{enumerate}
  \item[(A)] $\Spec_\CC^\cons(R)$ is empty if and only if $R \cong \star_{\cC}$.
  \item[(B)] If $R$ is Nullstellensatzian, then $\Spec^{\cons}_\CC(R)$ is a point.
  \item[(C)] $U \subset \Spec^{\cons}_\CC(R)$ is closed if and only if 
  there exists a map $R \to S$ such that $U$ is the image of the map 
  \[\Spec^{\cons}_\CC(S) \to \Spec^{\cons}_\CC(R).\]
  \item[(D)] For every  point $q\in \Spec^{\cons}_\CC(R)$, there exists a Nullstellensatzian object $L\in \cC_{R/}$ such that $\{q\}$  is the image 
  of the map 
    \[\Spec^{\cons}_\CC(L) \to \Spec^{\cons}_\CC(R).\] In this case, we say that the map $R\to L$ \deff{represents $q$}.

  \item[(E)] Given a span $S \leftarrow R \to T$, the natural comparison map
    \[ \Spec^{\cons}_\CC \left( S \coprod_R T\right) \to \Spec^{\cons}_\CC(S) \times_{\Spec^{\cons}_\CC(R)} \Spec^{\cons}_\CC(T) \]
    is surjective.
  \end{enumerate}
\end{thm}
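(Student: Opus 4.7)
My approach is to construct the functor by left Kan extension from the full subcategory of ``Nullstellensatzian products'' — small products of Nullstellensatzian objects — after first establishing that $\CC$ admits enough Nullstellensatzian covers. The most delicate preliminary step is to show that every non-terminal $R \in \CC$ admits a map to a Nullstellensatzian object. I would do this by a transfinite small object argument in the style of \Cref{cor:Small_object_DN}: iteratively adjoin retracts for all compact non-terminal objects above the current stage. Strictness of $\star_\CC$ ensures the construction cannot accidentally collapse the algebra (any map from $\star_\CC$ must be an isomorphism), while compactness of $\star_\CC$ ensures that non-terminality is preserved under the filtered colimits used at limit stages. A cardinality bound on the construction shows that $\Spec^{\cons}_\CC(R)$ will be a genuine set.

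Given enough Nullstellensatzian covers, I would define the underlying set of $\Spec^{\cons}_\CC(R)$ to be the set of equivalence classes of maps $f \colon R \to L$ with $L$ Nullstellensatzian, under the relation identifying $f_1 \colon R \to L_1$ with $f_2 \colon R \to L_2$ when $L_1 \coprod_R L_2 \neq \star_\CC$. Reflexivity is immediate from strictness, symmetry is clear, and transitivity follows because if $L_1 \coprod_R L_2$ and $L_2 \coprod_R L_3$ are both non-terminal, so is the iterated pushout $L_1 \coprod_R L_2 \coprod_R L_3$, which maps to a Nullstellensatzian cover witnessing $L_1 \sim L_3$. I would then topologize this set as mandated by (C) — closed subsets are images of morphisms $\Spec^{\cons}_\CC(S) \to \Spec^{\cons}_\CC(R)$ — and verify that this forms a topology: closure under finite unions uses products in $\CC$, and closure under arbitrary intersections reduces, via compactness of $\star_\CC$, to the fact that a filtered system of maps $R \to S_\alpha$ whose tensor products remain non-terminal has a non-terminal filtered colimit $R \to S_\infty$ cutting out the intersection.

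The axioms are then verified as follows: (A) is precisely the Nullstellensatzian cover lemma; (B) holds because a Nullstellensatzian object has only one point up to the equivalence relation; (D) is the Nullstellensatzian cover lemma applied in $\CC_{R/}$; and (E) follows because a compatible pair of geometric points of $S$ and $T$ over $R$ (represented by $S \to L_S$, $T \to L_T$ both extending some $R \to L$) gives rise to a non-terminal pushout $L_S \coprod_L L_T$, to which one applies (A). The compactness of $\Spec^{\cons}_\CC(R)$ follows from strictness of $\star_\CC$ by a Zorn-style argument: any directed family of non-empty closed subsets corresponds to a filtered system of non-terminal algebras under $R$, whose (necessarily non-terminal) filtered colimit produces a point in the intersection. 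The $T_1$ property follows because each singleton $\{q\}$ is the image of $\Spec^{\cons}_\CC(L_q)$ for any representing Nullstellensatzian cover. Uniqueness is immediate: the underlying set is pinned down by (B) combined with (D), and the topology is forced by (C).

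The main obstacle I anticipate is executing the small object argument so that the resulting $L$ is truly Nullstellensatzian — lifting against all compact non-terminal objects over $L$, not merely over the starting point $R$. This requires iterating the one-step construction through a sufficiently large regular cardinal and using the compactness of $\star_\CC$ to ensure that no new ``bad'' compact non-terminal $L$-algebras arise at the colimit stage that were not already dealt with earlier. A closely related subtlety is bounding the cardinality of $\Spec^{\cons}_\CC(R)$: one must show that some regular $\kappa$ (depending on $R$) suffices, so that the collection of equivalence classes of Nullstellensatzian covers is a set rather than a proper class. Both of these hinge on carefully propagating the compact generation of $\CC$ through the transfinite construction.
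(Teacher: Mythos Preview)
Your overall strategy is sound and is, in fact, the same one the paper follows in outline: produce enough Nullstellensatzian objects by a transfinite argument, then describe points as equivalence classes of Nullstellensatzian covers and topologize by images. However, your write-up contains a real gap at the transitivity step, and this is exactly the point at which the paper's argument diverges from yours in a substantive way.

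You assert that if $L_1 \coprod_R L_2$ and $L_2 \coprod_R L_3$ are both non-terminal then the iterated pushout $L_1 \coprod_R L_2 \coprod_R L_3$ is non-terminal, and you present this as essentially formal. It is not: non-terminality is not in general preserved under pushouts. The correct argument writes the triple pushout as $A \coprod_{L_2} B$ with $A = L_1 \coprod_R L_2$ and $B = L_2 \coprod_R L_3$, expresses $A$ as a filtered colimit of compact $L_2$-algebras $A_\alpha$, uses compactness and strictness of $\star$ to find a single $A_\alpha$ with $A_\alpha \coprod_{L_2} B \cong \star$, and then invokes the Nullstellensatzian property of $L_2$ to retract $A_\alpha$ onto $L_2$, giving a map $A_\alpha \coprod_{L_2} B \to B \neq \star$ and a contradiction. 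So the Nullstellensatz for $L_2$ is doing genuine work here that your sketch omits.

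The paper sidesteps this entirely by routing through an auxiliary bounded semi-lattice $\mathfrak{L}_\CC^\omega(R)$ built from the compact $R$-algebras: the points of $\Spec^{\cons}_\CC(R)$ are \emph{defined} as maximal ideals of this semi-lattice, with the topology generated by the sets $[\ell] = \{\mathfrak{I} \mid \ell \in \mathfrak{I}\}$. Since ``representing the same maximal ideal'' is tautologically an equivalence relation, transitivity never needs to be checked directly; the identification with your equivalence classes of Nullstellensatzian covers is proved after the fact (this is \Cref{lem:point-equiv}). The semi-lattice framework also makes the set-size issue you flag at the end disappear, since maximal ideals form a subset of the power set of the (small) semi-lattice, and it streamlines the transfinite construction of Nullstellensatzian objects by phrasing each step as factoring a semi-lattice map through $\{\circ \to \star\}$ (\Cref{lem:basic-replace}, \Cref{prop:ns-exist}). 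Your direct approach can certainly be made to work once the transitivity argument is filled in, but the semi-lattice detour buys exactly the bookkeeping you identify as the main obstacle.
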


The proof of \Cref{thm:con-spec} will occupy us for much of this appendix.
We begin in \Cref{subsec:null-props} by constructing a sufficient supply of Nullstellensatzian objects in $\CC$.
Next, we construct the functor $\Spec_\CC^\cons(-)$ in \Cref{subsec:construct-spec}.
Then, in \Cref{subsec:spec-properties} we prove properties (A)-(E) and complete the proof of \Cref{thm:con-spec}.
The remaining two subsections are then devoted to examining further properties which are useful for computing the constructible spectrum and examples.

\begin{ntn}
  We refer to the points of $\Spec_\CC^\cons(R)$ as the \emph{geometric points} of $R$.
  \tqed
\end{ntn}

\begin{cnv} \label{cnv:gpoints}
  Throughout this section, we will use $\circ_{\CC}$ and $\star_{\CC}$ to denote the initial and terminal objects of $\CC$, respectively, and drop the subscript when the category is clear from context.
  \tqed
\end{cnv}

\subsubsection{Examples of weakly spectral categories}\label{subsub:exm}\hfill

In order to apply the theory we have constructed in this section, we will need to verify that there is a sufficient supply of compactly generated, presentable categories $\CC$ whose terminal object $\star$ is strict and compact.

\begin{exm} \label{exm:discrete-w-spectral}
  The category $\mathrm{CRing}$ of discrete commutative rings is weakly spectral.
  \tqed
\end{exm}

\begin{lem} \label{lem:prig-strict-0}
  Let $\EE \in \Pr^\rig$. The zero algebra in $\CAlg(\EE)$ is a strict terminal object.
\end{lem}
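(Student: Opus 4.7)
The plan is to argue that the terminal algebra in $\CAlg(\EE)$ is the zero object $0 \in \EE$, and then to deduce from the existence of any algebra map out of $0$ that the unit map of the target is null, which in a stable setting forces the target to vanish.

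First I would check that the zero object $0 \in \EE$ (which exists because $\EE$ is stable) carries the essentially unique structure of a commutative algebra and is terminal in $\CAlg(\EE)$. This is formal: for any $R \in \CAlg(\EE)$, the mapping space $\Map_{\EE}(R,0)$ is contractible, and since the forgetful functor $\CAlg(\EE) \to \EE$ is conservative and preserves limits, the corresponding mapping space in $\CAlg(\EE)$ is also contractible. In particular, the unit map $u_0\colon \mathbf{1}_{\EE} \to 0$ is the zero map.

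Next, suppose we are given $R \in \CAlg(\EE)$ together with an algebra map $f\colon 0 \to R$. Since algebra maps preserve units, the unit $u_R\colon \mathbf{1}_{\EE} \to R$ factors as $u_R = f \circ u_0$. Because $u_0$ factors through the zero object, $u_R$ is null. At this point I invoke stability: the identity of $R$ factors as the composite
\[
R \;\simeq\; \mathbf{1}_{\EE}\otimes R \;\xrightarrow{u_R\otimes R}\; R\otimes R \;\xrightarrow{\mu_R}\; R,
\]
so nullity of $u_R$ forces $\mathrm{id}_R$ to be null, whence $R \simeq 0$. This shows the unique map $0 \to R$ is an isomorphism, i.e.\ the zero algebra is strict.

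The main step is just the (standard) null-unit argument in the last paragraph; the only mild subtlety is verifying that the terminal object of $\CAlg(\EE)$ really is $0 \in \EE$, which only uses the existence of a zero object in $\EE$ and the fact that the forgetful functor to $\EE$ creates limits. No use is made of the rigidity hypothesis on $\EE$; all that is used is that $\EE$ is a stable presentably symmetric monoidal category.
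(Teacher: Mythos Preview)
Your proof is correct and follows essentially the same approach as the paper's. The paper phrases the key step as the condition ``$1=0$ in $[\one_\EE, u(R)]$'' being preserved under algebra maps and equivalent to $R \cong 0$, which is exactly your null-unit argument unpacked; your observation that only stability (not rigidity) is used is also implicit in the paper's proof.
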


\begin{proof}
  Let $u$ denote the underlying object functor $\CAlg(\EE) \to \EE$.
  Given an $R \in \CAlg(\EE)$ we can use the multiplication on $R$ to verify that
  $1=0$ in $[{\one}_\EE, u(R)]$ iff $u(R) = 0$ iff $R \cong 0$.
  The condition that $1=0$ in $[{\one}_\EE, u(R)]$ is preserved under maps out, therefore $0$ is a strict terminal object.
\end{proof}

\begin{lem} \label{lem:prig-cpt-0}
  Let $\EE \in \Pr^\rig$ with ${\one}_\EE$ compact.
  The zero algebra in $\CAlg(\EE)$ is compact.
\end{lem}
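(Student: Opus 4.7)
The plan is to reduce compactness of the zero algebra to the single equation $1=0$ in $\pi_0$ of the underlying object, and to observe that this equation is detected on any term of a filtered colimit once ${\one}_\EE$ is compact.

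First I would unpack what compactness of $0$ means. Because $0$ is strict terminal by \Cref{lem:prig-strict-0}, for any $S \in \CAlg(\EE)$ the mapping space $\Map_{\CAlg(\EE)}(0,S)$ is contractible when $S \cong 0$ and empty otherwise. Consequently, showing that $\Map_{\CAlg(\EE)}(0,-)$ preserves filtered colimits reduces to the combinatorial statement: for every filtered diagram $(R_i)_{i \in I}$ in $\CAlg(\EE)$ with colimit $R$, one has $R \cong 0$ if and only if $R_i \cong 0$ for some $i \in I$. The ``if'' direction is automatic, since any map out of a strict terminal is an isomorphism: a factorization $0 \cong R_i \to R$ forces $R \cong 0$.

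The content lies in the ``only if'' direction, and here I would invoke two ingredients. The first, recorded in (and central to) the proof of \Cref{lem:prig-strict-0}, is that an object $S \in \CAlg(\EE)$ is zero if and only if $1 = 0$ holds in the commutative monoid $[{\one}_\EE, u(S)]$, where $u \colon \CAlg(\EE) \to \EE$ denotes the forgetful functor. The second is that $u$ preserves filtered colimits (indeed all sifted colimits, by \cite[Proposition 3.2.3.1]{HA}), while compactness of ${\one}_\EE$ in $\EE$ gives that $\pi_0 u(-) = [{\one}_\EE, u(-)]$ preserves filtered colimits of objects of $\EE$.

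Combining these ingredients: if $R \cong 0$, then $1 = 0$ in
\[ \pi_0 u(R) \;\cong\; \pi_0 u(\colim_i R_i) \;\cong\; \colim_i \pi_0 u(R_i), \]
so by the definition of a filtered colimit of sets there exists some $i \in I$ with $1 = 0$ already in $\pi_0 u(R_i)$, and hence $R_i \cong 0$. There is no real obstacle; the only subtlety to double-check is that the criterion ``$R \cong 0 \iff 1=0$ in $\pi_0 u(R)$'' genuinely follows from the multiplicative structure on $R$ exactly as in \Cref{lem:prig-strict-0}, so that the argument does not circularly presuppose compactness of $0$.
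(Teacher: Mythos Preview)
Your proof is correct and follows essentially the same approach as the paper: both reduce the question to detecting the equation $1=0$ in $[{\one}_\EE, u(-)]$ at a finite stage of a filtered colimit, using compactness of ${\one}_\EE$. Your version is simply more explicit about the ``if'' direction and about why $u$ preserves filtered colimits.
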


\begin{proof}
  Let $u$ denote the underlying object functor $\CAlg(\EE) \to \EE$.
  Using that assumption that ${\one}_\EE$ is compact, we see that a filtered colimit of commutative algebras $R_\alpha$ receives a map from the zero algebra iff $1=0$ in $[{\one}_\EE, u(R_\alpha)]$ at some finite stage.
\end{proof}

As a consequence of Lemmas \ref{lem:prig-strict-0} and \ref{lem:prig-cpt-0} we have our first non-trivial example of a weakly spectral category:

\begin{exm} \label{exm:calg-sp-spectral}
  The category $\CAlg(\Sp)$ of commutative algebras in spectra is weakly spectral.
  \tqed
\end{exm}

\begin{rmk} \label{rmk:type-of-alg}
  The arguments in Lemmas \ref{lem:prig-strict-0} and \ref{lem:prig-cpt-0} are not particularly sensitive to the choice of category of algebras. In particular, the same results hold with $\E_m$-algebras in place of commutative algebras.
  \tqed
\end{rmk}



\begin{lem} \label{lem:cpt-gen-spectral}
  Let $\EE \in \Pr^\rig$.
  If $\EE$ contains a compact object $e$ which generates $\EE$ under tensor products, duals and colimits, 
  then $\CAlg(\EE)$ is weakly spectral.
\end{lem}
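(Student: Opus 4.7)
The plan is to verify the three defining conditions of weakly spectral in turn: that the terminal object of $\CAlg(\EE)$ is strict, that it is compact, and that $\CAlg(\EE)$ is compactly generated. The first two are already handled by Lemmas \ref{lem:prig-strict-0} and \ref{lem:prig-cpt-0}, modulo the mild point that Lemma \ref{lem:prig-cpt-0} requires ${\one}_{\EE}$ to be compact. So I would first observe that ${\one}_{\EE}$ is compact: since $e$ is compact and $\EE \in \Pr^{\rig}$, the dual $e^{\vee}$ is also compact by \Cref{Prig_Conserv}(1), and ${\one}_{\EE}$ is a retract of $e \otimes e^{\vee}$, hence compact.

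The main content therefore lies in showing $\CAlg(\EE)$ is compactly generated. Consider the free/forgetful adjunction
\[ \mathrm{Free} \colon \EE \rightleftarrows \CAlg(\EE) \noloc U. \]
The forgetful functor $U$ preserves sifted, and in particular filtered, colimits by \cite[Proposition 3.2.3.1]{HA}, so by adjunction $\mathrm{Free}$ preserves compact objects. Next I would pick a set $\{c_{\alpha}\}$ of compact generators of $\EE$ (which exist because, by hypothesis on $e$, the category $\EE$ is compactly generated) and argue that $\{\mathrm{Free}(c_{\alpha})\}$ is a set of compact generators of $\CAlg(\EE)$.

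For this, the key step is to write every $R \in \CAlg(\EE)$ as a colimit of such free algebras. I would use the bar resolution
\[ R \simeq \left| \mathrm{Free} \circ (U \circ \mathrm{Free})^{\bullet} \circ U(R) \right|, \]
which presents $R$ as a sifted colimit of objects of the form $\mathrm{Free}(x)$ for $x \in \EE$. Each such $x$ is a filtered colimit of compact objects, and since $\mathrm{Free}$ is a left adjoint, the corresponding term becomes a filtered colimit of $\mathrm{Free}$'s of compact objects. Every compact $c \in \EE$ lies in the thick subcategory generated by the $c_{\alpha}$, and $\mathrm{Free}$ sends this to the thick subcategory of $\CAlg(\EE)$ generated by the $\mathrm{Free}(c_{\alpha})$. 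Assembling, we present $R$ as a colimit of objects in the thick subcategory generated by $\{\mathrm{Free}(c_{\alpha})\}$, which is enough to conclude that these form a set of compact generators.

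I do not expect serious obstacles here: the only mild subtlety is the bookkeeping needed to promote compact generation of $\EE$ to compact generation of $\CAlg(\EE)$, which is really just a standard consequence of the fact that the free algebra monad preserves filtered colimits. The rest is direct application of Lemmas \ref{lem:prig-strict-0} and \ref{lem:prig-cpt-0}.
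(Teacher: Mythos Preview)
There is a genuine gap in your argument for compactness of the terminal object. You claim that ${\one}_\EE$ is a retract of $e \otimes e^{\vee}$, but this is false in general: for a dualizable $e$ one has a unit ${\one}_\EE \to e^{\vee} \otimes e$ and a counit $e \otimes e^{\vee} \to {\one}_\EE$, and their composite is multiplication by the dimension of $e$, not the identity. In fact the paper explicitly warns that in $\Pr^{\rig}$ the unit need not be compact, and the main example $\EE = \Sp_{T(n)}$ has ${\one}_\EE$ non-compact while $e = L_{T(n)}V_n$ is compact. So you cannot reduce to \Cref{lem:prig-cpt-0}; the whole point of the lemma is to cover exactly this situation.

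The paper instead argues directly: a commutative algebra $R$ is zero iff $1=0$ in the ring $[{\one}_\EE, R \otimes \End(e)]$, because $e$ generates $\EE$ under tensor, dual, and colimits. Since $\End(e) = e \otimes e^{\vee}$ is compact and dualizable, one rewrites $[{\one}_\EE, R \otimes \End(e)] \cong [\End(e)^{\vee}, R]$, and now compactness of $\End(e)^{\vee}$ lets one detect $1=0$ at a finite stage of any filtered colimit. This is the missing idea.

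A smaller issue: in your compact-generation argument you assert that $\mathrm{Free}$ sends the thick subcategory generated by the $c_\alpha$ into the thick subcategory generated by the $\mathrm{Free}(c_\alpha)$, but $\mathrm{Free}$ is not exact, so this step does not go through as written. The paper simply invokes \cite[Corollary 5.3.1.17]{HA}; alternatively one can argue directly that $\{\mathrm{Free}(c_\alpha)\}$ jointly detect equivalences because $U$ is conservative and $\Map(\mathrm{Free}(c_\alpha),R) \simeq \Map(c_\alpha, U(R))$.
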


\begin{proof}
  As $\EE$ is compactly generated, so is $\CAlg(\EE)$ by \cite[Corollary 5.3.1.17]{HA}. 
  By \Cref{lem:prig-cpt-0}, the zero algebra in $\CAlg(\EE)$ is strict it therefore suffices to show that the zero algebra is compact.
  
  Using the assumption that $e$ generates $\EE$ under tensor products, duals and colimits, we can read off that
  a commutative algebra $R$ is equivalent to zero iff 
  $1=0$ in the associative ring $[{\one}_\EE, R \otimes \End(e)]$. Using that $\End(e)$ is compact (and therefore dualizable) we have that
  \[
  [{\one}_\EE, R \otimes \End(e)] =  [ \End(e)^{\dual}, R ]
  \] 
  and can then see that $1=0$ in a filtered colimit iff $1=0$ at some finite stage. 
\end{proof}


\begin{exm} \label{exm:Tn-0}
    The category of $T(n)$-local spectra is generated by any choice of nonzero compact object. Therefore, applying \Cref{lem:cpt-gen-spectral}, we learn that 
    the category $\CAlg(\Sp_{T(n)})$ of $T(n)$-local commutative algebras is weakly spectral.
    \tqed
\end{exm}

\begin{exm}
  Let $X$ be a compact, $T_1$ space.
  The poset $\mathrm{Op}(X)$ of open sets in $X$ under inclusions, considered as a category, is weakly spectral.
  \tqed
\end{exm}

\begin{exm}
  The opposite of the category of profinite sets is weakly spectral. 
  \tqed
\end{exm}






\begin{lem} \label{lem:under-spectral}
If $\CC$ is a weakly spectral category, then for any $R\in \CC$, the undercategory $\CC_{R/-}$ is weakly spectral.
\end{lem}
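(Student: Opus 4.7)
The plan is to verify the three conditions defining weakly spectral for $\CC_{R/-}$: compact generation, strictness of the terminal object, and compactness of the terminal object. Note that the terminal object of $\CC_{R/-}$ is the unique object $(R \to \star_{\CC})$, and the forgetful functor $U \colon \CC_{R/-} \to \CC$ preserves filtered colimits (non-empty colimits in an undercategory being computed in $\CC$).

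Strictness and compactness of the terminal object will reduce to the corresponding properties of $\star_{\CC}$ in $\CC$. For strictness, a map $(R \to \star_{\CC}) \to (R \to T)$ in $\CC_{R/-}$ is given by a morphism $\star_{\CC} \to T$ over $R$, which is an equivalence by strictness of $\star_{\CC}$ in $\CC$, forcing the map in $\CC_{R/-}$ to be an equivalence as well. For compactness, I would first compute using strictness that $\Map_{\CC_{R/-}}((R \to \star_{\CC}), (R \to T))$ is contractible when $T \cong \star_{\CC}$ and empty otherwise. Since $U$ preserves filtered colimits, compactness of $(R \to \star_{\CC})$ in $\CC_{R/-}$ then follows from compactness of $\star_{\CC}$ in $\CC$ via the observation that $\colim T_i \cong \star_{\CC}$ forces some $T_i \cong \star_{\CC}$.

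The main obstacle is compact generation of $\CC_{R/-}$, which is not automatic since $R$ itself need not be $\omega$-compact in $\CC$. I would argue using the adjunction $L \dashv U$, where $L \colon \CC \to \CC_{R/-}$ sends $X$ to $(R \to R \coprod X)$. This adjunction is monadic by the Barr--Beck--Lurie theorem (since $U$ is conservative and preserves filtered colimits together with $U$-split geometric realizations), and the associated monad $UL = R \coprod (-)$ on $\CC$ preserves filtered colimits (both $L$ and $U$ do). Standard results on categories of algebras over a filtered-colimit-preserving monad on a compactly generated category then imply that $\CC_{R/-}$ is compactly generated, with a set of compact generators given by $\{L(G) : G \in \mathcal{G}\}$ for $\mathcal{G}$ any set of compact generators of $\CC$.
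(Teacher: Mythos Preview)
Your proposal is correct and follows essentially the same approach as the paper: both use the adjunction $R \coprod (-) \dashv U$ to deduce compact generation of $\CC_{R/-}$, with the key point being that $U$ is conservative and preserves filtered colimits. The paper's proof is terser (it simply notes that $R \coprod -$ therefore sends compact generating sets to compact generating sets, and calls the terminal-object conditions ``clear''), whereas you spell out the terminal-object verifications and route compact generation through monadicity, but the content is the same.
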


\begin{proof}
  We have that $\CC_{R/-}$ is compactly generated because the functor $R\coprod - \colon \CC \to \CC_{R/-}$
  has a conservative and filtered colimit preserving right adjoint, and therefore preserves compact generating sets.  The condition that the terminal object is strict and compact is clear.
\end{proof}

\begin{rmk}
  Outside of this appendix we will only work with Examples \ref{exm:discrete-w-spectral} and \ref{exm:Tn-0} and those which can be produced from them using \Cref{lem:under-spectral}.
  \tqed
\end{rmk}

\subsection{Constructing Nullstellensatzian objects}
\label{subsec:null-props}\hfill

In this section, as preparation for proving \Cref{thm:con-spec},
we prove that Nullstellensatzian objects exist in great abundance in $\CC$.

\begin{prop} \label{cor:ns-exist}
  Let $\cC$ be a weakly spectral category and let $\kappa$ be a regular cardinal.  Then an object $ R \in \cC$ is non-terminal if and only if there exists a $\kappa$-Nullstellensatzian object $L$ and a map $R \to L$.
\end{prop}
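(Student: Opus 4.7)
The ``$\Leftarrow$'' direction is immediate from the strictness of the terminal object: if there is a map $R \to L$ with $L$ a $\kappa$-Nullstellensatzian object (hence non-terminal), then $R$ cannot be terminal, for otherwise strictness would force the map to be an isomorphism and $L$ to be terminal. For the harder ``$\Rightarrow$'' direction, I first apply \Cref{lem:under-spectral} to reduce to the case of showing that every weakly spectral $\CC$ whose initial object $\circ$ is non-terminal contains a $\kappa$-Nullstellensatzian object. The construction is a transfinite one, analogous to (but more subtle than) the construction of a maximal ideal in a nonzero ring.

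The central technical input is the following closure property, which I will verify first: the class $\mathcal{G}$ of morphisms $f\colon A \to B$ such that the pushout $B \coprod_A C$ is non-terminal whenever $A \to C$ is a map to a non-terminal object forms a weakly saturated class in the sense of \Cref{dfn:saturated}. Closure under cobase change and composition are routine manipulations with pushouts, using the identity $D \coprod_A C \cong D \coprod_B (B \coprod_A C)$. Closure under transfinite composition is the key step: it uses that the terminal object $\star$ is \emph{compact}, so if a filtered colimit $\colim L_\alpha$ of non-terminal objects were terminal, the identity map on $\star = \colim L_\alpha$ would, by compactness, factor through some $L_\alpha$; then strictness of $\star$ would force $L_\alpha$ itself to be terminal, a contradiction. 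Closure under retracts likewise uses strictness: a retract of a non-terminal object is non-terminal.

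With $\mathcal{G}$ in hand, I fix a regular cardinal $\lambda \geq \kappa$ large enough to bound the sizes of all objects of interest and apply Zorn's lemma to the poset $\mathcal{P}$ of isomorphism classes of non-terminal $\lambda$-compact objects of $\CC$, preordered by the existence of a morphism. Chains admit upper bounds given by filtered colimits, which remain non-terminal by the compactness of $\star$ established above, and remain $\lambda$-compact by choice of $\lambda$. Let $L$ be a maximal element: for any non-terminal $\lambda$-compact $X$ admitting a map from $L$, there exists a map $X \to L$.

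The main obstacle will be upgrading this existence-of-a-map-back property to the genuine $\kappa$-Nullstellensatzian condition, which requires a \emph{retraction} over $L$ in $\CC_{L/-}$, not merely a morphism $X \to L$ in $\CC$. To resolve this, I will run a secondary transfinite iteration within $\CC_{L/-}$: at each stage, I modify $L$ by passing to a suitable non-terminal pushout along maps in $\mathcal{G}$ that force the abstract maps back to become genuine retractions, and apply Zorn again. The weakly saturated structure of $\mathcal{G}$ ensures non-terminality is preserved at each step, and the cardinality bound $\lambda$ guarantees the process stabilizes, yielding the desired $\kappa$-Nullstellensatzian object under $R$.
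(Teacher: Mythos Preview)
Your backward direction is fine, and the reduction to the undercategory via \Cref{lem:under-spectral} is a reasonable first move. But the forward direction has a genuine gap at the point you yourself flag as the ``main obstacle.''

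The Zorn's lemma step, even if it goes through, only produces an object $L$ that is maximal in the \emph{map-existence preorder}: every non-terminal $\lambda$-compact $X$ receiving a map from $L$ admits \emph{some} map $X \to L$ in $\CC$. This is far weaker than the $\kappa$-Nullstellensatzian condition, which demands a retraction in $\CC_{L/-}$, i.e.\ a section of the specific structure map $L \to X$. Your ``secondary transfinite iteration'' is supposed to close this gap, but you do not say which pushouts you take or why they lie in your class $\mathcal{G}$. Concretely: given $L \to X$ non-terminal and an abstract map $g\colon X \to L$, there is no evident map in $\mathcal{G}$ whose pushout forces $g$ to become a retraction; coequalizing $g \circ (L \to X)$ with $\mathrm{id}_L$ may well produce the terminal object. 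The class $\mathcal{G}$ you carefully set up is never actually deployed in a way that does work here. (There are also secondary issues with the Zorn step itself: a chain in your preorder is not a diagram in $\CC$, so building a filtered colimit requires choosing coherent maps, and for chains of length $\geq \lambda$ the colimit need not remain $\lambda$-compact.)

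The paper's approach sidesteps this entirely by working not with the map-preorder but with the bounded semi-lattice $\mathfrak{L}_\CC^\kappa(R) = \mathfrak{L}((\CC_{R/-})^\kappa)$ and its maximal ideals. The point is that a maximal ideal $\mathfrak{I}$ records exactly which $\kappa$-compact objects can be \emph{simultaneously} given retractions: the replacement step (\Cref{lem:basic-replace}) sets $S = \coprod_R^{\alpha \in \mathfrak{I}} T_\alpha$, which by construction retracts onto every $T_\alpha$, and the ideal property (closure under finite joins) together with compactness and strictness of $\star$ ensures $S$ is non-terminal. Iterating this $\kappa$ many times and using that $\mathfrak{L}$ commutes with the relevant filtered colimit (\Cref{lem:kappa-gen}) collapses the lattice to $\{\circ \to \star\}$, which is precisely the Nullstellensatzian condition. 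The semi-lattice and maximal-ideal language is not decoration; it is what makes the retraction requirement tractable.
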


At its core, our proof of \Cref{cor:ns-exist} is essentially a small-object-type argument where we make successive replacements of $R$ and then argue that after a sufficiently filtered colimit of such replacements one must obtain a Nullstellensatzian object. There is, however, a wrinkle: the quantification over \textbf{non-terminal $\kappa$-compact objects} in the definition of $\kappa$-Nullstellensatzian forces us into an argument which more closely resembles the argument proving the existence of maximal (proper) ideals in commutative rings. Before proceeding we make a digression on semi-lattices which provides a convenient logical super-structure for our later arguments.

\begin{rec} \label{dfn:semi-lattice}
  A \emph{semi-lattice} is a poset $\mathfrak{L}$ in which every pair of objects of $\mathfrak{L}$ has a least upper bound.
  \begin{enumerate}
  \item Given $u,v \in \mathfrak{L}$ we write $u \vee v$ 
    for the least upper bound of the $u$ and $v$ and refer to this as the \deff{join} of $u$ and $v$.
  \item A semi-lattice is \deff{bounded} if it has minimal element $\circ$ and a maximal element $\star$.
  \item A map of bounded semi-lattices is a map of posets which preserves joins, $\circ$ and $\star$. We write \deff{$\BSL$} for the category of bounded semi-lattices.
  \item An \deff{ideal} in $\mathfrak{L}$ is a subset $\mathfrak{I} \subset \mathfrak{L}$ such that
    (i) if $u \leq v$ and $v \in \mathfrak{I}$, then $u \in \mathfrak{I}$ and
    (ii) $\mathfrak{I}$ is closed under joins.
  \item A \deff{maximal ideal} is a proper ideal $\mathfrak{I}$ such that any larger ideal $\mathfrak{I} \subset \mathfrak{J}$ contains $\star$.
  \item As a corollary of the fact that a union of a chain of ideals in a bounded semi-lattice is an ideal, every bounded semi-lattice $\mathfrak{L}$ in which $\circ_\mathfrak{L} \neq \star_\mathfrak{L}$ has at least one maximal ideal.
  \item Under the identification of the category of posets as a full subcategory of $\mathrm{Cat}_\infty$, we can identify bounded semi-lattices as those posets that have all finite colimits and a terminal object. Joins correspond to coproducts. 
  \tqed
  \end{enumerate}
\end{rec}




\begin{cnstr} \label{cnstr:lattice-functor}
  Let \deff{$\mathfrak{L}\colon \mathrm{Cat}_\infty \to \mathrm{Poset}$}
  be the left adjoint to the natural inclusion of posets into categories.
  Concretely,
  $\mathfrak{L}(\AA)$ has an object \deff{$\langle R \rangle$} for each $R \in \AA$
  and $\langle R_1 \rangle \leq \langle R_2 \rangle $ exactly when
  there is a map $R_1 \to R_2$ in $\AA$.
  \tqed
\end{cnstr}

If $\AA$ has finite coproducts and a terminal object, then the same is true of $\mathfrak{L}(\AA)$ and thus $\mathfrak{L}(\AA)$ is a bounded semi-lattice with
\[ \langle \circ_{\AA} \rangle = \circ_{\mathfrak{L}(\AA)} \quad \langle \star_{\AA} \rangle = \star_{\mathfrak{L}(\AA)}, \quad \mathrm{ and } \quad \langle R \rangle \vee \langle S \rangle = \left\langle R \coprod S \right\rangle. \]


\begin{cnstr}
  Given an object $R \in \CC$ and a regular cardinal $\kappa$,
  let \deff{$\mathfrak{L}_\CC^\kappa(R)$} denote the bounded semi-lattice $\mathfrak{L}( (\CC_{R/-})^\kappa )$.

  Given a map $R \to S$, the co-base change functor $ S \coprod_R - \colon \CC_{R/-} \to \CC_{S/-} $
  \begin{enumerate}
  \item[(i)]  preserves $\kappa$-compactness,
  \item[(ii)] preserves coproducts and
  \item[(iii)] preserves terminal objects\footnote{Note that this uses our assumption that the terminal object in $\CC$ is strict.}
  \end{enumerate}
  therefore we can lift $\mathfrak{L}_\CC^\kappa(-)$ to a functor
  $ \mathfrak{L}_\CC^\kappa(-) \colon \CC \to \BSL $.
  \tqed
\end{cnstr}

\begin{exm} \label{exm:empty}
  An object $R \in \CC$ is terminal iff $\mathfrak{L}_\CC^\kappa(R) \cong \{ \star \}$.  
  \tqed
\end{exm}

From the defintion of $\kappa$-Nullstellensatzian, we obtain the following example which served as the motivation for considering bounded semi-lattices:

\begin{exm} \label{lem:lattice-ns}
  Let $\kappa$ be a regular cardinal.
  An object $L \in \CC$ is $\kappa$-Nullstellensatzian
  iff $\mathfrak{L}_\CC^\kappa(L) \cong \{ \circ \to \star \}$.
  \tqed
\end{exm}

\begin{lem} \label{lem:kappa-gen}
   Let $\kappa$ be a regular cardinal such that $\CC$ is $\kappa$-compactly generated.
  The functor 
  $ \CC \to \mathrm{Cat}_\infty $
  which sends $R$ to $(\CC_{R/-})^\kappa$ with functoriality in $R$ via co-base change 
  commutes with $\kappa$-filtered colimits.
\end{lem}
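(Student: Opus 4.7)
The plan is to verify the lemma by directly checking essential surjectivity and fully faithfulness of the natural comparison functor
\[ \colim_{i \in I} (\CC_{D(i)/-})^\kappa \longrightarrow (\CC_{R/-})^\kappa, \]
where $D \colon I \to \CC$ is a $\kappa$-filtered diagram with colimit $R$ and the colimit on the left is taken in $\mathrm{Cat}_\infty$. The key auxiliary fact is a description of $\kappa$-compact objects in the undercategory: an object $S \in \CC_{R/-}$ lies in $(\CC_{R/-})^\kappa$ if and only if it is a retract of a pushout $R \coprod_{R'} S'$ with $R', S' \in \CC^\kappa$. The forward direction follows because the forgetful functor $\CC_{R/-} \to \CC$ preserves $\kappa$-filtered colimits, so
\[ \Map_{\CC_{R/-}}(R \coprod_{R'} S', T) \simeq \fib\bigl(\Map_{\CC}(S', T) \to \Map_{\CC}(R', T)\bigr), \]
and $\kappa$-filtered colimits commute with this finite limit. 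The converse follows by writing any $S \in \CC_{R/-}$ as a $\kappa$-filtered colimit of such pushouts (using $\kappa$-compact generation of $\CC$) so that, by $\kappa$-compactness, $S$ must be a retract of one of them.

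For essential surjectivity, given $S \in (\CC_{R/-})^\kappa$, present $S$ as a retract of $R \coprod_{R'} S'$ with $R', S' \in \CC^\kappa$. Since $R'$ is $\kappa$-compact in $\CC$, the structure map $R' \to R = \colim D(i)$ factors through some $D(i)$; setting $S_i := D(i) \coprod_{R'} S'$ yields a $\kappa$-compact object of $\CC_{D(i)/-}$ whose image in $\CC_{R/-}$ is $R \coprod_{R'} S'$. The idempotent witnessing $S$ as a retract is encoded by a map in $\CC_{R/-}$ between compact objects, so by the fully faithfulness step below it can be realized at some larger stage, producing a preimage for $S$.

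For fully faithfulness, fix $i \in I$ and objects $S, T \in (\CC_{D(i)/-})^\kappa$ presented as $S = D(i) \coprod_{R'} S'$ and $T = D(i) \coprod_{R''} T''$ with all corners in $\CC^\kappa$. For $j \geq i$, write $S_j = D(j) \coprod_{R'} S'$ and $T_j = D(j) \coprod_{R''} T''$; the universal property of the pushout defining $S_j$ gives
\[ \Map_{\CC_{D(j)/-}}(S_j, T_j) \simeq \fib\bigl(\Map_{\CC}(S', T_j) \to \Map_{\CC}(R', T_j)\bigr). \]
Since the forgetful $\CC_{D(i)/-} \to \CC$ preserves colimits and pushouts commute with the filtered colimit, $\colim_j T_j \simeq R \coprod_{R''} T''$ as the underlying object of the image of $T$ in $\CC_{R/-}$. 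Taking the $\kappa$-filtered colimit over $j \geq i$, the $\kappa$-compactness of $S'$ and $R'$ together with the fact that $\kappa$-filtered colimits commute with finite limits yields
\[ \colim_{j \geq i} \Map_{\CC_{D(j)/-}}(S_j, T_j) \simeq \fib\bigl(\Map_{\CC}(S', T) \to \Map_{\CC}(R', T)\bigr) \simeq \Map_{\CC_{R/-}}(S, T), \]
which is exactly the required identification of mapping spaces in the $\kappa$-filtered colimit.

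The main obstacle is the careful characterization of $\kappa$-compact objects in $\CC_{R/-}$, since $R$ itself need not be $\kappa$-compact in $\CC$. The formulation via retracts of pushouts with $\kappa$-compact corners sidesteps the need for any accessibility result about $\CC_{R/-}$ specific to the value of $\kappa$, and once this characterization is in hand the essential surjectivity and fully faithfulness reduce to standard commutation of $\kappa$-filtered colimits with finite limits in $\CC$.
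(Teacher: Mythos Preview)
Your proposal is correct and follows the same overall strategy as the paper: show the comparison functor
\[ c \colon \colim_{\mathcal{K}} (\CC_{R_\alpha/-})^\kappa \to (\CC_{R/-})^\kappa \]
is an equivalence by verifying full faithfulness and essential surjectivity, and for essential surjectivity use the characterization of $\kappa$-compact objects in $\CC_{R/-}$ as retracts of pushouts $R \coprod_{R'} S'$ with $R',S' \in \CC^\kappa$, exactly as the paper does.

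The execution of full faithfulness differs slightly. The paper stays in the slice: for $S,T \in (\CC_{R_\beta/-})^\kappa$ it uses the co-base-change/forget adjunction to rewrite
\[ \Map_{\CC_{R_\alpha/-}}\!\bigl(R_\alpha \coprod_{R_\beta} S,\; R_\alpha \coprod_{R_\beta} T\bigr) \;\cong\; \Map_{\CC_{R_\beta/-}}\!\bigl(S,\; R_\alpha \coprod_{R_\beta} T\bigr), \]
and then invokes $\kappa$-compactness of $S$ \emph{in $\CC_{R_\beta/-}$} to pass the $\kappa$-filtered colimit over $\alpha$ inside. This avoids ever presenting $S$ or $T$ via objects of $\CC^\kappa$. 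Your route instead descends to $\CC^\kappa$ using the fiber formula for maps out of a pushout; this is a bit heavier but perfectly valid. One small point you should make explicit: in your full faithfulness step you write ``$S = D(i) \coprod_{R'} S'$'' as an equality, whereas your own characterization only gives $S$ as a \emph{retract} of such a pushout. This is harmless once you observe that the comparison map on mapping spaces for $(S,T)$ is a retract, in the arrow category of spaces, of the comparison map for the ambient pushouts, and a retract of an equivalence is an equivalence; but it deserves a sentence. The paper's version sidesteps this entirely.
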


\begin{proof}
  Given a $\kappa$-filtered diagram
  $\colim_{\mathcal{K}} R_\alpha \cong R$,
  we obtain a natural comparison functor
  \[ c \colon \colim_{\mathcal{K}} (\CC_{R_\alpha/-})^\kappa \to ({\cC_{R/-}})^\kappa \]
  which we would like to show is an equivalence.
  We start by showing that $c$ is fully faithful.
  For any vertex $\beta \in \mathcal{K}$ and objects $S,T \in (\CC_{R_\beta/})^\kappa$ we have
  \begin{align*}
    &\Map_{\colim_{\mathcal{K}_{\beta/}} \CC_{R_\alpha/-}}\left( S, T \right)
      \cong \colim_{\mathcal{K}_{\beta/}} \Map_{\cC_{R_{\alpha}/-}}\left(R_\alpha \coprod_{R_\beta} S, R_k \coprod_{R_\beta} T\right) \\
    &\cong \colim_{\mathcal{K}_{\beta/}} \Map_{{\cC_{R_{\beta}/-}}}\left(S, R_\alpha \coprod_{R_\beta} T\right) 
    \cong \Map_{{\cC_{R_{\beta}/-}}}\left(S, \colim_{\mathcal{K}_{\beta/}} R_\alpha \coprod_{R_\beta} T\right) \\
    &\cong \Map_{{\cC_{R_{\beta}/}}}\left(S, R \coprod_{R_\beta} T\right) 
      \cong \Map_{{\cC_{R/-}}}\left(R \coprod_{R_\beta} S, R \coprod_{R_\beta} T\right).
  \end{align*}

  Given an object $W \in (\cC_{R/-})^\kappa$ write $R \to W$ as a $\kappa$-filtered colimit of arrows $S_\alpha \to T_\alpha$ where $S_\alpha, T_\alpha \in \CC^\kappa$. As $W \in {\cC_{R/-}}$ is $\kappa$-compact and expressible as a $\kappa$-filtered colimit of objects $R \coprod_{S_\alpha} T_\alpha$, we learn that $W$ is a retract of some $R \coprod_{S_\alpha} T_\alpha$. Since $S_\alpha$ is $\kappa$-compact we can factor the map $S_\alpha \to R$ through $R_q$ for some $q \in \mathcal{K}$. Thus, we learn that $W$ is a retract of an object in the image of $c$.
  On the other hand, using that $c$ is fully faithful we can lift idempotents on objects in the image of $c$ to a finite stage of the colimit and therefore $c$ is in fact essentially surjective.
\end{proof}

\begin{lem} \label{lem:basic-replace}
  Let $\kappa$ be a regular cardinal.
  Given an object $R \in \CC$ and a maximal ideal $\mathfrak{I}$ in $\mathfrak{L}^\kappa_\CC(R)$,
  there exists an object $S \in \CC$ and map $f \colon R \to S$ such that $\mathfrak{L}^\kappa_\CC(f)$ factors as
  \[ \mathfrak{L}^\kappa_\CC(R) \to \{ \circ \to  \star \} \to \mathfrak{L}_\CC^\kappa(S) \]
  with the preimage of $\circ$ being the maximal ideal $\mathfrak{I}$.
\end{lem}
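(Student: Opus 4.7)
The plan is to construct $S$ by transfinitely iterating pushouts along the objects in the ideal. First, let $\mathcal{J} \subset (\CC_{R/-})^\kappa$ denote the full subcategory consisting of those $T$ with $\langle T\rangle \in \mathfrak{I}$. Since $(\CC_{R/-})^\kappa$ is essentially small, $\mathcal{J}$ has a set of isomorphism classes, which I enumerate as $\{T_\alpha\}_{\alpha < \lambda}$. I then transfinitely build a chain $\{S_\alpha\}_{\alpha \leq \lambda}$ in $\CC_{R/-}$ with $S_0 = R$, $S_{\alpha+1} = S_\alpha \coprod_R T_\alpha$, and $S_\alpha = \colim_{\beta < \alpha} S_\beta$ at limits; then set $S := S_\lambda$.

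By rearranging pushouts, each $S_\alpha$ can be identified with the coproduct $\coprod_{\beta < \alpha}^R T_\beta$ in $\CC_{R/-}$, and hence with the $\omega$-filtered colimit $\colim_{F} T_F$ where $F$ ranges over the finite subsets of $[0,\alpha)$ and $T_F := \coprod_{\beta \in F}^R T_\beta$. Crucially, since $\mathfrak{I}$ is closed under finite joins, every $T_F$ lies in $\mathcal{J}$. Condition (a'), that each $T \in \mathcal{J}$ maps to $S$ over $R$, is then immediate from the construction.

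To verify that $S$ is non-terminal, I use that $\star_\CC$ is compact and strict in $\CC$, which implies it is $\omega$-compact and strict in $\CC_{R/-}$ (since the forgetful functor $\CC_{R/-}\to\CC$ creates colimits and preserves the terminal object). Suppose $S = \star_\CC$; then by compactness along the chain $\{S_\alpha\}$ and strictness, some $S_\alpha$ is terminal, and minimality of such an $\alpha$ forces $\alpha$ to be a successor, say $\alpha = \beta+1$. Writing $S_\beta \coprod_R T_\beta = \colim_F (T_F \coprod_R T_\beta)$ as an $\omega$-filtered colimit and applying $\omega$-compactness of $\star_\CC$ again, some $T_F \coprod_R T_\beta \cong \star_\CC$. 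But $T_F \coprod_R T_\beta \in \mathcal{J}$, so $\star = \langle \star_\CC \rangle \in \mathfrak{I}$, contradicting the properness of $\mathfrak{I}$.

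Finally, for condition (b'), that $\langle T\rangle \notin \mathfrak{I}$ implies $S \coprod_R T \cong \star_\CC$: I consider the ``supporting ideal'' $\mathfrak{I}' := \{\langle T'\rangle \in \mathfrak{L}^\kappa_\CC(R) : T' \to S \coprod_R T \text{ over }R\}$. A routine check (joins via the universal property, downward-closedness via composition) shows $\mathfrak{I}'$ is an ideal. By (a') every $T' \in \mathcal{J}$ maps to $S$ and hence to $S \coprod_R T$, so $\mathfrak{I}' \supseteq \mathfrak{I}$; moreover $\langle T\rangle \in \mathfrak{I}'$ via the second coproduct inclusion, so $\mathfrak{I}' \supsetneq \mathfrak{I}$. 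Maximality of $\mathfrak{I}$ then forces $\star \in \mathfrak{I}'$, which produces a map $\star_\CC \to S \coprod_R T$, and strictness of $\star_\CC$ gives $S \coprod_R T \cong \star_\CC$, as required. The main obstacle I anticipate is navigating the non-terminality argument for $S$: one must ensure that the $\omega$-filtered description in terms of finite sub-coproducts $T_F$ (which \emph{are} in $\mathcal{J}$) is available independently of $\kappa$, since ideals are only assumed closed under finite joins.
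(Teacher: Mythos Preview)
Your proof is correct and follows essentially the same approach as the paper: construct $S$ as the coproduct in $\CC_{R/-}$ of representatives for the elements of $\mathfrak{I}$, use compactness and strictness of $\star_\CC$ together with closure of $\mathfrak{I}$ under finite joins to see $S$ is non-terminal, and use maximality of $\mathfrak{I}$ for the factoring property. The transfinite presentation is unnecessary packaging (you immediately observe $S_\alpha \cong \coprod_{\beta<\alpha}^R T_\beta$), and your ``supporting ideal'' argument for condition (b') is a mild repackaging of the paper's more direct observation that maximality provides, for each $\langle W\rangle \notin \mathfrak{I}$, some $\alpha \in \mathfrak{I}$ with $W \coprod_R T_\alpha \cong \star_\CC$, whence $W \coprod_R S \cong \star_\CC$ by strictness.
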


\begin{proof}
  For each $\alpha \in \mathfrak{I}$ pick a $T_\alpha \in ({\cC_{R/-}})^\kappa$ with $\langle T_\alpha \rangle = \alpha$.
  Let 
  \[S \coloneqq \coprod_{\alpha \in \mathfrak{I}} T_\alpha \] 
  where the coproduct is taken in ${\cC_{R/-}}$.  
  For each $\alpha \in \mathfrak{I}$, the map  $S \to T_{\alpha} \coprod_{R} S$ admits a retract and thus 
  $\langle T_\alpha \rangle$ maps to $\circ$ in $\mathfrak{L}^\kappa_\CC(S)$.
  Similarly, since $\mathfrak{I}$ is a maximal ideal, for any $\langle W \rangle \not\in \mathfrak{I}$, there exists an $\alpha$ such that $ W \coprod_R T_\alpha$ is terminal. 
  Since the terminal object is strict, $W\coprod_R S$ is terminal as well, and so if $\beta \not\in \mathfrak{I}$ then $\beta$ maps to $\star$ in $\mathfrak{L}^\kappa_\CC(S)$.

  To complete the proof we just need to show that $S$ is non-terminal.
  Write $S$ as the filtered colimit
  \[ S \cong \colim_{A \subset \mathfrak{I},\ |A| < \omega} \coprod_{R}^{\alpha \in A} T_\alpha. \]
  Then, as the terminal object is strict and compact, $S$ is terminal iff
  there is some finite subset $A$ of $\mathfrak{I}$ such that $\coprod_R^{\alpha \in A} T_\alpha$ is terminal.
  A finite coproduct of the $T_\alpha$ in $({\cC_{R/-}})^\kappa$ is not terminal because $\mathfrak{I}$ is a proper ideal and joins in $\mathfrak{L}^\kappa_\CC(R)$ can be computed by the coproduct in $({\cC_{R/-}})^\kappa$.
\end{proof}

\begin{prop} \label{prop:ns-exist}
  Let $\kappa$ be a regular cardinal.
  Given an object $R \in \CC$ and a maximal ideal $\mathfrak{I}$ in $\mathfrak{L}^\kappa_\CC(R)$,
  there exists a $\kappa$-Nullstellensatzian object $L$ and a map $f :R \to L$
  such that $(\mathfrak{L}^\kappa_\CC(f))^{-1}(\circ) = \mathfrak{I}$.
\end{prop}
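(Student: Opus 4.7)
\medskip

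The plan is to iterate Lemma \ref{lem:basic-replace} transfinitely, producing a $\kappa$-filtered sequence $R = R_0 \to R_1 \to \cdots$ whose colimit $L$ is $\kappa$-Nullstellensatzian. The essential point is that each application of Lemma \ref{lem:basic-replace} ``resolves'' all previously accumulated $\kappa$-compact classes (sending each either to $\circ$ or to $\star$), so that after a $\kappa$-filtered colimit every $\kappa$-compact class under $L$, which by \Cref{lem:kappa-gen} is represented at some finite stage, must be resolved.

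Concretely, I would build the sequence $(R_\alpha, \mathfrak{I}_\alpha)_{\alpha \leq \kappa}$ by transfinite recursion. Set $R_0 = R$, $\mathfrak{I}_0 = \mathfrak{I}$. At a successor $\alpha + 1$, apply Lemma \ref{lem:basic-replace} to $(R_\alpha, \mathfrak{I}_\alpha)$ to obtain a map $f_\alpha \colon R_\alpha \to R_{\alpha+1}$; then, using that $R_{\alpha+1}$ is non-terminal (so $\circ \neq \star$ in $\mathfrak{L}^\kappa_\CC(R_{\alpha+1})$ and proper ideals exist), use Zorn's lemma to pick a maximal ideal $\mathfrak{I}_{\alpha+1} \subset \mathfrak{L}^\kappa_\CC(R_{\alpha+1})$. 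At a limit stage $\beta \leq \kappa$ set $R_\beta = \colim_{\alpha < \beta} R_\alpha$; since $\star$ is compact in $\CC$ and each $R_\alpha$ is non-terminal, $R_\beta$ is also non-terminal, so a maximal ideal $\mathfrak{I}_\beta$ can again be chosen. Define $L \coloneqq R_\kappa$.

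To verify that $L$ is $\kappa$-Nullstellensatzian it suffices, by \Cref{lem:lattice-ns}, to show $\mathfrak{L}^\kappa_\CC(L) = \{\circ \to \star\}$. Since $\kappa$ (as an ordinal) is $\kappa$-filtered by regularity, \Cref{lem:kappa-gen} implies that any $T \in (\CC_{L/-})^\kappa$ is equivalent to $T_\alpha \coprod_{R_\alpha} L$ for some $\alpha < \kappa$ and $T_\alpha \in (\CC_{R_\alpha/-})^\kappa$. If $\langle T_\alpha \rangle \in \mathfrak{I}_\alpha$, then by Lemma \ref{lem:basic-replace} the object $T_\alpha \coprod_{R_\alpha} R_{\alpha+1}$ admits a retract to $R_{\alpha+1}$, and base-changing along $R_{\alpha+1} \to L$ shows $\langle T \rangle = \circ$. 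If instead $\langle T_\alpha \rangle \notin \mathfrak{I}_\alpha$, then $T_\alpha \coprod_{R_\alpha} R_{\alpha+1}$ is terminal; strictness of $\star$ makes this terminality persist under any further pushout, so $T$ is terminal and $\langle T \rangle = \star$. The identity $(\mathfrak{L}^\kappa_\CC(f))^{-1}(\circ) = \mathfrak{I}$ for the composite $f \colon R \to L$ is then immediate, since $f_0$ already implements the desired partition and subsequent maps preserve $\circ$ and $\star$.

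The main obstacle is really conceptual rather than technical: one must see that, although new $\kappa$-compact classes may be introduced at each stage, Lemma \ref{lem:kappa-gen} forces every $\kappa$-compact class at the colimit to originate at some $\alpha < \kappa$, and hence to be decided at stage $\alpha + 1$. Everything else---the existence of maximal ideals, preservation of non-terminality across the colimit, and strict-terminal stability of pushouts---are formal consequences of the weakly spectral hypothesis on $\CC$ and the setup in \Cref{dfn:semi-lattice}.
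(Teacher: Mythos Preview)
Your proof is correct and takes essentially the same approach as the paper: iterate Lemma~\ref{lem:basic-replace} transfinitely along the ordinal $\kappa$, then use Lemma~\ref{lem:kappa-gen} and the $\kappa$-filteredness of $\kappa$ to conclude that the colimit is $\kappa$-Nullstellensatzian. The only stylistic difference is that the paper packages the final step as a single colimit computation---observing that each successor transition factors the lattice map through $\{\circ \to \star\}$, so $\mathfrak{L}^\kappa_\CC(L) \cong \colim_{\alpha<\kappa}\mathfrak{L}^\kappa_\CC(R_\alpha) \cong \colim\{\circ\to\star\} \cong \{\circ\to\star\}$ using that $\mathfrak{L}$ is a left adjoint---whereas you unwind this by hand for an individual $\kappa$-compact object.
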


\begin{proof}
  Using \Cref{lem:basic-replace} we can produce a map $g\colon R \to R'$ such that $(\mathfrak{L}^\kappa_\CC(g))^{-1}(\circ) = \mathfrak{I}$. To conclude we now just need to find a map $R' \to L$ to a $\kappa$-Nullstellensatzian object.

  Construct a diagram $F \colon (\kappa+1)\to \CC$ by setting $F(0)=R'$,
  using the replacement procedure from \Cref{lem:basic-replace} at each successor ordinal
  and extending to limit ordinals by colimits.
  
  To conclude we will show that $F(\kappa)$ is $\kappa$-Nullstellensatzian. 
  Using \Cref{lem:kappa-gen} and the fact that $\kappa$ is $\kappa$-filtered we have isomorphisms
  \[   \mathfrak{L}^\kappa_\CC(F(\kappa))
    \cong \mathfrak{L}\left((\CC_{F(\kappa)/-})^\kappa \right) \cong \mathfrak{L} \left( \colim_{\alpha <\kappa} (\CC_{F(\alpha)/-})^\kappa \right). \]
  Then, using the fact that $\mathfrak{L}$ is a left adjoint and the factorization through $\{\circ \to \star\}$ from \Cref{lem:basic-replace},
  we can read off that
   \[ \mathfrak{L} \left( \colim_{\alpha <\kappa} (\CC_{F(\alpha)/-})^\kappa \right) 
    \cong \colim_{\alpha <\kappa} \mathfrak{L} \left( (\CC_{F(\alpha)/-})^\kappa \right) 
      \cong \colim \{ \circ \to \star \}
      \cong \{ \circ \to \star \}. \]
  It follows from \Cref{lem:lattice-ns} that $F(\kappa)$ is $\kappa$-Nullstellensatzian, as desired.
\end{proof}

\Cref{cor:ns-exist} now follows as a corollary of \Cref{prop:ns-exist} and the existence of maximal ideals in bounded semi-lattices.



\subsection{Constructing the spectrum}
\label{subsec:construct-spec}\hfill

We are now ready to define the constructible spectrum by placing a topology on the set of maximal ideals of $\mathfrak{L}_\CC^\omega(R)$. 

\begin{cnstr}
  Given an $\mathfrak{L} \in \BSL$ we construct a topological space \deff{$\Spec(\mathfrak{L})$}
  whose points are the maximal ideals of $\mathfrak{L}$ and whose closed sets are generated by
  \[ \mdef{[\ell]}\coloneqq \{ \mathfrak{I}\ |\ \ell \in \mathfrak{I} \} \]
  for $\ell \in \mathfrak{L}$. We refer to $\Spec(\mathfrak{L})$ as the \deff{spectrum} of $\mathfrak{L}$.
  \tqed
\end{cnstr}

Unfortunately, a map of semi-lattices does not necessarily induce a map between their spectra.
In order to repair this we must restrict attention to only certain special maps which we call \emph{tame}.

\begin{dfn}
  A map of bounded semi-lattices $f\colon \mathfrak{L}_1 \to \mathfrak{L}_2$ is \deff{tame}
  if for every maximal ideal ${\mathfrak{I}}$ of $\mathfrak{L}_2$ the ideal $f^{-1}({\mathfrak{I}})$ is also maximal.
  We write \deff{$\BSL^{\mathrm{tame}}$} for subcategory of tame maps of $\BSL$.
  \tqed
\end{dfn}
  
\begin{lem}
  The spectrum construction $\Spec(-)$ assembles into a functor
  \[ \Spec(-) \colon (\BSL^{\mathrm{tame}})^\op \to \Top. \]
\end{lem}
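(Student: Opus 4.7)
The plan is to construct $\Spec(f)$ on points by pulling back maximal ideals, check continuity using the generating closed sets $[\ell]$, and verify strict functoriality, with tameness being exactly what is needed to make the point-level construction well-defined.

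First I would set up the action on morphisms. Given a tame map $f \colon \mathfrak{L}_1 \to \mathfrak{L}_2$ in $\BSL^{\mathrm{tame}}$, define $\Spec(f) \colon \Spec(\mathfrak{L}_2) \to \Spec(\mathfrak{L}_1)$ on points by $\mathfrak{I} \mapsto f^{-1}(\mathfrak{I})$. The preimage of an ideal under a map of bounded semi-lattices is always an ideal (since $f$ preserves joins and $\leq$ is reflected by a monotone map in a straightforward sense from the ideal conditions), and $f^{-1}(\mathfrak{I}) \neq \mathfrak{L}_1$ because $\star_{\mathfrak{L}_1} \mapsto \star_{\mathfrak{L}_2} \notin \mathfrak{I}$. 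The assumption that $f$ is tame is precisely what guarantees that $f^{-1}(\mathfrak{I})$ is again \emph{maximal}, so that $\Spec(f)$ lands in $\Spec(\mathfrak{L}_1)$. Before anything else I would also verify, on the side, that the composite of two tame maps is tame, so that $\BSL^{\mathrm{tame}}$ is indeed a subcategory; this is immediate from associativity of preimage.

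Next I would check continuity. Since the closed sets of $\Spec(\mathfrak{L}_1)$ are generated by the basic closed sets $[\ell]$ for $\ell \in \mathfrak{L}_1$, it suffices to show each preimage $\Spec(f)^{-1}([\ell])$ is closed. Unwinding definitions,
\[ \Spec(f)^{-1}([\ell]) = \{\mathfrak{I} \in \Spec(\mathfrak{L}_2) \mid \ell \in f^{-1}(\mathfrak{I})\} = \{\mathfrak{I} \in \Spec(\mathfrak{L}_2) \mid f(\ell) \in \mathfrak{I}\} = [f(\ell)], \]
which is closed in $\Spec(\mathfrak{L}_2)$ by definition.

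Finally, functoriality is formal. Given tame $f \colon \mathfrak{L}_1 \to \mathfrak{L}_2$ and $g \colon \mathfrak{L}_2 \to \mathfrak{L}_3$, we have $\Spec(g \circ f)(\mathfrak{I}) = (g \circ f)^{-1}(\mathfrak{I}) = f^{-1}(g^{-1}(\mathfrak{I})) = \Spec(f)(\Spec(g)(\mathfrak{I}))$, so $\Spec(g \circ f) = \Spec(f) \circ \Spec(g)$, and $\Spec(\mathrm{id}_\mathfrak{L}) = \mathrm{id}_{\Spec(\mathfrak{L})}$ by the same trivial calculation. There is no real obstacle here — the content of the statement is really just the observation that tameness is precisely the condition built to make pullback of maximal ideals well-defined, and once that is in place continuity and functoriality are immediate.
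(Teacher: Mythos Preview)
Your proposal is correct and follows essentially the same approach as the paper: define $\Spec(f)(\mathfrak{I}) = f^{-1}(\mathfrak{I})$ (well-defined by tameness), then verify continuity via the identity $\Spec(f)^{-1}([\ell]) = [f(\ell)]$. You are slightly more thorough in spelling out functoriality and the closure of tame maps under composition, but the core argument is identical.
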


\begin{proof}
  The restriction to tame maps allows us to define the underlying functor to $\mathrm{Set}$
  by the formula $\Spec(f)(\mathfrak{I}) = f^{-1}({\mathfrak{I}})$.
  Now we just need to check that given a tame map
  $f \colon \mathfrak{L}_1 \to \mathfrak{L}_2$, the induced map of spectra is continuous.
  For this, we observe that for $\ell \in \mathfrak{L}_1$
  \[ \Spec(f)^{-1}([\ell]) = \{ \mathfrak{J}\ |\ \ell \in f^{-1}(\mathfrak{J}) \} =  \{ \mathfrak{J}\ |\ f(\ell) \in \mathfrak{J} \} =  [f(\ell)]. \]
\end{proof}

\begin{lem} \label{lem:prove-tame}
  Given a map $f\colon R \to S$ in $\CC$,
  the associated map $\mathfrak{L}^\omega_\CC(R) \to \mathfrak{L}^\omega_\CC(S)$ is tame.
\end{lem}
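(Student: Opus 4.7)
The plan is to use \Cref{prop:ns-exist} to reduce the tameness statement to a concrete claim about maps into Nullstellensatzian objects, and then to exploit the Nullstellensatzian property directly.

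First, given a maximal ideal $\mathfrak{I} \subset \mathfrak{L}_\CC^\omega(S)$, I would invoke \Cref{prop:ns-exist} to produce an $\omega$-Nullstellensatzian object $L$ together with a map $g\colon S \to L$ such that $\mathfrak{L}_\CC^\omega(g)^{-1}(\{\circ\}) = \mathfrak{I}$. Setting $h = g \circ f\colon R \to L$, functoriality of $\mathfrak{L}_\CC^\omega(-)$ gives
\[ \mathfrak{L}_\CC^\omega(f)^{-1}(\mathfrak{I}) = \mathfrak{L}_\CC^\omega(h)^{-1}(\{\circ\}). \]
So it suffices to prove the following special case: for any map $h\colon R \to L$ with $L$ Nullstellensatzian, the preimage $\mathfrak{K} \coloneqq \mathfrak{L}_\CC^\omega(h)^{-1}(\{\circ\})$ is a maximal ideal of $\mathfrak{L}_\CC^\omega(R)$.

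Unwinding definitions, $\mathfrak{K}$ consists of those $\langle T \rangle \in \mathfrak{L}_\CC^\omega(R)$ such that $T \to L$ exists in $\cC_{R/-}$ (equivalently, such that $L \coprod_R T \to L$ admits a retract in $\cC_{L/-}$). Properness is immediate: if $\langle \star \rangle \in \mathfrak{K}$, then $\star \to L$ exists in $\cC_{R/-}$, which by strictness of the terminal object forces $L \cong \star$, contradicting the non-triviality of the Nullstellensatzian object $L$.

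The heart of the proof is showing maximality, which I would do by establishing the stronger statement that $\mathfrak{K} = \mathfrak{L}_\CC^\omega(R) \setminus \{\star\}$. Given $\langle T \rangle \neq \star$, i.e.\ $T$ is a non-terminal $\omega$-compact object of $\cC_{R/-}$, consider $L \coprod_R T \in \cC_{L/-}$. This object is $\omega$-compact in $\cC_{L/-}$ because the base-change functor $L \coprod_R (-) \colon \cC_{R/-} \to \cC_{L/-}$ is left adjoint to the forgetful functor, which preserves filtered colimits. It is non-terminal because $L \coprod_R T \cong \star$ would, via the unit map $L \to L \coprod_R T = \star$ and strictness of the terminal, force $L \cong \star$. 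Applying the Nullstellensatzian property of $L$ to $L \coprod_R T$ yields a map $L \coprod_R T \to L$ in $\cC_{L/-}$; precomposition with the coproduct inclusion $T \to L \coprod_R T$ then produces the required map $T \to L$ in $\cC_{R/-}$, showing $\langle T \rangle \in \mathfrak{K}$.

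The main conceptual obstacle is recognizing that tameness need not be checked for an arbitrary maximal ideal of $\mathfrak{L}_\CC^\omega(S)$ directly but is best accessed by ``lifting'' to an actual Nullstellensatzian witness via \Cref{prop:ns-exist}; once one is in that setting, maximality of the preimage becomes essentially tautological from the definition of Nullstellensatzian. The remaining technical inputs (preservation of compactness by $L \coprod_R (-)$, and the strict-terminal argument) are routine.
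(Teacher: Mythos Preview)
Your reduction step is correct and matches the paper exactly: via \Cref{prop:ns-exist} one reduces to showing that for any map $h\colon R \to L$ with $L$ Nullstellensatzian, the ideal $\mathfrak{K} = \mathfrak{L}_\CC^\omega(h)^{-1}(\{\circ\})$ is maximal.

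However, your maximality argument contains a genuine error. You claim that $\mathfrak{K} = \mathfrak{L}_\CC^\omega(R) \setminus \{\star\}$, arguing that for non-terminal compact $T$ the pushout $L \coprod_R T$ is non-terminal because ``$L \coprod_R T \cong \star$ would, via the unit map $L \to L \coprod_R T = \star$ and strictness of the terminal, force $L \cong \star$.'' This misapplies strictness: the hypothesis that $\star$ is strict says maps \emph{from} $\star$ are isomorphisms, not maps \emph{to} $\star$. Every object maps to $\star$, so having $L \to \star$ says nothing. Concretely, in $\CC = \CRing$ take $R = \Z$, $L = \overline{\Q}$, and $T = \F_p$: then $T$ is non-terminal and compact, but $L \coprod_R T = \overline{\Q} \otimes_\Z \F_p = 0$. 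Thus $\langle \F_p \rangle \notin \mathfrak{K}$, and your claimed equality $\mathfrak{K} = \mathfrak{L}_\CC^\omega(R) \setminus \{\star\}$ fails. The ideal $\mathfrak{K}$ is genuinely smaller, and maximality requires a different argument.

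The paper's approach is as follows. Write $L$ as a filtered colimit of compact objects $T_i \in (\CC_{R/-})^\omega$; each $\langle T_i \rangle$ lies in $\mathfrak{K}$ since $T_i \to L$ exists. Now if $\langle W \rangle \notin \mathfrak{K}$, then $W \coprod_R L$ is terminal, and since $\star$ is strict and compact and $W \coprod_R L \cong \colim_i (W \coprod_R T_i)$, some $W \coprod_R T_i$ is already terminal. Thus $\langle W \rangle \vee \langle T_i \rangle = \star$ with $\langle T_i \rangle \in \mathfrak{K}$, so no proper ideal containing $\mathfrak{K}$ can contain $\langle W \rangle$. This is exactly the ingredient you are missing: compactness of $\star$ lets one descend the terminality of $W \coprod_R L$ to a finite stage.
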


\begin{proof}
  \Cref{prop:ns-exist} lets us write every maximal ideal of $\mathfrak{L}^\omega_\CC(S)$ in the form $q^{-1}(\circ)$ for some map $q\colon S \to L$ to a Nullstellensatzian object $L$. Thus it suffices to prove the lemma for maps $q\colon R \to L$ with $L$ Nullstellensatzian.  That is, we must show, for every such map, that $q^{-1}(\circ)$ is maximal.  

  Write $L \in {\cC_{R/-}}$ as a filtered colimit of objects $T_{i} \in ({\cC_{R/-}})^\omega$. 
  Note that $\langle T_{i} \rangle$ is in the ideal $q^{-1}(\circ)$ for each ${i}$.
  Suppose $\langle W \rangle \not\in q^{-1}(\circ) \subset \mathfrak{L}^{\omega}_{\CC}(R)$; then $W \coprod_R L$ is terminal and since the terminal object is strict and compact, there is a $T_{i}$ with $W \coprod_R T_{i}$ terminal. Since $\langle T_{i} \rangle \in q^{-1}(\circ)$, this means $W$ cannot be added to $q^{-1}(\circ)$ and therefore that this ideal is maximal.
\end{proof}

\begin{dfn} \label{cnstr:spec-fun}
  Using \Cref{lem:prove-tame}, we know that $\mathfrak{L}_\CC^\omega(-)$ factors through $\BSL^{\mathrm{tame}}$ and we can therefore define the constructible spectrum functor as the composite:
  \[ \mdef{\Spec_\CC^\cons(-)} \coloneqq \Spec( \mathfrak{L}_\CC^\omega( - )). \]

  When it is clear from context that we are referring to the constructible spectrum (as opposed to another notion of spectrum) we will sometimes drop the adjective $\cons$ from our notation.
  \tqed
\end{dfn}

As a corollary of \Cref{cor:ns-exist}, \Cref{lem:lattice-ns} and \Cref{prop:ns-exist} we obtain the following proposition which verifies claims (A), (B) and (D) of \Cref{thm:con-spec}.

\begin{prop} \label{prop:ns-reps}
  The functor $\Spec_\CC^\cons \colon \CC^\op \to \Top$ has the following properties:
  \begin{enumerate}
  \item $\Spec^\cons_\CC(R) = \emptyset$ iff $R \cong \star$.
  \item If $L$ is Nullstellensatzian, then $\Spec^\cons_\CC(L)$ is point.
  \item Every point of $\Spec^\cons_\CC(R)$ has a Nullstellensatzian representative.
  \end{enumerate}
\end{prop}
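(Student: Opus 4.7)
The proof will unpack each of the three statements through the description of $\Spec_{\CC}^{\cons}(R)$ as the topological space of maximal ideals of the bounded semi-lattice $\mathfrak{L}_{\CC}^{\omega}(R)$, where a point of $\Spec_{\CC}^{\cons}(L) \to \Spec_{\CC}^{\cons}(R)$ is the preimage operation on ideals. The three inputs (existence of Nullstellensatzian covers, characterization of Nullstellensatzian objects by their semi-lattice, and existence of Nullstellensatzian representatives for any maximal ideal) correspond more or less directly to the three claims.

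For (1), I would argue both directions purely in terms of $\mathfrak{L}_{\CC}^{\omega}(R)$. If $R \cong \star$, then $\mathfrak{L}_{\CC}^{\omega}(R) \cong \{\star\}$ by \Cref{exm:empty}, which has no proper ideals and hence no maximal ideals, so $\Spec_{\CC}^{\cons}(R)$ is empty. Conversely, if $R$ is non-terminal, then $\circ \neq \star$ in $\mathfrak{L}_{\CC}^{\omega}(R)$ by \Cref{exm:empty}, so the bounded semi-lattice admits at least one maximal ideal by \Cref{dfn:semi-lattice}(6), giving a point of $\Spec_{\CC}^{\cons}(R)$. (Alternatively one may appeal directly to \Cref{cor:ns-exist} together with the tameness of $\mathfrak{L}_{\CC}^{\omega}(R \to L)$ from \Cref{lem:prove-tame} to pull back the unique maximal ideal of the Nullstellensatzian $L$.)

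For (2), if $L$ is Nullstellensatzian, then \Cref{lem:lattice-ns} gives $\mathfrak{L}_{\CC}^{\omega}(L) \cong \{\circ \to \star\}$. This two-element bounded semi-lattice has a unique maximal ideal, namely $\{\circ\}$, so $\Spec_{\CC}^{\cons}(L)$ is a single point. For (3), a point of $\Spec_{\CC}^{\cons}(R)$ is by definition a maximal ideal $\mathfrak{I}$ of $\mathfrak{L}_{\CC}^{\omega}(R)$; applying \Cref{prop:ns-exist} with $\kappa = \omega$ produces a Nullstellensatzian $L$ and a map $f \colon R \to L$ with $(\mathfrak{L}_{\CC}^{\omega}(f))^{-1}(\circ) = \mathfrak{I}$. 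Since $\Spec(f)$ is computed by preimage and the unique point of $\Spec_{\CC}^{\cons}(L)$ is $\{\circ\}$, its image under $\Spec_{\CC}^{\cons}(f)$ is $\mathfrak{I}$, as required.

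There is essentially no obstacle here: the proposition is a translation of results already in hand into the language of spectra, with the only subtlety being to make sure one cites \Cref{prop:ns-exist} at $\kappa = \omega$ so that the preimage is the correct maximal ideal. The bulk of the mathematical content was already packaged into Sections \ref{subsec:null-props} and \ref{subsec:construct-spec}; this proposition is the clean corollary that initiates the verification of properties (A), (B), (D) in \Cref{thm:con-spec}.
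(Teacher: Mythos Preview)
Your proof is correct and matches the paper's approach exactly; the paper simply states that the proposition is a corollary of \Cref{cor:ns-exist}, \Cref{lem:lattice-ns}, and \Cref{prop:ns-exist}, and you have unpacked precisely how those three inputs yield (1), (2), and (3) respectively.
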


\subsection{Properties of the spectrum}
\label{subsec:spec-properties}\hfill

In this section we finish the proof of \Cref{thm:con-spec} using the definition of the constructible spectrum from the previous section and then proceed into a discussion of further properties of $\Spec_\CC^\cons$ that are useful for making computations in practice.

\begin{dfn}
  Given an object $S \in {\cC_{R/-}}$ we write \deff{$[S]_R$} $ \subset \Spec_\CC(R)$ for the image of $\Spec_\CC(S)$ in $\Spec_\CC(R)$. When the map $R \to S$ is clear from context, we shall omit the subscript $R$. 
  \tqed
\end{dfn}

The following lemma follows from the fact that $\Spec^{\cons}_{\CC}(S)$ depends only on the undercategory $\CC_{S/-}$.

\begin{lem}\label{lem:spec-under-same}
Let $R\to S$ be a map.  Then there is an isomorphism
\[
\Spec^{\cons}_{\CC_{R/-}}(S) \cong \Spec^{\cons}_{\CC}(S).
\]
\end{lem}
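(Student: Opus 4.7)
The plan is to unwind the construction of $\Spec^{\cons}_{\CC}$ and $\Spec^{\cons}_{\CC_{R/-}}$ and observe that each depends only on the relevant undercategory. Recall from \Cref{cnstr:spec-fun} that
\[ \Spec^{\cons}_{\CC}(S) = \Spec(\mathfrak{L}_{\CC}^{\omega}(S)) = \Spec(\mathfrak{L}((\CC_{S/-})^{\omega})), \]
and analogously $\Spec^{\cons}_{\CC_{R/-}}(S) = \Spec(\mathfrak{L}(((\CC_{R/-})_{S/-})^{\omega}))$. It therefore suffices to exhibit an equivalence of categories $(\CC_{R/-})_{S/-} \simeq \CC_{S/-}$ that preserves $\omega$-compact objects, since applying $\mathfrak{L}(-)$ and then $\Spec(-)$ is functorial and the underlying posets of objects and the finite coproducts agree.

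The equivalence $(\CC_{R/-})_{S/-} \simeq \CC_{S/-}$ is standard: an object of the left-hand side is a commuting triangle $R \to S \to T$ with the top leg fixed, and the datum of such a triangle is equivalent to the datum of the single arrow $S \to T$ because the composite $R \to T$ is determined. Since this is an equivalence of $\infty$-categories it automatically preserves compactness (compact objects are a categorical invariant, as they are defined by a preservation property of filtered colimits). Hence it restricts to an equivalence of the full subcategories of $\omega$-compact objects, which gives an isomorphism of bounded semi-lattices
\[ \mathfrak{L}_{\CC_{R/-}}^{\omega}(S) \cong \mathfrak{L}_{\CC}^{\omega}(S). \]
Applying $\Spec(-)$ yields the desired isomorphism in $\Top$.

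I do not anticipate any obstacle: once the equivalence of undercategories is observed, everything else is formal. The only minor subtlety is to confirm that compactness is preserved by the equivalence, but this is automatic.
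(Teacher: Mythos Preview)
Your proposal is correct and is precisely the unwinding of the one-line argument the paper gives: the paper simply observes (in the sentence preceding the lemma) that $\Spec^{\cons}_{\CC}(S)$ depends only on the undercategory $\CC_{S/-}$, and your proof spells this out by invoking the standard equivalence $(\CC_{R/-})_{S/-} \simeq \CC_{S/-}$ and noting that everything in \Cref{cnstr:spec-fun} is built from $(\CC_{S/-})^{\omega}$.
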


\begin{lem} \label{lem:simple-intersection}
  For $S \in ({\cC_{R/-}})^\omega$, we have $[ \langle S \rangle ] = [S]_R$.
\end{lem}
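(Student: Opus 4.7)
The plan is to verify the two inclusions between subsets of $\Spec_\CC^\cons(R) = \Spec(\mathfrak{L}_\CC^\omega(R))$. Throughout, let $f\colon \mathfrak{L}_\CC^\omega(R) \to \mathfrak{L}_\CC^\omega(S)$ denote the map of bounded semi-lattices induced by $R \to S$, so that on spectra the induced map sends $\mathfrak{J}$ to $f^{-1}(\mathfrak{J})$.

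For the inclusion $[S]_R \subseteq [\langle S \rangle]$, I would observe that under the cobase change functor $S \coprod_R -\colon \cC_{R/-} \to \cC_{S/-}$, the object $S \in \cC_{R/-}$ is sent to $S \coprod_R S$; this object receives the codiagonal from $S$, so that $\langle S \coprod_R S\rangle = \circ_{\mathfrak{L}_\CC^\omega(S)}$. Thus $f(\langle S\rangle) = \circ$, which lies in every (maximal) ideal $\mathfrak{J}$ of $\mathfrak{L}_\CC^\omega(S)$, so $\langle S\rangle \in f^{-1}(\mathfrak{J})$ for every such $\mathfrak{J}$, giving the desired inclusion.

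For the reverse inclusion $[\langle S \rangle] \subseteq [S]_R$, fix a maximal ideal $\mathfrak{I}$ of $\mathfrak{L}_\CC^\omega(R)$ with $\langle S\rangle \in \mathfrak{I}$. By \Cref{prop:ns-exist} (applied at $\kappa=\omega$), there is a Nullstellensatzian object $L$ and a map $q\colon R \to L$ with $\mathfrak{I} = q_*^{-1}(\circ)$, where $q_*\colon \mathfrak{L}_\CC^\omega(R) \to \mathfrak{L}_\CC^\omega(L) = \{\circ \to \star\}$ is the induced map. The condition $\langle S\rangle \in \mathfrak{I}$ means that $S\coprod_R L$ is non-terminal in $\cC_{L/-}$, so again by \Cref{cor:ns-exist} there is a Nullstellensatzian object $L'$ and a map $S\coprod_R L \to L'$ under $L$. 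Letting $q'\colon S \to L'$ be the resulting map, the ideal $\mathfrak{J} := (q')_*^{-1}(\circ)$ is maximal in $\mathfrak{L}_\CC^\omega(S)$, and unraveling definitions one sees that $f^{-1}(\mathfrak{J}) = (q'')_*^{-1}(\circ)$, where $q''\colon R \to L'$ is the composite $R \to L \to L'$. Since $L \to L'$ exists, strictness of the terminal object gives $(q)_*^{-1}(\circ) \subseteq (q'')_*^{-1}(\circ)$, i.e.\ $\mathfrak{I} \subseteq f^{-1}(\mathfrak{J})$; as both are proper maximal ideals, they coincide, showing $\mathfrak{I} \in [S]_R$.

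The main (mild) obstacle is keeping track of the base change of semi-lattices against the two steps of Nullstellensatzian replacement and verifying that the resulting equality $f^{-1}(\mathfrak{J}) = \mathfrak{I}$ follows from strict maximality of both ideals together with the inclusion provided by the universal map $L \to L'$. Everything else is an application of the constructions of Sections~\ref{subsec:null-props} and~\ref{subsec:construct-spec}.
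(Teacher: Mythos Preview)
Your proof is correct. The argument for $[S]_R \subseteq [\langle S\rangle]$ via $f(\langle S\rangle) = \langle S\coprod_R S\rangle = \circ$ is clean. For the reverse inclusion, however, you take a detour compared with the paper.

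The paper observes that once a Nullstellensatzian representative $L$ of $q$ is chosen, the condition $\langle S\rangle \in \mathfrak{I}$ unpacks to $\langle S\coprod_R L\rangle = \circ$ in $\mathfrak{L}_\CC^\omega(L) \cong \{\circ \to \star\}$. Since $L$ is Nullstellensatzian and $S\coprod_R L$ is compact under $L$, this is equivalent not merely to $S\coprod_R L$ being non-terminal, but to the existence of a retraction $S\coprod_R L \to L$ over $L$, i.e.\ a factorization $R \to S \to L$. Thus $L$ itself already furnishes the desired preimage of $q$ in $\Spec_\CC(S)$, with no need for an auxiliary $L'$ or the subsequent comparison of ideals. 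Your route instead only extracts non-terminality, invokes \Cref{cor:ns-exist} to produce $L'$, and then has to reconcile $\mathfrak{I}$ with $f^{-1}(\mathfrak{J})$ via $L \to L'$; this works but is longer.

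One minor point: your justification ``strictness of the terminal object gives $(q)_*^{-1}(\circ) \subseteq (q'')_*^{-1}(\circ)$'' is not quite the right mechanism. The inclusion follows from functoriality of cobase change along $L \to L'$: a retraction $T\coprod_R L \to L$ over $L$ base-changes to a retraction $T\coprod_R L' \to L'$ over $L'$. In fact, since both $L$ and $L'$ are Nullstellensatzian, the induced map $\mathfrak{L}_\CC^\omega(L) \to \mathfrak{L}_\CC^\omega(L')$ is a map of bounded semi-lattices from $\{\circ \to \star\}$ to itself, hence the identity; so one obtains equality directly, and the ``both maximal, hence equal'' step is redundant.
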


\begin{proof}
  Given a point $q \in \Spec_\CC^\cons(R)$, pick a Nullstellensatzian representative $L \in {\cC_{R/-}}$.
  Examining the associated map of lattices
  \[ \mathfrak{L}_{\cC}^{\omega}(R) \to  \mathfrak{L}_{\cC}^{\omega}(L) \cong \{ \circ \to \star \}, \]
  we can read off that
  $q \in [\langle S \rangle]$
  iff $S$ is in the preimage of $\circ$
  iff the map $R \to L$ factors as $R \to S \to L$
  iff $q \in [S]$.
\end{proof}

\begin{lem}\label{lem:point-equiv}
 For $R\in \CC$ and Nullstellensatzian objects $q_1\colon R \to L_1$, $q_2\colon R\to L_2$ in  $\CC_{R/-}$, the following are equivalent:
  \begin{enumerate}
  \item We have $[L_1] = [L_2] \subset \Spec_{\CC}(R)$.
  \item The pushout $L_1 \coprod_{R} L_2$ is non-terminal.  
  \item There exists a Nullstellensatzian object $L_3 \in \CC_{R/-}$ together with maps $L_1 \to L_3$ and $L_2 \to L_3$ in $\CC_{R/-}$.  
  \end{enumerate}
\end{lem}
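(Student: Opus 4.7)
The plan is to prove the cycle $(1) \Rightarrow (2) \Rightarrow (3) \Rightarrow (1)$, drawing on the bounded semi-lattice picture of the constructible spectrum from \Cref{subsec:construct-spec} together with the existence of Nullstellensatzian envelopes (\Cref{cor:ns-exist}).

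The easiest implications are $(3) \Rightarrow (2)$ and $(2) \Rightarrow (3)$. For $(3) \Rightarrow (2)$, I would observe that a map $L_1 \coprod_R L_2 \to L_3$ is induced by the two maps of (3); since $L_3$ is Nullstellensatzian it is non-terminal by definition, and because the terminal object of $\CC$ is strict, the source $L_1 \coprod_R L_2$ must be non-terminal as well. For $(2) \Rightarrow (3)$, I would apply \Cref{cor:ns-exist} to the non-terminal object $L_1 \coprod_R L_2 \in \CC_{R/-}$ to produce a Nullstellensatzian $L_3 \in \CC_{R/-}$ receiving a map from $L_1 \coprod_R L_2$, and then precompose with the two coproduct inclusions.

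The implication $(3) \Rightarrow (1)$ is the place to use that each $L_i$ is Nullstellensatzian. By \Cref{prop:ns-reps}(2), $\Spec^{\cons}_{\CC}(L_i)$ is a single point for $i=1,2,3$, so each $[L_i] \subset \Spec^{\cons}_{\CC}(R)$ is a singleton. The maps $L_i \to L_3$ induce inclusions $[L_3] \subseteq [L_i]$ in $\Spec^{\cons}_{\CC}(R)$, and since the three sets are all singletons we conclude $[L_1] = [L_3] = [L_2]$.

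The main step, and the only one requiring real work, is $(1) \Rightarrow (2)$. Here I would argue by contradiction: assume $L_1 \coprod_R L_2$ is terminal. For $i = 1,2$, write $\mathfrak{I}_i$ for the maximal ideal of $\mathfrak{L}^\omega_\CC(R)$ corresponding to the point $[L_i]$, so that by the proof of \Cref{lem:prove-tame} we have $\langle S \rangle \in \mathfrak{I}_i$ exactly when $L_i \coprod_R S$ is non-terminal. Present $L_2$ as a filtered colimit $L_2 \cong \colim_j T_j$ with each $T_j \in (\CC_{R/-})^\omega$; then
\[ L_1 \coprod_R L_2 \cong \colim_j \bigl( L_1 \coprod_R T_j \bigr). \]
By assumption this colimit is terminal, and since the terminal object is compact there is some $j$ with $L_1 \coprod_R T_j$ terminal, i.e.\ $\langle T_j \rangle \notin \mathfrak{I}_1$. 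On the other hand, $T_j$ maps to $L_2$, so $L_2 \coprod_R T_j$ retracts onto $L_2$ and is therefore non-terminal (again by strictness of $\star$), giving $\langle T_j \rangle \in \mathfrak{I}_2$. This contradicts the hypothesis $\mathfrak{I}_1 = \mathfrak{I}_2$ coming from $(1)$, completing the proof. The main subtlety is keeping careful track of which side of the coproduct is compact so that one can transfer between $L_2$ and its compact approximations $T_j$.
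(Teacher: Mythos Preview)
Your proof is correct and follows essentially the same route as the paper: the implications $(2)\Leftrightarrow(3)\Rightarrow(1)$ are handled via \Cref{cor:ns-exist}/\Cref{prop:ns-reps}, and the core step $(1)\Rightarrow(2)$ is argued exactly as in the paper by writing $L_2$ as a filtered colimit of compacts $T_j$, using compactness of $\star$ to find a $T_j$ with $L_1\coprod_R T_j$ terminal, and noting that this $T_j$ lies in $\mathfrak{I}_2$ but not $\mathfrak{I}_1$.
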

\begin{proof}
(2) implies (3) by \Cref{prop:ns-reps}(1,3) and (3) implies (1) by \Cref{prop:ns-reps}(1).  

It remains to show that (1) implies (2), so suppose that $[L_1] = [L_2]$.  Write $L_2 = \colim_{\alpha} R_{\alpha}$ as a filtered colimit of objects in $R_{\alpha} \in(\CC_{R/-})^{\omega}$.  Note that $\langle R_{\alpha}\rangle  \in q_2^{-1}(\circ)$ because $[L_2] \subset [\langle R_{\alpha} \rangle ]$. Assume for the sake of contradiction that  
\[\star \cong L_1 \coprod_{R} L_2 \cong L_1 \coprod_R \colim_{\alpha} R_{\alpha} \cong  \colim_{\alpha} L_1 \coprod_R R_{\alpha}. \]
Then by the strictness and compactness of $\star_{\cC}$, there exists some $\alpha$ such that $L_1 \coprod_R R_{\alpha} \cong \star_{\cC}.$
Thus we get that $q_1 (\langle R_{\alpha} \rangle) = \star_{\mathfrak{L}_{\CC}^{\omega}(L_1)
}$ and $\langle R_{\alpha} \rangle \notin  q_1^{-1}(\circ)$ contradicting the equality \[\{q_2^{-1}(\circ)\} = [L_2] = [L_1] = \{q_1^{-1}(\circ)\} .\]
\end{proof}

\begin{cor}\label{cor:point-inside}
If $R\to S$ is a map in $\CC$ and $R\to L_1$ is Nullstellensatzian, then $[L_1] \subset [S]$ if and only if $L_1\coprod_R S$ is non-terminal.
\end{cor}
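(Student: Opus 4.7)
The plan is to prove both implications using Lemma \ref{lem:point-equiv} together with Proposition \ref{prop:ns-reps} to reduce everything to statements about Nullstellensatzian representatives.

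For the ``only if'' direction, suppose $[L_1] \subset [S]$. Since $[S]$ is the image of $\Spec_{\CC}^{\cons}(S) \to \Spec_{\CC}^{\cons}(R)$, there is a geometric point $q \in \Spec_{\CC}^{\cons}(S)$ whose image in $\Spec_{\CC}^{\cons}(R)$ equals $[L_1]$. By Proposition \ref{prop:ns-reps}(3), we can pick a Nullstellensatzian representative $L_2 \in \CC_{S/-}$ of $q$; regarding $L_2 \in \CC_{R/-}$ via the composite $R \to S \to L_2$, the maps $R \to L_1$ and $R \to L_2$ represent the same point of $\Spec_{\CC}^{\cons}(R)$. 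Applying the implication (1) $\Rightarrow$ (2) of Lemma \ref{lem:point-equiv} gives that $L_1 \coprod_R L_2$ is non-terminal. Since there is a natural map $L_1 \coprod_R S \to L_1 \coprod_R L_2$ in $\CC$ and the terminal object of $\CC$ is strict, $L_1 \coprod_R S$ must also be non-terminal.

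For the ``if'' direction, suppose $L_1 \coprod_R S$ is non-terminal. By Proposition \ref{prop:ns-reps}(1), the space $\Spec_{\CC}^{\cons}(L_1 \coprod_R S)$ is non-empty, so by Proposition \ref{prop:ns-reps}(3) we can pick a Nullstellensatzian representative $L_3 \in \CC_{(L_1 \coprod_R S)/-}$ of some point. Composing with the structure map $R \to L_1 \coprod_R S$, we obtain a map $R \to L_3$ in $\CC$, and by Lemma \ref{lem:spec-under-same} this $L_3$ is also Nullstellensatzian when viewed as an object of $\CC_{R/-}$. The maps $L_1 \to L_3$ and $L_2 := S \to L_3$ in $\CC_{R/-}$ let us apply (3) $\Rightarrow$ (1) of Lemma \ref{lem:point-equiv} to conclude $[L_1] = [L_3]$ in $\Spec_{\CC}^{\cons}(R)$. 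On the other hand, the factorization $R \to S \to L_3$ exhibits $[L_3] \subset [S]$ by functoriality. Combining, $[L_1] \subset [S]$.

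I do not expect any serious obstacle: both directions are essentially a bookkeeping exercise on top of Lemma \ref{lem:point-equiv}, whose content does the real work. The one subtlety worth keeping in mind is that Nullstellensatzian-ness is a property of the undercategory and is therefore invariant under changing the base object (Lemma \ref{lem:spec-under-same}), which is what legitimizes reusing the Nullstellensatzian $L_3$ as an object of $\CC_{R/-}$ in the ``if'' direction.
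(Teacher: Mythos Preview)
Your proof is correct and follows essentially the same approach as the paper's, which simply cites \Cref{prop:ns-reps}(1,3) for the ``if'' direction and \Cref{lem:point-equiv} for the ``only if'' direction; you have merely unpacked these references. One minor slip: in the ``if'' direction you write ``$L_2 := S \to L_3$'' when invoking (3)$\Rightarrow$(1) of \Cref{lem:point-equiv}, but that lemma requires $L_2$ to be Nullstellensatzian, which $S$ need not be. What you actually need---and what your conclusion $[L_1]=[L_3]$ reflects---is to apply the lemma to the pair $(L_1,L_3)$, using the map $L_1 \to L_3$ (and the identity on $L_3$) to verify condition (3); the map $S \to L_3$ is only used afterward for the inclusion $[L_3] \subset [S]$, exactly as you say.
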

\begin{proof}
If $L_1\coprod_R S$ is non-terminal, then $[L_1] \subset [S]$ follows immediately from \Cref{prop:ns-reps}(1,3). If $[L_1] \subset [S]$, choose some $S \to L_2$ for $L_2$ Nullstellensatzian that represents a preimage of $L_1$ in $\Spec(S)$, and we are done by \Cref{lem:point-equiv}.
\end{proof}

\begin{lem} \label{lem:filtered-intersection}
  Given a filtered diagram $F \colon \mathcal{K} \to {\cC_{R/-}}$, we have
  \[ \left[ \colim_{\mathcal{K}} F \right] = \bigcap_{\alpha \in \mathcal{K}} [F(\alpha)]. \]      
\end{lem}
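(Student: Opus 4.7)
The plan is to prove the two inclusions separately, with the nontrivial direction using the strictness and compactness of the terminal object of $\CC$ together with \Cref{cor:point-inside}.

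For the easy inclusion $\left[\colim_\mathcal{K} F\right] \subset \bigcap_\alpha [F(\alpha)]$, I would simply note that for each $\alpha \in \mathcal{K}$ there is a canonical map $F(\alpha) \to \colim_\mathcal{K} F$ in $\CC_{R/-}$, so any map $\colim_\mathcal{K} F \to L$ factors through $F(\alpha)$, whence $[\colim_\mathcal{K} F] \subset [F(\alpha)]$.

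For the reverse inclusion, take a point $q \in \bigcap_\alpha [F(\alpha)]$ and, using \Cref{prop:ns-reps}(3), choose a Nullstellensatzian representative $R \to L$ for $q$. By \Cref{cor:point-inside}, the assumption $[L] \subset [F(\alpha)]$ translates to the statement that $L \coprod_R F(\alpha)$ is non-terminal for every $\alpha$. I then want to conclude that $L \coprod_R \colim_\mathcal{K} F$ is non-terminal, which by \Cref{cor:point-inside} will give $q \in [\colim_\mathcal{K} F]$. Since coproducts in $\CC_{R/-}$ commute with colimits, there is an identification
\[ L \coprod_R \colim_\mathcal{K} F \cong \colim_\mathcal{K} \left( L \coprod_R F(\alpha) \right). \]

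The key step is now the following observation about the terminal object: if $\star$ is strict and compact and $\colim_\mathcal{K} X_\alpha \cong \star$ is a filtered colimit, then some $X_\alpha$ is already terminal. Indeed, compactness of $\star$ allows us to factor the identity $\star \to \star \cong \colim_\mathcal{K} X_\alpha$ through some $X_\alpha$, producing a map $\star \to X_\alpha$, and strictness then forces $X_\alpha \cong \star$. Applying this in our situation, if $L \coprod_R \colim_\mathcal{K} F$ were terminal, then $L \coprod_R F(\alpha)$ would be terminal for some $\alpha$, contradicting $[L] \subset [F(\alpha)]$. This finishes the proof; no significant obstacle is expected, as the argument is a direct filtered-colimit version of the ideas already used in the proofs of \Cref{lem:basic-replace} and \Cref{lem:point-equiv}.
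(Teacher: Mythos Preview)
Your proposal is correct and matches the paper's proof essentially line for line: the paper also notes the easy inclusion, picks a Nullstellensatzian representative $L$ for a point in the intersection, uses \Cref{cor:point-inside} in both directions, commutes $L\coprod_R -$ past the filtered colimit, and invokes strictness and compactness of the terminal object to conclude. You have simply made the last step (why strictness and compactness force some $L\coprod_R F(\alpha)$ to be terminal) slightly more explicit than the paper does.
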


\begin{proof}
First, note that we clearly have $\left[ \colim_{\mathcal{K}} F \right] \subset \bigcap_{\alpha \in \mathcal{K}} [F(\alpha)]$, so it suffices to show the other inclusion.  
Now, suppose $q \in \cap_{\alpha \in \mathcal{K}} [F(\alpha)]$ with Nullstellensatzian representative $L \in {\cC_{R/-}}$.   By \Cref{cor:point-inside}, it's enough to show that \[
L\coprod_R \colim_{\alpha\in \mathcal{K}}F(\alpha) = \colim_{\mathcal{K}} L\coprod_R F(\alpha)
\]
is non-terminal.  But since $q$ is in the intersection of the $[F(\alpha)]$, \Cref{cor:point-inside} implies that each $L\coprod_R F(\alpha)$ is non-terminal, and so the conclusion follows from the fact that the terminal object is strict and compact.

\end{proof}

\begin{lem} \label{lem:intersection}
  Given a collection of objects $S_{\alpha} \in {\cC_{R/-}}$, ${\alpha} \in U$ we have
  $ \cap_{{\alpha} \in U}[S_{\alpha}] = [ \coprod_R^{{\alpha} \in U} S_{\alpha}]$ in $\Spec_\CC(R)$.
\end{lem}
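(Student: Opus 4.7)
My plan is to prove the identity by reducing it in three stages down to a tautology about ideals in bounded semi-lattices. The key players are Lemma \ref{lem:simple-intersection}, which re-expresses $[S]$ for compact $S$ as a closed set $[\langle S\rangle]$ in the spectrum of $\mathfrak{L}^\omega_\CC(R)$, and Lemma \ref{lem:filtered-intersection}, which lets me commute $[-]$ past filtered colimits.

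The base case is the claim for finitely many compact objects $T_1,\dots,T_m\in(\CC_{R/-})^\omega$. Since the coproduct in $(\CC_{R/-})^\omega$ represents the join in $\mathfrak{L}^\omega_\CC(R)$, I have $\langle T_1\coprod_R\cdots\coprod_R T_m\rangle=\langle T_1\rangle\vee\cdots\vee\langle T_m\rangle$. By Lemma \ref{lem:simple-intersection}, both sides of the desired equality transfer to the semi-lattice side, and the claim becomes the statement that a (maximal) ideal $\mathfrak{I}\subset\mathfrak{L}^\omega_\CC(R)$ contains $\langle T_1\rangle\vee\cdots\vee\langle T_m\rangle$ iff it contains each $\langle T_i\rangle$. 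This is immediate from the definition of ideal: closure under joins gives one direction, and downward closure gives the other. So by a trivial induction, the finite compact case is done.

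Next I pass from compact to arbitrary objects. Given general $S_1,S_2\in\CC_{R/-}$, write each as a filtered colimit $S_i=\colim_j T_i^{(j)}$ with $T_i^{(j)}\in(\CC_{R/-})^\omega$. Then $S_1\coprod_R S_2\cong\colim_{j,k}(T_1^{(j)}\coprod_R T_2^{(k)})$ is again a filtered colimit, so applying Lemma \ref{lem:filtered-intersection} together with the compact finite case yields
\[
[S_1\coprod_R S_2]=\bigcap_{j,k}[T_1^{(j)}\coprod_R T_2^{(k)}]=\bigcap_{j,k}\left([T_1^{(j)}]\cap[T_2^{(k)}]\right)=[S_1]\cap[S_2].
\]
Induction on the cardinality of $U$ then settles all finite $U$.

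Finally, for arbitrary $U$, I express the coproduct as a filtered colimit over its finite sub-coproducts,
\[
\coprod_R^{\alpha\in U}S_\alpha\;\cong\;\colim_{F\subset U,\,|F|<\omega}\coprod_R^{\alpha\in F}S_\alpha,
\]
and a last application of Lemma \ref{lem:filtered-intersection} combined with the finite case gives
\[
\Bigl[\coprod_R^{\alpha\in U}S_\alpha\Bigr]=\bigcap_{F\subset U,\,|F|<\omega}\Bigl[\coprod_R^{\alpha\in F}S_\alpha\Bigr]=\bigcap_{F\subset U,\,|F|<\omega}\bigcap_{\alpha\in F}[S_\alpha]=\bigcap_{\alpha\in U}[S_\alpha].
\]
I do not anticipate a serious obstacle: all the real content sits in the base case, which is a tautology about ideals once compact images are translated to the semi-lattice via Lemma \ref{lem:simple-intersection}; every other step is a formal filtered-colimit manipulation supplied by Lemma \ref{lem:filtered-intersection}.
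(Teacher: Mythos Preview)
Your proof is correct and follows essentially the same approach as the paper's: both reduce the general case to finite subsets via \Cref{lem:filtered-intersection}, reduce the finite case to compact objects via filtered colimits and \Cref{lem:filtered-intersection} again, and then invoke \Cref{lem:simple-intersection} together with the ideal-theoretic identity $[\ell_1\vee\ell_2]=[\ell_1]\cap[\ell_2]$. The only cosmetic difference is that you isolate and prove the finite compact base case explicitly before using it, whereas the paper embeds that step in the middle of its chain of equalities for the binary case.
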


\begin{proof}
  We begin with the case of a binary intersection of objects $S$ and $T$.
  Write $S$ and $T$ as filtered colimits of objects $S_{\alpha}$ and $T_\beta$ in $({\cC_{R/-}})^\omega$.
  Then, using Lemmas \ref{lem:simple-intersection} and \ref{lem:filtered-intersection} we have
  \begin{align*}
    \left[ S \coprod_R T \right]
    &= \left[ \colim_{\alpha} S_{{\alpha}} \coprod_R \colim_\beta T_{\beta} \right]
      = \left[ \colim_{{\alpha},\beta} S_{{\alpha}} \coprod_R T_{\beta} \right] 
      = \bigcap_{{\alpha},\beta} \left[ S_{{\alpha}} \coprod_R T_{\beta} \right] \\
    &= \bigcap_{{\alpha},\beta} \left[ \left\langle S_{{\alpha}} \coprod_R T_{\beta} \right\rangle \right]
      = \bigcap_{{\alpha},\beta} [ \langle S_{{\alpha}} \rangle \vee \langle T_{\beta} \rangle]
      = \bigcap_{{\alpha},\beta} [ \langle S_{{\alpha}} \rangle ] \cap [ \langle T_{\beta} \rangle ] \\
    &= \left( \bigcap_{{\alpha}} [ S_{{\alpha}} ] \right) \cap \left( \bigcap_{\beta} [ T_{\beta} ] \right) 
      = [ \colim_{\alpha} S_{{\alpha}} ] \cap [ \colim_\beta T_{\beta} ] 
      = [ S ] \cap [ T ] 
  \end{align*}
  
  Now we return to the general case:
  \begin{align*}
    \left[ \coprod_R^{{\alpha} \in U} S_{\alpha} \right]  
    &= \left[ \colim_{V \subset U,\ |V| < \omega} \coprod_{R}^{{\alpha} \in V} S_{\alpha} \right] 
      = \bigcap_{V \subset U,\ |V| < \omega} \left[ \coprod_{R}^{{\alpha} \in V} S_{\alpha}  \right] 
      = \bigcap_{{\alpha} \in U} [ S_{\alpha} ].
  \end{align*}
\end{proof}



\begin{lem} \label{lem:strong-intersection}
  Given a span $S \leftarrow R \to T$ in $\CC$, the natural comparison map
    \[ \Spec_\CC \left( S \coprod_R T\right) \to \Spec_\CC(S) \times_{\Spec_\CC(R)} \Spec_\CC(T) \]
    is surjective.
\end{lem}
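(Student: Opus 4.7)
The plan is to unwind a point of the fiber product into a compatible pair of Nullstellensatzian representatives, and then use \Cref{lem:point-equiv} to produce the required lift. Fix a point $(q_S, q_T)$ of $\Spec_\CC(S)\times_{\Spec_\CC(R)}\Spec_\CC(T)$. By \Cref{prop:ns-reps}(3), choose Nullstellensatzian representatives $S \to L_S$ and $T \to L_T$ in $\CC_{S/-}$ and $\CC_{T/-}$ respectively. The compatibility assumption means that the two composites $R \to S \to L_S$ and $R \to T \to L_T$ represent the same point of $\Spec_\CC(R)$.

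The key step is to apply \Cref{lem:point-equiv} to these two Nullstellensatzian representatives of a common point of $\Spec_\CC(R)$: condition (1) of that lemma then gives condition (2), namely that the pushout $L_S \coprod_R L_T$ is non-terminal. By \Cref{prop:ns-reps}(1), this guarantees that $\Spec_\CC(L_S \coprod_R L_T)$ is non-empty. Picking any Nullstellensatzian representative $L_S \coprod_R L_T \to L$ (again using \Cref{prop:ns-reps}(3)) and precomposing with the canonical map $S \coprod_R T \to L_S \coprod_R L_T$ yields a map $S \coprod_R T \to L$, i.e.\ a point $q$ of $\Spec_\CC(S \coprod_R T)$.

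It remains to verify that $q$ maps to $(q_S, q_T)$. The image of $q$ in $\Spec_\CC(S)$ is represented by the composite $S \to S \coprod_R T \to L_S \coprod_R L_T \to L$, which by construction factors through $S \to L_S$. Since $L_S$ and $L$ are both Nullstellensatzian and admit a common map from $S$ in $\CC_{S/-}$, condition (3) of \Cref{lem:point-equiv} (applied in $\CC_{S/-}$) implies that they represent the same point of $\Spec_\CC(S)$, namely $q_S$. The same argument with $T$ in place of $S$ shows that $q$ maps to $q_T$, completing the proof. The only conceptual content is the invocation of \Cref{lem:point-equiv}, so no further obstacle is expected.
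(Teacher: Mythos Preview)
Your proof is correct and follows essentially the same approach as the paper: pick Nullstellensatzian representatives of the two points, invoke \Cref{lem:point-equiv} to see that $L_S \coprod_R L_T$ is non-terminal, and extract a lift. The paper's version is terser, leaving the verification that the lift maps back to $(q_S,q_T)$ implicit, but your added detail is correct and the conceptual content is identical.
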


\begin{proof}
  Suppose we are given points $q_1 \in \Spec_\CC(S)$ and $q_2 \in \Spec_\CC(T)$
  which become equal in $\Spec_\CC(R)$.
  Pick Nullstellensatzian representatives $L_1 \in {\cC_{S/}}$ and $L_2 \in {\cC_{T/}}$.
  Examining the map $S \coprod_R T \to L_1 \coprod_R L_2$, \Cref{lem:point-equiv} allows us to conclude.  
  
\end{proof}



\begin{cor} \label{cor:idempotent}
  If $R \to S$ is a map in $\CC$ such that the fold map $S \coprod_R S \to S$ is an equivalence,
  then the map $\Spec_\CC(S) \to \Spec_\CC(R)$ is an inclusion.
\end{cor}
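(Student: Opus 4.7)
The plan is to derive this as a direct consequence of the surjectivity statement \Cref{lem:strong-intersection} applied to the fold span $S \leftarrow R \to S$. The key observation is that when $S \coprod_R S \to S$ is an equivalence, the two structure maps $S \to S \coprod_R S$ (into the left and right factors) both become inverse to this equivalence, hence they are both equivalences themselves and, in particular, they agree after applying $\Spec_\CC$.

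Concretely, I would start with two points $q_1, q_2 \in \Spec_\CC(S)$ with the same image $q \in \Spec_\CC(R)$ and show $q_1 = q_2$. Applying \Cref{lem:strong-intersection} to the span $S \leftarrow R \to S$, the comparison map
\[ \Spec_\CC(S \coprod_R S) \to \Spec_\CC(S) \times_{\Spec_\CC(R)} \Spec_\CC(S) \]
is surjective, so there exists $\tilde q \in \Spec_\CC(S \coprod_R S)$ mapping to the pair $(q_1, q_2)$. Using the equivalence $S \coprod_R S \xrightarrow{\nabla} S$ furnished by the hypothesis, together with the fact that both inclusions $\iota_1, \iota_2 \colon S \to S \coprod_R S$ are sections of $\nabla$, we identify $\Spec_\CC(\iota_1) = \Spec_\CC(\iota_2) = \Spec_\CC(\nabla)^{-1}$. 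Therefore $q_1 = \Spec_\CC(\iota_1)(\tilde q) = \Spec_\CC(\iota_2)(\tilde q) = q_2$, as desired.

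Finally, to upgrade injectivity to the statement that $\Spec_\CC(S) \to \Spec_\CC(R)$ is a topological inclusion (i.e.\ a homeomorphism onto its image), I would invoke the target category of \Cref{thm:con-spec}: the map lies in $\Top^{\mathrm{cpt},T_1,\mathrm{cl}}$, so it is closed, and a closed continuous injection from a compact space to a $T_1$ space is automatically a homeomorphism onto its (closed) image. There is no real obstacle here; the entire content of the corollary is the injectivity, and the appeal to \Cref{lem:strong-intersection} handles it in one step.
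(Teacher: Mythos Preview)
Your proof is correct and takes essentially the same approach as the paper: both apply \Cref{lem:strong-intersection} to the span $S \leftarrow R \to S$ and use the fold equivalence to identify the resulting surjection with the diagonal map $\Spec_\CC(S) \to \Spec_\CC(S) \times_{\Spec_\CC(R)} \Spec_\CC(S)$, whence injectivity. The paper phrases this one-liner as ``the diagonal map is surjective, so the conclusion follows,'' while you unpack the identification via $\Spec_\CC(\iota_1) = \Spec_\CC(\iota_2) = \Spec_\CC(\nabla)^{-1}$; your additional remark about upgrading injectivity to a topological embedding using closedness is a welcome clarification the paper leaves implicit.
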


\begin{proof}
  Applying \Cref{lem:strong-intersection} and the assumption on the fold map, we learn that the diagonal map
  \[ \Spec_\CC \left( S \right) \to \Spec_\CC(S) \times_{\Spec_\CC(R)} \Spec_\CC(S) \]
  is surjective. 
  The conclusion follows.
\end{proof}

\begin{lem}\label{lem:top-is-cons}
  $\Spec_\CC(R)$ has the constructible topology.
\end{lem}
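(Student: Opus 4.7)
The plan is to interpret ``constructible topology'' as the condition that the topology generated by the subbasic closed sets $[\ell]$ (for $\ell \in \mathfrak{L}_\CC^\omega(R)$) is compact and $T_1$, as needed for the codomain $\Top^{\mathrm{cpt},T_1,\mathrm{cl}}$ of Theorem~A.3. Both properties follow essentially formally from the maximal ideal formalism built up in Sections~A.1--A.2.

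First I would verify the $T_1$ axiom. For any maximal ideal $\mathfrak{I} \in \Spec_\CC(R)$, I claim that the singleton $\{\mathfrak{I}\}$ can be written as
\[
\{\mathfrak{I}\} \;=\; \bigcap_{\ell \in \mathfrak{I}} [\ell].
\]
Indeed, a maximal ideal $\mathfrak{J}$ lies in $\bigcap_{\ell \in \mathfrak{I}} [\ell]$ iff $\ell \in \mathfrak{J}$ for every $\ell \in \mathfrak{I}$, i.e.~iff $\mathfrak{I} \subseteq \mathfrak{J}$; since $\mathfrak{I}$ is itself maximal this forces $\mathfrak{J} = \mathfrak{I}$. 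So $\{\mathfrak{I}\}$ is an intersection of subbasic closed sets and hence closed.

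Next I would prove compactness via Alexander's subbase theorem applied to the closed subbasis $\{[\ell]\}_{\ell \in \mathfrak{L}_\CC^\omega(R)}$: it suffices to show that any family $\{[\ell_i]\}_{i \in I}$ of subbasic closed sets with the finite intersection property has nonempty total intersection. By Lemma~A.25, any finite intersection satisfies
\[
\bigcap_{i \in F} [\ell_i] \;=\; \Big[\,\bigvee_{i \in F} \ell_i\,\Big],
\]
which is nonempty exactly when $\bigvee_{i \in F} \ell_i \neq \star$ (by the characterization $[\star] = \varnothing$ coming from Proposition~A.18(1), since $\star$ sits in no proper ideal). The finite intersection property therefore translates into the assertion that the ideal of $\mathfrak{L}_\CC^\omega(R)$ generated by $\{\ell_i\}_{i \in I}$ (the downward closure of finite joins of the $\ell_i$) avoids $\star$, i.e.~is a proper ideal. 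Zorn's lemma (as in Recollection~A.8(vi)) then extends it to a maximal ideal $\mathfrak{I}$, which necessarily contains every $\ell_i$, producing a point of $\bigcap_i [\ell_i]$.

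Neither step presents a serious obstacle: the semi-lattice framework of Construction~A.9 and Proposition~A.18 was designed precisely so that the argument for existence of geometric points (Proposition~A.11) and the topology-theoretic arguments above become completely parallel to the classical proof that $\Spec$ of a commutative ring is compact and $T_1$ in the constructible topology. The only minor point to be careful about is that $\mathfrak{L}_\CC^\omega(R)$ need not be a lattice, only a bounded join-semi-lattice, but the Zorn's lemma argument used in Recollection~A.8 and Proposition~A.11 works in this generality.
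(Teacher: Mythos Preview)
Your interpretation of the lemma is off. In this paper, ``$\Spec_\CC(R)$ has the constructible topology'' does \emph{not} mean ``compact and $T_1$''; it means property~(C) of Theorem~A.3, i.e.\ that the closed subsets of $\Spec_\CC(R)$ are exactly the images $[S]_R$ as $S$ ranges over $\CC_{R/-}$. This is made explicit in the first sentence of the paper's proof (``we must show that the closed sets of $\Spec_\CC(R)$ are exactly the sets of the form $[S]$''), and it is how the lemma is cited in the proof of Theorem~A.3. Compactness is a separate lemma (Lemma~A.28), and $T_1$ is deduced afterwards from (C) and (D) together.

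So you have proved the wrong statement. Your arguments for compactness and $T_1$ are fine (and indeed your compactness argument via Alexander's subbase theorem is essentially the same content as the paper's Lemma~A.28), but you have not addressed the actual content of Lemma~A.27. What is needed is a two-way inclusion: (i) every closed set, being an intersection of subbasic closed sets $[\langle S_\alpha\rangle]$ with $S_\alpha$ compact, can be rewritten as $[\coprod_R S_\alpha]$ via Lemmas~A.23--A.25; and (ii) conversely, for an arbitrary $S \in \CC_{R/-}$, writing $S$ as a filtered colimit of compact $S_\alpha$ and applying Lemma~A.24 shows $[S] = \bigcap_\alpha [\langle S_\alpha\rangle]$ is closed. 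Neither direction appears in your write-up.
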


\begin{proof}
  To prove the lemma we must show that the closed sets of $\Spec_\CC(R)$ are exactly the sets of the form $[S]$. Recall that $\Spec_\CC(R)$ has a basis of closed sets $[\langle S \rangle]$ as $S$ ranges over $({\cC_{R/-}})^\omega$ and by \Cref{lem:simple-intersection} $[\langle S \rangle] = [S]$.
  Then, using \Cref{lem:intersection} we see that any closed set $\cap_\alpha \langle S_\alpha \rangle$ can be written as $[T]$ for some $T \in {\cC_{R/-}}$:
  \[ \cap_\alpha \langle S_\alpha \rangle = \bigcap_\alpha [ S_\alpha ] \cong \left[ \coprod_R^\alpha S_\alpha \right]. \]

  Given an object $S \in {\cC_{R/-}}$ we can write $S$ as a filtered colimit of $S_\alpha \in ({\cC_{R/-}})^\omega$.
  Then, using Lemmas \ref{lem:simple-intersection} and \ref{lem:filtered-intersection} we find that
  \[ [T] = \bigcap_\alpha [T_\alpha] = \bigcap_\alpha [ \langle T_\alpha \rangle ]\]
  and therefore that $[T]$ is closed.
\end{proof}

\begin{lem} \label{lem:cons-cpt}
  $\Spec_\CC(R)$ is compact.
\end{lem}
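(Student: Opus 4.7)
The plan is to prove compactness using the finite intersection property (FIP) characterization, working at the level of the subbasis of closed sets of the form $[\langle S \rangle]$ for $S \in (\cC_{R/-})^\omega$. By Alexander's subbase theorem it suffices to check that any family of such subbasic closed sets with the FIP has nonempty intersection. Concretely, I would take a family $\{[\langle S_\alpha \rangle]\}_{\alpha \in I}$ with $S_\alpha \in (\cC_{R/-})^\omega$ satisfying the FIP, and aim to show $\bigcap_{\alpha \in I}[\langle S_\alpha \rangle] \neq \emptyset$.

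The first step is a clean translation from topology to category theory via the tools already developed. Using \Cref{lem:simple-intersection} and \Cref{lem:intersection}, for any finite $J \subset I$ we have
\[ \bigcap_{\alpha \in J}[\langle S_\alpha \rangle] = \bigcap_{\alpha \in J}[S_\alpha] = \left[\coprod_{R}^{\alpha \in J} S_\alpha\right], \]
and by \Cref{prop:ns-reps}(1) the FIP is equivalent to the assertion that each such finite coproduct $\coprod_{R}^{\alpha \in J} S_\alpha$ in $\cC_{R/-}$ is non-terminal. Applying \Cref{lem:intersection} once more, the full intersection is $[\coprod_R^{\alpha \in I} S_\alpha]$, and it will be nonempty provided this (infinite) coproduct is non-terminal.

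The core of the proof is then the deduction that a filtered colimit of non-terminal objects is itself non-terminal, which uses exactly the two hypotheses packaged into ``weakly spectral.'' Write $\coprod_R^{\alpha \in I} S_\alpha = \colim_{J \subset I,\; |J| < \omega}\coprod_R^{\alpha \in J} S_\alpha$ as a filtered colimit. If this colimit were the terminal object $\star$, then since $\star$ is compact, the identity map $\star \to \star$ would factor through some finite stage $\coprod_R^{\alpha \in J_0} S_\alpha$; and since the terminal object is strict, any such map $\star \to \coprod_R^{\alpha \in J_0} S_\alpha$ is an isomorphism, forcing $\coprod_R^{\alpha \in J_0} S_\alpha \cong \star$, contradicting the FIP.

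There is essentially no serious obstacle here; the argument is a direct assembly of \Cref{lem:simple-intersection}, \Cref{lem:intersection}, \Cref{prop:ns-reps}(1), and the defining properties of a weakly spectral category, combined with Alexander's subbase theorem. If one wished to avoid Alexander's theorem, the same argument works verbatim for arbitrary closed sets, because by \Cref{lem:top-is-cons} every closed subset of $\Spec_\CC(R)$ is of the form $[T]$ for some $T \in \cC_{R/-}$, and the above reduction to finite coproducts of compact $S_\alpha$'s can be performed after writing each $T$ as a filtered colimit of $\omega$-compact objects in $\cC_{R/-}$ and invoking \Cref{lem:filtered-intersection}.
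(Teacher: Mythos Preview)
Your proof is correct and follows essentially the same approach as the paper: both translate compactness (via the finite intersection property) into the statement that $\coprod_R^{\alpha \in I} S_\alpha$ is terminal only if some finite sub-coproduct is, and then use that $\star$ is compact and strict. The only cosmetic difference is that the paper starts directly from arbitrary closed sets $[S_\alpha]$ using \Cref{lem:top-is-cons} (the alternative you mention at the end), rather than passing through the subbasis and Alexander's theorem.
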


\begin{proof}
  Using \Cref{lem:top-is-cons} it will suffice to show that, 
  given a set $\{S_\alpha\}_{\alpha \in U}$ of objects in ${\cC_{R/}}$
  such that the intersection $\cap_{\alpha \in U} [S_\alpha]$ is empty,
  there exists a finite subset $V \subseteq U$ with $\cap_{\alpha \in V} [S_\alpha]$ empty as well.

  Using Lemmas \ref{lem:filtered-intersection} and \ref{lem:intersection}, we can translate into the claim that it suffices to show that
  if $\coprod_R^{\alpha \in U} S_\alpha$ is terminal, then there is some finite subset $V \subseteq U$ such that $\coprod_R^{\alpha \in V} S_\alpha$ is terminal.
  This follows by writing $\coprod_R^{\alpha \in U} S_\alpha$ as a filtered colimit over finite subsets of $U$ and then using the assumption that the terminal object in $\CC$ is compact and strict.
\end{proof}


\begin{proof}[Proof (of \Cref{thm:con-spec}).]
  We constructed the functor $\Spec_\CC(-)\colon \CC^\op \to \Top$ in \Cref{cnstr:spec-fun}
  and proved that $\Spec_\CC(R)$ is compact in \Cref{lem:cons-cpt}.  
  Properties (A), (B) and (D) were proved in \Cref{prop:ns-reps}.
  Properties (C) and (E)  were proved in Lemmas
  \ref{lem:top-is-cons} and \ref{lem:strong-intersection} respectively.
  Given a map $R \to S$ the associated map on constructible spectra is closed as a corollary of (C). Finally, the fact that $\Spec_{\CC}(-)$ is $T_1$ follows because combining (C) and (D), one sees that every point is closed.  
  
  The uniqueness follows from the fact that (A), (B) and (D) determine the underlying set of $\Spec_{\cC}(R)$; indeed, it is the set of Nullstellensatzian objects $L \in \cC_{R/-}$ modulo the equivalence relation that $L_1$ and $L_2$ determine the same point if $L_1\coprod_R L_2$ is non-terminal.
The topology is completely determined by (C).
\end{proof}
\begin{rmk}
  As a consequence of the fact that $\Spec_\CC(-)$ lands in topological spaces and \emph{closed} maps,
  any map $R \to S$ which induces a bijection on geometric points will induce an isomorphism on constructible spectra.
  \tqed
\end{rmk}

As a corollary of \Cref{thm:con-spec}(E) we learn that the collection of maps which induce surjections on the constructible spectrum is stable under co-base change, and therefore we can use these maps as the covers in a topology on $\CC^\op$.

\begin{dfn}
  A \deff{$c$-cover} of $R \in \CC$ is a map $R \to S$ which induces a surjection on constructible spectra. We define the $c$-topology to be the topology on $\CC^\op$ generated the singleton $c$-covers.
  \tqed
\end{dfn}

The following is clear from the definition of maps in $\Top^{\mathrm{cpt},T_1,\mathrm{cl}}$.

\begin{lem}\label{lem:surj_is_quo}
A map $X\to Y$ in $\Top^{\mathrm{cpt},T_1,\mathrm{cl}}$ is surjective on the underlying sets if and only if it is a topological quotient map.  
\end{lem}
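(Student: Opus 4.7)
The plan is to observe that this is essentially a standard fact from point-set topology once one unpacks the definitions. Recall that a quotient map $f : X \to Y$ is, by definition, a surjective continuous map such that $U \subset Y$ is open if and only if $f^{-1}(U) \subset X$ is open. Thus one direction, that quotient maps are surjective, is immediate from the definition.

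For the converse, I would assume given a morphism $f : X \to Y$ in $\Top^{\mathrm{cpt},T_1,\mathrm{cl}}$ (so $f$ is continuous and closed) which is surjective on underlying sets. To show $f$ is a quotient map, it suffices to verify that if $U \subset Y$ is a subset with $f^{-1}(U)$ open in $X$, then $U$ is open in $Y$. The argument is a one-line diagram chase: the set $X \setminus f^{-1}(U) = f^{-1}(Y \setminus U)$ is closed in $X$, so by the hypothesis that $f$ is a closed map, $f(f^{-1}(Y \setminus U))$ is closed in $Y$. By surjectivity of $f$ this image equals $Y \setminus U$, so $Y \setminus U$ is closed and $U$ is open, as required.

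There is no genuine obstacle here beyond recording the two short arguments above; the lemma is a purely point-set statement and does not use either compactness or the $T_1$ condition in any essential way. (These hypotheses are present only because they are inherited from the ambient category $\Top^{\mathrm{cpt},T_1,\mathrm{cl}}$ in which the lemma is stated.) Consequently the proof is just the pair of observations in the previous two paragraphs, and I would write it up in two or three lines.
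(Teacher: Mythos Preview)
Your argument is correct and is exactly the standard verification the paper has in mind; indeed, the paper does not give a proof at all, simply remarking that the lemma is ``clear from the definition of maps in $\Top^{\mathrm{cpt},T_1,\mathrm{cl}}$.'' Your observation that neither compactness nor the $T_1$ hypothesis is actually used is also correct.
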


This lemma has the following consequence:

\begin{cor} \label{cor:descent}
  The functor $\Spec_\CC^\cons(-)\colon \CC \to (\Top^{\mathrm{cpt},T_1,\mathrm{cl}})^{\op}$ is a sheaf for the $c$-topology.
\end{cor}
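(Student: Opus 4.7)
The plan is to reduce the sheaf condition to descent along singleton $c$-covers and then to check this descent by combining property (E) of \Cref{thm:con-spec} with \Cref{lem:surj_is_quo}. The singleton $c$-covers form a pretopology on $\CC^{\op}$: composition of surjections is surjective, and stability under base change in $\CC^{\op}$ (i.e.\ pushout in $\CC$) follows from \Cref{thm:con-spec}(E), since a base change of a surjection of sets is a surjection. Hence the generated Grothendieck topology is controlled by the singleton case, and it suffices to show that for every $c$-cover $R \to S$ the diagram
\[
\Spec_\CC^\cons(S \coprod_R S) \rightrightarrows \Spec_\CC^\cons(S) \to \Spec_\CC^\cons(R)
\]
is a coequalizer in $\Top^{\mathrm{cpt},T_1,\mathrm{cl}}$.

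First I would establish this set-theoretically. Since $R\to S$ is a $c$-cover, the map $\pi\colon \Spec_\CC^\cons(S) \to \Spec_\CC^\cons(R)$ is a surjection. Now property (E) of \Cref{thm:con-spec} says the comparison $\Spec_\CC^\cons(S\coprod_R S) \to \Spec_\CC^\cons(S) \times_{\Spec_\CC^\cons(R)} \Spec_\CC^\cons(S)$ is surjective, so two points $p_1, p_2$ of $\Spec_\CC^\cons(S)$ satisfy $\pi(p_1)=\pi(p_2)$ if and only if they have a common preimage in $\Spec_\CC^\cons(S \coprod_R S)$. In particular the relation $\sim$ generated by the two structure maps is already an equivalence relation, and $\pi$ descends to a bijection $\Spec_\CC^\cons(S)/\!\sim\, \xrightarrow{\cong}\, \Spec_\CC^\cons(R)$ on underlying sets.

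Next I would upgrade this to a topological quotient. By \Cref{thm:con-spec} the map $\pi$ is continuous and closed; it is also a surjection of compact $T_1$ spaces, and so \Cref{lem:surj_is_quo} tells us $\pi$ is a topological quotient. Therefore $\Spec_\CC^\cons(R)$ carries the quotient topology coming from $\sim$ on $\Spec_\CC^\cons(S)$, which exhibits it as the coequalizer in $\Top$.

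Finally one must check that the $\Top$-coequalizer is also the coequalizer in $\Top^{\mathrm{cpt},T_1,\mathrm{cl}}$; this is the one point requiring mild care, since morphisms in the target category are restricted to closed continuous maps. Uniqueness of the factorization is automatic from the $\Top$ coequalizer property, so it only remains to show that the induced map $\bar f\colon \Spec_\CC^\cons(R) \to X$ from any morphism $f\colon \Spec_\CC^\cons(S) \to X$ in $\Top^{\mathrm{cpt},T_1,\mathrm{cl}}$ satisfying $f\circ p_1 = f\circ p_2$ is itself closed: for closed $C \subseteq \Spec_\CC^\cons(R)$, continuity of $\pi$ gives $\pi^{-1}(C)$ closed, then closedness of $f$ gives $f(\pi^{-1}(C))$ closed, and surjectivity of $\pi$ gives $f(\pi^{-1}(C)) = \bar f(C)$. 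The main obstacle in this argument is purely bookkeeping around the restricted morphisms in $\Top^{\mathrm{cpt},T_1,\mathrm{cl}}$; the substantive geometric content is carried entirely by the combination of \Cref{thm:con-spec}(E) and \Cref{lem:surj_is_quo}.
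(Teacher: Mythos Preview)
Your proof is correct and follows essentially the same approach as the paper: reduce to a single $c$-cover, use \Cref{lem:surj_is_quo} to see that the surjection $\Spec_\CC(S)\to\Spec_\CC(R)$ is a quotient map, and use property~(E) (which is \Cref{lem:strong-intersection}) to identify the equivalence relation. The paper's proof is considerably terser and leaves implicit both the reduction to singleton covers and the verification that the $\Top$-coequalizer is also a coequalizer in $\Top^{\mathrm{cpt},T_1,\mathrm{cl}}$, which you spell out carefully.
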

\begin{proof}
We would like to show that if $R\to S$ is a $c$-cover, then the diagram 
\[ \left( \Spec_\CC(S\coprod_R S) \rightrightarrows \Spec_\CC( S ) \right) \to \Spec_\CC (R) \] is a coequalizer in $\Top^{\mathrm{cpt},T_1,\mathrm{cl}}$.
Since $\Spec_\CC (S) \to \Spec_\CC (R)$ is surjective by assumption, it is a topological quotient map by \Cref{lem:surj_is_quo}.  We are then done by \Cref{lem:strong-intersection}.
\end{proof}

More generally, this corollary means that in order to compute $\Spec_\CC(R)$, it suffices to find a map $R \to S$ which covers $\Spec_\CC(R)$ and a map $S \coprod_R S \to T$ which covers $\Spec_\CC(S \coprod_R S)$ and then compute the coequalizer
\[ \mathrm{Coeq} \left( \Spec_\CC(T) \rightrightarrows \Spec_\CC( S ) \right). \]

Although it may seem difficult to construct $c$-covers in general, 
in fact, each object $R$ admits a $c$-cover of a particularly simple form:

\begin{exm} \label{exm:prod-points}
  Given an $R \in \CC$, 
  pick a Nullstellensatzian representative $L_q$ of each point $q \in \Spec_\CC(R)$ and consider the map
  \[ R \to \prod_{q \in \Spec_\CC(R)} L_q. \]
  Examining the projections maps to the individual $L_q$'s, one can see that this map is a $c$-cover.
  \tqed
\end{exm}

We end the section with a pair of results which exploit the existence of $c$-covers by products of Nullstellensatzian objects to simplify the process of computing the constructible spectrum.

\begin{dfn}\label{dfn:NS_PROD}
  Let \deff{$\NS_\CC$} $ \subset \CC$ be the full subcategory of Nullstellensatzian objects and
  let $\NSP_\CC \subset \CC$ be the full subcategory of products of Nullstellensatzian objects.
  \tqed
\end{dfn}

\begin{lem} \label{prop:kan-set}
  The composite functor
  \[ \CC^\op \xrightarrow{\Spec_\CC(-)} \Top^{\mathrm{cpt},T_1,\mathrm{cl}} \to \mathrm{Set} \]
  is the left Kan extension of the constant functor $\mathrm{pt} \colon \NS^{\op} \to \mathrm{Set}$ along the inclusion $\NS_{\CC}^{\op} \to \CC^{\op}$.
\end{lem}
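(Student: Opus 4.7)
The plan is to unwind the pointwise formula for the left Kan extension and reduce the claim to statements already proved in Theorem \ref{thm:con-spec} and Lemma \ref{lem:point-equiv}. Writing $i\colon \NS_{\CC}^{\op} \hookrightarrow \CC^{\op}$ for the inclusion, the left Kan extension is given pointwise by
\[ (\mathrm{Lan}_i \mathrm{pt})(R) \;=\; \colim_{(L,\,R\to L)\,\in\, i\downarrow R} \mathrm{pt}. \]
Objects of the comma category $i\downarrow R$ are pairs $(L, R\to L)$ with $L\in \NS_{\CC}$, and morphisms $(L_1,R\to L_1)\to (L_2,R\to L_2)$ are maps $L_2\to L_1$ in $\NS_{\CC}$ compatible with the structure arrows from $R$. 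A colimit of the constant $\mathrm{pt}$-functor in $\mathrm{Set}$ is just the set of connected components of the indexing category, and connected components are insensitive to reversing arrows, so I can rewrite this as $\pi_0(R/\NS_{\CC})$, the connected components of the ordinary undercategory of Nullstellensatzian objects under $R$.

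With this reformulation, I would construct a natural comparison map
\[ \varphi_R\colon \pi_0(R/\NS_{\CC}) \longrightarrow \Spec_{\CC}^{\cons}(R), \qquad (R\to L) \longmapsto [L], \]
and show it is a bijection. For well-definedness, any morphism $L_1\to L_2$ in $\NS_{\CC}$ under $R$ gives, by functoriality of $\Spec_{\CC}^{\cons}$, a factorization $[L_2]\subseteq [L_1]$; since both $\Spec_{\CC}^{\cons}(L_1)$ and $\Spec_{\CC}^{\cons}(L_2)$ are single points by Theorem \ref{thm:con-spec}(B), these images coincide. So $\varphi_R$ is well defined on connected components, and it is clearly natural in $R\in \CC^{\op}$.

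Surjectivity of $\varphi_R$ is immediate from Theorem \ref{thm:con-spec}(D): every geometric point of $R$ admits a Nullstellensatzian representative $R\to L$. For injectivity, suppose $R\to L_1$ and $R\to L_2$ represent the same point $q\in \Spec_{\CC}^{\cons}(R)$. Lemma \ref{lem:point-equiv}, in its equivalence of (1) and (3), provides a Nullstellensatzian $L_3\in \CC_{R/-}$ together with maps $L_1\to L_3$ and $L_2\to L_3$ under $R$; this is precisely a zigzag $L_1\to L_3\leftarrow L_2$ exhibiting $L_1$ and $L_2$ in the same connected component of $R/\NS_{\CC}$. Hence $\varphi_R$ is a natural bijection, identifying the underlying set functor of $\Spec_{\CC}^{\cons}(-)$ with $\mathrm{Lan}_i\mathrm{pt}$.

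There is no real obstacle here beyond bookkeeping; the substance is already contained in Theorem \ref{thm:con-spec} and Lemma \ref{lem:point-equiv}. The one subtle point is the variance when passing between the comma category $i\downarrow R$ and the undercategory $R/\NS_{\CC}$, which is harmless because we are taking a colimit of a constant functor and therefore only see the underlying groupoid of connected components.
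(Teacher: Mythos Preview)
Your proof is correct and takes essentially the same approach as the paper: both identify the left Kan extension pointwise with connected components of $R/\NS_{\CC}$, then use the existence of Nullstellensatzian representatives for surjectivity and Lemma~\ref{lem:point-equiv} for injectivity. The paper's version is simply terser, recording these two facts without spelling out the comma-category bookkeeping.
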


\begin{proof}
  This follows from the fact that every point has a Nullstellensatzian representative and any pair of maps $R \to L_1$ and $R \to L_2$ represent the same point iff there is a Nullstellensatzian object $L_3$ and a map $L_1 \coprod_R L_2 \to L_3$.
\end{proof}


  


\begin{lem} \label{lem:matches}
  The restriction of $\Spec_{\CC}(-)\colon \CC^{\op} \to \mathrm{Top}^{\mathrm{cpt}, T_1,\mathrm{cl}}$ to $\NSP_\CC$ is uniquely determined by the full subcategory $\NS_\CC \subseteq \NSP_\CC$.
\end{lem}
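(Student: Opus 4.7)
The plan is to explicitly reconstruct both the underlying set and the topology of $\Spec_\CC(R)$, for $R \in \NSP_\CC$, using only the data of $\NS_\CC$ as a full subcategory of $\NSP_\CC$, together with the restriction of $\Spec_\CC$ to $\NS_\CC$ (which, by property (B) of \Cref{thm:con-spec}, is the constant functor at a point).

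First I would handle the underlying set. By \Cref{prop:kan-set}, the composite $\CC^\op \to \mathrm{Top}^{\mathrm{cpt},T_1,\mathrm{cl}} \to \Set$ is the left Kan extension of $\mathrm{pt}\colon \NS_\CC^\op \to \Set$ along the inclusion, so for any $R$ the underlying set of $\Spec_\CC(R)$ is computed as $\pi_0$ of the comma category of pairs $(L, R \to L)$ with $L \in \NS_\CC$. When $R \in \NSP_\CC$, every object and morphism of this comma category lies entirely inside $\NSP_\CC$ (and involves only objects of $\NS_\CC$), so this is visibly determined by the inclusion $\NS_\CC \subseteq \NSP_\CC$. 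The same formula, applied functorially in $R$, yields the action of $\Spec_\CC|_{\NSP_\CC}$ on morphisms at the level of sets.

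Next I would pin down the topology, which is the main step. By \Cref{thm:con-spec}(C), a subset $U \subseteq \Spec_\CC(R)$ is closed iff it is the image of $\Spec_\CC(S) \to \Spec_\CC(R)$ for \emph{some} map $R \to S$ in $\CC$. The key observation is that we may always arrange $S \in \NSP_\CC$: given an arbitrary $R \to S$, \Cref{exm:prod-points} produces a $c$-cover $S \to S'$ with $S' = \prod_{q \in \Spec_\CC(S)} L_q \in \NSP_\CC$, and since $c$-covers are surjective on constructible spectra, the image of $\Spec_\CC(S) \to \Spec_\CC(R)$ equals the image of $\Spec_\CC(S') \to \Spec_\CC(R)$ induced by the composite $R \to S \to S'$. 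Thus the closed subsets of $\Spec_\CC(R)$ are exactly the images of maps $\Spec_\CC(S') \to \Spec_\CC(R)$ arising from morphisms $R \to S'$ in $\NSP_\CC$; these images are set-theoretic and computed from the underlying-set description of the previous paragraph.

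Finally, putting the two steps together gives a complete description of the functor $\Spec_\CC|_{\NSP_\CC}\colon \NSP_\CC^\op \to \mathrm{Top}^{\mathrm{cpt},T_1,\mathrm{cl}}$: its value on each object and its action on each morphism are extracted from the comma-category formula, and the topology is extracted from images of morphisms in $\NSP_\CC$. All of this input is contained in the full subcategory inclusion $\NS_\CC \subseteq \NSP_\CC$. The main obstacle is the topology step, and the decisive tool is the reduction to $\NSP_\CC$-witnesses via the $c$-cover construction of \Cref{exm:prod-points}, which is exactly what lets us avoid needing to know the category $\CC$ outside of $\NSP_\CC$.
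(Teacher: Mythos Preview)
Your proposal is correct and follows essentially the same approach as the paper's proof: recover the underlying-set functor on $\NSP_\CC$ via \Cref{prop:kan-set}, and recover the topology using \Cref{thm:con-spec}(C) together with the $c$-cover by products of Nullstellensatzian objects from \Cref{exm:prod-points} to replace arbitrary witnesses $S$ by ones in $\NSP_\CC$. The paper states this more tersely, but the ingredients and logic are identical.
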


\begin{proof}
  The underlying set of the restriction of $\Spec_\CC(-)$ to $\NSP_\CC$ can be recovered as the left Kan extension of the constant functor to $\mathrm{Set}$ from $\mathrm{NS}_\CC$ to $\mathrm{NS}_\CC^\Pi$ by \Cref{prop:kan-set}. 
  Using the description of the constructible topology from \Cref{thm:con-spec}(C) and \Cref{exm:prod-points}, we see that the lift of the functor $\mathrm{NS}_\CC^\Pi \to \mathrm{Set}$ to $\mathrm{Top}^{\mathrm{cpt}, T_1,\mathrm{cl}}$ is uniquely determined.
\end{proof}

\begin{prop} \label{prop:kan-top}
  The constructible spectrum functor
  \[
  \Spec_{\CC}(-) \colon \CC^{\op} \to \Top^{\mathrm{cpt}, T_1,\mathrm{cl}}
  \]
  is the left Kan extension of its restriction to $(\mathrm{NS}^\Pi)^{\op}$.
\end{prop}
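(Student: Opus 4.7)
The plan is to verify directly that $\Spec_\CC(R)$ satisfies the universal property of the left Kan extension along $i\colon (\NSP_\CC)^{\op}\hookrightarrow \CC^{\op}$; that is, for every $X\in\Top^{\mathrm{cpt},T_1,\mathrm{cl}}$, the canonical comparison
\[
\mathrm{Map}(\Spec_\CC(R),X)\ \longrightarrow\ \lim_{(A,\,f\colon R\to A)\in(\NSP_\CC)_{R/-}}\mathrm{Map}(\Spec_\CC(A),X),
\]
sending $\psi$ to $\{\psi\circ\Spec_\CC(f)\}$, is a bijection. The three essential inputs will be: the existence of good $c$-covers by products of Nullstellensatzians (\Cref{exm:prod-points}), descent along $c$-covers (\Cref{cor:descent}), and the fact that surjections in $\Top^{\mathrm{cpt},T_1,\mathrm{cl}}$ are topological quotients (\Cref{lem:surj_is_quo}). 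To set up, I would apply \Cref{exm:prod-points} twice: first to obtain a $c$-cover $R\to A_R$ with $A_R=\prod_{q\in\Spec_\CC(R)}L_q\in\NSP_\CC$, then to obtain a $c$-cover $\pi\colon A_R\coprod_R A_R\to B_R$ with $B_R\in\NSP_\CC$. Composing the two pushout inclusions $\iota_1,\iota_2\colon A_R\rightrightarrows A_R\coprod_R A_R$ with $\pi$ yields parallel morphisms $\tilde\iota_j=\pi\circ\iota_j\colon A_R\to B_R$ in $(\NSP_\CC)_{R/-}$, both over the common map $R\to B_R$ provided by the pushout.

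For injectivity, any two maps $\psi_1,\psi_2$ inducing the same cocone must agree after precomposition with $\Spec_\CC(R\to A_R)$, and by \Cref{lem:surj_is_quo} this is a topological quotient, so $\psi_1=\psi_2$. For surjectivity, starting from a cocone $\{\phi_A\}$, the compatibilities at $\tilde\iota_1,\tilde\iota_2$ yield $\phi_{A_R}\circ\Spec_\CC(\tilde\iota_1)=\phi_{B_R}=\phi_{A_R}\circ\Spec_\CC(\tilde\iota_2)$. Factoring $\tilde\iota_j=\pi\circ\iota_j$ and cancelling the topological quotient $\Spec_\CC(\pi)$ shows $\phi_{A_R}$ already equalizes $\Spec_\CC(\iota_1),\Spec_\CC(\iota_2)\colon\Spec_\CC(A_R\coprod_R A_R)\rightrightarrows\Spec_\CC(A_R)$. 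By descent (\Cref{cor:descent}), this latter parallel pair coequalizes to $\Spec_\CC(R)$, so a unique $\psi\colon\Spec_\CC(R)\to X$ satisfies $\phi_{A_R}=\psi\circ\Spec_\CC(R\to A_R)$.

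The main obstacle is then to verify $\phi_A=\psi\circ\Spec_\CC(f)$ for every $(A,f)\in(\NSP_\CC)_{R/-}$, not merely for $A_R$. The idea is to relate an arbitrary $A$ back to $A_R$ through a common $c$-cover: choose a $c$-cover $A\coprod_R A_R\to C$ with $C\in\NSP_\CC$ by \Cref{exm:prod-points}, yielding morphisms $A\to C$ and $A_R\to C$ in $(\NSP_\CC)_{R/-}$. The two cocone compatibilities then give
\[
\phi_A\circ\Spec_\CC(A\to C)\ =\ \phi_C\ =\ \phi_{A_R}\circ\Spec_\CC(A_R\to C)\ =\ \psi\circ\Spec_\CC(f)\circ\Spec_\CC(A\to C),
\]
the last equality using $\phi_{A_R}=\psi\circ\Spec_\CC(R\to A_R)$ and commutativity over $R$. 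Since $R\to A_R$ is a $c$-cover, $A\to A\coprod_R A_R$ is as well by \Cref{thm:con-spec}(E), whence $A\to C$ is a $c$-cover and $\Spec_\CC(A\to C)$ is a topological quotient by \Cref{lem:surj_is_quo}. Cancelling it delivers $\phi_A=\psi\circ\Spec_\CC(f)$. This repeated cancellation of topological quotients, supplied by the particular features of the target category, is what makes the argument go through.
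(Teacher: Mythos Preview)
Your proof is correct and takes a genuinely different route from the paper's.

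The paper's argument leverages \Cref{prop:kan-set}, which shows that the composite $u\circ\Spec_\CC$ with the forgetful functor $u\colon\Top^{\mathrm{cpt},T_1,\mathrm{cl}}\to\mathrm{Set}$ is already left Kan extended from $\NS_\CC$ (hence from $\NSP_\CC$). Since $u$ is faithful, a natural transformation $\Spec_\CC\Rightarrow G$ in $\Top^{\mathrm{cpt},T_1,\mathrm{cl}}$ is the same as a natural transformation $u\circ\Spec_\CC\Rightarrow u\circ G$ that is componentwise continuous and closed. The Kan extension property at the $\mathrm{Set}$ level then reduces the problem to showing that a natural transformation which is continuous and closed on $\NSP_\CC$ is automatically so everywhere; this follows from a single $c$-cover $R\to S$ with $S\in\NSP_\CC$ and the quotient property of \Cref{lem:surj_is_quo}.

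Your approach instead verifies the pointwise universal property of the Kan extension directly, without appealing to \Cref{prop:kan-set}. You use descent (\Cref{cor:descent}) to manufacture the factorization $\psi$ from the component $\phi_{A_R}$, and then a further $c$-cover argument to propagate the equality $\phi_A=\psi\circ\Spec_\CC(f)$ to every object of $(\NSP_\CC)_{R/-}$. This is more hands-on and self-contained: you never invoke the $\mathrm{Set}$-level Kan extension, but you pay for it by having to run the quotient-cancellation trick several times. The paper's proof is shorter because it offloads the combinatorics to \Cref{prop:kan-set}, isolating the topological content into a single continuity/closedness check.
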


\begin{proof}

  Using that the forgetful functor $u \colon \Top^{\mathrm{cpt},T_1,\mathrm{cl}} \to \mathrm{Set}$ is faithful
  and the fact that, as a consequence of \Cref{prop:kan-set}, 
  the functor $u(\Spec_\CC(-))$ is Kan extended from its restriction to $\NSP_\CC$
  we find, after unrolling definitions, that it suffices to show that
  given a functor $G \colon \CC^\op \to \Top^{\mathrm{cpt},T_1,\mathrm{cl}}$,
  any natural transformation
  $\eta \colon u \circ \Spec_\CC \Rightarrow u \circ G $
  whose restriction to $\mathrm{NS}^\Pi$ is continuous and closed is continuous and closed on all $\cC$.

  For this we argue as follows:
  Given an object $R \in \CC$,
  pick a $S \in \mathrm{NS}^\Pi$ and a map $f \colon R \to S$ which induces a surjection on the constructible spectrum (e.g. \Cref{exm:prod-points}), and consider the following diagram:
  \[ \begin{tikzcd}
      \Spec_\CC(S) \arrow[r,"\eta_S"] \ar[d, two heads] & G(S) \ar[d] \\
      \Spec_\CC(R) \arrow[r,"\eta_R"] & G(R)
    \end{tikzcd} \]
  By \Cref{lem:surj_is_quo}, $\eta(R)$ is continuous and closed iff $\Spec_\CC(f) \circ \eta(R)$ is continuous and closed. On the other hand $\Spec_\CC(f) \circ \eta(R) = \eta(S) \circ G(f)$  and the latter is continuous and closed by assumption.
\end{proof}

A corollary of this result is that the constructible spectrum of $R\in\CC$ depends only on the maps from $R$ to products of Nullstellensatzian objects.

\begin{dfn}\label{dfn:spd}
For an object $R\in \CC$, define the functor \deff{
\[ 
\Spd_{\CC} (R)  \colon \NSP_\CC \to \Set
\]}
to be the restriction of $\pi_0\Map_{\CC}(R, -)\colon \CC \to \Set$ to the full subcategory $\mathrm{NS}^\Pi$ of products of Nullstellensatzian objects.  
\tqed
\end{dfn}

As a consequence of \Cref{prop:kan-top}, the functor $\Spec_{\CC}(-)$ factors through $\Spd_{\CC}$ and we have:

\begin{cor}\label{cor:spd}
Suppose that $R_1,R_2\in \CC$.  Then any isomorphism $\Spd_{\CC}(R_1) \cong \Spd_{\CC}(R_2)$ induces an isomorphism $\Spec_{\CC}(R_1)\cong \Spec_{\CC}(R_2)$.
\end{cor}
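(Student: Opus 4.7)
The plan is to deduce this from the description of $\Spec_\CC(-)$ as a left Kan extension in \Cref{prop:kan-top}. First I would apply that proposition to express $\Spec_\CC(R)$ for any $R \in \CC$ as a colimit
\[ \Spec_\CC(R) \;\cong\; \colim_{(S,\alpha)\, \in\, j \downarrow R}\, \Spec_\CC(S) \]
in $\Top^{\mathrm{cpt},T_1,\mathrm{cl}}$, where $j\colon (\NSP)^{\op} \hookrightarrow \CC^{\op}$ is the inclusion and the indexing category $j\downarrow R$ has as objects pairs $(S,\alpha)$ with $S \in \NSP$ and $\alpha\colon R \to S$ a map in $\CC$, with morphisms given by maps $S' \to S$ in $\NSP$ commuting with the structure maps out of $R$. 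The diagram being colimited, $(S,\alpha)\mapsto \Spec_\CC(S)$, forgets $\alpha$ and uses only the functoriality of the restriction $\Spec_\CC|_{(\NSP)^{\op}}$.

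Next I would argue that this colimit depends on $R$ only through the set-valued functor $\Spd_\CC(R)\colon \NSP \to \mathrm{Set}$. Since $\Top^{\mathrm{cpt},T_1,\mathrm{cl}}$ is a $1$-category, the colimit above depends on the indexing diagram only through its homotopy $1$-category. Taking $\pi_0$ of the mapping spaces in $j\downarrow R$ identifies this homotopy $1$-category with the opposite of the category of elements of $\Spd_\CC(R)$: its objects are pairs $(S,[\alpha])$ with $S \in \NSP$ and $[\alpha] \in \Spd_\CC(R)(S)$, and a morphism $(S,[\alpha]) \to (S',[\alpha'])$ is a class $[f]\in \pi_0\Map_{\NSP}(S',S)$ such that $\Spd_\CC(R)([f])([\alpha']) = [\alpha]$. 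The diagram $(S,\alpha)\mapsto \Spec_\CC(S)$ likewise factors through this homotopy $1$-category because its target is a $1$-category.

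To conclude, a given isomorphism $\Spd_\CC(R_1) \cong \Spd_\CC(R_2)$ of $\mathrm{Set}$-valued functors on $\NSP$ induces an equivalence of their opposite categories of elements, under which the two diagrams computing $\Spec_\CC(R_i)$ correspond; passing to colimits then produces the desired isomorphism $\Spec_\CC(R_1) \cong \Spec_\CC(R_2)$. The main obstacle is the bookkeeping around this $1$-categorical reduction — specifically, carefully checking that $\pi_0$ of the mapping spaces in $j\downarrow R$ really does recover the opposite of the category of elements of $\Spd_\CC(R)$, and that the restriction of $\Spec_\CC$ descends consistently — but once that identification is pinned down the rest is a routine unwinding of the Kan extension formula.
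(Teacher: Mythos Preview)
Your overall approach matches the paper's: both deduce the corollary from \Cref{prop:kan-top} by arguing that the left Kan extension depends only on the $\pi_0$-level mapping data encoded in $\Spd_\CC(R)$. The paper's proof is the single sentence ``as a consequence of \Cref{prop:kan-top}, the functor $\Spec_\CC(-)$ factors through $\Spd_\CC$,'' so your write-up is in fact considerably more detailed.

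There is, however, a genuine gap precisely at the step you flag as ``the main obstacle.'' The homotopy $1$-category $h(j\downarrow R)$ is \emph{not} in general the opposite of the category of elements of $\Spd_\CC(R)$: the hom-set in the former from $(S,\alpha)$ to $(S',\alpha')$ is $\pi_0$ of the homotopy fiber of $\Map_{\NSP}(S',S)\to\Map_\CC(R,S)$ over $\alpha$, while in the latter it is the ordinary fiber of the induced map on $\pi_0$'s, and $\pi_0$ of a fiber need not agree with the fiber of $\pi_0$. The clean fix is to first pass to homotopy categories and \emph{then} form the Kan extension. Since the target is a $1$-category, the universal property of left Kan extensions shows that $\Spec_\CC$ factors through $(h\CC)^\op$ as the left Kan extension of the induced functor $\bar G\colon(h\NSP)^\op\to\Top^{\mathrm{cpt},T_1,\mathrm{cl}}$ along $hj\colon(h\NSP)^\op\hookrightarrow(h\CC)^\op$. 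The pointwise formula for this $1$-categorical Kan extension is a colimit over the comma category $(hj)\downarrow[R]$ formed \emph{in} $(h\CC)^\op$, and \emph{this} comma category is the opposite of the category of elements of $\Spd_\CC(R)$. The rest of your argument then goes through unchanged.
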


\subsection{The spectrum of a product}\hfill

\Cref{prop:kan-top} implies that the fundamental step in understanding the functor $\Spec_\CC(-)$ is understanding $\Spec_\CC(R)$ when $R$ is a \emph{product} of Nullstellensatzian objects. The simplest guess  for what happens in this case is that
\begin{enumerate}
\item[(i)] $\Spec_\CC(L_1 \times \cdots \times L_m)$ is a discrete space with $m$ points and more generally
\item[(ii)] $\Spec_\CC\left( \prod_{\alpha \in U} L_\alpha \right)$ is the Stone-\v{C}ech compactification of $U$.\footnote{Recall that $\Spec_\CC(-)$ lands in compact spaces and the discrete space $U$ is not compact if $U$ is not finite.}
\end{enumerate}
In this section we isolate certain natural conditions on $\CC$ 
which ensure that this guess is correct.

\subsubsection{Op-disjunctive categories and finite products}\hfill


\begin{dfn} \label{dfn:op-disjunctive}
  A presentable category $\CC$ is \deff{op-disjunctive} if for any pair of objects $R, S \in \CC$, the product functor induces an equivalence
  \[ {\cC_{R/-}} \times  {\cC_{S/-}} \xrightarrow{\cong} \cC_{R \times S/-}. \]
 By \cite[\S 4]{BarwickMackey}, this is equivalent to the conditions that
 \begin{enumerate}
     \item \deff{Products are disjoint}, that is, for any pair of objects $R, S\in \CC$, there is an equivalence $R \coprod_{R\times S} S \cong \star$.
     \item \deff{Coproducts distribute over products}, that is, $R\coprod (S_1\times S_2) \cong (R\coprod S_1) \times (R\coprod S_2) $.\tqed  
 \end{enumerate}
  
\end{dfn}

\begin{rmk}
  If $\CC$ is op-disjunctive, then the equivalences 
  \[ \CC_{\circ/-} \times \CC_{\star/-} \cong \CC_{\circ \times \star / -} \cong \CC_{\circ/-}\] 
  imply that the terminal object of $\CC$ is strict.
  \tqed
\end{rmk}

\begin{lem} \label{lem:fin-prod}
  If $\CC$ is weakly spectral and op-disjunctive, then
  \[ \Spec_\CC(R \times S) \cong \Spec_\CC(R) \coprod \Spec_\CC(S) .\]
\end{lem}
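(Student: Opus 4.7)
The plan is to exploit the equivalence of undercategories $\CC_{R \times S/-} \cong \CC_{R/-} \times \CC_{S/-}$ provided by op-disjunctivity, combined with \Cref{lem:spec-under-same}, which lets me compute $\Spec_\CC(R \times S)$ inside this product category. Under the equivalence, the object $R \times S$ itself corresponds to the pair $(R, S)$, the terminal $\star_\CC$ corresponds to $(\star, \star)$, and an object $A \times B$ with $A \in \CC_{R/-}$, $B \in \CC_{S/-}$ is identified with the pair $(A, B)$; in particular, $A \times \star \cong A$ in $\CC$.

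The key step is to classify the Nullstellensatzian objects of the undercategory, i.e., those $L \in \CC_{R \times S/-}$ that are Nullstellensatzian in $\CC$. I claim these are exactly the pairs $(L_R, \star)$ or $(\star, L_S)$ with $L_R$ (resp.\ $L_S$) Nullstellensatzian in $\CC_{R/-}$ (resp.\ $\CC_{S/-}$). Indeed, if $L$ corresponds to $(L_R, L_S)$ with both components non-terminal, then the undercategory $\CC_{L/-} \cong \CC_{L_R/-} \times \CC_{L_S/-}$ contains the compact non-terminal object $(L_R, \star)$, which cannot admit a map to the initial object $(L_R, L_S)$ because this would force a map $\star \to L_S$ contrary to strictness. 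Conversely, when one component of $(L_R, L_S)$ is $\star$, the undercategory collapses to a copy of $\CC_{L_R/-}$ and the Nullstellensatzian condition on $(L_R, \star)$ transports directly to that on $L_R$.

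Given this classification, \Cref{lem:point-equiv} yields the set-level bijection: two representatives $(L_R, \star)$ and $(L_R', \star)$ coincide iff $(L_R \coprod_R L_R', \star)$ is non-terminal iff $L_R \coprod_R L_R'$ is non-terminal, matching the equivalence relation on $\Spec_\CC(R)$; meanwhile, a mixed pair $(L_R, \star)$ and $(\star, L_S')$ has pushout $(\star, \star) \cong \star_\CC$ over $R \times S$ and hence represents distinct points. For the topology, \Cref{thm:con-spec}(C) identifies the closed subsets of $\Spec_\CC(R \times S)$ as images of maps $\Spec_\CC(T) \to \Spec_\CC(R \times S)$ for $T \in \CC_{R \times S/-}$; writing $T = T_R \times T_S$ via the equivalence, such an image is the disjoint union of the images of $\Spec_\CC(T_R) \to \Spec_\CC(R)$ and $\Spec_\CC(T_S) \to \Spec_\CC(S)$, and every such disjoint union arises this way by taking products, thereby matching the topology on the coproduct $\Spec_\CC(R) \coprod \Spec_\CC(S)$. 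The main technical care required lies in tracking the equivalence $\CC_{R \times S/-} \cong \CC_{R/-} \times \CC_{S/-}$ carefully enough to verify that compactness, strictness, and pushouts are all transported correctly.
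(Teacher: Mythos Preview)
Your proof is correct but takes a genuinely different route from the paper's. The paper argues more externally: it observes that $R$ (and likewise $S$) is $\coprod$-idempotent in $\CC_{R\times S/-}$, invokes \Cref{cor:idempotent} to conclude that $\Spec_\CC(R)$ and $\Spec_\CC(S)$ embed as closed subspaces of $\Spec_\CC(R\times S)$, uses disjointness of products together with property~(E) to see these subspaces are disjoint, and finally uses distributivity of coproducts over products to check that $[R]\cup[S]$ exhausts $\Spec_\CC(R\times S)$.

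Your approach instead works internally with the equivalence $\CC_{R\times S/-}\cong \CC_{R/-}\times\CC_{S/-}$, classifying the Nullstellensatzian objects directly as those pairs where one factor is terminal. This is a clean structural explanation and avoids the detour through \Cref{cor:idempotent}; the argument that a pair $(L_R,L_S)$ with both factors non-terminal fails the Nullstellensatz via the witness $(L_R,\star)$ is nice. The paper's argument is shorter because it leverages the machinery already built (idempotents, property~(E)), while yours is more self-contained and makes the dependence on strictness and compactness of $\star$ more transparent. Both are valid; the paper's version is better adapted to the narrative flow of the appendix, but your version would be perfectly at home in a treatment that emphasized the equivalence of undercategories from the start.
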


\begin{proof}
  Using the assumption that $\CC$ is op-disjunctive, we find that the object $R \in  {\cC_{R\times S/-}}$ is $\coprod$-idempotent. \Cref{cor:idempotent} then implies that $\Spec_\CC(R)$ sits inside $\Spec_\CC(R \times S)$ as a closed subspace. Similarly, we learn that $\Spec_\CC(S)$ is a closed subspace of $\Spec_\CC(R \times S)$ as well.
  Using \Cref{thm:con-spec}(E) and the fact that products are disjoint in $\CC$ we find that these closed subspaces are disjoint as $R \coprod_{R \times S} S \cong \star$.

  To conclude, we now argue that $[R] \cup [S]$ is all of $\Spec_\CC(R \times S)$.
  For this, we use the assumption that coproducts distribute over products to conclude that, for $L$ Nullstellensatzian,
  $L \coprod_{R \times S} R$ and $L \coprod_{R \times S} S$ are both terminal
  iff $L \coprod_{R \times S} (R \times S)$ is terminal iff $L$ is terminal.
  \qedhere
  
  
\end{proof}

\Cref{lem:fin-prod} implies that the spectrum of a product $L_1 \times \cdots \times L_m$ of Nullstellensatzian objects is the discrete space with $m$ points, as desired. Moreover, \Cref{lem:fin-prod} also implies that Nullstellensatzian objects are \emph{product-indecomposible} in the sense that they cannot be written as the product of two non-terminal objects. 

\begin{lem} \label{lem:reproduce_Spec}
  Let $\CC$ be weakly spectral and op-disjunctive. 
  The restriction of $\Spec_\CC(-)$ to $\mathrm{NS}_\CC^\Pi$ is uniquely determined by the category $\mathrm{NS}_\CC^\Pi$.
\end{lem}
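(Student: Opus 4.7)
By Lemma~\ref{lem:matches}, the restriction of $\Spec_\CC(-)$ to $\NSP_\CC$ is uniquely determined by the full subcategory $\NS_\CC \subseteq \NSP_\CC$. Therefore the plan is to show that, under the hypotheses that $\CC$ is weakly spectral and op-disjunctive, the subcategory $\NS_\CC$ can be recovered from the abstract category $\NSP_\CC$ alone. We will do this by giving a purely categorical characterization of Nullstellensatzian objects inside $\NSP_\CC$.

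First I would note that $\NSP_\CC$ has all (small) products, computed as in $\CC$: a product of products of Nullstellensatzian objects is again a product of Nullstellensatzian objects. In particular, the terminal object of $\CC$ (the empty product) lies in $\NSP_\CC$ and is recognizable from the categorical structure. The proposed characterization is then: an object $R \in \NSP_\CC$ is Nullstellensatzian if and only if $R$ is non-terminal and \emph{product-indecomposable}, in the sense that whenever $R \cong S \times T$ in $\NSP_\CC$, one of $S$ or $T$ is terminal.

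For the forward direction, suppose $L \in \NS_\CC$ and $L \cong S \times T$ in $\NSP_\CC$. Applying Lemma~\ref{lem:fin-prod} we obtain
\[ \Spec_\CC(L) \cong \Spec_\CC(S) \coprod \Spec_\CC(T). \]
By Theorem~\ref{thm:con-spec}(B), the left-hand side is a single point, so one of $\Spec_\CC(S), \Spec_\CC(T)$ is empty, and then Theorem~\ref{thm:con-spec}(A) forces the corresponding object to be terminal. For the converse, suppose $R = \prod_{\alpha \in U} L_\alpha$ is a product of Nullstellensatzian objects. If $U$ is empty, then $R$ is terminal. If $|U| \geq 2$, fix $\alpha_0 \in U$ and write $R \cong L_{\alpha_0} \times \prod_{\alpha \neq \alpha_0} L_\alpha$; since $L_{\alpha_0}$ and $\prod_{\alpha \neq \alpha_0} L_\alpha$ are both non-terminal (the latter admitting a projection to the non-terminal $L_{\alpha_1}$ for any $\alpha_1 \neq \alpha_0$), $R$ is not product-indecomposable. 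Hence only the case $|U|=1$ remains, in which $R \in \NS_\CC$.

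Putting the two directions together, $\NS_\CC$ is identified as the full subcategory of $\NSP_\CC$ spanned by the non-terminal, product-indecomposable objects. Since this characterization uses only the product structure and the terminal object of $\NSP_\CC$, it depends only on the abstract category, and the lemma follows by combining with Lemma~\ref{lem:matches}. The only mild subtlety, which I would want to verify carefully, is the recognition of the terminal object in $\NSP_\CC$ and the fact that a non-empty product of non-terminal objects in $\CC$ is non-terminal; the latter is immediate from the existence of projection maps together with the strictness of the terminal object.
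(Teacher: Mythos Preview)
Your proposal is correct and follows essentially the same approach as the paper: single out $\NS_\CC$ inside $\NSP_\CC$ as the non-terminal product-indecomposable objects (using \Cref{lem:fin-prod}), and then invoke \Cref{lem:matches}. The paper's proof is just a terser version of what you wrote.
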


\begin{proof}
  \Cref{lem:fin-prod} implies that the full subcategory $\mathrm{NS}_\CC \subseteq \mathrm{NS}_\CC^\Pi$ can be singled out as the full subcategory of product-indecomposable objects.
  The lemma now follows from \Cref{lem:matches}.
\end{proof}

\begin{lem} \label{lem:rig-disjunctive}
  If $\DD$ is a semiadditively symmetric monoidal category, then $\CAlg(\DD)$ is op-disjunctive.  In particular, this holds for $\DD \in \Prig$.  
\end{lem}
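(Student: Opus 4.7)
The plan is to unwind the definition of op-disjunctive into the two conditions after \Cref{dfn:op-disjunctive}: products are disjoint, and coproducts distribute over products. I will verify each in turn, using the semiadditivity of $\DD$ to produce a canonical decomposition of modules over a product algebra.

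First I set up the basic structure. In $\CAlg(\DD)$ the coproduct is the tensor product (over the unit), the initial object is $\one_\DD$, and the terminal object is the zero algebra $0$ (which exists and is strict by \Cref{lem:prig-strict-0}-style reasoning, or directly because $\DD$ is semiadditive). Since the forgetful functor $\CAlg(\DD) \to \DD$ preserves limits, the underlying object of $R \times S$ in $\CAlg(\DD)$ is the product $R \times S \cong R \oplus S$ in $\DD$, with unit the diagonal map and projection algebra maps $\pi_R, \pi_S$. Semiadditivity gives orthogonal idempotents $e := \iota_R \pi_R$ and $1 - e := \iota_S \pi_S$ in $\pi_0\mathrm{End}_{\CAlg(\DD)/(R\times S)}(R \times S)$ (viewing $R \times S$ as an $R\times S$-algebra) summing to the identity, exhibiting a canonical splitting
\[ \Mod_{R \times S}(\DD) \;\simeq\; \Mod_R(\DD) \times \Mod_S(\DD), \]
under which $R$ (via $\pi_R$) corresponds to $(R, 0)$ and $S$ (via $\pi_S$) to $(0, S)$.

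For products being disjoint I compute the pushout $R \coprod_{R \times S} S$, which is the relative tensor product $R \otimes_{R \times S} S$. Under the product decomposition above, this is
\[ (R, 0) \otimes_{(R,S)} (0, S) \;\cong\; (R \otimes_R 0,\; 0 \otimes_S S) \;=\; (0, 0), \]
so $R \otimes_{R \times S} S$ has underlying object $0 \in \DD$; as a commutative algebra this is the zero algebra, which is the terminal object of $\CAlg(\DD)$. For distributivity, I use that in any semiadditive symmetric monoidal category the tensor product is biadditive, so for $R, S_1, S_2 \in \CAlg(\DD)$,
\[ R \otimes (S_1 \times S_2) \;\cong\; R \otimes (S_1 \oplus S_2) \;\cong\; (R \otimes S_1) \oplus (R \otimes S_2) \;\cong\; (R \otimes S_1) \times (R \otimes S_2), \]
and this equivalence is one of commutative algebras (everything is natural in the algebra structure). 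Translating $\otimes$ back into $\coprod$ in $\CAlg(\DD)$ gives exactly the required distributivity $R \coprod (S_1 \times S_2) \cong (R \coprod S_1) \times (R \coprod S_2)$.

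For the ``in particular'' clause, any $\DD \in \Prig$ is stable by definition, hence semiadditive, so the main statement applies. The main obstacle is making the idempotent decomposition precise at the $\infty$-categorical level; I would cite \cite[Proposition 2.4.3]{Lurie-DAGVII} or the analogous fact in \cite{HA} that in a stable (or more generally semiadditive) presentably symmetric monoidal category, a pair of orthogonal idempotents summing to the identity induces a product decomposition of the module category, rather than construct it by hand. Given that, the computation of $R \otimes_{R \times S} S$ is immediate, and the distributivity statement is a direct consequence of biadditivity of $\otimes$.
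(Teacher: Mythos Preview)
Your proof is correct. The distributivity argument is essentially the same as the paper's: both use that in a semiadditive symmetric monoidal category the tensor product preserves finite biproducts (the paper phrases this as ``$S_1 \otimes_R -$ commutes with colimits in $\DD$,'' using that finite products equal finite coproducts). For disjointness, however, the paper takes a more elementary route than yours: rather than invoking the module-category decomposition $\Mod_{R\times S}(\DD) \simeq \Mod_R(\DD) \times \Mod_S(\DD)$, it works directly in the commutative ring $[\one_\DD, R_1 \otimes_{R_1\times R_2} R_2]$ and tracks the idempotents $(1,0),(0,1)$ through the pushout to show $1 = i_1\pi_1(1,1) = i_1\pi_1(0,1) + i_2\pi_2(1,0) = 0$. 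This avoids any appeal to external results about idempotent splittings of $\infty$-categories and is entirely self-contained; your approach is more structural and explains \emph{why} the computation works, but relies on a citation. One minor slip: the idempotents you want live in the ring $\pi_0(R\times S) \cong [\one_\DD, R\times S]$ (equivalently $\pi_0\End_{\Mod_{R\times S}(\DD)}(R\times S)$), not in endomorphisms in the slice of commutative algebras, which is not a ring.
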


\begin{proof}
  In order to check that $\CAlg(\DD)$ is op-disjunctive we must show that products are disjoint and coproducts distribute over products.
  Given $S_1,S_2,S_3 \in \CAlg(\DD)_{R/-}$ the natural map
  \[ S_1 \otimes_R ( S_2 \times S_3) \to  ( S_1 \otimes_R S_2) \times (S_1 \otimes_R S_3) \]
  is an equivalence since $S_1 \otimes_R - $ commutes with colimits in $\D$.  In order to show that products are disjoint, we examine the square of discrete commutative algebras
  \[ \begin{tikzcd}
      {[{\one}_\DD, R_1 \times R_2]} \ar[r, "\pi_1"] \ar[d, "\pi_2"] & {[{\one}_\DD, R_1]} \ar[d, "i_1"] \\
      {[{\one}_\DD, R_2]} \ar[r, "i_2"] & {\left[ {\one}_\DD, R_1 \otimes_{R_1 \times R_2} R_2 \right]}
    \end{tikzcd} \]
  and compute in the bottom right corner that
  \[ 1 = i_1\pi_1(1,1) = i_1\pi_1(0,1) + i_2\pi_2(1,0) = 0+0 = 0, \]
  and therefore $R_1 \otimes_{R_1\times R_2} R_2 \simeq 0$.  
\end{proof}

\subsubsection{Ultraproducts and infinite products}\hfill

Let us now consider the constructible spectrum of an infinite product of Nullstellensatzian objects.
Given a set of Nullstellensatzian objects $\{ L_\alpha \}_{\alpha \in U}$ in $\CC$ and a point $\mu \in \beta U$ in the Stone--\v{C}ech compactification of $U$ we are naturally led to consider the \emph{ultraproduct of the $L_\alpha$ over the ultrafilter $\mu$} which is given by
\[ \int_U L_{\alpha} d\mu \coloneqq \colim_{V \in \mu} \prod_{\alpha \in V} L_\alpha. \]

The key observation is then that, because $\mu$ is an ultrafilter, the colimit diagram above is filtered, and therefore the assumption that the terminal object of $\CC$ is strict and compact implies that \emph{$\int_U L_\alpha d\mu$ is always non-terminal}. From this one can see that the constructible spectrum of a product is always at least as large as $\beta U$.
Motivated by the example of discrete commutative rings, where ultraproducts of algebraically closed fields are themselves algebraically closed fields we place the following restriction on $\CC$:

\begin{dfn}
  We say that \deff{ultraproducts in $\CC$ are point-like} if, for any set
  $\{L_\alpha\}_{\alpha \in U}$ of Nullstellensatzian objects in $\CC$
  and ultrafilter $\mu \in \beta U$,
  the constructible spectrum of the ultraproduct
  $ \int_U L_{\alpha} d\mu $
  is a point.
  \tqed
\end{dfn}

In this paper, the most important example of a category with point-like ultraproducts is
the category of $T(n)$-local commutative $E(k)$-algebras.

\begin{exm} \label{exm:E-ultraproducts}
  As a corollary of \Cref{thm:E_functor}
  the functor $E(-) \colon \Perf_k \to \CAlgw_{E(k)}$ commutes with ultraproducts.
  Then, using our characterization of the Nullstellensatzian objects in $\CAlgw_{E(k)}$ from \Cref{thm:alpha-null}, we may conclude that an ultraproduct of Nullstellensatzian objects is Nullstellensatzian and therefore that ultraproducts in $\CAlgw_{E(k)}$ are point-like.
  \tqed
\end{exm}

\begin{lem} \label{lem:stone-cech}
  Suppose $\CC$ is op-disjunctive and ultraproducts in $\CC$ are point-like.
  Then, given a set of Nullstellensatzian objects $\{L_\alpha\}_{\alpha \in U}$
  there is a natural isomorphism
  \[ \Spec_\CC\left( \prod\nolimits_{\alpha \in U} L_\alpha \right) \cong \beta U \]
  between the spectrum of the product and the Stone--\v{C}ech compactification of the set $U$.
\end{lem}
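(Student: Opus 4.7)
The plan is to construct mutually inverse continuous maps
\[ \Phi \colon \Spec_\CC\left(\prod\nolimits_U L_\alpha\right) \longrightarrow \beta U, \qquad \Psi \colon \beta U \longrightarrow \Spec_\CC\left(\prod\nolimits_U L_\alpha\right) \]
and conclude via the standard fact that a continuous bijection from a compact space to a Hausdorff space is a homeomorphism ($\Spec_\CC$ is compact by \Cref{lem:cons-cpt}, and $\beta U$ is Hausdorff).

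First I would construct $\Psi$. Given $\mu \in \beta U$, the filter $\mu$, ordered by reverse inclusion, is filtered, so $\int_U L_\alpha\,d\mu = \colim_{V \in \mu} \prod_{\alpha \in V} L_\alpha$ is a filtered colimit in $\CC_{\prod L_\alpha/-}$. By the point-like hypothesis, $\Spec_\CC(\int_U L_\alpha\,d\mu)$ is a single point, and its image in $\Spec_\CC(\prod_U L_\alpha)$ along the canonical map is the point $\Psi(\mu)$.

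Next I would construct $\Phi$. For each $V \subseteq U$, op-disjunctivity (\Cref{dfn:op-disjunctive}) gives an equivalence $\prod_U L_\alpha \cong \prod_V L_\alpha \times \prod_{U \setminus V} L_\alpha$, and \Cref{lem:fin-prod} then yields a clopen decomposition
\[ \Spec_\CC\left(\prod\nolimits_U L_\alpha\right) = \left[\prod\nolimits_V L_\alpha\right] \sqcup \left[\prod\nolimits_{U\setminus V} L_\alpha\right]. \]
For $q \in \Spec_\CC(\prod_U L_\alpha)$, set $\mu_q = \{ V \subseteq U : q \in [\prod_V L_\alpha]\}$. Verifying $\mu_q$ is an ultrafilter is the first point that needs a bit of care: $U \in \mu_q$ trivially and $\emptyset \notin \mu_q$ because $\prod_\emptyset L_\alpha = \star$ has empty spectrum; the decomposition above shows that exactly one of $V, U\setminus V$ lies in $\mu_q$; closure under supersets is automatic since $V \subseteq W$ gives $[\prod_W L_\alpha] \supseteq [\prod_V L_\alpha]$; and closure under finite intersection follows by applying \Cref{lem:fin-prod} iteratively to the four-piece partition $U = (V_1\cap V_2) \sqcup (V_1\setminus V_2) \sqcup (V_2\setminus V_1) \sqcup (U\setminus(V_1\cup V_2))$, which forces $q$ to lie in the $V_1 \cap V_2$ component whenever $V_1, V_2 \in \mu_q$. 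Set $\Phi(q) = \mu_q$.

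Then I would verify that $\Phi$ and $\Psi$ are mutually inverse. For $\Psi \circ \Phi = \id$: by \Cref{lem:filtered-intersection},
\[ \left[\textstyle\int_U L_\alpha\,d\mu_q\right] = \bigcap_{V \in \mu_q}\left[\prod\nolimits_V L_\alpha\right], \]
and $q$ lies in every set on the right, so $q \in [\int L_\alpha\, d\mu_q] = \{\Psi(\mu_q)\}$. For $\Phi \circ \Psi = \id$: for $V \in \mu$, the subfilter $\{W \in \mu : W \subseteq V\}$ is cofinal in $\mu$, so $\int L_\alpha\,d\mu$ is naturally a commutative algebra under $\prod_V L_\alpha$; thus $\Psi(\mu) \in [\prod_V L_\alpha]$, giving $\mu \subseteq \mu_{\Psi(\mu)}$, and hence equality by maximality of ultrafilters.

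Finally I would check continuity of $\Phi$: the basic closed sets in $\beta U$ are $\widehat{V} = \{\mu : V \in \mu\}$, whose preimage under $\Phi$ is exactly $[\prod_V L_\alpha]$, which is closed by construction of the topology on $\Spec_\CC$. The main obstacle is the ultrafilter verification in the construction of $\Phi$, particularly the finite-intersection axiom, which relies essentially on having the finite-partition decomposition from \Cref{lem:fin-prod} available through op-disjunctivity; the rest of the argument is essentially bookkeeping once the inverse maps are in place. Naturality in the family $\{L_\alpha\}$ is immediate from the fact that both $\Phi$ and $\Psi$ are defined directly in terms of the product and ultraproduct.
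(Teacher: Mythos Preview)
Your proof is correct and follows essentially the same route as the paper's. The only stylistic difference is that the paper packages your construction of $\Phi$ as a map of Boolean algebras $\mathcal{P}(U) \to \{\text{clopens of } \Spec_\CC(\prod L_\alpha)\}$ and then invokes Stone duality, which gives continuity for free and bypasses the explicit verification of the ultrafilter axioms; it then computes the fibers of this map exactly as you do via \Cref{lem:filtered-intersection} and the point-like hypothesis, rather than building $\Psi$ separately and checking both composites. Your more hands-on version is entirely equivalent.
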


\begin{proof}
  In order to simplify notation, for each $V \subseteq U$ we let 
  $L(V)$ denote the product $\prod_{\alpha \in V} L_\alpha$. 
  Using the assumption that $\CC^\op$ is disjunctive we have an isomorphism
  \[ L(V) \coprod_{L(U)} L(W) \cong L(V \cap W) \]
  for any pair of subsets $U,V$ of $A$. 
  Dually, the diagonal and projection maps allow us to see that
  $ \left[ L(V) \times L(W) \right] = \left[ L(V \cup W) \right] $
  in $\Spec_\CC( L(U) )$.
  In particular this implies that
  $\left[ L(V) \right]$ and $\left[ L(U \setminus V) \right]$
  partition $\Spec_\CC(L(U))$ into a pair of clopen sets.

  Using the clopen sets associated to subsets of $U$
  we can construct a map of Boolean algebras from $\mathcal{P}(U)$ to the boolean algebra of clopen sets of $\Spec_\CC(L(U))$.
  Under Stone duality this corresponds to a map of topological spaces
  \[ r \colon \Spec_\CC\left( L(U) \right) \to \beta U. \]

  We can compute fiber of $r$ over an ultrafilter $\mu \in \beta U$ by using \Cref{lem:filtered-intersection}:
  \[ r^{-1}(\mu) = \cap_{V \in \mu} [L(V)] = \left[ \colim_{V \in \mu} L(V) \right] = \left[ \int_U L_\alpha d\mu \right]. \]
  The assumption that ultraproducts are point-like in $\CC$ therefore implies the $r$ is bijective.
  Finally, as $r$ is a continuous bijection with compact source and Hausdorff target, it is an isomorphism.  
\end{proof}

\begin{prop} \label{prop:ultra-Hausdorff}  
  Suppose $\CC$ is weakly spectral and op-disjunctive, and that ultraproducts in $\CC$ are point-like.
  Then, for every $R\in \CC$, $\Spec_\CC(R)$ is Hausdorff.
\end{prop}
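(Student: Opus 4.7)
The plan is to exhibit $\Spec_\CC(R)$ as a topological quotient of a compact Hausdorff space by a closed equivalence relation, which is a standard criterion ensuring that the quotient is Hausdorff.

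First, I would apply \Cref{exm:prod-points} to choose a $c$-cover $R \to S$ with $S = \prod_{q \in \Spec_\CC(R)} L_q$ a product of Nullstellensatzian objects indexed by the geometric points of $R$. By the hypotheses on $\CC$ and \Cref{lem:stone-cech}, $\Spec_\CC(S) \cong \beta(\Spec_\CC(R))$ is compact Hausdorff. Since $R \to S$ is a $c$-cover, \Cref{cor:descent} gives that the diagram
\[ \Spec_\CC(S \coprod_R S) \rightrightarrows \Spec_\CC(S) \to \Spec_\CC(R) \]
is a coequalizer in $\Top^{\mathrm{cpt},T_1,\mathrm{cl}}$, and by \Cref{lem:surj_is_quo} the surjection $\Spec_\CC(S) \to \Spec_\CC(R)$ is in fact a topological quotient map.

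Next, I would identify the equivalence relation cut out by this quotient. By \Cref{lem:strong-intersection}, the comparison map
\[ \Spec_\CC(S \coprod_R S) \twoheadrightarrow \Spec_\CC(S) \times_{\Spec_\CC(R)} \Spec_\CC(S) \subseteq \Spec_\CC(S) \times \Spec_\CC(S) \]
is surjective, so its image is precisely the graph of the equivalence relation. Since $\Spec_\CC(S \coprod_R S)$ is compact and $\Spec_\CC(S) \times \Spec_\CC(S)$ is Hausdorff (it is a product of Stone spaces), any continuous map between them is closed; therefore the graph of the equivalence relation is a closed subset of $\Spec_\CC(S) \times \Spec_\CC(S)$.

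Finally, I would invoke the standard point-set fact that if $X$ is compact Hausdorff and $\sim$ is an equivalence relation on $X$ whose graph is closed in $X \times X$, then the topological quotient $X/{\sim}$ is Hausdorff. Applying this with $X = \Spec_\CC(S)$ and the equivalence relation above yields that $\Spec_\CC(R)$ is Hausdorff, as desired. I do not expect any serious obstacle here: all nontrivial input (the Stone--\v{C}ech description of $\Spec_\CC(S)$, $c$-descent, and the identification of the equivalence relation) has already been established, so the argument is a direct assembly of these results with one classical topology lemma.
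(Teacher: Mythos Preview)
Your argument is correct and shares the same setup as the paper: cover $R$ by a product $S$ of Nullstellensatzian objects so that $\Spec_\CC(S)$ is a Stone--\v{C}ech compactification, hence compact Hausdorff, and then deduce Hausdorffness of the quotient $\Spec_\CC(R)$.

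The difference lies only in the final point-set topology step. You go through the equivalence relation: using \Cref{lem:strong-intersection} to identify its graph as the image of the compact space $\Spec_\CC(S\coprod_R S)$ in the Hausdorff space $\Spec_\CC(S)\times\Spec_\CC(S)$, hence closed, and then invoking that a quotient of a compact Hausdorff space by a closed equivalence relation is Hausdorff. The paper instead bypasses $S\coprod_R S$ entirely and uses the fact (Munkres, Ex.~31.6) that a closed continuous surjection with normal source has normal target; since $\Spec_\CC(S)$ is compact Hausdorff it is normal, and the map $\Spec_\CC(S)\to\Spec_\CC(R)$ is closed by \Cref{thm:con-spec}. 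The paper's route is marginally shorter and avoids analyzing the fiber product, while your route is perhaps more conceptual in that it makes explicit use of the descent description of $\Spec_\CC(R)$ as a coequalizer. Both are standard and equally valid.
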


\begin{proof}
  As in \Cref{exm:prod-points} pick a $S \in \mathrm{NS}^\Pi$ and a $c$-cover $R \to S$.  
  We will show that $\Spec_\CC(R)$ is Hausdorff by analyzing the surjective map
  \[ q \colon \Spec_\CC(S) \to \Spec(R). \]
  
  $q$ is closed by \Cref{thm:con-spec}.
  From \Cref{lem:stone-cech} we know that $\Spec_\CC(S)$ is the Stone--\v{C}ech compactification of a set and therefore compact Hausdorff. Compact Hausdorff spaces are normal, so $\Spec_\CC(S)$ is normal. Finally, by \cite[Ex. 31.6]{Munkres}, a closed, continuous surjection with normal source has a normal target (and in particular a Hausdorff target).
  \qedhere
  
\end{proof}


\subsubsection{Spectral categories}
\label{subsub:spectral}\hfill

We have seen above that the constructible spectrum is even more well-behaved under the following additional assumptions:

\begin{dfn}\label{dfn:spectralcat}
We say that a category $\CC$ is \deff{spectral} if it is weakly spectral and additionally:
\begin{enumerate}
    \item $\CC$ is op-disjunctive.
    \item Ultraproducts in $\CC$ are point-like.  \tqed
\end{enumerate}
\end{dfn}

\begin{exm}\label{exm:Tnspectral}
  The category $\CAlg(\Sp_{T(n)})$ is spectral, by virtue of \Cref{exm:calg-sp-spectral}, \Cref{lem:rig-disjunctive}, and \Cref{exm:E-ultraproducts}.
  \tqed
\end{exm}

For spectral categories, we have the following refinement of \Cref{thm:con-spec}:

\begin{thm}\label{thm:spectralspec}
Let $\CC$ be a spectral category.  Then there is a functor
  \[ \Spec^{\cons}_\CC(-)\colon \CC^\op \to \CHaus \]
  to compact Hausdorff spaces which satisfies properties (A) through (E) of \Cref{thm:con-spec} together with:
  \begin{enumerate}
      \item[(F)] The functor $\Spec_{\CC}^{\cons}$ is left Kan extended from the full subcategory $(\mathrm{NS}^{\Pi})^{\op} \subset \CC^{\op}$ of products of Nullstellensatzian objects.  
  \end{enumerate}
  \end{thm}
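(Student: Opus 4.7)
The plan is to assemble this statement directly from results already established earlier in the appendix; nothing genuinely new needs to be proved. First, since a spectral category is in particular weakly spectral, Theorem \ref{thm:con-spec} immediately supplies a functor
\[ \Spec_\CC^\cons(-)\colon \CC^\op \to \Top^{\mathrm{cpt},T_1,\mathrm{cl}} \]
together with properties (A) through (E). So the work left is to (i) refine the target from $\Top^{\mathrm{cpt},T_1,\mathrm{cl}}$ to $\CHaus$ and (ii) verify property (F).

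For (i), the additional axioms in the definition of a spectral category (\Cref{dfn:spectralcat})—op-disjunctiveness and point-like ultraproducts—are exactly the hypotheses of Proposition \ref{prop:ultra-Hausdorff}, which shows that $\Spec_\CC(R)$ is Hausdorff for every $R \in \CC$. Combined with the compactness already provided by Theorem \ref{thm:con-spec}, this means each $\Spec_\CC(R)$ lies in $\CHaus$. On morphisms, any continuous map between compact Hausdorff spaces is automatically closed, so the inclusion $\CHaus \hookrightarrow \Top^{\mathrm{cpt},T_1,\mathrm{cl}}$ is fully faithful and the existing functor factors through $\CHaus$ with no additional data required.

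For (ii), Proposition \ref{prop:kan-top} already establishes that $\Spec_\CC(-)\colon \CC^\op \to \Top^{\mathrm{cpt},T_1,\mathrm{cl}}$ is the left Kan extension of its restriction to $(\NSP_\CC)^\op$. Transferring this to the $\CHaus$-valued version is a formality: if $H\colon \CC^\op \to \CHaus$ is any functor, then natural transformations from the Kan extension to $H$ may be tested after postcomposing with the fully faithful inclusion into $\Top^{\mathrm{cpt},T_1,\mathrm{cl}}$, where the universal property from \Cref{prop:kan-top} applies. Hence the universal property of left Kan extension persists along the restriction of the target, and property (F) follows.

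Since all three ingredients have already been proved, there is no serious obstacle; the only mild wrinkle is the target-category change in (ii), which is handled by the observation above that full subcategories inherit Kan-extension universal properties once one knows the extended functor already lands in the subcategory.
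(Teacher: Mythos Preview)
Your proposal is correct and follows essentially the same approach as the paper: invoke \Cref{thm:con-spec} for (A)--(E), apply \Cref{prop:ultra-Hausdorff} to land in $\CHaus$, and deduce (F) from \Cref{prop:kan-top}. The only cosmetic difference is that the paper handles (F) by saying one reruns the proof of \Cref{prop:kan-top} with $\CHaus$ in place of $\Top^{\mathrm{cpt},T_1,\mathrm{cl}}$, whereas you transfer the Kan-extension property along the fully faithful inclusion; both are equally valid and amount to the same thing.
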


\begin{proof}
    The fact that the functor lands in compact Hausdorff spaces and satisfies properties (A) through (E) follows from \Cref{thm:con-spec} combined with \Cref{prop:ultra-Hausdorff}.  Finally, (F) follows from the same proof as \Cref{prop:kan-top}, with $\CHaus$ replacing $\mathrm{Top}^{\mathrm{cpt},T_1,\mathrm{cl}}$ everywhere.   
    \end{proof}

\subsection{Examples}
\label{subsec:gspec-tn}\hfill

We conclude this section by analyzing the constructible spectrum in some examples.

\begin{lem} \label{lem:surj}
  Let $\DD \in \Pr^{\rig}$ and assume that the zero algebra is compact in $\CAlg(\DD)$.
  Then, a nil-conservative map $A \to B$ in $\CAlg(\DD)$
  induces a surjective map on constructible spectra.
\end{lem}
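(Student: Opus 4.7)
The plan is to combine the description of points of $\Spec^{\cons}_{\CAlg(\DD)}(A)$ via Nullstellensatzian representatives with the definition of nil-conservativity. First, note that the hypotheses guarantee that $\CAlg(\DD)$ is weakly spectral: the zero algebra is a strict terminal object by \Cref{lem:prig-strict-0}, it is compact by assumption, and $\CAlg(\DD)$ is compactly generated since $\DD$ is (cf.\ \cite[Corollary 5.3.1.17]{HA}). Hence the functor $\Spec^{\cons}_{\CAlg(\DD)}(-)$ of \Cref{thm:con-spec} is available.

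Let $q \in \Spec^{\cons}_{\CAlg(\DD)}(A)$ be an arbitrary geometric point. By \Cref{thm:con-spec}(D), we may pick a Nullstellensatzian object $L \in \CAlg(\DD)_{A/-}$ representing $q$, i.e.\ with $[L]_A = \{q\}$. By \Cref{cor:point-inside}, proving that $q$ lies in the image of the map $\Spec^{\cons}_{\CAlg(\DD)}(B) \to \Spec^{\cons}_{\CAlg(\DD)}(A)$ reduces to showing that the pushout $L \coprod_A B$ is non-terminal in $\CAlg(\DD)$. Since coproducts in $\CAlg(\DD)$ are computed by relative tensor products, this is the assertion that $L \otimes_A B \neq 0$.

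The key step is the following. Regard $L$ as a commutative $A$-algebra. Because $L$ is Nullstellensatzian, it is non-terminal, and since the terminal object of $\CAlg(\DD)$ is the zero algebra (\Cref{lem:prig-strict-0}), we have $L \neq 0$. Now the map $A \to B$ is nil-conservative, which means exactly that for any $R \in \CAlg(\DD)_{A/-}$, the vanishing $R \otimes_A B = 0$ forces $R = 0$. Applying this with $R = L$ and using $L \neq 0$, we conclude that $L \otimes_A B \neq 0$, as required. I do not expect any substantial obstacle here; once the dictionary between ``$q$ lifts to $\Spec(B)$'' and ``$L \otimes_A B \neq 0$'' is in place (via \Cref{cor:point-inside}), the hypothesis of nil-conservativity is essentially a verbatim translation of the conclusion, so the proof is effectively a one-line verification after setting up the correct framework.
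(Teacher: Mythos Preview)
Your proof is correct and follows essentially the same approach as the paper's: pick a Nullstellensatzian representative $L$ of a point of $\Spec^{\cons}_{\CAlg(\DD)}(A)$ and use nil-conservativity to ensure $L \otimes_A B \neq 0$, so that the point lifts. Your version is more explicit about verifying that $\CAlg(\DD)$ is weakly spectral and about invoking \Cref{cor:point-inside}, whereas the paper phrases the same step as identifying the fiber over $L$ with the geometric points of $B \otimes_A L$; these are the same argument.
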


\begin{proof}
  The fiber of the map
  \[ \Spec_{\CAlg(\DD)}(B) \to \Spec_{\CAlg(\DD)}(A) \]
  at a Nullstellensatzian commutative $A$-algebra $L$ is given by the geometric points of $B \otimes_A L$.
  Since $A \to B$ is nil-conservative by assumption, $B \otimes_A L$ is non-zero and therefore has at least one geometric point. 
\end{proof}


As one might expect, our abstract definition of the constructible spectrum recovers the usual definition of the constructible topology on the Zariski spectrum when we take $\CC = \CRing$.


\begin{prop} \label{lem:discrete-top}
  Given a discrete commutative algebra $R$, there is an isomorphism
  \[ \Spec^\cons_{\CRing}(R) \cong \Spec^\cons_{\mathrm{Zar}}(R) \]
  between the constructible spectrum of $R$ and the Zariski spectrum of $R$ equipped with its constructible topology.
\end{prop}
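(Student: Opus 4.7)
The plan is to construct an explicit homeomorphism and verify the two ends agree by a combination of underlying-set bookkeeping and a topological identification coming from classical commutative algebra.

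First I would identify the Nullstellensatzian objects of $\CRing$ with algebraically closed fields: applied with $\alpha=\omega$, \Cref{thm:Lang} says $B$ is $\omega$-Nullstellensatzian as a commutative ring if and only if $B$ is an algebraically closed field. Combining this with \Cref{lem:point-equiv}, the points of $\Spec^\cons_{\CRing}(R)$ are equivalence classes of ring maps $R\to L$ with $L$ algebraically closed, where $R\to L_1$ and $R\to L_2$ are identified whenever $L_1\otimes_R L_2$ is nonzero (equivalently, whenever they admit a common enlargement $L_1,L_2\to L_3$). Sending such a class to the kernel of $R\to L$ gives the classical bijection with $\Spec_{\mathrm{Zar}}(R)$: this is well-defined and injective because two maps to algebraically closed fields share a common enlargement iff they share the same kernel (embed both residue fields in a sufficiently large algebraic closure of $\mathrm{Frac}(R/\mathfrak{p})$), and it is surjective because every prime $\mathfrak{p}$ gives such a map $R\to R/\mathfrak{p}\hookrightarrow \overline{\mathrm{Frac}(R/\mathfrak{p})}$.

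Next I would match the topologies. By property~(C) of \Cref{thm:con-spec}, the closed subsets of $\Spec^\cons_{\CRing}(R)$ are precisely the images of maps $\Spec^\cons_{\CRing}(S)\to \Spec^\cons_{\CRing}(R)$ induced by $R\to S$ in $\CRing$. Under the bijection above, such an image corresponds to the subset of primes $\mathfrak{p}\subset R$ for which the map $R\to R/\mathfrak{p}$ extends to $S$, i.e.\ the image of $\Spec_{\mathrm{Zar}}(S)\to \Spec_{\mathrm{Zar}}(R)$ in the classical sense. It is then a classical fact (e.g.\ Stacks Project, tag \texttt{08YF}) that the subsets of $\Spec_{\mathrm{Zar}}(R)$ of this form are exactly the \emph{pro-constructible} subsets, which by definition are the closed subsets of the Zariski spectrum equipped with its constructible topology. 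Thus the two collections of closed sets coincide.

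The main step requiring care is the topology comparison. There are two directions: every pro-constructible subset arises as the image of some $\Spec_{\mathrm{Zar}}(S)\to\Spec_{\mathrm{Zar}}(R)$ (writing it as an intersection of basic constructible sets and taking an appropriate localization/quotient $R$-algebra whose spectrum has that image), and conversely every such image is pro-constructible (shown by reducing to finitely presented $S$ via a filtered-colimit argument and using that constructible sets are preserved under pushforward along finitely presented maps, then taking intersections). Both assertions are standard, but spelling them out is the one genuinely nontrivial ingredient; once they are in hand, a continuous bijection between compact Hausdorff (in fact compact $T_1$) spaces with matching closed sets is automatically a homeomorphism, and naturality in $R$ is immediate from the definitions on both sides. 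Alternatively one could verify properties (A)--(E) of \Cref{thm:con-spec} directly for the classical constructible Zariski spectrum functor and invoke uniqueness; the bookkeeping there is essentially the same.
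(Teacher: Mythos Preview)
Your proposal is correct and follows essentially the same approach as the paper's proof: identify Nullstellensatzian objects as algebraically closed fields via \Cref{thm:Lang}, match points through the common-enlargement equivalence relation, and then match topologies using property~(C) together with the classical fact that images of $\Spec_{\mathrm{Zar}}$-maps are exactly the pro-constructible subsets. The paper's proof is terser---it simply asserts the topology identification---whereas you spell out the underlying commutative algebra (including the Stacks reference), but the substance is the same.
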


\begin{proof}
  In \Cref{thm:Lang} we showed that the Nullstellensatzian objects in $\CRing$ are the algebraically closed fields. Using that fact that two Nullstellensatzian objects $R \to L_1$ and $R \to L_2$ represent the same point of $R$ if and only if their pushout, $L_1 \otimes_R L_2$, is non-zero we see that the points of $\Spec^\cons_{\CRing}(R)$ are the Zariski points. Examining the description of the constructible topology in \Cref{thm:con-spec}(C), we see that it agrees with the usual definition of the constructible topology on the Zariski spectrum.
\end{proof}

\begin{rmk}
  Since ultraproducts of algebraically closed fields are algebraically closed fields,
  we learn from \Cref{prop:ultra-Hausdorff} that for a discrete ring $R$
  its constructible spectrum is Hausdorff.
  In fact, the Zariski constructible spectrum of $R$ is a totally disconnected compact Hausdorff space.
  \tqed
\end{rmk}

This leads us to ask the following question:

\begin{qst}
  In which categories of commutative algebras 
  is the constructible spectrum a totally disconnected compact Hausdorff space?
\end{qst}

For us the most interesting case of this question is that provided by the main example of this paper, $\CAlg(\Sp_{T(n)})$.

\begin{exm} \label{exm:space}
    Let $X$ be a compact $T_1$ space. 
    The poset $\mathrm{Op}(X)$ of open sets of $X$ under inclusion is weakly spectral and it is not too difficult to verify that the Nullstellensatzian objects in $\mathrm{Op}(X)$ are the open complements of the points of $X$.  This means its spectrum $\Spec_{\mathrm{Op}}(\emptyset)$ can be identified with $X$ as a set, and one can then read off for $U \in \mathrm{Op}(X)$ that $[U]_{\emptyset} = X \setminus U$.  It follows that 
    \[ \Spec_{\mathrm{Op}(X)}(\emptyset) \cong X \]
    as topological spaces.  
    \tqed
\end{exm}

An important feature of \Cref{exm:space} is that it implies that every compact, $T_1$ space occurs as a constructible spectrum (and therefore the constructible spectrum does not, in general, have any further special properties).

\begin{exm}
    In the case of (the opposite of) profinite sets, it is not too difficult to show that the constructible spectrum functor recovers the embedding of profinite sets into compact Hausdorff spaces.
    \tqed
\end{exm}

Notably, as the next example shows, algebraically closed Lubin--Tate theories are only Nullstellensatzian in the monochromatic world. In fact, in the seemingly more general $L_n^f$-local setting, the constructible spectrum collapses to the rational case.

\begin{exm}\label{exm:lnf_spec}
    Since $\LnfSp \in \Prig$ is generated by the compact object $L_n^f(\mathbb{S})$, by \Cref{lem:cpt-gen-spectral} we have that $\CAlg(\LnfSp)$ is weakly spectral. Now let $L \in \CAlg(\LnfSp)$ be Nullstellensatzian. By the May nilpotence conjecture \cite{MNNmaynilp}, we have that $L[1/p] \neq 0$; but it is easy to see that $L[1/p]$ is compact under $L$, so we get that $L = L[1/p]$. Now let $R \in \CAlg(\LnfSp)$ be arbitrary.  We deduce from the claim above and \Cref{cor:idempotent} that the natural map 
    \[\Spec_{\CAlg(\LnfSp)}(R[1/p]) \to \Spec_{\CAlg(\LnfSp)}(R) \]
    is an isomorphism. 
    \tqed
\end{exm}


\bibliographystyle{alpha}
\bibliography{bibliography}

\newcommand{\etalchar}[1]{$^{#1}$}
\begin{thebibliography}{LMMT20}

\bibitem[ABG{\etalchar{+}}14]{ABGHR2}
Matthew Ando, Andrew~J Blumberg, David Gepner, Michael~J Hopkins, and Charles
  Rezk.
\newblock An {$\infty$}-categorical approach to {R}-line bundles, {R}-module
  {T}hom spectra, and twisted {R}-homology.
\newblock {\em Journal of Topology}, 7(3):869--893, 2014.

\bibitem[ACB19]{BAC}
Omar Antol{\'\i}n-Camarena and Tobias Barthel.
\newblock A simple universal property of {T}hom ring spectra.
\newblock {\em Journal of Topology}, 12(1):56--78, 2019.

\bibitem[AHS04]{AHS}
Matthew Ando, Michael~J. Hopkins, and Neil~P. Strickland.
\newblock The sigma orientation is an {$H_\infty$} map.
\newblock {\em Amer. J. Math.}, 126(2):247--334, 2004.

\bibitem[AK18]{angelini2018detecting}
Gabriel Angelini-Knoll.
\newblock Detecting {$\beta$} elements in iterated algebraic {K}-theory of
  finite fields.
\newblock 2018.
\newblock \href{https://arxiv.org/abs/1810.10088}{arXiv:1810.10088}.

\bibitem[AKQ19]{angelini2019chromatic}
Gabriel Angelini-Knoll and JD~Quigley.
\newblock Chromatic complexity of the algebraic {K}-theory of {$ y (n) $}.
\newblock 2019.
\newblock \href{https://arxiv.org/abs/1908.09164}{arXiv:1908.09164}.

\bibitem[And95]{Ando}
Matthew Ando.
\newblock Isogenies of formal group laws and power operations in the cohomology
  theories {$E_n$}.
\newblock {\em Duke Math. J.}, 79(2):423--485, 1995.

\bibitem[Ang08]{Ang}
Vigleik Angeltveit.
\newblock Topological {H}ochschild homology and cohomology of {$A_\infty$} ring
  spectra.
\newblock {\em Geom. Topol.}, 12(2):987--1032, 2008.

\bibitem[Aus57]{auslander1957regular}
Maurice Auslander.
\newblock On regular group rings.
\newblock {\em Proceedings of the American Mathematical Society},
  8(4):658--664, 1957.

\bibitem[Aus10]{ausoni2010algebraic}
Christian Ausoni.
\newblock On the algebraic {$K$}-theory of the complex {K}-theory spectrum.
\newblock {\em Inventiones mathematicae}, 180(3):611--668, 2010.

\bibitem[Bal21]{Balderrama}
William Balderrama.
\newblock Algebraic theories of power operations.
\newblock 2021.
\newblock \href{https://arxiv.org/abs/2108.06802}{arXiv:2108.06802}.

\bibitem[Bar16]{BarAKT}
Clark Barwick.
\newblock On the algebraic {$K$}-theory of higher categories.
\newblock {\em J. Topol.}, 9(1):245--347, 2016.

\bibitem[Bar17]{BarwickMackey}
Clark Barwick.
\newblock Spectral mackey functors and equivariant algebraic {K}-theory (i).
\newblock {\em Advances in Mathematics}, 304:646--727, 2017.

\bibitem[BCSY22]{fourier}
Tobias Barthel, Shachar Carmeli, Tomer~M Schlank, and Lior~A Yanovski.
\newblock Chromatic {F}ourier transform.
\newblock 2022.
\newblock In Preparation.

\bibitem[BDRR07]{baas2007two}
Nils~A Baas, Bj{\o}rn~Ian Dundas, Birgit Richter, and John Rognes.
\newblock Two-vector bundles define a form of elliptic cohomology.
\newblock 2007.
\newblock \href{https://arxiv.org/abs/0706.0531}{arXiv:0706.0531}.

\bibitem[BHN{\etalchar{+}}19]{BHNNNS}
Tobias Barthel, Markus Hausmann, Niko Naumann, Thomas Nikolaus, Justin Noel,
  and Nathaniel Stapleton.
\newblock The {B}almer spectrum of the equivariant homotopy category of a
  finite abelian group.
\newblock {\em Invent. Math.}, 216(1):215--240, 2019.

\bibitem[BL14]{BarwickLawson}
Clark Barwick and Tyler Lawson.
\newblock Regularity of structured ring spectra and localization in {K}-theory.
\newblock 2014.
\newblock \href{https://arxiv.org/abs/1402.6038}{arXiv:1402.6038}.

\bibitem[BM08]{BlumMan}
Andrew~J. Blumberg and Michael~A. Mandell.
\newblock The localization sequence for the algebraic {$K$}-theory of
  topological {$K$}-theory.
\newblock {\em Acta Math.}, 200(2):155--179, 2008.

\bibitem[BM20]{arctopology}
Bhargav Bhatt and Akhil Mathew.
\newblock The arc-topology.
\newblock 2020.
\newblock \href{https://arxiv.org/abs/1807.04725}{arXiv:1807.04725}.

\bibitem[BNT18]{bunke2018beilinson}
Ulrich Bunke, Thomas Nikolaus, and Georg Tamme.
\newblock The {B}eilinson regulator is a map of ring spectra.
\newblock {\em Advances in Mathematics}, 333:41--86, 2018.

\bibitem[BR05]{baker2005invertible}
Andrew Baker and Birgit Richter.
\newblock Invertible modules for commutative-algebras with residue fields.
\newblock {\em manuscripta mathematica}, 118(1):99--119, 2005.

\bibitem[BR08]{BakerRichter}
Andrew Baker and Birgit Richter.
\newblock Galois extensions of {L}ubin--{T}ate spectra.
\newblock {\em Homology, Homotopy and Applications}, 10(3):27--43, 2008.

\bibitem[Bur22]{BurklundMoore}
Robert Burklund.
\newblock Multiplicative structures on {M}oore spectra.
\newblock 2022.
\newblock \href{https://arxiv.org/abs/2203.14787}{arXiv:2203.14787}.

\bibitem[BW05]{BorgerWieland}
James Borger and Ben Wieland.
\newblock Plethystic algebra.
\newblock {\em Adv. Math.}, 194(2):246--283, 2005.

\bibitem[CMNN20]{CMNN}
Dustin Clausen, Akhil Mathew, Niko Naumann, and Justin Noel.
\newblock Descent and vanishing in chromatic algebraic {$K$}-theory via group
  actions.
\newblock Nov 2020.
\newblock \href{https://arxiv.org/abs/2011.08233}{arXiv:2011.08233}.

\bibitem[CSY21a]{carmeli2021ambidexterity}
Shachar Carmeli, Tomer~M Schlank, and Lior Yanovski.
\newblock Ambidexterity and height.
\newblock {\em Advances in Mathematics}, 385:107763, 2021.

\bibitem[CSY21b]{carmeli2021chromatic}
Shachar Carmeli, Tomer~M Schlank, and Lior Yanovski.
\newblock Chromatic cyclotomic extensions.
\newblock 2021.
\newblock \href{https://arxiv.org/abs/2103.02471}{arXiv:2103.02471}.

\bibitem[CSY22]{TeleAmbi}
Shachar Carmeli, Tomer~M Schlank, and Lior Yanovski.
\newblock Ambidexterity in chromatic homotopy theory.
\newblock {\em Invent. Math.}, 2022.

\bibitem[DGM13]{DGM}
Bj{\"o}rn~Ian Dundas, Thomas~G. Goodwillie, and Randy McCarthy.
\newblock {\em The local structure of algebraic {K}-theory}, volume~18 of {\em
  Algebra and Applications}.
\newblock Springer-Verlag London, Ltd., London, 2013.

\bibitem[DHS88]{DHS}
Ethan~S. Devinatz, Michael~J. Hopkins, and Jeffrey~H. Smith.
\newblock Nilpotence and stable homotopy theory. {I}.
\newblock {\em Ann. of Math. (2)}, 128(2):207--241, 1988.

\bibitem[GH04]{GoerssHopkins}
P.~G. Goerss and M.~J. Hopkins.
\newblock Moduli spaces of commutative ring spectra.
\newblock In {\em Structured ring spectra}, volume 315 of {\em London Math.
  Soc. Lecture Note Ser.}, pages 151--200. Cambridge Univ. Press, Cambridge,
  2004.

\bibitem[GM95]{GreenleesMay}
J.~P.~C. Greenlees and J.~P. May.
\newblock Generalized {T}ate cohomology.
\newblock {\em Mem. Amer. Math. Soc.}, 113(543):viii+178, 1995.

\bibitem[Goo79]{Goodearl}
K.~R. Goodearl.
\newblock {\em von {N}eumann regular rings}, volume~4 of {\em Monographs and
  Studies in Mathematics}.
\newblock Pitman (Advanced Publishing Program), Boston, Mass.-London, 1979.

\bibitem[GS96]{GS96}
J.~P.~C. Greenlees and Hal Sadofsky.
\newblock The {T}ate spectrum of {$v_n$}-periodic complex oriented theories.
\newblock {\em Math. Z.}, 222(3):391--405, 1996.

\bibitem[Hah16]{Hahnsupport}
Jeremy Hahn.
\newblock On the {B}ousfield classes of {$H_{\infty}$}-ring spectra.
\newblock 2016.
\newblock \href{https://arxiv.org/abs/1612.04386}{arXiv:1612.04386}.

\bibitem[Hah18]{HahnThesis}
Jeremy Hahn.
\newblock {\em Variations on a Nilpotence Theorem of {H}opkins and {M}ahowald}.
\newblock ProQuest LLC, Ann Arbor, MI, 2018.
\newblock Thesis (Ph.D.)--Harvard University.

\bibitem[Heu21]{heuts2021lie}
Gijs Heuts.
\newblock Lie algebras and {$v_n$}-periodic spaces.
\newblock {\em Annals of Mathematics}, 193(1):223--301, 2021.

\bibitem[HL13]{HL}
Michael Hopkins and Jacob Lurie.
\newblock Ambidexterity in {$K(n)$}-local stable homotopy theory.
\newblock 2013.
\newblock \href{http://www.math.ias.edu/~lurie/}{{A}vailable online}.

\bibitem[HL18]{HopkinsLawson}
Michael~J Hopkins and Tyler Lawson.
\newblock Strictly commutative complex orientation theory.
\newblock {\em Mathematische Zeitschrift}, 290(1):83--101, 2018.

\bibitem[Hov95]{Hovey}
Mark Hovey.
\newblock Bousfield localization functors and {H}opkins' chromatic splitting
  conjecture.
\newblock In {\em The \v{C}ech centennial ({B}oston, {MA}, 1993)}, volume 181
  of {\em Contemp. Math.}, pages 225--250. Amer. Math. Soc., Providence, RI,
  1995.

\bibitem[Hov04]{hovey2004operations2}
Mark Hovey.
\newblock Operations and co-operations in {M}orava {$ E $}-theory.
\newblock {\em Homology, Homotopy and Applications}, 6(1):201--236, 2004.

\bibitem[HS96]{HS96}
Mark Hovey and Hal Sadofsky.
\newblock Tate cohomology lowers chromatic {B}ousfield classes.
\newblock {\em Proc. Amer. Math. Soc.}, 124(11):3579--3585, 1996.

\bibitem[HS98]{NilpII}
Michael~J. Hopkins and Jeffrey~H. Smith.
\newblock Nilpotence and stable homotopy theory. {II}.
\newblock {\em Ann. of Math. (2)}, 148(1):1--49, 1998.

\bibitem[HS99]{HovStrick}
Mark Hovey and Neil~P. Strickland.
\newblock Morava {$K$}-theories and localisation.
\newblock {\em Mem. Amer. Math. Soc.}, 139(666):viii+100, 1999.

\bibitem[HW18]{HahnWilsoneven}
Jeremy Hahn and Dylan Wilson.
\newblock Quotients of even rings.
\newblock 2018.
\newblock \href{https://arxiv.org/abs/1809.04723}{arXiv:1809.04723}.

\bibitem[HW20]{hahn2020redshift}
Jeremy Hahn and Dylan Wilson.
\newblock Redshift and multiplication for truncated {B}rown--{P}eterson
  spectra.
\newblock 2020.
\newblock \href{https://arxiv.org/abs/2012.00864}{arXiv:2012.00864}.

\bibitem[HY20]{HahnYuan}
Jeremy Hahn and Allen Yuan.
\newblock Exotic multiplications on periodic complex bordism.
\newblock {\em Journal of Topology}, 13(4):1839--1852, 2020.

\bibitem[Joy85]{JoyalWitt}
Andr\'{e} Joyal.
\newblock {$\delta$}-anneaux et vecteurs de {W}itt.
\newblock {\em C. R. Math. Rep. Acad. Sci. Canada}, 7(3):177--182, 1985.

\bibitem[Kuh04a]{kuhn2004tate}
Nicholas~J Kuhn.
\newblock Tate cohomology and periodic localization of polynomial functors.
\newblock {\em Inventiones mathematicae}, 157(2):345--370, 2004.

\bibitem[Kuh04b]{Kuhn}
Nicholas~J. Kuhn.
\newblock Tate cohomology and periodic localization of polynomial functors.
\newblock {\em Invent. Math.}, 157(2):345--370, 2004.

\bibitem[Lan52]{Lang}
Serge Lang.
\newblock Hilbert's {N}ullstellensatz in infinite-dimensional space.
\newblock {\em Proc. Amer. Math. Soc.}, 3:407--410, 1952.

\bibitem[Lin66]{linton1966some}
Fred~EJ Linton.
\newblock Some aspects of equational categories.
\newblock In {\em Proceedings of the Conference on Categorical Algebra}, pages
  84--94. Springer, 1966.

\bibitem[LMMT20]{LMMT}
Markus Land, Akhil Mathew, Lennart Meier, and Georg Tamme.
\newblock Purity in chromatically localized algebraic {$K$}-theory.
\newblock 2020.
\newblock \href{https://arxiv.org/abs/2001.10425}{arXiv:2001.10425}.

\bibitem[LT66]{lubin1966formal}
Jonathan Lubin and John Tate.
\newblock Formal moduli for one-parameter formal {L}ie groups.
\newblock {\em Bulletin de la Soci{\'e}t{\'e} Math{\'e}matique de France},
  94:49--59, 1966.

\bibitem[Lur09]{HTT}
Jacob Lurie.
\newblock {\em Higher topos theory}, volume 170 of {\em Annals of Mathematics
  Studies}.
\newblock Princeton University Press, Princeton, NJ, 2009.

\bibitem[Lur11]{DAGX}
Jacob Lurie.
\newblock Derived algebraic geometry {X}: Formal moduli problems.
\newblock 2011.
\newblock \href{https://www.math.ias.edu/~lurie/papers/DAG-X.pdf}{{A}vailable
  online}.

\bibitem[Lur15]{LurRot}
Jacob Lurie.
\newblock Rotation invariance in algebraic {K}-theory.
\newblock 2015.
\newblock
  \href{https://www.math.ias.edu/~lurie/papers/Waldhaus.pdf}{{A}vailable
  online}.

\bibitem[Lur17]{HA}
Jacob Lurie.
\newblock Higher {A}lgebra.
\newblock 2017.
\newblock \href{http://www.math.ias.edu/~lurie/}{{A}vailable online}.

\bibitem[Lur18]{ECII}
Jacob Lurie.
\newblock Elliptic {C}ohomology {II}: {O}rientations.
\newblock 2018.
\newblock \href{http://www.math.ias.edu/~lurie/}{{A}vailable online}.

\bibitem[Mac39]{MacLane}
Saunders MacLane.
\newblock The universality of formal power series fields.
\newblock {\em Bull. Amer. Math. Soc.}, 45:888--890, 1939.

\bibitem[Mat14]{TNO}
Akhil Mathew.
\newblock Thursday seminar notes.
\newblock 2014.
\newblock
  \href{http://math.uchicago.edu/~amathew/notes_thursday.pdf}{{A}vailable
  online}.

\bibitem[Mat17]{mathew2017residue}
Akhil Mathew.
\newblock Residue fields for a class of rational {$E_{\infty}$}-rings and
  applications.
\newblock {\em Journal of Pure and Applied Algebra}, 221(3):707--748, 2017.

\bibitem[MNN15]{MNNmaynilp}
Akhil Mathew, Niko Naumann, and Justin Noel.
\newblock On a nilpotence conjecture of {J}. {P}. {M}ay.
\newblock {\em J. Topol.}, 8(4):917--932, 2015.

\bibitem[M{\"o}l10]{Mol}
Jan-David M{\"o}llers.
\newblock {\em {$K(1)$}-local complex {$E_{\infty}$}-orientations}.
\newblock {Thesis (Ph.D.)--Ruhr-Universit{\"a}t Bochum}, 2010.

\bibitem[Mun00]{Munkres}
James~R. Munkres.
\newblock {\em Topology}.
\newblock Prentice Hall, Inc., Upper Saddle River, NJ, 2000.
\newblock Second edition of [ MR0464128].

\bibitem[NS18]{NS}
Thomas Nikolaus and Peter Scholze.
\newblock On topological cyclic homology.
\newblock {\em Acta Math.}, 221(2):203--409, 2018.

\bibitem[RA12]{rognes2012algebraic}
John Rognes and Christian Ausoni.
\newblock Algebraic {$K$}-theory of the first {M}orava {$K$}-theory.
\newblock {\em Journal of the European Mathematical Society}, 14(4):1041--1079,
  2012.

\bibitem[Rag22]{Ragimov}
Shaul Ragimov.
\newblock {\em The {$\infty$}-Categorical Reflection Theorem}.
\newblock {Thesis (M. Sc.)--Hebrew University of Jerusalem}, 2022.
\newblock \href{https://arxiv.org/abs/2207.09244}{arXiv:2207.09244}.

\bibitem[Rez98]{RezkGHM}
Charles Rezk.
\newblock Notes on the {H}opkins-{M}iller theorem.
\newblock In {\em Homotopy theory via algebraic geometry and group
  representations ({E}vanston, {IL}, 1997)}, volume 220 of {\em Contemp.
  Math.}, pages 313--366. Amer. Math. Soc., Providence, RI, 1998.

\bibitem[Rez09]{RezkCong}
Charles Rezk.
\newblock The congruence criterion for power operations in {M}orava
  {$E$}-theory.
\newblock {\em Homology Homotopy Appl.}, 11(2):327--379, 2009.

\bibitem[Rog14]{rognes2014chromatic}
John Rognes.
\newblock Chromatic redshift.
\newblock 2014.
\newblock \href{https://arxiv.org/abs/1403.4838}{arXiv:1403.4838}.

\bibitem[Sta16]{Stapleton}
Nathaniel Stapleton.
\newblock A canonical lift of {F}robenius in {M}orava {E}-theory.
\newblock 2016.
\newblock \href{https://arxiv.org/abs/1603.04811}{arXiv:1603.04811}.

\bibitem[{Sta}22]{stacks-project}
The {Stacks project authors}.
\newblock The stacks project.
\newblock \url{https://stacks.math.columbia.edu}, 2022.

\bibitem[Str97]{Strickland2}
Neil~P. Strickland.
\newblock Finite subgroups of formal groups.
\newblock {\em J. Pure Appl. Algebra}, 121(2):161--208, 1997.

\bibitem[Str98]{Strickland1}
N.~P. Strickland.
\newblock Morava {$E$}-theory of symmetric groups.
\newblock {\em Topology}, 37(4):757--779, 1998.

\bibitem[Sus83]{Suslin}
A.~Suslin.
\newblock On the {$K$}-theory of algebraically closed fields.
\newblock {\em Invent. Math.}, 73(2):241--245, 1983.

\bibitem[Vee18]{veen2018detecting}
Torleif Veen.
\newblock Detecting periodic elements in higher topological hochschild
  homology.
\newblock {\em Geometry \& Topology}, 22(2):693--756, 2018.

\bibitem[Wal85]{Waldhausen}
Friedhelm Waldhausen.
\newblock Algebraic {$K$}-theory of spaces.
\newblock In {\em Algebraic and geometric topology ({N}ew {B}runswick,
  {N}.{J}., 1983)}, volume 1126 of {\em Lecture Notes in Math.}, pages
  318--419. Springer, Berlin, 1985.

\bibitem[Wal08]{Walker}
Barry~John Walker.
\newblock {\em Multiplicative orientations of {K}-Theory and p-adic analysis}.
\newblock University of Illinois at Urbana-Champaign, 2008.

\bibitem[Wes17]{westerland2017higher}
Craig Westerland.
\newblock A higher chromatic analogue of the image of {J}.
\newblock {\em Geometry \& Topology}, 21(2):1033--1093, 2017.

\bibitem[Yua21]{Yuanred}
Allen Yuan.
\newblock Examples of chromatic redshift in algebraic {$K$}-theory.
\newblock 2021.
\newblock \href{https://arxiv.org/abs/2111.10837}{arXiv:2111.10837}.

\bibitem[Zhu20]{Zhu}
Yifei Zhu.
\newblock Norm coherence for descent of level structures on formal
  deformations.
\newblock {\em Journal of Pure and Applied Algebra}, 224(10):106382, 2020.

\end{thebibliography}

\end{document}